\documentclass[11pt]{amsart}

\usepackage{amsmath, amsthm, amssymb} 
\numberwithin{equation}{section}

\usepackage{amsthm}
\usepackage{hyperref}
\usepackage[all]{xy}
\usepackage[usenames,dvipsnames]{xcolor}

\usepackage{mleftright}

\makeatletter
\newtheoremstyle{indented}
{}
{}
{\addtolength{\@totalleftmargin}{.0em}
	\addtolength{\linewidth}{.0em}
	\parshape 1.em \linewidth}
{}
{\itshape}
{.}
{.5em}
{}
\makeatother

\theoremstyle{indented}

\usepackage{overpic}

\newtheorem{theo}{Theorem}[section]
\newtheorem{lemma}[theo]{Lemma}
\newtheorem{prop}[theo]{Proposition}
\newtheorem{prpl}[theo]{Proposal}
\newtheorem{hypo}[theo]{Hypothesis}
\newtheorem{corollary}[theo]{Corollary}
\newtheorem{defn}[theo]{Definition}
\newtheorem{warning}[theo]{Warning}

\newtheorem{eg}[theo]{Example}

\newtheorem{rmk}[theo]{Remark}

\newtheorem{conj}[theo]{Conjecture}

\newenvironment{manualtheorem}[1]{%
	\manualtheoreminner
}{\endmanualtheoreminner}

\usepackage[left=2.5cm,right=2.5cm,top=3.5cm]{geometry}

\pdfoutput=1
\usepackage{amsmath}
\usepackage{amssymb}
\usepackage{wrapfig}
\usepackage{bbm}
\usepackage{arydshln}
\usepackage{multirow}
\usepackage{mathtools}
\usepackage{blkarray}
\usepackage{setspace}
\usepackage{dsfont}
\usepackage{float}
\usepackage{subcaption}
\usepackage{braket}
\usepackage[utf8]{inputenc}
\usepackage[english]{babel}
\usepackage[all]{xy}
\usepackage{tikz-cd}
\usepackage{mathtools}
\usepackage{tikz}

\makeatletter
\newcommand\mathcircled[1]{%
  \mathpalette\@mathcircled{#1}%
}
\newcommand\@mathcircled[2]{%
  \tikz[baseline=(math.base)] \node[draw,circle,inner sep=3pt] (math) {$\m@th#1#2$};%
}
\makeatother

\allowdisplaybreaks

\newcommand{\C}{{\mathbb C}}

\DeclareMathOperator{\Bun}{Bun}

\DeclareMathOperator{\Hom}{Hom}
\DeclareMathOperator{\Vect}{Vect}
\DeclareMathOperator{\DGV}{\DD(\K)}
\DeclareMathOperator{\ob}{ob}
\DeclareMathOperator{\Ext}{Ext}
\newcommand{\Mod}{{\textup{-Mod}}}
\newcommand{\modd}{{\textup{-mod}}}
\newcommand{\Bimod}{{\textup{-BiMod}}}

\usepackage{cjhebrew}
\newcommand{\Fp}{{\text{\<p>}}}

\newcommand{\K}{{\bb K}}

\newcommand{\bb}{\mathbb}
\newcommand{\mc}{\mathcal}

\newcommand{\mf}{\mathfrak}

\newcommand{\aaa}{{\textbf{a}}}
\newcommand{\bbb}{{\textbf{b}}}

\newcommand{\dd}{{\textbf{d}}}
\newcommand{\Gl}{{\textup{Gl}}}
\newcommand{\sh}{{\textup{sh}}}
\newcommand{\DD}{{\textup{D}}}
\newcommand{\CC}{{\textup{C}}}
\newcommand{\DC}{{\textup{D}_c}}
\newcommand{\D}{{\mc D}}

\newcommand{\cc}{{c}}
\newcommand{\q}{{\textup{q}}}

\newcommand{\Tr}{\textup{Tr}}

\newcommand{\Crit}{\textup{Crit}}
\newcommand{\Quot}{\textup{Quot}}
\newcommand{\Hilb}{\textup{Hilb}}
\newcommand{\perf}{\textup{perf}}

\newcommand{\Coh}{\textup{Coh}}
\newcommand{\QC}{\textup{QCoh}}
\newcommand{\op}{\textup{op}}

\newcommand{\fd}{\textup{fd}}
\newcommand{\cs}{\textup{cs}}
\newcommand{\Sym}{\textup{Sym}}
\newcommand{\id}{{\textup{id}}}
\newcommand{\Alg}{\textup{Alg}}
\newcommand{\Ass}{\textup{Ass}}
\newcommand{\Perv}{\textup{PervCoh}}

\newcommand{\Per}{\textup{Perv}}
\newcommand{\coker}{\textup{coker}}
\newcommand{\cyc}{\textup{cyc}}
\newcommand{\fg}{\textup{fg}}
\newcommand{\tw}{\textup{tw}}
\newcommand{\Tw}{\textup{Tw}}

\newcommand{\cn}{\textup{c}}
\newcommand{\gr}{\textup{gr}}
\newcommand{\nil}{\textup{nil}}

\newcommand{\pt}{\textup{pt}}
\newcommand{\Z}{\bb Z}
\newcommand{\DGVect}{\textup{DGVect}}
\newcommand{\Ainf}{A_\infty}
\newcommand{\Filt}{\textup{Filt}}
\newcommand{\Triang}{\textup{Triang}}
\newcommand{\triang}{\textup{triang}}
\newcommand{\Thick}{\textup{Thick}}
\newcommand{\thick}{\textup{thick}}
\newcommand{\filt}{\textup{filt}}
\newcommand{\red}{\textup{red}}
\newcommand{\Pic}{\textup{Pic}}
\newcommand{\Perf}{{\textup{Perf}}}
\newcommand{\Fd}{{\textup{Fd}}}
\newcommand{\Fg}{{\textup{Fg}}}
\newcommand{\fr}{{\textup{fr}}}
\newcommand{\Fr}{{\textup{Fr}}}
\newcommand{\ZZ}{{\mathbb{Z}}}
\newcommand{\Zz}{{\mathcal{Z}}}
\newcommand{\nn}{{r}}
\newcommand{\p}{{\textup{p}}}

\newcommand{\V}{{\bb V}}

\newcommand{\f}{{\textup{f}}}
\newcommand{\DT}{{\textup{DT}}}
\newcommand{\PT}{{\textup{PT}}}
\newcommand{\rk}{{\textup{rk}}}
\newcommand{\NCDT}{{\textup{NCDT}}}
\newcommand{\VW}{{\textup{VW}}}
\newcommand{\M}{{{M}}}
\newcommand{\rr}{{{\textbf{r}}}}
\newcommand{\qq}{{{\textbf{q}}}}
\newcommand{\g}{{{\mathfrak{g}}}}
\newcommand{\gl}{{{\mathfrak{gl}}}}
\newcommand{\spl}{{{\mathfrak{sl}}}}
\newcommand{\glh}{{{\widehat{\mathfrak{gl}}}}}
\newcommand{\gh}{{{\widehat{\mathfrak{g}}}}}
\newcommand{\rrr}{{{r}}}
\newcommand{\xx}{{{\textbf{x}}}}

\newcommand{\ee}{{e}}

\newcommand{\Spec}{{\textup{Spec}\ }}
\newcommand{\Spf}{{\textup{Spf}\ }}
\newcommand{\Homi}{{\underline{\textup{Hom}}}}
\newcommand{\Endi}{{\underline{\textup{End}}}}
\newcommand{\End}{{\textup{End}}}
\newcommand{\Pq}{{{\bb Z [\![\qq]\!]}}}
\newcommand{\lP}{{ [\![ }}
\newcommand{\rP}{{ ]\!] }}
\newcommand{\lL}{{ (\!( }}
\newcommand{\rL}{{ )\!) }}
\newcommand{\one}{{\textbf{1}}}
\newcommand{\sss}{{\textup{ss}}}

\usepackage{ytableau}

\begin{document}

\title[]{Perverse coherent extensions on Calabi-Yau threefolds and representations of cohomological Hall algebras}

\author{Dylan Butson}
\address[D.B.]{Mathematical Institute, University of Oxford, Andrew Wiles Building, Radcliffe Observatory Quarter (550), Woodstock Road, Oxford, OX2 6GG}
\email{dylan.butson@maths.ox.ac.uk}
\author{Miroslav Rap\v{c}\'{a}k}
\address[M.R.]{Building 4, Meyrin site, CERN, 1211 Geneva 23, Switzerland}
\email{miroslav.rapcak@cern.ch}

\begin{abstract}
For $Y\to X$ a toric Calabi-Yau threefold resolution and $M\in \DD^b\Coh(Y)^T$ satisfying some hypotheses, we define a stack $\mf M(Y,M)$ parameterizing \emph{perverse coherent extensions} of $M$, iterated extensions of $M$ and the compactly supported perverse coherent sheaves of Bridgeland. We define framed variants $\mf M^\f(Y,M)$, prove that they are equivalent to stacks of representations of framed quivers with potential $(Q^\f,W^\f)$, and deduce natural monad presentations for these sheaves. Moreover, following Soibelman we prove that the homology $H_\bullet(\mf M^{\f,\zeta}(Y,M),\varphi_{W^\f})$ of the space of $\zeta$-stable, $\f$-framed perverse coherent extensions of $M$, with coefficients in the sheaf $\varphi_{W^\f}$ of vanishing cycles for $W^\f$, is a representation of the Kontsevich-Soibelman cohomological Hall algebra of $Y$.
	
For $M=\mc O_Y[1]$, $\mf M^{\f}(Y,M)$ is the stack of perverse coherent systems of Nagao-Nakajima, so $\bb V_Y^\zeta=H_\bullet(\mf M^{\f,\zeta}(Y,M),\varphi_{W^\f})$ is the DT/PT series of $Y$ for $\zeta=\zeta_{\DT/\PT}$ by Szendroi and \emph{loc. cit.}, and we conjecture that $\V_Y^{\zeta_\NCDT}$ is the vacuum module for the quiver Yangian of Li-Yamazaki. For $M=\mc O_S[1]$ with $S\subset Y$ a divisor, $\mf M^{\f}(Y,M)$ provides a definition in algebraic geometry for Nekrasov's spiked instanton variant of the ADHM construction, and analogous variants of the constructions of Kronheimer-Nakajima, Nakajima-Yoshioka, and Finkelberg-Rybnikov. We conjecture that $H_\bullet(\mf M^{\f,\zeta}(Y,M),\varphi_{W^{\f}})$ is the vacuum module of the vertex algebra $\V(Y,S)$ defined by the \mbox{authors} in a companion paper, generalizing the AGT conjecture to this setting. For $Y\to X=\{xy-z^mw^n\}$, this gives a geometric approach to the relationship between $W$-algebras and Yangians for affine $\gl_{m|n}$.
\end{abstract}

\maketitle

\begingroup
\hypersetup{hidelinks}
\tableofcontents
\endgroup

\vspace*{-.8cm}
\section{Introduction}

This paper is intended to contribute to a family of interconnected mathematical ideas at the intersections of geometric representation theory, enumerative algebraic geometry, low dimensional topology, and integrable systems, which follow predictions from supersymmetric quantum field theory and string theory. We are especially interested in a group of results following the conjectures of Alday-Gaiotto-Tachikawa \cite{AGT}, and their independent proof by Schiffmann-Vasserot \cite{SV}, Maulik-Okounkov \cite{MO}, and Braverman-Finkelberg-Nakajima \cite{BFN5}.

The goal of this paper is to construct certain moduli spaces of sheaves on Calabi-Yau threefolds, and representations on their homology groups of algebras of Hecke modifications of these sheaves, which conjecturally identify with modules over certain vertex algebras and affine quantum groups, generalizing the AGT conjecture and several related predictions of string theory.

In this introduction, we will explain these goals and the results of this paper as follows:

\begin{itemize}
	
	\item In Section \ref{agintrosec}, we explain the motivation from algebraic geometry: to construct moduli spaces of coherent sheaves on Calabi-Yau threefolds $Y$ which are equivalent to spaces of representations of a framed quiver with potential via a monad description, and in particular give models for moduli spaces of instantons on divisors $S\subset Y$, as in Theorem \ref{Athm}.
	
	\item In Section \ref{rtintrosec}, we explain the motivation from geometric representation theory: to construct representations on the cohomology groups of these moduli spaces of certain infinite dimensional associative algebras, as in Theorem \ref{Bthm}, and identify these with particular modules.
	
	\item In Section \ref{stintrosec}, we explain the motivation from the relevant string theory constructions.

	\item In Section \ref{summarysec}, we give a concrete summary of the results of each section in this paper.
	
\end{itemize}

\subsection{Motivation from algebraic geometry}\label{agintrosec}
The first goal of this paper is to construct certain moduli spaces of sheaves on Calabi-Yau threefolds $Y$ which admit descriptions as moduli spaces of representations of quivers with potential. The motivating example is the space of perverse coherent systems on $Y$ introduced by Nagao-Nakajima \cite{NN}, and studied systematically in \cite{Nag}. This space was defined to give a geometric interpretation to the results of Szendroi \cite{Sz1}, who computed the Donaldson-Thomas (DT) invariants \cite{DT}, Panharipande-Thomas (PT) invariants \cite{PT}, and generalizations thereof, for the resolved conifold $Y=|\mc O_{\bb P^1}(-1)\oplus \mc O_{\bb P^1}(-1)|$ using the moduli space of representations of a certain quiver with potential discovered in \cite{KlW}. The definition of the moduli space of perverse coherent systems on $Y$ is in terms of the notion of perverse coherent sheaves on $Y$ introduced in \cite{Bdg1}, but we will begin by explaining the simplest example of such a moduli space, the case $Y=\bb C^3$, for which the category of perverse coherent sheaves on $Y$ is simply the usual abelian category of coherent sheaves.

A (rank 1, compactly supported) perverse coherent system on $\C^3$ is by definition a pair $(F,s)$ where $F\in\Coh_\cs(\C^3)$ is a compactly supported coherent sheaf on $\C^3$ and $s:\mc O_{\C^3}\to F$ is a map of coherent sheaves. The stack parameterizing these objects is evidently equivalent to that of finite dimensional representations of the quiver with potential
\[ Q = \begin{tikzcd}
	\boxed{\bb C}\arrow[r, "I"] &
	\mathcircled{V}\arrow[out=340,in=20,loop,swap,"B_3"]
	\arrow[out=220,in=260,loop,swap,"B_2"]
	\arrow[out=100,in=140,loop,swap,"\normalsize{B_1}"]
\end{tikzcd} \quad\quad \text{with}\quad\quad W = B_1[B_2,B_3] \ .\]
Indeed, the critical locus equations for the potential $W$ require that the endomorphisms $B_i:V\to V$ commute, so that $V$ defines a compactly supported coherent sheaf $F$ on $\C^3$, and thus we can identify the stack of representations of the underlying unframed quiver with potential with the stack of compactly supported coherent sheaves on $\C^3$. Similarly, the image of the unit $1\in\bb C$ under the map $I:\C \to V$ determines the image of the unit section $1\in \mc O_{\C^3}$ under a unique map $s:\mc O_{\C^3}\to F$.

The open substack of cyclic representations of this quiver with potential, which is precisely the stable locus with respect to a choice of King stability condition $\zeta$, corresponds to the condition that the map $s$ is surjective, so that the moduli space of $\zeta$-stable representations is equivalent to the Hilbert scheme $\Hilb_n(\bb C^3)$ of zero dimensional subschemes of $\C^3$ of length $n=\dim V$, noting that we have a short exact sequence
\[ \mc I \to \mc O_{\bb C^3} \xrightarrow{s} F \] 
so that the data of the coherent sheaf $F$ and the map $s$ are equivalent to that of an ideal sheaf $\mc I$ in $\mc O_{\bb C^3}$ of codimension $n$. This moduli space is the simplest example of a DT theory moduli space, and is the prototypical example of the class of moduli spaces considered in this paper.

Finally, we mention another equivalent interpretation of this moduli space, which is the one that our construction most naturally generalizes. In the derived category $\DD^b\Coh(\C^3)$, we can rewrite the above exact triangle equivalently as
\begin{equation}\label{DTextensioneqn}
	 F \to \mc I[1] \to \mc O_{\bb C^3}[1]  \ , 
\end{equation}
which we interpret geometrically as follows: the ideal sheaves $\mc I$ being parameterized by $\Hilb_n(\C^3)$, viewed as objects in the derived category $\mc I[1]\in \DD^b\Coh(\C^3)$ concentrated in cohomological degree $-1$, can equivalently be characterized as certain extensions of the object $\mc O_{\bb C^3}[1]$ by a compactly supported coherent sheaf $F\in \Coh_\cs(\C^3)$ concentrated in cohomological degree $0$. Even away from the stable locus, the data of the map $s$ in the definition of perverse coherent system is evidently equivalent to a representative of the extension class under the identification
\[\Hom^1(\mc O_{\bb C^3}[1],F)= \Hom^0(\mc O_{\bb C^3},F) \ .\]

In fact, there exist exotic t-structures on the triangulated category $\DD^b\Coh(\C^3)$ with hearts containing both $\Coh_\cs(\C^3)$ and $\mc O_{\bb C^3}[1]$, called perverse coherent t-structures in the sense of Deligne, as recalled in \cite{ArBez}. Thus, we can understand the moduli space of perverse coherent systems as simply describing the space of extensions between objects in a certain abelian category, and this is the perspective that we will generalize below. For more general threefolds $Y$ on which the perverse coherent sheaves in the sense of Bridgeland are distinct from the usual coherent heart, this requires a new family of t-structures that combines these two notions, which we call Bridgeland-Deligne perverse coherent t-structures.

The primary motivation for the generalization of the moduli space of perverse coherent systems on a threefold $Y$ that we introduce in this paper is to construct moduli spaces of sheaves supported on algebraic surfaces $S$ or divisors $S\subset Y$ which are also equivalent to representations of framed quivers with potential. Perhaps the most famous example of any construction relating quiver representations and sheaves is that of Atiyah-Drinfeld-Hitchin-Manin \cite{ADHM}, which provides an isomorphism between the moduli space of stable, framed instantons of rank $r$ and charge $n$ on $\bb R^4$ and the space of stable representations of dimension $\dim V=n$ of the quiver 
\[   \begin{tikzcd}\boxed{\bb C^r}  \arrow[r, shift left=0.5ex, "I"] & \arrow[l, shift left=0.5ex, "J"]
	\mathcircled{V}	\arrow[out=70,in=110,loop,swap,"B_1"]
	\arrow[out=250,in=290,loop,swap,"\normalsize{B_2}"]\end{tikzcd}
\quad\quad\text{with relations}\quad\quad  [B_1,B_2]+IJ=0   \ .\]

It is well known that in the case $r=1$, the only non-trivial instantons are purely singular and thus the moduli space of these objects is equivalent to the Hilbert scheme $\Hilb_n(\C^2)$ after choosing a complex structure to identify $\bb R^4\cong \C^2$. The induced identification between stable, rank $1$ representations of the ADHM quiver of dimension $n$ and points in $\Hilb_n(\C^2)$ is exactly in parallel with the $\C^3$ case explained above: one can show that the stable locus in the stack of quiver representations is contained in the locus where the map $J:V\to \bb C$ vanishes, after which the argument from the $\C^3$ case above applies \emph{mutatis mutandis}.

It is not difficult to see that this construction can be recast in terms of sheaves on $\C^3$ supported on $\C^2$, and that the ADHM quiver admits a corresponding natural generalization to a quiver with potential. The Hilbert scheme of zero dimensional, length $n$ subschemes of $\C^2$ is by definition the quot scheme $\Quot_n(\C^2,\mc O_{\C^2})$ of length $n$ quotients over $\C^2$ of the structure sheaf $\mc O_{\C^2}$, which can be equivalently interpreted as the quot scheme $\Quot_n(\C^3,\iota_*\mc O_{\C^2})$ of length $n$ quotients over $\C^3$ of the pushforward of the structure sheaf $\mc O_{\C^2}$ along the inclusion $\iota:\C^2 \to \C^3$. Similarly, since the ADHM quiver is a doubled quiver in the sense of Nakajima \cite{Nak1}, it admits a canonical enhancement to a tripled quiver with potential in the sense of Ginzburg \cite{Ginz}, given by
\begin{equation}\label{3dadhmeqn}
	\begin{tikzcd}\boxed{\C^r} \arrow[r, shift left=0.5ex, "I"] & \arrow[l, shift left=0.5ex, "J"]
	\mathcircled{V}\arrow[out=340,in=20,loop,swap,"B_3"]
	\arrow[out=220,in=260,loop,swap,"B_2"]
	\arrow[out=100,in=140,loop,swap,"\normalsize{B_1}"] \end{tikzcd}
\quad\quad\text{and}\quad\quad   W = B_1[B_2,B_3] + B_3I J \ .  
\end{equation}
These two reformulations evidently correspond to each other under an equivalence of the type explained above. Indeed, the critical locus equations for the potential $W$ imply not only that $B_1,B_2,I$ and $J$ satisfy the ADHM relations, but additionally that
\[ [B_1,B_3]=[B_2,B_3]=0 \quad\quad \text{and} \quad\quad B_3I=0 \ .\]
Together, this implies that on the stable locus $B_3$ vanishes identically on $V$, and thus $V$ should be interpreted as a compactly supported coherent sheaf on $\C^3$ which is annihilated by the coordinate function $z_3$ corresponding to $B_3$, and thus supported on $\C^2\subset \C^3$. In turn, the map $\mc O_{\C^3}\to V$ corresponding to $I$ evidently factors through a map $s:\iota_*\mc O_{\C^2} \to V$.

In these terms, the goal of our construction is to systematically generalize the preceding example in which $Y=\C^3$ and $S=\C^2$ to give analogous models in algebraic geometry for moduli spaces of instantons on divisors $S$ in Calabi-Yau threefolds $Y$. However, in order to better motivate the details of our approach, we briefly discuss some of the subtleties which appear already in this simple setting and must be resolved in the general construction.

One natural question when comparing with the example of perverse coherent systems on $\C^3$ is to give an analogous geometric description of the full stack of representations of the quiver with potential of Equation \ref{3dadhmeqn}. In particular, away from the stable locus the map $J:V\to \C$ is not in general zero, so the moduli space is evidently parameterizing more than just general maps $s:\mc O_{\C^2}\to V$, but there is not such an evident geometric interpretation for the map $J$ as for $I$. Moreover, this issue is even more apparent in rank $r>1$, in which case the map $J$ is not in general zero even on the stable locus. Further, the well-known description in algebraic geometry of the moduli space of stable representations of the ADHM quiver of higher rank, as in \cite{NakLec}, requires a choice of projective compactification of $S=\C^2$, but we would like to generalize this construction to surfaces $S$ in threefolds $Y$ for which there is not evidently a canonical choice of projective compactification. We now explain the resolution of these subtleties in the present setting:

Even in the well-known setting of \emph{loc. cit.}, where the moduli space of stable representations of the usual ADHM quiver is identified with the moduli space of torsion free sheaves $\mc E$ on $\bb P^2$ equipped with a trivialization $\varphi:\mc E|_{\bb P^1_\infty}\xrightarrow{\cong} \mc O_{\bb P^1_\infty}$ on the line $\bb P^1_\infty \subset \bb P^2$ at infinity, there is surprisingly little discussion in the literature of the full stack of representations. In fact, the only explicit reference we could find which gives a geometric description of the stack of representations of the ADHM quiver is in Section 5 of \cite{BFG}, where the result is attributed to Drinfeld. The description is as the stack of torsion free perverse coherent sheaves on $\bb P^2$, equipped with a trivialization at infinity as above, where the former are defined to be complexes of coherent sheaves
\begin{equation}\label{pervTFSeqn}
	 E = \left[ E^{-1}[1] \xrightarrow{d} E^0  \right] \ , 
\end{equation}
such that $H^{-1}E$ is torsion free and $H^0E$ has zero dimensional support. We now reinterpret this description analogously to the interpretation of perverse coherent systems in terms of extensions explained above.

First, note that these complexes of sheaves are again in the heart of a perverse coherent t-structure on $\bb P^2$ in the sense of Deligne, as the name suggests. For simplicity, we begin by considering the stable locus on which the differential $d$ is surjective, so that the objects being parameterized are just the usual torsion free coherent sheaves $H^{-1} E=\mc E[1]$ thought of as concentrated in cohomological degree $-1$. It is well known that every such sheaf fits into a canonical short exact sequence
\[ \mc E\to (\mc E^\vee)^\vee \to  F \]
where the reflexive hull $(\mc E^\vee)^\vee$ determines a rank $r$ vector bundle on $\bb P^2$, and $F$ is a coherent sheaf with zero dimensional support concentrated in cohomological degree $0$.

Note that $F$ must be supported on the complement $\C^2=\bb P^2\setminus \bb P^1_\infty$ of the line at infinity, by the existence of the isomorphism $\varphi$, and similarly for $H^0E$ in the unstable case. Thus, the restriction $\tilde{\mc E}= \mc E|_{\C^2}$ and the restriction of the double dual $( \tilde{\mc E}^\vee)^\vee$ also fit into a short exact sequence
\[\tilde{ \mc E}\to (\tilde{ \mc E}^\vee)^\vee \to  F   \ . \]
The rank $r$ vector bundle $(\tilde{ \mc E}^\vee)^\vee $ on $\C^2$ is necessarily trivializable, and
the choice of trivialization at infinity of $\mc E$ determines a trivialization at infinity for $(\mc E^\vee)^\vee$ which extends uniquely to a trivialization of $(\tilde{ \mc E}^\vee)^\vee[1]$. Thus, the space of sheaves of the form $\tilde{\mc E}[1]$ is identified with the space of extensions
\begin{equation}\label{pervcohextADHMeqn}
	  F \to \tilde{\mc E}[1] \to  \mc O_{\C^2}^{\oplus r}[1] \ ,
\end{equation}
which is the desired analogue of Equation \ref{DTextensioneqn}. In fact, the object $\mc O_{\C^2}^{\oplus r}[1]$ is itself an iterated extension of the object $\mc O_{\C^2}[1]$, so that we can further identify the space of sheaves $\tilde{\mc E}[1]$ with the space of iterated extensions of compactly supported coherent sheaves $F\in\Coh_\cs(\C^3)$ with $r$ copies of $\mc O_{\C^2}[1]$, such that $F$ is a subobject and the underlying iterated extension of $\mc O_{\C^2}[1]$ is equipped with an isomorphism to the object $\mc O_{\C^2}^{\oplus r}[1]$.

The generalization of this description to the unstable locus simply corresponds to dropping the requirement that $F$ occurs as a subobject in the iterated extension. For example, in the simplest case that $F=\iota_*\mc O_\pt$ is given by the structure sheaf of a single reduced point in $\C^2$, there is a unique non-trivial extension class
\begin{equation}\label{beqn1}
	 \mc O_{\C^2}[1] \to E \to \mc O_\pt \quad\quad \text{represented by}\quad\quad \xymatrixcolsep{3pc}
\xymatrixrowsep{3pc}
\vcenter{\xymatrix{ \mc{O}_{\C^2} \ar[d]^{1} \ar[r] &    \mc{O}_{\C^2}^2\ar[r] & \mc{O}_{\C^2}  \\
	\mc{O}_{\C^2}  &  &} } \ , 
\end{equation}
where the top row is given by the Koszul resolution of $\mc O_\pt$. The totalization of this map of complexes provides a representative $E$ of the extension class, and this complex of sheaves is the prototypical example of the desired geometric interpretation of a representation of the ADHM quiver with $J\neq 0$. Equivalently, we can view this as a representative of the corresponding class in $\Hom^2( \mc O_\pt,\mc O_{\C^2})$, as
\[ \mc O_{\C^2} \to E^{-1} \to E^0 \to \mc O_\pt  \quad\quad \text{where}\quad\quad \begin{cases} E^{-1} & =\coker\left[ \mc O_{\C^2} \to \mc O_{\C^2}^3\right]  \\  E^0 & =\mc O_{\C^2} \end{cases} \ ,\]
which is the corresponding prototypical example of a perverse coherent torsion free sheaf, in the sense of Equation \ref{pervTFSeqn}, for which the differential $d$ is not surjective.

We can extend this argument to explicitly parameterize the entire moduli space of such sheaves in terms of representations of the ADHM quiver. The corresponding prototypical extension of the form in Equation \ref{pervcohextADHMeqn} is represented analogously in terms of the Koszul resolution of $\mc O_\pt$ by the map of complexes
\begin{equation}\label{beqn2} \xymatrixcolsep{3pc}
\xymatrixrowsep{3pc}
\vcenter{\xymatrix{  &  & \mc{O}_{\C^2}\ar[d]^{1}   \\  \mc{O}_{\C^2}  \ar[r] &    \mc{O}_{\C^2}^2\ar[r] & \mc{O}_{\C^2} 
	}} \ . \end{equation}
Moreover, note that any compactly supported coherent sheaf $F\in \Coh_\cs(\C^2)$ of length $n$ is itself an iterated extension of the structure sheaves of $n$ points in $\C^2$, and again there are natural representatives of the two elementary non-trivial extensions in $\Hom^1(\mc O_\pt,\mc O_\pt)\cong \C^2$ given by
\begin{equation}\hspace*{-.5cm}\label{beqn3}  \xymatrixcolsep{3pc}
\xymatrixrowsep{3pc}
\vcenter{\xymatrix{ & \mc{O}_{\C^2}  \ar[r] \ar[d]^{\scriptsize{\begin{bmatrix} 1 \\ 0
	\end{bmatrix}}} &    \mc{O}_{\C^2}^2\ar[r]  \ar[d]^{\scriptsize{\begin{bmatrix} 0 & 1
	\end{bmatrix}}} & \mc{O}_{\C^2}  \\  \mc{O}_{\C^2}  \ar[r] &    \mc{O}_{\C^2}^2\ar[r] & \mc{O}_{\C^2} 
}} \quad\quad\text{and}\quad\quad \xymatrixcolsep{3pc}
\xymatrixrowsep{3pc}
\vcenter{\xymatrix{ & \mc{O}_{\C^2}  \ar[r] \ar[d]^{\scriptsize{\begin{bmatrix} 0 \\ -1
\end{bmatrix}}} &    \mc{O}_{\C^2}^2\ar[r]  \ar[d]^{\scriptsize{\begin{bmatrix} 1 & 0
\end{bmatrix}}} & \mc{O}_{\C^2}  \\  \mc{O}_{\C^2}  \ar[r] &    \mc{O}_{\C^2}^2\ar[r] & \mc{O}_{\C^2} 
}} \ . \end{equation}
Thus, we can express a general sheaf $E$ which is isomorphic to an iterated extension of $n$ copies of $\mc O_\pt$ and $r$ copies of $\mc O_{\C^2}[1]$, and equipped with an isomorphism between the underlying iterated extension of $\mc O_{\C^2}[1]$ and $\mc O_{\C^2}^{\oplus r}[1]$, as a complex of the form
\begin{equation}\label{ADHMmonadeqn}
	  \xymatrixcolsep{9pc}
\xymatrixrowsep{.2pc}\vcenter{\xymatrix{ & \mc O_{\C^2}\otimes V & \\ & \oplus & \\ \mc O_{\C^2}\otimes V \ar[r]^{\scriptsize{\begin{bmatrix} B_1-z_1 \\ B_2-z_2 \\ J
	\end{bmatrix}}}  & \mc O_{\C^2}\otimes V \ar[r]^{\scriptsize{\begin{bmatrix} z_2-B_2 & B_1-z_1 & I
\end{bmatrix}}} & \mc O_{\C^2}\otimes V \\ & \oplus & \\
	& \mc O_{\C^2}\otimes \C^r }} \ ,
\end{equation}
where $V$ is an $n$ dimensional vector space representing the multiplicity space of the copies of $\mc O_\pt$ used in the construction. This is precisely the monad presentation of Equation 2.6 in \cite{NakLec} for arbitrary maps $B_1$, $B_2$, $I$, and $J$ satisfying the ADHM equations, and in particular we deduce that this moduli space of sheaves is equivalent to the full stack of representations of the ADHM quiver.

The preceding arguments all apply analogously to identify the stack of representations of the quiver with potential of Equation \ref{3dadhmeqn} with that parameterizing complexes of coherent sheaves isomorphic to an iterated extension of $\mc O_\pt$ and $\iota_*\mc O_{\C^2}[1]$ in an appropriate category of perverse coherent sheaves on $\C^3$ in the sense of Deligne, equipped with an isomorphism of the underlying iterated extension of $\iota_*\mc O_{\C^2}[1]$ with $\iota_*\mc O_{\C^2}^{\oplus r}[1]$. This is the prototypical example of the spaces of coherent sheaves studied in this paper, which we call (rank $r$, trivially framed) \emph{perverse coherent extensions} of $\iota_*\mc O_{\C^2}[1]$. We remark that the object which we have referred to as \emph{the} underlying iterated extension of $\iota_*\mc O_{\C^2}[1]$ is not in general well-defined, but we give general hypotheses sufficient to ensure that it is, which can be verified directly in the examples of interest; we will ignore this subtlety for the remainder of the introduction.

The reinterpretation of the framing data as a choice of isomorphism between the underlying iterated extension of $\iota_*\mc O_{\C^2}[1]$ and the trivial extension $\iota_*\mc O_{\C^2}^{\oplus r}[1]$ may at first seem ad hoc, but in fact it immediately leads to a natural family of generalizations of the above correspondence, which play a crucial role in our desired applications in geometric representation theory. We define a framing structure $\f$ to be a fixed iterated extension $H_\f$ of $r$ copies of $\iota_*\mc O_{\C^2}[1]$, and we can consider the analogous moduli space of sheaves but equipped with an isomorphism to $H_\f$ instead of the trivial extension $\iota_*\mc O_{\C^2}^{\oplus r}[1]$, which we call $\f$\emph{-framed} perverse coherent extensions.

In present setting, such extensions are determined by a nilpotent Higgs field $\phi_\f$ of rank $r$ on $\C^2$, and if we assume that $\phi_\f$ is given by the constant nilpotent matrix $A_\f\in \gl_r$, the corresponding stack of $\f$-framed perverse coherent extensions of $\iota_*\mc O_{\C^2}[1]$ is equivalent to that of representations of the modified quiver with potential
\begin{equation}\label{CCDSeqn}
 \begin{tikzcd}\boxed{V_\infty} 	\arrow[out=160,in=200,loop,swap,"\bullet" marking, "A_\f"]  \arrow[r, shift left=0.5ex, "I"] & \arrow[l, shift left=0.5ex, "J"]
	\mathcircled{V}\arrow[out=340,in=20,loop,swap,"B_3"]
	\arrow[out=220,in=260,loop,swap,"B_2"]
	\arrow[out=100,in=140,loop,swap,"\normalsize{B_1}"] \end{tikzcd}
\quad\quad\text{with}\quad\quad    W = B_1[B_2,B_3] + B_3I J + I A_\f J \ ;
\end{equation}
arrows marked with a $\bullet$ such as that labeled by $A_\f$ in this quiver are fixed, in the sense that they are not part of the data required to specify a representation of the quiver, but are chosen as part of the input data for defining the quiver with potential itself, and modify the equations on the collection of linear maps corresponding to the unmarked arrows defining a quiver representation. This quiver with potential was studied recently in \cite{CCDS} in precisely the geometric representation theory context in which we are interested, and their results were our initial motivation for studying this generalized notion of framing.

We can also consider framed perverse coherent extensions in the presence of more than one object at once, which provide analogous models in algebraic geometry for moduli spaces of instantons on divisors which are not necessarily irreducible. Indeed, considering the shifted structure sheaves of the three coordinate divisors in $\C^3$ simultaneously, we obtain a description in algebraic geometry for the stack of representations of the spiked instantons variant of the ADHM quiver introduced in \cite{NekP}, the quiver with potential defined by
\begin{equation}\label{Nekeqn}
	\begin{tikzcd}
	& & \boxed{V_\infty^2} \ar[dl, shift left=0.5ex, "I_2"] \\
	\boxed{V_\infty^3} \arrow[r, shift left=0.5ex, "I_3"] & \arrow[l, shift left=0.5ex, "J_3"]
	\mathcircled{V}\arrow[out=340,in=20,loop,swap,"B_3"]
	\arrow[out=220,in=260,loop,swap,"B_2"]
	\arrow[out=100,in=140,loop,swap,"\normalsize{B_1}"]  \ar[ur, shift left=0.5ex, "J_2"] \ar[dr, shift left=0.5ex, "J_1"] \\
	& & \boxed{V_\infty^1} \ar[ul, shift left=0.5ex, "I_1"]
\end{tikzcd}  \quad\quad \text{with}\quad\quad   W_M^\f = B_1[B_2,B_3] + \sum_{k=1}^3 B_k I_k J_k\ .
\end{equation}

\noindent The above is the quiver with potential corresponding to the trivial framing structure, but for any choice of iterated extension of the framing objects there is a corresponding modification of the potential analogous to that in Equation \ref{CCDSeqn} above.

We now briefly describe our generalization of the three dimensional variants of the ADHM construction described above to a larger class of algebraic surfaces $S$, or more generally divisors, in Calabi-Yau threefolds $Y$. We must assume that $Y$ occurs as a resolution $f:Y\to X$ such that $\dim f^{-1}(\{x\})\leq 1$ for each $x\in X$ and $f_* \mc O_Y \cong \mc O_X$, where we use $f_*$ to denote the full derived pushforward functor, and for concreteness we restrict our attention to the case that $X$ is an affine, toric threefold singularity with a unique $T$-fixed point $x\in X$. The former conditions are precisely those under which the perverse coherent t-structure in the sense of Bridgeland is defined on $\DD^b\Coh(Y)$, and the basic idea is to modify the definitions above by replacing the compactly supported coherent sheaves $\Coh_\cs(Y)$ on $Y=\C^3$ used in the preceding constructions with the complexes of sheaves with compactly supported cohomology contained in the alternative heart $\Perv_\cs(Y)\subset \Perv(Y)$.

The category of compactly supported perverse coherent sheaves $\Perv_\cs(Y)$ has a natural collection of simple generators, generalizing the role of the structure sheaves of points in $\Coh_\cs(\C^3)$ in the previous constructions. In the simplest example where the fibre $C=f^{-1}(\{x\})$ over the fixed point $x\in X$ is reduced and irreducible, so that it is isomorphic as a scheme to $\bb P^1$, the compactly supported perverse coherent sheaves on $Y$ are generated by the structure sheaves of points in $Y\setminus C$ together with the objects $F_0=\iota_*\mc O_{\bb P^1}$ and $F_1\iota_* \mc O_{\bb P^1}(-1)[1]$, where $\iota:C\to Y$ is the inclusion of the exceptional curve. For simplicity, we restrict our attention to the formal completion $\hat Y$ of $Y$ along $C$, for which the category is generated only by the latter two objects.

We can again identify the moduli stack of objects in this category with the stack of representations of an unframed quiver with potential, in analogy with the description of $\Coh_\cs(\C^3)$ above, where the quiver is defined to have two vertices corresponding to the objects $F_0$ and $F_1$, and arrows determined by the non-trivial extension classes between these objects in the threefold $Y$. For example, in the case that $Y=|\mc O_{\bb P^1}(-1)\oplus \mc O_{\bb P^1}(-1)|$, it is given by
\[
\begin{tikzcd}
	\mathcircled{V_0} \arrow[r, bend left=25 ] \arrow[r, bend left=40 ,  "A\ C"]  & \arrow[l, bend left=25 ] \arrow[l, bend left=40 ,  "B\ D"]  \mathcircled{V_1}
\end{tikzcd}
\quad\quad \text{and}\quad\quad
W= ABCD-ADCB \ ,
\]
which is precisely the unframed variant of the quiver with potential used in our motivating example of the study of the DT theory of the conifold in \cite{Sz1}, originally discovered in string theory by Klebanov-Witten \cite{KlW}. We will comment further on the physical derivation below in Section \ref{stintrosec}, which explains the motivation from string theory for our constructions.

A typical example is the description of the structure sheaf $\mc O_y$ of a point $y$ on the exceptional curve $C$ as the quiver representation corresponding to an extension
\[    \mc O_{\bb P^1} \to \mc O_y \to \mc O_{\bb P^1}(-1)[1]   \ ; \]
this was one of the original motivations for the definition of the category of perverse coherent sheaves in \cite{Bdg1}, and can be understood as a reinterpretation in the derived category of the classical description of Beilinson \cite{Bei2} of coherent sheaves on $\bb P^1$ as representations of a quiver.

More generally, we consider the stack of iterated extensions of these objects together with a fixed number of copies of an auxiliary object $M\in \DD^b\Coh(Y)$, such as $\mc O_S[1]$ for a reduced divisor $S\subset Y$, equipped with an isomorphism of the underlying iterated extension of $\mc O_S[1]$ with some fixed choice of such an extension. This is the general definition of the stack of perverse coherent extensions of $M$, and under some hypotheses on the object $M$, we prove that it is equivalent to the stack of representations of a framed quiver with potential, and that this equivalence is implemented explicitly by a monad presentation, as for the examples in the case $Y=\C^3$ explained above.

In the case $Y=\widetilde{A_{m-1}}\times \bb A^1 \to X=\{ xy-z^m\}\times \bb A^1$, the product of $\bb A^1$ with a resolution $\widetilde{A_{m-1}}$ of the singularity $A_{m-1}=\{ xy-z^m\}$, and $M=\mc O_S[1]$ for $S=\widetilde{A_{m-1}}$, the resulting quiver is a framed, tripled variant of the affine Dynkin quiver of type $A_{m-1}$. In the case $m=2$ for example, with framing structure determined by a nilpotent endomorphism $G_\f\in \gl_r$, the resulting quiver with potential is given by
\[ \hspace*{-1cm} Q_M^\f = \quad \begin{tikzcd}
	\boxed{V_\infty}\arrow[d,shift left=0.5ex, "I"] \arrow[out=160,in=200,loop, "\bullet" marking,swap,"G_\f"] \\ 
	\arrow[out=160,in=200,loop,swap,"E"] \ar[u,shift left=0.5ex, "J"] \mathcircled{V_0} \arrow[r, bend left=25 ] \arrow[r, bend left=40 ,  "A\ C"]  & \arrow[l, bend left=25 ] \arrow[l, bend left=40 ,  "B\ D"] \mathcircled{V_1}\arrow[out=340,in=20,loop,swap,"F"]
\end{tikzcd} \quad\quad \text{and}\quad\quad W_M^\f =E(BC-DA)+ F(AD-CB) + EIJ - IG_\f J  \ . \] 
In the case $G_\f=0$, this is precisely the three dimensional analogue of the Nakajima quiver variety given by the doubled affine Dynkin quiver, the moduli spaces of representations of which was identified with moduli space of instantons on $\widetilde{A_{m-1}}$ in the results of Kronheimer-Nakajima \cite{KrNak}. We give a three dimensional variant of this identification which extends to the full stack of quiver representations, via a corresponding variant of the monad description of \emph{loc. cit.}.

In the case $Y=|\mc O_{\bb P^1}(-1)\oplus \mc O_{\bb P^1}(-1)| \to X = \{ xy-zw\}$ and $M=\mc O_S[1]$ for $S=|\mc O_{\bb P^1} (-1)|$, the quiver with potential is given by 
\[\begin{tikzcd}
	\boxed{V_\infty}\arrow[d,shift left=0.5ex, "I"] \\ 
	\mathcircled{V_0} \arrow[r, bend left=25 ] \arrow[r, bend left=40 ,  "A\ C"]  & \arrow[l, bend left=25 ] \arrow[l, bend left=40 ,  "B\ D"] \mathcircled{V_1}\ar[ul, bend right=40, swap,"J"]    
\end{tikzcd} \quad\quad \text{and}\quad\quad   W = ABCD-ADBC + IJC \ . \]
This is a three dimensional analogue of the quiver with relations studied in \cite{NY1}, generalizing the results of the thesis \cite{KingTh} of King describing vector bundles to torsion free sheaves on $|\mc O(-1)|$. However, we note that this example involves an additional subtlety which was absent in the preceding examples: since the potential has quartic terms rather than only cubic, the critical locus equations will have cubic terms rather than just quadratic, and thus the differentials in the monad description must have some quadratic dependence on the linear maps defining the quiver representation. Indeed, the three dimensional monad description produced by our construction in this case is given by
	\begin{equation}\label{NYmonadeqn}
	\xymatrix @R=-.3pc @C=-1.5pc{ & {\scriptsize{\begin{pmatrix}  1 & -B \\ z & -D \\ -A & x \\ - C & y \\  0 & J \end{pmatrix}}} & && {\scriptsize{\begin{pmatrix}zy-DC & CB-y & 0 & zB-D & 0\\ D A - zx & x-BA & D-zB & 0 & I \\ 0 & yA-xC & yz - CD & AD-xz & 0 \\ xC-yA & 0 & CB-y & x-AB & 0 \\ 0 & 0 & 0 & J & 0\end{pmatrix}}} && && \scriptsize{\begin{pmatrix} x & y & B & D & I \\ A & C & 1 & z & 0 \end{pmatrix}}&\\
		\mc O \otimes V_0   & & \mc O(1)^2 \otimes V_0  && &&\mc O(1)^2 \otimes V_0 &&  &&\mc O  \otimes V_0 	\\
		\oplus&  \longrightarrow & \oplus && \longrightarrow && \oplus && \longrightarrow && \oplus \\
		\\
		\mc O(1) \otimes V_1  & & \mc O^2 \otimes V_1 &&& &    \mc O^2 \otimes V_1 &&&& \mc O(1) \otimes V_1 \\ 
		&   & \oplus &&  && \oplus &&  &&  \\
		& &\mc O(1) \otimes V_\infty && && \mc O \otimes V_\infty } \ .
\end{equation}

Evidently a monad description involving a non-linear dependence on the linear maps defining the quiver representation can not be derived from the straightforward argument we gave to obtain the ADHM monad description in Equation \ref{ADHMmonadeqn} above, in which we added differentials to the Koszul resolutions of the constituent objects of the iterated extension corresponding to the representatives of the extension classes between the simple objects, as such differentials evidently depend linearly on the extension classes and in turn the linear maps in the quiver representation.

In fact, we observe that the condition that the potential is cubic corresponds to the requirement that the path algebra of the resulting quiver with relations is Koszul. In general, the path algebra need not be Koszul and equivalently its Koszul dual algebra, which is given by the $\Ext$ algebra of the simple objects $F_i$, will in general be an $A_\infty$ algebra rather than a strict associative algebra. Following this observation, we use the analogy with the classical setting of \cite{BGS} to construct the monad description as the image, under the derived equivalence of module categories between the Koszul dual algebras, of a combinatorial description of the corresponding heart of the category of modules over the $\Ext$ algebra. In the Koszul case, this heart is given by the category of linear complexes of projective modules, as explained in \emph{loc. cit.}, and in the general $\Ainf$ case we generalize this description using the twisted objects construction proposed by Kontsevich in \cite{KonHMS}, and established carefully in \cite{Lef}, allowing us to deduce a general formula for the desired non-linear monad description of perverse coherent extensions.

We summarize the above discussion by stating the first main theorem of the paper: Let $Y\to X$ be a toric Calabi-Yau threefold resolution and $M\in \DD^b\Coh(Y)^T$ an object in the coherent derived category, equivariant with respect to the structure torus $T$, and satisfying some hypotheses as outlined above. Then given a choice of a framing structure $\f$, there exists an algebraic stack $\mf M^\f(Y,M)$ parameterizing $\f$-framed perverse coherent extensions of $M$, and we have:

\begin{manualtheorem}{A}[\ref{stackpotthm}]\label{Athm} 
	There is a canonical framed quiver with potential $(Q_M^\f,W_M^\f)$ and an equivalence of algebraic stacks
	\begin{equation*}
		\mf M(Q_M^\f,W_M^\f) \xrightarrow{\cong} \mf M^{\f}(Y,M) \ ,
	\end{equation*}
	where the induced equivalence of groupoids of $\bb K$ points is defined on objects by
	\begin{equation*}\hspace*{-1cm}
		(V_i, B_{ij})   \mapsto \left(	\tilde H:=  \bigoplus_{i\in I_M}K(I_i)\otimes  V_i \ ,\ d_B:= \sum_{k\in \bb Z,\  i,i_2,...,i_{k-1},j \in I_M}  K (\rho_k^{\Sigma\oplus \Sigma_\infty}(\cdot, b_{i,i_2},...,b_{i_{k-1}, j})^N)\otimes  ( B_{i,i_2}  ...  B_{i_{k-1}, j} )  \right)  \ .
	\end{equation*}
\end{manualtheorem}

This general formula determines the monad descriptions of Equations \ref{ADHMmonadeqn} and \ref{NYmonadeqn}, for example. The pair $(V_i,B_{ij})$ denotes a representation of the framed quiver with potential $(Q_M^\f,W_M^\f)$, determined by vector spaces $V_i$ and linear maps $B_{ij}:V_i\to V_j$, the objects $K(I_i)\xrightarrow{\cong} F_i$ are canonical Koszul-type projective resolutions of the compactly supported simple objects $F_i\in \Perv_\cs(Y)$, and $d_B$ denotes the deformation of the differential determined by the quiver representation in terms of certain $\Ainf$ module structure maps $(\rho_k^{\Sigma\oplus \Sigma_\infty})_{k\in\bb Z}$ for the $\Ext$ algebras $\Sigma=\Hom(F,F)$, and the representatives of extension classes $b_{i,j}\in \Hom^1(F_i,F_j)$ such as those of Equations \ref{beqn1}, \ref{beqn2} and \ref{beqn3}.

\subsection{Motivation from geometric representation theory}\label{rtintrosec} The primary motivation for the results of this paper, and in particular for the detailed study of the moduli spaces of coherent sheaves on threefolds described in the preceding section, is to generalize a conjecture from the seminal paper \cite{AGT} of Alday-Gaiotto-Tachikawa (AGT) and its proof by Schiffmann-Vasserot in \cite{SV}. This generalization predicts relationships between the enumerative geometry of coherent sheaves on surfaces and threefolds, and the representation theory of certain vertex algebras and affine quantum groups, mediated by familiar mechanisms from geometric representation theory.

To begin, we explain the geometric origin of the vector spaces on which we will define these representations. Recall that the original motivation for the definition of perverse coherent systems in \cite{NN} was to provide a geometric explanation for the computations in \cite{Sz1} of the DT invariants, PT invariants, and generalizations thereof, for the threefold $Y=|\mc O_{\bb P^1}(-1)\oplus \mc O_{\bb P^1}(-1)|$, in terms of the stack of representations of a quiver with potential. In fact, the computation was of the cohomological analogues of these invariants, as we now explain:

For each choice of stability condition $\zeta$, \emph{loc. cit.} compute the homology of the space of $\zeta$-stable representations of the framed quiver with potential $(Q^{\f}_Y,W^{\f}_Y)$ determined by the object $\mc O_Y[1]$ with its unique framing structure $\f$, or equivalently the space of $\zeta$-stable, trivially framed, perverse coherent extensions of $\mc O_Y[1]$, with coefficients in the sheaf of vanishing cycles $\varphi_{W^{\f}_Y}$ for $W^{\f}_Y$,
\[ \bb V^\zeta_Y = H_\bullet( \mf M^{\f,\zeta}(Y,\mc O_{Y}[1]),\varphi_{W^{\f}_Y}) = \bigoplus_{\dd \in \bb N^{Q_Y}} H_\bullet( \mf M_\dd^{\f,\zeta}(Y,\mc O_{Y}[1]),\varphi_{W^{\f}_Y})  \ \]
where $\mf M_\dd^{\f,\zeta}(Y,\mc O_{Y}[1])$ denotes the connected component corresponding to representations of dimension $\dd=(d_i)_{i\in I} \in \bb N^{Q_Y}$. Then, it was explained in \emph{loc. cit.} that the corresponding generating functions of the Euler characteristics of the graded components
\[ \mc Z^\zeta_Y(\qq) =  \sum_{\dd \in \bb N^{Q_Y}} \qq^\dd \chi(H_\bullet( \mf M_\dd^{\f,\zeta}(Y,\mc O_{Y}[1]),\varphi_{W^{\f}_Y})) \  \in \Pq  \ , \]
for appropriate stability conditions $\zeta_\DT$ and $\zeta_\PT$, compute the DT series and PT series of $Y$, 
\[ \mc Z_Y^{\zeta_\DT}(\qq) = \mc Z_Y^\DT(\tilde \qq) \quad\quad \text{and}\quad \quad   \mc Z_Y^{\zeta_\PT}(\qq) = \mc Z_Y^\PT(\tilde \qq)  \ ,\]
for some appropriate identifications of parameters. For example, in the case $Y=\C^3$ the DT series is given by the MacMahon function
\[ \mc Z_{\C^3}^{\DT}(q) = \prod_{k=1}^\infty \frac{1}{(1-q^k)^k}=M(q) \quad\quad \text{for}\quad\quad q=-q_0  \ ,\]
as conjectured in \cite{MNOP}; this computation follows from a fixed point counting argument by general results of \cite{BehFan}, and was successively generalized in \cite{Sz1}, \cite{Young1}, \cite{NN}, and \cite{Nag}.

For any object $M\in \DD^b\Coh(Y)^T$ satisfying appropriate hypotheses and a fixed choice of framing structure $\f$, we can consider analogous spaces of $\zeta$-stable representations of the associated framed quiver with potential, or equivalently $\zeta$-stable, $\f$-framed perverse coherent extensions of $M$, and define the corresponding cohomological invariants
\begin{equation}\label{moduleintroeqn}
	 \bb V^{\f,\zeta}(M) = \bigoplus_{\dd \in \bb N^{Q_Y}} \bb V^{\f,\zeta}_\dd(M) \quad\quad  \bb V^{\f,\zeta}_\dd(M)= H_\bullet(\mf M^{\f,\zeta}_\dd(Y,M), \varphi_{W_M^\f})  \ ,
\end{equation}
and their associated generating functions. We will be especially interested in the case $M=\mc O_S[1]$ for $S\subset Y$ a divisor, as we have discussed in the preceding section.

Let $S\subset Y$ be an effective Cartier divisor, $S^\red$ the underlying reduced scheme, and let $S_d$ for $d\in \mf D_S$ denote the irreducible components of $S^\red$, so that $\mc O_{S^\red}^\sss$ the semisimplification of $\mc O_{S^\red}$ is
\[ \mc O_{S^\red}^\sss = \bigoplus_{d\in \mf D_S} \mc O_{S_d}   \quad\quad  \text{and we have} \quad\quad  [S] = \sum_{d\in \mf D_S} r_d [S_d]  \]
for some tuple $\rr_S=(r_d)_{d\in\mf D_S}$ of positive integers $r_d\in \bb N$. Consider the space of perverse coherent extensions of $\mc O_{S^\red}^\sss[1]$, and note that $\mc O_S[1]$ is by definition an iterated extension of the objects $\mc O_{S_d}[1]$ which each occur with multiplicity $r_d$, so that the object $\mc O_S[1]$ itself determines a framing structure $\f_S$ of rank $\rr_S$. Thus, for each stability condition $\zeta$ on the corresponding framed quiver with potential, we define the associated cohomological invariant
\[ \bb V^\zeta_S = \bigoplus_{\dd\in \bb N^{V_{Q_Y}}} \V^\zeta_{S,\dd} \quad\quad \V_{S,\dd}^\zeta= H_\bullet(\mf M_\dd^{\f_S,\zeta}(Y,\mc O_{S^\red}^\sss[1] ),\varphi_{W^{\f_S}}) \ .   \]
For an appropriate choice of stability condition $\zeta=\zeta_\VW$, we let $\bb V_S=\bb V_S^{\zeta_\VW}$ and introduce the corresponding generating function
\[ \mc Z^\VW_S(\q) = \sum_{\dd \in \bb N^{Q_Y}} \qq^\dd \chi(H_\bullet(\mf M_\dd^{\f_S,\zeta_\VW}(Y,\mc O_{S^\red}^\sss[1] ),\varphi_{W^{\f_S}})) \  \in \Pq  \ , \]
which we interpret as a local analogue of the Vafa-Witten partition function proposed in \cite{VW}. Indeed, in cases where $S^\red$ is given by an irreducible smooth surface, this will be a generating function for the Euler characteristics of moduli spaces of instantons on $S$, as we explain below. We also consider the analogous vector spaces
\[ \bb V_S^0 =\bigoplus_{\dd\in \bb N^{V_{Q_Y}}} \V^0_{S,\dd} \quad\quad \V_{S,\dd}^0= H_\bullet(\mf M_\dd^{0_S,\zeta_\VW}(Y,\mc O_{S^\red}^\sss[1] ),\varphi_{W^{0_S}}) \ ,  \]
determined by the trivial framing structure $0_S$ of rank $\rr_S$.

We now consider the simplest example of this construction: let $Y=\C^3$ and $S=\C^2$, so that the corresponding quiver with potential is given by the three dimensional analogue of the ADHM quiver, as in Equation \ref{3dadhmeqn}. One crucial aspect of the analogy between this quiver with potential and the usual ADHM quiver is that they are related by \emph{dimensional reduction}, in the sense of Appendix A of \cite{Dav1} for example. It is proved in \emph{loc. cit.} that there is a natural isomorphism
\begin{equation}\label{dimredeqn}
	 H_\bullet( \mf M_\dd(Q,W),\varphi_W ) \xrightarrow{\cong} H_\bullet(\mf M_\dd(\tilde Q,R) )  \ ,
\end{equation}
between the homology of the stack of representations of the quiver with potential $(Q,W)$, with coefficients in the sheaf of vanishing cycles determined by $W$, and the ordinary Borel-Moore homology of the corresponding dimensionally reduced quiver with relations $(\tilde Q,R)$. In the case at hand, this implies we have isomorphisms
\[  \bb V_{\C^2,n} = H_\bullet( \mf M_n^{\f,\zeta}(\C^3,\mc O_{\C^2}[1]), \varphi_{W^\f}) \xrightarrow{\cong } H_\bullet(\Hilb_n(\C^2)) \ ,  \]
so that we can identify
\[ \bb V_{\C^2} = \bigoplus_{n\in \bb N} H_\bullet(\Hilb_n(\C^2)) \quad\quad \text{and}\quad\quad \mc Z^\VW_{\C^2}(q) = \prod_{k=1}^\infty \frac{1}{1-q^k} = q^{\frac{1}{24}} \eta(q)^{-1} \ ,\]
where the latter can again be computed directly by a fixed point counting argument.

In fact, the preceding well known formula for the generating function of the Euler characteristics of Hilbert schemes of points on surfaces admits a more structured algebraic refinement, which was discovered independently by Grojnowski \cite{Groj} and Nakajima \cite{Nak}. It was observed in \emph{loc. cit.} that there exist natural correspondences
\begin{equation}\label{Nakcorreqn}
	 \vcenter{\xymatrix{ & \Hilb_{n,n+k} \ar[dr]^p \ar[dl]_q & \\ \Hilb_n && \Hilb_{n+k} }} \quad\quad \text{inducing}\quad\quad \alpha_{-k}^n=p_*\circ q^*: H_\bullet(\Hilb_n(\C^2)) \to H_\bullet(\Hilb_{n+k}(\C^2)) \ ,
\end{equation}
for $k\in \bb Z$ and $n\in \bb N$ in the compatible range. Taking a sum over $n\in \bb N$, we obtain operators
\begin{equation}\label{Nakopeqn}
	 \alpha_k = \sum_{n\in \bb N} \alpha_k^n : \bb V_{\C^2} \to \bb V_{\C^2}  \ ,
\end{equation}
for each $k\in \bb Z$ which satisfy the relations implicit in the following theorem:
\begin{theo}\label{Nakthm} \cite{Groj,Nak} There exists a natural representation
	\[ \mc U(\pi) \to \End( \bb V_{\C^2}) \quad\quad \text{defined by} \quad\quad b_k \mapsto \alpha_k   \ ,\]
of the algebra of modes $\mc U(\pi)$ of the Heisenberg vertex algebra $\pi$ on the vector space $\V_{\C^2}$, such that $\V_{\C^2}$ is identified with the vacuum module $\pi_0$ for the Heisenberg vertex algebra.
\end{theo}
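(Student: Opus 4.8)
The plan is to follow the now-classical strategy of Nakajima and Grojnowski, which constructs the Heisenberg action on $\bigoplus_n H_\bullet(\Hilb_n(\C^2))$ directly by geometric correspondences, and then to identify the resulting module with the vacuum Fock module $\pi_0$. Since the identification $\V_{\C^2}=\bigoplus_n H_\bullet(\Hilb_n(\C^2))$ has already been established above via dimensional reduction, the only thing to prove is the statement about the correspondences $\Hilb_{n,n+k}$ and the operators $\alpha_k$ they induce. First I would recall the definition of the nested Hilbert schemes $\Hilb_{n,n+k}=\{(Z\subset Z')\ :\ \ell(Z)=n,\ \ell(Z')=n+k,\ \supp(Z'/Z)=\{\pt\}\}$ together with the projections $q,p$ recording $Z$, $Z'$ (and implicitly the support point), giving the operators $\alpha_{-k}^n=p_*q^*$ and their adjoints $\alpha_k^n$ for $k>0$; summing over $n$ yields the operators $\alpha_k\in\End(\V_{\C^2})$ of Equation \ref{Nakopeqn}.

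The key step is then to verify the Heisenberg commutation relations
\[ [\alpha_k,\alpha_l] = k\,\delta_{k+l,0}\cdot \id \ , \]
interpreted as an identity of correspondences on $\Hilb \times \Hilb$. The standard argument is to compute the relevant fibre products of the nested Hilbert schemes, analyse their components and excess intersection contributions, and reduce the computation of the diagonal term to a local computation near a point of $\C^2$; the sign and the precise normalization constant $k$ come from an Euler class / self-intersection computation on the punctual Hilbert scheme. I would cite \cite{Nak,Groj} for this, or reproduce the short version of the argument using the bidegree bookkeeping that makes the off-diagonal terms cancel and isolates the diagonal contribution. Once the relations hold, one gets a homomorphism from the Heisenberg algebra $\mc U(\pi)$, i.e. the universal enveloping algebra of the Lie algebra with basis $(b_k)_{k\in\Z}$ and central element, into $\End(\V_{\C^2})$, sending $b_k\mapsto\alpha_k$.

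To upgrade this to the statement that $\V_{\C^2}\cong\pi_0$ as a module, I would exhibit a vacuum vector: the class $|0\rangle=1\in H_\bullet(\Hilb_0(\C^2))=H_\bullet(\pt)$ is annihilated by $\alpha_k$ for $k>0$ (since $\Hilb_{0,k}=\emptyset$ is forced by support considerations, or more precisely $q^*$ of a point class lands in a space where $p_*$ vanishes for degree reasons), and then show that the descending operators $\alpha_{-k}$, $k>0$, generate all of $\V_{\C^2}$ from $|0\rangle$. Generation is the statement that the classes $\alpha_{-k_1}\cdots\alpha_{-k_r}|0\rangle$ with $\sum k_i=n$ span $H_\bullet(\Hilb_n(\C^2))$; this follows by an induction on $n$ using the known cell/Betti number computation of $\Hilb_n(\C^2)$ (the ranks of the graded pieces match the number of partitions of $n$, which is exactly $\dim(\pi_0)_n$), together with a triangularity argument showing the monomials in the $\alpha_{-k}$ are linearly independent. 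Comparing graded dimensions on both sides then forces the map $\pi_0\to\V_{\C^2}$, $b_{-k_1}\cdots b_{-k_r}|0\rangle\mapsto \alpha_{-k_1}\cdots\alpha_{-k_r}|0\rangle$, to be an isomorphism.

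The main obstacle is the commutator computation $[\alpha_k,\alpha_l]=k\delta_{k+l,0}$: verifying that the composite correspondences decompose as claimed, that all ``excess'' components contribute trivially, and — crucially — pinning down the normalization constant and sign requires a careful local analysis of the geometry of the nested Hilbert schemes and the punctual Hilbert scheme, including an excess-intersection/Euler-class contribution. Everything else (the vanishing on the vacuum, the generation statement, the dimension count against partitions) is comparatively formal given the structure theory of $\Hilb_n(\C^2)$. Since this is exactly the content of \cite{Nak,Groj}, in the paper I would give the construction of the correspondences and the statement of the relations, and refer to \emph{loc. cit.} for the detailed verification, noting that the identification $\V_{\C^2}\cong\pi_0$ then follows by matching the vacuum vector and graded dimensions.
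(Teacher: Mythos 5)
Your proposal is the standard Nakajima--Grojnowski argument, and it matches the paper's treatment: the paper does not prove this statement itself but quotes it from \cite{Groj,Nak}, whose proof is exactly the correspondence construction, the intersection-theoretic verification of the Heisenberg relations, and the vacuum-plus-partition-count identification you outline. (One cosmetic point: annihilation of the vacuum by the lowering operators $\alpha_k$, $k>0$, is immediate since $\Hilb_{n}(\C^2)$ is empty for $n<0$, rather than via the emptiness of $\Hilb_{0,k}$.)
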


\noindent In particular, note that as a corollary of the latter part of the statement, we have an identification
\begin{equation}\label{VWC2introeqn}
P_q(\pi) = \mc Z^\VW_{\C^2}(q) 
\end{equation}
of the Poincare polynomial of the vacuum module of the vertex algebra $\pi$ and the rank 1 local Vafa-Witten invariant of $\C^2$, which immediately implies the above computation of the latter.

The preceding theorem and the resulting identification of Equation \ref{VWC2introeqn} are the prototype of our desired results relating the enumerative geometry of sheaves on algebraic surfaces to the representation theory of vertex algebras, and the goal of this paper is to extend these to more general divisors $S$ in Calabi-Yau threefolds $Y$ corresponding to more interesting vertex algebras, and provide geometric constructions of certain classes of modules for these algebras. 

It will be important in the generalization to replace all of the above constructions by their equivariant analogues with respect to the action of the subtorus $\tilde T$ of the toric structure torus $T$ of the Calabi-Yau threefold $Y$ which preserves the Calabi Yau structure, as well as a subtorus $T_\f$ of the group of symmetries of the framing structure $\f$, the product of which we will denote by $A=\tilde T\times T_\f$. In particular, we consider the vector spaces $\bb V$ over the field of fractions $F$ of $H_A^\bullet(\pt)$, and similarly with all of the linear algebraic constructions above. The analogous statement of Theorem \ref{Nakthm} above gives an action of the family of vertex algebras over $F$ given by the Heisenberg algebra at level
\begin{equation}\label{heisleveleqn}
	 \kappa=-\frac{1}{\hbar_i\hbar_j} \quad\quad\text{where}\quad\quad H_{\tilde T}^\bullet(\pt)=\bb K[\hbar_1,\hbar_2,\hbar_3]/(\hbar_1+\hbar_2+\hbar_3) \ , 
\end{equation}
and $i$ and $j$ are determined by the choice of toric divisor $\C^2$ in $\C^3$.

In a sense, the first such generalization of this construction actually predated the above Theorem, and was also discovered by Nakajima in the seminal paper \cite{Nak1}: among many other influential results, this paper constructs an action of the universal enveloping algebra of a Kac-Moody Lie algebra $\mc U(\gh)$, which is indeed the algebra of modes of the closely related vertex algebra $V_\kappa(\g)$, on the homology of the moduli space of instantons on the resolved du Val singularity of the corresponding ADE type; for concreteness we will restrict our attention to type $A$ in the following. These results were another key step in the family of generalizations we propose in the present work, and in a sense we can understand the preceding example as simply the $\gl_1$ analogue of this more general construction, but this example is also slightly degenerate in the following sense: the action of the torus $\tilde T$ on $\widetilde{A_{m-1}}$ viewed as a toric divisor in $\widetilde{A_{m-1}}\times \bb A^1$ factors through a one dimensional quotient, so there is not a parameterization of the generic level as in Equation \ref{heisleveleqn}. In fact, the level is determined by rank of the instantons, which otherwise does not modify the resulting algebra.

Another construction of an action of $\mc U(V_\kappa(\g))$ was given by Braverman in \cite{Brav1}, on a moduli space of sheaves on $\bb P^1\times \bb P^1$ with structure group corresponding to the ADE type, a reduction of structure to a Borel along $\{0\}\times \bb P^1$, and trivialization along $(\{\infty\}\times \bb P^1) \cup (\bb P^1\times \{\infty\})$. Although the level of the action was not explicitly computed in \emph{loc. cit.}, it was implicitly determined by the other results therein, and given by
\begin{equation}\label{agtleveleqn}
 \kappa = -h^\vee -\frac{\hbar_2}{\hbar_1}    \ . 
\end{equation}
This result would later be reinterpreted as a special case of a generalization of the AGT conjecture to subprincipal $W$ algebras, and we will give a geometric approach to the more general conjecture which can be understood as a reformulation and generalization of the results of \emph{loc. cit.}.

A fundamental breakthrough in the study of analogous vertex algebra actions on moduli spaces of sheaves of higher rank came from physics in the paper \cite{AGT} mentioned above. Among other observations which have not yet been as carefully codified into mathematics, the authors conjectured that the vertex algebra which acts analogously on the $\tilde T$-equivariant cohomology of the moduli space of instantons of rank $r$ on $\C^2_{xy}$ is given the principal affine $W$ algebra $W^\kappa_{f_{\textup{prin}}}(\gl_r)$, a quantum Hamiltonian reduction of the affine algebra $V_\kappa(\gl_r)$, at level $\kappa$ as in Equation \ref{agtleveleqn}, which we will refer to as the AGT conjecture.

The AGT conjecture as stated in type A was proved independently by Schiffmann-Vasserot \cite{SV} and Maulik-Okounkov \cite{MO} using quite different constructions. This paper follows the approach of \cite{SV}, but as we will later speculate it is possible some ingredients of the approach of \cite{MO} will be useful in completing the proof of the general conjecture. Another distinct proof was given for general ADE type by Braverman-Finkelberg-Nakajima in \cite{BFN5}. We now state the result in terms of the notions we have introduced above:

\begin{theo}\label{SVAGTthm}\cite{SV,MO,BFN5} There exists a natural representation
	\[ \mc U(W^\kappa_{f_{\textup{prin}} }(\gl_r)) \to \End(\V_{r[\C^2]}^0)  \ ,\]
	such that $\V_{r[\C^2]}^0$ is identified with the universal Verma module $\bb M_r$ for $W^\kappa_{f_{\textup{prin}} }(\gl_r)$.
\end{theo}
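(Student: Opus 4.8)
The plan is to reduce the statement to the classical setting of framed torsion-free sheaves on $\bb P^2$ and then to recall, in our language, the construction of \cite{SV} (with \cite{MO} and \cite{BFN5} providing alternative routes via stable envelopes and Coulomb branch realizations respectively). Applying the dimensional reduction isomorphism of Equation~\ref{dimredeqn} to the three-dimensional ADHM quiver with potential of Equation~\ref{3dadhmeqn}, in rank $r$ with the trivial framing structure $0_S$, together with Theorem~\ref{Athm}, identifies $\V_{r[\C^2]}^0$ with $\bigoplus_{n\in\bb N} H^A_\bullet(\CM(r,n))$, the $A$-equivariant Borel--Moore homology of the Nakajima quiver varieties $\CM(r,n)$ of $\zeta_\VW$-stable ADHM data, equivalently of framed rank $r$ torsion-free sheaves on $\bb P^2$ with $c_2=n$; under this identification the $\tilde T$-equivariant parameters are the $\hbar_i$ of Equation~\ref{heisleveleqn} and the $T_\f$-equivariant parameters are the Chern roots $a_1,\dots,a_r$ of the framing bundle.

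Next I would construct the action geometrically. The cohomological Hall algebra action of Theorem~\ref{Bthm} for $Y=\C^3$, passed through dimensional reduction, produces an action on $\bigoplus_{r,n} H^A_\bullet(\CM(r,n))$ of the positive half of (the Drinfeld double of) the shuffle algebra of $\C^3$ --- equivalently the affine Yangian of $\widehat{\gl}_1$, or the mode algebra of $W_{1+\infty}$ --- realized by Nakajima--Baranovsky-type Hecke correspondences adding points along $\C^2$. The decisive point, following \emph{loc. cit.}, is that on the fixed-rank summand this action factors through $\mc U(W^\kappa_{f_{\textup{prin}}}(\gl_r))$: one compares the Miura, or free-field, realization $W^\kappa_{f_{\textup{prin}}}(\gl_r)\hookrightarrow \pi^{\otimes r}$ as a subalgebra of a product of $r$ Heisenberg vertex algebras with the iterated coproduct on the shuffle algebra dictated by the $T_\f$-fixed locus $\CM(r,\vec n)^{T_\f}=\prod_{a=1}^r\Hilb_{n_a}(\C^2)$ and the rank one case of Theorem~\ref{Nakthm}. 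The level is then pinned down to be that of Equation~\ref{agtleveleqn} by computing a single structure constant --- for instance the commutator of two quadratic modes, or the eigenvalue of the central element on the vacuum --- via equivariant localization at the unique fixed point of $\CM(r,0)$.

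Finally I would identify the module with the universal Verma module. Let $|\textup{vac}\rangle=[\CM(r,0)]$ be the fundamental class. Three checks remain: (i) $|\textup{vac}\rangle$ is a highest weight vector of the generic highest weight determined by $\hbar_i$ and $a_j$, the positive modes annihilating it for support and degree reasons on the point $\CM(r,0)$, which is again a fixed-point localization computation; (ii) the resulting map $\bb M_r\to \V_{r[\C^2]}^0$ from the universal Verma module, sending highest weight to $|\textup{vac}\rangle$, is surjective, since the shuffle-algebra action is cyclic on the vacuum --- every torus-fixed class, hence every class, is obtained by iterated Hecke modification --- and this cyclicity passes to the fixed-rank $W$-action; (iii) both sides have graded character $\sum_n\dim H_\bullet(\CM(r,n))\,q^n=\prod_{k\ge1}(1-q^k)^{-r}$, the right-hand side being the character of $\bb M_r$ (a PBW basis of creation modes of the generating fields of $W^\kappa_{f_{\textup{prin}}}(\gl_r)$, augmented with the spin-one current, gives one factor per spin). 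As $\V_{r[\C^2]}^0$ is a quotient of $\bb M_r$ with the same graded dimension in every degree, the surjection is an isomorphism.

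The main obstacle is the middle paragraph: showing that the a priori much larger shuffle/affine-Yangian action factors \emph{precisely} through $\mc U(W^\kappa_{f_{\textup{prin}}}(\gl_r))$, at exactly the level of Equation~\ref{agtleveleqn}, and not through anything larger or smaller. This is the technical heart of \cite{SV}; it requires matching the explicit shuffle presentation against the Feigin--Frenkel/Miura presentation of the principal $W$-algebra, together with a careful analysis of the compatibility of the coproduct with fixed-point localization and of the resulting normal-ordering corrections. A secondary subtlety is ensuring the character comparison in step (iii) is legitimate, i.e. that $\bb M_r$ is irreducible at the universal level and weight so that its character is indeed $\prod_k(1-q^k)^{-r}$; this is guaranteed by working over the fraction field $F$, where the relevant Shapovalov determinants are nonzero.
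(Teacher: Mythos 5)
The paper does not actually prove this statement—it is imported verbatim from \cite{SV,MO,BFN5}, and the only "proof" the paper offers is the later sketch of the Schiffmann--Vasserot argument: the cohomological Hall algebra/affine Yangian action by Hecke correspondences on $\bigoplus_n H^A_\bullet(\CM(r,n))$, the factorization $\bb V^0_{r[\C^2]}\cong \V_{\C^2}^{\otimes r}$ with a compatible coproduct on $\mc Y(\glh_1)$, and the screening-operator/Feigin--Frenkel characterization of $W^\kappa_{f_{\textup{prin}}}(\gl_r)$ inside $\pi^{\otimes r}$. Your outline follows exactly this route (your Miura-versus-coproduct comparison is the same as the paper's screening-kernel step, and your Verma-character endgame is the standard conclusion), so it is essentially the same approach, with the factoring-through-$W$ step being precisely the part that both you and the paper defer to \cite{SV}.
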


In \cite{CCDS}, the authors extended this result to give an analogous geometric construction of the vacuum module as the cohomology of the moduli space of stable representations of the variant of the ADHM construction in Equation \ref{CCDSeqn} with $A_\f$ given by a principal nilpotent, following many of the calculations previously established in \cite{SV}. Note that this is precisely the framing structure determined by the divisor $r[\C^2]$, and thus in the notation we have introduced above, we have:

\begin{theo}\label{CCDSAGTtheo}\cite{CCDS} There exists a natural representation
	\[ \mc U(W^\kappa_{f_{\textup{prin}} }(\gl_r)) \to \End_F(\V_{r[\C^2]})  \ ,\]
	such that $\V_{r[\C^2]}$ is identified with the vacuum module for $W^\kappa_{f_{\textup{prin}} }(\gl_r)$.
\end{theo}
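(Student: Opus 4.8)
\begin{Proof}[Proof proposal]
The plan is to reduce the statement, via Theorem \ref{Athm} and dimensional reduction, to a variant of the Gieseker moduli space to which the machinery of \cite{SV} can be transported, and then to pin down the resulting module as the vacuum module by a character computation together with a cyclicity argument. To begin, one unwinds the definitions: the divisor $S=r[\C^2]$ has reduced scheme $S^\red=\C^2$, and the object $\mc O_{r\C^2}[1]$ is an iterated self-extension of $\mc O_{\C^2}[1]$ whose underlying framing structure $\f_S$ is a regular nilpotent Higgs field on $\C^2$, so by Theorem \ref{Athm} the stack $\mf M^{\f_S}(\C^3,\mc O_{\C^2}[1])$ is the stack of representations of the quiver with potential of Equation \ref{CCDSeqn} with $A_\f$ the principal nilpotent.

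Next I would apply the dimensional reduction isomorphism of Equation \ref{dimredeqn}, following \cite{Dav1}, to identify $\V_{r[\C^2]}=H_\bullet(\mf M^{\f_S,\zeta_\VW}(\C^3,\mc O_{\C^2}[1]),\varphi_{W^{\f_S}})$ with the ordinary equivariant Borel--Moore homology, localized over the field of fractions $F$ of $H_A^\bullet(\pt)$, of the moduli space of $\zeta_\VW$-stable representations of the dimensionally reduced quiver with relations: the variant of the Gieseker moduli space of framed torsion-free sheaves on $\bb P^2$ in which the trivialization at infinity is replaced by a fixed regular nilpotent Higgs field, which is the space considered in \cite{CCDS}. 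I would record that over $F$ this space has finite-dimensional graded pieces, with $T$-fixed points indexed by the expected combinatorial data, so that its bigraded character is computable by equivariant localization.

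I would then construct the action following \cite{SV}: one builds Hecke-type correspondences between these moduli spaces for dimension vectors differing by one, producing raising and lowering operators together with operators of multiplication by tautological classes, and checks that they assemble into an action of the spherical degenerate double affine Hecke algebra $\mathrm{SH}^{\mathrm c}$, i.e. the affine Yangian of $\gl_1$. The commutation relations and structure constants are the localized-fixed-point computations of \cite{SV}, carried out now on the nilpotently framed moduli space; the fixed-point combinatorics differs from \cite{SV} only in that the $r$ framing weights are replaced by a single weight of multiplicity $r$, and this is precisely the specialization that makes the $\mathrm{SH}^{\mathrm c}$-action factor through the surjection $\mathrm{SH}^{\mathrm c}\twoheadrightarrow \mc U(W^\kappa_{f_{\textup{prin}}}(\gl_r))$. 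I would read off the level $\kappa$ from the action of the Cartan (degree-two) generators on the fixed-point basis and check that it matches Equation \ref{agtleveleqn}, consistently with the rank-$1$ normalization of Equation \ref{heisleveleqn}.

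Finally I would identify $\V_{r[\C^2]}$ with the vacuum module. Cyclicity --- that the fundamental class of $\mf M_0$ generates the whole space under the raising operators --- I would prove by induction on the dimension vector, using that the relevant Hecke correspondences surject onto the lower-dimensional factor and are nonempty in every relevant bidegree. To exclude further relations, I would compute the bigraded character of $\V_{r[\C^2]}$ by localization, obtain the expected MacMahon-type product formula, and compare it with the known character of the vacuum module of $W^\kappa_{f_{\textup{prin}}}(\gl_r)$; matching characters together with cyclicity forces the canonical surjection from the vacuum module onto $\V_{r[\C^2]}$ to be an isomorphism. A more structural variant is to use the $W$-equivariant map $\V_{r[\C^2]}\to \V^0_{r[\C^2]}$ induced by degenerating the trivial framing to the regular nilpotent one and invoke Theorem \ref{SVAGTthm}, which realizes the vacuum module as the specialization of the universal Verma module $\bb M_r$ at the arithmetic-progression weight determined by the nilpotent framing. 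The main obstacle I expect is exactly this last step --- pinning down $\V_{r[\C^2]}$ as \emph{precisely} the vacuum module rather than merely a module with the right character or a subquotient, which needs either a nondegeneracy statement for the geometric Euler pairing or a careful comparison with the universal Verma module of \cite{SV} under specialization; the level computation is the other delicate point, while the verification of the $W$-algebra relations, though technically heavy, is essentially mechanical given \cite{SV}.
\end{Proof}
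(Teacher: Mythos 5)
There is nothing in the paper to check your argument against: Theorem \ref{CCDSAGTtheo} is not proved in this paper at all, but imported verbatim (in the paper's notation) from \cite{CCDS}. The only ``proof'' the paper offers is the surrounding discussion in Section \ref{rtintrosec} and Example \ref{AGTeg}, which says that \cite{CCDS} construct the vacuum module as the (equivariant, localized) cohomology of the moduli of $\zeta$-stable representations of the quiver with potential of Equation \ref{CCDSeqn} with $A_\f$ a principal nilpotent, ``following many of the calculations previously established in \cite{SV}.'' Your outline --- identify the $\f_S$-framed moduli via Theorem \ref{Athm} with that nilpotently framed quiver, run the Schiffmann--Vasserot $\mathrm{SH}^{\mathrm c}$/affine-Yangian machinery on it, observe that the nilpotent framing specializes the framing equivariant parameters to an arithmetic progression so that the action factors through $\mc U(W^\kappa_{f_{\textup{prin}}}(\gl_r))$ at level \ref{agtleveleqn}, and then pin down the module by the nested-partition character together with cyclicity (or by specializing the universal Verma $\bb M_r$ of Theorem \ref{SVAGTthm}) --- is exactly the strategy the paper attributes to \cite{CCDS}, so as a reconstruction of the cited proof it is on target.

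One concrete caution about your second step: for the framing structure $\f_S$ the potential is $W^\f = B_1[B_2,B_3] + B_3IJ + IA_\f J$, and the extra term $IA_\f J$ involves neither $B_3$ nor $B_1$. Hence the potential is not linear in the variables you want to integrate out, and Davison's dimensional reduction (Equation \ref{dimredeqn}) does not identify $\V_{r[\C^2]}$ with the \emph{ordinary} Borel--Moore homology of a dimensionally reduced quiver with relations; what survives the reduction is critical cohomology with coefficients in the vanishing cycles of the residual function $\Tr(IA_\f J)$ on the reduced (ADHM-type) space. The paper itself only invokes the clean dimensional reduction for the trivial framing $0_S$ (Example \ref{VWAGTeg}, Equations \ref{vacC20reqn}--\ref{VWr0C2eqn}) and treats the $\f_S$-framed case directly via torus fixed points, where the nested-partition basis of Equation \ref{VWrC2eqn} appears. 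So you should either drop the claim that the target is a plain Gieseker-type space with ordinary homology, or reformulate it in the critical/vanishing-cycle setting; with that repair, the rest of your plan (the $\mathrm{SH}^{\mathrm c}$ action, the surjection onto $\mc U(W^\kappa_{f_{\textup{prin}}}(\gl_r))$, and the character-plus-cyclicity identification with the vacuum module) matches the cited argument, modulo the heavy computations that both you and the paper defer to \cite{SV} and \cite{CCDS}.
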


In particular, we obtain the identification
\[ P_q(W^\kappa_{f_{\textup{prin}} }(\gl_r))  = \prod_{j=1}^r \prod_{k=1}^\infty \frac{1}{1-q^{k+j}}=\mc Z^\VW_{r[\C^2]}(q) \ ,\]
where we have used the usual abuse of notation denoting the vacuum module by the underlying vertex algebra $W^\kappa_{f_{\textup{prin}} }(\gl_r)$. These results are also closely related to the cohomological variant of \cite{FFJMM} and similarly the results of \cite{BerFM} in their simpler specialization to case $n=0$. The formulation of Schiffmann-Vasserot is also analogously related to unpublished results of Feigin-Tsymbaliuk as described in \cite{Ts1}, and in turn previous results of Feigin and collaborators in the context of quantum toroidal algebras, as we will explain in more detail below.

The result was also generalized in \cite{RSYZ} to the case that $Y=\C^3$ and $S$ is given by an arbitrary toric divisor $S_{L,M,N}=L[\C^2_{yz}]+M[\C^2_{xy}]+N[\C^2_{xz}]$, again closely following the approach of \cite{SV}, giving an action of the Gaiotto-Rapcak vertex algebras $Y_{L,M,N}$ introduced in \cite{GaiR} on the moduli space of stable representations of the quiver with potential of Equation \ref{Nekeqn}, such that it is identified with the analogue of the Verma module for this algebra.

The proposal that there should be analogous vertex algebras corresponding to more general divisors $S$ and threefolds $Y$ was considered already in the original paper of Gaiotto-Rapcak \cite{GaiR}, and explored in some detail in \cite{PrR}, but the analogue of the AGT conjecture in this setting was not known in general, as the relevant descriptions of moduli spaces generalizing the spiked instantons construction of Nekrasov to threefolds $Y$ other than $\C^3$ had not been constructed previously.

There is also a closely related conjecture of Feigin-Gukov \cite{FeiG} that there should exist vertex operator algebras $\text{VOA}[M_4,\gl_r]$ associated to four manifolds $M_4$, analogously generalizing the AGT conjecture. This appears to coincide with the predictions of \cite{GaiR} discussed in the preceding paragraph in the case that the underlying reduced scheme $S^\red$ is irreducible and smooth, $M_4$ is the analytification of $S^\red$, and $r\in \bb N$ the multiplicity of $S^\red$ in $S$.

However, a mathematical definition of these vertex algebras was also not known in general, for a divisor $S$ nor four-manifold $M_4$, and relatively few examples were known for $r \geq 2$. In the companion paper \cite{Bu}, we give a general combinatorial construction of vertex algebras $\V(Y,S)$ as the kernel of screening operators acting on lattice vertex algebras determined by the data of the GKM graph of $Y$ and a Jordan-Holder filtration of $\mc O_S$ with subquotients structure sheaves $\mc O_{S_d}$ of the divisors $S_d$ occuring as irreducible, reduced components of $S$. This construction reproduces many interesting vertex algebras, conjecturally including $\mc W$ superalgebras $\mc W_{f_0,f_1}(\gl_{m|n})$ and genus zero class S chiral algebras $\V(\bb P^1,\gl_m;f_1,...,f_k)$ with $k\leq 2$ marked points, each in type A and for general nilpotents $f_i$, and appears to satisfy the predictions of \cite{GaiR}, \cite{PrR}, and \cite{FeiG}. In particular, we formulate the following analogue of the AGT conjecture in this setting:

\begin{conj}\label{VWintroconj} There exists a natural representation
\begin{equation}\label{repstrmapeqn}
		\rho: \mc U(\V(Y,S)) \to \End_F(\V_S )
\end{equation}
	of the algebra of modes $\mc U(\V(Y,S))$ of the vertex algebra $\V(Y,S)$ on the vector space $\V_S$, such that $\V_S$ is identified with the vacuum module $\bb V_{(Y,S)}$ for the vertex algebra $\V(Y,S)$.
\end{conj}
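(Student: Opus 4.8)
Conjecture \ref{VWintroconj} has the same form as Theorems \ref{SVAGTthm} and \ref{CCDSAGTtheo}, so the plan is to adapt the strategy of Schiffmann--Vasserot \cite{SV}, taking as input the cohomological Hall algebra action on $\V_S$ produced following Soibelman (Theorem \ref{Bthm}), the monad description of Theorem \ref{Athm}, and the combinatorial construction of $\V(Y,S)$ in \cite{Bu} as the kernel of screening operators on a lattice vertex algebra $V_L$. The construction of the representation $\rho$ then reduces to a geometric free-field realization, and the identification of $\V_S$ with the vacuum module to a cyclicity statement together with a matching of graded characters.

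For the free-field realization I would first analyze the action on $\V_S$ of the ``Cartan'' part of the cohomological Hall algebra of $Y$: by analogy with Theorem \ref{Nakthm}, equivariant localization with respect to $A=\tilde T\times T_\f$ together with the dimensional reduction of Equation \ref{dimredeqn} should identify this with an action of the mode algebra of a Heisenberg, or more generally lattice, vertex algebra $V_L$ whose lattice $L$ and bilinear form are read off from the GKM graph of $Y$ and the Euler form of $Q_Y$ exactly as in the recipe of \cite{Bu}. The vertex algebra $\V(Y,S)$ is by construction the common kernel of a family of screening operators $\{Q_d\}_{d\in\mf D_S}$ on $V_L$, and the crux is to realize each $Q_d$ geometrically as an explicit operator on $\V_S$ built from the cohomological Hall algebra action --- conjecturally from the elementary Hecke modifications along the simple objects $F_i\in\Perv_\cs(Y)$, in the way that a ``full-flag'' refinement of the framing sits inside the product of $\gl_1$-pieces in \emph{loc.\ cit.} Granting this, one takes $\rho$ to be the free-field action of $\mc U(V_L)$ restricted to $\bigcap_d\ker Q_d=\mc U(\V(Y,S))$, so that the defining operator product expansions of $\V(Y,S)$ become identities in the shuffle algebra underlying the cohomological Hall algebra of $Y$: this is the analogue of the spherical degenerate DAHA / affine Yangian of $\gl_1$ computations of \cite{SV} and the quiver Yangian computations of \cite{RSYZ}.

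To identify $\V_S$ with the vacuum module $\bb V_{(Y,S)}$ I would use Theorem \ref{Athm}: every $\f_S$-framed perverse coherent extension of $\mc O_{S^\red}^\sss[1]$ is an iterated extension built from the zero object by the Hecke modifications above, so $\V_S$ is generated from the fundamental class $[\mf M_0^{\f_S,\zeta_\VW}]$ of the $\dd=0$ component under the cohomological Hall algebra, hence under $\rho(\mc U(\V(Y,S)))$ once the screening-invariant operators are shown to suffice. This gives a surjection $\bb V_{(Y,S)}\to\V_S$; to see it is an isomorphism I would compare Poincar\'e series, using that the character of $\V_S$ is computed by the $A$-fixed-point count underlying $\mc Z^\VW_S(\q)$ (a MacMahon-type product refining the computations of \cite{Sz1,Young1,NN,Nag}), while the character of the vacuum module $\bb V_{(Y,S)}$ follows from a Felder-type (BGG) resolution of it by Fock modules over $V_L$ of the kind produced in \cite{Bu}; these two expressions should agree term by term.

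The hard part will be the free-field realization for threefolds $Y$ beyond $\C^3$: there the cohomological Hall algebra is attached to a genuine quiver with potential rather than the $\gl_1$-case, so its shuffle presentation, the identification of its screening-invariant subalgebra with $\mc U(\V(Y,S))$, and the precise verification of operator product expansions (including Serre-type relations) are substantially harder, and $\V(Y,S)$ is itself only conjecturally a $\mc W$-superalgebra; moreover the singularities of $\mf M^{\f_S}(Y,\mc O_{S^\red}^\sss[1])$ force one to work with the vanishing-cycle sheaf $\varphi_{W^{\f_S}}$ throughout and to control the $\zeta_\VW$-stability conditions needed for the localization argument. A promising route is to bootstrap from the established cases --- $Y=\C^3$ (Theorems \ref{SVAGTthm}, \ref{CCDSAGTtheo} and \cite{RSYZ}) and $Y=\widetilde{A_{m-1}}\times\bb A^1$ (via \cite{KrNak} and \cite{Brav1}) --- by gluing along the edges of the GKM graph of $Y$, using a coproduct on the cohomological Hall algebras compatible with the gluing of the corresponding lattice vertex algebras; this would in particular yield the affine $\gl_{m|n}$ $\mc W$-algebra / quiver Yangian statements for $Y\to\{xy-z^mw^n\}$.
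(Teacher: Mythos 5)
You are attempting to prove Conjecture \ref{VWintroconj}, which the paper itself does not prove: the authors state explicitly that they were unable to establish it at the time of writing. What the paper does supply is (i) the candidate structure map of Equation \ref{repstrmapeqn}, obtained from Theorem \ref{Bthm} by restricting the $\mc H(Y)$-action to the spherical subalgebra $\mc{SH}(Y)$ and adjoining the opposite algebra acting by adjoints with respect to the equivariant pairing (Corollary \ref{Nakopcoro} and the surrounding discussion), and (ii) a direct verification only in the case that $S=S^\red$ is smooth and the sheaves have rank $1$, where $\V(Y,S)$ is a lattice vertex algebra whose mode relations are explicit. Your proposal follows essentially the same intended strategy --- CoHA action for the structure map, free-field realization via the screening presentation of \cite{Bu}, an SV-style verification of relations --- but it leaves unproven exactly the steps that constitute the open problem: the geometric realization of the screening operators on $\V_S$, the verification of the mode-algebra (OPE and Serre-type) relations inside the critical cohomological Hall algebra of a general quiver with potential, and the cyclicity of $\V_S$ over the image of $\rho$. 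Labelling these ``the hard part'' does not close them, so what you have is a plausible program, not a proof.

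Two more specific cautions. First, your proposed reduction to known cases by ``gluing along the edges of the GKM graph of $Y$'' via a coproduct on cohomological Hall algebras is not the reduction the paper envisages and is not set up in the paper or its references: the intended mechanism is a coproduct attached to a decomposition of the \emph{divisor}, $\Delta_{S_1,S_2}:\mc Y_S(Y)\to\mc Y_{S_1}(Y)\otimes\mc Y_{S_2}(Y)$, compatible with the factorization $\V^0_S\cong\V^0_{S_1}\otimes\V^0_{S_2}$ and with the screening-characterized embedding $\V(Y,S)\to\V(Y,S_1)\otimes\V(Y,S_2)$ of Equation \ref{voaembeqn}, reducing the relations to the rank-one constituents $S_d$; this is the analogue of the Feigin--Frenkel step in \cite{SV}, whereas a ``gluing over $Y$'' of critical CoHAs or of categories of perverse coherent extensions is nowhere constructed and is a far less controlled operation. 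Second, your character argument is circular as stated: the equality $\mc Z^\VW_S(q)=P_q(\V_{(Y,S)})$ of Equation \ref{VWintroeqn} appears in the paper only as a \emph{consequence} of the conjecture, so it cannot be used as independent input; and in any case the surjection $\bb V_{(Y,S)}\to\V_S$ you invoke presupposes that the mode relations already hold on $\V_S$, which is precisely the crux --- a match of Poincar\'e series can at best upgrade such a surjection to an isomorphism, it cannot substitute for the relations.
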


\noindent As a consequence of this conjecture, we expect equality between the local Vafa-Witten series $\mc Z^\VW_S(q)$ of the divisor $S$ defined above and the Poincare series $P_q$ of the vacuum module $\V_{(Y,S)}$, that is
\begin{equation}\label{VWintroeqn}
	 \mc Z^\VW_S(q) =  P_q(\V_{(Y,S)})  \ \in \bb Z \lP q \rP  \ .
\end{equation}

We have not been able to prove the preceding conjecture at the time of writing. However, we do provide a construction of the conjectural representation structure map of Equation \ref{repstrmapeqn} as a special case of Theorem \ref{Bthm} below, and we can verify directly that this satisfies the conjecture in the case that $S$ is a smooth subvariety of $Y$ and the corresponding moduli space parameterizes sheaves of rank $1$ on $S=S^\red$, as we discuss further below. It is not possible to directly verify that the resulting endomorphisms satisfy the relations of the algebra of modes $\mc U(\V(Y,S))$ in general, as even in the case of rank $r$ sheaves on $\C^2$ considered in \cite{SV} this is not possible. However, the construction of the vertex algebras $\V(Y,S)$ we give in \cite{Bu}, as the joint kernels of certain collections of screening operators, provides a family of compatible free field realizations of these vertex algebras which are in some sense defined to allow for a proof of Conjecture \ref{VWintroconj} generalizing that of \cite{SV}, as we will explain further below. We hope to complete the proof following this approach in future work.

We now describe the construction of the representation of $\mc U(W^\kappa_{f_{\textup{prin}} }(\gl_r))$ from \cite{SV}, and our generalization of the construction for arbitrary divisors $S$ in threefolds $Y$ of the class considered here. The construction of \emph{loc. cit.} is in essence the same as the original construction of Nakajima defined above, but it proceeds in stages and constructs actions of several auxiliary algebras which are necessary to provide the abstract proof of the desired relations implicit in Theorem \ref{SVAGTthm} above.

From the geometric perspective we have advocated for in Section \ref{agintrosec} above, the correspondences of Nakajima in Equation \ref{Nakcorreqn} above can be understood as adding $k$ copies of the structure sheaf $\mc O_\pt$ of a point in $\C^2$ to an iterated extension of $n$ copies of $\mc O_\pt$ together with a single copy of $\mc O_{\C^2}[1]$, or removing $-k$ copies of $\mc O_\pt$ if $k<0$. In fact, the relations between these correspondences can be understood more primitively in the absence of the auxiliary copy of $\mc O_{\C^2}[1]$.

As we have explained in Section \ref{agintrosec}, the stack of compactly supported coherent sheaves of length $n$ on $\C^2$ is equivalent to that of $n$ dimensional representations of the unframed quiver with relations underlying the ADHM quiver, given by a single vertex $Q_{\C^2}$ with two loops $B_1,B_2$ satisfying $[B_1,B_2]=0$, so that we have
\[\mf M_n(\C^2) = \left[ C_n / \Gl_n  \right]  \quad\quad \text{where}\quad\quad  C_n=\{ (B_1,B_2) \in \gl_n^{\times 2}\ | \ [B_1,B_2]=0 \} \ ,\]
and we denote the corresponding Borel-Moore homology groups by
\[\mc H(\C^2) =  \bigoplus_{n\in \bb N}\mc H_n(\C^2) = \bigoplus_{n\in \bb N} H_\bullet( \mf M_n(\C^2)) \ . \]

There are analogous correspondences between the spaces $\mf M_n(\C^2)$, defined by the stacks of short exact sequences $\mf M_{k,l}(\C^2)$ of representations of the unframed quiver of dimension $k+l$, with a subobject of dimension $k$ and quotient object of dimension $l$,
\[\vcenter{\xymatrix{ & \mf M_{k,l}(\C^2) \ar[dr]^p \ar[dl]_q  & \\ \mf M_{k}(\C^2) \times \mf M_{l}(\C^2)  && \mf M_{k+l}(\C^2) }}\]
which induce maps
\[ p_*\circ q^*: \mc H_k(\C^2) \otimes \mc H_l(\C^2) \to \mc H_{k+l}(\C^2)  \quad\quad \text{and thus}\quad\quad m: \mc H(\C^2)^{\otimes 2} \to \mc H(\C^2) \ ,  \]
which defines an associative algebra structure on $\mc H(\C^2)$; the resulting algebra $\mc H(\C^2)$ is called the \emph{preprojective cohomological Hall algebra} of $\C^2$.

Further, the stacks of short exact sequences of iterated extensions of $n+k$ copies of the structure sheaf of a point and a single copy of $\mc O_{\C^2}[1]$, with subobject given by an iterated extension of $k$ copies of $\mc O_\pt$, define analogous correspondences
\begin{equation}\label{cohacoreqn}
	\vcenter{\xymatrix{ & \Hilb_{k;n}(\C^2) \ar[dr]^p \ar[dl]_q  & \\ \mf M_{k}(\C^2) \times \Hilb_n(\C^2)  && \mf \Hilb_{n+k}(\C^2) }}
\end{equation}
and induce a representation of $\mc H(Y)$ on $\bb V_{\C^2}$, which we denote by
\[ \rho_{\C^2} : \mc H(\C^2) \to \End_F( \V_{\C^2})  \ . \]
From this perspective, the Nakajima operator $\alpha_{-1}:\V_{\C^2} \to \V_{\C^2}$ of Equation \ref{Nakopeqn} is given by the action of the fundamental class of $[\mf M_1(\C^2)] \in \mc H_1(\C^2)$ under this representation. More generally, the image of the spherical subalgebra $\mc{SH}(\C^2)\subset \mc H(\C^2)$ generated by $\mc H_1(\C^2)$ includes the Nakajima operators $\alpha_k$ for all $k> 0$, and we obtain an isomorphism
\[ \mc U(\pi)_+  \xrightarrow{\cong} \rho_{\C^2}( \mc{SH}(\C^2))  \ , \]
between the positive half $\mc U(\pi)_+$ of the algebra of modes $\mc U(\pi)$ of the Heisenberg vertex algebra $\pi$, and the image of the spherical subalgebra $\mc{SH}(\C^2)$ under the above representation $\rho_{\C^2}$.

In fact, the analogous results for higher rank sheaves on $\C^2$ were proved in \cite{SV}, giving a representation
\[ \rho_{r[\C^2]}^0 : \mc{H}(\C^2) \to \End_F(\V_{r[\C^2]}^0) \quad\quad \text{inducing}\quad\quad \mc U(W^\kappa_{f_{\textup{prin}} }(\gl_r))_+ \xrightarrow{\cong} \rho_{r[\C^2]}^0( \mc{SH}(\C^2))   \ .\]
Similarly, the action of the opposite algebra $\mc{SH}(\C^2)^\op$ by adjoints with respect to the equivariant integration pairing on $\bb V_{r[\C^2]}^0$ determines the action of the negative half of the algebra, and this provides the construction of the representation structure map in Theorem \ref{SVAGTthm} above.

This approach was also generalized in \cite{RSYZ} to the case of an arbitrary toric divisor $S_{L,M,N}$ in $Y=\C^3$, as mentioned above. The authors consider the analogous cohomological Hall algebra constructed from the homology of the stack of representations of the corresponding unframed quiver with potential, or equivalently the stack of compactly supported coherent sheaves on $\C^3$, with coefficients in the sheaf of vanishing cycles
\[ \mc H(\C^3) = \bigoplus_{n\in\bb N} \mc H_n(\C^3) \quad\quad \mc H_n(\C^3)= H_\bullet(\mf M_n(\C^3),\varphi_{W_\C^3}) \ , \] which was introduced in general by Kontsevich-Soibelman in \cite{KS1}, where it was proved that the analogous correspondences always define an associated algebra structure in this setting. Moreover, it was proved in \cite{RSYZ} that the analogous correspondences define a representation of $\mc H(\C^3)$ on the analogous homology groups $\V_{S_{L,M,N}}$ of the moduli spaces of stable representations of the framed quiver with potential of Equation \ref{Nekeqn} with coefficients in the sheaf of vanishing cycles, and that the analogous extension to the opposite algebra by adjoints determines a representation of the algebra of modes $\mc U(Y_{L,M,N})$ of the vertex algebra $Y_{L,M,N}$ introduced in \cite{GaiR}. In particular, in the case that $L=N=0$, this construction reduces exactly to the construction of \cite{SV} explained above: the dimensional reduction equivalence of Equation \ref{dimredeqn} extends to an identification of modules under an analogous isomorphism of associative algebras $\mc H(\C^3)\cong \mc H(\C^2)$, which follows from the results of \cite{YZ1} and the appendix to \cite{RenS}.

In general, for a Calabi-Yau threefold resolution $Y\to X$ satisfying the hypotheses outlined above, the construction of \cite{KS1} again provides an associative algebra structure on the homology of the stack of representations of the corresponding unframed quiver with potential, or equivalently the stack of compactly supported perverse coherent sheaves in the sense of Bridgeland \cite{Bdg1}, with coefficients in the sheaf of vanishing cycles determined by the potential. We denote the resulting algebra by $\mc H(Y)$ and call it the Kontsevich-Soibelman cohomological Hall algebra of $Y$.

For an object $M\in \DD^b\Coh(Y)^T$ satisfying the hypotheses required to define the stack $\mf M^\f(Y,M)$ of $\f$-framed perverse coherent extensions of $M$, and a compatible stability condition $\zeta$, recall from Equation \ref{moduleintroeqn} that we define the associated cohomological invariant by
\[ \V^{\f,\zeta}(M) = \bigoplus_{\dd \in \bb N^{Q_Y}} H_\bullet(\mf M^{\f,\zeta}_\dd(Y,M), \varphi_{W_M^\f}) \ . \]

We prove the following general result, following the approach of Soibelman from \cite{Soi}:

\begin{manualtheorem}{B}[\ref{cohareptheo}]\label{Bthm}
	There exists a natural representation
	\[ \rho_M: \mc H(Y) \to \End( \V^{\f,\zeta}(M) )  \]
	of the Kontsevich-Soibelman cohomological Hall algebra $\mc H(Y)$ on $\V^{\f,\zeta}(M) $.
\end{manualtheorem}

In particular, in the case that $M=\mc O_{S^\red}^\sss[1]$ with framing structure $\f$ of rank $\rr_S$ given by $\f_S$ or $0_S$, as defined above, we obtain:

\begin{corollary}\label{Nakopcoro} There exist natural representations
	\[ \rho_S: \mc H(Y) \to \End_F( \V_S) \quad\quad \text{and}\quad\quad \rho_S^0:\mc H(Y) \to \End_F(\V_S^0)  \ . \]
\end{corollary}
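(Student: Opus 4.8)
The plan is to obtain this statement as a direct specialization of Theorem \ref{Bthm}. The content is entirely in checking that the object $M=\mc O_{S^\red}^\sss[1]=\bigoplus_{d\in\mf D_S}\mc O_{S_d}[1]$, together with the framing structures $\f_S$ and $0_S$ of rank $\rr_S$, fall within the scope of that theorem; once this is done the representations $\rho_S$ and $\rho_S^0$ are simply the maps $\rho_M$ produced by Theorem \ref{Bthm}. So first I would verify the hypotheses on $M$. Since $S\subset Y$ is an effective Cartier divisor which by our standing assumptions is $T$-invariant, each reduced irreducible component $S_d$ is a $T$-invariant closed subscheme, hence $\mc O_{S_d}[1]\in\DD^b\Coh(Y)^T$ and so is their direct sum $M$. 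The remaining conditions — those ensuring that the underlying iterated extension of $M$ by the simple compactly supported perverse coherent sheaves $F_i\in\Perv_\cs(Y)$ is well-defined, and hence that the framed quiver with potential $(Q_M^\f,W_M^\f)$ and the monad presentation of Theorem \ref{Athm} exist — can be checked by a computation with the $\Ext$ algebra of the collection $\{F_i\}\cup\{\mc O_{S_d}[1]\}$, exactly as in the examples $Y=\widetilde{A_{m-1}}\times\bb A^1$ with $S=\widetilde{A_{m-1}}$ and $Y=|\mc O_{\bb P^1}(-1)\oplus\mc O_{\bb P^1}(-1)|$ with $S=|\mc O_{\bb P^1}(-1)|$ discussed in the introduction; one also checks that $\zeta_\VW$ is a compatible stability condition in the sense of Theorem \ref{Bthm}.

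With this in hand, I would take $\f=\f_S$, the framing structure of rank $\rr_S$ determined by the canonical presentation of $\mc O_S[1]$ as an iterated extension of $r_d$ copies of each $\mc O_{S_d}[1]$, and $\zeta=\zeta_\VW$, so that by definition $\V_S=\V_S^{\zeta_\VW}=\bigoplus_{\dd}H_\bullet(\mf M_\dd^{\f_S,\zeta_\VW}(Y,\mc O_{S^\red}^\sss[1]),\varphi_{W^{\f_S}})=\V^{\f_S,\zeta_\VW}(M)$. Theorem \ref{Bthm} then yields $\rho_S:=\rho_M\colon\mc H(Y)\to\End_F(\V_S)$, where throughout we pass to the $A$-equivariant setting with $A=\tilde T\times T_\f$ and work over the fraction field $F$ of $H_A^\bullet(\pt)$, as indicated after the statement of Theorem \ref{Nakthm}. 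Running the same argument verbatim with the trivial framing structure $0_S$ of rank $\rr_S$ in place of $\f_S$ produces $\rho_S^0:=\rho_M\colon\mc H(Y)\to\End_F(\V_S^0)$, and this completes the proof.

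The main obstacle here is not conceptual but bookkeeping: the hypothesis check in the first step requires an explicit enough handle on the $A_\infty$ $\Ext$ algebra $\Sigma\oplus\Sigma_\infty$ appearing in Theorem \ref{Athm} to confirm that the iterated extension decomposition of $\mc O_{S^\red}^\sss[1]$ into the simple objects $F_i$ is rigid in the required sense. In the cases of primary interest — $S^\red$ a smooth toric divisor, or a du Val resolution times $\bb A^1$ — this can be verified by a direct computation with the relevant quivers with potential, so I expect no genuine difficulty beyond organizing that computation.
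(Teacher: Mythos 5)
Your proposal is correct and follows the same route as the paper: Corollary \ref{Nakopcoro} is obtained there exactly as a direct specialization of Theorem \ref{Bthm} to $M=\mc O_{S^\red}^\sss[1]$ with the framing structures $\f_S$ and $0_S$ of rank $\rr_S$ and the stability condition $\zeta_\VW$, working $A$-equivariantly over $F$. Your additional remarks on verifying the hypotheses on $M$ (e.g.\ that $\mc O_{S^\red}^\sss[1]$ lies in the admissible heart of Example \ref{mainteg} and that the framing/compatibility conditions hold) are exactly the checks the paper leaves implicit, so no further comment is needed.
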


Moreover, this representation can be restricted to a spherical subalgebra $\mc{SH}(Y)\subset \mc H(Y)$ or extended to a representation of the opposite algebra $\mc{SH}^\op(Y)$ by adjoints, and our expectation is that this provides a construction of the desired representation of Conjecture \ref{VWintroconj}, via an identification
\[ \mc U(\V(Y,S))_+ \xrightarrow{\cong}  \rho_S(\mc{SH}(Y)) \ . \]
In particular, in the case $S=r[\C^2]$ the representation $\rho_{r[\C^2]}^0$ is that used in the proof of the AGT conjecture in \cite{SV}, and the variant $\rho_{r[\C^2]}$ is that used in \cite{CCDS}, as described above.

We emphasize one key feature of our construction which differs significantly from previous constructions of vertex algebras corresponding to general algebraic surfaces $S$ in the spirit of the AGT conjecture: the moduli spaces $\mf M^{\f,\zeta}(Y,\mc O_{S^\red}^\sss[1])$ typically parameterize spaces of sheaves on surfaces with components corresponding to a lattice of possible values of the first Chern class $c_1(\mc E)$ of the sheaves, in addition to subcomponents corresponding to the possible values of $c_2(\mc E)$ usually considered. Moreover, the correspondences used in the construction of the representation in Theorem \ref{Bthm} are interpreted geometrically as modifying the sheaves along curve classes within the divisor in a way that relates components of the moduli space with distict values of $c_1(\mc E)$.

Similarly, the construction of the vertex algebras $\V(Y,S)$ in the companion paper \cite{Bu}, in the case that $S$ is a smooth subvariety of $Y$ and the corresponding moduli space is of sheaves of rank $1$ on $S=S^\red$, gives a lattice vertex algebra $V_{H_2(S;\Z)}$ tensored with the Heisenberg algebra $\pi_{H_0(S)}$, where the components of the lattice extension correspond to components of the moduli space with distinct possible values of $c_1(\mc E)$, in contrast to \cite{Nak} which constructs the Heisenberg subalgebra $\pi_{H_\bullet(S)}$. In fact, a similar construction was outlined in the final chapter of \cite{NakLec} in the rank 1 case, and our construction of $\V(Y,S)$ can be understood as providing a higher rank generalization of the proposal of \emph{loc. cit.}. As we mentioned following Conjecture \ref{VWintroconj} above, we can verify the conjecture by direct computation in the rank 1 case, as the relations in the algebra of modes of the lattice vertex algebra are given by explicit formulas; a proof of this will appear in future work.

In higher rank, the construction of $\V(Y,S)$ is again somewhat manifestly as a lattice type vertex algebra extension, conjecturally corresponding to Hecke modifications of higher rank sheaves along curve classes, of the $W$-algebras corresponding to modifications of the sheaves at points of the surface. This is in contrast to the construction of \cite{Neg1}, which induces an action on a moduli space of sheaves with fixed first Chern class.

Finally, we discuss the role of the affine Yangian $\mc Y(\glh_1)$ of $\gl_1$ in the proof of the AGT conjecture, which occurs in both \cite{SV} and \cite{MO}, and its putative generalization in the present context. As we have explained, the action of the positive and negative halves of the algebra of modes $\mc U(W^\kappa_{f_{\textup{prin}} }(\gl_r))$ on the module $\V^0_{r[\C^2]}$ are given by the image of $\mc{SH}(\C^3)$ and $\mc{SH}(\C^3)^\op$ under the representation $\rho_{r[\C^2]}^0:\mc H(\C^3) \to \End_F(\bb V_{r[\C^2]})$ above, for each $r\in \bb N$. It is proved in \cite{SV} that these extend to the action of an algebra which was later identified with the affine Yangian $\mc Y(\glh_1)$ of $\gl_1$. This algebra admits a triangular decomposition
\[ \mc Y(\glh_1) =\mc Y(\glh_1)_- \otimes \mc Y(\glh_1)_0 \otimes \mc Y(\glh_1)_+ \quad\quad \text{with}\quad\quad \mc Y(\glh_1)_+ \cong \mc{SH}(\C^3) \quad\quad \mc Y(\glh_1)_-\cong \mc{SH}(\C^3)^\op  \ , \]
such that the representations of $\mc{SH}(Y)$ and $\mc{SH}(Y)^\op$ above extend to define a representation
\[ \rho_{r[\C^2]}: \mc Y(\glh_1) \to \End_F(\V_{r[\C^2]})  \]
for each $r\in \bb N$, inducing a surjection
\[ \mc Y(\glh_1) \to \mc U(W^\kappa_{f_{\textup{prin}} }(\gl_r)) \ ,\]
up to issues of completions which will we will continue to ignore in the remaining discussion below.

This additional observation played an important role in the proof of the AGT conjecture in \cite{SV}, following a crucial suggestion of Nakajima: The basic idea is that the Verma modules $\bb V^0_{r[\C^2]}$ under consideration satisfy the factorization property $\bb V^0_{r[\C^2]} \cong \V_{\C^2}^{\otimes r}$ and there exists a compatible coproduct $\Delta:\mc Y(\glh_1) \to \mc Y(\glh_1)^{\otimes 2}$ in the sense that the following diagram commutes
\[ \vcenter{\xymatrixcolsep{7pc} \xymatrix{\mc Y(\glh_1)  \ar[r]^{\Delta} \ar[d]^{\rho_{r[\C^2]}} &  \mc Y(\glh_1)^{\otimes 2} \ar[d]^{\rho_{r_1[\C^2]}^0\otimes \rho_{r_2[\C^2]}^0}  \\  \End(\V^0_{r[\C^2]}) \ar[r]^{\cong} & \End(\V_{r_1[\C^2]}^0)\otimes  \End(\V_{r_2[\C^2]}^0)    }} \ .\]
Moreover, the image of the induced embedding
\[ W^\kappa_{f_{\textup{prin}} }(\gl_r) \to W^{\kappa_1}_{f_{\textup{prin}} }(\gl_{r_1})\otimes \mc W^{\kappa_2}_{f_{\textup{prin}} }(\gl_{r_2}) \]
is characterized by the kernel of a screening operator acting on the latter tensor product of vertex algebras. The proof of this fact uses the compatible structure of the Feigin-Frenkel resolutions of \cite{FF1} to reduce to the case $r_1=r_2=1$, for which the calculation can be directly checked, and this implies the action factors through $ \mc U(W^\kappa_{f_{\textup{prin}} }(\gl_r))$ in general; this is essentially the proof of Theorem \ref{SVAGTthm} from \cite{SV}. A physical explanation of this mechanism was also explained in \cite{GaiR2}.

In the case of a general threefold $Y$, it is natural to expect there exists a universal algebra $\mc Y(Y)$ admitting an analogous triangular decomposition
\[ \mc Y(Y) =\mc Y(Y)_- \otimes \mc Y(Y)_0 \otimes \mc Y(Y)_+ \quad\quad \text{with}\quad\quad \mc Y(Y)_+ \cong \mc{SH}(Y) \quad\quad \mc Y(Y)_-\cong \mc{SH}(Y)^\op  \ , \]
such that there exist representations of $\mc Y(Y)$ on the vector spaces $\V_S$ factoring through those of Conjecture \ref{VWintroconj}. Indeed, from the construction of the seminal paper of \cite{Nak2} and its cohomological variant \cite{Var}, together with the dimensional reduction isomorphisms between critical and preprojective cohomological Hall algebras of the type proved in \cite{YZ1} and the appendix to \cite{RenS}, it is natural to expect that in the case $Y=Y_{m,0}=\widetilde{A_{m-1}}\times \C$ the corresponding algebra is given by the affine Yangian $\mc Y(\glh_m)$ of $\gl_m$, in the sense defined for $\spl_m$ in \cite{Guay}.

In fact, based on considerations from string theory which we will discuss in Section \ref{stintrosec} below, as well as the many existing results in mathematics we have mentioned, Costello \cite{CosMSRI} proposed a family of conjectures about geometric actions of affine Yangian type quantum groups in this setting. One observation of \emph{loc. cit.} was that while the positive and negative parts $\mc Y(Y)_\pm$ of the putative algebra are determined by the universal algebra $\mc{SH}(Y)$, the structure of the algebra $\mc Y_M(Y)$ and in particular its Cartan subalgebra $\mc Y_M(Y)_0$ should, as the notation suggests, depend on the choice of object $M\in \DD^b\Coh(Y)^T$, as well as the framing structure $\f$, that determines the homology group $\bb V^{\f,\zeta}(M)$ on which we would like to construct the representation
\begin{equation}\label{Yangacteqn}
	\rho_M:\mc Y_M(Y) \to \End_F(\bb V^{\f,\zeta}(M)) \ . 
\end{equation}
In particular, it was conjectured in \emph{loc. cit.} that these variants correspond to shifted variants of affine Yangian type quantum groups, in the sense of \cite{BrKl} and \cite{FKPRW}. 

In addition to the main result quoted in Theorem \ref{Bthm} above, we outline a construction of a representation of this larger algebra, following the approach of \cite{RSYZ} and in turn \cite{SV}. We also attempt to explain the resulting computations in language closely paralleling that introduced in the string theory papers of Li-Yamazaki \cite{GL1} and Galakov-Li-Yamazaki \cite{GLY}, which appear to have provided computations that establish some of the conjectures stated here, and we discuss these relationships further in Section \ref{stintrosec} below. We hope that this paper will help to translate the results of \emph{loc. cit.} into geometric representation theory.

Let us conclude this section of the introduction by mentioning the compatibility of the preceding approach with the definition of the vertex algebras $\V(Y,S)$ given in the companion paper \cite{Bu}, towards the proof of Conjecture \ref{VWintroconj}, as alluded to above. In the setting of Corollary \ref{Nakopcoro}, the conjectural extension of the representation $\rho_S^0$ is expected to give a surjection
\begin{equation}\label{MVmapeqn}
	 \mc Y_S(Y) \to \mc U (\V(Y,S))  \ , 
\end{equation}
induced by the representations of these algebras on the common vector space $\V_S^0$. Further, we have
\[ \bb V_S^0 \cong \bigotimes_{d\in \mf D_S} \bb V_{S^d}^{\otimes r_d} \quad\quad \text{so that}\quad\quad \bb V_S^0 \cong \V_{S_1}^0\otimes \V_{S_2}^0 \ , \]
if $S=S_1+S_2$. Moreover, if we pick a composition series of $\mc O_{S}$, $\mc O_{S_1}$ and $\mc O_{S_2}$ in terms of the structure sheaves $\mc O_{S_d}$ of the irreducible components $S_d$ of $S^\red$, compatible in the sense that
\[ \mc O_{S_1} \to \mc O_S \to  \mc O_{S_2}   \  , \]
defines a short exact sequence compatible with the filtrations, then we obtain
\begin{equation}\label{voaembeqn}
	 \bb V(Y,S) \to \V(Y,S_1)\otimes \V(Y,S_2)  
\end{equation}
an embedding of vertex algebras, with image characterized by the kernel of a screening operator; this is essentially by definition in the construction of the companion paper \cite{Bu}. Thus, it is natural to speculate that there exist coproduct maps making the following diagram commute
\[ \vcenter{\xymatrixcolsep{7pc} \xymatrix{\mc Y_S(Y)  \ar[r]^{\Delta_{S_1,S_2}} \ar[d]^{\rho_{S}^0} &  \mc Y_{S_1}(Y)\otimes \mc Y_{S_2}(Y)  \ar[d]^{\rho_{S_1}^0\otimes \rho_{S_2}^0}  \\  \End(\V^0_{S}) \ar[r]^{\cong} & \End(\V_{S_1}^0)\otimes  \End(\V_{S_2}^0)    }} \ ,\]
and such that the induced embedding of vertex algebras is given by that of Equation \ref{voaembeqn} above.

These conjectural coproducts appear to be the affine analogues of the coproducts constructed in \cite{BrKl} and \cite{FKPRW}, and as we will explain in the final Section \ref{MVsec}, the maps of the form of Equation \ref{MVmapeqn} above for an appropriate choice of divisor in a threefold $Y_{m,n}$ resolving the singularity $X_{m,n}=\{xy-z^mw^n\}$ are expected to give a geometric construction of Brundan-Kleshchev type isomorphisms between affine $W$-superalgebras in type A and quotients of shifted Yangians for the affine superalgebras $\glh_{m|n}$. We hope to give a geometric construction of the coproducts and complete the proof of Conjecture \ref{VWintroconj} following this approach in future work.

\subsection{Motivation from string theory}\label{stintrosec} The broader context for the results of this paper are a family of interconnected mathematical ideas at the intersections of enumerative algebraic geometry, low dimensional topology, geometric representation theory, and integrable systems, which follow predictions from supersymmetric quantum field theory and string theory. In particular, the conjectures of Alday-Gaiotto-Tachikawa from \cite{AGT}, their generalizations, and adaptations thereof in mathematics, are a central example of these ideas and are the primary motivation for the present paper, as we have explained in the preceding section.

In this section, we will recall the well-known physical arguments behind the AGT conjecture, as well as its generalization in \cite{GaiR}, \cite{PrR}, and \cite{FeiG}, and give a cartoon of the physics argument to explain its connections with cohomological Hall algebras and affine Yangian type quantum groups, following the series of paper \cite{Cos1}, \cite{Cos2}, \cite{Cos3}, as well as \cite{CWY}, \cite{LiYam}, and \cite{GLY}. We will mention some further references which have influenced our thinking, but make no attempt to give systematic attributions for the physics arguments below, which we emphasize are only a cartoon to motivate the mathematical results of this paper, as well as the many further conjectures we state and various expected connections between them.

The contemporary perspective on the AGT conjecture begins with the existence of a family of special quantum field theories in six real spacetime dimensions, admitting an ADE classification, and defined by having symmetries governed by the six dimensional $\mc N=(2,0)$ superconformal algebra. The definition of these theories is poorly understood even by the standards of theoretical physics, but it is widely accepted that they exist based on an impressive web of compatible connections between better understood supersymmetric quantum field theories in lower dimensions, and corresponding relationships with topics in the pure mathematics canon from homological knot invariants \cite{ALR} to the quantum geometric Langlands correspondence \cite{AFO}. Indeed, the vertex algebras $\V(Y,S)$ considered here, defined in the companion paper \cite{Bu}, are closely related to both these subjects.

The six dimensional $\mc N=(2,0)$ superconformal field theories admit a holomorphic-topological twist, a certain deformation of the quantum field theory defined on six dimensional manifolds that are the product of a smooth algebraic curve $\Sigma$ and a four manifold $M_4$. This deformation depends only on the underlying holomorphic and topological structures of $\Sigma$ and $M_4$, respectively, so that any quantities computed in these theories can be understood as invariants of these objects.

The AGT conjecture is an example of a general pattern of mathematical predictions arising in quantum field theory from the operation of \emph{dimensional reduction}, a kind of integration or pushforward of quantum field theories along fibrations, which uses the quantum field theory on the total space to encode the geometry of the fibres into a quantum field theory on the base. The total integral or pushforward to a point of a quantum field theory determines concrete mathematical objects such as numbers or vector spaces, which define invariants of the mathematical input data as above, and this operation satisfies an analogue of Fubini's theorem that canonically identifies invariants computed in two distinct ways, by reducing from a product of spaces to the point in different orders. This often leads to identifications of pairs of numbers or vector spaces, as well as various related algebraic or categorical structures, of completely distinct mathematical origins.

In the setting of the six dimensional $\mc N=(2,0)$ theory of type $\g$ considered above, this implies a sort of commutativity of the following schematic diagram, which we explain below:
\begin{equation}\label{AGTeqn}
	 \vcenter{\xymatrixcolsep{5pc}\xymatrix{ \textup{6d $\mc N=(2,0)$ type $\g$ on $\Sigma\times M_4$}  \ar@{~>}[r]^{\int_{\Sigma}} \ar@{~>}[d]^{\int_{M_4}}  & \textup{4d $\mc N=2$ class $\mc S[\Sigma,\g]$ on $M_4$} \ar@{~>}[d]^{\int_{M_4}}  \\
\textup{2d $\mc N=(0,2)$ class $\mc T[M_4,\g]$ on $\Sigma$} \ar@{~>}[r]^{\int_{\Sigma}}  & \textup{Invariants of $\g$, $\Sigma$, and $M_4$} }} \ .
\end{equation}

The top horizontal arrow of the diagram in Equation \ref{AGTeqn} indicates dimensional reduction along $\Sigma$, which takes the six dimensional $\mc N=(2,0)$ superconformal field theory of type $\g$ we have considered above and produces a four dimensional $\mc N=2$ superconformal field theory. This was discovered in the seminal paper of Gaiotto \cite{GaiS}, which already explained many of the key structural features of the correspondence, including identifying the space of superconformal deformations of the four dimensional theory with the Deligne-Mumford compactification of the moduli space of curves, so that the action of the mapping class group determines dualities among these theories.

For the holomorphic-topological twist of the six dimensional theory considered above, the reduction to four dimensions induces the topological twist of the corresponding four dimensional $\mc N=2$ theories generalizing the one considered in the case of pure gauge theory by Witten in \cite{WitTQFT} to explain the Donaldson invariants \cite{Don}. Thus, we expect that the invariants of $\g$, $\Sigma$ and $M_4$ computed from this perspective should include a family of decorated Donaldson-type invariants of the four manifold $M_4$, determined by a choice of curve $\Sigma$ and ADE type $\g$.

The left vertical arrow of the diagram in Equation \ref{AGTeqn} indicates dimensional reduction along $M_4$, and produces a two dimensional $\mc N=(0,2)$ supersymmetric quantum field theory $T[M_4,\g]$ on the Riemann surface $\Sigma$, as was studied with similar applications in \cite{GGP} and \cite{DGP}. The holomorphic-topological twist of the six dimensional theory induces a holomorphic twist in two dimensions, so that the algebra of observables of this theory is expected to define a vertex operator algebra, denoted by $\textup{VOA}[M_4,\g]$ in \cite{FeiG}. In this context, the natural invariant associated to an algebraic curve $\Sigma$ by the vertex operator algebra $\textup{VOA}[M_4,\g]$ is the space of conformal blocks on $\Sigma$.

Thus, the commutativity of the diagram in Equation \ref{AGTeqn} can be understood concretely as predicting a relationship between the Donaldson-type invariants of the four manifold $M_4$, determined by a choice of curve $\Sigma$ and ADE type $\g$, and the conformal blocks of the vertex operator algebra $\textup{VOA}[M_4,\g]$ on $\Sigma$. The exact mathematical statement is difficult to formulate in general, and does not appear in the literature to our knowledge, but seems to be of the following general form:

\begin{prpl}\label{metaAGTprpl}
The partition function on $M_4$ of the class $\mc S(\Sigma)$ theory of type $\g$ as the curve $\Sigma$ varies can be identified with a canonical section of the sheaf of conformal blocks of $\textup{VOA}[M_4,\g]$ over $\overline{\mf M}_{g,n}$.
\end{prpl}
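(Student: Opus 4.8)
The plan is to realize both sides of the stated identification within the framework of Sections \ref{rtintrosec}--\ref{stintrosec} and to reduce the Proposal to Conjecture \ref{VWintroconj} together with the standard compatibility of conformal blocks with degenerations of curves. First I would make the left-hand side precise. For $M_4$ the analytification of a reduced divisor $S^\red\subset Y$ in the class considered here, the partition function of the topologically twisted class $\mc S[\Sigma,\g]$ theory on $M_4$ is, following the discussion of Section \ref{rtintrosec}, the equivariant generating series $\mc Z^\VW_S(\q)$ -- or, more structurally, the graded vector space $\V_S$ with its natural gradings and symmetries. The dependence on $\Sigma$ enters through the marginal couplings of the four dimensional theory, which by \cite{GaiS} are coordinates on $\overline{\mf M}_{g,n}$: the equivariant parameters of $A=\tilde T\times T_\f$ are attached to the punctures of $\Sigma$ and the remaining moduli enter as instanton-counting-type variables, so the left-hand side is to be organized as a family of vectors, or a generating series, over an \'etale neighbourhood of each boundary stratum of $\overline{\mf M}_{g,n}$.

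For the right-hand side, the companion paper \cite{Bu} provides an explicit definition of $\textup{VOA}[M_4,\g]=\V(Y,S)$ as a conformal (after a shift, quasi-conformal) vertex algebra, so the usual construction of the sheaf of coinvariants produces a sheaf of conformal blocks $\mc{CB}_{\V(Y,S)}$ on $\overline{\mf M}_{g,n}$, together with its canonical flat projective connection with regular singularities along the boundary divisor. Using Conjecture \ref{VWintroconj}, and its refinement giving an action of $\mc Y_S(Y)$, I would identify $\V_S$ with the vacuum module $\bb V_{(Y,S)}$, so that the puncture data of the left-hand side are modules over $\V(Y,S)$ and hence determine a point of the relevant conformal-block datum. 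The identification of the partition function with a canonical section of $\mc{CB}_{\V(Y,S)}$ would then be carried out by a degeneration argument: the factorization isomorphisms $\bb V_S^0\cong \bigotimes_{d\in\mf D_S}(\bb V_{S^d}^0)^{\otimes r_d}$ and the conjectural coproducts $\Delta_{S_1,S_2}$ intertwining $\rho_S^0$ with $\rho_{S_1}^0\otimes\rho_{S_2}^0$ from Section \ref{MVsec} are exactly the ``Fubini'' compatibility for a degeneration of $\Sigma$ into a nodal curve -- on the curve side this is the sewing axiom for conformal blocks, and the vertex-algebra embedding of Equation \ref{voaembeqn} is its shadow. Since a section of $\mc{CB}_{\V(Y,S)}$ is determined by its leading behaviour in every maximal degeneration together with flatness of the connection, matching the left-hand side with the vacuum conformal block in a maximal degeneration and propagating via the coproduct intertwiners pins down the canonical section. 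In the prototype $Y=\C^3$, $S=r[\C^2]$, $M_4=\C^2$, $\Sigma=\bb P^1$ with two full punctures, this recovers the AGT statement: $\V_{r[\C^2]}$ is the vacuum module of $W^\kappa_{f_{\textup{prin}}}(\gl_r)$ by Theorem \ref{CCDSAGTtheo}, the Nekrasov partition function is the corresponding conformal block, and the coproduct is the Feigin--Frenkel factorization used in \cite{SV}.

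The main obstacle is that there is at present no uniform rigorous definition of the left-hand side for general $M_4$: even granting the surface case via the moduli stacks $\mf M^{\f,\zeta}(Y,\mc O_{S^\red}^\sss[1])$, identifying which enumerative invariant depends holomorphically on the full set of moduli of $\Sigma$, and proving that it extends across $\overline{\mf M}_{g,n}$ with the correct singularities, is out of reach away from genus zero with few punctures. Internally, the degeneration argument moreover rests on Conjecture \ref{VWintroconj} and on the existence and properties of the coproducts $\Delta_{S_1,S_2}$, neither of which is established here, so the Proposal can only be reduced to those inputs rather than proved outright. I would therefore expect a complete proof to require, at a minimum, a proof of Conjecture \ref{VWintroconj} along the screening-operator strategy of \cite{SV}, combined with a genus-zero-to-higher-genus bootstrap using the sewing axiom on the vertex-algebra side.
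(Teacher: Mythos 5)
There is no proof in the paper for you to match: Proposal \ref{metaAGTprpl} is stated in the string-theory motivation section as a deliberately schematic formulation of the physics prediction, and the authors say explicitly that ``the exact mathematical statement is difficult to formulate in general, and does not appear in the literature to our knowledge.'' It is a Proposal, not a theorem, and the paper offers no argument for it; the rigorous content of the paper lives in the ``local on the curve'' shadow of this statement (Theorems \ref{Athm} and \ref{Bthm}, Conjecture \ref{VWintroconj}), not in the global conformal-block assertion over $\overline{\mf M}_{g,n}$.

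Judged as a strategy sketch rather than a proof, your proposal is sensible and is broadly aligned with what the authors themselves envisage: you reduce the statement to Conjecture \ref{VWintroconj} plus the factorization $\bb V_S^0\cong\bigotimes_d(\bb V_{S^d}^0)^{\otimes r_d}$ and the conjectural coproducts $\Delta_{S_1,S_2}$ of Section \ref{MVsec}, interpreting the latter as the sewing compatibility for degenerations of $\Sigma$, with Equation \ref{voaembeqn} as its vertex-algebra shadow; this is essentially the route the paper hopes to pursue in future work, and you correctly flag that none of these inputs is established here. Two caveats are worth keeping in mind. First, your sketch only addresses the case where $M_4$ comes from a divisor $S$ in a toric Calabi--Yau threefold (so effectively type A and a local four-manifold), whereas the Proposal is stated for general $M_4$ and ADE type $\g$, so even granting all the conjectures your argument would establish only a special case of the statement's scope. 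Second, the step ``a section of the sheaf of conformal blocks is determined by its leading behaviour in maximal degenerations together with flatness'' presupposes a well-behaved sheaf of coinvariants with regular singular connection and a sewing/factorization theorem for the vertex algebras $\V(Y,S)$, which are not C$_2$-cofinite or rational in general; this is an additional substantial input beyond Conjecture \ref{VWintroconj} that your write-up treats as standard. Neither point is a flaw relative to the paper, which proves nothing here, but both should be made explicit if this reduction is ever to be carried out.
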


Several aspects of this phenomenon were discovered in the special case that $\Sigma=E$ is an elliptic curve in the seminal paper of Vafa-Witten \cite{VW} prior to almost all of the preceding references. In this case, the class $\mc S$ theory for $\Sigma$ identifies with four dimensional $\mc N=4$ superconformal gauge theory of type $\g$, where the parameter on moduli space of elliptic curves determines the complexified gauge coupling $\tau$ of the gauge theory and the dualities corresponding to the mapping class group include the famous $S$-duality autoequivalence of four dimensional $\mc N=4$ gauge theory, which maps $\tau\mapsto -\frac{1}{\tau}$ generalizing the classical Montonen-Olive electromagnetic duality and provides a physical explanation for the quantum geometric Langlands correspondence \cite{KW}.

Thus, Vafa-Witten explained that the partition functions $\mc Z^\VW_{M_4,\g}(q)$ of four dimensional $\mc N=4$ gauge theory in the Donaldson-Witten twist, given by the generating functions for Euler characteristics of components of the moduli spaces of instantons on four manifolds $M_4$, should define modular forms. They also observed that from the perspective of the alternative reduction, these modular forms should correspond to the characters of modules $\bb V_{M_4,\g}$ for the vertex algebra $\textup{VOA}[M_4,\g]$, viewed as sections of the sheaf of conformal blocks over $\overline{\mf M}_{1,1}$. Concretely, they conjectured
\begin{equation}\label{VW4meqn}
	 \mc Z^\VW_{M_4,\g}(q) = P_q(\bb V_{M_4,\g})  \ \in \bb Z\lP q\rP \ ,
\end{equation}
where $P_q$ denotes the Poincare polynomial with respect to the conformal grading. The analogue of this statement for divisors $S$ in Calabi-Yau threefolds $Y$ is precisely the equality of Equation \ref{VWintroeqn}.

Another crucial insight leading to the AGT conjecture came in the context of four dimensional $\mc N=2$ supersymmetric gauge theories. In the seminal papers \cite{SW1} and \cite{SW2}, Seiberg and Witten explained that the low energy physics of $\mc N=2$ theories on the Coulomb branch is controlled by certain complex integrable systems in terms of the \emph{Seiberg-Witten prepotential}, a quantity related to the variation of Hodge structure on the cohomology of the fibres of the integrable system.

In the celebrated paper \cite{Nek1}, Nekrasov showed that the Seiberg-Witten prepotential of a four dimensional $\mc N=2$ theory can be computed in terms of the instanton partition function on $\bb R^4$, defined using equivariance with respect to an $S^1\times S^1$ action. Nekrasov explained that this construction had a physical interpretation in terms of the \emph{$\Omega$-background}, a notion of $S^1$-equivariance for quantum field theories with a localization mechanism analogous to equivariant cohomology, such that certain quantities in $S^1$-equivariant theories can be computed using induced quantum field theories on the $S^1$-fixed points, and the latter appear naturally quantized with respect to $\hbar\in H_{S^1}^\bullet(\pt)$. This insight turned out to feature prominently in fascinating connections between supersymmetric quantum field theory and quantization of integrable systems, such as those of Nekrasov-Shatashvili \cite{NS1}, \cite{NS2}, and Nekrasov-Witten \cite{NW}, closely related to our setting of interest. We note that a mathematical formulation of the $\Omega$-background construction for factorization algebras was given in Chapter 3 of the thesis \cite{BTh} of the first author, following the preprints \cite{Bu1,Bu2}.

In the context of the diagram in Equation \ref{AGTeqn}, this insight implies that it sensible to consider a local model for the four manifold given by $M_4=\bb R^4_{\hbar_1,\hbar_2}$, flat space $\bb R^4$ in the $\Omega$-background induced by the $S^1\times S^1$ action, and moreover the instanton counting invariants computed by the right hand side of the diagram in this case should be related to the quantization of the Seiberg-Witten integrable system. In the class case of class $\mc S$ theories of type $\g$ determined by $\Sigma$, the relevant integrable system is the Hitchin system on $T^\vee\Bun_G(\Sigma)$, the quantization of which plays a central role in the geometric Langlands correspondence, as described in \cite{BD2}.

The key breakthrough of Alday-Gaiotto-Tachikawa in \cite{AGT} was the realization that the vertex algebra that features on the left side of the diagram in Equation \ref{AGTeqn} in this context, which can in retrospect be denoted $\textup{VOA}[\bb R^4_{\hbar_1,\hbar_2},\g]$ in the language of \cite{FeiG} we have followed above, is the closely related principal affine $W$-algebra $W_{\rho_\textup{prin}}^\kappa(\g)$ of type $\g$, that is
\[\textup{VOA}[\bb R^4_{\hbar_1,\hbar_2},\g]=W_{\rho_\textup{prin}}^\kappa(\g) \quad\quad\text{where}\quad\quad \kappa = - h^\vee - \frac{\hbar_2}{\hbar_1}  \ .\]
Moreover, the authors explained the interpretation of the Nekrasov partition functions for class $\mc S$ theories as conformal blocks of this vertex algebra on the corresponding curve, which was the original observation of the general type formulated in Proposal \ref{metaAGTprpl} above.

Shortly afterwards, it was explained by Gaiotto in \cite{Gst} that there is also a sensible variant of the above conjectures which is essentially local on the curve $\Sigma$. The restriction to $\bb C$ or $\bb C^\times$ of the observables of a conformal field theory are identified with a module $\V_{M_4,\g}$ of the vertex algebra $\textup{VOA}[M_4,\g]$ and its associative algebra of modes $\mc U(\textup{VOA}[M_4,\g])$, respectively; these are the Hilbert space and algebra of operators of a quantum mechanical system on $\bb R$ given by reducing along $S^1$ in the formal identification $\bb C^\times= S^1\times \bb R$, as indicated by the bottom horizontal arrow in the following analogue of the diagram in Equation \ref{AGTeqn}:
\begin{equation}\label{mathAGTeqn}
	\vcenter{\xymatrixcolsep{5pc}\xymatrix{ \textup{6d $\mc N=(2,0)$ type $\g$ on $\bb C^\times \times M_4$}  \ar@{~>}[r]^{\int_{S^1}} \ar@{~>}[d]^{\int_{M_4}}  & \textup{5d $\mc N=2$ type $\g$ on $\bb R\times M_4$} \ar@{~>}[d]^{\int_{M_4}}  \\
			\textup{2d $\mc N=(0,2)$ class $\mc T[M_4,\g]$ on $\bb C^\times$} \ar@{~>}[r]^{\int_{S^1}}  & \textup{1d SQM $\mc T[S^1\times M_4,\g]$ on $\bb R$} }} \ .
\end{equation}

The analogous commutativity of the preceding diagram implies that the Hilbert space and algebra of operators of this quantum mechanical system can be computed in terms of the five dimensional $\mc N=2$ supersymmetric gauge theory on $\bb R \times M_4$ induced by the reduction on $S^1$ of the six dimensional $\mc N=(2,0)$ theory of type $\g$. The holomorphic topological twist of the latter induces a five dimensional analogue of the Donaldson-Witten twist, for which the Hilbert space for the theory reduced on a four manifold $M_4$ is given by the homology of the moduli space of instantons on $M_4$, and the algebra of observables is generated by instanton operators, which add or remove charge from instanton particles in the five dimensional gauge theory propagating along the $\bb R$ direction.

The preceding discussion can be formally summarized by the following:

\begin{prpl} There exists a natural representation
	\[ \mc U(\textup{VOA}[M_4,\g]) \to \End ( H_\bullet( \mf M(M_4,\g)) ) \ ,\]
	of the algebra of modes $ \mc U(\textup{VOA}[M_4,\g])$ of the vertex algebra $\textup{VOA}[M_4,\g]$ on the homology $ H_\bullet( \mf M(M_4,\g))$ of the moduli space of $\g$-instantons on $M_4$, such that the latter is identified with the module $\bb V_{M_4,\g}$.
\end{prpl}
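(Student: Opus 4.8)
The plan is to deduce the statement from the commutativity of the diagram in Equation \ref{mathAGTeqn}, by computing the Hilbert space and the operator algebra of the one dimensional supersymmetric quantum mechanics $\mc T[S^1\times M_4,\g]$ in the two ways indicated there and matching the answers. Since the six dimensional $\mc N=(2,0)$ theory of type $\g$ has no rigorous definition, and dimensional reduction of twisted quantum field theories is not mathematically formalized in the generality needed, this argument is a physical cartoon rather than a proof; a genuine proof would have to construct the relevant twisted theories and their reductions first, which we do not attempt here.

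Following the left-then-bottom route, dimensional reduction of the holomorphic-topological twist of the six dimensional theory along $M_4$ produces the two dimensional $\mc N=(0,2)$ theory $\mc T[M_4,\g]$ on $\bb C^\times$, whose holomorphic twist has the vertex algebra $\textup{VOA}[M_4,\g]$ as its algebra of observables. Restricting observables to $\bb C^\times$ and reducing along the circle in the formal identification $\bb C^\times=S^1\times\bb R$ yields a quantum mechanics on $\bb R$ whose algebra of operators is, by construction, the algebra of modes $\mc U(\textup{VOA}[M_4,\g])$ and whose Hilbert space is the associated module $\bb V_{M_4,\g}$.

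Following the top-then-right route, reducing the six dimensional theory along the same circle first produces the five dimensional $\mc N=2$ theory of type $\g$ on $\bb R\times M_4$ with the five dimensional analogue of the Donaldson-Witten twist; reducing this further along $M_4$ gives the same quantum mechanics on $\bb R$, now presented with Hilbert space the homology $H_\bullet(\mf M(M_4,\g))$ of the moduli space of $\g$-instantons on $M_4$ and with operator algebra generated by the instanton operators that add and remove instanton charge along the $\bb R$ direction. Equating these two descriptions of the one dimensional quantum mechanics yields an isomorphism $H_\bullet(\mf M(M_4,\g))\cong\bb V_{M_4,\g}$ of vector spaces together with an action of $\mc U(\textup{VOA}[M_4,\g])$ on the former by instanton operators, which is the desired representation.

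The main obstacle to upgrading this to mathematics — beyond the absence of a definition of the six dimensional theory itself — is to construct the instanton operators intrinsically within the five dimensional gauge theory and to verify that the relations among them coincide with those of $\mc U(\textup{VOA}[M_4,\g])$; in the special cases made rigorous in this paper, that is precisely what is achieved by the cohomological Hall algebra action of Theorem \ref{Bthm} combined with the vertex algebra identifications of the companion paper \cite{Bu}, modulo the conjectural matching of relations discussed around Conjecture \ref{VWintroconj}.
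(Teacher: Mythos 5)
Your argument is essentially the paper's own: the statement is a physics-motivated Proposal whose only justification in the text is the commutativity of the reduction diagram in Equation \ref{mathAGTeqn}, read in the two orders exactly as you describe, and you correctly flag that it is a heuristic rather than a proof, with the rigorous content in special cases supplied by Theorem \ref{Bthm} and the identifications of \cite{Bu}.
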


This is a schematic generalization of the common mathematical statement of the AGT conjecture, such as that quoted in Theorem \ref{SVAGTthm} above from \cite{SV} which corresponds to the special case that $M_4=\bb R^4$ so that $\textup{VOA}[M_4,\g]=W_{\rho_\textup{prin}}^\kappa(\g)$ and the relevant moduli space of instantons is given by the ADHM construction. In the abelian case, this generalization was essentially discovered by Nakajima, in the sense explained in \cite{NakInst} for example. Even the paper of Vafa-Witten \cite{VW} followed the results of Nakajima \cite{Nak1} in the relevant example.

Unfortunately, even after the inspiring new proposals for this program in \cite{FeiG}, relatively few concrete examples of these vertex algebras were known in the non-abelian case for interesting four manifolds $M_4$. However, an alternative approach in a closely related setting was considered previously by Gaiotto-Rapcak in the seminal paper \cite{GaiR}, which allowed for more direct gauge theory computations of the resulting vertex algebras. The six dimensional $\mc N=(2,0)$ theory for $\gl_r$ is essentially by definition the theory of quantum fluctuations of a stack of $r$ distinct six dimensional extended objects called \emph{M5-branes}, inside of an eleven dimensional spacetime supporting a mysterious variant of string theory called \emph{M-theory}. It was observed in \emph{loc. cit.} that there exist vertex algebras corresponding to two dimensional theories defined on $\Sigma$ as above, constructed by reducing in the $\Omega$-background a configuration of M5 branes supported on a toric divisor $S$ in a toric Calabi-Yau threefold $Y$, rather than a four manifold $M_4$. In summary, we consider
\begin{equation}\label{6dMTeqn}
	 \textup{6d $\mc N=(2,0)$ on $\bb C^\times \times S$}  \quad \subset \quad  \textup{M-theory on $\bb R \times \bb C^\times  \times \bb C \times Y$}  
\end{equation}
and perform the analogous reductions of Equation \ref{mathAGTeqn}. In this setting, we note that rather than being labeled by an ADE type $\g$, the six dimensional $\mc N=(2,0)$ theory occurs in type A, with rank on each reduced, irreducible component of $S$ determined by the multiplicity of this component in $S$. This is the motivation for the vertex algebras $\V(Y,S)$ defined in the companion paper \cite{Bu}, for which the generalization of the AGT conjecture is our Conjecture \ref{VWintroconj} above.

The computations of Gaiotto-Rapcak \cite{GaiR}, and their generalizations in Prochazka-Rapcak \cite{PrR}, used the type IIB duality frame in which the geometry of the threefold $Y$ determines a $(p,q)$-web of type II fivebranes, and the M5-branes wrapping toric divisors $S$ in $Y$ determine D3-branes stretched between the walls of the $(p,q)$-web according to their position on the boundary of the moment polytope of the threefold $Y$. In this setting, the configuration could be analyzed in terms of junctions of boundary conditions and domain walls between the four dimensional $\mc N=4$ gauge theories on the D3 branes, giving concrete predictions for the vertex algebras in terms of constructions in the representation theory of affine Lie algebras, following \cite{GaiW}. This lead to mathematical constructions of some examples of these algebras, such as in \cite{CrG} and in turn \cite{ACF}, but the analogue of the AGT conjecture was not formulated in these examples.

Motivated by closely related ideas in string theory \cite{NekBPS}, Nekrasov discovered the quiver with potential of Equation \ref{Nekeqn} above, which appeared to describe moduli spaces of instantons supported on the toric divisors $S_{L,M,N}$ in $Y=\C^3$, a generalization of the ADHM construction that he called the space of \emph{spiked instantons} \cite{NekP}. Motivated by this, the results of \cite{RSYZ} proved the generalization of the AGT conjecture in this setting, closely following the methods of \cite{SV}, but using the moduli space of spiked instantons in place of the usual moduli space of instantons described by the ADHM construction. These results of \cite{RSYZ} were the prototype for the generalization of the AGT conjecture stated in Conjecture \ref{VWintroconj}, though we note that a description of the moduli space of spiked instantons in terms of algebraic geometry was not provided in \emph{loc. cit.}, nor was the generalization to divisors $S$ in more interesting threefolds $Y$.

Towards stating more refined predictions about this general correspondence, including understanding the ingredients of the proof of the AGT conjecture given in \cite{SV} and in turn \cite{RSYZ}, and how we should expect them to generalize in the case of $\V(Y,S)$, we give a cartoon of the relevant string theory arguments, following a proposal of Costello developed in the series of papers \cite{Cos1}, \cite{Cos2}, and \cite{Cos3}, and described in general in his lecture \cite{CosMSRI}; we also follow the more recent results of \cite{CWY}, \cite{LiYam} and \cite{GLY}. The basic idea is to consider lifts of the compatible reductions of the diagram in Equation \ref{mathAGTeqn}, along the inclusion of the six dimensional $\mc N=(2,0)$ theory into M-theory as in Equation \ref{6dMTeqn}, which are summarized in the following diagram:
\begin{equation}\label{stringAGTeqn}
	\vcenter{\xymatrixcolsep{3pc}\xymatrix{ \textup{Twisted M-theory on $\bb R \times \bb C^\times  \times \bb C \times Y$}  \ar@{~>}[r]^{\int_{S^1}} \ar@{~>}[d]^{\int_{Y}}  & \textup{Twisted type IIA on $\bb R^2 \times \bb C \times Y$} \ar@{~>}[d]^{\int_{Y}}  \\
			\textup{5d Chern-Simons for $\g_Y$ on $\bb R \times \bb C^\times \times \bb C$} \ar@{~>}[r]^{\int_{S^1}}  & \textup{4d Chern-Simons for $\gh_Y$ on $\bb R^2 \times \bb C$} }} \ .
\end{equation}

The field theories in the diagram of Equation \ref{mathAGTeqn} all occur as theories of observables local to a defect supported along a submanifold of the spacetime underlying the corresponding object in the diagram of Equation \ref{stringAGTeqn}. We begin by describing the top horizontal arrow and the resulting expectations for computations in the twisted type IIA setting.

The conjectural geometric description of the vacuum module $\bb V_S$ of the vertex algebra $\V(Y,S)$, in terms of the homology of the corresponding moduli space of instantons supported on the divisor $S$, was derived in the context of the diagram of Equation \ref{mathAGTeqn} as the Hilbert space of the Donaldson-Witten twist of the five dimensional $\mc N=2$ theory on $\bb R \times M_4$. In fact, it was explained in \cite{WitADHM} that the moduli spaces of instantons in gauge theories on stacks of relatively heavy, non-compact branes can be interpreted as moduli spaces of bound states of these fixed branes with the spectrum of infinitesimally massive, compactly supported branes. The reformulation of the ADHM construction explained in Section \ref{agintrosec} and its generalization in Theorem \ref{Athm} is a mathematical realization of this perspective, and provides a systematic approach to the computations of Aspinwall-Katz \cite{AsK}.

Similarly, in this context the representations of the Kontsevich-Soibelman cohomological Hall algebra $\mc H(Y)$ constructed in Theorem \ref{Bthm} can be understood as defining the action of a universal BPS algebra associated to the type IIA string theory on $Y$, induced by correspondences between moduli spaces of bound states of D-branes defined by adjoining additional compactly supported branes, as in the correspondence of Equation \ref{cohacoreqn}. Moreover, the choice of auxiliary brane $M\in \DD^b\Coh(Y)$ conjecturally determines an algebra $\mc Y_M(Y)$ with triangular decomposition
\begin{equation}\label{repeatyangeqn}
	 \mc Y_M(Y) = \mc{SH}(Y)^\op \otimes \mc H^0_M \otimes \mc{SH}(Y)  \quad\quad \text{and}\quad\quad \mc Y_M(Y) \to \End_F(\bb V_M) 
\end{equation}
a representation as in Equation \ref{Yangacteqn}, for which we outline a construction in Section \ref{quiveryangsec} following \cite{RSYZ}. This should be interpreted as a working definition of a mathematical avatar for the BPS algebras introduced by Li-Yamazaki in \cite{LiYam}, and generalized in \cite{GLY}, under the name (shifted) \emph{quiver Yangians}; these algebras were defined to act on the moduli spaces of bound states of D-branes in $Y$, in precisely the way we have explained above, and we hope to provide a more careful statement of the relation to \emph{loc. cit.} in future work.

In the case that the auxiliary branes are a configuration of D4 branes in the twisted IIA frame determined by an M5 brane wrapping the divisor $S$, corresponding to $M=\mc O_{S^\red}^\sss[1]\in\DD^b\Coh(Y)$ and framing structure $\f_S$ of rank $\rr_S$ as described above, we expect that the resulting representation of Equation \ref{repeatyangeqn} factors through a map 
\begin{equation}\label{repeatmveqn}
	 \mc Y_S(Y) \to \mc U(\V(Y,S))  \ ,
\end{equation}
as in Equation \ref{MVmapeqn}, from this universal algebra $\mc Y_S(Y)$ to the algebra of modes $\mc U(\V(Y,S))$.

We now consider the alternative order of reduction, indicated by the left vertical arrow in the diagram of Equation \ref{stringAGTeqn}, which we expect to give an analogous description of these objects in terms of representation theory. A model for this reduction was calculated in \cite{Cos2} in a holomorphic-topological twist of M-theory, and in the presence of an $\Omega$-background acting by the rank two subtorus of the toric action of $T$ on $Y$ which preserves the Calabi-Yau structure. The resulting five dimensional theory was computed to be a non-commutative deformation of a five dimensional cousin of Chern-Simons theory of type $\g=\g_Y$ determined by the threefold $Y$. In the case that $Y=Y_{m,n}$ is a resolution of $X_{m,n}=\{xy-z^mw^n\}$, Costello conjectured that the corresponding Lie algebra was $\g_{Y_{m,n}}=\gl_{m|n}$, and detailed calculations were done in the $\gl_1$ case in \cite{GaiO} and \cite{OZ}.

The intended implications of these results are informed by a general proposal of Costello originating in \cite{Cos1}, which explains relationships between various classes of quantum groups and Chern-Simons type theories in various dimensions. In the classic paper on the physical origin of the Jones polynomial \cite{WitJones}, Witten explains that the category of line operators in three dimensional Chern-Simons theory is identified with representations of the quantum enveloping algebra $U_q\g$. Costello explained that there is an analogous relationship between line operators in a four dimensional holomorphic-topological variant of Chern-Simons theory and representations of the Yangian $Y_\hbar\g$. Moreover, he explained that the quantum groups $U_q\g$ and $Y_\hbar \g$ arising in these theories are the Koszul dual algebras $\mc A|_{\ell}^!$ of the algebras of local operators $\mc A$ of the quantum field theories restricted along a line $\ell$, the locus along which the defects being classified are supported. These are the universal algebras controlling defects with a specified locus of support, in the sense that coupling a defect in the theory supported along the line $\ell$ is equivalent to a map
\begin{equation}\label{kozmapeqn}
	 \mc A|_{\ell}^! \to \mc B 
\end{equation}
to the putative algebra of observables $\mc B$ on the defect, so that taking $\mc B=\End(V)$ determines the correspondence between defects and representations of these algebras. Further, the five dimensional variant of Chern-Simons theory, or equivalently four dimensional Chern-Simons theory in affine type, is meant to describe the observables of a quantum gravitational theory in the twisted context, as in the twisted holographic principle of Costello-Li \cite{CosL}, and thus the Koszul dual algebra is deformed by a twisted analogue of backreaction to an alternate algebra $\widetilde{\mc A}(Y)|_{\ell_M}^!$, which depends on the precise defect supported on $\ell$ corresponding to a configuration of branes determined by $M$. Thus, the commutativity of the diagram of Equation \ref{stringAGTeqn} conjecturally induces an identification
\[ \widetilde{\mc A}(Y)|_{\ell_M}^! \cong \mc Y_M(Y)\]
such that the maps of Equation \ref{kozmapeqn} induce those of Equation \ref{repeatmveqn} and more generally \ref{repeatyangeqn}.

In the case that $Y=Y_{m,n}$ so that $\g_{Y_{m,n}}=\gl_{m|n}$ as described above, this implies a conjectural identification between the infinite dimensional associative algebras $\mc Y_M(Y_{m,n})$ corresponding to various compatible objects $M\in \DD^b\Coh(Y_{m,n})^T$ and various shifts of the affine Yangian for $\gl_{m|n}$. This is the motivation for the Conjectures \ref{Yangactconj} and \ref{Yangsurfconj} in the main text, for example.

In this general context, Costello explained that the additional data on the algebra of observables $\mc A$ forgotten in its restriction to $\ell$, related to its factorization structure in the transverse directions to the support of the defect, determines the generalized commutativity data for the monoidal structure on the category of representations of the Koszul dual quantum group $\mc A|_{\ell}^!$.  We hope that this will lead to a better understanding of the relationship between the crucial coproducts on the affine Yangian of $\gl_1$ that are required for the proof of the AGT conjecture in \cite{SV}, their proposed generalization discussed at the end of Section \ref{stintrosec}, and the more conceptual constructions of the R-matrix \cite{MO}, coproduct \cite{Dav1}, and primitives \cite{DavMein} in closely related contexts.

Finally, we mention that the main applications discussed in Sections \ref{DTsec} and \ref{VWsec}, and especially their relationship outlined in Section \ref{MVsec}, can indeed be understood in the context of the twisted holographic principle of Costello-Li \cite{CosL} mentioned above. In particular, the isomorphism of Conjecture \ref{twholoconj} is of the typical form of their general conjecture.

\subsection{Summary of results}\label{summarysec} We now give a concrete summary of the results of this paper:

In Section \ref{presec}, we review some relevant preliminaries for later reference and in order to fix notation. We recommend the reader skip this section and return to it only later as necessary.

In Section \ref{unextsec}, we explain the unframed analogue of Theorem \ref{Athm}, a folklore theorem stating the equivalence between compactly supported perverse coherent sheaves on certain toric Calabi-Yau threefolds and finite dimensional representations of corresponding quivers with potential, which follows from results of Bridgeland \cite{Bdg1} and Van den Bergh \cite{VdB1} together with some general facts about triangulated categories. We explain a formula for the natural monad presentation for these complexes of sheaves, using an analogy with the role of the Koszul duality derived equivalences of the BGG category $\mc O$ in \cite{BGS}. A more detailed overview is given in Section \ref{unextovsec}.

In Section \ref{extsec}, we generalize the results of the preceding section to describe analogous abelian categories of coherent sheaves generated by the compactly supported perverse coherent sheaves together with an auxiliary object $M\in \DD^b\Coh(Y)^T$ contained in the heart of a compatible Bridgeland-Deligne perverse coherent t-structure. We describe the moduli spaces of objects in these categories, and framed variants thereof, in terms of spaces of representations of framed quivers with potential via a natural monad formalism, proving Theorem \ref{Athm}. A more detailed overview is given in Section \ref{Extoverviewsec}.

In Section \ref{cohareptotalsec}, we recall the Kontsevich-Soibelman cohomological Hall algebra $\mc H(Y)$ of a quiver with potential corresponding to a threefold $Y$, and give the construction of the representation of $\mc H(Y)$ on the homology $\bb V^{\f,\zeta}(M)$ of the moduli space of $\zeta$-stable, $\f$-framed perverse coherent extensions of $M$, proving Theorem \ref{Bthm}. A more detailed overview is given in Section \ref{coharepintrosec}.

In Section \ref{funsec}, we explain the primary intended applications of the results of the previous sections: In Section \ref{DTsec}, we explain the relationship with perverse coherent systems of \cite{NN} and the results of \cite{Sz1}, and for $Y=Y_{m,n}$ state the conjectural relationship to the affine Yangian of $\gl_{m|n}$. In Section \ref{VWsec}, we explain the local Vafa-Witten invariants of divisors $S\subset Y$ defined by Theorem \ref{Athm}, and the relationship of the representation of the cohomological Hall algebra constructed in Theorem \ref{Bthm} in this case to the generalization of the AGT conjecture and the vertex algebras $\V(Y,S)$ defined in the companion paper \cite{Bu}. In Section \ref{MVsec}, we explain the relationship between these results and its potential application to an analogue of the Brundan-Kleshchev isomorphism between affine W-superalgebras and truncated shifted Yangians for affine $\gl_{m|n}$.

\newpage

\section{Preliminaries}\label{presec} In this section, we recall some preliminary results for later reference and in order to fix notation. We recommend the reader skip this section and return to it only later as necessary.

\subsection{$A_\infty$ algebras and their module categories}\label{Ainfalgsec}

Let $S$ be a finite dimensional commutative $\K$ algebra.
\begin{defn}
	A $\Z$-graded $A_\infty$ algebra over $S$ is $A$ is a $\Z^2$-graded $S$ module
\[ A=\bigoplus_{i,j\in \Z } A^i_j[-i]\langle -j \rangle \quad \quad\textup{together with} \quad\quad m_n: A^{\otimes n} \to A[2-n]  \]
for $n\geq 1$, satisfying the usual $A_\infty$ relations.
\end{defn}
When $A^i_j=0$ for $i\neq 0$, or $j\neq 0$, this is equivalent to a graded associative algebra structure, or usual $A_\infty$ algebra, respectively, and when $m_n=0$ for $n\geq 3$ this is equivalent to a (graded) DG-algebra.

We define morphisms of graded $A_\infty$ algebras as usual, and assume that all $A_\infty$ algebras and morphisms of such are graded and strictly unital, in the sense of \cite{LPWZ}. In particular, all (strictly unital, graded) $A_\infty$ algebras are equipped with a canonical strict (strictly unital) map $u:S \to A$.

\begin{defn} An augmentation on a (strictly unital, $\Z$-graded) $A_\infty$ algebra is a (strictly unital) map $\epsilon:A\to S$ of $A_\infty$ algebras over $S$.
\end{defn}

\begin{defn} A (right, $\Z$-graded) $A_\infty$ module $M$ over a $\Z$-graded $A_\infty$ algebra $A$ is a $\Z^2$-graded vector space
	\[ M = \bigoplus_{i,j\in \Z} M^i_j[-i]\langle -j\rangle  \quad\quad \text{together with} \quad\quad \rho_n: M\otimes  A^{\otimes n-1} \to M[2-n] \]
for $n\geq 1$, satisfying the usual $A_\infty$ relations.
\end{defn}
We assume in addition that all $A_\infty$ modules and morphisms of such are graded and strictly unital, and let $\DD_\ZZ(A)$ denote the triangulated category given by the derived category of (strictly unital, $\Z$-graded, right) $A_\infty$ modules over $A$ under (strictly unital) maps of $\Ainf$ modules. In particular, if $A$ is a plain (graded) associative algebra, this agrees with the usual derived category of (graded) $A$ modules. Similarly, we let $\DD_\ZZ^?(A)$ for $?=b,+,-$ denote the usual bounded variants, and $\DD^?(A)$ the usual derived category of (strictly unital, but not graded) $A_\infty$ modules.

\begin{defn}\label{thickdefn}
	A subcategory $\mc C \subset \mc D$ of a triangulated category $\mc D$ is called thick if it is a strictly full, triangulated subcategory which is closed under taking direct summands.
\end{defn}

For any collection of objects $\{M_i\}$ of $\D=\DD_\ZZ(A)$, the thick subcategory $\thick (M_i)$ generated by $\{M_i\}$ is the minimal thick subcategory of $\mc D$ containing all of the objects $M_i$ and their graded shifts. In particular, we let ${\textup{D}_\perf}(A)=\thick(A)$ denote the thick subcategory generated by the rank one free module and its graded shifts, and $\DD_\fd(A)$ the thick subcategory generated by modules $M\in \DD_\ZZ(A)$ with finite dimensional cohomology; the ungraded variants are denoted $\Thick(M_i)$, $\DD_\Perf(A)$, and $\DD_\Fd(A)$, respectively.

\subsection{Tilting objects and derived equivalences}\label{tiltingsec} Throughout this section, let $\D$ be a $\K$-linear, algebraic triangulated category in the sense of Keller \cite{Kel1}. The primary examples of triangulated categories we consider in the present work all fall within this class.

\begin{defn}\label{tiltingdefn}
	An object $T\in \D$ is called a \emph{tilting} object if it satisfies the following conditions:
	\begin{itemize}
		\item $T$ is compact, that is, $\Hom_{\D}(T,\cdot):\D\to \DGV$ preserves coproducts;
		\item $T$ generates $\D$, that is, $\Hom_{\D}(T,C)=0$ implies $C=0$.
	\end{itemize}
If in addition $T$ satisfies the condition that the DG associative algebra $\Hom_{\D}(T,T)$ has cohomology concentrated in degree $0$, then $T$ is called a \emph{classical} tilting object.
\end{defn}

We will use the following result of Keller throughout:

\begin{theo}\cite{Kel2}\label{Morita}
	Let $T$ be a tilting object in an algebraic triangulated category admitting set-indexed coproducts, and let $\Lambda = \Hom_{\D}(T,T)$ be the DG associative algebra of (derived) endomorphisms of $T$. Then there is a triangle equivalence
	\[ \Psi_T: \D \xrightarrow{\cong} \DD( \Lambda ) \] 
	such that composition with the forgetful functor $\ob:\DD(\Lambda ) \to \DGV$ is given by
	 \[\ob\circ \Psi_T =  \Hom_{\D}(T,\cdot): \D \to \DGV  \ , \]
	and with inverse equivalence given by
	\[(\cdot)\otimes_{\Lambda}T:\DD( \Lambda ) \to \D  \ . \]
	 Further, this restricts to define equivalences
\begin{equation}\label{dgmoritaeqntriang}
		\Psi_T: \Triang(T) \xrightarrow{\cong} \Triang(\Lambda ) \quad\quad\text{and}\quad\quad 	 \Psi_T: \Thick(T) \xrightarrow{\cong} \DD_\Perf(\Lambda ) \  ,
\end{equation}
where $\Triang(T)$ denotes the minimal triangulated subcategory of $\D$ containing $T$, similarly for $\Triang(\Lambda )$, and $\Thick(T)$ is the minimal thick subcategory of $\D$ containing $T$.
\end{theo}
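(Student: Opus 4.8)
The plan is to use that $\D$ is algebraic in the sense of Keller in order to choose a DG enhancement: realize $\D$ as $H^0(\mc A)$ for a pretriangulated DG category $\mc A$ admitting set-indexed coproducts, fix an h-projective (cofibrant) representative $\tilde T\in\mc A$ of $T$, and set $\Lambda=\underline{\Hom}_{\mc A}(\tilde T,\tilde T)$, the DG endomorphism algebra, so that $H^\bullet(\Lambda)=\Hom^\bullet_{\D}(T,T)$; this is the $\Lambda$ appearing in the statement. At the DG level there is an adjoint pair $(-)\otimes_\Lambda\tilde T\dashv \underline{\Hom}_{\mc A}(\tilde T,-)$, and after deriving it one obtains an adjunction
\[ G=(-)\otimes^{\bb L}_\Lambda\tilde T:\DD(\Lambda)\rightleftarrows\D:\Hom_{\D}(T,-)=F \ . \]
One then sets $\Psi_T:=F$; by construction $\ob\circ\Psi_T=\Hom_{\D}(T,\cdot)$ and the candidate inverse is $G=(-)\otimes_\Lambda T$.

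The core is to show $F$ and $G$ are mutually inverse equivalences. First, both functors preserve set-indexed coproducts: $F$ does because $T$ is compact by hypothesis, and $G$ does as a left adjoint (equivalently, because the tensor product commutes with colimits). Next, evaluate the unit $\eta:\id_{\DD(\Lambda)}\Rightarrow FG$ on the rank-one free module: $FG(\Lambda)=F(\tilde T)=\Lambda$ and $\eta_\Lambda$ is an isomorphism. The full subcategory of $\DD(\Lambda)$ on which $\eta$ is an isomorphism is triangulated, closed under coproducts and under direct summands, and contains $\Lambda$; since $\DD(\Lambda)=\Loc(\Lambda)$, it is all of $\DD(\Lambda)$, so $\eta$ is an isomorphism and $G$ is fully faithful. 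The essential image of $G$ is then a triangulated subcategory of $\D$ closed under coproducts and summands and containing $G(\Lambda)=T$, hence it contains $\Loc(T)$. Finally $\Loc(T)=\D$ because $T$ is a compact generator: for $C\in\D$ the localization triangle $\iota\iota^!C\to C\to C'$ with $C'\in\Loc(T)^\perp$, obtained from Brown representability applied to the compactly generated subcategory $\Loc(T)$, forces $\Hom_{\D}(T[n],C')=0$ for all $n$ and hence $C'=0$ by the generation hypothesis. Thus $G$ is also essentially surjective, so $G$ is an equivalence with quasi-inverse $F=\Psi_T$.

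The two further equivalences claimed in the statement are then formal: an exact equivalence of triangulated categories sending $T$ to $\Lambda$ carries the minimal triangulated subcategory $\Triang(T)$ onto $\Triang(\Lambda)$ and the minimal thick subcategory $\Thick(T)$ onto $\Thick(\Lambda)$, which by definition is $\DD_\Perf(\Lambda)$.

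I expect the main obstacle to be the homotopical bookkeeping in the first paragraph: producing a DG (or stable-model-categorical) enhancement of the abstract algebraic triangulated category $\D$ in which $\Lambda=\Hom_{\D}(T,T)$, the derived tensor product $(-)\otimes^{\bb L}_\Lambda T$, and the adjunction $(G,F)$ are all constructed compatibly, and verifying that the derived DG-level adjunction really induces the stated functor $\Hom_{\D}(T,-)$ on homotopy categories. This is exactly the content of Keller's foundational work, and once it is in place the devissage and generation arguments above are routine.
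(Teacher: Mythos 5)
Your argument is correct: the paper states this result as a citation to Keller's Morita theory and gives no proof of its own, and your reconstruction — deriving the DG-level adjunction $(-)\otimes_{\Lambda}\tilde T\dashv \underline{\Hom}(\tilde T,-)$, checking the unit on $\Lambda$, and running the localizing-subcategory d\'evissage with compactness and generation of $T$ to get full faithfulness and essential surjectivity, after which the restrictions to $\Triang$ and $\Thick$ are formal — is exactly the standard proof in the cited source. The only point you defer, constructing a cocomplete DG enhancement in which $\Lambda$ and the adjunction are realized compatibly, is indeed the content of Keller's foundational work and is legitimately cited rather than reproved.
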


In summary, we obtain mutually inverse triangle equivalences
\[ \Hom_{\D}(T,\cdot):\D \xymatrix{  \ar@<.5ex>[r]^{\cong} & \ar@<.5ex>[l]} \DD(\Lambda) : (\cdot)\otimes_{\Lambda}T  \ . \]
Further, we note these equivalences identify the object $T\in \D$ with the rank 1 free module $\Lambda \in \DD(\Lambda)$.

The primary application of exceptional collections in the present work is to constructing well-behaved tilting objects, which uses the following result: Recall that a collection of objects $T_i$ \emph{classically generates} a category $\mc C$ if $\thick(T_i)=\mc C$.

\begin{theo} \cite{BonV}
	Let $\D$ be a compactly generated triangulated category. Then a set of compact objects classically generates $\D^\cc$ if and only if it generates $\D$.
\end{theo}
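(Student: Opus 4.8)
This is the theorem of Bondal--Van den Bergh, and the plan is to separate the two implications, handle the ``classically generates $\Rightarrow$ generates'' direction by a formal orthogonality argument, and reduce the converse to Neeman's localization theorem (that $\D^\cc$ coincides with the thick closure of \emph{any} set of compact generators of a compactly generated triangulated category). Write $\mathcal S=\{T_i\}$ and $\mathcal T=\thick(\mathcal S)\subseteq\D^\cc$.

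For the direction ``$\thick(\mathcal S)=\D^\cc \Rightarrow \mathcal S$ generates $\D$'', the argument is purely formal. Since $\D$ is compactly generated, fix a set $\mathcal G\subseteq\D^\cc$ that generates $\D$. Let $C\in\D$ satisfy $\Hom_\D(T_i,C[n])=0$ for all $i$ and all $n\in\Z$. The full subcategory $\mathcal N=\{X\in\D:\Hom_\D(X,C[n])=0\ \text{for all }n\}$ is triangulated, shift-stable and closed under direct summands (a summand of a vanishing $\Hom$-group vanishes), hence thick, and it contains every $T_i$; therefore $\mathcal N\supseteq\thick(\mathcal S)=\D^\cc\supseteq\mathcal G$. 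Thus $\Hom_\D(G,C[n])=0$ for all $G\in\mathcal G$ and all $n$, so $C=0$, and $\mathcal S$ generates $\D$.

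For the converse, suppose $\mathcal S$ generates $\D$. Let $\mathcal L=\Loc(\mathcal S)$ be the smallest triangulated subcategory of $\D$ that is closed under set-indexed coproducts and contains $\mathcal S$. Coproducts computed in $\mathcal L$ agree with those in $\D$, so each $T_i$ is compact as an object of $\mathcal L$, and $\mathcal L$ is compactly generated by $\mathcal S$. By Neeman's Bousfield localization theorem the inclusion $\mathcal L\hookrightarrow\D$ admits a right adjoint, giving a semiorthogonal decomposition $\D=\langle\mathcal L^\perp,\mathcal L\rangle$ with $\mathcal L^\perp=\{C\in\D:\Hom_\D(\mathcal L,C)=0\}$. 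Any $C\in\mathcal L^\perp$ satisfies $\Hom_\D(T_i,C[n])=0$ for all $i,n$, hence $C=0$ by hypothesis; so $\mathcal L^\perp=0$ and $\mathcal L=\D$. Thus $\D$ itself is compactly generated by $\mathcal S$, and Neeman's theorem yields $\D^\cc=\thick(\mathcal S)=\mathcal T$.

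The single non-formal ingredient, and the main obstacle, is Neeman's theorem itself. Its proof constructs, for a compact object $X\in\D^\cc$, an $\mathcal S$-cellular tower $0=Y_0\to Y_1\to Y_2\to\cdots$ with $X\cong\operatorname{hocolim}Y_j$ and with the cone of each $Y_j\to Y_{j+1}$ a (possibly infinite) coproduct of shifts of objects of $\mathcal S$; compactness of $X$ forces $\id_X$ to factor through some $Y_n$, exhibiting $X$ as a retract of $Y_n$. The delicate point is to arrange, using compactness again at each finite stage, that only finitely many summands are needed at each step, so that $Y_n\in\thick(\mathcal S)$ and hence $X\in\thick(\mathcal S)$ as a retract. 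I would invoke this finiteness refinement from Neeman's work (equivalently from \cite{BonV}) rather than reproduce it, since everything else in the proof is formal manipulation of triangulated categories.
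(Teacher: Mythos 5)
The paper does not actually prove this statement: it is imported verbatim from Bondal--Van den Bergh \cite{BonV} (where it is in turn attributed to Neeman), and the surrounding text uses it only as a black box, so there is no in-paper argument to compare yours against. Judged on its own, your outline follows the standard route, and your first direction is complete and correct: the right orthogonal of $\mathcal S$ is a thick subcategory, so if $\thick(\mathcal S)=\D^\cc$ it contains a compact generating set of $\D$, forcing any object right orthogonal to $\mathcal S$ to vanish.

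For the converse, however, what you invoke as ``Neeman's theorem'' --- that in a compactly generated triangulated category the compact objects coincide with the thick closure of any set of compact generators --- \emph{is} precisely the hard direction of the statement being proved, so as written your argument reduces the theorem to itself plus a citation; it is a correct roadmap rather than a proof. The detour through $\mathcal L=\Loc(\mathcal S)$ and Bousfield localization is moreover unnecessary here: with the definition of generation used in the paper (vanishing of $\Hom_{\D}(T_i,C[n])$ for all $n$ forces $C=0$), the hypothesis that $\mathcal S$ generates $\D$ already exhibits $\D$ as compactly generated by $\mathcal S$, so the cited statement applies directly. You do correctly identify the genuinely delicate point --- in the cellular-tower argument compactness of $X$ only makes it a retract of a finite stage $Y_n$, and one must use compactness again at each stage to cut the infinite coproducts down to finite ones so that $Y_n\in\thick(\mathcal S)$ --- but you defer exactly this step to the literature. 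Since the paper itself simply cites \cite{BonV}, that deferral is consistent with how the result is used there; just be aware that as a self-contained proof your write-up remains incomplete until that finiteness refinement (Neeman's version of Thomason's trick) is actually carried out.
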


Let $X$ be a smooth variety and $E$ in $\DD^b\Coh(X)$ a classical generator. Then from the preceding theorem, we have:

\begin{corollary}
$E$ is a tilting object for $\DD \QC(X)$.
\end{corollary}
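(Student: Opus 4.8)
The plan is to verify the two defining properties of a tilting object from Definition \ref{tiltingdefn} for $E\in\DD^b\Coh(X)\subset\DD\QC(X)$, treating $\D=\DD\QC(X)$: namely that $E$ is compact in $\DD\QC(X)$, and that $E$ generates $\DD\QC(X)$ in the sense that $\Hom_{\D}(E,C)=0$ implies $C=0$. The hypothesis that $X$ is smooth will be used precisely to guarantee that objects of $\DD^b\Coh(X)$ are perfect complexes, hence compact in $\DD\QC(X)$; this is the standard fact that on a smooth variety $\DD^b\Coh(X)=\DD_\Perf(X)=\DD\QC(X)^{\cc}$. So the first step is to record that $E$, being an object of $\DD^b\Coh(X)$ with $X$ smooth, is a compact object of $\DD\QC(X)$, which is exactly the first bullet of Definition \ref{tiltingdefn}.

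Next I would address generation. Here the input is that $E$ is a \emph{classical generator} of $\DD^b\Coh(X)$, i.e. $\thick(E)=\DD^b\Coh(X)$, and in particular $\thick(E)=\DD\QC(X)^{\cc}$ by smoothness. The point is then to invoke the theorem of Bondal--Van den Bergh quoted above: $\DD\QC(X)$ is a compactly generated triangulated category (it is compactly generated by a single perfect complex, e.g. $\mc O_X$ when $X$ is quasi-compact and separated, or more generally by the structure sheaves of an affine cover), and for a set of compact objects, classically generating the subcategory of compact objects $\DD\QC(X)^{\cc}$ is equivalent to generating $\DD\QC(X)$ in the sense of the second bullet of Definition \ref{tiltingdefn}. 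Since $\{E\}$ classically generates $\DD\QC(X)^{\cc}=\DD^b\Coh(X)$ by hypothesis, it therefore generates $\DD\QC(X)$. Combining the two steps, $E$ satisfies both conditions and is a tilting object for $\DD\QC(X)$, which is the claim.

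I expect the only genuine subtlety — and hence the "main obstacle," though it is a mild one — to be the careful identification of $\DD^b\Coh(X)$ with the compact objects of $\DD\QC(X)$, which is where smoothness enters and which must be cited rather than reproved; without smoothness an object of $\DD^b\Coh(X)$ need not be perfect, so it need not be compact, and the argument collapses. Everything else is a direct application of the two cited theorems (Bondal--Van den Bergh, and the ambient compact generation of $\DD\QC(X)$ by perfect complexes), so the proof is essentially a two-line assembly once that identification is in hand. One should also note in passing that $X$ is implicitly assumed quasi-compact and separated (or at least to have a nice enough structure that $\DD\QC(X)$ is compactly generated), which holds in all the geometric situations considered later in the paper.
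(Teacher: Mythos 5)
Your proof is correct and follows essentially the same route as the paper: smoothness identifies $\DD^b\Coh(X)$ with the compact objects of $\DD\QC(X)$, so $E$ is compact, and the Bondal--Van den Bergh theorem upgrades classical generation of the compacts to generation of $\DD\QC(X)$, which is exactly how the paper deduces the corollary from the preceding theorem.
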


Let $\Lambda=\Hom_{\DD^b\Coh(X)}(E,E)$ be the DG algebra of endomorphisms of $E$. Applying the Morita theory from \cite{Kel2} recalled in Theorem \ref{Morita}, we have:

\begin{corollary} There are triangle equivalences
\begin{align*}
 \DD\QC(X) \xrightarrow{\cong} \DD(\Lambda ) &  & \DD^b\Coh(X) \xrightarrow{\cong} \DD_\Perf(\Lambda ) \end{align*}
intertwining the forgetful functor $\DD(\Lambda \Mod) \to \DGV$ with $\Hom_{\QC(X)}(E,\cdot)$.
\end{corollary}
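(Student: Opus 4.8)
The plan is to invoke the Morita-theoretic machinery of Theorem \ref{Morita} together with the corollary immediately preceding the statement, which asserts that $E$ is a tilting object for $\DD\QC(X)$. First I would observe that $\DD\QC(X)$ is a compactly generated, $\K$-linear, algebraic triangulated category admitting set-indexed coproducts, so that Theorem \ref{Morita} applies with $\D = \DD\QC(X)$ and $T = E$. This immediately produces a triangle equivalence $\Psi_E : \DD\QC(X) \xrightarrow{\cong} \DD(\Lambda)$ with $\Lambda = \Hom_{\DD^b\Coh(X)}(E,E) = \Hom_{\DD\QC(X)}(E,E)$ (the two agree since $E$ is a perfect complex, hence compact, and $\Hom$ out of a compact object commutes with the inclusion $\DD^b\Coh(X) \hookrightarrow \DD\QC(X)$), such that $\ob \circ \Psi_E = \Hom_{\DD\QC(X)}(E,\cdot)$ and the inverse is $(\cdot)\otimes_\Lambda E$. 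This is exactly the first equivalence, together with the claimed intertwining with the forgetful functor.

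For the second equivalence, I would use the last clause of Theorem \ref{Morita}, namely that $\Psi_E$ restricts to $\Psi_E : \Thick(E) \xrightarrow{\cong} \DD_\Perf(\Lambda)$. So the remaining point is purely an identification of subcategories: I must check that $\Thick(E) = \DD^b\Coh(X)$ inside $\DD\QC(X)$. Since $X$ is smooth, $\DD^b\Coh(X) = \DD_\Perf(X)$ is precisely the subcategory of compact objects of $\DD\QC(X)$. The hypothesis that $E$ is a classical generator means $\thick(E) = \DD^b\Coh(X)$ by definition of "classical generator" as recalled in the excerpt just before the statement, i.e.\ $E$ classically generates the compact objects; equivalently, by the theorem of Bondal--Van den Bergh quoted above, a set of compact objects classically generates $\DD\QC(X)^{\cc}$ if and only if it generates $\DD\QC(X)$, which is consistent with $E$ being a tilting object. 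Hence $\Thick(E) = \thick(E) = \DD^b\Coh(X)$, and the restricted equivalence reads $\DD^b\Coh(X) \xrightarrow{\cong} \DD_\Perf(\Lambda)$, again intertwining $\ob$ with $\Hom_{\QC(X)}(E,\cdot)$ by restriction of the statement for the large categories.

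The only genuinely substantive step is the identification $\Thick(E) = \DD^b\Coh(X)$, and within that the input that on a smooth variety the compact objects of $\DD\QC(X)$ are exactly the bounded coherent complexes (perfect $=$ coherent bounded under smoothness). Everything else is a direct citation of Theorem \ref{Morita} and the two corollaries above it. I expect no real obstacle; the main thing to be careful about is bookkeeping between the graded and ungraded versions and between $\DD\QC(X)$ and $\DD^b\Coh(X)$ when asserting that $\Lambda$ computed in the small category agrees with that computed in the large one — this follows from compactness of $E$. I would therefore present the proof as: (i) apply Theorem \ref{Morita} to the tilting object $E$ to get the first equivalence; (ii) note $\Lambda$ is well-defined independently of the ambient category by compactness of $E$; (iii) restrict along $\Thick(E) = \DD^b\Coh(X)$, using smoothness and the classical generation hypothesis, to get the second equivalence; (iv) observe both intertwinings with the forgetful functor are inherited from Theorem \ref{Morita}.
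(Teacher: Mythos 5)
Your proof is correct and follows the same route the paper implicitly takes: the paper states this corollary immediately after saying "Applying the Morita theory from \cite{Kel2} recalled in Theorem \ref{Morita}, we have:" and gives no further argument, so the content is precisely your steps (i)--(iv), with (ii)--(iii) being the bookkeeping details the paper suppresses. One tiny wording quibble: the agreement $\Hom_{\DD^b\Coh(X)}(E,E)=\Hom_{\DD\QC(X)}(E,E)$ follows just from full faithfulness of the inclusion $\DD^b\Coh(X)\hookrightarrow\DD\QC(X)$, not from compactness; compactness is what you need for $E$ to be a tilting object and for the thick-subcategory restriction.
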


\subsection{Perverse coherent sheaves and non-commutative resolutions}\label{NCCRsec}

In this section, we recall descriptions of categories of coherent sheaves in terms of non-commutative algebras, following \cite{Bdg1} and \cite{VdB1}, \cite{VdB2} throughout. Throughout this section, let $f:Y\to X$ a projective map of Noetherian schemes satisfying the conditions
\begin{enumerate}
	\item $f_*\mc O_Y = \mc O_X$, and
	\item the fibres of $f$ are at most one dimensional.
\end{enumerate}

We recall the notation $\Homi_{\DD^b\Coh(X)}(E,F)\in \DD^b\Coh(Y)$ for the internal $\Hom$ object in $\DD^b\Coh(Y)$. In particular, for any object $E\in \DD^b\Coh(Y)$, the internal endomorphism algebra
\[  \underline{\Lambda}_E =\Homi_{\DD^b\Coh(Y)}(E,E) \in \Alg_\Ass(\DD^b\Coh(Y)) \]
defines a DG associative algebra object internal to $\DD^b\Coh(Y)$, and we have
$$ \Homi_{\DD^b\Coh(Y)}(E,F) \in\underline{\Lambda}_E \Mod(\DD^b\Coh(Y)) $$
for each $F\in \DD^b\Coh(Y)$. Similarly, we have 
\[ f_*\underline{\Lambda}_E\in \Alg_\Ass(\DD^b\Coh(X)) \quad\quad \text{and}\quad\quad f_*\Homi_{\DD^b\Coh(Y)}(E,F) \in (f_*\underline{\Lambda}_E)\Mod(\DD^b\Coh(X)) \ .\]
In this context the first main result of \cite{VdB1}, following \cite{Bdg1}, is as follows:

\begin{theo}\label{derivedVdBequiv}\cite{VdB1}
	There is a vector bundle $E$ on $Y$ such that for $\mc A= f_*\Homi_{\DD^b\Coh(Y)}(E,E)$ there are mutually inverse equivalences of categories
	\[f_*\Homi_{\DD^b\Coh(Y)}(E,\cdot):\xymatrix{ \DD^b\Coh(Y) \ar@<.5ex>[r]^{\cong} & \ar@<.5ex>[l]  \DD^b \mc A\Mod} : f^{-1}(\cdot)\otimes_{f^{-1}(\mc A)} E \ . \] 
\end{theo}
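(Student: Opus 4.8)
The plan is to follow the argument of Van den Bergh \cite{VdB1}, which in turn rests on Bridgeland \cite{Bdg1}: produce a vector bundle $E$ on $Y$ that is a \emph{relative tilting bundle} for $f$ --- meaning $E$ classically generates $\DD^b\Coh(Y)$ and $f_*\Homi_{\DD^b\Coh(Y)}(E,E)$ is concentrated in cohomological degree $0$ --- and then apply a relative version of the Morita theory recalled in Theorem \ref{Morita}. The object $E$ is to be chosen as a projective generator of the heart $\Per(Y/X)\subset \DD^b\Coh(Y)$ of the perverse coherent t-structure; the hypotheses $f_*\mc O_Y=\mc O_X$ and $\dim f^{-1}(x)\leq 1$ are exactly what guarantee that this t-structure exists and that such a generator can be found.

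First I would construct $E$. Since the claimed equivalence is local on $X$ and behaves well under flat base change, one may assume $X=\Spec R$ is local, and then build $E=\mc O_Y\oplus E_1$ where $E_1$ is obtained as a universal extension of suitable line bundles on the (at most one-dimensional) exceptional fibre, engineered to annihilate the higher direct images $R^{>0}f_*$; this is a relative analogue of Beilinson's resolution of the diagonal on $\bb P^1$ \cite{Bei2}. The points to check are: $E$ is locally free; $f_*\Homi_{\DD^b\Coh(Y)}(E,E)$ has vanishing higher cohomology (the relative $\Ext^{>0}$-vanishing), via a fibrewise computation using one-dimensionality of the fibres together with Grothendieck vanishing; and $E$ generates $\DD\QC(Y)$, which follows from the Bondal--Van den Bergh criterion once one knows that the summands of $E$ detect all the simple objects of $\Per(Y/X)$.

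Granting such an $E$, the equivalence is essentially formal. Set $\mc A=f_*\Homi_{\DD^b\Coh(Y)}(E,E)$, which by the above is an honest sheaf of $\mc O_X$-algebras, coherent as an $\mc O_X$-module, and regard $E$ as a left $f^{-1}\mc A$-module. The functor $f_*\Homi_{\DD^b\Coh(Y)}(E,\cdot)$ sends $E$ to the rank-one free right $\mc A$-module, is fully faithful on $\Thick(E)$ by the $\Ext$-vanishing, and is essentially surjective because $\mc A$ is a compact generator of $\DD(\mc A)$ while $E$ generates $\DD\QC(Y)$; this is precisely the relative form of Theorem \ref{Morita}, and identifies the inverse as $f^{-1}(\cdot)\otimes_{f^{-1}(\mc A)}E$. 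Finally, since $E$ is a vector bundle (hence perfect over $\mc O_Y$) and $\mc A$ is module-finite over $\mc O_X$, one checks that both functors preserve the subcategories of objects with bounded, coherent cohomology, yielding the asserted equivalence $\DD^b\Coh(Y)\xrightarrow{\cong}\DD^b \mc A\Mod$.

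The main obstacle is the first step: the construction of $E$ and the verification of relative $\Ext^{>0}$-vanishing and of generation. This is where the geometric hypotheses on $f$ genuinely enter, through the existence and exactness properties of the perverse coherent t-structure and a careful analysis of line bundles on the exceptional fibres, and it is the part most specific to the one-dimensional fibre situation. Once $E$ has been produced with these properties, the passage to the derived module category over $\mc A$ is a routine application of relative Morita theory and of the Bondal--Van den Bergh generation theorem quoted above.
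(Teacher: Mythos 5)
Your proposal follows the same route the paper relies on: it simply cites Van den Bergh \cite{VdB1}, where $E$ is exactly the projective generator of the perverse heart $\Perv(Y/X)$ (later recalled in Section \ref{Geosetupsec}), and the equivalence is obtained from tilting/Morita theory as in Theorem \ref{Morita}, just as you outline. So your sketch is correct and essentially reproduces the cited argument, with the genuinely hard content (construction of $E$, relative $\Ext^{>0}$-vanishing, generation) correctly identified as the part done in \cite{VdB1}.
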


Note we are considering $\mc A \in \Alg_\Ass(\DD^b\Coh(X))$ as above and use the simplified notation $\DD^b\mc A\Mod:= \mc A\Mod(\DD^b\Coh(X))$. We will give a concrete description of the vector bundle $E$ and explain the details of this equivalence in the more specific setting of Section \ref{Kozpatsec} below.

It is natural to ask what the standard $t$-structure on $\DD^b \mc A\Mod$ corresponds to on $\DD^b\Coh(X)$. Towards giving a geometric description, we recall the Beilinson-Bernstein-Deligne theorem on gluing t-structures along recollements: 

\begin{theo}\label{BBD}\cite{BBD} Consider three triangulated categories with exact functors
	\[ \mc D_F \xrightarrow{i_{*}} \mc D \xrightarrow{j^{*}} \mc D_U \]
	satisfying the following conditions:
	\begin{enumerate}
		\item there exist left and right adjoints $i^*,i^!$ and $j_!,j_*$ to both $i_{*}$ and $j^*$, respectively;
		\item $j^*\circ i_* = 0$;
		\item for each $E\in \mc D$, there exist exact triangles
		\[ j_!j^*E \to E  \to i_*i^*E \xrightarrow{[1]} \quad\quad \text{and}\quad\quad i_*i^! E \to E \to j_*j^* E \xrightarrow{[1]} \quad\quad \textup{; and} \]
		\item $i_*,j_!,j_*$ are fully faithful.
	\end{enumerate}

	Then for each pair of $t$-structures $(\mc D^{\leq 0}_F,\mc D^{\geq 0}_F)$ and $(\mc D^{\leq 0}_U,\mc D^{\geq 0}_U)$ on $\mc D_F$ and $\mc D_U$, we have that
	\[ \mc D^{\leq 0} :=\{ E \in \D | j^*E \in \D_U^{\leq 0}\text{ and } i^*E \in \D_F^{\leq 0} \} \quad\quad \text{and}\quad\quad
		\mc D^{\geq 0}  =\{ E \in \D | j^*E \in \D_U^{\geq 0}\text{ and } i^!E \in \D_F^{\geq 0} \} 
	\]
define a $t$-structure on $\mc D$.
\end{theo}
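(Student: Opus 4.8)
The plan is to verify directly the three axioms of a t-structure for the pair $(\mc D^{\leq 0},\mc D^{\geq 0})$, after first extracting from the recollement axioms (1)--(4) the formal identities one expects in this situation. So the first step is: full faithfulness of $i_*$, $j_!$, $j_*$ gives canonical isomorphisms $i^*i_*\cong\id$, $i^!i_*\cong\id$, $j^*j_!\cong\id$, $j^*j_*\cong\id$, and, combined with $j^*i_*=0$, a one-line adjunction argument yields the vanishings $i^!j_*=0$ and $i^*j_!=0$; for instance $\Hom(M,i^!j_*N)\cong\Hom(i_*M,j_*N)\cong\Hom(j^*i_*M,N)=0$ for all $M\in\mc D_F$. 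I will also use the truncation functors $\tau^{\leq n}_F,\tau^{\geq n}_F$ of $\mc D_F$ and $\tau^{\leq n}_U,\tau^{\geq n}_U$ of $\mc D_U$ together with their functorial triangles.

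Next I would dispose of the two easy axioms. Stability under shifts --- $\mc D^{\leq 0}\subset\mc D^{\leq 1}$ and $\mc D^{\geq 1}\subset\mc D^{\geq 0}$ --- is immediate, since $i^*,i^!,j^*$ are exact and the given t-structures on $\mc D_F$ and $\mc D_U$ have the corresponding inclusions. For $\Hom$-orthogonality, take $A\in\mc D^{\leq 0}$ and $B\in\mc D^{\geq 1}$ and apply $\Hom(-,B)$ to the first recollement triangle $j_!j^*A\to A\to i_*i^*A\xrightarrow{[1]}$: by adjunction the two outer terms become $\Hom_{\mc D_U}(j^*A,j^*B)$ and $\Hom_{\mc D_F}(i^*A,i^!B)$, which both vanish by orthogonality in $\mc D_U$ and $\mc D_F$, so the long exact sequence forces $\Hom(A,B)=0$.

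The real content is the existence, for each $E\in\mc D$, of a triangle $A\to E\to B\xrightarrow{[1]}$ with $A\in\mc D^{\leq 0}$ and $B\in\mc D^{\geq 1}$, which I would build in two stages. First let $E'$ be the fibre of the composite $E\to j_*j^*E\to j_*\tau^{\geq 1}_U j^*E$; applying $j^*$ and using $j^*j_*\cong\id$ gives $j^*E'\cong\tau^{\leq 0}_U j^*E\in\mc D^{\leq 0}_U$. Then let $A$ be the fibre of $E'\to i_*i^*E'\to i_*\tau^{\geq 1}_F i^*E'$; applying $i^*$ gives $i^*A\cong\tau^{\leq 0}_F i^*E'\in\mc D^{\leq 0}_F$, while applying $j^*$ and using $j^*i_*=0$ gives $j^*A\cong j^*E'\in\mc D^{\leq 0}_U$, so $A\in\mc D^{\leq 0}$. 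Finally set $B$ to be the cone of $A\to E$; the octahedron on $A\to E'\to E$ yields a triangle $i_*\tau^{\geq 1}_F i^*E'\to B\to j_*\tau^{\geq 1}_U j^*E\xrightarrow{[1]}$, and applying $j^*$ (which kills the first term) gives $j^*B\cong\tau^{\geq 1}_U j^*E\in\mc D^{\geq 1}_U$, while applying $i^!$ (using $i^!i_*\cong\id$ on the first term and $i^!j_*=0$ on the third) gives $i^!B\cong\tau^{\geq 1}_F i^*E'\in\mc D^{\geq 1}_F$, so $B\in\mc D^{\geq 1}$.

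The hard part will be exactly this last step: one has to order the two truncations so that the $F$-truncation does not undo the $U$-truncation already arranged, and so that the third vertex $B$ lies in $\mc D^{\geq 1}$ on both sides --- and both of these rely on the recollement vanishings $j^*i_*=0$ and $i^!j_*=0$ from the first step, together with exactness of $i^!$ and $j^*$ so that they can be pushed through the octahedral triangle. Once the truncation triangles exist, the remaining bookkeeping (functoriality, and so on) is standard and is not needed for the bare statement that $(\mc D^{\leq 0},\mc D^{\geq 0})$ is a t-structure.
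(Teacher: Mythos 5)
Your proof is correct: the paper itself gives no argument for this statement, citing \cite{BBD} instead, and what you have written is precisely the classical gluing argument of Beilinson--Bernstein--Deligne (shift stability and orthogonality via the recollement triangles and the adjunction vanishings $i^*j_!=0$, $i^!j_*=0$, followed by the two-stage truncation $E'=\mathrm{fib}(E\to j_*\tau^{\geq 1}_U j^*E)$, $A=\mathrm{fib}(E'\to i_*\tau^{\geq 1}_F i^*E')$ and the octahedron identifying $j^*B$ and $i^!B$). Nothing further is needed.
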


\begin{defn}
	A \emph{recollement} of triangulated categories is a triple of triangulated categories with functors satisfying the hypotheses of the preceding theorem.
\end{defn}

\begin{rmk}
	The notation in the statement of Theorem \ref{BBD} is motivated by the example of the recollement given by the derived categories of constructible sheaves on complementary closed and open embeddings $i:F \to X$ and $j:U=X\setminus F  \to X$.
\end{rmk}

For our application, we take $\D=\DD^b\Coh(Y)$, $\D_U=\DD^b\Coh(X)$, and $\D_F$ the full subcategory
\[ \mc C = \{ E\in \DD^b\Coh(Y)\ |\ f_*E=0 \} \ .\]
It was observed in \cite{Bdg1} that these categories constitute a recollement; we outline the proof to fix notation, and for completeness as it was omitted in \emph{loc. cit.}:

\begin{prop}\label{Bdgrecprop}\cite{Bdg1} The categories and functors
	\begin{equation}
		\mc C \xrightarrow{\iota} \DD^b\Coh(Y) \xrightarrow{f_*}\DD^b\Coh(X) \ , \label{threc}
	\end{equation}
	define a recollement of triangulated categories, where $\iota:\mc C \to \DD^b\Coh(Y)$ is the inclusion of the full subcategory.
\end{prop}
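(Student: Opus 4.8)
The plan is to exhibit the six functors of a recollement explicitly and then to deduce every axiom of Theorem \ref{BBD} from two basic isomorphisms encoding the hypotheses $f_*\mathcal O_Y=\mathcal O_X$ and $\dim f^{-1}(x)\le 1$. For the open half I would take $j^*=f_*=Rf_*$, with left adjoint $j_!=Lf^*$ and right adjoint $j_*=f^!$ the twisted inverse image furnished by Grothendieck duality for the projective morphism $f$. The one-dimensional fibre hypothesis makes $Rf_*$ carry $\DD^b\Coh(Y)$ into $\DD^b\Coh(X)$, since then its cohomological amplitude is $[0,1]$; the corresponding well-definedness statements for $Lf^*$ and $f^!$ on bounded coherent categories are where foundational care is needed, and I return to this below. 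The functor $i_*$ of the closed part is simply the inclusion $\iota$, which is fully faithful as $\mc C$ is by definition a full subcategory, and which is a triangulated subcategory closed under summands since $f_*$ is exact and additive. Moreover $j^*\circ i_*=f_*\circ\iota=0$ holds tautologically from $\mc C=\{E\mid f_*E=0\}$, which is axiom (2) of Theorem \ref{BBD}.

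I would next establish full faithfulness of $j_!$ and $j_*$, which together with the previous remark on $i_*$ gives axiom (4). By the projection formula and $f_*\mathcal O_Y=\mathcal O_X$,
\[ j^*j_! \;=\; Rf_*\,Lf^* \;\cong\; (-)\otimes^{L}_{\mathcal O_X}Rf_*\mathcal O_Y\;\cong\;\mathrm{id}_{\DD^b\Coh(X)}\ , \]
so the unit $\mathrm{id}\to j^*j_!$ is an isomorphism and hence, by the triangle identities, $j_!=Lf^*$ is fully faithful. Dually, a standard Grothendieck duality computation --- using $\omega_Y^\bullet\cong f^!\omega_X^\bullet$, the duality isomorphism $Rf_*\,\mathcal{RH}om_Y(Lf^*(-),\omega_Y^\bullet)\cong\mathcal{RH}om_X(Rf_*Lf^*(-),\omega_X^\bullet)$, and again the projection formula with $f_*\mathcal O_Y=\mathcal O_X$ --- shows the counit $j^*j_*=Rf_*\,f^!\to\mathrm{id}$ is an isomorphism, so $j_*=f^!$ is fully faithful.

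With these in hand the remaining structure follows formally by the standard Bousfield localization formalism. For $E\in\DD^b\Coh(Y)$ set
\[ i^!E:=\mathrm{fib}\bigl(E\xrightarrow{\eta_E}j_*j^*E\bigr) \qquad\text{and}\qquad i^*E:=\mathrm{cofib}\bigl(j_!j^*E\xrightarrow{\varepsilon_E}E\bigr)\ , \]
using the unit $\eta$ of $(j^*,j_*)$ and the counit $\varepsilon$ of $(j_!,j^*)$, so that the two exact triangles of axiom (3) hold by construction. Applying the exact functor $j^*$ to these triangles and invoking $j^*j_!\cong\mathrm{id}$ and $j^*j_*\cong\mathrm{id}$ from the previous step --- whence $j^*\eta_E$ and $j^*\varepsilon_E$ are isomorphisms by the triangle identities --- shows $j^*i^!E=j^*i^*E=0$, so $i^!E,i^*E\in\mc C$. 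Then for any $C\in\mc C$, applying $\Hom_{\DD^b\Coh(Y)}(C,-)$ and $\Hom_{\DD^b\Coh(Y)}(-,C)$ to these triangles and using $\Hom_{\DD^b\Coh(Y)}(C,j_*j^*E)\cong\Hom_{\DD^b\Coh(X)}(j^*C,j^*E)=0$ and $\Hom_{\DD^b\Coh(Y)}(j_!j^*E,C)\cong\Hom_{\DD^b\Coh(X)}(j^*E,j^*C)=0$ (both by $j^*C=0$) yields the adjunction isomorphisms $\Hom(C,i^!E)\cong\Hom(i_*C,E)$ and $\Hom(i^*E,C)\cong\Hom(E,i_*C)$, that is $i^*\dashv i_*\dashv i^!$, which is axiom (1). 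This completes the verification.

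The one genuine obstacle is keeping $j_!=Lf^*$ and $j_*=f^!$ inside the bounded coherent categories over a general, possibly singular, Noetherian $X$: unlike $Rf_*$, derived pullback need not preserve $\DD^b\Coh$ without a finite Tor-dimension hypothesis on $f$, and $f^!$ likewise requires finite Tor-dimension or the existence of dualizing complexes for the duality argument above to make sense on $\DD^b\Coh$. The clean route, which I expect is implicit in \cite{Bdg1} and \cite{VdB1}, is to first establish the recollement for the unbounded categories $\DD\QC(Y)$ and $\DD\QC(X)$ --- where all six functors and their adjunctions exist unconditionally, e.g.\ by Neeman's Brown representability for the proper morphism $f$ --- and then check that this recollement restricts to $\DD^b\Coh$ using precisely the hypotheses $\dim f^{-1}(x)\le 1$ and finite Tor-dimension of $f$, both of which are automatic in the toric Calabi--Yau situation of interest. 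Once that reduction is granted, the formal argument above applies verbatim.
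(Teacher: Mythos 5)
Your proposal is correct and follows essentially the same route as the paper: identify $j^*=f_*$ with adjoints $f^*$ and $f^!$ furnished by coherent duality, use the projection formula together with $f_*\mc O_Y\cong \mc O_X$ to get $f_*f^*\cong \id$ and $f_*f^!\cong\id$ (hence full faithfulness), and define the closed-part adjoints as the cone of $f^*f_*E\to E$ and the cocone of $E\to f^!f_*E$, whose images under $f_*$ vanish so that they land in $\mc C$. Your closing caveat about $Lf^*$ and $f^!$ preserving $\DD^b\Coh$ is a legitimate foundational point that the paper leaves implicit (citing coherent duality for the projective morphism $f$), but it does not change the argument.
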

\begin{proof} By definition, we have $f_*\circ \iota=0$. The functor $f_*=f_!$ by projectivity, and admits left and right adjoints $f^*$ and $f^!$, respectively, by coherent duality. Moreover, for each object $E\in \DD^b\Coh(Y)$ we have
\[ 	f_*f^* E  \cong E \otimes f_* \mc O_Y \cong E  \ ,  \]
by the hypothesis that $f_*\mc O_Y=\mc O_X$, and similarly $f_*f^!E\cong E$. It follows similarly that $f^*$ and $f^!$ are fully faithful.

Further, for each object $E\in \DD^b\Coh(Y)$ define
\[ \tilde C_E = \textup{cone}\left[ f^*f_*E \to E \right] \quad\quad \text{and}\quad\quad C_E=\textup{cocone}\left[ E \to f^!f_*E \right] \]
so that we have exact triangles
\begin{equation}\label{Bdgrecseqeqn}
	 f^*f_*E \to E \to \tilde C_E \xrightarrow{[1]} \quad\quad\text{and}\quad\quad  C_E \to E \to f^!f_* E \xrightarrow{[1]}  \ .
\end{equation}
By the preceding paragraph, we have $f_*C_E=f_*\tilde C_E=0$, so that $C_E,\tilde C_E \in \mc C$, and thus these exact triangles imply left and right admissibility, respectively, of the inclusion of the full subcategory $\iota: \mc C \to \DD^b\Coh(Y)$. In particular, the left and right adjoints
\[ \iota^L,\iota^R:\DD^b\Coh(Y) \to \mc C\quad\quad \text{are defined by}\quad\quad E \mapsto \tilde C_E, C_E \ , \] respectively, and by construction we have the desired exact triangles.
\end{proof}

Fix an integer $k\in \bb Z$ and consider the $k^{th}$ shift of the standard $t$-structure on $\mc C$ inherited as a subcategory of $\mc D$:
\[ \mc C^{k,\leq 0 }:= \mc C^{\leq k} \quad\quad \text{and}\quad\quad \mc C^{k,\geq 0}:= \mc C^{\geq k} \ . \]

\begin{defn}\label{pervcohdef} The perverse coherent $t$-structure is defined by
\begin{align}
		\DD^b\Coh(Y)^{k,\leq 0} & := \{ E \in \DD^b\Coh(Y) \ |\  f_*E \in \DD^b\Coh(X)^{\leq 0}\text{ and } \iota^L E \in \mc C^{k,\leq 0} \} &  ,\\
	\DD^b\Coh(Y)^{k,\geq 0} &:=\{ E \in \DD^b\Coh(Y) \  |  \  f_*E \in \DD^b\Coh(X)^{\geq 0}\text{ and } \iota^R E \in \mc C^{k,\geq 0} \} & .
\end{align}
\end{defn}

\noindent Note that this indeed defines a t-structure, by Proposition \ref{Bdgrecprop} and Theorem \ref{BBD}.

Towards giving a more concrete description, we have:
\begin{prop} For each $E\in  $, the conditions $\iota^L E \in \mc C^{k,\leq 0}$ and $\iota^R E \in \mc C^{k,\geq 0}$ are equivalent to the conditions
	\[ \Hom(E,C)=0 \text{ for each $C\in \mc C^{>k}$} \quad\quad \text{and}\quad\quad  \Hom(C,E)=0 \text{ for each $C\in \mc C^{<k}$} \  ,  \]
respectively.
\end{prop}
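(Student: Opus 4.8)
The plan is to use the adjunction triangles from Proposition~\ref{Bdgrecprop} together with the defining adjunction $(\iota,\iota^R)$, respectively $(\iota^L,\iota)$, to translate $\Hom$-vanishing against all of $\mc C$ into $\Hom$-vanishing against $\mc C^{>k}$ or $\mc C^{<k}$. Concretely, for the second condition: since $\iota$ is fully faithful with right adjoint $\iota^R$, for any $C\in\mc C$ we have a natural isomorphism $\Hom_{\mc D}(C,E)\cong \Hom_{\mc C}(C,\iota^R E)$, where on the left $C$ is viewed via $\iota$. Now $\iota^R E\in\mc C^{k,\geq 0}=\mc C^{\geq k}$ if and only if $\Hom_{\mc C}(C,\iota^R E)=0$ for every $C\in\mc C^{<k}$ (this is simply the characterization of the aisle/co-aisle of the standard $t$-structure on $\mc C$: an object lies in $\mc C^{\geq k}$ iff it has no maps from anything in $\mc C^{<k}$). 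Combining the two displayed isomorphisms gives the equivalence $\iota^R E\in\mc C^{k,\geq 0}\iff \Hom(C,E)=0$ for all $C\in\mc C^{<k}$. The argument for the first condition is dual: using the left adjoint $\iota^L$ and the isomorphism $\Hom_{\mc D}(E,C)\cong\Hom_{\mc C}(\iota^L E,C)$, and the fact that $\iota^L E\in\mc C^{\leq k}$ iff it admits no maps to objects of $\mc C^{>k}$.

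First I would record the adjunction isomorphisms explicitly, being careful that the adjoints $\iota^L,\iota^R$ produced in the proof of Proposition~\ref{Bdgrecprop} are genuine adjoints to the inclusion $\iota$, so that $\Hom_{\mc D}(\iota C', E)=\Hom_{\mc C}(C',\iota^R E)$ and $\Hom_{\mc D}(E,\iota C')=\Hom_{\mc C}(\iota^L E, C')$ for all $C'\in\mc C$; since $\iota$ is the inclusion of a full subcategory, $\Hom_{\mc D}(\iota C', \iota C'')=\Hom_{\mc C}(C',C'')$, so no information is lost. Next I would invoke the elementary $t$-structure fact that for a $t$-structure $(\mc C^{\leq 0},\mc C^{\geq 0})$ one has $M\in\mc C^{\geq k}$ iff $\Hom(N,M)=0$ for all $N\in\mc C^{\leq k-1}=\mc C^{<k}$, and dually $M\in\mc C^{\leq k}$ iff $\Hom(M,N)=0$ for all $N\in\mc C^{\geq k+1}=\mc C^{>k}$ — applied here to the standard $t$-structure on $\mc C$ inherited from $\mc D$. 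Chaining these two steps yields both equivalences.

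The only real subtlety — and the step I expect to require the most care rather than cleverness — is making sure the quantifier ``for each $C\in\mc C^{>k}$'' in the statement, which ranges over objects of $\mc D$ that happen to lie in $\mc C^{>k}$, matches the quantifier ``for all $C'\in\mc C$ with $C'\in\mc C^{>k}$'' appearing after the adjunction isomorphism; this is immediate because $\mc C^{>k}\subset\mc C$ by definition of the induced $t$-structure on $\mc C$, but it should be stated so the reader sees that $\mc C^{>k}$, $\mc C^{<k}$ really are the truncation subcategories of $\mc C$ and not of $\mc D$. I would also note that the statement as displayed has a small typo — ``For each $E\in  $'' is missing its category, which should be $\DD^b\Coh(Y)$ — and the proof is uniform over all such $E$ with no finiteness or boundedness hypothesis beyond membership in $\DD^b\Coh(Y)$. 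No step here is genuinely hard; the whole proposition is formal manipulation of adjunctions and $t$-structures, and the proof should be only a few lines once the adjunction isomorphisms are in hand.
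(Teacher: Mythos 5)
Your proof is correct and follows essentially the same route as the paper: the paper likewise combines the orthogonality characterization $\mc C^{\leq k}={}^{\perp}\mc C^{>k}$ (and its dual) with the identification $\Hom(\iota^L E,C)\cong\Hom(E,C)$ for $C\in\mc C$, the only cosmetic difference being that the paper verifies this last isomorphism directly from the exact triangle $f^*f_*E\to E\to \tilde C_E$ of Proposition \ref{Bdgrecprop} rather than quoting the adjunction abstractly. Your remark about the typo (``$E\in\DD^b\Coh(Y)$'') is also accurate.
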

\begin{proof} By definition $\mc C^{\leq k}= {^{\perp}}{\mc C}^{>k}$, so that $\iota^L E \in \mc C^{k,\leq 0}$ if and only if $\Hom(\iota^L E,C)=0$ for each $C\in {\mc C}^{>k}$. Moreover, by construction we have $ \Hom(\tilde C_E, C) = \Hom(E,C) $ for each object $C\in \mc C$ since $\mc C = {^\perp}(f^*\DD^b\Coh(X))$. The proof of the equivalence of the two latter conditions follows by the dual argument.
\end{proof}

In summary, we obtain the concrete description of the perverse coherent $t$-structure:
	\begin{align*}
		\DD^b\Coh(Y)^{k,\leq 0} & := \{ E \in \DD^b\Coh(Y) \ |\  f_*E \in \DD^b\Coh(X)^{\leq 0}\text{ and } \Hom(E,C)=0 \text{ for each $C\in \mc C^{>k}$} \} &  ,\\
		\DD^b\Coh(Y)^{k,\geq 0} &:=\{ E \in \DD^b\Coh(Y) \  |  \  f_*E \in \DD^b\Coh(X)^{\geq 0}\text{ and }  \Hom(C,E)=0 \text{ for each $C\in \mc C^{<k}$} \} & .
	\end{align*}

\begin{defn}\label{PervCohdefn} The category of perverse coherent sheaves on $f:Y\to X$ is defined by
	\[\Perv^{k}(Y/X) : = \DD^b\Coh(Y)^{k,\leq 0} \cap  \DD^b\Coh(Y)^{k,\geq 0}  \ .\]
\end{defn}
In particular, we let\[ \Perv(Y/X)=\Perv^{-1}(Y/X) \]denote the category of perverse coherent sheaves for the integer $k=-1$, and use the term perverse coherent sheaves on $f:Y\to X$ to refer to this case, unless specified otherwise. Also, we will often drop the dependence on $f$ and $X$ from the notation and write simply \[\Perv(Y):=\Perv(Y/X) \ .\]

In this case, we obtain the following further simplification:

\begin{corollary}\label{PervCohcoro} The category of perverse coherent sheaves on $f:X\to Y$ is given by
\[\hspace*{-1.5cm} \Perv(Y)= \{ E \in \DD^{[-1,0]}\Coh(Y) \ | \ R^0f_*H^{-1}E=0,\ R^1f_* H^0E=0,\ \Hom( H^0E,C)=0 \text{ for any $C\in \mc C^\heartsuit$} \} \ . \]
In particular, the right hand side is an abelian category.
\end{corollary}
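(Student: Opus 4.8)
The plan is to unwind the definition of $\Perv(Y)=\Perv^{-1}(Y/X)=\DD^b\Coh(Y)^{-1,\leq 0}\cap \DD^b\Coh(Y)^{-1,\geq 0}$ using the concrete description of the perverse coherent $t$-structure recorded just above, specialized to $k=-1$, and then simplify each of the four conditions. Recall from the displayed description that $E\in\Perv(Y)$ iff $f_*E\in\DD^b\Coh(X)^{\leq 0}$, $f_*E\in\DD^b\Coh(X)^{\geq 0}$, $\Hom(E,C)=0$ for all $C\in\mc C^{>-1}$, and $\Hom(C,E)=0$ for all $C\in\mc C^{<-1}$. The first simplification I would carry out is cohomological amplitude: since $f$ has fibres of dimension at most one, $f_*$ has cohomological amplitude $[0,1]$, and combined with the standard argument that $\mc C$ is generated by objects supported on fibres one shows that membership in the heart forces $E\in\DD^{[-1,0]}\Coh(Y)$. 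Here one uses that $\Hom(C,E)=0$ for $C\in\mc C^{<-1}$ bounds $E$ from below by $-1$ (test against shifts of objects of $\mc C^\heartsuit$), and $\Hom(E,C)=0$ for $C\in\mc C^{>-1}$ bounds $E$ from above by $0$; together with $f_*E$ being in degrees $[0,1]$ this pins down the amplitude.

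Next I would rewrite the two conditions on $f_*E$. Given $E\in\DD^{[-1,0]}\Coh(Y)$, the spectral sequence $R^pf_*H^qE\Rightarrow H^{p+q}f_*E$ has only the rows $q=-1,0$ and columns $p=0,1$, so it degenerates enough to give exact sequences expressing $H^{-1}f_*E$, $H^0f_*E$, $H^1f_*E$ in terms of $R^0f_*H^{-1}E$, $R^1f_*H^{-1}E$, $R^0f_*H^0E$, $R^1f_*H^0E$. The condition $f_*E\in\DD^b\Coh(X)^{\geq 0}$ says $H^{-1}f_*E=0$, which forces $R^0f_*H^{-1}E=0$; the condition $f_*E\in\DD^b\Coh(X)^{\leq 0}$ says $H^1f_*E=0$, which forces $R^1f_*H^0E=0$ (note $R^1f_*H^{-1}E$ then contributes to $H^0f_*E$, which is allowed). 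One should check that, conversely, these two vanishings together with the amplitude restriction recover both $f_*$-conditions — this is a short diagram chase in the above spectral sequence. The remaining condition $\Hom(C,E)=0$ for $C\in\mc C^{<-1}$ is then automatic from $E\in\DD^{\geq -1}$ together with $H^{-1}E$ having no subobjects in $\mc C^\heartsuit$, which in turn follows from $R^0f_*H^{-1}E=0$ (an object of $\mc C^\heartsuit$ mapping to $H^{-1}E$ would give, after applying $f_*$, nothing, but since objects of $\mc C$ are detected by... ) — I would verify this by the $\perp$-characterization: $\mc C^{<-1}={}^{\perp}$ of the relevant piece, or directly that $R^0f_*H^{-1}E=0$ already implies $\Hom(C^\heartsuit[1],E)=0$. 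The last surviving condition $\Hom(H^0E,C)=0$ for $C\in\mc C^\heartsuit$ cannot be absorbed and stays as stated.

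The main obstacle I anticipate is bookkeeping the interaction between the Beilinson-Bernstein-Deligne conditions phrased via $\iota^L,\iota^R$ (equivalently via orthogonality against $\mc C^{>k}$, $\mc C^{<k}$) and the naive cohomological conditions: one must be careful that passing between "$\iota^LE\in\mc C^{k,\leq 0}$" and "$H^0E$ has no quotients/subobjects in $\mc C^\heartsuit$" is done in the correct variance, since $\mc C$ sits as a subcategory of $\DD^b\Coh(Y)$ with a nonstandard embedding and $\iota^L,\iota^R$ are genuinely different. Concretely, the subtle point is showing that for $E$ of amplitude $[-1,0]$ the condition $\Hom(E,C)=0$ for all $C\in\mc C^{>-1}$ is equivalent to $R^1f_*H^0E=0$ \emph{and} $\Hom(H^0E,C)=0$ for $C\in\mc C^\heartsuit$ simultaneously (the degree-$0$ and degree-$\geq 1$ parts of $\mc C^{>-1}$ contributing the two separate conditions), and dually that $\Hom(C,E)=0$ for $C\in\mc C^{<-1}$ is equivalent to $R^0f_*H^{-1}E=0$. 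I would handle this by the long exact sequence in $\Hom(E,-)$ applied to the truncation triangle $H^{-1}E[1]\to E\to H^0E$, reducing each statement to $\Ext$-vanishing between a single sheaf and objects of $\mc C$ concentrated in one degree, and then invoking adjunction $\Hom_Y(G,f^*H)=\Hom_X(f_*G,H)$ together with $f_*\circ\iota=0$ to convert these into the stated higher-direct-image vanishings. Finally, abelianness of the right-hand side is immediate since it is by construction the heart of a $t$-structure.
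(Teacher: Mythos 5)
The paper does not actually reprove this statement: its ``proof'' is a citation to Lemma 3.2 of \cite{Bdg1}, so your attempt stands or falls on its own. Your first two paragraphs are essentially the right argument: the amplitude bound (obtained by playing the two $\Hom$-orthogonality conditions off against the two $f_*$-conditions), the identification via the Leray spectral sequence (which degenerates because $f_*$ has cohomological amplitude $[0,1]$) of $f_*E\in\DD^{\geq 0}$ with $R^0f_*H^{-1}E=0$ and of $f_*E\in\DD^{\leq 0}$ with $R^1f_*H^0E=0$, and the observation that $\Hom(C,E)=0$ for $C\in\mc C^{<-1}$ is automatic once $E\in\DD^{[-1,0]}\Coh(Y)$ --- indeed $\mc C^{<-1}\subset \DD^{\leq -2}$, so this is plain $t$-structure orthogonality, and no auxiliary condition on $H^{-1}E$ is needed.

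The problem is your final paragraph, which you yourself identify as the crux: the bookkeeping there is wrong, and the proposed fix would not work. The claimed equivalence ``$\Hom(C,E)=0$ for $C\in\mc C^{<-1}$ iff $R^0f_*H^{-1}E=0$'' is false: the left side holds for \emph{every} $E\in\DD^{[-1,0]}\Coh(Y)$, while the right side fails e.g.\ for $E=F[1]$ with $F$ a sheaf having $R^0f_*F\neq 0$; the vanishing $R^0f_*H^{-1}E=0$ is contributed by $f_*E\in\DD^{\geq 0}$, exactly as you correctly said in your second paragraph, not by the orthogonality condition. Likewise $R^1f_*H^0E=0$ comes from $f_*E\in\DD^{\leq 0}$: the degree-$\geq 1$ part of $\mc C^{>-1}$ contributes nothing, since $\Hom(E,C)=0$ is automatic for $E\in\DD^{\leq 0}$ and $C\in\DD^{\geq 1}$, so the proposed conversion via $\Hom_Y(G,f^*H)=\Hom_X(f_*G,H)$ has nothing to convert (and does not apply to targets in $\ker f_*$ anyway). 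Relatedly, the parenthetical claim that $R^0f_*H^{-1}E=0$ implies $\Hom(C[1],E)=0$ for $C\in\mc C^\heartsuit$ is false: take $E=C[1]$, which lies in $\Perv(Y)$; fortunately that vanishing is never needed, precisely because $\mc C^{<-1}$ sits in degrees $\leq -2$. What the condition $\Hom(E,C)=0$ for $C\in\mc C^{>-1}$ actually reduces to is the single condition $\Hom(H^0E,C)=0$ for $C\in\mc C^\heartsuit$: applying $\Hom(-,C)$ to the triangle $H^{-1}E[1]\to E\to H^0E$ gives $\Hom(E,C)\cong\Hom(H^0E,C)$ for any $C\in\DD^{\geq 0}$, and for the converse inclusion one reduces a general $C\in\mc C\cap\DD^{\geq 0}$ to $H^0C$ --- for which you also need the unaddressed fact that $\mc C$ is stable under truncation (i.e.\ $H^0C\in\mc C^\heartsuit$), which follows from the same degenerate spectral sequence since every $R^pf_*H^qC$ is a graded piece of $H^{p+q}(f_*C)=0$. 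With the conditions re-attributed in this way your outline does close up, but as written the key equivalences in the last paragraph are misstated and the argument for the converse inclusion is incomplete.
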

\begin{proof}
	See Lemma 3.2 in \cite{Bdg1}.
\end{proof}

We can now state the result of Bridgeland and Van den Bergh on the image of the heart $\mc A\Mod\subset \DD^b\mc A\Mod$ under the equivalence of Theorem \ref{derivedVdBequiv}:

\begin{theo} \cite{Bdg1,VdB1}\label{PCohequiv} The equivalence of Theorem \ref{derivedVdBequiv} restricts to an equivalence of abelian categories
	\[ \Perv(Y) \xrightarrow{\cong} \mc A\Mod  \ . \]
\end{theo}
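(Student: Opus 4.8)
The plan is to deduce the equivalence of hearts from the derived equivalence of Theorem \ref{derivedVdBequiv} by matching $t$-structures on both sides. On the module side, $\mc A\Mod$ is the heart of the standard $t$-structure on $\DD^b\mc A\Mod$; I would first transport this $t$-structure along the equivalence $\Phi := f_*\Homi_{\DD^b\Coh(Y)}(E,\cdot)$ to obtain a bounded $t$-structure on $\DD^b\Coh(Y)$, and then identify its heart with $\Perv(Y)$ using the concrete description of the perverse coherent $t$-structure recorded above. The transported $t$-structure is, by definition, $\{G : \Phi(G)\in \DD^b\mc A\Mod^{\leq 0}\}$ and $\{G : \Phi(G)\in \DD^b\mc A\Mod^{\geq 0}\}$; since $\Phi$ is an equivalence it is automatically a $t$-structure, and since $\mc A\Mod$ is the heart, $\Phi$ restricts to an equivalence from the transported heart onto $\mc A\Mod$. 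So the entire content of the theorem is the identification of the transported heart with $\Perv(Y)$.

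For that identification I would argue as follows. Recall $\mc A = f_*\underline{\Lambda}_E$ where $E$ is the Van den Bergh tilting bundle; $\Phi(G) = f_*\Homi(E,G) = f_*(E^\vee\otimes G)$ up to the bimodule structure, and the cohomology modules $H^i\Phi(G)$ are the sheaf cohomologies $f_*$-pushforwards of $\Homi(E,G)$, which in good degree ranges coincide with $\Hom^\bullet_{\DD^b\Coh(Y)}(E,G)$ globally since $X$ is affine in the relevant local models (more precisely, $\mc A$-modules are quasicoherent sheaves on $X$ with extra structure, so the standard $t$-structure detects $H^i$ via the $f_*$-pushforward). The conditions $\Phi(G)\in \DD^b\mc A\Mod^{\leq 0}$ and $\geq 0$ thus become cohomological vanishing conditions $\mathbf{R}^i f_* \Homi(E,G)=0$ for $i>0$, resp. $i<0$. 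Since $E$ is a progenerator for the perverse heart — this is exactly the content of Theorem \ref{PCohequiv}'s predecessor, i.e. $E$ is a tilting object whose $\Hom$ into $\mc C^{\heartsuit}$ governs the recollement gluing — these vanishing conditions translate, via the $\Hom(E,-)$ and $\Hom(-,E)$ pairings and the recollement triangles \eqref{Bdgrecseqeqn}, into precisely the conditions $f_*G\in \DD^b\Coh(X)^{\leq 0}$ together with $\Hom(G,\mc C^{>-1})=0$, resp. $f_*G\in\DD^b\Coh(X)^{\geq 0}$ together with $\Hom(\mc C^{<-1},G)=0$. Comparing with Definition \ref{pervcohdef} and the concrete description following it (with $k=-1$), this shows the transported $t$-structure equals the perverse coherent $t$-structure, hence the transported heart is $\Perv(Y)$, giving the equivalence $\Perv(Y)\xrightarrow{\cong}\mc A\Mod$.

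The main obstacle is the careful bookkeeping in the middle step: one must show that the standard $t$-structure on the abelian category of $\mc A$-modules — which is a $t$-structure on modules over a \emph{sheaf} of algebras on $X$, not a plain algebra — corresponds under $\mathbf{R}\Phi$ exactly to the gluing of the standard $t$-structure on $\DD^b\Coh(X)$ (detected by $f_*$, i.e. the $j^*$ leg of the recollement of Proposition \ref{Bdgrecprop}) with the shifted standard $t$-structure on $\mc C$ (the $i_*$ leg, detected by the $\Hom(E,-)$ and $\Hom(-,E)$ orthogonality conditions). Concretely this requires knowing that $E$ restricted to each fibre $f^{-1}(x)$ decomposes with the right cohomological amplitude so that $\mathbf{R} f_*\Homi(E,C)$ for $C\in\mc C$ is concentrated in degree determined by the perversity $k=-1$ — this is Van den Bergh's local computation and is where the fibre-dimension-$\leq 1$ and $f_*\mc O_Y=\mc O_X$ hypotheses are used. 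Once that comparison of $t$-structures is in hand, the passage to hearts is purely formal, and I would finish by invoking Corollary \ref{PervCohcoro} to confirm the transported heart is indeed the abelian category $\Perv(Y)$ as described, with no further work. In fact, since the statement is attributed jointly to \cite{Bdg1} and \cite{VdB1}, I expect the cleanest writeup simply cites the $t$-exactness statement from \emph{loc. cit.} for the relevant vector bundle $E$ and then deduces the heart-level equivalence as above.
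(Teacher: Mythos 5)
The paper does not actually prove this statement: it is quoted verbatim from Bridgeland and Van den Bergh, with the proof left entirely to \cite{Bdg1,VdB1}, so there is no in-paper argument to measure you against. Your outline is a correct reconstruction of the standard argument from those references: transport the standard $t$-structure on $\DD^b\mc A\Mod$ through the equivalence of Theorem \ref{derivedVdBequiv}, observe that the whole content is then the identification of the transported heart with $\Perv(Y)$, and reduce that identification to the amplitude statement that $f_*\Homi_{\DD^b\Coh(Y)}(E,G)$ has cohomology in the range dictated by the perversity $k=-1$, which is exactly Van den Bergh's local computation using $\dim f^{-1}(x)\leq 1$ and $f_*\mc O_Y\cong \mc O_X$. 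You correctly flag this as the one nontrivial input and defer it to \emph{loc.\ cit.}, which is consistent with how the present paper treats the result.

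One comparative remark worth knowing: Van den Bergh's own route is phrased slightly differently from your $t$-structure bookkeeping. He shows that $E$ is a projective generator of the abelian category $\Perv(Y)$ (a statement this paper quotes separately in Section \ref{Geosetupsec}) and then obtains $\Perv(Y)\cong \mc A\Mod$ by the relative Morita-type argument $P\mapsto f_*\Homi(E,P)$, rather than by explicitly matching the glued perverse $t$-structure with the transported standard one. The two formulations require the same vanishing of higher direct images of $\Homi(E,-)$ on the heart, so nothing in your plan would fail; the projective-generator phrasing just packages the orthogonality conditions of Definition \ref{pervcohdef} once and for all instead of re-deriving them through the recollement triangles, and it also delivers exactness of the heart-level functor for free. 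Your only loose phrase is the aside that ``$X$ is affine in the relevant local models'': the theorem as cited holds for general Noetherian $X$ satisfying the two hypotheses, and the standard $t$-structure on $\mc A\Mod(\DD^b\Coh(X))$ is detected by cohomology sheaves on $X$, not by global sections, so that clause should be dropped rather than leaned on.
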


\subsection{Quivers, path algebras, and Koszul duality for $A_\infty$ algebras}\label{quiversec}

\begin{defn}\label{quiverdefn}
	A finite, bigraded \emph{quiver} $Q$ is a finite set of vertices $V_Q$, a finite set of edges $E_Q=\sqcup_{k,j\in \bb Z} (E_Q)^k_j$, and source and target maps $s,t:E_Q\to V_Q$. We write
	$$ E_Q(v,w) = \bigsqcup_{k,j\in \bb Z} (E_Q)^k_j(v,w)$$
	for the space of edges from $v$ to $w$, defined by $s^{-1}(v)\cap t^{-1}(w)$; the elements of $(E_Q)^k_j$ are called edges of \emph{bidegree} $(k,j)$, as they determine by definition the bidegree of the corresponding summand of the $\bb K$-span of the edge set
	\[ \bb K\langle E_Q \rangle := \bigoplus_{k,j\in \Z} \bb K\langle E_Q \rangle^k_j [-k] \langle -j \rangle  \ ,\]
	where $\bb K\langle E_Q\rangle^k_j=\bb K\langle (E_Q)^k_j\rangle$ is the $\bb K$-span of the space of bidegree $(k,j)$ edges.
	
	A \emph{path} $p$ of length $n$ in a quiver $Q$ is a sequence of edges $e_n,...,e_1\in E_Q$ such that $t(e_i)=s(e_{i+1})$ for $i=1,...,n-1$. The vertices $s(e_1)$ and $t(e_n)$ are the \emph{source} and \emph{target} of the path, and the bidegree of the path is sum of the bidegrees of the edges in the corresponding sequence.
\end{defn}

We say that a quiver \emph{plain} if its edge set is concentrated in bidegree $(0,0)$, \emph{plain graded} if its edge set is concentrated in bidegree $(0,\bullet)$ and \emph{cohomologically graded} if its edge set is concentrated in bidegree $(\bullet,0)$. 

Concatenation of paths is defined as usual as the concatenation of the corresponding sequences, whenever it yields a new path. We also formally define the set of length zero paths to be the set of vertices of $Q$, and define concatenation of a fixed path with one of length zero on the left or right only if the corresponding vertex is the source or target of that path, respectively, in which case it yields the same path.

\begin{defn} Let $Q$ be a $\bb Z^2$-graded quiver. The path algebra $\K Q$ is the unital associative bigraded $\K$ algebra spanned over $\K$ by paths of any length $n\geq 0$, with product defined by
	\[ p \star q = \begin{cases}
		pq & \textup{if the concatenation is defined, and} \\
		0 & \textup{otherwise.}
	\end{cases}\]
\end{defn}

The above conventions for length zero paths imply that
$$\K Q = \otimes^\bullet_S \ \K\langle E_Q\rangle \quad\quad\text{where}\quad\quad S=\bigoplus_{v\in V_Q} \K_v \ ,$$
that is, the path algebra $\K Q$ is the tensor algebra over the semi-simple base ring $S$ of the $S$-bimodule $\K\langle E_Q\rangle$ spanned by the set of edges with left and right module structures determined by concatenation on the left and right with length zero paths.

Let $\K Q_{(n)}\subset \K Q$ be the two-sided ideal spanned as a vector space by paths of length $\geq n$.

\begin{defn}\label{dgquiverdefn} A (graded) \emph{quiver with relations} $(Q,R)$ is a plain (graded) quiver $Q$ together with a two-sided ideal $R\subset\K Q$ such that $R\subset \K Q_{(2)}$. The path algebra $\K Q_R$ of a quiver with relations is defined by $\K Q_R= \K Q/R$.
	
A (graded) \emph{DG quiver} $(Q,d)$ is a cohomologically graded (bigraded) quiver $Q$ together with an $S$-linear derivation $d:\K Q \to \K Q[1]$ such that $d^2=0$. The path algebra $\K Q_d$ of a DG quiver $(Q,d)$ is the DG algebra defined by $(\K Q, d)$.
\end{defn}

It is possible to describe the most general lift of the path algebra of a bigraded quiver to yield a graded DG quiver. In the most concrete terms, we see that a differential $d:\K Q\to \K Q[1]$ is determined on generators by maps
$$ d_n:\K\langle E_Q\rangle \to \otimes^n_S \K\langle E_Q\rangle [1] \ . $$
These maps are equivalently defined in terms of $\bar A =\K\langle E_Q\rangle^\vee[-1]$ by maps
$$ m_n: \otimes^n_S\bar A \to  \bar A [2-n]  \ ,$$
and the condition that $d^2=0$ is equivalent to the condition that the maps $m_n$ satisfy the $A_\infty$ relations. In summary, we obtain that a choice of differential making $\K Q$ into a quasi-free graded DG algebra is equivalent to a graded $A_\infty$ structure on $A=S\oplus \bar A$ compatible with the natural $S$-augmentation.

\begin{rmk}\label{Kosfinrmk} We will sometimes assume that the $S$-bimodule $\K\langle E_Q\rangle$ is graded finite dimensional. Moreover, the differential $d$ is in fact defined only on the completed tensor algebra unless the corresponding $A_\infty$ algebra is finite, in the sense that the structure maps $m_n=0$ vanish for $n$ sufficiently large. These conditions will often hold in our applications of interest.
\end{rmk}

In fact, the preceding construction is a concrete manifestation of Koszul duality for $A_\infty$ algebras, as we now explain:
\begin{defn}
	Let $A$ be an $A_\infty$ algebra over $S$ with an augmentation $\epsilon:A\to S$. The \emph{Koszul dual} of $A$ over $S$ is the DG associative algebra defined by
	$$A^!= \Hom_A(S,S) \ . $$
\end{defn}

For any $A_\infty$ algebra $A$ with augmentation $\epsilon:A\to S$ and augmentation ideal $\bar A=\ker \epsilon$, there is a canonical free resolution of $S$ as an $A$ module, called the \emph{Koszul resolution} given by
$$ \mc K^\bullet:= A\otimes_S \left( \otimes^\bullet_S \bar A[1]\right) = \left[ \cdots \to A\otimes_S \bar A^{\otimes 2}[2] \to   A\otimes_S \bar A[1]  \to A \right] \ ,$$
with $d:\mc K^\bullet \to \mc K^\bullet[1]$ defined on generators by the multiplication map
$$ d|_{A\otimes_S \bar A}=m_2: A\otimes_S \bar A \to A  $$
and its higher arity analogues. We obtain the following computation of the Koszul dual algebra:

\begin{prop}\label{Kozpresprop} The algebra $A^!$ is presented by the quasi-free DG associative algebra
	\[ A^!= \left( \otimes^\bullet_S (  \bar A[1])\right)^\vee \]
with underlying complete associative algebra free on $ \bar A^\vee[-1]$, and differential determined by the $A_\infty$ structure via the equivalence explained above.
\end{prop}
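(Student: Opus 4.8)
The plan is to compute $A^! = \Hom_A(S,S)$ directly from the Koszul resolution $\mc K^\bullet$ of $S$ recalled immediately before the statement. Since $\mc K^\bullet \to S$ is a resolution of $S$ by free $A$-modules, the canonical map $\Hom_A(S,S) \to \Hom_A(\mc K^\bullet, S)$ is a quasi-isomorphism of DG associative algebras, the product on the left being composition of module endomorphisms and the product on the right being that induced by the deconcatenation coproduct on $\mc K^\bullet$ discussed below. (Equivalently, one may observe directly that an $A_\infty$-module endomorphism of $S$ is a collection $f_n : \bar A^{\otimes_S n-1} \to S$, so that the $A_\infty$-Hom complex $\Hom_A(S,S)$ is on the nose the object we want to identify; the Koszul resolution merely makes the composition product transparent.) Thus it suffices to identify $\Hom_A(\mc K^\bullet, S)$, with its differential and product, with the claimed quasi-free DG algebra.

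First I would pin down the underlying bigraded $S$-bimodule. By freeness of each term of $\mc K^\bullet$ over $A$ and the tensor--hom adjunction,
\[ \Hom_A(\mc K^\bullet, S) \;=\; \Hom_A\bigl( A \otimes_S (\otimes^\bullet_S \bar A[1]),\, S\bigr) \;=\; \Hom_S\bigl(\textstyle\otimes^\bullet_S \bar A[1],\, S\bigr) \;=\; \prod_{n \geq 0} \Hom_S\bigl(\bar A[1]^{\otimes_S n},\, S\bigr). \]
Invoking the (graded) finiteness of $\bar A$ over $S$ from Remark \ref{Kosfinrmk}, the $n$-th factor is $\bigl(\bar A[1]^{\otimes_S n}\bigr)^\vee \cong (\bar A^\vee[-1])^{\otimes_S n}$, so the product over $n$ is the completion of $\otimes^\bullet_S (\bar A^\vee[-1])$ along the length filtration, i.e.\ the complete associative $S$-algebra free on $\bar A^\vee[-1]$, which is $(\otimes^\bullet_S \bar A[1])^\vee$ as asserted.

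Next I would match the structure maps. The differential on $\Hom_A(\mc K^\bullet, S)$ is precomposition with $d_{\mc K}$; since $d_{\mc K}$ is built on generators from the multiplication $m_2$ together with its higher-arity analogues — that is, from the full collection of operations $(m_n)_{n\geq 1}$ on $A = S \oplus \bar A$ — its transpose is precisely the $S$-linear derivation of the complete tensor algebra on $\bar A^\vee[-1]$ determined by the $m_n$ under the correspondence recalled just before Definition \ref{dgquiverdefn}, with $d^2 = 0$ dual to the $A_\infty$ relations. For the product: the deconcatenation map $\mc K^\bullet \to \mc K^\bullet \otimes_A \mc K^\bullet$ is an $A$-linear chain map lifting $\id_S$ along the augmentation, hence computes the Yoneda (composition) product on $\Hom_A(S,S)$, and under the identification above it transposes to concatenation of tensors. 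Together these give the claimed presentation of $A^!$ as the quasi-free DG associative algebra $(\otimes^\bullet_S \bar A[1])^\vee$.

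The content here is bookkeeping rather than genuine difficulty, and the main points needing care are: that $\mc K^\bullet$ is genuinely acyclic over $A$ in the strictly unital, $\Z^2$-graded setting (the standard contracting homotopy built from $\epsilon$, or a spectral sequence for the bar-length filtration); that passing to duals converts the direct sum $\otimes^\bullet_S \bar A[1]$ into a \emph{completed} tensor algebra, and that the differential and product remain continuous for the resulting topology (automatic, as $d_{\mc K}$ and the deconcatenation coproduct alter tensor length by a bounded amount); and that the signs in the transpose of $d_{\mc K}$ reproduce the conventional $A_\infty$/DG-derivation dictionary. I would carry out the identifications above and defer the sign verification to the $A_\infty$ literature already cited (the bar--cobar formalism of Lefèvre-Hasegawa), where this is standard.
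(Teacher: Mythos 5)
Your proposal is correct and follows essentially the same route as the paper: the paper likewise computes $\Hom_A(S,S)$ via the Koszul resolution $\mc K^\bullet$, uses freeness over $A$ and adjunction to identify the underlying complex with $\left(\otimes^\bullet_S(\bar A[1])\right)^\vee$, and identifies the algebra structure by precomposition with $\Hom_A(\mc K^\bullet,\mc K^\bullet)$, which is your deconcatenation/Yoneda product. Your additional remarks on completion, continuity, and signs are fuller than the paper's two-line argument but do not change the method.
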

\begin{proof}
	We use the Koszul resolution to compute the underlying cochain complex
	\[ \Hom_A(S,S) \cong \Hom^0_A(\mc K^\bullet, S) \cong \Hom^0_S( \otimes^\bullet_S \bar A[1],S )=\left(\otimes^\bullet_S ( \bar A[1])\right)^\vee \ ,  \]
	as desired, and observe that the left module structure of $A^!$ over itself as a tensor algebra is identified with the right module structure on $\Hom_A(S,S)$ over itself given by precomposition with $\Hom_A(\mc K^\bullet,\mc K^\bullet)$.
\end{proof}

We now recall the primary statements of Koszul duality in the present setting, following the presentation of \cite{LPWZ}:
\begin{theo}\cite{LPWZ}\label{Kozsqtheo} Let $A$ be a strongly locally finite $A_\infty$ algebra. Then $A^!$ is strongly locally finite, and moreover there is a natural isomorphism of algebras $(A^!)^!\cong A$.
\end{theo}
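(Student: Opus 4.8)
The plan is to follow \cite{LPWZ} and prove the two assertions in turn: first that $A^!$ again satisfies the strong local finiteness hypotheses, and then that the canonical biduality map is an isomorphism. Recall that ``strongly locally finite'' packages three things: (a) local finiteness, $\dim_\K A^i_j<\infty$ for every bidegree $(i,j)$; (b) the normalization $A^0_0=S$ together with positivity of the internal (Adams) grading, so that the augmentation ideal $\bar A$ is concentrated in strictly positive internal degree; and (c) a boundedness condition ensuring that, for each fixed internal degree, only finitely many cohomological degrees occur. These are precisely the conditions needed to make the Koszul/bar complexes below converge and stay graded-finite, and the word ``strongly'' is exactly what survives dualization.

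For closure under $(\,\cdot\,)^!$, I would argue by bidegree bookkeeping on the presentation of Proposition \ref{Kozpresprop}: $A^!$ has underlying graded algebra $(\otimes^\bullet_S \bar A[1])^\vee$, free over $S$ on $\bar A^\vee[-1]$, with differential assembled from the duals of the $m_n$. Since $\bar A$ lives in strictly positive internal degree, any monomial in $\bar A[1]^{\otimes_S n}$ contributing to a fixed internal degree $j$ has $n\le j$; hence each internal-degree-$j$ piece of $\otimes^\bullet_S \bar A[1]$ is a finite sum of finite-dimensional pieces, and linear duality preserves this, giving local finiteness of $A^!$. The assignment $\bar A\mapsto \bar A^\vee[-1]$ converts conditions (b) and (c) for $A$ into the corresponding conditions for $A^!$ — in particular the new internal grading is again nonnegative with $(A^!)^0_0=S$ — so $A^!$ is strongly locally finite. (One also invokes Remark \ref{Kosfinrmk} here: the differential of $A^!$ is a priori only defined on the \emph{completed} tensor algebra, but strong local finiteness makes completion invisible one bidegree at a time.)

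For the biduality isomorphism, I would construct the natural map from the bar--cobar (Koszul) adjunction: $A$ acts on its own Koszul resolution $\mc K^\bullet$ of $S$, hence on $\Hom_A(\mc K^\bullet,S)=A^!$ compatibly with the right $A^!$-module structure, and the resulting coevaluation assembles into a map of $A_\infty$ algebras $\eta_A\colon A\to \Hom_{A^!}(S,S)=(A^!)^!$, natural in $A$. To see $\eta_A$ is an isomorphism, I would compute the target via the identification, up to quasi-isomorphism, of $A^!$ with the cobar construction of the linear dual of $A$ and of $(A^!)^!$ with the cobar of the bar construction of $A$; the fundamental theorem of the bar--cobar adjunction then gives $(A^!)^!\simeq A$, and $\eta_A$ is this counit. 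Strong local finiteness enters twice: it makes $(\,\cdot\,)^\vee$ a termwise duality, so that $A^{\vee\vee}=A$ and the identifications of the cobar pieces with $\otimes^\bullet_S\bar A[1]$ are honest, and it makes the spectral sequence of the length (word-length) filtration on $(A^!)^!$ converge, upgrading the associated-graded identification $\eta_A$ to an isomorphism on the nose. If one wants the conclusion with the given (possibly genuinely $A_\infty$, not DG) algebra $A$ rather than only with a DG model, one transports along the homotopy transfer theorem to minimal models at the very end.

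The main obstacle is the convergence/completion issue in the double dual: by Remark \ref{Kosfinrmk} the DG algebra $A^!$ genuinely lives in a completed tensor algebra, so $(A^!)^!$ is a priori built from infinite products, and one must check that strong local finiteness forces these products to agree, bidegree by bidegree, with the direct sums appearing in $A$ — equivalently, that the cobar--bar counit is a quasi-isomorphism in this graded-topological setting and not merely a split injection. This is the only step that is not formal bookkeeping, and it is exactly where the ``strongly'' (positivity plus the per-internal-degree cohomological bound) is indispensable rather than mere local finiteness; everything else, including the existence and naturality of $\eta_A$ and the degree arithmetic for closure, is routine.
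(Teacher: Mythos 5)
The paper does not actually prove this statement—it is quoted from \cite{LPWZ}—and your argument follows essentially the same route as the proof there: dualize the bar construction, use Adams-positivity together with bidegree-wise finiteness to identify double duals and rule out completion issues, and conclude via the word-length filtration that the bar–cobar counit $\Omega B A \to A$ is an equivalence. The only slips are cosmetic: the Adams grading of $A^!$ is concentrated in \emph{nonpositive} degrees (the sign flips under dualization, which \cite{LPWZ} accommodates by allowing either sign of connectedness), and the stated conclusion is an isomorphism of $A_\infty$-algebras after passing to minimal models, exactly as you note at the end.
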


\begin{rmk}\label{koszulquiverrmk} This result suggests the following strategy to describe a given DG algebra over $S$ in terms of the path algebra of a DG quiver, or equivalently, to construct a quasi-free resolution of it: compute the Koszul dual algebra, use homotopy transfer to compute the $A_\infty$ structure on its cohomology, and then construct the corresponding DG quiver $Q$ according to the concrete description of Koszul duality above.
\end{rmk}

One also obtains an equivalence of categories of modules over Koszul dual algebras:

\begin{theo}\cite{LPWZ}\label{Kozeqtheo} Let $A$ be a strongly locally finite, augmented $A_\infty$ algebra over $S$ and $A^!$ its Koszul dual. Then there are mutually inverse equivalences of categories
	\[ \Hom_A(S,\cdot): \xymatrix{\thick(S)   \ar@<.5ex>[r]^{\cong} & \ar@<.5ex>[l] {\textup{D}_\perf}(A^!)}: (\cdot)\otimes_{A^!}S \ ,\]
	where $\thick(S)$ denotes the thick subcategory of $\DD(A)$ generated by $S$ and its graded shifts, and ${\textup{D}_\perf}(A^!)=\thick(A^!)$ that generated by the rank 1 free module $A^!$ and its graded shifts in $\DD(A^!)$, as in the discussion following Definition \ref{thickdefn}.
\end{theo}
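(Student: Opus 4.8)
The plan is to deduce this from the Morita-type theorem of Keller (Theorem \ref{Morita}) applied to the object $S$ viewed as a tilting-type object inside the thick subcategory $\thick(S)\subseteq \DD(A)$, together with the computation of the derived endomorphism algebra of $S$ furnished by Proposition \ref{Kozpresprop} and the Koszul resolution. First I would observe that $S$ is a compact object of $\DD(A)$: by construction it admits the finite-type free resolution $\mc K^\bullet = A\otimes_S(\otimes^\bullet_S \bar A[1])$, and under the strong local finiteness hypothesis on $A$ each graded piece of this resolution is a finitely generated free $A$-module, so $\Hom_A(S,-)$ commutes with coproducts. By definition $S$ generates $\thick(S)$, so $S$ is a classical-type generator of this triangulated subcategory. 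Keller's theorem then produces a triangle equivalence $\Psi_S:\thick(S)\xrightarrow{\cong}\DD_\Perf(\Lambda)$ where $\Lambda=\Hom_A(S,S)$ is the DG endomorphism algebra, with $\Psi_S=\Hom_A(S,-)$ on the nose and with quasi-inverse $(-)\otimes_\Lambda S$, by the restricted equivalence of Equation \eqref{dgmoritaeqntriang} (the statement $\Psi_T:\Thick(T)\xrightarrow{\cong}\DD_\Perf(\Lambda)$) applied with $T=S$.

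The second step is to identify $\Lambda$ with $A^!$. But $\Lambda=\Hom_A(S,S)=A^!$ is literally the definition of the Koszul dual, and Proposition \ref{Kozpresprop} gives the explicit quasi-free presentation $A^!=(\otimes^\bullet_S(\bar A[1]))^\vee$ with differential induced from the $A_\infty$ structure, computed via the Koszul resolution $\mc K^\bullet$. Hence $\DD_\Perf(\Lambda)=\DD_\Perf(A^!)=\thick(A^!)$, the thick subcategory of $\DD(A^!)$ generated by the rank $1$ free module and its graded shifts, exactly matching the notation in the statement and the discussion following Definition \ref{thickdefn}. Substituting this identification into Keller's equivalence yields mutually inverse functors
\[ \Hom_A(S,\cdot):\thick(S)\xrightarrow{\cong}\DD_\Perf(A^!) \qquad (\cdot)\otimes_{A^!}S:\DD_\Perf(A^!)\xrightarrow{\cong}\thick(S)\ ,\]
which is precisely the asserted equivalence; the only remaining point is that these respect the $\Z$-grading, which follows because $S$, the resolution $\mc K^\bullet$, and all the structure maps $m_n$ are graded, so $\Hom_A(S,-)$ preserves the internal grading and the equivalence restricts to the graded derived categories as claimed.

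The main obstacle — and the only place real work is hidden — is verifying that the hypotheses of Keller's Theorem \ref{Morita} genuinely apply here, i.e. that $\DD(A)$ is an algebraic triangulated category with set-indexed coproducts in which $S$ is compact and generating in the appropriate sense, and, more delicately, that the equivalence restricts correctly to the \emph{thick} (rather than merely triangulated) closures so that summands are handled: this is where the strong local finiteness of $A$ is essential, since it guarantees $S$ is perfect and that $A^!$ is again strongly locally finite (Theorem \ref{Kozsqtheo}), keeping us inside $\DD_\Perf$. One should also check the compatibility of the two gradings (cohomological and internal) with the $A_\infty$ structure maps $m_n:A^{\otimes n}\to A[2-n]$, so that the Koszul resolution is a resolution by objects in the bigraded category and the induced differential on $A^!$ is the one described in Proposition \ref{Kozpresprop}; this is a bookkeeping matter already set up in Section \ref{quiversec}, but it is what makes the graded statement precise. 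Once these framework points are in place the theorem is essentially a formal consequence of Keller's Morita theory together with the explicit Koszul resolution, which is why \cite{LPWZ} is cited for it.
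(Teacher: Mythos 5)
The central step of your argument—applying Keller's Theorem~\ref{Morita} to $T=S$—rests on the claim that $S$ is a compact object of $\DD(A)$, and that claim is false in general. Your justification is that the Koszul resolution $\mc K^\bullet = A\otimes_S(\otimes^\bullet_S\bar A[1])$ has each term a finitely generated free $A$-module, but the resolution is \emph{infinite} in homological degree, and an unbounded complex of finitely generated projectives does not represent a compact object. Compactness of $S$ would require $S$ to be a retract of a \emph{bounded} complex of finitely generated projectives, i.e. $S\in\DD_\perf(A)$, which is equivalent to $S$ having finite projective dimension. Strong local finiteness of $A$ does not grant this. The simplest counterexample is $A=\bb K[x]/(x^2)$ with $x$ in internal degree $1$: this is strongly locally finite (even finite dimensional), yet $S=\bb K$ has the infinite resolution $\cdots\to A\langle-2\rangle\to A\langle-1\rangle\to A\to S$, so $S\notin\DD_\perf(A)$. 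Here $A^!=\bb K[y]$ and the theorem asserts $\thick(S)\cong\DD_\perf(\bb K[y])$, which is correct, but it is definitely \emph{not} the same as $\DD_\perf(A)$—precisely because $S$ is not compact. So Keller's theorem, which requires a tilting (compact and generating) object in a cocomplete algebraic triangulated category, cannot be invoked with $T=S$; nor can it be invoked in $\thick(S)$ directly, since $\thick(S)$ has no set-indexed coproducts.

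The actual proof, as in \cite{LPWZ} (Theorems 5.4 and 5.7) and going back to \cite{BGS} in the classical Koszul case, does not pass through a global Morita equivalence at all. One instead exhibits $\Hom_A(S,-)$ and $(-)\otimes_{A^!}S$ as an adjoint pair, observes that $\Hom_A(S,-)$ sends $S\mapsto A^!$ and $(-)\otimes_{A^!}S$ sends $A^!\mapsto S$, and then checks that the unit and counit of the adjunction are isomorphisms on these generators. Since both sides are thick subcategories generated by a single object (and its graded shifts), and the functors are triangulated, the unit and counit are then isomorphisms on all of $\thick(S)$ and $\DD_\perf(A^!)$ respectively; strong local finiteness enters to guarantee that $\Hom_A(S,S)\simeq A^!$ is computed correctly by the Koszul resolution (the relevant completions and dualities behave) and to ensure $(A^!)^!\simeq A$, not to force $S$ to be perfect. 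Your identification $\Lambda=\Hom_A(S,S)=A^!$ and the graded bookkeeping are fine and reusable; it is only the Morita framework that needs to be replaced by the unit--counit argument on thick closures.
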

	In particular, note that if $A$ is graded finite dimensional then $\thick(S) = \DD^b_\fd(A)$, and thus we obtain an equivalence of the latter with ${\textup{D}_\perf}(A^!)$.
	
	\begin{rmk}\label{Kozequivrmk} In the case that $A$ is an augmented DG algebra, the above result holds under the weaker hypothesis that $A^!$ is locally finite. These results follow from Theorems 5.7 and 5.4 of \emph{loc. cit.}, respectively, and the relevant definitions are recalled in Definition 2.1.
	\end{rmk}

\begin{rmk}\label{cohomshearrmk} Recall that for a plain (non DG) Koszul associative algebra $\Lambda$, the Koszul dual algebra is concentrated in bi-degrees $(k,-k)$ for $k\in \bb N$. Thus, in order to relate the preceding Theorem with more classical accounts of Koszul duality of plain graded algebras, it is necessary to apply a cohomological shearing operation, which we now explain following Section 7.2 of \cite{LPWZ}:

Let $A$ be a graded DG associative algebra with zero differential, and underlying bi-graded vector space
\[ A = \bigoplus_{k,j \in \Z} A^k_j[-k]\langle -j\rangle \ . \]
Then the cohomological shear $A^\sh$ of $A$, defined by
\[ A^\sh = \bigoplus_{k,j \in \Z} (A^\sh)^k_j[-k]\langle -j\rangle \quad\quad \text{with}\quad\quad  (A^\sh)^k_j= A^{k+j}_{-j} \]
is again a graded associative algebra. For example, if $A$ is concentrated in bidegree $(k,-k)$, as we observed for the Koszul dual of a plain graded Koszul associative algebra, then $A^\sh$ defines a plain graded associative algebra, concentrated in cohomological degree $0$.

Moreover, the analogous shear functor gives an equivalence $(\cdot)^\sh:\DD(A)\xrightarrow{\cong} \DD(A^\sh) $ on categories of DG modules, defined by
\[ \quad\quad M=\bigoplus_{i,j \in \Z} M^i_j[-i]\langle - j \rangle \mapsto M^\sh=\bigoplus_{i,j \in \Z} M^{i+j}_{-j}[-i]\langle - j \rangle  \ .\] 
If $A$ is finite dimensional, it also restricts to an equivalence $(\cdot)^\sh:\DD_\fd(A)\xrightarrow{\cong} \DD_\fd(A^\sh) $.
\end{rmk}

\begin{warning} \label{gradingnotationwarning} We will often describe algebras and their DG modules in terms of the cohomologically sheared conventions, but omit the superscript $(\cdot)^\sh$ by abuse of notation. Further, in this setting we will use superscripts and subscripts that differ from those above, to denote the decomposition of a plain (non DG) graded $S$ algebra
\[ A = \bigoplus_{i,j\in I,\ k\in \Z} \ _{i} A_{j}^k\langle - k \rangle \quad\quad \text{where}\quad\quad _i A_j = S_i A S_j \]
denote the components with respect to the idempotents of the semisimple base ring $S=\oplus_i S_i$.
\end{warning}

\subsection{Calabi-Yau structures and quivers with potential}

Let $Q$ be a bigraded quiver as in Definition \ref{quiverdefn} above, $\K Q$ its path algebra, and define
$$ \K Q_\cyc = \K Q /[\K Q, \K Q]  \ . $$

\noindent Suppose that the edge space $\K\langle E_Q \rangle$ of the underlying quiver $Q$ admits a non-degenerate pairing
\begin{equation}\label{Pzerostreqn}
	(\cdot,\cdot):\K\langle E_Q \rangle^{\otimes 2} \to \K[1] 
\end{equation}
such that
\begin{enumerate}
	\item $(a,b)=(-1)^{|a||b|} (b,a)$
	\item $(a,b)=0$ unless $t(a)=s(b)$ and $s(a)=t(b)$.
\end{enumerate}
We also let $\langle\cdot,\cdot\rangle: (\K\langle E_Q \rangle^\vee)^{\otimes 2} \to \K[-1]$ denote the inverse pairing.

\begin{defn}
	A \emph{non-commutative, $(-1)$-shifted symplectic form} on the path algebra $\K Q$ is a pairing $\langle\cdot,\cdot\rangle: (\K\langle E_Q \rangle^\vee)^{\otimes 2} \to \K[-1]$ given by the inverse of a pairing $(\cdot,\cdot):\K\langle E_Q \rangle^{\otimes 2} \to \K[1]$ as in Equation \ref{Pzerostreqn} above.
\end{defn}
For simplicity, we will simply refer to such a pairing as a symplectic form on the underlying $\bb Z$-graded quiver $Q$.

\begin{eg}\label{geoquiveg1}
	In the setting of Section \ref{Geosetupsec} below, let $\Sigma=\Ext^\bullet(F,F)$ and $Q_Y$ the corresponding DG quiver with edge set $\K\langle E_{Q_Y}\rangle = \bar\Sigma^\vee[-1]$ so that $\Lambda= \K Q_Y$. Since $Y$ is a Calabi-Yau threefold, $\bb K Q_Y$ admits a symplectic form by restricting the Serre duality pairing
	\[ \langle \cdot,\cdot \rangle: \Ext^\bullet(F,F)[1] \otimes \Ext^\bullet(F,F)[1] \to \bb K [-1] \]
	noting that the object $F$ is compactly supported.
\end{eg}

For a $\bb Z$-graded quiver $Q$ with symplectic form, there is a canonical non-commutative Poisson bracket $\{ \cdot,\cdot\}: \K Q^{\otimes 2} \to \K Q[1] $ of degree $+1$, defined on generators by the pairing \ref{Pzerostreqn} and extended as a non-commutative biderivation.

\begin{defn}\label{quivpotdefn}
	A \emph{quiver with potential} is a DG quiver $(Q,d)$ in the sense of Definition \ref{dgquiverdefn}, equipped with a symplectic form, and a potential $W\in \K Q_\cyc$ such that \[d=\{W,\cdot\}\ \in \textup{Der}^1(\K Q) \ . \]
\end{defn}
Note that the potential $W\in \K Q_\cyc$ necessarily satisfies the master equation $\{W,W\}=0$ since $d^2=0$ by hypothesis. The following construction is well-known:

\begin{eg}\label{geoquiveg2}
	Let $\Sigma$ and $Q_Y$ be as in Example \ref{geoquiveg1} above. Then there is a canonical potential $W\in \K Q_\cyc$, defined by
	$$ W= \sum_{n\geq 1} \sum_{a_1,...,a_{n+1} \in E_Q} \langle m_n(a_1^\vee,...,a_n^\vee), a_{n+1}^\vee \rangle a_1 \cdot ... \cdot a_{n+1}  \ , $$
	such that $(Q_Y,d,W)$ is a quiver with potential, where $m_n:\Sigma^{\otimes n}\to \Sigma[2-n]$ denote the $\Ainf$ structure maps on $\Sigma$.
\end{eg}

\subsection{$A_\infty$ categories and the twisted objects construction of Kontsevich}\label{twobjsec}

\begin{defn} An $A_\infty$ category $\mc A$ is a set $\textup{ob}(\mc A)$ together with
	\begin{itemize}
		\item graded vector spaces $\mc A(i,j)=\Hom_{\mc A}(i,j)\in \K\Mod_\Z$ for each $i,j\in \ob(\mc A)$, and
		
		\item maps of graded vector spaces
		\[ m_n^{i_0,...,i_n}: \mc A(i_0,i_1)\otimes_{\bb K} \hdots \otimes_{\bb K} \mc A(i_{n-1},i_n) \to \mc A(i_0,i_n)[2-n] \]
		for each finite list of objects $i_0,...,i_n\in \textup{ob}(\mc A)$,
	\end{itemize}
	satisfying the natural generalization of the usual $A_\infty$ relations.
\end{defn}

In particular, the $A_\infty$ category structure maps for $n=1$ endow each of the graded vector spaces $\mc A(i,j)$ with the structure of a cochain complex; $A_\infty$ categories are the natural non-strictly associative homotopical generalization of DG categories, analogous to the way in which $A_\infty$ algebras generalize DG associative algebras.

\begin{eg}\label{AinfcatSeg}
	Let $S=\oplus_{i\in I} S_i = \oplus_{i\in I}\bb K$ be a semi simple base ring and $A$ an $A_\infty$ algebra over $S$. Then $A$ can equivalently be considered as an $A_\infty$ category $\mc A$ with
	\[ \textup{ob}(\mc A) = I \quad\quad \textup{and} \quad\quad \mc A(i,j)=\Hom_{\mc A}(i,j)= \ _i A_j \  .\]
	For example, if $A_M=\Ext^\bullet_\D(M,M)$ for some triangulated category $\mc D$ and $M=\oplus_{i\in I} M_i \in \D$, then the corresponding $A_\infty$ category $\mc A_M$ is given by
	\[ \Hom_{\mc A_M}(i,j) = \Ext_\D^\bullet(M_i,M_j) \ , \]
	together with the natural higher multiplication maps induced by homotopy transfer.
\end{eg}

Similarly, generalizing the usual definition of a map of $A_\infty$ algebras, we have:

\begin{defn} Let $\mc A,\mc B$ be $A_\infty$ categories. A functor $f:\mc A \to \mc B$ is a map $f:\ob(\mc A) \to \ob(\mc B)$ of object sets together with maps of graded vector spaces
	\[ f_n^{i_0,...,i_n}: \mc A(i_0,i_1)\otimes_{\bb K} \hdots \otimes_{\bb K} \mc A(i_{n-1},i_n) \to \mc B(f(i_0), f(i_n))[1-n] \]
	for each finite list of objects $i_0,...,i_n\in \textup{ob}(\mc A)$, satisfying the natural generalization of the usual conditions defining a map of $A_\infty$ algebras.
\end{defn}

Similarly, there is a notion of natural transformations of $A_\infty$ functors, which makes the $A_\infty$ functors $f:\mc A \to \mc B$ into a DG category $\textup{Fun}_\infty(\mc A,\mc B)$. In particular, taking $\mc B$ to be the category $\DGVect $ of cochain complexes, we obtain the DG category of modules $\CC_\infty(\mc A):=\textup{Fun}_\infty(\mc A, \DGVect )$ over the $A_\infty$ category $\mc A$. Concretely, we have:

\begin{defn} Let $\mc A$ be an $A_\infty$ category. An $\Ainf$ module $\mc M\in\CC_\infty(\mc A)$ over $\mc A$ is given by
	\begin{itemize}
		\item graded vector spaces $\mc M(i)\in \bb K\Mod_\Z$ for each $i \in \ob(\mc A)$, and
		\item maps of graded vector spaces
		\begin{equation}\label{Ainfcatmodmapseqn}
			\rho_n^{i_0,...,i_n} : \mc A(i_0,i_1)\otimes_{\bb K} \hdots \otimes_{\bb K} \mc A(i_{n-1},i_n)\otimes \mc M(i_0) \to \mc M(i_n) [1-n] 
		\end{equation}
		for each finite list of objects $i_0,...,i_n\in \textup{ob}(\mc A)$,
	\end{itemize}
	satisfying the natural generalization of the usual conditions defining an $A_\infty$ module.
\end{defn}

\begin{rmk} For simplicity, we will often denote the collection of maps of graded vector spaces of Equation \ref{Ainfcatmodmapseqn} by simply $\mc A^{\otimes n}\otimes \mc M\to \mc M[1-n]$.
\end{rmk}

\begin{eg}
	For each object $j\in \mc A$, there is a module $\mc A_j\in\CC_\infty(\mc A)$ defined concretely by
	\[ \mc A_j(i) = \mc A(i,j) \]
	with module structure maps given by the structure maps for $\mc A$. These modules are the natural generalization of the rank 1 free module over a usual $A_\infty$ algebra, and in the context of Example \ref{AinfcatSeg} of a category $\mc A$ determined by an algebra $A$ over a semi simple base ring $S=\oplus_{i\in I} \bb K$, the module $\mc A_j$ is identified with $A_j$, the rank 1 free module multiplied on the right by the $j^{th}$ idempotent, under the equivalence $\CC_\infty(\mc A)=\CC_\infty(A)$.
	
	This construction defines a functor of $A_\infty$ categories
	\begin{equation}\label{Ainfyoneqn}
		Y: \mc A \to \CC_\infty (\mc A ) \quad\quad j \mapsto \left[ \mc A(\cdot, j): \mc A \to \Vect \right] \ ,
	\end{equation}
	called the Yoneda embedding for $A_\infty$ categories.
\end{eg}

We now describe the twisted objects construction, which was first proposed in \cite{KonHMS}, generalizing the results of \cite{BonKap} to the $\Ainf$ context, and was worked out in detail in \cite{Lef}. The first step is to introduce the category of shifted objects $\Z\mc A$ of $\mc A$:

\begin{defn} Let $\mc A$ be an $\Ainf$ category and define the $\Ainf$ category $\Z\mc A$ by
	\[ \ob(\Z\mc A) = \{(i,n)\ |\  i\in \ob(\mc A), n\in \Z \} \quad\quad \text{and}\quad\quad \Z\mc A( (i,n),(j,m)) = \mc A(i,j)[m-n]\]
	together with the natural extension of the $A_\infty$ structure maps of $\mc A$.
\end{defn}

\begin{warning}
	In the following, we will often omit the integer $n\in \Z$ from the notation and write simply $i\in \Z \mc A$ and $\Z\mc A(i,j)$ with the associated integers left implicit.
\end{warning}

We now define the desired category $\Tw \mc A$ of twisted objects in $\mc A$

\begin{defn}\cite{Lef}\label{twobjdefn} Let $\mc A$ be an $\Ainf$ category. The category $\Tw\mc A$ of twisted objects over $\mc A$ is the $\Ainf$ category defined as follows: an object of the category $\Tw\mc A$ is given by a finite collection of objects
	\[ (i_1,n_1),...,(i_d,n_d) \in \Z\mc A \]
	together with a degree zero element
	\[ \delta=(\delta_{kl}\in \Z\mc A(i_k,i_l)[1]  = \mc A(i_k,i_l)[n_l-n_k+1] )_{k,l=1}^d \in \mf{gl}_d(\Z \mc A) \]
	such that $\delta_{kl}=0 $ for $k\leq l$ and moreover $\delta$ satisfies the Maurer-Cartan equation
\begin{equation}\label{MCeqn}
		 \sum_{t\in \bb N}  m_t^{\mf{gl}_d(\Z\mc A)}(\delta^{\otimes t}) = 0 \ \in \mf{gl}_d(\bb Z \mc A)  \ ; 
\end{equation}
	here $\mf{gl}_d(\Z\mc A)$ denotes the $d\times d$ matrices with coefficients in $\Z \mc A$ and $m_t^{\mf{gl}_d(\Z\mc A)}$ the extension of the $A_\infty$ structure maps on $\mc A$ to $\mf{gl}_d(\Z\mc A)$, given by tensoring with the usual associative product on $\mf{gl}_d$.
	
	The spaces of maps in $\Tw\mc A$ are given by
	\begin{equation}\label{twobjhomspceqn}
		\Tw\mc A ((i_1,...,i_d,\delta), (j_1,...,j_c,\eta))= \bigoplus_{k=1,...,d,\ l = 1,...,c} \Z\mc A ( i_k, j_l) 
	\end{equation}
	and the $A_\infty$ structure maps are defined for each $t\in \bb N$ by
	\begin{equation}\label{twobjstrmapseqn}
		m_t^{\Tw\mc A} = \sum_{b,d\in \bb N } m^{\mf{gl}(\Z\mc A)}_{t+b+d}(\delta^{\otimes b} \otimes \id \otimes \eta^{\otimes d}) \ , 
	\end{equation}
	where $m^{\mf{gl}(\Z\mc A)}_{t+b+d}$ denotes the analogous extension of the $A_\infty$ structure maps on $\Z\mc A$ to matrices with coefficients in $\Z\mc A$, and we identify $\Tw\mc A ((i_1,...,i_d,\delta), (j_1,...,j_c,\eta))$ with the corresponding subspace of the space of $d\times c$ matrices with coefficients in $\Z\mc A$.
\end{defn}

There is a natural, strict functor of $\Ainf$ categories
\[ Y_1:\mc A \to \Tw\mc A \quad\quad\textup{defined by}\quad\quad i \mapsto ((i,0),0) \ , \]
together with the canonical inclusions on $\Hom$ spaces. Moreover, there is a functor of $A_\infty$ categories $Y_2:\Tw\mc A \to \CC_\infty(\mc A)$ defined by
\begin{equation}\label{twobjrealfuneqn}
	(i_1,...,i_r \in \Z\mc A,\ \delta=(\delta_{kl}\in \Z\mc A(i_k,i_l)[1])_{k,l=1}^d) \mapsto  \mc A_{i_1,...,i_r}^\delta :=\left(  \mc A_{i_1,...,i_d}:=\bigoplus_{k=1,...,d} \mc A_{i_k}[n_k] \ , \ (\rho_t)_{t\in \bb N} \right)  
\end{equation}
where
\begin{equation}\label{twobjrealfunstrmapseqn}
	\rho_t = \sum_{k\in \bb N}  (-1)^{\frac{t(t-1)}{2}}\rho^{\mc A_{i_1,...,i_d}}_{t,k}( \id_{\mc A}^{\otimes t}\otimes \id_{\mc A_{i_1,...,i_d}}\otimes \delta^{\otimes k} )\ : \mc A^{\otimes t}\otimes \mc A_{i_1,...,i_d} \to \mc A_{i_1,...,i_d} [1-t]  \ ; 
\end{equation}

\noindent here we let $\mc E_{\mc A}$ denote the $A_\infty$ category with objects and $\Hom$ spaces the same as $\Tw \mc A$ but with $\Ainf$ structure maps given only by those for $\mf{gl}(\bb Z \mc A)$, and note that $\delta \in \mc E_{\mc A}^1((i_1,...,i_d),(i_1,...,i_d))$ and moreover that $\mc A_{i_1,...,i_d}\in \CC^\infty(\mc A,\mc E_{\mc A})$ is naturally an $A_\infty$ bimodule over $(\mc A,\mc E_{\mc A})$ by Yoneda, with structure maps denoted
\begin{equation}\label{bimodstreqn}
	\rho^{\mc A_{i_1,...,i_d}}_{t,k}: \mc A^{\otimes t}\otimes \mc A_{i_1,...,i_d} \otimes \mc E_{\mc A}^{\otimes k} \to  \mc A_{i_1,...,i_d}[1-t-k] . 
\end{equation}

The main result about these constructions is the following:
\begin{theo}\cite{Lef}\label{twobjthm} The $\Ainf$ Yoneda embedding of Equation \ref{Ainfyoneqn} admits a factorization
	\[ \xymatrix{ \mc A \ar[r]^{Y_1} \ar[dr]^{Y} & \Tw\mc A \ar[d]^{Y_2} \\ & \CC_\infty(\mc A) } \]
	inducing an equivalence of triangulated categories
	\begin{equation}\label{twobjequiv}
		H^0(Y_2): H^0(\Tw \mc A) \to \Triang(\mc A_i) 
	\end{equation}
	where $\textup{Triang}(\mc A_i)$ denotes the triangulated subcategory of $\DD(A)$ generated by the objects $\mc A_i$.
\end{theo}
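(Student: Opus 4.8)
The plan is to follow Lefèvre-Hasegawa \cite{Lef}: verify that $Y_1$ and $Y_2$ are well-defined $A_\infty$ functors composing to the Yoneda embedding $Y$ of Equation \ref{Ainfyoneqn}, that $H^0(\Tw\mc A)$ carries a natural triangulated structure for which $\Tw\mc A$ is classically generated by the image of $\mc A$, and finally that $H^0(Y_2)$ is fully faithful with essential image exactly $\Triang(\mc A_i)$. For the first point, $Y_1$ is strict because $((i,0),0)$ has vanishing twisting datum, so the structure maps \ref{twobjstrmapseqn} of $\Tw\mc A$ collapse to those of $\mc A$. For $Y_2$, one expands Equations \ref{twobjrealfuneqn}--\ref{bimodstreqn}: the Maurer--Cartan equation \ref{MCeqn} for $\delta$ is precisely the condition that the perturbed maps $\rho_t$ of \ref{twobjrealfunstrmapseqn} satisfy the $A_\infty$ module relations, so $\mc A^\delta_{i_1,\dots,i_d}$ is a genuine $A_\infty$ module, and compatibility with composition is a (sign-sensitive but finite, by strict triangularity) bookkeeping check using that $\mc A_{i_1,\dots,i_d}$ is an $A_\infty$ bimodule over $(\mc A,\mc E_{\mc A})$ via Yoneda. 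Composing gives $Y_2(Y_1(i))=\mc A_i=\mc A(\cdot,i)$, the claimed factorization.

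Next I would establish that $H^0(\Tw\mc A)$ is triangulated, with shift induced by the grading-shift bijection on $\ob(\Z\mc A)$ and distinguished triangles built by \emph{adjoining a morphism to the twisting differential}: given a closed degree-zero morphism $f:(i_\bullet,\delta)\to(j_\bullet,\eta)$, its cone is the twisted object on the list $(i_\bullet,j_\bullet)$ with twisting datum the strictly triangular block matrix assembled from $\delta$, $\eta$ and $f$, and the Maurer--Cartan equation for this matrix follows from those for $\delta$ and $\eta$ together with $f$ being a cocycle. This exhibits $H^0(\Tw\mc A)$ as pretriangulated and, since every twisted object is by construction an iterated cone of shifts of the objects $Y_1(i)=((i,0),0)$, shows it is classically generated by $Y_1(\mc A)$.

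The main obstacle is full faithfulness of $H^0(Y_2)$. The key input is the $A_\infty$ Yoneda lemma: for any $\mc M\in\CC_\infty(\mc A)$ the evaluation map $\Hom_{\CC_\infty(\mc A)}(\mc A_i,\mc M)\to\mc M(i)$ is a quasi-isomorphism, so the representable modules behave as projective generators. One then argues by induction on the length $d$ of the twisting datum, writing $\mc A^\delta_{i_1,\dots,i_d}$ as the cone of a morphism between $\mc A^{\delta'}_{i_1,\dots,i_{d-1}}$ and $\mc A_{i_d}[n_d]$ read off the last row and column of $\delta$, and comparing the long exact sequences computing $\Hom$ in $H^0(\CC_\infty(\mc A))$ and in $H^0(\Tw\mc A)$ through Equation \ref{twobjhomspceqn} via the five lemma. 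Equivalently, one filters the $\Hom$-complex in $\CC_\infty(\mc A)$ by the number of $\delta$- and $\eta$-insertions, identifies the associated graded with $\bigoplus_{k,l}\Z\mc A(i_k,j_l)$ using the Yoneda quasi-isomorphism, and checks that the induced differential reproduces $m_1^{\Tw\mc A}$; strict triangularity of the twisting matrices makes the filtration finite, so the spectral sequence converges without subtlety, the only real care being the signs in \ref{twobjstrmapseqn} and \ref{twobjrealfunstrmapseqn}.

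Finally, combining these: $H^0(Y_2)$ is a fully faithful exact functor whose essential image is a strictly full triangulated subcategory of $H^0(\CC_\infty(\mc A))=\DD(A)$ containing each $\mc A_i=Y_2(Y_1(i))$; since $\Tw\mc A$ is classically generated by the $Y_1(i)$ and $Y_2$ preserves shifts and cones, this image is exactly $\Triang(\mc A_i)$, and hence $H^0(Y_2):H^0(\Tw\mc A)\xrightarrow{\cong}\Triang(\mc A_i)$ is the asserted equivalence \ref{twobjequiv}, with all shifts of the $\mc A_i$ already accounted for by ranging over $\ob(\Z\mc A)$.
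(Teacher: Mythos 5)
The paper does not prove this statement itself—it quotes it from \cite{Lef}—and your outline is precisely the standard argument of that reference: strictness of $Y_1$, the Maurer--Cartan equation making the perturbed structure maps of $Y_2$ into genuine $\Ainf$-module structures, Bondal--Kapranov-style cones built by adjoining a closed morphism to the strictly triangular twisting datum, and full faithfulness via the $\Ainf$ Yoneda lemma with a finite filtration (or induction on the length of $\delta$), after which generation by the $\mc A_i$ identifies the essential image with $\Triang(\mc A_i)$. So your proposal is correct and takes essentially the same route as the cited proof the paper relies on.
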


Further, we introduce the following categories

\begin{defn} Let $\mc A$ be an $\Ainf$ category. The $\Ainf$ category $\Tw^0\mc A$ is defined as the full subcategory of $\Tw \mc A$ on objects $((i_1,...,i_d),\delta)\in\Tw\mc A$ such that for each $k=1,...,d$ the corresponding $i_k\in \mc A \subset \bb Z \mc A$, that is, has associated integer $n_k=0$. 
\end{defn}

Concretely, $\Tw^0 \mc A$ is the full subcategory on objects whose image under the functor $Y_2$ has underlying vector space given by a direct sum of the indecomposable summands of the rank 1 free module, that is
\[ \mc A_{i_1,...,i_d}=\bigoplus_{k=1,...,d} \mc A_{i_k}  . \]

\begin{defn}\label{Filtdefn} Let $M_i\in \mc D$ be a collection of objects in a triangulated category $\mc D$. Then $\Filt (M_i)$ is defined as the full subcategory of $\mc D$ on objects admitting a filtration with subquotients in the collection $M_i$.
\end{defn}

Then in addition, we have

\begin{corollary}\cite{Lef}\label{twobjfiltcoro} The equivalence of Equation \ref{twobjequiv} induces an equivalence of full subcategories
	\begin{equation}\label{twobjequivab}
		H^0(\Tw^0 \mc A) \xrightarrow{\cong} \Filt (\mc A_i)
	\end{equation}
\end{corollary}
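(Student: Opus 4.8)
The plan is to deduce the corollary from Theorem \ref{twobjthm} by checking that the equivalence $H^0(Y_2)\colon H^0(\Tw\mc A)\xrightarrow{\cong}\Triang(\mc A_i)$ restricts to the stated equivalence on the designated full subcategories. Since $\Tw^0\mc A$ is, by definition, a full subcategory of $\Tw\mc A$ and $\Filt(\mc A_i)$ is a full subcategory of $\Triang(\mc A_i)$ (every object admitting a finite filtration with subquotients among the $\mc A_i$ is built from the $\mc A_i$ by iterated cones, hence lies in the triangulated subcategory they generate), it suffices to verify that $H^0(Y_2)$ maps $H^0(\Tw^0\mc A)$ into $\Filt(\mc A_i)$, that the restriction is essentially surjective onto $\Filt(\mc A_i)$, and that fully faithfulness is inherited — the last being automatic, as both sides are full subcategories and the functor is already fully faithful on the larger categories. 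So the content is the identification of the essential image.

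First I would establish the ``$\subseteq$'' inclusion. Given an object $((i_1,\dots,i_d),\delta)\in\Tw^0\mc A$, its image under $Y_2$ has underlying complex $\mc A_{i_1,\dots,i_d}=\bigoplus_{k=1}^d\mc A_{i_k}$ (all shifts $n_k=0$), with differential deformed by the strictly lower-triangular Maurer--Cartan element $\delta=(\delta_{kl})$, $\delta_{kl}=0$ for $k\le l$. The strict lower-triangularity gives a finite, exhaustive filtration of this twisted module: for $p=0,\dots,d$ let $F_p=\bigoplus_{k>d-p}\mc A_{i_k}$ viewed as a subspace. Because $\delta_{kl}$ is nonzero only for $k>l$, the structure maps $\rho_t$ of Equation \ref{twobjrealfunstrmapseqn} preserve each $F_p$ (the only way the $\delta$'s can move a summand is toward larger index), so each $F_p$ is an $\Ainf$ submodule and the successive quotients $F_p/F_{p-1}$ are isomorphic to $\mc A_{i_{d-p+1}}$ with their undeformed module structure — i.e., the quotient's $\delta$-corrections vanish because there is only one summand. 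Passing to $H^0$, this exhibits $H^0(Y_2)((i_1,\dots,i_d),\delta)$ as an iterated extension (via the triangles $F_{p-1}\to F_p\to \mc A_{i_{d-p+1}}$) with subquotients among the $\mc A_i$, hence an object of $\Filt(\mc A_i)$.

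For the reverse inclusion ``$\supseteq$'', i.e. essential surjectivity, I would argue by induction on the length of the filtration. An object $X\in\Filt(\mc A_i)$ of filtration length $d$ fits into a triangle $X'\to X\to\mc A_{i_d}\xrightarrow{[1]}$ with $X'\in\Filt(\mc A_i)$ of length $d-1$; by induction $X'\cong H^0(Y_2)((i_1,\dots,i_{d-1}),\delta')$ for some $\delta'\in\Tw^0\mc A$. The connecting map $\mc A_{i_d}\to X'[1]$ corresponds, under the equivalence $H^0(Y_2)$ together with the description of $\Hom$-spaces in Equation \ref{twobjhomspceqn}, to a degree-one element $(\delta_{dl})_{l=1}^{d-1}$ extending $\delta'$ to a new $\delta$ on $((i_1,\dots,i_d))$; one checks that the Maurer--Cartan equation \ref{MCeqn} for $\delta$ reduces, in the top row, precisely to the (co)cycle condition on the connecting map, which holds, while in the remaining rows it is the already-established equation for $\delta'$. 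The cone of this twisted-object morphism is then $((i_1,\dots,i_d),\delta)\in\Tw^0\mc A$, and since $H^0(Y_2)$ is a triangulated functor it sends this cone to (something isomorphic to) $X$. The base case $d=1$ is the identity $H^0(Y_2)(((i_1),0))=\mc A_{i_1}$, which is immediate from Equation \ref{twobjrealfuneqn}.

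The main obstacle I anticipate is the bookkeeping in the ``$\supseteq$'' step: one must be careful that the extension of $\delta'$ by a single new lower-triangular row both (a) lands in $\Tw^0\mc A$ — which is where the hypothesis $n_k=0$ for all $k$ is used, so that $\mc A_{i_d}$ genuinely appears as an unshifted summand and the resulting object is of the restricted form — and (b) corresponds under $Y_2$ to the right mapping cone, which requires tracing through how the functor $Y_2$ of Equation \ref{twobjrealfuneqn}, and in particular the bimodule structure maps \ref{bimodstreqn}, interact with taking cones of twisted-object morphisms defined via Equation \ref{twobjstrmapseqn}. This is essentially a compatibility check between the triangulated structures on $H^0(\Tw\mc A)$ and on $\DD(A)$, which is already implicit in Theorem \ref{twobjthm}; the reference \cite{Lef} can be cited for the delicate parts. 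Everything else is formal manipulation of the filtration.
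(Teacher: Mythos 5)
The paper gives no proof for this corollary, citing Lefèvre-Hasegawa \cite{Lef}, so there is no internal argument to compare against; I will assess your proposal on its own terms. Your overall strategy — restrict the equivalence of Theorem \ref{twobjthm} and identify essential images via a filtration coming from the triangularity of $\delta$ (for ``$\subseteq$'') together with an induction on filtration length (for ``$\supseteq$'') — is the right one and almost certainly what \cite{Lef} does.

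However, your ``$\subseteq$'' step has the filtration running in the wrong direction. An entry $\delta_{kl}\in\mc A(i_k,i_l)[1]$, nonzero only for $k>l$, acts on $\mc A_{i_1,\dots,i_d}$ on the right by post-composition under the Yoneda identification, carrying $\mc A_{i_k}=\mc A(\cdot,i_k)$ into $\mc A_{i_l}=\mc A(\cdot,i_l)$. So the twisted differential moves the $k$-th summand into summands of strictly \emph{smaller} index $l<k$, not larger as you assert. Consequently your $F_p=\bigoplus_{k>d-p}\mc A_{i_k}$ is \emph{not} preserved by the differential: an element of $\mc A_{i_{d-p+1}}\subset F_p$ is sent into $\mc A_{i_l}$ with $l\le d-p$, outside $F_p$. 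The fix is to take $F_p=\bigoplus_{k\le p}\mc A_{i_k}$, which \emph{is} preserved (the differential of $\mc A_{i_k}$ for $k\le p$ lands in $\bigoplus_{l<k}\mc A_{i_l}\subset F_{p-1}$), with subquotients $F_p/F_{p-1}\cong\mc A_{i_p}$ carrying the undeformed structure. Note that your ``$\supseteq$'' induction already implicitly uses this corrected direction — peeling off $\mc A_{i_d}$ as top quotient and extending $\delta'$ by a new bottom row $(\delta_{dl})_{l<d}$ is exactly the structure of the corrected filtration — so the two halves of your write-up are at present inconsistent with each other. Finally, the loose end you flag at the end, namely lifting the connecting morphism in $H^0$ to an actual Maurer--Cartan solution on the enlarged index set and verifying Equation \ref{MCeqn} including the higher $A_\infty$ terms mixing the new row with $\delta'$, is a real gap: it is precisely where the citation to \cite{Lef} carries the weight, and it requires more than the formal bookkeeping your sketch provides.
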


Applying the Morita theory results of Section \ref{tiltingsec}, we also obtain:

\begin{corollary}\cite{Lef} Let $\mc A_M$ be the $\Ainf$ category determined by the $\Ainf$ algebra $A_M=\Ext^\bullet_\D(M,M)$ for $M=\oplus_{i\in I} M_i \in \D$ a direct sum of objects in a triangulated category $\mc D$, as in Example \ref{AinfcatSeg}. Then the composition of the equivalence of triangulated categories of Equation \ref{twobjequiv} above with that of Equation \ref{dgmoritaeqntriang} gives an equivalence
	\[ H^0(\Tw\mc A_M) \xrightarrow{\cong} \Triang(M_i) \quad\quad \text{and in turn}\quad\quad H^0(\Tw^0\mc A_M) \xrightarrow{\cong} \Filt(M_i) \ . \]
\end{corollary}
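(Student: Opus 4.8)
The plan is to exhibit the asserted equivalence as a composite of three equivalences of triangulated categories: Theorem \ref{twobjthm}, an intermediate equivalence recording the passage from the $\Ainf$-algebra $A_M=\Ext^\bullet_\D(M,M)$ to the DG algebra of which it is the minimal model, and Equation \ref{dgmoritaeqntriang}. Fix a DG enhancement of $\D$ and set $\Lambda=\Hom_\D(M,M)$, a DG algebra over the semisimple base $S=\bigoplus_{i\in I}\bb K$, with idempotent decomposition $\Lambda=\bigoplus_{i\in I}\Lambda_i$ where $\Lambda_i=\Lambda e_i\cong\Hom_\D(M,M_i)$; then $A_M$ carries its transferred $\Ainf$-structure as in Example \ref{AinfcatSeg}, with idempotent pieces $A_{M,i}=A_M e_i$, and under the identification $\CC_\infty(\mc A_M)=\CC_\infty(A_M)$ of \emph{loc. cit.} the $A_{M,i}$ are precisely the Yoneda modules $\mc A_{M,i}$ occurring in Theorem \ref{twobjthm}.

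First I would invoke Theorem \ref{twobjthm} with $\mc A=\mc A_M$: the functor $Y_2$ induces an equivalence $H^0(\Tw\mc A_M)\xrightarrow{\cong}\Triang(\mc A_{M,i})$ onto the triangulated subcategory of $\DD(A_M)$ generated by the $A_{M,i}$. Next I would use that the minimal model map is an $\Ainf$-quasi-isomorphism $A_M\to\Lambda$ of strictly unital, $S$-augmented $\Ainf$-algebras, and that any such quasi-isomorphism induces an equivalence of the corresponding derived categories of $\Ainf$-modules — a standard fact within the framework of Section \ref{Ainfalgsec} — which carries $A_{M,i}$ to $\Lambda_i$ and hence restricts to an equivalence $\Triang(\mc A_{M,i})\xrightarrow{\cong}\Triang(\Lambda_i)$ inside $\DD(\Lambda)$. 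Finally I would apply Equation \ref{dgmoritaeqntriang} of Theorem \ref{Morita} to the object $T=M$, in the evident form over the semisimple base $S$ used throughout this paper: since $\ob\circ\Psi_M=\Hom_\D(M,\cdot)$ sends $M_i$ to $\Lambda_i$, it restricts to an equivalence $\Triang(M_i)\xrightarrow{\cong}\Triang(\Lambda_i)$. Composing these three equivalences, with the last one inverted, produces the desired equivalence $H^0(\Tw\mc A_M)\xrightarrow{\cong}\Triang(M_i)$.

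For the statement about $\Tw^0$ I would run the identical chain, replacing Theorem \ref{twobjthm} by Corollary \ref{twobjfiltcoro}, so that the first step becomes $H^0(\Tw^0\mc A_M)\xrightarrow{\cong}\Filt(\mc A_{M,i})$. Each of the three equivalences above is an equivalence of triangulated categories matching the families of objects $\{A_{M,i}\}$, $\{\Lambda_i\}$, $\{M_i\}$ with one another, hence it sends an object admitting a finite filtration with subquotients among one family to an object admitting such a filtration among the corresponding family; applying this step by step gives $H^0(\Tw^0\mc A_M)\xrightarrow{\cong}\Filt(M_i)$.

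The main obstacle is purely organizational: one must track that the distinguished objects are matched compatibly along the whole chain — that $Y_2$ genuinely lands among the $A_M$-modules $A_{M,i}$, that the quasi-isomorphism $A_M\simeq\Lambda$ may be chosen $S$-linear and unital so that it respects idempotents, and that the Morita functor of Theorem \ref{Morita} has object map $\Hom_\D(M,\cdot)$. A second, genuinely minor point is that $M$ is in general neither compact nor a generator of $\D$, so Theorem \ref{Morita} is applied not to $\D$ on the nose but to a cocomplete algebraic triangulated category containing $\D$ — for instance the derived category of the fixed DG enhancement — which is harmless since this enlargement changes neither $\Triang(M_i)$ nor $\Filt(M_i)$, and the $\Triang$ and $\Filt$ parts of Equation \ref{dgmoritaeqntriang} use only that the $M_i$ are compact within $\Triang(M_i)$, which holds automatically.
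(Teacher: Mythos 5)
Your argument is correct and is essentially the route the paper intends: the corollary is obtained precisely by composing the twisted-objects equivalence of Theorem \ref{twobjthm} (resp.\ Corollary \ref{twobjfiltcoro} for the $\Tw^0$ statement) with the Morita equivalence of Equation \ref{dgmoritaeqntriang} applied to $T=M$, with the $\Filt$ part following because triangulated equivalences matching the generating objects preserve iterated extensions. The only difference is that you make explicit two points the paper suppresses — the quasi-isomorphism $A_M\simeq\Hom_{\D}(M,M)$ identifying $\Ainf$-modules over the minimal model with DG modules over the endomorphism algebra, and the observation that the restricted equivalences $\Triang(M_i)\cong\Triang(\Lambda_i)$ do not require the full tilting hypotheses of Theorem \ref{Morita} — and you handle both correctly.
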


We now describe the corresponding graded variant of the twisted objects category and its functor to $\CC_\infty(\mc A)$:

\begin{defn} Let $\mc A$ be a graded $\Ainf$ category and define the graded $\Ainf$ category $\bb Z \mc A$ by
	\[ \ob(\Z\mc A) = \{(i,n,p)\ |\  i\in \ob(\mc A), n,p\in \Z \} \quad\quad \text{and}\quad\quad \bb Z\mc A((i,n,p),(j,m,q))= \mc A (i,j)[m-n]\langle q-p\rangle \]
	together with the natural extension of the graded $A_\infty$ structure maps of $\mc A$.	
\end{defn}

\begin{warning}
	As above,  we will often omit the integers $n,p\in \Z$ from the notation and write simply $i\in \Z \mc A $ and $\Z\mc A (i,j)$ with the associated integers left implicit.
\end{warning}

Following Definition \ref{twobjdefn}, we make the following definition:
\begin{defn}
	Let $\mc A $ be a graded $\Ainf$ category. The category $\tw\mc A $ of twisted objects over $\mc A $ is the graded $\Ainf$ category defined as follows:
	an object of the category $\tw \mc A $ is given by a finite collection of objects \[(i_1,n_1,p_1),...,(i_r,n_d,p_d)\in \bb Z \mc A  \ , \]
	together with a degree zero element
	\[ \delta = (\ \delta_{kl} \in  \bb Z\mc A(i_k,i_l)[1]  =\ \mc A({i_k},{i_l}) [n_l-n_k+1]\langle p_l-p_k\rangle \ )_{k,l=1}^d  \ \in \ \mf{gl}_d(\bb Z \mc A)[1] \]
	such that $\delta_{kl}=0 $ for $k\leq l$ and moreover $\delta$ satisfies the Maurer-Cartan equation
	\[ \sum_{t\in \bb N} m_t^{\mf{gl}_d(\Z\mc A)}(\delta^{\otimes t}) = 0 \ . \]
	The graded vector spaces of homomorphisms and $\Ainf$ category structure maps are defined as in Equations \ref{twobjhomspceqn} and \ref{twobjstrmapseqn}, respectively.
\end{defn}

There is an analogous natural functor of graded $\Ainf$ categories $Y_2:\tw \mc A  \to \CC_\infty(\mc A ) $ to the category of graded $\Ainf$ modules over $\mc A$, defined by
\[  (i_1,...,i_d \in \bb Z \mc A,\ \delta=(\delta_{kl})_{k,l=1}^d) \mapsto  \mc A_{i_1,...,i_d}^\delta :=\left(  \mc A_{i_1,...,i_d}:=\bigoplus_{k=1,...,d} \mc A_{i_k}[n_k]\langle p_k\rangle  \ , \ (\rho_t)_{t\in \bb N} \right)   \ , \]
as in Equation \ref{twobjrealfuneqn}, where
\begin{equation}\label{twobjstrmapgradedeqn}
	\rho_t = \sum_{k\in \bb N}  (-1)^{\frac{t(t-1)}{2}}\rho^{\mc A_{i_1,...,i_d}}_{t,k}( \id_{\mc A }^{\otimes t}\otimes \id_{\mc A_{i_1,...,i_d}}\otimes \delta^{\otimes k} )\ : \mc A^{\otimes t}\otimes \mc A_{i_1,...,i_d} \to \mc A_{i_1,...,i_d} [1-t]  \ , 
\end{equation}
as in Equation \ref{twobjrealfunstrmapseqn}, and $\rho_{t,k}^{\mc A_{i_1,...,i_d}}:\mc A^{\otimes t} \otimes \mc A_{i_1,...,i_d} \otimes \mc E_{\mc A}^{\otimes k} \to \mc A_{i_1,...,i_d}[1-t-k]$ are the $\Ainf$ category bimodule structure maps, as in Equation \ref{bimodstreqn}.

\begin{rmk} All statements in the preceding paragraph are given in terms of the natural, unsheared grading, in the sense of Remark \ref{cohomshearrmk}. We will continue to work in these conventions throughout this section, explicitly adding the superscript $\mc A^\sh$ to denote the cohomologically sheared variants when necessary.
\end{rmk}

We have the following natural analogue of Theorem \ref{twobjthm} in this case:

\begin{corollary}\label{grtwobjequiv} The functor $Y_2:\tw \mc A  \to \CC_\infty(\mc A)$ induces an equivalence of triangulated categories
	\begin{equation}\label{grtwobjequiveqn}
		H^0(Y_2): H^0(\tw \mc A ) \to \triang(\mc A_i)  \ ,
	\end{equation}
	where we recall that $\triang(\mc A_i)$ denotes the minimal triangulated subcategory of $\DD_\ZZ(A)$ containing the objects $\mc A_i\langle k \rangle$ for $i\in I$ and $k\in \bb Z$.
\end{corollary}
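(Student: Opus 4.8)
The plan is to reduce the statement to the ungraded case, Theorem \ref{twobjthm}, by absorbing the auxiliary internal grading $\langle\cdot\rangle$ into the object set. First I would introduce the ordinary ($\bb Z$-graded, not bigraded) $\Ainf$ category $\hat{\mc A}$ with object set $\ob(\hat{\mc A}) = \ob(\mc A)\times\bb Z$ and morphism complexes
\[ \hat{\mc A}\bigl((i,p),(j,q)\bigr) := \mc A(i,j)_{q-p} \ , \]
the summand of $\mc A(i,j)$ in internal degree $q-p$, regarded as a plain cochain complex; since all the $\Ainf$ structure maps of $\mc A$ preserve internal degree, they restrict to $\Ainf$ structure maps on $\hat{\mc A}$. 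The key observation is that the DG category of \emph{graded} $\Ainf$ modules over $\mc A$ is canonically equivalent to the DG category $\CC_\infty(\hat{\mc A})$ of ordinary $\Ainf$ modules over $\hat{\mc A}$, via $\mc M\mapsto\hat{\mc M}$ with $\hat{\mc M}((i,p)) := \mc M(i)_{-p}$ — this is the standard device identifying $\bb Z$-graded objects with modules over a category carrying $\bb Z$ copies of each object related by the shift autoequivalence $\langle 1\rangle$. Under this equivalence the representable graded module $\mc A_i\langle k\rangle$ corresponds to the representable $\hat{\mc A}_{(i,k)}$, so that the subcategory $\triang(\mc A_i)\subset\DD_\ZZ(A)$ of the statement corresponds to $\Triang(\hat{\mc A}_j)\subset\DD(\hat{\mc A})$, the triangulated subcategory generated by \emph{all} representables of $\hat{\mc A}$.

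Next I would check that the graded twisted objects category $\tw\mc A$ is identified with $\Tw\hat{\mc A}$, compatibly with the functors $Y_2$. This is a matter of unwinding the definitions: an object of $\Tw\hat{\mc A}$ is a finite list of shifted objects $\bigl((i_k,p_k),n_k\bigr)\in\bb Z\hat{\mc A}$ together with a strictly lower triangular Maurer--Cartan element, and the triple $(i_k,n_k,p_k)$ is precisely an object of $\bb Z\mc A$ in the graded sense; moreover, since the object data pin the components $\delta_{kl}$ to internal degree $p_l-p_k$, the Maurer--Cartan element lives in exactly the same space $\mc A(i_k,i_l)^{n_l-n_k+1}_{p_l-p_k}$ for $\tw\mc A$ and for $\Tw\hat{\mc A}$, and the Maurer--Cartan equation \eqref{MCeqn} and the structure maps \eqref{twobjstrmapseqn} involve the same $\Ainf$ operations in both cases. (The only difference is that $\tw\mc A$ retains the finer internal bigrading on its $\Hom$ complexes, which is harmless since the Corollary is a statement about $H^0$, taken here in the degree-$(0,0)$ sense consistent with the plain category $\DD_\ZZ(A)$.) Likewise the formula \eqref{twobjstrmapgradedeqn} for $Y_2$ on $\tw\mc A$ is, by construction, the restriction of \eqref{twobjrealfunstrmapseqn} for $Y_2$ on $\Tw\hat{\mc A}$. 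Granting these identifications, Theorem \ref{twobjthm} applied to $\hat{\mc A}$ asserts that $H^0(Y_2)\colon H^0(\Tw\hat{\mc A})\to\Triang(\hat{\mc A}_j)$ is an equivalence, which under the dictionary above is exactly the claimed equivalence $H^0(\tw\mc A)\xrightarrow{\cong}\triang(\mc A_i)$. Equivalently, and perhaps more transparently, one may simply observe that the whole construction of \cite{Lef} underlying Theorem \ref{twobjthm} is compatible with an inert auxiliary grading, so repeating that argument verbatim while carrying the symbol $\langle\cdot\rangle$ along yields the graded statement directly.

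The routine but slightly delicate point, which is the only real work, is the verification that $\CC_\infty(\mc A)\simeq\CC_\infty(\hat{\mc A})$ is an equivalence of DG (not merely triangulated) categories intertwining $Y_1$, $Y_2$, the Yoneda embeddings, and the two shift operations — here one must keep careful track of the cohomological shift $[\cdot]$ versus the internal shift $\langle\cdot\rangle$ and of the signs in \eqref{twobjrealfunstrmapseqn}. Since shearing (Remark \ref{cohomshearrmk}) is itself an equivalence of DG categories commuting with all of these structures, the argument is insensitive to the sheared versus unsheared convention, so it suffices to carry it out once in the unsheared convention as in the statement. I do not anticipate any conceptual obstacle beyond this bookkeeping: nothing in the twisted objects construction interacts nontrivially with a grading that all higher products respect.
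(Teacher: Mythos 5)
Your proof is correct, but you should know that the paper provides no explicit proof of this corollary at all: it simply states it as ``the natural analogue'' of Theorem \ref{twobjthm}, implicitly invoking the second, fallback argument you give (``repeat the construction of \cite{Lef} verbatim, carrying the symbol $\langle\cdot\rangle$ along''). Your primary argument --- the reduction to the ungraded case by unfolding the internal grading into the object set of an auxiliary $\Ainf$ category $\hat{\mc A}$ with $\ob(\hat{\mc A}) = \ob(\mc A)\times\bb Z$ --- is a genuinely different and more structured route. It has the advantage of using Theorem \ref{twobjthm} as a black box rather than asking the reader to re-inspect each step of the Lef\`evre argument, and the identifications it requires ($\tw\mc A \simeq \Tw\hat{\mc A}$ compatibly with $Y_2$, and graded $\Ainf$ modules over $\mc A$ $\simeq$ ordinary $\Ainf$ modules over $\hat{\mc A}$) are indeed forced, since the object data of $\bb Z\hat{\mc A}$ are exactly the triples $(i,n,p)$ and the internal degree constraint on $\delta_{kl}$ pins it to the same bidegree $(n_l-n_k+1,\,p_l-p_k)$ either way. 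One small bookkeeping slip: with $\hat{\mc A}\bigl((i,p),(j,q)\bigr) := \mc A(i,j)_{q-p}$ and the module structure maps $\rho_n:\mc A(i_0,i_1)\otimes\cdots\otimes\mc M(i_0)\to\mc M(i_n)$ preserving internal degree, the matching convention is $\hat{\mc M}\bigl((i,p)\bigr) := \mc M(i)_{p}$ rather than $\mc M(i)_{-p}$; the sign reversal you wrote would be consistent only if you had instead set $\hat{\mc A}\bigl((i,p),(j,q)\bigr) := \mc A(i,j)_{p-q}$. Since this is exactly the ``delicate point'' you flag and it affects nothing structural, the argument stands.
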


\begin{defn}
	Let $\mc A$ be a graded $\Ainf$ category. The graded $\Ainf$ category $\tw^0\mc A$ is defined as the full subcategory on objects $(i_1,...,i_d,\delta)\in \bb Z \mc A$ such that each of the associated integers $n_k=0$ for $k=1,...,d$.
\end{defn}

Concretely, $\tw^0 \mc A$ is the full subcategory on objects whose image under the functor $Y_2$ has underlying graded vector space given by a direct sum of graded shifts of the indecomposable summands of the rank 1 free module, that is
\[ \mc A_{i_1,...,i_d}=\bigoplus_{k=1,...,d} \mc A_{i_k}\langle p_k \rangle\ , \quad\quad \text{or equivalently} \quad\quad   \mc A_{i_1,...,i_d}^\sh=\bigoplus_{k=1,...,d} \mc A_{i_k}^\sh[p_k]\langle -p_k \rangle , \]
in the cohomologically sheared conventions.

\begin{defn}\label{filtdefn}
	Let $M_i\in \mc C$ be a collection of objects in a mixed category $\mc C$. The category $\filt(M_i)$ is defined as the full subcategory of $\mc C$ on objects admitting a filtration with subquotients in the collection of objects $M_i\langle k \rangle \in \mc C$ for $i\in I$ and $k\in\bb Z$.
\end{defn}

We have the following natural analogue of Corollary \ref{twobjfiltcoro}:
\begin{corollary}\label{twobjgrfiltcoro} The equivalence of Equation \ref{grtwobjequiveqn} induces an equivalence of the full subcategories
	\[ H^0(\tw^0\mc A ) \xrightarrow{\cong} \filt(\Sigma_i) \ .\]
\end{corollary}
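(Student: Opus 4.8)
The plan is to deduce this as the graded refinement of Corollary \ref{twobjfiltcoro}, by showing that the equivalence $H^0(Y_2)$ of Corollary \ref{grtwobjequiv} restricts along the full subcategory inclusion $\tw^0\mc A \subset \tw\mc A$ to land precisely in $\filt(\mc A_i)$, where I write $\mc A_i$ for the rank one free $\Ainf$ modules (the objects denoted $\Sigma_i$ in the statement). Since $H^0(Y_2)$ is already fully faithful on all of $H^0(\tw\mc A)$ and $\tw^0\mc A$ is full, the restricted functor is automatically fully faithful; so the entire content is the identification of its essential image with $\filt(\mc A_i)$, and this breaks into two inclusions.

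For the inclusion $H^0(Y_2)(H^0(\tw^0\mc A)) \subseteq \filt(\mc A_i)$, I would argue as follows. Given an object $((i_1,0,p_1),\dots,(i_d,0,p_d),\delta)$ of $\tw^0\mc A$, the strictly lower triangular condition $\delta_{kl}=0$ for $k\leq l$ implies that for each $j$ the truncation $((i_1,0,p_1),\dots,(i_j,0,p_j),\delta_{\leq j})$ is again an object of $\tw^0\mc A$ and that the inclusion of the first $j$ summands is a degree zero cycle in the morphism complex of $\tw\mc A$, hence a morphism in $H^0(\tw^0\mc A)$. Under $Y_2$ these inclusions are sent, by the explicit formula of Equation \ref{twobjstrmapgradedeqn}, to the $\Ainf$-submodule inclusions $\bigoplus_{k\leq j}\mc A_{i_k}\langle p_k\rangle \hookrightarrow \bigoplus_{k\leq d}\mc A_{i_k}\langle p_k\rangle$, and the successive quotients are realizations of objects of $\tw^0\mc A$ with vanishing twisting cochain, namely $\mc A_{i_{j+1}}\langle p_{j+1}\rangle$. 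Thus $H^0(Y_2)((i_\bullet),\delta)$ carries a filtration with subquotients among the $\mc A_i\langle p\rangle$, i.e. lies in $\filt(\mc A_i)$.

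For the reverse inclusion I would induct on the length of a filtration. The length one case is $\mc A_i\langle p\rangle = H^0(Y_2)((i,0,p),0)$. For the step, suppose $X\in\filt(\mc A_i)$ admits a triangle $X'\to X\to \mc A_i\langle p\rangle$ with $X'$ of strictly smaller filtration length, so by induction $X'\cong H^0(Y_2)(T')$ with $T'=((i_1,0,p_1),\dots,(i_{d-1},0,p_{d-1}),\delta')\in\tw^0\mc A$. By full faithfulness of $H^0(Y_2)$ the class of this extension lies in $\Ext^1_{\DD_\ZZ(A)}(\mc A_i\langle p\rangle,X')$, which is computed by the degree one cohomology of the morphism complex in $\tw^0\mc A$; a representative is a degree one element $\beta=(\beta_k\in\bb Z\mc A(i_k,i))_{k=1}^{d-1}$ that is closed with respect to the differential twisted by $\delta'$. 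Appending $\beta$ as the last row of the matrix representing $\delta'$ then gives $\delta\in\mf{gl}_d(\bb Z\mc A)$; because of the strictly lower triangular shape the Maurer--Cartan equation for $\delta$ (Equation \ref{MCeqn}) decouples into the Maurer--Cartan equation for $\delta'$, which holds by hypothesis, together with exactly the cocycle condition on $\beta$, which holds by construction. Hence $T=((i_1,0,p_1),\dots,(i_{d-1},0,p_{d-1}),(i,0,p),\delta)$ is an object of $\tw^0\mc A$, and one checks from Equation \ref{twobjstrmapgradedeqn} that $H^0(Y_2)(T)$ is an extension of $\mc A_i\langle p\rangle$ by $X'$ realizing the chosen class, hence isomorphic to $X$.

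I expect the main obstacle to be the sign and grading bookkeeping in the inductive step: matching the $\Ainf$-module $\Ext^1$ with $H^1$ of the $\tw^0\mc A$ morphism complex, checking that appending a row of the correct bidegree preserves Equation \ref{MCeqn}, and verifying that $H^0(Y_2)(T)$ realizes the prescribed extension class rather than merely some extension, all via the signed formula of Equation \ref{twobjstrmapgradedeqn}. Since each of these is the verbatim graded analogue of what is established in \cite{Lef} for Corollary \ref{twobjfiltcoro}, I would in the end present the argument as following \emph{mutatis mutandis}, spelling out only the compatibility with the internal shifts $\langle p_k\rangle$ and the consequent restriction to the subcategory $\tw^0\mc A$ of objects with all $n_k=0$.
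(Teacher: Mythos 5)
Your proof is correct and takes essentially the same route as the paper, which records this statement without proof as the graded analogue of Corollary \ref{twobjfiltcoro} from \cite{Lef}: your truncation-filtration argument for one inclusion and the Maurer--Cartan/extension-class induction for the other is precisely the argument underlying that reference, adapted to the internal shifts $\langle p_k\rangle$. One cosmetic slip: with the convention $\delta_{kl}=0$ for $k\leq l$, the summand appended last contributes the row entries $\delta_{d,k}\in \bb Z\mc A(i,i_k)[1]$, so your $\beta_k$ should lie in $\bb Z\mc A(i,i_k)$ rather than $\bb Z\mc A(i_k,i)$ (by Yoneda this is exactly a degree-one cycle in the morphism complex computing $\Ext^1(\mc A_i\langle p\rangle, X')$), which does not affect the argument.
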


\section{Perverse coherent sheaves on Calabi-Yau threefolds and quivers with potential}\label{unextsec}

\subsection{Overview of Section \ref{unextsec}}\label{unextovsec} In Section \ref{unextsec}, we explain the proof of a folklore theorem relating coherent sheaves on a certain class of toric Calabi-Yau threefold resolutions $Y\to X$ with representations of an unframed quiver with potential $(Q_Y,W_Y)$, which follows from results of Bridgeland \cite{Bdg1} and Van den Bergh \cite{VdB1} together with some general facts about triangulated categories. This is a precursor to Theorem \ref{Athm} from the introduction, and is simply the special case $M=0$.

In Section \ref{Geosetupsec}, we explain the basic hypotheses on the resolution $Y\to X$ used throughout this paper, and two corresponding descriptions of the derived category of complexes of coherent sheaves induced by two natural collections of generators. In Section \ref{Kozpatsec}, we explain that the equivalence between these two descriptions is an example of Koszul duality analogous to that studied in \cite{BGS}, and deduce the corresponding equivalences of hearts following \emph{loc. cit.} and the results of \cite{Bdg1} and \cite{VdB1}, assuming the tilting algebra is Koszul. In Section \ref{monadsec}, we use this perspective to define a canonical monad presentation of the category of perverse coherent sheaves, and in Section \ref{Ainfkozsec}, we generalize the results of the previous two sections to the case that the tilting algebra is not necessarily Koszul and correspondingly the $\Ext$ algebra of the simple objects is $\Ainf$. In Section \ref{quiversecunfr}, we apply these results to deduce the desired precursor to Theorem \ref{Athm}, and in Section \ref{egunfrsec} we explain the results in several concrete examples. Finally, in Section \ref{beilressec} we explain the relationship of this construction to the well-known arguments using the Beilinson spectral sequence induced by a resolution of the diagonal.

\subsection{Tilting objects on toric Calabi-Yau threefolds}\label{Geosetupsec}

In this subsection, we fix the primary objects of interest and hypotheses required for the main results of the paper.

Let $X$ be an affine, toric Calabi-Yau threefold singularity, $T$ denote the associated torus, and let $Y\xrightarrow{\pi}X$ be a toric resolution of singularities, such that
\begin{enumerate}
	\item  the fibres of $\pi$ are of dimension $\leq 1$, and
	\item $\pi_* \mc O_Y\cong \mc O_X$.
\end{enumerate}
We assume that $X$ has a (unique) $T$-fixed point $x\in X(\C)$, and let $C=Y\times_X \{x\}$ be the scheme theoretic fibre of $\pi$ over $x$. Under these hypotheses, we have that $H^0(C,\mc O_C)=\bb K$, $C$ is Cohen-Macaulay, and $C_\red$ is either a point or an arithmetic genus zero union of projective lines with normal crossings intersections. We let
\[C=\bigcup_{i\in I_+} C_i\]
denote the decomposition of $C$ into irreducible components $C_{i}$, the index set of which we denote $I_+$, and let $\iota:C\to Y$ and $\iota_i:C_i\to Y$ denote the inclusion maps. Although many statements hold without this hypothesis, we also assume
\begin{enumerate}\setcounter{enumi}{2}
	\item the multiplicity of each $C_i$ in $C$ is one.
\end{enumerate}

Let $\hat X = X^\wedge_{x}$ the formal completion of $X$ at $x$, and $\hat Y = Y^\wedge_C$ the formal completion of $Y$ along $C$. Then we have:

\begin{prop}\label{Picprop} There are natural isomorphisms
\[ \xymatrixrowsep{.5pc} \xymatrix{ \Pic(\hat Y) \ar[r]^{\cong} &\Pic(C) \ar[r]^\cong & \bb Z^{I_+} \\
 \mc L \ar@{|->}[r] & \iota^* \mc L \ar@{|->}[r] & (\deg \iota_i^*\mc L)_{i\in I_+} } \ .
 \]	
\end{prop}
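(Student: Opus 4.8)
The plan is to exhibit the two isomorphisms separately. The first map, $\Pic(\hat Y)\to\Pic(C)$ given by restriction $\mathcal L\mapsto\iota^*\mathcal L$, should be handled by the theorem on formal functions / Grothendieck's existence theorem together with the fact that $\hat Y$ is the formal completion of $Y$ along $C$. Concretely, one writes $\hat Y$ as the colimit of the thickenings $C_n=$ the $n$-th infinitesimal neighbourhood of $C$ in $Y$, and observes that $\Pic(\hat Y)=\varprojlim_n\Pic(C_n)$. The transition maps $\Pic(C_{n+1})\to\Pic(C_n)$ are controlled by $H^1(C,\mathcal I^{n}/\mathcal I^{n+1})$ where $\mathcal I$ is the ideal of $C$; since $C_{\mathrm{red}}$ is a genus zero nodal curve (a tree of $\mathbb P^1$'s) and the conormal sheaf of $C$ is a sum of line bundles of controlled degree on the components, these $H^1$ groups vanish in the completed setting — this is precisely where hypotheses (1)–(3) and the Calabi–Yau condition enter, and it is essentially the statement used already in \cite{Bdg1},\cite{VdB1}. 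Hence the restriction $\Pic(\hat Y)\to\Pic(C)$ is an isomorphism. I would cite the relevant statement from \emph{loc. cit.} rather than reprove it.

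The second map, $\Pic(C)\to\mathbb Z^{I_+}$, $\mathcal M\mapsto(\deg\iota_i^*\mathcal M)_{i\in I_+}$, is the multidegree map on a reduced nodal curve of arithmetic genus zero. Surjectivity is easy: for each component $C_i\cong\mathbb P^1$ one can glue $\mathcal O_{\mathbb P^1}(d_i)$ on $C_i$ to trivial bundles on the remaining components, using that the dual graph of $C$ is a tree so there is no cocycle obstruction at the nodes. Injectivity amounts to showing $\mathrm{Pic}^{\underline 0}(C)=0$, i.e. a line bundle trivial on every component is globally trivial; this follows from the Mayer–Vietoris / normalization exact sequence
\[
0\to H^0(C,\mathcal O_C^\times)\to \prod_{i\in I_+} H^0(C_i,\mathcal O_{C_i}^\times)\to \prod_{\text{nodes}}\mathbb K^\times \to \mathrm{Pic}(C)\to \prod_{i\in I_+}\mathrm{Pic}(C_i)
\]
together with the fact that the dual graph is a tree (so $H^1$ of the dual graph with $\mathbb K^\times$ coefficients vanishes) and $H^0(C,\mathcal O_C)=\mathbb K$. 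Combining, the connecting map into $\ker(\mathrm{Pic}(C)\to\prod\mathrm{Pic}(C_i))$ is forced to be zero, giving injectivity of the multidegree map.

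The main obstacle, and the only genuinely substantive point, is the vanishing needed for the first isomorphism: that the infinitesimal thickenings do not introduce new line bundles, equivalently that $H^1(C_n,\mathcal O_{C_n})$ stabilizes appropriately and the relevant obstruction groups $H^1(C,\mathrm{Sym}^n N^\vee_{C/Y})$ vanish. This uses the structure of $C$ as a chain/tree of rational curves with the Calabi–Yau normal-bundle constraint and the one-dimensional fibre hypothesis; I would extract it from the analysis of $\widehat Y$ already present in \cite{Bdg1} and \cite{VdB1}, since it is exactly the input those papers use to build the tilting bundle $E$. The rest is bookkeeping on a nodal genus zero curve.
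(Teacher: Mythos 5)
The paper's own ``proof'' is simply a pointer: it invokes the proof of Lemma 3.4.3 in \cite{VdB1}, where the statement (for the complete local setting) is established. Your second step, $\Pic(C)\xrightarrow{\cong}\bb Z^{I_+}$ via the normalization sequence and the fact that the dual graph of $C$ is a tree, is correct and complete. The problem is the first step. You reduce injectivity of $\Pic(\hat Y)\to\Pic(C)$ to the vanishing $H^1(C,\mc I^n/\mc I^{n+1})=H^1(C,\Sym^n N^\vee_{C/Y})=0$ for all $n$, and then assert that this vanishing ``is precisely where hypotheses (1)--(3) and the Calabi--Yau condition enter'' and can be extracted from \cite{Bdg1},\cite{VdB1}. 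Neither half of that claim holds as stated. The Calabi--Yau condition only controls the total degree of the normal bundle of each component ($\deg N_{C_i/Y}=-2$, corrected by the nodes for $N_{C/Y}|_{C_i}$); it says nothing about the splitting type, and the vanishing genuinely depends on the splitting type: for a component with $N\cong\mc O(1)\oplus\mc O(-3)$ one has $\Sym^2 N^\vee\supset\mc O(-2)$, so $H^1(C,\Sym^2N^\vee)\neq 0$ and the kernel of $\Pic(C_3)\to\Pic(C_2)$ is a priori nontrivial. Ruling out such splitting types under the paper's hypotheses (fibre dimension $\leq 1$, $\pi_*\mc O_Y=\mc O_X$, toric, multiplicity one) is a real argument that you have not given, and it is not the statement proved in \cite{Bdg1} or \cite{VdB1}: those papers do not prove your cohomological vanishing. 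What they use for injectivity is a different mechanism — if $\deg(\mc L|_{C_i})\geq 0$ for all $i$ then $\mc L$ is generated by global sections and $R^1\pi_*\mc L=0$ (this is where $R\pi_*\mc O_Y=\mc O_X$ and the fibre-dimension hypothesis enter), so a line bundle with all degrees zero is globally generated together with its inverse and hence trivial. This argument never needs to know the conormal graded pieces.

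So there is a genuine gap: the crux of your first isomorphism is an unproved vanishing which is not a formal consequence of the stated hypotheses and is not available as a citation in the form you need. You could repair the proof either by switching to the degree-zero/global-generation argument sketched above (which also proves injectivity of $\Pic(\hat Y)\to\bb Z^{I_+}$ directly, making the intermediate $\Pic(C)$ unnecessary for injectivity), or by verifying by hand, in the toric fibre-dimension-$\leq 1$ setting, that the conormal bundle of $C$ restricted to each component has no summand of degree $\leq -2$ — but that verification is exactly the substantive content you have deferred. Surjectivity via lifting along the thickenings (obstructions in $H^2$ of a curve vanish) and the multidegree isomorphism on the tree of $\bb P^1$'s are fine as you wrote them.
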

\begin{proof}
	This follows from the proof of Lemma 3.4.3 in \cite{VdB1}.
\end{proof}

In particular, there exist line bundles $\hat E_i\in \Pic(\hat Y)$ for each $i\in I_+$ with the property that
\[ \deg \iota_i^* \hat E_j = \delta_{ij}  \ .\]
In addition, we let $\hat E_0=\mc O_{\hat Y}$, $I=I_+\cup \{0\}$ and $\hat E=\oplus_{i\in I} \hat E_i$

We fix once and for all a choice of $T$-equivariant structure on $\hat E_i$ for each $i\in I$, and moreover we fix extensions to global line bundles
\[E_i \in \Pic(Y)^T \quad\quad\text{for each $i\in I$, and} \quad\quad E = \bigoplus_{i\in I} E_i  \ \in \Coh(Y)^T \ , \]
\noindent recalling that every line bundle on a smooth toric variety admits a $T$-equivariant structure.

In Section \ref{NCCRsec}, we recalled the existence of an exotic t-structure on $\DD^b\Coh(Y)$, defined in terms of $\pi:Y\to X$ satisfying a subset of the hypotheses of the present setting. The heart of this t-structure was the abelian category $\Perv(Y)=\Perv(Y/X)$ of perverse coherent sheaves on $Y$ (relative to $f:Y\to X$), in the sense of Definition \ref{PervCohdefn}.

\begin{theo} \cite{VdB1} The object $E$ is a projective generator for $\Perv(Y)$, and similarly on $\hat Y$ the object $\hat E$ is a projective generator for $\Perv(\hat Y)$.
\end{theo}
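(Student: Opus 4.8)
The plan is to reduce the statement to the result of Van den Bergh already recalled in Theorem \ref{derivedVdBequiv} and Theorem \ref{PCohequiv}, namely that there exists a vector bundle with the desired projective generation property for the perverse heart, and then to identify that vector bundle with the specific $E$ (resp. $\hat E$) constructed above. First I would invoke Theorem \ref{derivedVdBequiv} applied to $\pi:Y\to X$ (which satisfies conditions (1) and (2) of Section \ref{NCCRsec} by our hypotheses): it produces \emph{some} vector bundle, call it $E'$, with $\mc A = \pi_*\Homi_{\DD^b\Coh(Y)}(E',E')$ and an equivalence $\DD^b\Coh(Y)\xrightarrow{\cong}\DD^b\mc A\Mod$ restricting by Theorem \ref{PCohequiv} to $\Perv(Y)\xrightarrow{\cong}\mc A\Mod$. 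Under this equivalence, projectivity and generation of $E'$ in $\Perv(Y)$ correspond to the free module $\mc A$ being projective and a generator in $\mc A\Mod$, which is automatic; the content is therefore entirely in showing that $E$ as constructed here is isomorphic to $E'$, or at least that $E$ has the same two properties directly.

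The key point, which I would extract from the proof of Lemma 3.4.3 of \cite{VdB1} (the same source cited for Proposition \ref{Picprop}), is the following characterization: Van den Bergh's tilting bundle is, up to summands, the direct sum $\mc O_Y \oplus \bigoplus_{i\in I_+} L_i$ where the $L_i$ are line bundles restricting to $\mc O_{C_i}(1)$ on the component $C_i$ and to degree-$0$ bundles on the other components — precisely the normalization $\deg\iota_i^*\hat E_j = \delta_{ij}$ that defines the $\hat E_i$, extended to $Y$. Indeed, on the formal completion $\hat Y$, Proposition \ref{Picprop} tells us $\Pic(\hat Y)\cong \bb Z^{I_+}$ via $\mc L\mapsto(\deg\iota_i^*\mc L)$, so the line bundles with $\deg\iota_i^*\hat E_j=\delta_{ij}$ are uniquely determined, and these are exactly the building blocks of the tilting bundle in \emph{loc.\ cit.} Thus $\hat E = \bigoplus_{i\in I}\hat E_i$ agrees with Van den Bergh's generator on $\hat Y$, giving the claim for $\hat Y$; the statement for $Y$ then follows because $E$ restricts to $\hat E$ along $\hat Y \to Y$, projectivity and generation in $\Perv$ can be checked after this completion (the category $\mc C = \{E : f_*E = 0\}$ and its heart are supported on $C$, by the description in Corollary \ref{PervCohcoro}), and the $T$-equivariant structure is irrelevant to the underlying abelian-categorical statement.

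The main obstacle I anticipate is the last reduction step: carefully justifying that checking projectivity and generation of $E$ in $\Perv(Y)$ can be done after passing to $\hat Y$, i.e.\ that $\Perv(\hat Y)$ is a faithful enough "local model" of $\Perv(Y)$ near $C$. This requires knowing that $\Hom_{\Perv(Y)}(E,-)$ is exact and conservative, for which one uses $R^1\pi_* E = 0$ (the vanishing that makes $E$ a tilting bundle, which itself comes from the degree normalization and the genus-zero, normal-crossings structure of $C$) together with the recollement of Proposition \ref{Bdgrecprop}: any object of $\Perv(Y)$ on which $\Hom(E,-)$ vanishes maps to $0$ under both $\pi_*$ and the functor to $\mc C^\heartsuit$, hence is $0$ by the $t$-structure glued from these. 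Alternatively — and this may be cleaner — one can simply cite that $E$ was \emph{defined} to have the property that $\mc A = \pi_*\Homi(E,E)$ is the non-commutative crepant resolution of \cite{VdB1}, so that Theorem \ref{derivedVdBequiv} and Theorem \ref{PCohequiv} apply verbatim with this $E$, and the theorem is then immediate; the only thing one is really adding here is the explicit identification of the summands $E_i$ via Proposition \ref{Picprop}, which is what makes the quiver $Q_Y$ computable in Section \ref{Kozpatsec}.
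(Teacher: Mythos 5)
Your route is essentially the paper's: the paper proves this by a bare citation ("special case of Proposition 3.2.5 and Theorem 3.5.5 of \cite{VdB1}"), and your proposal likewise reduces everything to Van den Bergh's result, adding only an explicit identification of $E$ (resp.\ $\hat E$) with his projective generator. Two cautions about the added detail, though. First, Van den Bergh's generator is in general \emph{not} a direct sum of line bundles: it is $\mc O_Y\oplus\bigoplus_i \mc M_i$ where each $\mc M_i$ sits in an extension of the line bundle $\mc L_i$ (the one with $\deg \mc L_i|_{C_j}=\delta_{ij}$) by a trivial bundle whose rank is governed by the length/multiplicity data of the fibre. Your claim that his bundle is "up to summands" the sum $\mc O_Y\oplus\bigoplus_i L_i$ only holds because of the paper's standing hypothesis (3) (each $C_i$ has multiplicity one), which forces these extensions to be trivial of rank one; this is exactly why the paper says "special case", and your sketch should invoke it (also, the relevant construction is in Sections 3.2 and 3.5 of \cite{VdB1}, not Lemma 3.4.3, which only gives Proposition \ref{Picprop}). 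Second, the reduction of the statement on $Y$ to the one on $\hat Y$ is both unnecessary and, as sketched, incomplete: Proposition 3.2.5 of \cite{VdB1} is already the global statement, whereas your completion argument addresses generation (conservativity of $\Hom(E,-)$ via the recollement of Proposition \ref{Bdgrecprop}) but not projectivity, i.e.\ the vanishing $\Ext^1(E,F)=0$ for all $F\in\Perv(Y)$, which in Van den Bergh's proof rests on $E$ being globally generated together with $R^1\pi_*\mc Ext$-type vanishings over the affine base, not merely on $R^1\pi_*E=0$. With these two points repaired (or simply by citing 3.2.5 and 3.5.5 directly, as the paper does), the argument is fine.
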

\begin{proof} This is a special case of Proposition 3.2.5, and Theorem 3.5.5 respectively, in \cite{VdB1}.
\end{proof}

The object $E$ is the vector bundle appearing in the statement of Theorem \ref{derivedVdBequiv}; it is evidently compact and thus defines a classical tilting object, in the sense of Definition \ref{tiltingdefn}, so that letting
\[\Lambda = \Hom_{\DD^b\Coh(Y)}(E,E) \quad\quad \text{and}\quad\quad \hat\Lambda = \Hom_{\DD^b\Coh(\hat Y)}(\hat E,\hat E) ,\]
we have:
\begin{corollary}\label{EMoritacoro} There are triangle equivalences
	\begin{equation*}
		\DD\QC(Y) \xrightarrow{\cong} \DD(\Lambda)  \quad \quad \text{and}\quad \quad  \DD^b\Coh(Y) \xrightarrow{\cong} \DD_\Perf(\Lambda)  \end{equation*}
	intertwining the forgetful functor $\DD(\Lambda) \to \DGV$ with $\Hom_{\DD\QC(Y)}( E,\cdot)$, and analogous equivalences for $\hat Y$ and $\hat \Lambda$.
\end{corollary}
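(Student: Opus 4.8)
The statement to prove is Corollary~\ref{EMoritacoro}: that the equivalences of Corollary~\ref{EMoritacoro} exist, intertwining the forgetful functor with $\Hom_{\DD\QC(Y)}(E,\cdot)$, together with the analogous statement for $\hat Y$. The plan is to deduce this directly from the general Morita-theoretic machinery already assembled in Section~\ref{tiltingsec}, applied to the tilting object $E$ on $Y$ whose existence and projectivity for $\Perv(Y)$ was recorded in the preceding theorem of \cite{VdB1}. There is essentially no new content beyond identifying the hypotheses; the work is bookkeeping.

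First I would verify that $E$ is a tilting object for $\DD\QC(Y)$ in the sense of Definition~\ref{tiltingdefn}. Since $Y$ is a smooth variety (a toric resolution of $X$) and $E$ is a vector bundle, $E$ is a perfect complex, hence compact in $\DD\QC(Y)$; equivalently $\Hom_{\DD\QC(Y)}(E,\cdot)$ commutes with coproducts. For the generation condition, one argues that $E$ classically generates $\DD^b\Coh(Y)$: indeed $E$ is a projective generator of the abelian heart $\Perv(Y)$ of a bounded t-structure on $\DD^b\Coh(Y)$, so every object of $\Perv(Y)$ is a summand of a finite direct sum of copies of $E$, and every object of $\DD^b\Coh(Y)$ is built from finitely many shifts of objects of $\Perv(Y)$ by iterated cones; hence $\thick(E)=\DD^b\Coh(Y)$. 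By the theorem of \cite{BonV} quoted in Section~\ref{tiltingsec} (a set of compact objects classically generates $\D^{\mathrm{c}}$ iff it generates $\D$), it follows that $E$ generates $\DD\QC(Y)$. Thus $E$ is tilting, and by the final theorem of the preceding subsection it is a \emph{classical} tilting object, since $\Lambda=\Hom_{\DD^b\Coh(Y)}(E,E)$ has cohomology concentrated in degree zero — this is exactly the statement that $E$ is projective in $\Perv(Y)$, so $\Ext^{>0}_{\Perv(Y)}(E,E)=0$.

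Next I would apply Keller's theorem (Theorem~\ref{Morita}) with $\D=\DD\QC(Y)$ and $T=E$, which admits set-indexed coproducts. This immediately yields a triangle equivalence $\Psi_E\colon\DD\QC(Y)\xrightarrow{\cong}\DD(\Lambda)$ with $\ob\circ\Psi_E=\Hom_{\DD\QC(Y)}(E,\cdot)$ and inverse $(\cdot)\otimes_\Lambda E$, exactly as in the corollary of Section~\ref{tiltingsec} already stated for a general smooth $X$ with a classical generator. The restriction statement $\DD^b\Coh(Y)\xrightarrow{\cong}\DD_\Perf(\Lambda)$ is then the second part of \eqref{dgmoritaeqntriang}: under $\Psi_E$ one has $\Thick(E)\xrightarrow{\cong}\DD_\Perf(\Lambda)$, and $\Thick(E)=\DD^b\Coh(Y)$ by the generation argument above (for a smooth variety, $\thick(E)=\DD^b\Coh(Y)$ and this coincides with $\DD^b\Coh(Y)$ as the subcategory of compact, i.e. perfect, objects of $\DD\QC(Y)$). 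Finally, the identical argument applied verbatim to $\hat Y$, using that $\hat E$ is a projective generator of $\Perv(\hat Y)$ by Theorem~3.5.5 of \cite{VdB1}, gives the equivalences $\DD\QC(\hat Y)\xrightarrow{\cong}\DD(\hat\Lambda)$ and $\DD^b\Coh(\hat Y)\xrightarrow{\cong}\DD_\Perf(\hat\Lambda)$ intertwining the forgetful functor with $\Hom_{\DD\QC(\hat Y)}(\hat E,\cdot)$.

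The only point requiring genuine care — the ``main obstacle'', such as it is — is the identification $\Thick(E)=\DD^b\Coh(Y)$, i.e. that classical generation of $\DD^b\Coh(Y)$ by $E$ really does follow from $E$ being a projective generator of the \emph{non-standard} heart $\Perv(Y)$ rather than the usual coherent heart. This is where smoothness of $Y$ and the boundedness of the perverse coherent t-structure (Corollary~\ref{PervCohcoro}, which places $\Perv(Y)$ inside $\DD^{[-1,0]}\Coh(Y)$) are used: every coherent sheaf, placed in degree $0$ or $-1$, lies in the extension-closure of $\Perv(Y)$, so iterating cones shows $\thick(\Perv(Y))=\DD^b\Coh(Y)$, and since every object of $\Perv(Y)$ is a summand of a sum of copies of $E$ this gives $\thick(E)=\DD^b\Coh(Y)$. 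Everything else is a direct citation of Theorems~\ref{Morita} and the \cite{BonV} result together with the input from \cite{Bdg1,VdB1} already quoted.
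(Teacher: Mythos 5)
Your overall route is the same as the paper's: the paper's proof of this corollary is literally the one-line application of Keller's Morita theorem (Theorem \ref{Morita}) to the compact generator $E\in \DD\QC(Y)$, with the bounded statement coming from the restriction \eqref{dgmoritaeqntriang}, and your write-up just fills in the verification that $E$ is a (classical) tilting object using the Van den Bergh input. So the skeleton is fine.

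However, the one step you yourself single out as the main obstacle is justified by a false claim: a projective generator of an abelian category is \emph{not} an object of which every object is a direct summand of a finite direct sum of copies; it only provides epimorphisms $E^{\oplus n}\twoheadrightarrow H$. For instance $F_1=\iota_*\mc O_{\bb P^1}(-1)[1]\in\Perv(Y)$ is simple and not projective, so it is certainly not a summand of any $E^{\oplus n}$. As written, your deduction $\thick(E)\supseteq \Perv(Y)$ therefore does not follow. The standard repair is to use finiteness of homological dimension: since $Y$ is smooth (of dimension $3$) and $\Perv(Y)\cong \mc A\Mod$ with $\mc A=f_*\Endi(E)$ a coherent sheaf of algebras of finite global dimension, every object of $\Perv(Y)$ admits a \emph{finite} resolution by objects that are summands of finite direct sums of copies of $E$, and only then does iterating cones give $\Perv(Y)\subset\thick(E)$ and hence $\thick(E)=\DD^b\Coh(Y)$ (this is in effect Van den Bergh's statement that $E$ is a tilting bundle/classical generator). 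Alternatively, you can sidestep classical generation for the bounded statement entirely: once $E$ is known to generate $\DD\QC(Y)$ (which is part of the \cite{Bdg1,VdB1} input), Keller gives $\DD\QC(Y)\cong\DD(\Lambda)$, an equivalence preserves compact objects, the compacts of $\DD\QC(Y)$ are $\DD^b\Coh(Y)$ by smoothness, and the compacts of $\DD(\Lambda)$ are $\DD_\Perf(\Lambda)$; this yields the second equivalence without the argument that tripped you up. With either fix the proof is complete and agrees with the paper's.
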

\begin{proof}
	The claim follows from applying the Morita theory from \cite{Kel2} recalled in Theorem \ref{Morita} to the object $E \in \DD\QC(Y)$, and similarly for $\hat E \in \DD\QC(\hat Y)$.
\end{proof}

 The induced $T$-equivariant structure on $E$ makes $\Lambda$ into a trigraded algebra, and we let $\textup{D}_\perf(\Lambda)$ denote the thick subcategory of the derived category $\DD_{\bb Z^3}(\Lambda)$ of trigraded modules generated by $\Lambda$ and its shifts, generalizing the notation introduced in Section \ref{Ainfalgsec}. Then we have:
\begin{corollary}\label{EMoritagrcoro} There are triangle equivalences 
	\begin{equation*}
		\DD\QC(Y)^{T} \xrightarrow{\cong} \DD_{\ZZ^3}(\Lambda) \quad \quad \text{and}\quad \quad \DD^b\Coh(Y)^{T} \xrightarrow{\cong} {\textup{D}_\perf}(\Lambda)    \end{equation*}
	intertwining the forgetful functor $\DD_{\ZZ^3}(\Lambda) \to \DGV$ with $\Hom_{\DD\QC(Y)}( E,\cdot)$.
\end{corollary}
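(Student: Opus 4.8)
The plan is to upgrade the equivalences of Corollary \ref{EMoritacoro} to their $T$-equivariant counterparts by exploiting the fact that a $T$-action on a variety corresponds, at the level of (quasi-)coherent sheaves, to an action of the Hopf algebroid $\mathcal O(T)$, and that this structure is transported across the Morita equivalence to the grading datum on the algebra $\Lambda$. Concretely, since $X$ (and hence $Y$) is toric with structure torus $T\cong (\mathbb G_m)^3$, the category $\DD\QC(Y)^T$ of $T$-equivariant quasi-coherent complexes is equivalent to the category of $\mathbb Z^3$-graded objects internal to $\DD\QC(Y)$; the chosen $T$-equivariant structure on $E=\bigoplus_{i\in I}E_i$ (which exists because every line bundle on a smooth toric variety admits one, as noted in the excerpt) makes $E$ an object of $\DD\QC(Y)^T$, and therefore endows $\Lambda=\Hom_{\DD^b\Coh(Y)}(E,E)$ with a $\mathbb Z^3$-grading compatible with its DG associative algebra structure — i.e. $\Lambda$ becomes a DG algebra object in $\mathbb Z^3$-graded complexes. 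I would then invoke the equivariant/graded enhancement of Keller's Morita theorem (Theorem \ref{Morita}): applying Theorem \ref{Morita} in the category of $\mathbb Z^3$-graded complexes with $T$ acting, the tilting object $E\in\DD\QC(Y)^T$ produces a triangle equivalence $\DD\QC(Y)^T\xrightarrow{\cong}\DD_{\mathbb Z^3}(\Lambda)$ intertwining the forgetful functor with $\Hom_{\DD\QC(Y)}(E,\cdot)$, exactly as in Corollary \ref{EMoritacoro} but now internal to graded complexes.

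The key steps, in order, are: (i) observe that $E$ remains a tilting object for $\DD\QC(Y)^T$ — compactness is unaffected by passing to the equivariant category, and generation follows because the forgetful functor $\DD\QC(Y)^T\to\DD\QC(Y)$ is conservative and detects the vanishing $\Hom_{\DD\QC(Y)^T}(E,C)=0$ in each graded piece; (ii) identify $\Lambda$ with its equivariant refinement, i.e. check that $\Hom$ computed in the equivariant category, with its $\mathbb Z^3$-grading recording the weights, recovers the underlying DG algebra after forgetting the grading; (iii) apply Keller's theorem in the graded/equivariant setting to obtain $\DD\QC(Y)^T\cong\DD_{\mathbb Z^3}(\Lambda)$ with the stated intertwining of forgetful functors; (iv) restrict to compact objects on both sides — on the geometric side compact objects in $\DD\QC(Y)^T$ are $\DD^b\Coh(Y)^T$, and under the equivalence they correspond to $\Thick(\Lambda)$ in $\DD_{\mathbb Z^3}(\Lambda)$, which is by definition $\textup{D}_\perf(\Lambda)$ in the trigraded sense introduced just above the statement; this last step uses the restriction portion of Theorem \ref{Morita} (the equivalence $\Thick(T)\xrightarrow{\cong}\DD_\Perf(\Lambda)$ in Equation \ref{dgmoritaeqntriang}), applied in the graded context.

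The main obstacle I anticipate is step (iii): making precise the "$T$-equivariant" or "$\mathbb Z^3$-graded" version of Keller's Morita theorem and verifying that the hypotheses of Theorem \ref{Morita} are met in the category of $\mathbb Z^3$-graded DG modules. The cleanest route is to note that $\DD\QC(Y)^T$ is again a $\mathbb K$-linear algebraic triangulated category admitting set-indexed coproducts (equivariant quasi-coherent sheaves on a reasonable stack $[Y/T]$), so Theorem \ref{Morita} applies verbatim with $\D=\DD\QC([Y/T])$ and $T$ the tilting object $E$; the DG algebra $\Hom_{\D}(E,E)$ computed there is precisely the $\mathbb Z^3$-graded algebra $\Lambda$ — the grading is automatic because $\mathrm{pt}/T$ has $\Hom(\mathbb K,\mathbb K)=\mathcal O(BT)=\mathbb K[\mathbb Z^3]$ controlling the weight decomposition. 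Thus "graded modules over $\Lambda$" is literally "$\mathcal O(T)$-comodule modules over $\Lambda$", and the equivalence follows. With this identification in hand the remaining verifications (compactness, generation, and the identification of $\DD^b\Coh(Y)^T$ with perfect trigraded modules) are routine and parallel the non-equivariant argument of Corollary \ref{EMoritacoro}.
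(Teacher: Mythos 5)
Your proposal is correct and takes essentially the same route as the paper: the paper's proof is a one-liner stating that the equivalences of Corollary \ref{EMoritacoro} (Keller's Morita theorem applied to the tilting object $E$) simply acquire the gradings induced by the chosen $T$-equivariant structures, which is exactly the argument you spell out in detail. The only cosmetic slip is the identification ``$\Hom(\bb K,\bb K)=\mc O(BT)=\bb K[\bb Z^3]$''; what you actually need, and effectively use, is that $\QC(BT)$ is equivalent to comodules over $\mc O(T)\cong \bb K[\bb Z^3]$, i.e.\ $\bb Z^3$-graded vector spaces, so that equivariant objects are graded objects and the enriched $\Hom$ recovers the trigraded algebra $\Lambda$.
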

\begin{proof} The claim follows by including the gradings induced by equivariant structures in the equivalences of Corollary \ref{EMoritacoro}.
\end{proof}

We define a new family of objects $F_i\in \Perv(Y)$ for each $i\in I$ given by
\[ F_0 = \iota_* \mc O_C \quad\quad \text{and} \quad\quad F_i= \iota_{i *} \mc O_{C_i}(-1)[1] \ ,\]
for $i\in I_+$. It is straightforward to check these objects lie in the heart of the perverse coherent t-structure, by Corollary \ref{PervCohcoro}.

\begin{prop}\label{simpleobjprop} The objects $E_i$, $F_j$ for $i,j\in I$ satisfy
\[ \Hom_{\DD^b\Coh(Y)}( E_i,F_j) = \begin{cases}
\K & \textup{ if $i=j$, and} \\
 0 & \textup{ if $i\neq j$,}
\end{cases}
\]
similarly for $\hat Y$, and the objects $F_i$ for $i\in I$ are the unique simple objects in $\Perv(\hat Y)$. 
\end{prop}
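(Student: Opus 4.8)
\begin{Proof}[Proof strategy]
The plan is to reduce everything to the tilting equivalences of Corollary \ref{EMoritacoro} and Theorem \ref{PCohequiv}, after one direct cohomological computation. Since $E=\bigoplus_{i\in I}E_i$ is a projective generator of $\Perv(Y)$, the functor $\Psi_E=\Hom_{\DD^b\Coh(Y)}(E,\cdot)$ gives an equivalence $\Perv(Y)\xrightarrow{\cong}\Lambda\Mod$ carrying each summand $E_i$ to the indecomposable projective $P_i$ at the vertex $i$, and similarly $\Psi_{\hat E}$ on $\hat Y$. Under this equivalence $\Hom_{\DD^b\Coh(Y)}(E_i,F_j)=\Hom_\Lambda(P_i,\Psi_E(F_j))$ is the multiplicity space of the vertex $i$ in the $\Lambda$-module $\Psi_E(F_j)$, so the first displayed formula is exactly the assertion that $\Psi_E(F_j)$ is the one-dimensional module supported at the vertex $j$.

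To establish that formula I would compute the $\Hom$ groups directly by adjunction along the closed immersions $\iota\colon C\to Y$ and $\iota_i\colon C_i\to Y$, using that $C$ is reduced (being Cohen--Macaulay and, by hypothesis (3), generically reduced) with each component $C_i\cong\bb P^1$. For $j\in I_+$ one gets, since $E_i$ is a line bundle, $\Hom_{\DD^b\Coh(Y)}(E_i,F_j)=\Ext^1_{C_j}(\iota_j^*E_i,\mc O_{C_j}(-1))=H^1(\bb P^1,\mc O(-\delta_{ij}-1))$, which is $\K$ for $i=j$ and $0$ otherwise because $\deg\iota_j^*E_i=\delta_{ij}$; note this already covers $i=0$ as well. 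For $j=0$ and $i=0$ one has $\Hom(\mc O_Y,\iota_*\mc O_C)=H^0(C,\mc O_C)=\K$ by the standing hypothesis on $C$. The remaining case $j=0$, $i\in I_+$ is $\Hom(E_i,\iota_*\mc O_C)=H^0(C,(\iota^*E_i)^\vee)$: here $(\iota^*E_i)^\vee$ is the line bundle on $C$ restricting to $\mc O_{\bb P^1}(-1)$ on $C_i$ and to $\mc O_{\bb P^1}$ on every other component, and since $C$ has arithmetic genus zero its dual graph $\Gamma$ is a connected tree, so a global section vanishes on $C_i$ and then, enumerating the components outward along $\Gamma$ so that each meets an earlier one at a node, the constant value on each further component is forced to be $0$; hence $H^0=0$. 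The computations over $\hat Y$ are identical, as all these $\Hom$ groups only involve the restrictions $E_i|_C$.

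Given the formula, $\Psi_E(F_j)$ is a one-dimensional $\Lambda$-module supported at vertex $j$, hence simple; since $F_j$ lies in the heart $\Perv(Y)$ (as recalled before the statement), it is a simple object, and likewise on $\hat Y$. It remains to see that on $\hat Y$ there are no others. For this I would use that $\hat\Lambda=\Hom_{\DD^b\Coh(\hat Y)}(\hat E,\hat E)$ is module-finite over the complete local ring $R=H^0(\hat Y,\mc O_{\hat Y})$ with residue field $\K$, and is basic: the $\hat E_i$ are pairwise non-isomorphic line bundles (their restrictions to $C$ have distinct multidegrees, and $\Pic(\hat Y)\cong\Pic(C)$ by Proposition \ref{Picprop}) with $\End(\hat E_i)=R$ local, so $\hat\Lambda/\mathrm{rad}(\hat\Lambda)\cong\bigoplus_{i\in I}\K$. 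Hence $\hat\Lambda$ has exactly $|I|$ isomorphism classes of simple modules, one $S_i$ for each vertex, and by the second paragraph $\Psi_{\hat E}(F_i)\cong S_i$; transporting back through $\Perv(\hat Y)\cong\hat\Lambda\Mod$ shows that the $F_i$, $i\in I$, are precisely the simple objects of $\Perv(\hat Y)$.

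The step I expect to be the main obstacle is the case $j=0$, $i\in I_+$: making the tree argument rigorous requires knowing the dual graph of $C$ is a tree (from arithmetic genus zero, via $h^1(\mc O_C)=b_1(\Gamma)$ for a reduced nodal curve with rational components) and that $C$ is genuinely reduced, so that $F_0=\iota_*\mc O_C$ and $(\iota^*E_i)^\vee$ are the naive objects; everything else is a routine application of adjunction together with the already established equivalences.
\end{Proof}
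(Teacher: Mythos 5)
Your argument is correct. The paper gives no direct proof of this proposition — it simply cites the proof of Proposition 3.5.7 in \cite{VdB1} — and what you have written is essentially a self-contained reconstruction of the argument behind that citation: adjunction along $\iota$ and $\iota_j$ with degree bookkeeping on the components of $C$ for the $\Hom$-formula, and the projective-generator (Morita) equivalence combined with the semiperfect, basic structure of $\hat\Lambda$ over the complete local ring $\widehat{\mc O}_{X,x}$ for the uniqueness of the simples; all the ring-theoretic facts you use (off-diagonal $\Hom$'s lie in the radical, $\hat\Lambda/\mathrm{rad}\,\hat\Lambda\cong\K^{\oplus|I|}$, simples biject with indecomposable projective summands) are standard for endomorphism rings of finite sums of pairwise non-isomorphic modules with local endomorphism rings over a complete local base. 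Two small refinements: in the case $j=0$, $i\in I_+$ the propagation argument only needs connectedness of $C$, which you already have from $H^0(C,\mc O_C)=\K$, so the tree structure of the dual graph is not essential; and since the paper later uses this proposition to identify $\Hom_{\DD^b\Coh(\hat Y)}(E,F_i)$ with the one-dimensional module $S_i$ concentrated in cohomological degree zero, it is worth noting that the higher $\Ext$-vanishing is also covered — for $j\in I_+$ it is immediate from your computation on $C_j\cong\bb P^1$, and for $j=0$ it follows either from $H^1(C,(\iota^*E_i)^\vee)=0$ (genus zero, component degrees $\geq -1$) or, more cheaply, from projectivity of $E$ in $\Perv$ via the same abelian equivalence you already invoke.
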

\begin{proof} This follows from the proof of Proposition 3.5.7 in \cite{VdB1}.
\end{proof}

We fix once and for all a $T$-equivariant structure on each object $F_i$, compatible with those on $E_i$ in the sense that the preceding proposition holds as graded vector spaces, that is, such that the one dimensional $\Hom$ space is in graded degree zero. Further, we define
\begin{equation}\label{Fdefn}
 	F = \oplus_{i\in I} F_i \ \in \DD^b\Coh(Y)^{T} \ .
\end{equation}

Note that by Proposition \ref{simpleobjprop}, the images of the objects $F_i$ and $F\in \DD^b\Coh_\cs(\hat Y)$, and their $T$-equivariant enhancements in $\DD^b\Coh(Y)^{T}$, under the equivalences of Corollaries \ref{EMoritacoro} and \ref{EMoritagrcoro}, respectively, define one dimensional simple modules
$$ S_i= \bb K  \ \in \DD_\Fd(\hat \Lambda) \quad\quad \text{and}\quad\quad S =\oplus_i S_i \ \in \DD_\Fd(\hat \Lambda)   \ , $$
and their analogues in $\DD_\fd(\Lambda)$. This allows us to deduce the following descriptions of the thick subcategories generated by $F$: Let $\Coh_\cs(Y)$ denote the full subcategory of compactly supported coherent sheaves, and $\DD^b\Coh_\cs(Y)$ the derived category of complexes with compactly supported cohomology.

\begin{prop}\label{csequivprop} The equivalences of Corollaries \ref{EMoritacoro} and \ref{EMoritagrcoro} induce equivalences
	\[ \DD^b\Coh_\cs(\hat Y)\cong \Thick(F) \xrightarrow{\cong} \DD_\Fd(\hat \Lambda)\quad \quad \text{and}\quad \quad  \DD^b\Coh_\cs(Y)^{T} \cong \thick(F) \xrightarrow{\cong} \DD_\fd(\Lambda) \ , \]
	respectively, intertwining the forgetful functor $\DD(\Lambda) \to \DGV$ with $\Hom_{\DD\QC(Y)}( E,\cdot)$.
\end{prop}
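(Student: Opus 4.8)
The plan is to deduce all four equivalences from the derived Morita equivalences of Corollaries \ref{EMoritacoro} and \ref{EMoritagrcoro} by matching up the relevant thick subcategories on the two sides. The only geometric input required is the computation $\Hom_{\DD^b\Coh(Y)}(E_i,F_j)=\K^{\delta_{ij}}$ of Proposition \ref{simpleobjprop}: it shows that the equivalence $\Hom_{\DD\QC(\hat Y)}(\hat E,\cdot)$ sends $F_i$ to the one dimensional simple $\hat\Lambda$-module $S_i$, hence $F=\oplus_i F_i$ to $S=\oplus_i S_i$, and likewise equivariantly on $Y$. Being a triangle equivalence it carries $\Thick(F)$ onto $\Thick(S)$ and $\thick(F)$ onto $\thick(S)$, and the asserted compatibility with the forgetful functor $\DD(\Lambda)\to\DGV$ is inherited from those corollaries. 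It then remains to establish the identifications $\Thick(F)=\DD^b\Coh_\cs(\hat Y)$, $\Thick(S)=\DD_\Fd(\hat\Lambda)$, and their graded analogues.

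On the algebra side I would invoke the formal analogue of Theorem \ref{PCohequiv}, which restricts the equivalence to an equivalence of hearts $\Perv(\hat Y)\cong\hat\Lambda\Mod$; under it the simple objects correspond by Proposition \ref{simpleobjprop}, hence so do the finite length objects. The finite length objects of $\hat\Lambda\Mod$ are exactly the finite dimensional $\hat\Lambda$-modules, since $\hat\Lambda$ is module finite over the complete local ring $\mc O_{\hat X}$ with residue field $\K$, so every finite dimensional module is supported at the closed point and is a finite iterated extension of the $S_i$; taking thick closures inside $\DD(\hat\Lambda)$ gives $\Thick(\textup{finite dimensional }\hat\Lambda\textup{-modules})=\DD_\Fd(\hat\Lambda)$. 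On the geometric side the finite length objects of $\Perv(\hat Y)$ are exactly the compactly supported ones $\Perv_\cs(\hat Y)$ (each $F_i$ is annihilated by the ideal of $C$, so an iterated extension of $n$ of them is annihilated by its $n$-th power), and $\Perv_\cs(\hat Y)\subseteq\Triang(F)$ since every object has a finite Jordan--H\"older filtration with factors $F_i$; moreover $\Thick(\Perv_\cs(\hat Y))=\DD^b\Coh_\cs(\hat Y)$ because the perverse coherent $t$-structure restricts to $\DD^b\Coh_\cs(\hat Y)$ --- the recollement of Proposition \ref{Bdgrecprop} and its functors preserve compact support --- and is bounded relative to the standard one by Corollary \ref{PervCohcoro}, so every compactly supported object is a finite iterated extension of shifts of its perverse cohomology objects. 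Chaining these identifications yields the first equivalence.

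The graded, $T$-equivariant statement over $Y$ itself proceeds identically, using the equivariant refinement of Theorem \ref{PCohequiv} and the gradings in Corollary \ref{EMoritagrcoro}, once one checks that a $T$-equivariant compactly supported coherent sheaf on $Y$ is set-theoretically supported on $C$: its support is a $T$-invariant subscheme proper over $\Spec\K$, hence finite over the affine $X$ with $T$-invariant image a finite set of points, which must be $\{x\}$ since $x$ is the unique $T$-fixed point, so the support lies in $\pi^{-1}(x)=C$. Thus the $T$-equivariant compactly supported perverse coherent sheaves are the finite length objects of $\Perv(Y)^T$ with Jordan--H\"older factors among the equivariant twists $F_i\langle\mathbf k\rangle$, and the argument concludes as before with $\thick$, $\triang$, $\DD_\fd$ in place of $\Thick$, $\Triang$, $\DD_\Fd$.

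The main obstacle I anticipate is precisely this last point --- verifying that no compactly supported objects are lost and no exotic summands enter the thick closure. This is subtle because over $Y$ the abelian category $\Perv_\cs(Y)$ is genuinely larger than $\Perv_\cs(\hat Y)$, containing for instance the structure sheaves of non-equivariant points of $Y\setminus C$, and it is exactly the passage to the formal completion in the first statement, and the imposition of $T$-equivariance in the second, that cuts the category down to $\Thick(F)$ (resp.\ $\thick(F)$) and matches it with $\DD_\Fd(\hat\Lambda)$ (resp.\ $\DD_\fd(\Lambda)$).
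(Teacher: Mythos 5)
Your argument is correct, but it is organized differently from the paper's. The paper never passes through the heart equivalence or a restricted perverse $t$-structure: it observes (i) that $\Hom_{\hat Y}(\hat E,\cdot)$ applied to any complex with proper support has finite-dimensional cohomology, so $\DD^b\Coh_\cs(\hat Y)$ lands in $\DD_\Fd(\hat \Lambda)$, and (ii) that $\Thick(F)$ is sent to $\Thick(S)=\DD_\Fd(\hat \Lambda)$, because every finite-dimensional $\hat \Lambda$-module has a Jordan--H\"older filtration whose factors are among the $S_i$, these being the only simples; since $\Thick(F)\subseteq \DD^b\Coh_\cs(\hat Y)$ and both subcategories have the same essential image under the (fully faithful) equivalence, they coincide. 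Your route instead matches finite-length objects of the two hearts and then takes thick closures, which obliges you to check in addition that the Bridgeland $t$-structure restricts to $\DD^b\Coh_\cs(\hat Y)$ and is bounded there --- correct, but extra scaffolding that the paper's squeeze argument avoids. One step in your write-up is only half-justified: you assert that the finite-length objects of $\Perv(\hat Y)$ are exactly the compactly supported ones, but your parenthetical only proves finite length implies compact support; the converse --- that a compactly supported perverse coherent sheaf $H$ has finite length, equivalently that $\Hom_{\hat Y}(\hat E,H)$ is finite dimensional when $H$ has proper support --- is precisely the paper's opening observation and should be stated explicitly (it is immediate from properness of the support and $\hat E$ being a vector bundle, and in your framework it is what transports finite-dimensionality across the heart equivalence). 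On the plus side, your verification that a $T$-equivariant compactly supported sheaf on $Y$ is necessarily supported on $C$, via the unique $T$-fixed point of the affine base $X$, is a genuinely useful detail that the paper compresses into ``similarly for the graded case.''
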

\begin{proof}
The image under the natural functor $\Hom_{\DD\QC(\hat Y)}( E,\cdot)$ of any coherent sheaf with proper support is finite dimensional, so the image of $\DD^b\Coh_\cs(\hat Y)$ under the equivalence of Corollary \ref{EMoritacoro} is contained in $\DD_\Fd(\hat \Lambda)$, and similarly for the graded case.

The image of the thick subcategory $\Thick(F)$ generated by $F$ is identified with $\Thick(S)\subset \DD_\Fd(\hat \Lambda)$, but this inclusion is an equivalence as every finite dimensional $\hat \Lambda$ module admits a Jordan-Holder filtration with simple subquotients, and the summands $S_i$ of $S$ are precisely the simple modules in $\Lambda\Mod_\textup{Fg}$ by the proof of Proposition \ref{simpleobjprop}. It follows that $\Thick(F)$ is equivalent to $ \DD^b\Coh_\cs(\hat Y)$ and its image is equivalent to $\DD_\Fd(\hat \Lambda)$, and similarly for the graded case.
\end{proof}

The object $F=\oplus_{i\in I} F_i\in \DD^b\Coh(Y)^{T}$ is evidently compact, so that letting
\[ \Sigma = \Hom_{\DD^b\Coh(Y)}(F,F) , \]
equipped with the grading defined by the equivariant structure on $F$, we have:

\begin{corollary}\label{cctmoritacoro} There are equivalences of triangulated categories
\[ \DD^b\Coh_\cs(\hat Y) \xrightarrow{\cong} \DD_\Perf(\Sigma)\quad \quad \text{and}\quad \quad  \DD^b\Coh_\cs(Y)^{T} \xrightarrow{\cong} {\textup{D}_\perf}(\Sigma) \ , \]
intertwining the forgetful functor $\DD(\Sigma) \to \DGV$ with $\Hom_{\DD\QC(Y)}( F,\cdot)$.
\end{corollary}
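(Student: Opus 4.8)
The plan is to prove this exactly as Corollaries \ref{EMoritacoro} and \ref{EMoritagrcoro} were proved, only with the classical tilting object $E$ replaced by $F$, and then to identify the resulting perfect derived category of $\Sigma$ with the category of compactly supported complexes by invoking Proposition \ref{csequivprop}. I would deliberately take the Keller--Morita route rather than a Koszul-duality route, since it needs only that $F$ is compact, not any local finiteness of the tilting algebra.

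Concretely: first observe that $F=\oplus_{i\in I}F_i$ is a perfect complex, being a bounded complex with coherent cohomology on the smooth variety $Y$; hence $F$ remains compact in $\DD\QC(Y)^{T}$ and, after completion along $C$, in $\DD\QC(\hat Y)$, and its derived endomorphism algebra is canonically $\Sigma=\Hom_{\DD^b\Coh(Y)}(F,F)$, with trigrading from the fixed $T$-equivariant structure. Unlike $E$, the object $F$ does not generate all of $\DD\QC(\hat Y)$, so I would pass to the localizing subcategory $\mc D=\langle F\rangle\subset \DD\QC(\hat Y)$, which is algebraic and cocomplete and for which $F$ is a tilting object in the sense of Definition \ref{tiltingdefn}. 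Applying Theorem \ref{Morita} to $F\in\mc D$ gives an equivalence $\mc D\xrightarrow{\cong}\DD(\Sigma)$ intertwining the forgetful functor with $\Hom_{\DD\QC(Y)}(F,\cdot)$, which by Equation \ref{dgmoritaeqntriang} restricts to $\Thick(F)\xrightarrow{\cong}\DD_\Perf(\Sigma)$. Since $\Thick(F)\cong\DD^b\Coh_\cs(\hat Y)$ by Proposition \ref{csequivprop}, the first claimed equivalence follows; the $T$-equivariant statement $\DD^b\Coh_\cs(Y)^{T}\xrightarrow{\cong}{\textup{D}_\perf}(\Sigma)$ follows identically after retaining the trigrading and using the graded half of Proposition \ref{csequivprop}, exactly as in the passage from Corollary \ref{EMoritacoro} to Corollary \ref{EMoritagrcoro}.

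The one genuinely delicate point — and the place where this differs from the proof for $E$ — is that $\DD^b\Coh_\cs(\hat Y)$ is not cocomplete, so Theorem \ref{Morita} cannot be applied to it on the nose; the argument above circumvents this by embedding into a cocomplete ambient category in which $F$ stays compact and then restricting to thick subcategories, the only non-formal inputs being the compactness of $F$ (perfectness plus regularity of $Y$) and the intrinsic identification of $\Thick(F)$ already furnished by Proposition \ref{csequivprop}. As a consistency check and an alternative proof, I would note that $\Sigma$ is precisely the Koszul dual $\hat\Lambda^!$ of the tilting algebra, so that the same equivalence is also an instance of Theorem \ref{Kozeqtheo} applied to $A=\hat\Lambda$ (with $S$ corresponding to $F$ under Proposition \ref{csequivprop}), provided one verifies the requisite strong local finiteness of $\hat\Lambda$ for the $\ZZ^3$-grading; the compatibility of the two pictures — $\Hom(\hat E,\cdot)$ realizing $F$ as the semisimple module $S$ versus $\Hom(F,\cdot)$ realizing $F$ as the free $\Sigma$-module — is then the statement that Koszul duality interchanges the two standard $t$-structures, which is what the monad construction of the later sections will exploit.
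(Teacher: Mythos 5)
Your proof takes essentially the same route as the paper's: apply Keller's Morita theory (Theorem \ref{Morita}) to the compact object $F$ inside a suitable ambient cocomplete category, restrict to $\Thick(F)$ resp. $\thick(F)$, and invoke Proposition \ref{csequivprop} to identify those thick subcategories with the compactly supported derived categories. The paper's proof is terser and does not spell out the embedding into the localizing subcategory $\langle F\rangle$, but that is exactly the intended reading of "applied to the object $F$", and your extra care about cocompleteness is a correct and useful amplification rather than a divergence.
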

\begin{proof}
	This follows from the Morita theory from \cite{Kel2} recalled in Theorem \ref{Morita} applied to the object $F$, together with the identifications of $\Thick(F)$ and $\thick(F)$ with the compactly supported derived categories in Proposition \ref{csequivprop}.
\end{proof}

\subsection{Koszul duality patterns in equivariant enumerative geometry}\label{Kozpatsec}

The relationship between the two descriptions of $\DD^b\Coh_\cs(Y)$ given in Proposition \ref{csequivprop} and Corollary \ref{cctmoritacoro}, in terms of modules over the associative algebras $\Lambda$ and $\Sigma$, is an example of Koszul duality. These two algebras are the analogues of the endomorphisms of the projective generator and the $\Ext$ algebra of the direct sum of simple modules in the BGG category $\mc O$, respectively, whose Koszul duality was studied in the seminal paper of Beilinson-Ginzburg-Soergel \cite{BGS}.

Note that the module structure map for the object $S\in \DD_\Fd(\hat \Lambda)$ (and its graded enhancement $S\in \DD_\fd(\Lambda)$) defines a augmentation $\epsilon:\hat \Lambda\to S$ of $\hat\Lambda$ considered as an algebra over the base ring $\hat \Lambda_0=S$ (and similarly a graded augmentation of $\Lambda$). Moreover, we can identify the Koszul dual algebras of $\hat \Lambda$ and $\Lambda$ with $\Sigma$, as we have
$$ \hat \Lambda^! = \Hom_{\DD(\hat \Lambda)}(S,S) \cong \Hom_{\DD^b\Coh(\hat Y)}(F,F) = \hat \Sigma \quad\quad \text{and}\quad\quad  \Lambda^!\cong  \Sigma  $$
similarly, where $\hat \Sigma$ denotes $\Sigma$ as a plain (ungraded) algebra while $\Sigma$ denotes the algebra equipped with its natural trigrading defined by the $T$-equivariant structure on $F$. In terms of this notation, we have:

\begin{prop}\label{SigmaKozprop} There are canonical quasi-isomorphisms
	\begin{equation*}
		\hat\Sigma^! \cong \hat \Lambda \quad\quad\text{and}\quad\quad  \Sigma^! \cong \Lambda \ .
	\end{equation*}
\end{prop}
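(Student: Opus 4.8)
The plan is to deduce both quasi-isomorphisms from the Koszul duality square, Theorem \ref{Kozsqtheo}, applied to the augmented algebras $\Lambda$ and $\hat\Lambda$, using the identifications $\Lambda^!\cong\Sigma$ and $\hat\Lambda^!\cong\hat\Sigma$ recalled above. Since $E$ is a classical tilting object in the sense of Definition \ref{tiltingdefn}, we regard $\Lambda=\Hom_{\DD^b\Coh(Y)}(E,E)$ as an ordinary graded associative algebra — in particular an augmented DG algebra — with augmentation $\epsilon:\Lambda\to S=\Lambda_0$ given by the action on the simple modules $S=\bigoplus_{i\in I}S_i$ as above, and likewise for $\hat\Lambda$. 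Recall also that $(\cdot)^!$ preserves quasi-isomorphisms of augmented algebras, so from $\Lambda^!\cong\Sigma$ we get $\Sigma^!\cong(\Lambda^!)^!$, and similarly in the completed case.

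First I would record that the Koszul duals $\Lambda^!\cong\Sigma$ and $\hat\Lambda^!\cong\hat\Sigma$ are \emph{finite dimensional}: indeed $\Sigma=\Ext^\bullet_{\DD^b\Coh(Y)}(F,F)$ with $F=\bigoplus_{i\in I}F_i$ an object of $\DD^b\Coh_\cs(Y)$ supported on the proper curve $C$ inside the smooth threefold $Y$, so each $\Ext^n$ is finite dimensional and only finitely many are nonzero, and the same applies to $\hat\Sigma$ since $C$ is proper in $\hat Y$. In particular $\Sigma$ and $\hat\Sigma$ are locally finite $A_\infty$ algebras. Now, even though $\Lambda$ and $\hat\Lambda$ are themselves infinite dimensional in general — so that Theorem \ref{Kozsqtheo} does not apply to them verbatim — Remark \ref{Kozequivrmk} states that for an augmented DG algebra $A$ the conclusions of Theorem \ref{Kozsqtheo} hold under the weaker hypothesis that $A^!$ is locally finite. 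Applying this with $A=\Lambda$ (resp. $A=\hat\Lambda$) yields the canonical quasi-isomorphism $\Sigma^!\cong(\Lambda^!)^!\cong\Lambda$ (resp. $\hat\Sigma^!\cong\hat\Lambda$), as desired.

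The step I expect to require the most care is checking that this machinery is genuinely in force: that $\epsilon$ is an honest algebra augmentation with $\Lambda_0=S$ — which uses the compatibility of the chosen $T$-equivariant structures on the $E_i$ and $F_i$, namely that $\Hom(E_i,F_j)$ is $\bb K$ in internal degree zero for $i=j$ and vanishes otherwise, as arranged above — and that Remark \ref{Kozequivrmk} really does license dropping strong local finiteness of $\Lambda$ in favour of local finiteness of $\Lambda^!$. A further piece of bookkeeping is to track the cohomological shearing conventions of Remark \ref{cohomshearrmk} and Warning \ref{gradingnotationwarning} through the two applications of $(\cdot)^!$, so that the $\bb Z^3$-grading on $\Sigma^!$ induced by the $T$-equivariant structure matches that on $\Lambda$. (An alternative, more hands-on route would present $\Sigma^!$ via the Koszul resolution of $S$ over $\Sigma$ as in Proposition \ref{Kozpresprop}, as the completed quasi-free algebra on $\bar\Sigma^\vee[-1]$, and match it against the DG-quiver presentation of $\Lambda$ from Section \ref{Kozpatsec}; but the route through Theorem \ref{Kozsqtheo} is cleaner.)
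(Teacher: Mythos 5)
There is a genuine gap, and it sits exactly at the step you flag as needing "the most care." Your plan is to get both isomorphisms from a double–Koszul-dual statement $(A^!)^!\cong A$ applied to $A=\Lambda$ and $A=\hat\Lambda$, justified by Remark \ref{Kozequivrmk} on the grounds that $A^!\cong\Sigma$ is locally finite. But Remark \ref{Kozequivrmk} weakens the hypotheses of Theorem \ref{Kozeqtheo} (the Koszul duality equivalence of module categories, $\thick(S)\simeq \textup{D}_\perf(A^!)$) — that is also how the paper uses it, in Corollary \ref{Kozequivcoro} — not of the double-dual Theorem \ref{Kozsqtheo}. More importantly, the principle you extract from it is false as stated: for an augmented DG algebra $A$ with merely locally finite $A^!$, the double dual need not return $A$. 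Take $A=\bb K[x]$ (or $\Lambda$ itself with the trivial grading): $A^!$ is finite dimensional, yet $(A^!)^!$, computed as in Proposition \ref{Kozpresprop}, is the \emph{completed} tensor algebra, i.e.\ $\bb K[[x]]$ (resp.\ $\hat\Lambda$), not $A$. The very reason the proposition distinguishes $\hat\Sigma^!\cong\hat\Lambda$ from the graded statement $\Sigma^!\cong\Lambda$ is this completion phenomenon; a principle of the kind you invoke would erase the distinction and "prove" $\Sigma^!\cong\Lambda$ as ungraded algebras, which is wrong. Strong local finiteness in Theorem \ref{Kozsqtheo} is precisely the (Adams-type) grading hypothesis that blocks the completion, and it cannot be traded for local finiteness of $A^!$ alone.

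The repairs are the two halves of the paper's actual proof. In the graded case your worry that Theorem \ref{Kozsqtheo} "does not apply verbatim because $\Lambda$ is infinite dimensional" is a misreading of strong local finiteness: it is a condition on the trigraded pieces, not on total dimension, and the $T$-equivariant structure on $E$ is chosen in Section \ref{Kozpatsec} exactly so that $\Lambda$ is strongly locally finite as a trigraded algebra. So Theorem \ref{Kozsqtheo} applies directly to $A=\Lambda$ and gives $\Sigma^!\cong(\Lambda^!)^!\cong\Lambda$ — no detour through the remark is needed or available. In the completed case there is no off-the-shelf double-dual theorem to quote; the argument you set aside parenthetically as the "hands-on alternative" is in fact the proof: by Proposition \ref{Kozpresprop}, $\hat\Sigma^!$ is the completed tensor algebra $\widehat{\otimes}^\bullet_S(\bar\Sigma^\vee[-1])$ with differential dual to the $A_\infty$ structure, i.e.\ the completion of the quasi-free presentation of $\Lambda$ at its augmentation ideal, and one then identifies this augmentation-ideal-adic completion with the completion at the $T$-fixed point $x$, which is $\hat\Lambda$. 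Your observations that $\Sigma$ and $\hat\Sigma$ are finite dimensional (properness of the support of $F$) and your bookkeeping of the augmentation and shearing conventions are fine; it is only the appeal to Remark \ref{Kozequivrmk} as a substitute for strong local finiteness that does not work.
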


\begin{warning}
	We will continue to use the notation $\Sigma$ for both the graded and ungraded algebra; the notation for the graded and ungraded derived categories already makes this distinction, and we will explicitly indicate the grading whenever there is an ambiguity.
\end{warning}

\begin{proof} In the graded case, both $\Sigma$ and $\Lambda$ are strongly locally finite, so that by Theorem \ref{Kozsqtheo} we have $\Sigma^! \cong (\Lambda^!)^! \cong \Lambda$.
	
In the ungraded case, by Proposition \ref{Kozpresprop} we have
\[ \Sigma^! \cong ( \otimes^\bullet_S(\bar\Sigma[1]))^\vee \cong \widehat {\otimes}^\bullet_S (\bar \Sigma^\vee[1]) \ , \]	
that is, the bar complex presentation of the Koszul dual of $\Sigma$ (with the trivial grading) is given by the completion of the tensor algebra with respect to the natural augmentation ideal. In the graded case, the augmentation ideal agrees with the ideal $\Lambda_{>0}$ of strictly positively graded elements, and thus the completion agrees with that induced by completion at the unique fixed point $x\in X(\C)$, as desired.
\end{proof}

Finally, we note that the $T$-equivariant structure on $E$ can be chosen so that $\Lambda$ is strongly locally finite, in the sense of Definition 2.1 of \cite{LPWZ}, as a trigraded algebra. We have:

\begin{corollary}\label{Kozequivcoro}
	There are mutually inverse equivalences of categories
	\begin{align} \Hom_{\hat \Lambda}(S,\cdot) &: \xymatrix{\DD_\Fd(\hat \Lambda)  \ar@<.5ex>[r]^{\cong} & \ar@<.5ex>[l] \DD_\Perf(\Sigma)}: (\cdot)\otimes_{\Sigma} S  \quad\quad \ , \label{Kozequivcsungreqn} \\
		\Hom_{\Lambda}(S,\cdot) &: \xymatrix{\DD_\fd(\Lambda)  \ar@<.5ex>[r]^{\cong} & \ar@<.5ex>[l] {\textup{D}_\perf}(\Sigma)}: (\cdot)\otimes_{\Sigma} S  \quad\quad\ \  ,    \label{Kozequivcseqn} \\
  (\cdot)\otimes_{\Lambda} S &: \xymatrix{ \DD_\Perf(\hat \Lambda) \ar@<.5ex>[r]^{\cong} &  \ar@<.5ex>[l]  \DD_\Fd(\Sigma)}: \Hom_{\Sigma}(S,\cdot) \ \ \text{ , and} \label{Kozequivungreqn} \\
   (\cdot)\otimes_{\Lambda} S &: \xymatrix{ {\textup{D}_\perf}(\Lambda) \ar@<.5ex>[r]^{\cong} & \ar@<.5ex>[l]  \DD_\fd(\Sigma)}: \Hom_{\Sigma}(S,\cdot) \quad\  .  \label{Kozequiveqn} 
 \	\end{align}
\end{corollary}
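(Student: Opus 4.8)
The plan is to deduce all four equivalences in Corollary \ref{Kozequivcoro} from the general Koszul duality results recalled in Section \ref{quiversec}, together with the identifications of Koszul dual algebras in Proposition \ref{SigmaKozprop}. The key input is Theorem \ref{Kozeqtheo}, which for a strongly locally finite, augmented $A_\infty$ algebra $A$ over $S$ produces mutually inverse equivalences $\Hom_A(S,\cdot):\thick(S)\xrightarrow{\cong}\textup{D}_\perf(A^!)$ and $(\cdot)\otimes_{A^!}S:\textup{D}_\perf(A^!)\xrightarrow{\cong}\thick(S)$, together with the observation following that theorem that when $A$ is graded finite dimensional one has $\thick(S)=\DD^b_\fd(A)$. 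In our setting $\Lambda$ is strongly locally finite as a trigraded algebra, with augmentation $\epsilon:\Lambda\to S$ defined by the module structure on $S$, and by Proposition \ref{SigmaKozprop} its Koszul dual is $\Lambda^!\cong \Sigma$; dually, $\Sigma$ is strongly locally finite and $\Sigma^!\cong\Lambda$ by the same proposition (the graded case of which is just the biduality Theorem \ref{Kozsqtheo}).

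First I would establish Equation \ref{Kozequivcseqn}: apply Theorem \ref{Kozeqtheo} to $A=\Lambda$ with $A^!=\Sigma$. Since $\Lambda$ is graded finite dimensional — here one invokes that $F$ has compactly supported, hence finite dimensional, $\Ext$-groups so $\Sigma$, and dually $\Lambda$, has the relevant finiteness — the remark after Theorem \ref{Kozeqtheo} identifies $\thick(S)$ with $\DD_\fd(\Lambda)$, yielding $\Hom_\Lambda(S,\cdot):\DD_\fd(\Lambda)\xrightarrow{\cong}\textup{D}_\perf(\Sigma)$ with quasi-inverse $(\cdot)\otimes_\Sigma S$. Then Equation \ref{Kozequiveqn} follows by applying the same theorem with the roles reversed, $A=\Sigma$ and $A^!\cong\Lambda$: this gives $\Hom_\Sigma(S,\cdot):\DD_\fd(\Sigma)\xrightarrow{\cong}\textup{D}_\perf(\Lambda)$, and inverting this equivalence — whose quasi-inverse is $(\cdot)\otimes_\Lambda S$ by the statement of Theorem \ref{Kozeqtheo} — rewrites it in the displayed direction. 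For the ungraded variants, Equations \ref{Kozequivcsungreqn} and \ref{Kozequivungreqn}, I would run the identical argument using the ungraded derived categories and the ungraded Koszul duality: here one uses Remark \ref{Kozequivrmk}, which allows the ungraded statement under the weaker hypothesis that the Koszul dual is locally finite, together with the ungraded half $\hat\Sigma^!\cong\hat\Lambda$ of Proposition \ref{SigmaKozprop}. One should note that in the ungraded case the completion appearing in Proposition \ref{Kozpresprop} matches the completion at the fixed point $x$, as established in the proof of Proposition \ref{SigmaKozprop}, so the hypotheses of Theorem \ref{Kozeqtheo} (in the DG form of Remark \ref{Kozequivrmk}) are met.

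The one point requiring care — and the main potential obstacle — is the precise matching of finiteness hypotheses: Theorem \ref{Kozeqtheo} identifies $\thick(S)$ with $\textup{D}_\perf(A^!)$ and only identifies $\thick(S)$ with the \emph{bounded} finite-dimensional derived category $\DD^b_\fd(A)$ when $A$ itself is graded finite dimensional, whereas here $\Lambda$ is infinite dimensional but strongly locally finite. The resolution is that it is $\Sigma=\Lambda^!$ (respectively $\Sigma^!=\Lambda$) that plays the role of the finite-type algebra on each side, so one must be careful to apply the "finite dimensional" half of the statement to the correct algebra — to $\Sigma$ in deriving \ref{Kozequiveqn} and to the Koszul dual pairing in \ref{Kozequivcseqn} — and to keep track of which of $\DD_\fd$, $\DD_\Perf$, $\textup{D}_\perf$ is bounded versus unbounded, graded versus ungraded. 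Once this bookkeeping is set up correctly against Section \ref{Ainfalgsec} and the statement of Theorem \ref{Kozeqtheo}, the four equivalences are immediate formal consequences; no new computation beyond Proposition \ref{SigmaKozprop} is needed.
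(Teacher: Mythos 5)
Your treatment of the two graded equivalences and of Equation \ref{Kozequivcsungreqn} is essentially the paper's argument: Theorem \ref{Kozeqtheo} with $A=\Lambda$ (giving \ref{Kozequivcseqn}) and with $A=\Sigma$ (giving \ref{Kozequiveqn}), both strongly locally finite in the trigraded setting, and Remark \ref{Kozequivrmk} with $A=\hat\Lambda$ — a plain associative algebra by projectivity of $E$, whose Koszul dual $\Sigma$ is finite dimensional, hence locally finite — for the first ungraded case. (Your passing claim that $\Lambda$ is ``graded finite dimensional'' is false, as you yourself note later; the identification $\thick_\Lambda(S)=\DD_\fd(\Lambda)$ instead rests on Jordan--H\"older filtrations by the simples $S_i$, as in the proof of Proposition \ref{csequivprop}, but this is a small point that the paper also leaves implicit.)

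The genuine gap is in Equation \ref{Kozequivungreqn}. To obtain $\DD_\Perf(\hat\Lambda)\simeq\DD_\Fd(\Sigma)$ from Koszul duality you must take $A=\Sigma$ \emph{ungraded}, and there neither hypothesis you invoke is available: the Koszul dual of the ungraded $\Sigma$ is the completed algebra $\hat\Lambda$, which is \emph{not} locally finite (it is an infinite-dimensional, $I$-adically complete algebra with no grading making its pieces finite), so Remark \ref{Kozequivrmk} does not apply; and the ungraded $\Sigma$ is not covered by the strongly-locally-finite hypothesis of Theorem \ref{Kozeqtheo} either — this is precisely why the paper's own proof of Proposition \ref{SigmaKozprop} has to treat the ungraded case by a separate bar-complex/completion argument rather than by biduality. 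The paper closes this case by a different mechanism: since $\Sigma$ is finite dimensional one has $\DD_\Fd(\Sigma)=\Thick_\Sigma(S)$, and $\Hom_\Sigma(S,S)\cong\hat\Lambda$ by Proposition \ref{SigmaKozprop}, so Keller's DG Morita theorem (Theorem \ref{Morita}, in the form of Equation \ref{dgmoritaeqntriang}) applied to the object $S\in\DD_\Fd(\Sigma)$ yields $\Thick_\Sigma(S)\simeq\DD_\Perf(\hat\Lambda)$ with quasi-inverse pair as displayed. Your ``identical argument'' for this equivalence therefore does not go through as written; you need either this Morita-theoretic step or some other justification replacing the failed local-finiteness hypothesis.
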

\begin{proof}
In the graded case, both $\Sigma$ and $\Lambda$ are strongly locally finite by construction, so that Theorem \ref{Kozeqtheo} applies with $A=\Lambda$ or $A=\Sigma$ to give the two claimed equivalences of categories of graded modules. The first equivalence in the ungraded case follows from applying the same result with the weaker hypotheses of Remark \ref{Kozequivrmk}, noting that $\Lambda$ is always a plain associative algebra by projectivity of $E$ in $\Perv(Y)$, and that $\Sigma$ is finite dimensional and thus locally finite since $F$ has proper support. The latter ungraded equivalence follows again by applying DG Morita theory to the object $S \in  \DD_\Fd(\Sigma)$ since $\hat \Lambda = \Hom_\Sigma(S,S)$ by Proposition \ref{SigmaKozprop} above.
\end{proof}

Note that we have an inclusion of full subcategories
\[ \DD_\fd(\Lambda) \subset {\textup{D}_\perf}(\Lambda)  \quad\quad \text{corresponding to}\quad\quad \DD^b\Coh_\cs(Y)^{T}\subset \DD^b\Coh(Y)^{T} \ , \]
and similarly ${\textup{D}_\perf}(\Sigma) \subset \DD_\fd(\Sigma)$, since $\Sigma$ is finite dimensional. However, the equivalences of Equations \ref{Kozequivcseqn} and \ref{Kozequiveqn} evidently do not make the natural diagram commute. This minor technical obstruction is resolved by introducing the Nakayama functor:

The Nakayama functor is the t-exact auto-equivalence defined by
\begin{equation}\label{Nakayamaeqn}
	(\cdot)^N:=(\cdot)\otimes_\Sigma \Hom_S(\Sigma,S) : \DD_\fd(\Sigma)\to \DD_\fd(\Sigma) \ , 
\end{equation}
which has the property that it defines an equivalence between the full subcategories of (complexes of) projective and injective objects; see Proposition 7 of \cite{MOS} and Lemma 4.5.6 of \cite{Zim}.

The main result of this subsection, which summarizes several of the results above and their compatibilities, is the following:

\begin{theo}\label{catthmunext} The diagram of triangulated categories
		\begin{equation}\label{Kozgeoextendedungreq}
		\xymatrixcolsep{5pc}
		\xymatrix{	\DD^b\Coh_\cs(\hat Y) \ar@<.5ex>[r]^{\Hom_{\hat Y}(F,\cdot)\otimes_{\Sigma} S}\ar[d]^{\subset} & \ar@<.5ex>[l]^{\Hom_{\hat\Lambda}(S,\cdot)\otimes_{\Sigma} F }  \DD_\Fd(\hat \Lambda)  \ar@<.5ex>[r]^{ \Hom_\Lambda(S,\cdot) } \ar[d]^{\subset}  & \ar@<.5ex>[l]^{(\cdot)\otimes_\Sigma S } \DD_\Perf(\Sigma)  \ar[d]^{(\cdot)^N\circ \subset}	\\
			\DD^b\Coh(\hat Y) \ar@<.5ex>[r]^{\Hom_{\hat Y}(E,\cdot)}  &  \ar@<.5ex>[l]^{E\otimes_{\hat\Lambda}(\cdot) }  \DD_\Perf(\hat \Lambda)    \ar@<.5ex>[r]^{ (\cdot)\otimes_{\Lambda} S}  & \ar@<.5ex>[l]^{\Hom_{\Sigma}(S,\cdot)}  \DD_\Fd(\Sigma) & \textup{,}		 } \end{equation}	
	has horizontal arrows mutually inverse triangle equivalences and vertical arrows inclusions of thick subcategories, and admits canonical commutativity data, and similarly for the diagram
	\begin{equation}\label{Kozgeoextendedeq}
		\xymatrixcolsep{5pc}
	\xymatrix{	\DD^b\Coh_\cs(  Y)^{T} \ar@<.5ex>[r]^{\Hom_{ Y}(F,\cdot)\otimes_{\Sigma} S}\ar[d]^{\subset} & \ar@<.5ex>[l]^{\Hom_{ \Lambda}(S,\cdot)\otimes_{\Sigma} F }  \DD_\fd(  \Lambda)  \ar@<.5ex>[r]^{ \Hom_\Lambda(S,\cdot) } \ar[d]^{\subset}  & \ar@<.5ex>[l]^{(\cdot)\otimes_\Sigma S } {\textup{D}_\perf}(\Sigma)  \ar[d]^{(\cdot)^N\circ \subset}	\\
		\DD^b\Coh(  Y)^{T} \ar@<.5ex>[r]^{\Hom_{Y}(E,\cdot)}  &  \ar@<.5ex>[l]^{E\otimes_{ \Lambda}(\cdot) }  {\textup{D}_\perf}(\Lambda)    \ar@<.5ex>[r]^{ (\cdot)\otimes_{\Lambda} S}  & \ar@<.5ex>[l]^{\Hom_{\Sigma}(S,\cdot)}  \DD_\fd(\Sigma) & \textup{.}		 } \end{equation}			
	
\end{theo}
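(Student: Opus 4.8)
\begin{Proof}[Proof sketch]
The plan is to assemble the statement from the Morita-type equivalences of Corollaries \ref{EMoritacoro} and \ref{cctmoritacoro}, the identification of compactly supported objects in Proposition \ref{csequivprop}, and the Koszul duality equivalences of Corollary \ref{Kozequivcoro}, and then to exhibit the commutativity data one square at a time. I describe the argument for the diagram \ref{Kozgeoextendedungreq} over $\hat Y$; the case of \ref{Kozgeoextendedeq} is identical, using the graded refinements (Corollary \ref{EMoritagrcoro} and the graded Koszul equivalences \ref{Kozequivcseqn}, \ref{Kozequiveqn}) together with the fact that $\Lambda$ and $\Sigma$ are strongly locally finite as trigraded algebras.

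First I would check that the horizontal arrows are mutually inverse triangle equivalences by recognizing each as a composite of equivalences already established: the bottom row is $\Hom_{\hat Y}(E,\cdot)$ (Corollary \ref{EMoritacoro}) followed by the Koszul equivalence \ref{Kozequivungreqn}, the top-left arrow is $\Hom_{\hat Y}(F,\cdot)$ (Corollary \ref{cctmoritacoro}) followed by the inverse $(\cdot)\otimes_\Sigma S$ of the Koszul equivalence \ref{Kozequivcsungreqn}, and the top-right arrow is $\Hom_{\hat\Lambda}(S,\cdot)$ from that same equivalence; the displayed left-pointing arrows are the corresponding composites of inverses. The two descriptions of the top-left arrow agree because, under the equivalence $\Hom_{\hat Y}(E,\cdot)$ sending $F$ to the semisimple module $S$ (Proposition \ref{simpleobjprop}), there is a natural isomorphism $\Hom_{\hat Y}(F,\cdot)\cong\Hom_{\hat\Lambda}(S,\Hom_{\hat Y}(E,\cdot))$.

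Next I would treat the vertical arrows and the left square together. The left vertical arrow is the inclusion of the thick subcategory of complexes with compactly supported cohomology. The module $S$ is perfect over $\hat\Lambda$, since $\Hom_{\hat Y}(E,F)=S$ with $F\in\DD^b\Coh(\hat Y)$, so $S$ lies in the image $\DD_\Perf(\hat\Lambda)$ of the equivalence of Corollary \ref{EMoritacoro}; hence $\DD_\Fd(\hat\Lambda)=\Thick(S)\subset\DD_\Perf(\hat\Lambda)$ is thick, which is the middle vertical arrow. For the right vertical arrow, $\DD_\Perf(\Sigma)\subset\DD_\Fd(\Sigma)$ because $\Sigma$ is finite dimensional, and postcomposing with the Nakayama auto-equivalence \ref{Nakayamaeqn} identifies $\DD_\Perf(\Sigma)$ with the thick subcategory $\Thick(\Hom_S(\Sigma,S))$ of injective complexes. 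Commutativity of the left square amounts to a natural isomorphism $\Hom_{\hat Y}(E,G)\cong\Hom_{\hat Y}(F,G)\otimes_\Sigma S$ for $G\in\DD^b\Coh_\cs(\hat Y)$, which I would obtain from $\Hom_{\hat Y}(F,G)\cong\Hom_{\hat\Lambda}(S,M)$ above with $M:=\Hom_{\hat Y}(E,G)\in\DD_\Fd(\hat\Lambda)$ (Proposition \ref{csequivprop}) and the counit $\Hom_{\hat\Lambda}(S,M)\otimes_\Sigma S\cong M$ of the equivalence \ref{Kozequivcsungreqn}.

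The hard part will be the commutativity of the right square, i.e.\ the natural isomorphism $\big(\Hom_{\hat\Lambda}(S,M)\big)^N\cong M\otimes_{\hat\Lambda}S$ in $\DD_\Fd(\Sigma)$ for $M\in\DD_\Fd(\hat\Lambda)$, expressing that the two Koszul-dual functors \ref{Kozequivcsungreqn} and \ref{Kozequivungreqn} differ exactly by the Nakayama twist. I would prove it using the explicit self-dual Koszul resolution $\mc K^\bullet$ of $S$ over $\hat\Lambda$ from Section \ref{quiversec} (and $\hat\Lambda=\Hom_\Sigma(S,S)$ of Proposition \ref{SigmaKozprop}): computing $M\otimes^L_{\hat\Lambda}S$ and $R\Hom_{\hat\Lambda}(S,M)$ via $\mc K^\bullet$ realizes them as $S$-linearly dual complexes, whose comparison is precisely tensoring over $\Sigma$ with $\Hom_S(\Sigma,S)$; equivalently, since both sides are triangulated functors on $\DD_\Fd(\hat\Lambda)=\Thick(S)$ related by a natural comparison map, it suffices to check agreement on the generator $M=S$, where both compute $\Hom_S(\Sigma,S)$ (using $S\otimes^L_{\hat\Lambda}S\cong\mathrm{Tor}^{\hat\Lambda}_\bullet(S,S)\cong\Ext_{\hat\Lambda}^\bullet(S,S)^\ast=\Sigma^\ast$ and $\Sigma^N=\Sigma\otimes_\Sigma\Hom_S(\Sigma,S)$). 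This is the Serre duality comparison for the finite dimensional algebra $\Sigma$ rephrased through Koszul duality, and it is here that care with the cohomological shearing conventions of Remark \ref{cohomshearrmk} is required. Splicing the two commuting squares yields the asserted commuting rectangle, and the identical argument in the equivariant setting gives \ref{Kozgeoextendedeq}.
\end{Proof}
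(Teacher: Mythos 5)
Your proposal is correct and follows the same skeleton as the paper's proof: assemble the horizontal equivalences from the Morita-theoretic Corollaries \ref{EMoritacoro}, \ref{cctmoritacoro} and the Koszul duality equivalences of Corollary \ref{Kozequivcoro}, with the whole theorem hinging on the compatibility of the two Koszul functors up to the Nakayama twist. Where you diverge is in how that key square is verified. The paper isolates it as a lemma and proves it entirely on the $\Sigma$ side, for the \emph{inverse} functors: a three-step chain of Hom-tensor adjunctions gives a canonical natural isomorphism $\Hom_\Sigma(S,\cdot)\circ(\cdot)^N\cong(\cdot)\otimes_\Sigma S$ on $\DD_\Perf(\Sigma)$ (resp. ${\textup{D}_\perf}(\Sigma)$), after which commutativity of the right square is automatic because the horizontal arrows are equivalences; no comparison map on the $\hat\Lambda$ side ever has to be constructed. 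You instead aim at $(\Hom_{\hat\Lambda}(S,M))^N\cong M\otimes_{\hat\Lambda}S$ for $M\in\DD_\Fd(\hat\Lambda)$, either by dualizing the Koszul resolution or by checking on the generator $M=S$; the resolution route works (and is close in spirit to the explicit Koszul computations of Section \ref{monadsec}), but the generator-check shortcut is only valid once the ``natural comparison map'' you invoke is actually exhibited, which is precisely the step the paper's adjunction argument sidesteps — so if you keep your route, write that map down (e.g.\ via the evaluation/adjunction unit) before reducing to $M=S$. The remaining differences are organizational and harmless: the paper deduces commutativity of the left square and the thickness of the vertical arrows as consequences of the right square together with Proposition \ref{csequivprop} and Corollary \ref{cctmoritacoro}, whereas you verify the left square directly from the counit of \ref{Kozequivcsungreqn} and prove thickness by observing $S\in\DD_\Perf(\hat\Lambda)$; both orders are fine, and your treatment of the equivariant diagram by the graded refinements matches the paper's.
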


To complete the proof, we need the following lemma:
 
\begin{lemma} The diagrams of triangulated categories
	\begin{equation}	\xymatrixcolsep{5pc}
		\xymatrixrowsep{2pc} \xymatrix{ \DD_\Fd(\hat \Lambda) \ar[r]^{\Hom_{\hat\Lambda}(S,\cdot) }\ar[d]^{\subset}  & \DD_\Perf(\Sigma) \ar[d]^{(\cdot)^N}  \\  {\textup{D}_\perf}(\Lambda) \ar[r]^{(\cdot)\otimes_\Lambda S} & \DD_\fd(\Sigma) } 
		\quad\quad \text{and}\quad\quad 
		\xymatrixcolsep{5pc}
		\xymatrixrowsep{2pc} \xymatrix{ \DD_\fd(\Lambda) \ar[r]^{\Hom_\Lambda(S,\cdot) }\ar[d]^{\subset}  & {\textup{D}_\perf}(\Sigma) \ar[d]^{(\cdot)^N}  \\  {\textup{D}_\perf}(\Lambda) \ar[r]^{(\cdot)\otimes_\Lambda S} & \DD_\fd(\Sigma) }   \ .
	\end{equation}
	admit canonical commutativity data.
\end{lemma}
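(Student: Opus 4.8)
\emph{Proof strategy.} The plan is to reduce both squares (which have the same shape and admit essentially the same argument) to a single bimodule identification; I describe the first, the second being identical after replacing $\hat\Lambda,\DD_\Fd,\DD_\Perf$ by $\Lambda,\DD_\fd,{\textup{D}_\perf}$ and working with graded modules throughout. Write $\Phi_1=\Hom_{\hat\Lambda}(S,\cdot)$, $\Psi_1=(\cdot)\otimes_\Sigma S$ and $\Phi_2=(\cdot)\otimes_{\hat\Lambda}S$, regarded as derived functors of the bimodule $S$ (a bimodule over both $\hat\Lambda$ and $\Sigma$ via their augmentations to $S$). By Corollary \ref{Kozequivcoro}, Equation \ref{Kozequivcsungreqn}, the functors $\Phi_1,\Psi_1$ are mutually inverse equivalences between $\DD_\Fd(\hat\Lambda)$ and $\DD_\Perf(\Sigma)$, and by Equation \ref{Kozequivungreqn}, $\Phi_2$ is an equivalence $\DD_\Perf(\hat\Lambda)\xrightarrow{\cong}\DD_\Fd(\Sigma)$; the inclusion $\iota\colon\DD_\Fd(\hat\Lambda)\hookrightarrow\DD_\Perf(\hat\Lambda)$ is legitimate because $\hat\Lambda$ has finite global dimension ($Y$ being smooth), and $\Phi_2$ carries $\DD_\Fd(\hat\Lambda)$ into $\DD_\Fd(\Sigma)$ since $S_i\otimes^L_{\hat\Lambda}S$ is finite dimensional for each simple $S_i$. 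Pre- and post-composing with the mutually inverse equivalences $\Phi_1,\Psi_1$ shows the asserted commutativity $\Phi_2\circ\iota\cong(\cdot)^N\circ\Phi_1$ is equivalent to a natural isomorphism of functors $\DD_\Perf(\Sigma)\to\DD_\Fd(\Sigma)$,
\[ \Phi_2\circ\iota\circ\Psi_1\ \cong\ (\cdot)^N\ . \]
Unwinding the definitions, $\Phi_2\circ\iota\circ\Psi_1=(\cdot)\otimes^L_\Sigma\bigl(S\otimes^L_{\hat\Lambda}S\bigr)$ while $(\cdot)^N=(\cdot)\otimes^L_\Sigma\Hom_S(\Sigma,S)$ by Equation \ref{Nakayamaeqn}, so the whole statement reduces to constructing a canonical isomorphism of $\Sigma$-bimodules $S\otimes^L_{\hat\Lambda}S\cong\Hom_S(\Sigma,S)$.

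This last isomorphism is the assertion that Koszul duality squares to the identity twisted by the dualizing bimodule. Since $\hat\Lambda=\Sigma^!$ by Proposition \ref{SigmaKozprop}, Theorem \ref{Kozsqtheo} (in the graded picture) gives $\Ext^\bullet_{\hat\Lambda}(S,S)=\hat\Lambda^!=\Sigma$, and the universal coefficient pairing then identifies $\textup{Tor}^{\hat\Lambda}_\bullet(S,S)$ with the $S$-linear dual of $\Sigma$, compatibly with the $\Sigma$-bimodule structures because the $\textup{Tor}$--$\Ext$ pairing of the augmentation module is compatible with Yoneda products. Concretely I would compute both $S\otimes^L_{\hat\Lambda}S$ and $\Hom_S(\Sigma,S)$ from the two-sided Koszul resolution of $S$ over $\hat\Lambda=\Sigma^!$ supplied by Proposition \ref{Kozpresprop}, exactly along the lines of \cite{BGS}; the canonical choice of that resolution is what makes the resulting commutativity datum canonical, as required for gluing it into the diagram of Theorem \ref{catthmunext}. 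In the graded case one additionally uses that $\Lambda$ and $\Sigma$ are strongly locally finite, so that Theorems \ref{Kozsqtheo} and \ref{Kozeqtheo} apply verbatim with gradings, and that the graded Nakayama functor of Equation \ref{Nakayamaeqn} sends graded projectives to graded injectives, cf.\ \cite{MOS}, \cite{Zim}.

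The hard part will be precisely this upgrade, from the evident isomorphism of graded vector spaces $\textup{Tor}^{\hat\Lambda}_\bullet(S,S)\cong\Sigma^\vee$ to an isomorphism of $\Sigma$-\emph{bimodules} $S\otimes^L_{\hat\Lambda}S\cong\Hom_S(\Sigma,S)$: one must check that the chosen identification genuinely intertwines the two $\Sigma$-actions, namely the one coming from $\Psi_1$ and the one coming from $\Phi_2$, rather than merely matching them up to an unspecified automorphism. This is where the hypothesis that $\Lambda$ (equivalently $\Sigma$) is Koszul is essential, and where one must either invoke or reprove the bimodule-level computations of \cite{BGS}. A secondary, purely technical point specific to the first diagram is that $\hat\Lambda$ is infinite dimensional, so that throughout one works with the completed tensor algebra presentation of Proposition \ref{Kozpresprop} and with the finite dimensional modules $S_i$, exactly as in the proof of Proposition \ref{SigmaKozprop}.
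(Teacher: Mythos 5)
Your reduction is sound and the target bimodule isomorphism $S\otimes^L_{\hat\Lambda}S\cong\Hom_S(\Sigma,S)$ is true, but your route is genuinely different from, and heavier than, the paper's. The paper never passes through Koszul-duality-squared or a two-sided Koszul resolution: it simply checks that the \emph{inverse} equivalences commute, via the chain of canonical isomorphisms
\[ \Hom_\Sigma(S,(\cdot)\otimes_\Sigma\Hom_S(\Sigma,S))\ \cong\ (\cdot)\otimes_\Sigma\Hom_\Sigma(S,\Hom_S(\Sigma,S))\ \cong\ (\cdot)\otimes_\Sigma\Hom_S(S\otimes_\Sigma\Sigma,S)\ \cong\ (\cdot)\otimes_\Sigma S\ , \]
i.e.\ it proves $\Hom_\Sigma(S,\cdot)\circ(\cdot)^N\cong(\cdot)\otimes_\Sigma S$ directly (pull the tensor factor out of the $\Hom$, apply tensor--hom adjunction, and simplify $S\otimes_\Sigma\Sigma\cong S$), after which commutativity of both squares follows formally by composing with the mutually inverse Koszul duality equivalences, exactly as in your first reduction step. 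This sidesteps any computation of $S\otimes^L_{\hat\Lambda}S$ and, crucially, any question of matching $\Sigma$-bimodule structures; what your approach buys in exchange is an explicit identification of the Nakayama bimodule with $\textup{Tor}^{\hat\Lambda}_\bullet(S,S)$, which is conceptually pleasant but not needed here.

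Two caveats about your version. First, the step you yourself flag as ``the hard part'' --- that the identification $\textup{Tor}^{\hat\Lambda}_\bullet(S,S)\cong\Hom_S(\Sigma,S)$ intertwines the $\Sigma$-action coming from $\Psi_1$ with the one coming from $\Phi_2$ --- is, in your formulation, the entire content of the lemma, and you do not carry it out; as written this is a strategy rather than a proof, and completing it requires the BGS-style bimodule bookkeeping explicitly (with some care about the completion of $\hat\Lambda$ in the ungraded square, mitigated by finite-dimensionality of the relevant cohomology). Second, Koszulity of $\Lambda$ is neither a hypothesis of this lemma nor needed for it: the lemma precedes Warning \ref{LambdaKozwarn} and feeds into the triangulated statement of Theorem \ref{catthmunext}, which is later used in the general $A_\infty$ setting; even on your own route the bar-type resolution of Proposition \ref{Kozpresprop} computes $S\otimes^L_{\hat\Lambda}S$ with no Koszulity assumption, so you should not build that restriction into the argument.
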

\begin{proof} The inverse equivalences commute up to the canonical natural isomorphism
	\begin{align}\label{Nakayamapropeqn} \Hom_\Sigma(S,\cdot) \circ N & =  \Hom_\Sigma(S, (\cdot)\otimes_\Sigma \Hom_S(\Sigma,S))  \nonumber\\
		&   \xrightarrow{\cong}(\cdot)\otimes_\Sigma\Hom_\Sigma(S, \Hom_S(\Sigma,S)) \nonumber \\
		&   \xrightarrow{\cong}(\cdot)\otimes_\Sigma\Hom_S(S\otimes_\Sigma \Sigma,S)  \nonumber \\
		&   \xrightarrow{\cong} (\cdot)\otimes_\Sigma S 	\ . \nonumber
	\end{align}
\end{proof}

\begin{proof}{(of Theorem \ref{catthmunext})} 
	The arrows in the bottom rows define mutually inverse equivalences of categories by Corollaries \ref{EMoritacoro} and \ref{EMoritagrcoro}, as well as Equations \ref{Kozequivungreqn} and \ref{Kozequiveqn} of Corollary \ref{Kozequivcoro}. Commutativity of the squares on the right in each diagram follows from the preceding lemma. This in turn implies commutativity of the squares on the left of each diagram, by Proposition \ref{csequivprop} and Corollary \ref{cctmoritacoro}. It follows the vertical arrows must all be inclusions of thick subcategories.
\end{proof}

Thus, we have obtained three equivalent descriptions of the same triangulated category
\begin{equation}\label{Kozgeoeqiv}	\xymatrixcolsep{3pc}
\xymatrixrowsep{.5pc}
\xymatrix{\DD^b\Coh(Y)^{T} \ar@<.5ex>[r]^{\cong} &   {\textup{D}_\perf}(\Lambda)  \ar@<.5ex>[r]^{\cong} & \DD_\fd(\Sigma) &  \textup{under which} \\
M \ar@{|->}[r] & \Hom(E,M) \ar@{|->}[r] & \Hom(F,M)^N & , \\
E_i \ar@{|->}[r] & P_i \ar@{|->}[r] & S_i &    \text{, and} \\
  F_i \ar@{|->}[r] & S_i \ar@{|->}[r] & I_i & \textup{,} } \end{equation}
where $P_i=\Lambda_i$ is the projective $\Lambda$ module given by the $i^{th}$ direct summand of $\Lambda$ corresponding to $E_i$. By abuse of notation, we have also used $S_i$ to denote the simple $\Sigma$ module corresponding to $P_i$ under the latter equivalence, since both underlying vector spaces are given by $S_i=\K$ and their direct sum $S=\oplus_i S_i$ is the common augmentation module used to define the Koszul duality equivalence. Finally, the object \[I_i=\Sigma_i^N=\Hom_S(\Sigma_i,S)\]
is the injective module given by the linear dual of $\Sigma_i$, where the latter denotes the $i^{th}$ direct summand of $\Sigma$ in the decomposition
$$ \Sigma = \bigoplus_{i\in I} \Sigma_i = \bigoplus_{i\in I}  \Hom_{\Lambda}(S_i,S) \ . $$

It is natural to ask which abelian subcategory of the derived category of $\Sigma$ modules is equivalent to the usual heart of ${\textup{D}_\perf}(\Lambda)$ under the Koszul duality equivalence, and in turn to perverse coherent sheaves on $Y$ relative to $X$ under the equivalence of \cite{Bdg1},\cite{VdB1}, recalled in Theorem \ref{PCohequiv} above. We now describe this category, following the general approach of \cite{BGS,MOS}.

For the initial explanation, we will assume that $\Lambda$ is Koszul, and thus $\Sigma$ is a ($\Z\times \Z^3$-graded) associative algebra, with no non-trivial higher $A_\infty$ multiplications. In the succeeding section, we generalize these results to the case where $\Lambda$ is not necessarily Koszul and correspondingly $\Sigma$ is a general ($\Z^3$-graded) $A_\infty$ algebra. Also, for simplicity of notation, we will often restrict the grading along a cocharacter $\C^\times \to T$ and state the results using the classical language of graded algebras, suppressing the trigrading unless it is necessary.

\begin{warning}\label{LambdaKozwarn} For the remainder of this section, we will assume that $\Lambda$ is Koszul.
\end{warning}

\begin{warning}\label{gradingwarn} Throughout this paper, we will often restrict the trigrading along a cocharacter $\C^\times \to T$ for notational simplicity.
\end{warning}

Note that under this hypotheses, $\Sigma$ is concentrated in bi-degree $(k,-k)$ for $k\in \bb N$, and thus its cohomologically sheared avatar $\Sigma^\sh$ is concentrated in cohomological degree $0$, in keeping with Remark \ref{cohomshearrmk}. The resulting algebra $\Sigma^\sh$ is given by simply interpreting the cohomological degree of the above $\Ext$ algebra as an abstract graded degree, as in the classical setting of \cite{BGS}.

\begin{warning}\label{Sigmashwarning}  In keeping with Remark \ref{cohomshearrmk}, we will often express $\Sigma$ and its DG modules in terms of the cohomologically sheared grading, but sometimes omit the superscript $(\cdot)^\sh$ by abuse of notation.
\end{warning}

First, recall that the grading on $\Sigma$ endows the abelian category of finite dimensional graded $\Sigma$ modules $\Sigma\Mod_\fd$ with the structure of a mixed category, in the following sense:

\begin{defn} A \emph{mixed category} is an artinian category $\mc C$ together with an integer-valued function called the \emph{weight} $w:\textup{Irr}(\mc C)\to \Z$ on the set of equivalence classes $\textup{Irr}(\mc C)$ of simple objects of $\mc C$, such that for any two simple objects $M,N\in \mc C$, we have
	\[ w(M)\leq w(N)\quad\quad \textup{implies} \quad\quad \Ext^1_{\mc C}(M,N)=0\ . \]
\end{defn}
We recall that a category $\mc C$ is called \emph{artinian} if it is abelian and every object $M \in \mc C$ admits a finite filtration with simple subquotients.

\begin{defn}\label{LCPdefn} A complex $P^\bullet=\oplus_i P^i[-i]$ of objects in a mixed category $\mc C$ is \emph{linear} if for each $i$, the simple quotient of each indecomposable summand of $P^i$ is pure of weight $i$.
\end{defn}

\noindent Let $\text{LCP}(\mc C)$ denote the full subcategory of $D^b(\mc C)$ of linear complexes of projective objects. 
In general, $\text{LCP}(\mc C)$ is an abelian subcategory of $\DD(\mc C)$, with simple objects given by indecomposable projectives of $\mc C$ concentrated in a single cohomological degree $j$. Defining the weight of such a simple object to be $j$, the category $\text{LCP}(\mc C)$ is a mixed category, with Tate twist given by $[-1]\langle 1\rangle$.

For $\mc C=A\Mod_\fg$, this is equivalent to the condition that $Q^i=A Q^i_i$ is generated in degree $-i$. Consider the full subcategories $\DD^b_\fg(A)^{\leq 0, g}$ and $\DD^b_\fg(A)^{\geq 0,g}$ of $\DD(A)$ on objects isomorphic to a complex of graded projective modules $P^\bullet$ such that $P^i$ is generated in degree $\leq -i$ and $\geq -i$, respectively. We have:

\begin{theo}\label{Kosheartthem}\cite{BGS,MOS} The pair $\DD^b_\fg(A)^{\leq 0, g},\DD^b_\fg(A)^{\geq 0,g}$ define a t-structure on $\DD^b_\fg(A)$, with heart given by $\text{LCP}(A):=\text{LCP}( A\Mod_\fg)$. Moreover, the Koszul duality functor $\Hom_A(S,\cdot):\DD^b_\fd(A) \to {\textup{D}_\perf}(A^!)$ of Theorem \ref{Kozeqtheo} restricts to an equivalence of mixed categories $K: A\Mod_\fd\to \text{LCP}(A^!)$.
\end{theo}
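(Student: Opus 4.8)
The plan is to deduce both assertions from the Koszul duality equivalence $K=\Hom_A(S,\cdot):\DD^b_\fd(A)\xrightarrow{\cong}{\textup{D}_\perf}(A^!)$ of Theorem \ref{Kozeqtheo}, together with its inverse and the involutivity $(A^!)^!\cong A$ of Theorem \ref{Kozsqtheo}; the conceptual content is that the ``linear projective'' grading conditions single out precisely the image under $K$ of the standard heart. I would treat the \emph{moreover} clause first, as it carries the essential computation, and then recover the $t$-structure statement by exchanging the roles of $A$ and $A^!$.

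For the \emph{moreover} clause, given a finite dimensional graded $A$-module $M$ I would compute $K(M)$ by resolving $S$ rather than $M$: since $A$ is Koszul (Warning \ref{LambdaKozwarn}) it admits the minimal, linear projective resolution $\mc K^\bullet\to S$ refining the bar resolution of Proposition \ref{Kozpresprop}, with $\mc K^{-i}$ the projective generated purely in internal degree $-i$, so that $K(M)=\Hom_A(\mc K^\bullet,M)$ is represented by a complex of projective $A^!$-modules whose $i$-th term is, after the cohomological shearing of Remark \ref{cohomshearrmk}, generated in internal degree $-i$; that is, $K(M)\in\text{LCP}(A^!)$, and the simple quotient of its $i$-th term is pure of weight $i$, so $K$ is compatible with the mixed structures (the Tate twist $[-1]\langle 1\rangle$ on $\text{LCP}(A^!)$ matching $\langle 1\rangle$ on $A\Mod_\fd$, since $K$ intertwines $[1]$ with $[1]\langle 1\rangle$). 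Full faithfulness of the restriction is inherited from Theorem \ref{Kozeqtheo}, so it remains to show $K$ is essentially surjective onto $\text{LCP}(A^!)$: for $X\in\DD^b_\fd(A)$ with $K(X)\in\text{LCP}(A^!)$, the Postnikov tower of $X$ exhibits $K(X)$ as an iterated extension of the complexes $K(H^jX)[-j]$, each a linear complex of projectives but with a distinct ``linearity offset'' $-j$, and since no nonsplit iterated extension of such complexes is again linear, $X$ must be concentrated in a single cohomological degree, necessarily the one making $K(X)$ literally linear; thus $X\in A\Mod_\fd$ up to the zero shift.

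For the $t$-structure, I would apply the previous paragraph with $A$ replaced by the Koszul algebra $A^!$ — legitimate by Theorem \ref{Kozsqtheo} — so that $\Hom_{A^!}(S,\cdot):\DD^b_\fd(A^!)\xrightarrow{\cong}{\textup{D}_\perf}(A)$ identifies $\text{LCP}(A)$ with the image of the standard heart, and then construct the truncation functors on all of $\DD^b_\fg(A)$ directly. Representing $X\in\DD^b_\fg(A)$ by a (bounded above) minimal complex $P^\bullet$ of finitely generated graded projectives, minimality forces the differential to send the summands of $P^i$ generated in internal degree $e$ into summands of $P^{i+1}$ generated in degrees $<e$; hence for each $i$ the sub-projective $(P^{\leq 0})^i\subset P^i$ on the summands generated in degrees $\leq -i$ is a subcomplex, lying in $\DD^b_\fg(A)^{\leq 0,g}$, with quotient in $\DD^b_\fg(A)^{\geq 1,g}$, and the associated short exact sequence of complexes yields the truncation triangle; here Koszulity is what guarantees that the generator degrees of the $P^i$ grow with slope one in $i$, so that these truncations are again bounded with finite dimensional cohomology. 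The vanishing $\Hom(\DD^b_\fg(A)^{\leq 0,g},\DD^b_\fg(A)^{\geq 1,g})=0$ is immediate, a degree-preserving chain map between such minimal complexes of projectives being identically zero and $\Hom$ between complexes of projectives being computed up to homotopy; closure of the aisles under the relevant shifts is read off from the defining inequalities. By construction the heart is $\text{LCP}(A)$, hence abelian, and it agrees with the $K$-image of $A^!\Mod_\fd$.

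I expect the main obstacle to lie not in any isolated step but in the bookkeeping reconciling these two routes to $\text{LCP}(A)$ in the case of infinite global dimension (which occurs for $A=\Sigma$): transport along $K$ a priori yields a $t$-structure only on ${\textup{D}_\perf}(A)$, and one must verify that the explicit truncations above remain within $\DD^b_\fg(A)$, that the resulting heart is artinian, and that $\text{LCP}(A)$ is stable under the operations realizing it as the heart — exactly the points where Koszulity is used essentially and where I would follow \cite{BGS,MOS} closely. The remaining verifications — the value of the Tate twist, exactness and functoriality of $K$, and that the weight function on $\text{LCP}$ makes it mixed — are routine.
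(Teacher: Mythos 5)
Your ingredients are the right ones, and for the most part they coincide with the standard arguments of \cite{BGS,MOS} that the paper itself is citing (the paper gives no proof of this statement; its own closest substitute is the constructive route of Sections \ref{monadsec}--\ref{Ainfkozsec}, see Propositions \ref{monadunextprop} and \ref{Kozcaseprop}): the computation $K(M)\cong M\otimes_S A^!$ with twisted differential shows $K$ lands in linear complexes, and your minimal-complex truncation by generator degree, with the degree-reason $\Hom$-vanishing, is exactly how the $t$-structure is built. The genuine gap is your essential-surjectivity step. The assertion that an iterated extension of the shifted linear complexes $K(H^jX)[-j]$ with distinct ``offsets'' cannot be isomorphic to a linear complex is precisely the statement that $\textup{LCP}(A^!)$ is the heart of a $t$-structure (orthogonality of the aisles plus identification of the heart); it is asserted, not proved, and as phrased it also omits the split case (you must likewise exclude $K(H^0X)\oplus K(H^1X)[-1]$ being linear). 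Worse, your ordering is circular: the \emph{moreover} clause for $A$ leans on this heart property for $A^!$, while your $t$-structure argument for $A$ is set up to invoke the \emph{moreover} clause for $A^!$, whose surjectivity step would in turn need the heart property for $(A^!)^!\cong A$.

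Two standard repairs. (a) Prove the $t$-structure statement first, entirely by your third-paragraph argument, including the heart identification, which is direct: a minimal representative of an object lying in both aisles has generator degrees both $\leq -i$ and $\geq -i$ in cohomological degree $i$, hence is literally linear (using that any bounded-above complex of finitely generated graded projectives splits as minimal $\oplus$ contractible, so aisle membership is detected on the minimal model). Essential surjectivity then follows formally: $K$ sends shifted heart objects to shifted linear complexes, hence the standard aisles into the linear aisles (aisles are closed under extensions and the relevant shifts), so if $K(X)\in\textup{LCP}(A^!)$ the triangles $K(\tau_{\leq 0}X)\to K(X)\to K(\tau_{\geq 1}X)$ and its companion force $\tau_{\geq 1}X=\tau_{\leq -1}X=0$ by the orthogonality just established and conservativity of $K$. (b) Alternatively, prove surjectivity constructively, as in \cite{BGS,MOS} and as the paper's twisted-objects formalism does: a linear complex of projectives over $A^!$, read off through its multiplicity spaces and the degree-one components of its differential, is exactly the data of a finite-dimensional graded module over $(A^!)^!\cong A$, with $d^2=0$ equivalent to the defining relations; this gives an explicit inverse to $K$ on hearts. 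In either version, make precise what Koszulity buys in your truncation step — finiteness of regularity, so that the generator degrees of a minimal complex representing an object of $\DD^b_\fg(A)$ lie in a band of bounded width about slope one and both truncation pieces remain in $\DD^b_\fg(A)$ — since this is where boundedness genuinely fails without it when $A$ has infinite global dimension, and note that the Tate-twist bookkeeping should read $K(M\langle -1\rangle)\cong K(M)[-1]\langle 1\rangle$ rather than an intertwining of $[1]$ itself, which any triangulated functor respects on the nose.
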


In particular, note that the grading shift functor $\langle - 1 \rangle$ on $A$ modules corresponds to the simultaneous cohomological and grading shift functor $[-1]\langle 1\rangle$ on linear complexes of projectives.

We now proceed to apply these results in our present context, to describe the desired t-structure on $\DD^b_\fd(\Sigma)$ corresponding to the perverse coherent t-structure on $\DD^b\Coh(Y)^{\C^\times}$.  We define a complex of injective $\Sigma$ modules to be linear if its corresponding dual complex of projective objects is linear, in the sense of Definition \ref{LCPdefn} above, and let $\textup{LCI}(\Sigma):=\text{LCP}( \Sigma)^N$ denote the full subcategory on bounded linear complexes of injective objects.

In fact, in the situation at hand of the finite dimensional algebra $\Sigma$ over the base ring $S=\oplus_{i\in I}\bb K$, we have the following equivalent form of these definitions:

\begin{defn}\cite{MOS} Let $M\in \Sigma\modd_\fd$ be a finite dimensional $\Sigma$ module and $M=\oplus_{k} M_k$ a presentation of $M$ as a direct sum of indecomposable summands $M_k$. We define the category $\textup{LC}_\Sigma(M)$ as the full subcategory of ${\textup{D}_\perf}(\Sigma)$ on complexes $N=(N^\bullet,d)$ of $\Sigma $ modules such that for each $i\in \Z$, every indecomposable summand of $N^i$ is of the form $M_k\langle i \rangle$.
\end{defn}

\noindent In particular, letting $P=\Sigma=\oplus_{i\in I} \Sigma_i$ and $I=\Sigma^\vee = \oplus_{i\in I} \Sigma^\vee_i$, we have	
\[ \textup{LCP}(\Sigma) = \textup{LC}_{\Sigma}(P) \quad\quad \text{and}\quad\quad  \textup{LCI}(\Sigma) = \textup{LC}_\Sigma(I) \ . \]

We now state the main result of this subsection:

\begin{theo}\label{unextheartthm}
Restriction to the hearts of the triangulated categories in the diagram of Equation \ref{Kozgeoextendedeq} induces a commutative diagram of mixed categories, with horizontal arrows equivalences of mixed categories and vertical arrows inclusions of full mixed subcategories
\begin{equation}\label{Kozgeoextendedhearteq}
	\xymatrixcolsep{3pc}
	\xymatrixrowsep{.5pc}
	\xymatrix{ \Perv_\cs(Y)^{T} \ar@{}[d]|-*[@]{\subset} \ar@<.5ex>[r]^{ } & \ar@<.5ex>[l]  \Lambda\Mod_\fd \ar@{}[d]|-*[@]{\subset}  \ar@<.5ex>[r]^{ } & \ar@<.5ex>[l] \textup{LCP}(\Sigma) \\
		\Perv(Y)^{T}  \ar@<.5ex>[r]^{ } & \ar@<.5ex>[l]  \Lambda\Mod_\fg } \ .
\end{equation}
Similarly, restriction to hearts in the diagram of Equation \ref{Kozgeoextendedungreq} induces
\begin{equation}\label{Kozgeoextendedheartungreq}
	\xymatrixcolsep{3pc}
	\xymatrixrowsep{.5pc}
	\xymatrix{ \Perv_\cs(\hat Y) \ar@{}[d]|-*[@]{\subset} \ar@<.5ex>[r]^{ } & \ar@<.5ex>[l]  \hat \Lambda\Mod_\Fd \ar@{}[d]|-*[@]{\subset}  \ar@<.5ex>[r]^{ } & \ar@<.5ex>[l] \Filt(\Sigma) \\
		\Perv(\hat Y)  \ar@<.5ex>[r]^{ } & \ar@<.5ex>[l]  \hat \Lambda\Mod_\Fg } \ ,
\end{equation}
where $\Filt(\Sigma)$ is as in Definition \ref{Filtdefn}.
\end{theo}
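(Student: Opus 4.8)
The strategy is to assemble the diagram one square at a time, using the three equivalences of triangulated categories already established in Theorem \ref{catthmunext} (equivalently, the chain of equivalences in Equation \ref{Kozgeoeqiv}), and then transport t-structures across them to identify the hearts. First I would fix the perverse coherent t-structure on $\DD^b\Coh(Y)^T$ of Definition \ref{pervcohdef}, with heart $\Perv(Y)^T$; by Theorem \ref{PCohequiv} (Bridgeland--Van den Bergh), this corresponds under the equivalence $\Hom_Y(E,\cdot):\DD^b\Coh(Y)^T\xrightarrow{\cong}{\textup{D}_\perf}(\Lambda)$ to the standard t-structure with heart $\Lambda\Mod_\fg$. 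Restricting to the compactly supported subcategories $\DD^b\Coh_\cs(Y)^T\subset\DD^b\Coh(Y)^T$ and $\DD_\fd(\Lambda)\subset{\textup{D}_\perf}(\Lambda)$ (Proposition \ref{csequivprop}), the induced t-structure has heart $\Perv_\cs(Y)^T$ on the left and $\Lambda\Mod_\fd$ on the right — here I need the elementary fact that a finitely generated $\Lambda$-module lies in the compactly supported subcategory iff it is finite dimensional, which follows from the Jordan--Hölder argument already used in the proof of Proposition \ref{csequivprop}. This gives the left square in Equation \ref{Kozgeoextendedhearteq}.

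For the right square, the key input is Theorem \ref{Kosheartthem} (Beilinson--Ginzburg--Soergel, Mazorchuk--Ovsienko--Stroppel): the Koszul duality functor $\Hom_\Lambda(S,\cdot):\DD^b_\fd(\Lambda)\to{\textup{D}_\perf}(\Lambda^!)={\textup{D}_\perf}(\Sigma)$ carries the standard heart $\Lambda\Mod_\fd$ to the heart $\textup{LCP}(\Sigma)$ of linear complexes of projective $\Sigma$-modules, as an equivalence of mixed categories. (Here one uses $\Lambda^!\cong\Sigma$ from Proposition \ref{SigmaKozprop} and the Koszulity standing assumption of Warning \ref{LambdaKozwarn}, together with the cohomological shearing of Remark \ref{cohomshearrmk} so that $\Sigma^\sh$ is concentrated in degree $0$ and the classical BGS statement applies verbatim.) Composing, I obtain that $\Perv_\cs(Y)^T\cong\Lambda\Mod_\fd\cong\textup{LCP}(\Sigma)$, all as mixed categories, and the commutativity and mixed-category structure on the top row of Equation \ref{Kozgeoextendedhearteq} follow. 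The vertical inclusion on the right of the top row in Equation \ref{Kozgeoextendedhearteq} is simply the inclusion $\textup{LCP}(\Sigma)\hookrightarrow$ the heart of ${\textup{D}_\perf}(\Lambda)\cong\Perv(Y)^T$, i.e. the composite of the two already-identified equivalences with the inclusion $\Lambda\Mod_\fd\subset\Lambda\Mod_\fg$; I should check this is compatible with the Nakayama twist in the right vertical arrow of Equation \ref{Kozgeoextendedeq}, which is exactly the content of the lemma preceding the proof of Theorem \ref{catthmunext}, so no new work is needed.

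For the second diagram, Equation \ref{Kozgeoextendedheartungreq}, I would run the ungraded analogue: Theorem \ref{PCohequiv} again gives $\Perv(\hat Y)\cong\hat\Lambda\Mod_\Fg$ and $\Perv_\cs(\hat Y)\cong\hat\Lambda\Mod_\Fd$ via $\Hom_{\hat Y}(\hat E,\cdot)$ restricted using Proposition \ref{csequivprop}. The role of $\textup{LCP}(\Sigma)$ is now played by $\Filt(\Sigma)$: this is where Corollary \ref{twobjfiltcoro} and the twisted-objects formalism of Section \ref{twobjsec} enter, since in the ungraded (non-mixed) setting there is no weight grading to pin down a linear-complex heart, but the derived equivalence of Equation \ref{Kozequivcsungreqn} carries finite-dimensional $\hat\Lambda$-modules — each built by iterated extensions of the simples $S_i$ — to objects of $\DD_\Perf(\Sigma)$ filtered by the free modules $\Sigma_i$, i.e. to $\Filt(\Sigma)$ in the sense of Definition \ref{Filtdefn}. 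Concretely: every $M\in\hat\Lambda\Mod_\Fd$ has a Jordan--Hölder filtration with subquotients among the $S_i$, the functor is exact for the relevant t-structures, and $S_i\mapsto\Sigma_i$; conversely a $\Filt(\Sigma)$ object is sent back to such an $M$. The main obstacle I anticipate is precisely this last identification — verifying that the image of $\hat\Lambda\Mod_\Fd$ under the ungraded Koszul duality equivalence is exactly $\Filt(\Sigma)$, and that this is abelian and matches the heart transported from $\Perv_\cs(\hat Y)$ — since, unlike the graded case, there is no off-the-shelf ``linear complex'' t-structure and one must argue directly via the filtration/twisted-objects description (Corollary \ref{twobjfiltcoro}) that the two subcategories coincide and that the square commutes up to the Nakayama-type identification; everything else is bookkeeping with the already-proven Theorem \ref{catthmunext} and the cited results \cite{BGS,MOS}.
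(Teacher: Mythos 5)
Your proposal is correct and follows essentially the same route as the paper: the graded case via Bridgeland--Van den Bergh ($\Perv(Y)^T\cong\Lambda\Mod_\fg$, restricted to $\Perv_\cs(Y)^T\cong\Lambda\Mod_\fd$) composed with the BGS/MOS Koszul duality of Theorem \ref{Kosheartthem} under the Koszulity assumption of Warning \ref{LambdaKozwarn}, with the Nakayama twist handled by the lemma preceding Theorem \ref{catthmunext}. Your direct Jordan--Hölder/$\Filt$ argument for the ungraded case (equivalences preserve iterated-extension closures and send $S_i\mapsto\Sigma_i$, then invoke Corollary \ref{twobjfiltcoro}) is exactly the argument the paper uses, merely packaged there as a deferral to Theorem \ref{unextAinfheartthm}.
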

\begin{proof}

These equivalences intertwine the heart of the perverse coherent t-structure on $\DD^b\Coh(Y)$ with the usual t-structure on $\DD(\Lambda)$ by the results of \cite{Bdg1,VdB1} recalled in Section \ref{NCCRsec}. In the graded case, this is intertwined with (the image under the derived Nakayama functor of) the category of linear complexes of projectives, by the results of \cite{MOS, BGS} recalled in Theorem \ref{Kosheartthem} above. We recall some of the details of this equivalence, following \emph{loc. cit.}, in Section \ref{monadsec} below. We also remind the reader that this latter equivalence requires the additional assumption that $\Lambda$ is Koszul, in keeping with Warning \ref{LambdaKozwarn}. The extension of this theorem to the general case is given in Theorem \ref{unextAinfheartthm} below.

The ungraded case follows from the proof of the extension of this result to the general, not necessarily Koszul case (again, we recall this is given in Theorem \ref{unextAinfheartthm} below), together with Corollary \ref{twobjfiltcoro}.
\end{proof}

\subsection{Monad presentations from Koszul duality}\label{monadsec}

As explained in Section \ref{quiversec}, the description of $\Lambda$ as the path algebra of the quiver $Q_Y$ is equivalent to its description as the Koszul dual of the algebra $\Sigma$. Thus, towards understanding various moduli stacks of coherent sheaves on $Y$ in terms of stacks of representations of quivers, as outlined in the introductory section \ref{agintrosec} above, we now describe more explicitly the correspondence between $\Sigma$ modules, quiver representations, and coherent sheaves.

The compositions of the equivalences of Equations \ref{Kozgeoextendedhearteq} and \ref{Kozgeoextendedheartungreq} define functors
\begin{align}
\hat K(\cdot):= \Hom_\Sigma(S,\cdot)\otimes_{\Lambda} E  & : \DD_\Fd(\Sigma) \to \DD^b\Coh(\hat Y)  & \textup{and} \label{concreteungrKozeqn} \\
K(\cdot):= \Hom_\Sigma(S,\cdot)\otimes_{\Lambda} E  & : \DD_\fd(\Sigma) \to \DD^b\Coh(Y)^{T}  & . \label{concreteKozeqn}
\end{align}
The complex of coherent sheaves $K(M)$ corresponding to a $\Sigma$ module $M$ under this equivalence can be computed explicitly, as we now explain.

\begin{warning}
	For concreteness, we give most of the exposition in this section in the graded case, noting the ungraded variant can be recovered by completing and forgetting the grading. Further, for notation simplicity we will restrict the trigrading along a cocharacter $\C^\times \to T$, in keeping with Warning \ref{gradingwarn}.
\end{warning}

To begin, recall that using the Koszul resolution
\[\mc K^\bullet:=\otimes_S^\bullet( \bar\Sigma[1]) \otimes_S \Sigma\xrightarrow{\cong} S \ ,\]
we can compute the underlying bigraded vector space
\[ \Hom_\Sigma(S,M)  \cong  \Hom_\Sigma(\otimes_S^\bullet( \bar\Sigma[1]) \otimes_S \Sigma,M)  \cong \Hom_S( \otimes_S^\bullet( \bar\Sigma[1]),M) \cong M\otimes_S \Lambda \ , \]
and thus the underlying cohomologically graded equivariant coherent sheaf as
\begin{equation}\label{Kozshfeqn}
	K(M)=\Hom_\Sigma(S,M)  \otimes_\Lambda E \cong M\otimes_S \Lambda \otimes_\Lambda E \cong M \otimes_S E \ . 
\end{equation}

Moreover, the differential $d: M\otimes_S E \to M\otimes_S E[1]$ is given by
\begin{equation}\label{Kozdifleqn}
	 d(m\otimes e) = d_M m \otimes e + \sum_n \rho_n^\vee(m) \cdot e \quad\quad \textup{where} \quad\quad \rho_n^\vee: M \to M\otimes_S \bar\Sigma^\vee [-1]^{\otimes n} \subset M\otimes_S \Lambda 
\end{equation}
is the dual of the $A_\infty$ module structure map $\rho_n:\Sigma^{\otimes n}\otimes M \to M[1-n]$ and $\rho_n^\vee(m) \cdot e $ denotes the action of the second tensor factor $\Lambda=\Hom_{\DD^b\Coh(Y)}(E,E)$ on $E$.

For example, if $\Sigma$ is a strict, Koszul associative algebra and $\rho=\rho_1:\Sigma\otimes M \to M$ is the structure map for a strict $\Sigma$ module $M$, the differential is defined by
\[ d(m\otimes e) = d_M m \otimes e + \sum_\alpha v_\alpha m \otimes v_\alpha^\vee e \]
where $\{v_\alpha\}$ are a basis for the degree 1 component $\Sigma^1$ of $\Sigma$ and $v_\alpha^\vee$ denote the dual basis for $(\Sigma^1)^\vee=\Lambda^1$, which give generators for the quadratic dual algebra $\Lambda$.

\begin{rmk}
	
	In terms of the cohomological sheared grading of Remark \ref{cohomshearrmk}, the image of the module $M\in \DD(\Sigma^\sh)$ under the Koszul duality equivalence is given by
	\[ \Hom_\Sigma(S,M) \cong M\otimes_S \Lambda =  \bigoplus_{k,j\in \Z} M^k_j\otimes_S \Lambda_l [-k-j] \langle - l +  j\rangle \]
	as in the proof of Theorem 2.12.1 in \cite{BGS} (where this object is denoted $F(M)$), so that in particular for a plain graded $\Sigma$ module $M \in \Sigma^\sh\Mod_\fg$ given by
	\[ M = \bigoplus_{j\in \Z} M_j \langle -j\rangle \quad\quad\text{we obtain}\quad\quad K(M) = \bigoplus_{j\in \Z} M_j \otimes_S E [ - j] \]
	as the underlying cohomologically graded coherent sheaf.
\end{rmk}

\begin{warning}\label{cohomshearusewarning} We will use the cohomologically sheared grading on $\Sigma$ modules implicitly throughout the remainder of this section, omitting the superscript $(\cdot)^\sh$ by abuse of notation, in keeping with Warning \ref{gradingnotationwarning}. Similarly, the use of indices in subscripts and superscripts describing direct summands of modules will also be as in \emph{loc. cit.}, which in particular disagrees with their use in the preceding remark.
\end{warning}

It is clear that the simple modules $S_i$ over $\Sigma$ correspond to the projective $\Lambda$ modules $P_i$ and in turn the summands $E_i$ of the tilting object, as necessary:
\[ K(S_i)=\Hom_\Sigma(S,S_i) \otimes_\Lambda E \cong S_i \otimes_S \Lambda \otimes_\Lambda E \cong P_i \otimes_\Lambda E \cong E_i  \ . \]
More generally, any finite dimensional $\Sigma$ module $M$ admits a semi-simple composition series, that is, a filtration with semi-simple subquotients:
\[ 0=M_{m+1}\subset M_m\subset \cdots \subset M_1 \subset M_0=M \quad\quad \text{such that} \quad\quad  M_k/M_{k+1}= \bigoplus_{i_k\in I_k} S_{i_k}\langle -k \rangle \]
for some finite sets $I_k$ of elements $i_k \in I$ including repetitions, where we have assumed for notational simplicity $M$ is non-negatively graded; we summarize this situation by writing
\[ M = \left[ \bigoplus_{i_m\in I_m} S_{i_m}\langle - m \rangle  < \cdots< \bigoplus_{i_1\in I_1} S_{i_1}\langle - 1\rangle < \bigoplus_{i_0\in I_0} S_{i_0} \right] \ .\]
Following \cite{BGS}, the image of such a module $M$ under the Koszul duality equivalence is given by the corresponding complex of projective objects
\begin{equation}\label{Kosreseqn}
	 K(M)\cong  \left[ \bigoplus_{i_0\in I_0} E_{i_0} \to \bigoplus_{i_1\in I_1} E_{i_1}[-1] \to \cdots \to \bigoplus_{i_m\in I_{m}} E_{i_m}[-m] \right] \ .
\end{equation}
where each summand $E_{i_k}\to E_{i_{k+1}}$ of the differential is determined by the class of the extension defining $M$ in $\Ext^1_\Sigma(S_{i_k}\langle  -k \rangle, S_{i_{k+1}}\langle -k-1\rangle)$; the identification of $\Lambda$ with the Koszul dual algebra of $\Sigma$ gives an isomorphism
\[ \Hom_{\Sigma}(S,S) \cong\Lambda= \Hom_{\DD^b\Coh(Y)} (E,E) \]
under which the above extension class determines a map $E_{i_k} \to E_{i_{k+1}}$.

These maps can be understood concretely as follows: the failure of the extension to split is detected by an element of $\Sigma$ sending $S_{i_k}$ to $S_{i_{k+1}}\langle -1\rangle$, or equivalently a non-trivial restriction of the module structure map $\Sigma\otimes S_{i_k}\to S_{i_{k+1}}\langle -1\rangle $. This determines the desired differential by inducing the dual map $S_{i_k} \to \Lambda^1\otimes S_{i_{k+1}}$ (after inverting the cohomological shearing of Remark \ref{cohomshearrmk}) along the inclusion $S\to \Lambda$ to give
\[ P_{i_k} \to \Lambda^1 \otimes P_{i_{k+1}}\xrightarrow{m} P_{i_{k+1}} \quad\quad \text{or equivalently}\quad\quad  E_{i_k} \to E_{i_{k+1}} \]
under the equivalence ${\textup{D}_\perf}(\Lambda)\cong\DD^b\Coh(Y)^{\C^\times}$, where $m: \Lambda^1\otimes P_{i_{k+1}} \to P_{i_{k+1}}$ is the $\Lambda$ module structure map. Similarly, the higher arity structure maps of the $A_\infty$ module determine terms in the differential given by the action of higher degree components $\Lambda^n$ of $\Lambda$.

In particular, the composition series of the injective $\Sigma$ modules $I_i=\Sigma^\vee_i$, given by
\begin{equation}\label{injcompeqn}
	I_i = [ I_i^0 < I_i^{-1}\langle 1 \rangle < \cdots < I_i^{-m_i}\langle m_i \rangle ]   
\end{equation}
in the cohomologically sheared notation (see Warning \ref{cohomshearusewarning}), determine canonical Koszul-type resolutions of the simple objects $F_i$ in terms of the distinguished projective objects $E_{i_j}$:
\begin{equation}  \label{simplereseqn}
	 K(I_i)  = \left[   I_i^{-m_i} \otimes_S E[m_i] \to \cdots \to I_i^{-1}\otimes_S E [1]  \to E_i \right] \xrightarrow{\cong} F_i   \ . 
\end{equation}

More generally, we obtain a canonical Koszul-type projective resolution of an arbitrary compactly supported, $\C^\times$-equivariant, perverse coherent sheaf $H\in \Perv_\cs(Y)^{\C^\times}$ in terms of the distinguished projective objects $E_i\in\Coh(Y)^{\C^\times}$. Again, we begin with the simplifying assumption that $\Lambda$ is Koszul, and correspondingly $\Sigma$ has no non-trivial higher $\Ainf$ multiplication maps.

\begin{warning} Throughout the remainder of this section we assume that $\Lambda$ is Koszul, as in Warning \ref{LambdaKozwarn}. The main result of this section, Proposition \ref{monadunextprop} below, is proved in the general case in Proposition \ref{monadunextAinfprop}.
\end{warning}

Under this assumption, the desired resolution of $H\in \Perv_\cs(Y)^{\C^\times}$ is simply defined by the image of the corresponding linear complex of injectives $L \in \textup{LCP}(\Sigma)$ under the explicit model of the Koszul duality equivalence $K:\DD^b_\fd(\Sigma) \xrightarrow{\cong} \DD^b\Coh(Y)^{\C^\times}$ explained above.

Note that each term of the differential in a linear complex of injectives decomposes as a direct sum of maps
$I_i \to I_j\langle 1 \rangle $ between the distinguished injectives, of abstract graded degree $1$ by the linearity assumption. By Theorem \ref{catthmunext}, we have identifications
\begin{equation}\label{extideqn} 
	\ _i \Sigma^k_j \cong \Hom^0_{\Sigma}(I_i,I_j\langle k \rangle) \cong \Hom^0_{\DD^b\Coh(Y)}(K(I_i), K(I_j)[k]) 
\end{equation}
between the space $ _i\Sigma^k_j$ dual to the degree $1-k$ edges of the DG quiver $Q_Y$ corresponding to $Y$, the space of graded degree $k$ maps between the injective $\Sigma$ modules $I_i$, and the space of cohomological degree $k$ maps between the canonical Koszul resolutions of the simple coherent sheaves $F_i$ from Equation \ref{simplereseqn}. Concretely, an element $\delta_{ij} \in\   _i\Sigma_{j}^k$ determines by multiplication a map $d_{\delta_{ij}} :\Sigma_i\to \Sigma_j\langle k \rangle$ between the distinguished projective $\Sigma$ modules, and in turn a dual map
\[ d_{\delta_{ij}}^N :I_i \to I_j\langle k \rangle \quad\quad \text{or equivalently} \quad\quad (d_{\delta_{ij}}^N)^{l}: I_i^{l} \to I_j^{l+k}  \quad\quad \text{for $l\in \bb Z$}\]
in graded components. Finally, this determines a map $K( d_{\delta_{ij}}^N):K(I_i)\to K(I_j)[k]\langle -k\rangle $ of the corresponding complexes of projectives of Equation \ref{simplereseqn}, defined explicitly as
\[\xymatrix{\cdots&  0  &   I_i^{-m_i} \otimes_S E \ar[r]\ar[d]^{(d_{\delta_{ij}}^N)^{-n}\otimes \id_E} &  \cdots \ar[r] \ar@{}[d]^{\cdots}&  I_i^{-k} \otimes_S E \ar[d]^{(d_{\delta_{ij}}^N)^{-k}\otimes \id_E} \ar[r]& \cdots \ar[r] & E_i \\
	 I_j^{-m_j} \otimes_S E \ar[r] & \cdots \ar[r] &  I_j^{-(m_i-k)}\otimes_S E \ar[r] &  \cdots \ar[r] & E_j   & 0 & \cdots  } \ . \]

Thus, we can explicitly describe the canonical Koszul resolution of an arbitrary extension
$$ 0 \to F_j \to H \to F_i \to 0 $$
of two simple, compactly supported perverse coherent sheaves $F_i$: it is the image of the linear complex of injectives $I_i \xrightarrow{ d_{\delta_{ij}}^N} I_j\langle 1\rangle $ under the explicit description of the Koszul duality functor above, which gives the totalization of the double complex
\[\xymatrix{ \cdots & 0 &   I_i^{-m_i} \otimes_S E \ar[r]\ar[d] &  \cdots \ar[r] \ar@{}[d]^\hdots&  I_i^{-2}\otimes_S E \ar[r] \ar[d]&  I_i^{-1}\otimes_S E \ar[d] \ar[r]  & E_i \\
 I_j^{-m_j}\otimes_S E  \ar[r] & \cdots \ar[r] &   I_j^{-(m_i-1)} \otimes_S E \ar[r] &\cdots \ar[r] &  I_j^{-1}\otimes_S E \ar[r] &  E_j   & 0 &    } \ .\]
More generally, for an arbitrary iterated extension of $d$ simple objects
\[H = \left[ F_{i_d}\langle -d\rangle < \cdots < F_{i_1}\langle -1\rangle  < F_{i_0} \right]  \quad \in \Perv_\cs(Y)^{\C^\times} \ , \]
the image of the corresponding linear complex of injectives
\[ L = \left[I_{i_o} \to I_{i_1}[-1]\langle 1 \rangle \to \cdots \to  I_{i_{d-1}} [-(d-1)]\langle {d-1}\rangle \to  I_{i_d} [-d]\langle d\rangle   \right]  \quad \in \textup{LCI}(\Sigma)\]
under the Koszul duality equivalence is given by the totalization of the double complex
\[\xymatrix{ \cdots & 0 &   I_{i_0}^{-m_{i_0}} \otimes_S E \ar[r]\ar[d] &  \cdots \ar[r] \ar@{}[d]^\hdots&  I_{i_0}^{-2}\otimes_S E \ar[r] \ar[d]&  I_{i_0}^{-1}\otimes_S E \ar[d] \ar[r]  & E_{i_0} \\
	I_{i_{1}}^{-m_{i_{1}}}\otimes_S E  \ar[r]\ar[d] & \cdots \ar[d]\ar[r] &   I_{i_{1}}^{-(m_{i_0}-1)} \otimes_S E \ar[r]\ar[d] &\cdots \ar[r]\ar[d] &  I_{i_{1}}^{-1}\otimes_S E \ar[r]\ar[d] &  E_{i_{1}}  & 0 \\
\vdots\ar[d] & \vdots \ar[d]&\vdots \ar[d]&\vdots \ar[d]&\vdots &0  & \vdots \\
  \cdots \ar[r]\ar[d] &  I_{i_{d-1}}^{-2}\otimes_S E \ar[r]\ar[d] &  I_{i_{d-1}}^{-1}\otimes_S E  \ar[r]\ar[d]  & E_{i_{d-1}} & 0 &\vdots &  \\
  \cdots \ar[r] &  I_{i_d}^{-1}\otimes_S E  \ar[r]  & E_{i_d} & 0 & \vdots &  & 
} \ .\]

The collection of all such iterated extensions $H$ of the simple objects $F_i\in \Perv_\cs(Y)^{\C^\times}$ is parameterized by the space of finite dimensional graded $\Lambda$ modules, or equivalently representations of the graded DG quiver $Q_Y$ with path algebra $\Lambda=\K Q_Y$, as we have seen in Theorem \ref{Kozgeoextendedhearteq}. The canonical resolutions constructed in the preceding discussion can thus also be understood in terms of quiver representations, which we now explain:

The K-theory class of an object $H\in \Perv_\cs(Y)^{\C^\times}$ is determined by the multiplicities $d_i\in \bb N$ of the simple factors $F_i$ (and their graded shifts), and we can collect multiplicities in the object $K(L)$ with respect to the variable $i \in I$ to express the complex as
\begin{equation}\label{colmulteqn}
	 K(L) = \left ( \bigoplus_{i\in I} K(I_i)\otimes V_i  \ , \  K(d_\delta^N) \right) \quad\quad \text{with}\quad\quad K(d_\delta^N) := \sum_{k=1}^{d} K(d_{\delta_{i_{k-1} i_k}}^N) \ ,  
\end{equation}
where $V_i$ is a (graded) vector space with $\dim V_i = d_i$ for each $i\in I$ and $\sum_{i\in I} d_i=d+1$.

These multiplicities correspond to those of the simple $\Lambda$ module factors $S_i$ occurring in the corresponding quiver representation, so that the quiver representation corresponding to $H$ is determined by a $\dd=(d_i)_{i\in I}$ dimensional graded $S$ module
\begin{equation}\label{monadkoszuleqn}
	V = \bigoplus_{i\in V_{Q_Y}} V_i \quad\quad \textup{together with} \quad\quad \rho:\K\langle E_{Q_Y}\rangle^0=(\Sigma^1)^\vee \to \Endi(V)
\end{equation}
a map of graded $S$ bimodules from the space of cohomological degree $0$ edges of the quiver to the endomorphism algebra of $V$, such that the induced map from the cohomological degree zero component of the quasi-free resolution of the path algebra $\Lambda^0=\otimes_S^\bullet \K\langle E_Q\rangle^0$ to $\Endi_{S\Bimod}(V)$ takes the ideal of relations in the graded quiver $Q_Y$ (or equivalently the image of the differential in the quasi-free resolution) to zero in $\Endi_{S\Bimod}(V)$.

The map $\rho$ of Equation \ref{monadkoszuleqn} can equivalently be interpreted as an element
\begin{equation}\label{deltaeqn}
	\delta = \sum_{i,j\in I} b_{ij} \otimes B_{ij} \quad  \in \quad \Sigma^1 \otimes_S\Endi_{S\Bimod}(V)= \bigoplus_{i,j\in I} \Ext^1(F_i,F_j)\otimes \Hom(V_i,V_j)  \ .
\end{equation}

\begin{warning}\label{puretenswarning} We use the abuse of notation $b_{ij}\otimes B_{ij} \in \Ext^1(F_i,F_j)\otimes \Hom(V_i,V_j)$ to denote a not necessarily pure tensor
	\[ b_{ij}\otimes B_{ij} := \sum_{\alpha} b_{ij}^\alpha \otimes B_{ij}^\alpha \quad\quad\textup{with}\quad\quad  b_{ij}^\alpha \in \Ext^1(F_i,F_j), \ B_{ij}^\alpha\in \Hom(V_i,V_j)  \]
	where the sum is over $\alpha\in \mc B_{ij}$ the parameterizing set for a basis $\{b_{ij}^\alpha\}$ for $\Ext^1(F_i,F_j)$. We will often use the shorthand $B_{ij}=(B_{ij}^\alpha)$ and $b_{ij}=(b_{ij}^\alpha)$ to denote these collections of data.
\end{warning}

Further, the element $\delta$ determines a cohomological degree +1 map $d_B:\tilde H \to \tilde H[1]$ of the equivariant perverse coherent complex
\begin{equation}\label{monadvseqn}
	 \tilde H:= \bigoplus_{i\in I} K(I_i)\otimes V_i  \ \in \Perv_\cs(Y)^{\bb C^\times}  \quad\quad \text{ or }\quad\quad  \tilde H:=\bigoplus_{i\in I} \hat K(I_i)\otimes V_i  \ \in \Perv_\cs(\hat Y)   
\end{equation}
in the ungraded case, where $K(I_i)$ is the complex of Equation \ref{simplereseqn}, via the isomorphism
\[ \Ext^1(F_i,F_j)\cong \Hom( K(I_i), K(I_j)[1]) \]
of Equation \ref{extideqn}. Moreover, the resulting differential is precisely $K(d_{\delta}^N)$ of Equation \ref{colmulteqn} above, so that we have the identification
\begin{equation}\label{monadkozdifeqn}
	 K(d_\delta^N)=d_B:=\sum_{i,j \in I} K(b_{ij}) \otimes B_{ij} \quad\quad\text{with}\quad\quad   K(b_{ij}) \otimes B_{ij}: K(I_i)\otimes V_i \to  K(I_j)\otimes V_j[1] \ .
\end{equation}

In fact, the relations required for the map $\rho$ of Equation \ref{monadkoszuleqn} to define a quiver representation are equivalent to the condition that $ d_B$ defines a differential deforming the complex $\tilde H$ of Equation \ref{monadvseqn} to a projective resolution of the object $H$:

\begin{prop}\label{monadunextprop} Let $\tilde H \in \Perv_\cs(Y)^{T}$ be as in Equation \ref{monadvseqn} and fix
\[	B:=(B_{ij})_{i,j\in I}\quad\quad\text{with}\quad\quad B_{ij}=(B_{ij}^\alpha \in \Hom(V_i,V_j))_{ \alpha \in \mc B_{ij}} \ . \]
	 The following are equivalent: 
\begin{itemize}
	\item the induced map $d_B:\tilde H \to \tilde H[1]$ of Equation \ref{monadkozdifeqn} satisfies $d_B^2=0$, and
	\item the induced map $\rho:(\Sigma^1)^\vee\to \Endi_{S\Bimod}(V)$ of Equation \ref{monadkoszuleqn} defines a representation of the graded DG quiver $Q_Y$.
\end{itemize}
Further, when these conditions hold, the resulting complex $(\tilde H,d_B)$ is a projective resolution of the object $H \in \Perv_\cs(Y)^{T}$ corresponding to the quiver representation $V\in \Lambda\Mod_\fd$.

\smallskip

Similarly, for $\tilde H \in \Perv_\cs(\hat Y)$ as in Equation \ref{monadvseqn}, the following are equivalent:
\begin{itemize}
	\item the induced map $d_B:\tilde H \to \tilde H[1]$ of Equation \ref{monadkozdifeqn} satisfies $d_B^2=0$, and
	\item the induced map $\rho:(\Sigma^1)^\vee\to \Endi_{S\Bimod}(V)$ of Equation \ref{monadkoszuleqn} defines a nilpotent representation of the DG quiver $Q_Y$.
\end{itemize}
Further, when these conditions hold, the resulting complex $(\tilde H,d_B)$ is a projective resolution of the object $H \in \Perv_\cs(\hat Y)$ corresponding to the quiver representation $V\in \Lambda\Mod_\Fd$.
\end{prop}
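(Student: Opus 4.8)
# Proof Proposal

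The statement is an "equivalence of conditions plus a consequence" that I would prove by carefully unwinding the explicit dictionary set up in the preceding pages, namely the Koszul duality equivalence $K(\cdot)$ of Equation \ref{concreteKozeqn} and its realization on iterated extensions via the twisted objects construction of Section \ref{twobjsec}. The plan is to reduce everything to a clean statement about Maurer-Cartan elements.

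\smallskip

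\textbf{Step 1: Reinterpret $d_B^2 = 0$ as a Maurer-Cartan equation.} First I would observe that by Equation \ref{extideqn}, the element $\delta = \sum_{i,j} b_{ij}\otimes B_{ij} \in \Sigma^1\otimes_S \Endi_{S\Bimod}(V)$ of Equation \ref{deltaeqn} is precisely a degree $+1$ element of $\mf{gl}(\bb Z\mc A_F)$ where $\mc A_F$ is the $\Ainf$ category associated to $\Sigma = \Ext^\bullet(F,F)$ as in Example \ref{AinfcatSeg}, with the multiplicity spaces $V_i$ recording the number of copies of each object $F_i$. The deformed differential $d_B = d + K(b_{ij})\otimes B_{ij}$ on $\tilde H = \bigoplus_i K(I_i)\otimes V_i$ squares to zero \emph{if and only if} $\delta$ satisfies the Maurer–Cartan equation \eqref{MCeqn} of Definition \ref{twobjdefn}; this is exactly because $K(I_i) \cong F_i$ is the image of the rank-one-type module $I_i$ under the Koszul duality functor, and the functor $Y_2$ of Equation \ref{twobjrealfuneqn} sends a twisted object $(F_{i_1},\ldots,F_{i_d},\delta)$ to precisely the complex $(\tilde H, d_B)$, with the MC equation controlling $d_B^2 = 0$ by Theorem \ref{twobjthm}.

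\smallskip

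\textbf{Step 2: Reinterpret the quiver condition as the same Maurer-Cartan equation.} On the other side, by the construction of the DG quiver $Q_Y$ in Section \ref{quiversec} — where $\bb K\langle E_{Q_Y}\rangle = \bar\Sigma^\vee[-1]$ so that $\Lambda = \bb K Q_Y$ — a representation of the DG quiver of dimension vector $\dd = (d_i)$ is a map $\rho$ as in Equation \ref{monadkoszuleqn} that kills the relations, i.e.\ the image of the differential $d = \{W_Y,\cdot\}$ in the quasi-free resolution. Dualizing, this is again the condition that the dual element $\delta$ satisfies $\sum_t m_t^{\mf{gl}(\bb Z\mc A_F)}(\delta^{\otimes t}) = 0$, since the potential $W_Y$ is built (Example \ref{geoquiveg2}) out of the $\Ainf$ structure maps $m_n$ of $\Sigma$ via the Serre duality pairing. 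So Steps 1 and 2 identify both conditions with literally the same equation, proving the equivalence in the graded case. For the consequence: when $\delta$ is Maurer–Cartan, Corollary \ref{twobjfiltcoro} (and its graded analogue Corollary \ref{twobjgrfiltcoro}) together with the identification of hearts in Theorem \ref{unextheartthm} show $(\tilde H, d_B) = Y_2(F_{i_1},\ldots,F_{i_d},\delta)$ is quasi-isomorphic to an object $H \in \Perv_\cs(Y)^T$, and it is a complex of projective objects $E_i$ (each $K(I_i)$ is a complex of the $E_j$'s by Equation \ref{simplereseqn}); the identification of $H$ with the sheaf corresponding to $V \in \Lambda\Mod_\fd$ under \eqref{Kozgeoeqiv} is exactly the commutativity of the diagram in Theorem \ref{catthmunext}.

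\smallskip

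\textbf{Step 3: The formal completion / nilpotency case.} The only genuinely new content for $\hat Y$ is matching "$d_B^2 = 0$ with the $V_i$ finite dimensional" against "nilpotent representation of $Q_Y$", rather than against an arbitrary finite-dimensional $\Lambda$-module. The point is that $\hat\Lambda = \Hom_{\DD^b\Coh(\hat Y)}(\hat E,\hat E)$ is a \emph{complete} algebra (Proposition \ref{SigmaKozprop}: its bar presentation is the completed tensor algebra), so a finite-dimensional $\hat\Lambda$-module is automatically annihilated by $\hat\Lambda_{(n)}$ for $n\gg 0$, i.e.\ the corresponding quiver representation is nilpotent; conversely a nilpotent representation of $Q_Y$ extends to a $\hat\Lambda$-module. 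Combined with Proposition \ref{csequivprop}, which identifies $\DD^b\Coh_\cs(\hat Y)$ with $\DD_\Fd(\hat\Lambda)$, and Corollary \ref{twobjfiltcoro}, the argument of Steps 1–2 carries over verbatim with "finite-dimensional" replacing "graded finite-dimensional".

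\smallskip

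\textbf{Main obstacle.} The delicate bookkeeping is in Step 1: verifying that the twisted-objects realization functor $Y_2$, when composed with the explicit Koszul-dual model $K(M) = M\otimes_S E$ with differential \eqref{Kozdifleqn}, produces \emph{exactly} the complex $(\tilde H, d_B)$ with $d_B$ as in \eqref{monadkozdifeqn} — including getting the signs and the higher-arity terms $K(\rho_k^{\Sigma}(\cdot,b_{i i_2},\ldots)^N)\otimes (B_{i i_2}\cdots)$ right, since in the non-Koszul case $\Sigma$ has higher $m_n$ and the differential is genuinely nonlinear in $B$. In the current statement we are under Warning \ref{LambdaKozwarn} ($\Lambda$ Koszul), so only $m_2$ contributes and $d_B$ is linear in $B$; I would prove the clean Koszul case here by a direct comparison of the two double complexes displayed above with the totalization coming from $Y_2$, and defer the general nonlinear version to Proposition \ref{monadunextAinfprop} as the paper indicates.
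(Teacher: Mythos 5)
Your proposal is correct and follows essentially the same route as the paper: the paper's own proof of this proposition simply notes it is the Koszul special case of Proposition \ref{monadunextAinfprop}, whose proof is exactly your Steps 1--2 --- both $d_B^2=0$ and the quiver-representation condition are identified with the Maurer--Cartan equation on $\delta$, the latter by dualizing the Koszul differential (equivalently the potential built from the $\Ainf$ structure maps) of the quasi-free presentation of $\Lambda$ via Hom-tensor adjunction. Your Step 3 on nilpotency over $\hat\Lambda$ and your handling of the ``projective resolution'' clause via Corollaries \ref{twobjfiltcoro}/\ref{twobjgrfiltcoro} and Theorems \ref{catthmunext}, \ref{unextheartthm} are consistent with how the paper treats these points through its earlier categorical equivalences.
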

\begin{proof} This result is a special case of the more general result proved in Proposition \ref{monadunextAinfprop} without the assumption that $\Lambda$ is Koszul, in keeping with Proposition \ref{Kozcaseprop}.
\end{proof}

\subsection{Generalization to the $A_\infty$ case via twisted objects}\label{Ainfkozsec}

In this section, we use the twisted objects construction recalled in Section \ref{twobjsec} to generalize the results of Theorem \ref{unextheartthm}, which describes the induced equivalences between the hearts of the triangulated categories of Theorem \ref{catthmunext}, to the case where $\Lambda$ is not Koszul and in turn $\Sigma$ is a general $A_\infty$ algebra with non-trivial higher multiplication maps. Similarly, we generalize the monad presentation given in Proposition \ref{monadunextprop} to the case where $\Lambda$ is not Koszul.

Let $\mc D = \DD^b\Coh(Y)^{\bb C^\times}$, $F=\oplus_{i\in I} F_i$ and consider the graded variant of the $\Ainf$ category $\mc A_F$ defined in Example \ref{AinfcatSeg}, with objects $i\in I$ and $\Hom$ spaces given by
\[ \mc A_F(i,j) = \Ext^\bullet(F_i,F_j) =\ _i\Sigma_j \ .  \]

In this setting, an object of the category $\tw \mc A_F$ is given by a finite collection of objects \[(i_1,n_1,p_1),...,(i_d,n_d,p_d)\in \bb Z \mc A_F \ , \]
together with a degree zero element
\[ \delta = (\ \delta_{kl} \in  \ _{i_k}\Sigma_{i_l} [n_l-n_k+1]\langle p_l-p_k\rangle \ )_{k,l=1}^d  \ \in \ \mf{gl}_d(\bb Z \Sigma)[1] \]
such that $\delta_{kl}=0 $ for $k\leq l$ and moreover $\delta$ satisfies the Maurer-Cartan equation
\[ \sum_{t\in \bb N}  m_t^{\mf{gl}_d(\Z\Sigma)}(\delta^{\otimes t}) = 0 \ . \]

By Corollary \ref{grtwobjequiv}, we obtain that the functor $Y_2:\tw \mc A_F \to \CC_\infty(\Sigma)$ induces an equivalence of triangulated categories
\begin{equation}\label{grtwobjequivgreqn}
	H^0(Y_2): H^0(\tw \mc A_F) \to \triang(\Sigma_i)  \ ,
\end{equation}
where we recall that $\triang(\Sigma_i)$ denotes the minimal triangulated subcategory of $\DD_\Z(\Sigma)$ containing the objects $\Sigma_i\langle k \rangle$ for $i\in I$ and $k\in \bb Z$.

Similarly, we recall that $\tw^0\mc A_F$ is the full subcategory on objects $(i_1,...,i_r,\delta)$ such that each of the associated integers $n_k=0$ for $k=1,...,d$, so that their image under the functor $Y_2$ has underlying graded vector space
\[ \Sigma_{i_1,...,i_d}=\bigoplus_{k=1,...,d} \Sigma_{i_k}\langle p_k \rangle\ , \quad\quad \text{or equivalently} \quad\quad   \Sigma_{i_1,...,i_d}^\sh=\bigoplus_{k=1,...,d} \Sigma_{i_k}^\sh[p_k]\langle -p_k \rangle , \]
in the cohomologically sheared conventions.

We can also define the corresponding ungraded $\Ainf$ category $\mc A_F$, as before, and have the corresponding $\Ainf$ category $\Tw \mc A_F$ with objects given by a finite collection of objects \[(i_1,n_1),...,(i_r,n_d)\in \bb Z \mc A_F \ , \]
together with a degree zero element
\[ \delta = (\ \delta_{kl} \in  \ _{i_k}\Sigma_{i_l} [n_l-n_k+1] \ )_{k,l=1}^d  \ \in \ \mf{gl}_d(\bb Z \Sigma)[1] \]
satisfying the analogous conditions. Similarly, we have the full category $\Tw^0 \mc A_F$ on objects $(i_1,...,i_d,\delta)$ such that each of the associated integers $n_k=0$ for $k=1,...,d$, so that their image under the functor $Y_2$ has underlying vector space
\[ \Sigma_{i_1,...,i_d}=\bigoplus_{k=1,...,d} \Sigma_{i_k} \ . \]

We now establish the generalization of Theorem \ref{unextheartthm} to the case where $\Lambda$ is not necessarily Koszul:

\begin{theo}\label{unextAinfheartthm} Restriction to the hearts of the triangulated categories in the diagram of Equation \ref{Kozgeoextendedeq} induces mutually inverse equivalences of mixed categories
	\[	\xymatrixcolsep{3pc}
	\xymatrixrowsep{.5pc}
	\xymatrix{ \Perv_\cs(Y)^{T}  \ar@<.5ex>[r]^{} & \ar@<.5ex>[l]  \Lambda\Mod_\fd  \ar@<.5ex>[r]^{} & \ar@<.5ex>[l] H^0(\tw^0 \mc A_F)} \ .  \]
	
	Similarly, restriction to hearts in the diagram of Equation \ref{Kozgeoextendedungreq} induces
	\[	\xymatrixcolsep{3pc}
	\xymatrixrowsep{.5pc}
	\xymatrix{ \Perv_\cs(\hat Y) \ar@<.5ex>[r]^{} & \ar@<.5ex>[l]  \hat \Lambda\Mod_\Fd  \ar@<.5ex>[r]^{} & \ar@<.5ex>[l] H^0(\Tw^0 \mc A_F)} \ .  \]
\end{theo}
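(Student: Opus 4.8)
The plan is to obtain Theorem \ref{unextAinfheartthm} by combining the triangulated equivalences already established in Theorem \ref{catthmunext} with the twisted objects formalism of Section \ref{twobjsec}, and then tracking which full subcategories correspond to which hearts. The starting point is that the diagram of Equation \ref{Kozgeoextendedeq} identifies $\DD^b\Coh_\cs(Y)^T$ with $\DD_\fd(\Lambda)$ and, via Koszul duality, with ${\textup{D}_\perf}(\Sigma)$ (after applying the Nakayama functor). On the other hand, by Corollary \ref{cctmoritacoro} and Corollary \ref{grtwobjequiv}, the category $H^0(\tw \mc A_F)$ is equivalent to $\triang(\Sigma_i) \subset \DD_\ZZ(\Sigma)$, which under the Morita equivalence of Section \ref{tiltingsec} is identified with $\Triang(F_i) = \DD^b\Coh_\cs(Y)^T$. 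So all three categories sit inside the same ambient triangulated category, and the content of the theorem is purely about hearts.

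First I would pin down the abelian category corresponding to $\Lambda\Mod_\fd$: this is the heart $\Perv_\cs(Y)^T$ of the perverse coherent $t$-structure restricted to compactly supported objects, which is exactly the statement recalled in Theorem \ref{unextheartthm} and ultimately due to \cite{Bdg1, VdB1} (and requires no Koszulity hypothesis, since $\Lambda$ is always a plain associative algebra by projectivity of $E$ in $\Perv(Y)$). Next I would identify the image of $\Lambda\Mod_\fd$ in $H^0(\tw \mc A_F)$. The key observation is that a finite dimensional $\Lambda$-module has a Jordan–Hölder filtration with simple subquotients among the $S_i\langle k\rangle$; under Koszul duality this corresponds to a \emph{filtered} object with graded pieces $\Sigma_i\langle k \rangle$, i.e.\ an object of $\filt(\Sigma_i)$ in the sense of Definition \ref{filtdefn}. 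By Corollary \ref{twobjgrfiltcoro}, $\filt(\Sigma_i)$ is precisely the image of $H^0(\tw^0\mc A_F)$ under the realization functor $Y_2$. Concretely, I would argue that $V\in \Lambda\Mod_\fd$ with composition series having $i$-th simple appearing with multiplicity $d_i$ maps to a twisted object built on the collection $(\Sigma_i)$ taken $d_i$ times (all with associated integer $n_k=0$, since there are no cohomological shifts among the simples once we pass to the sheared grading), with the Maurer–Cartan element $\delta$ encoding the extension data — exactly the element $\delta = \sum_{i,j} b_{ij}\otimes B_{ij}$ of Equation \ref{deltaeqn}, now with the higher $m_t$ terms of the Maurer–Cartan equation \eqref{MCeqn} replacing the quadratic relations that appeared in the Koszul case. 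The condition $n_k=0$ for all $k$ is what cuts $\tw^0\mc A_F$ out of $\tw\mc A_F$, and it matches exactly the condition that the underlying complex is supported in the projective-resolution shape of Equation \ref{Kosreseqn}.

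The remaining work is to check that these two identifications are compatible, i.e.\ that the composite equivalence $\Lambda\Mod_\fd \to H^0(\tw^0\mc A_F)$ is the restriction to hearts of the triangulated equivalence of Equation \ref{Kozgeoextendedeq}, and that it is an equivalence of \emph{mixed} categories. For the first point one uses that both functors are computed by the same formula — $\Hom_\Sigma(S,\cdot)$ composed with tensoring up — so it suffices to verify agreement on the simple objects $S_i$ and on extensions, which is the content of Equation \ref{extideqn} together with the identification of $\mc A_F(i,j)=\ _i\Sigma_j$ with $\Ext^\bullet(F_i,F_j)$. For the mixed structure, the weight function on $H^0(\tw^0\mc A_F)$ is the one induced from the grading $\langle\cdot\rangle$ (equivalently the cohomological degree in the sheared convention of Remark \ref{cohomshearrmk}), and the vanishing of $\Ext^1$ between simples of incomparable weight is built into the Maurer–Cartan condition $\delta_{kl}=0$ for $k\le l$. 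The ungraded/completed case then follows by forgetting the grading and completing, using $\Tw^0\mc A_F$ in place of $\tw^0\mc A_F$ and Corollary \ref{twobjfiltcoro} in place of Corollary \ref{twobjgrfiltcoro}, exactly as in the proof of Theorem \ref{unextheartthm}.

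The main obstacle I anticipate is not any single hard computation but the careful bookkeeping of the three gradings (cohomological, equivariant, and the auxiliary ``twist'' integer $n_k$ appearing in $\bb Z\mc A_F$) and their interaction under the cohomological shear; in particular one must be sure that the constraint defining $\tw^0$ (all $n_k = 0$) corresponds on the sheaf side exactly to the heart of the perverse coherent $t$-structure and not to some shifted or tilted heart, and that the realization functor $Y_2$ of Equation \ref{twobjrealfuneqn} is genuinely the inverse of the explicit Koszul duality model $K(\cdot)$ of Equation \ref{concreteKozeqn} on the nose, rather than merely up to an unspecified natural isomorphism. Verifying this compatibility of the two explicit models — the ``twisted complex'' description and the ``resolution by $K(I_i)$'s'' description — is the technical heart of the argument, and it is precisely what makes the subsequent monad presentation in Proposition \ref{monadunextAinfprop} work.
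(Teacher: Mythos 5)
Your proposal follows essentially the same route as the paper's proof: the left equivalence is quoted from Theorem \ref{unextheartthm} (Bridgeland–Van den Bergh), and the right one is obtained by the chain $\Lambda\Mod_\fd \cong \filt(S_i) \xrightarrow{\cong} \filt(\Sigma_i) \cong H^0(\tw^0\mc A_F)$, using Jordan–Hölder filtrations, the fact that the triangulated equivalence sends $S_i$ to $\Sigma_i$ and preserves categories of iterated extensions, and Corollaries \ref{twobjgrfiltcoro} and \ref{twobjfiltcoro} (with the completed/ungraded case handled identically). The extra compatibility checks you flag (agreement of the composite with the restriction of the triangulated equivalence, and the explicit matching of $Y_2$ with $K(\cdot)$) are not spelled out in the paper's short proof but are consistent with it and are effectively deferred to Proposition \ref{monadunextAinfprop}.
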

\begin{proof}
The mutually inverse equivalences on the left of each diagram have already been established in Theorem \ref{unextheartthm}, following Bridgeland \cite{Bdg1} and Van den Bergh \cite{VdB1}. The equivalences on the right follow from the identifications
\[ \Lambda\Mod_\fd \cong \filt(S_i) \xrightarrow{\cong} \filt(\Sigma_i) \cong H^0(\tw^0 \mc A_F)  \ , \]
and similarly in the ungraded case
\[ \Lambda\Mod_\Fd \cong \Filt(S_i) \xrightarrow{\cong} \Filt(\Sigma_i) \cong H^0(\Tw^0 \mc A_F)  \ , \]
where the first isomorphism in each line follows from the existence of Jordan-Holder filtrations, as in the proof of Proposition \ref{csequivprop} for example, and the final isomorphism in each line follows from Corollaries \ref{twobjgrfiltcoro} and \ref{twobjfiltcoro}, respectively. Finally, the middle equivalence in each line follows from the fact that triangulated functors preserve categories of iterated extensions, and we have seen that the given functor maps $S_i$ to $\Sigma_i$ (and, in the graded case, preserves grading shifts in the unsheared conventions).
\end{proof}

We note that this equivalence is indeed a generalization of Theorem \ref{unextheartthm}, in the sense that we have the following proposition:

\begin{prop}\label{Kozcaseprop} Suppose $\Lambda$ is a Koszul algebra. Then the cohomological grading shear equivalence of Remark \ref{cohomshearrmk} restricts to an equivalence of mixed categories
	\[  H^0(\tw^0 \mc A_F) \xrightarrow{\cong} \textup{LCP}(\Sigma)  \ . \]
\end{prop}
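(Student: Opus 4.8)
The statement asserts that when $\Lambda$ is Koszul, the general twisted-objects model $H^0(\tw^0\mc A_F)$ for the heart $\Perv_\cs(Y)^T$, produced in Theorem \ref{unextAinfheartthm}, coincides with the classical model $\textup{LCP}(\Sigma)$ of linear complexes of projectives from Theorem \ref{Kosheartthem}. The strategy is to identify both sides explicitly with the same subcategory of $\DD_\fd(\Sigma)$ via the functor $Y_2:\tw^0\mc A_F\to \CC_\infty(\Sigma)$ of Corollary \ref{grtwobjequiv}, and then check that the grading conventions match up under the cohomological shear of Remark \ref{cohomshearrmk}.

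\textbf{Step 1: Unwind the Koszul hypothesis on $\Sigma$.} When $\Lambda$ is Koszul, Warning \ref{LambdaKozwarn} and the discussion following it tell us that $\Sigma = \Lambda^!$ is a strict (associative) algebra with all higher $\Ainf$-products $m_n$ vanishing for $n\geq 3$, concentrated in bidegrees $(k,-k)$ for $k\in\bb N$. Hence, for an object $((i_1,0,p_1),\dots,(i_d,0,p_d),\delta)\in \tw^0\mc A_F$, the Maurer--Cartan equation $\sum_{t}m_t^{\mf{gl}_d(\bb Z\Sigma)}(\delta^{\otimes t})=0$ reduces to $m_1(\delta) + m_2(\delta,\delta)=0$; that is, $\delta$ is simply a strictly upper-triangular element of $\mf{gl}_d(\Sigma)$ of cohomological degree $1$ (in the unsheared convention), satisfying the flatness condition $d\delta + \delta^2 = 0$. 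The image under $Y_2$ of such a datum, by Equation \ref{twobjrealfuneqn} with all $\rho_t=0$ for $t\geq 2$, is precisely the complex $\bigoplus_k \Sigma_{i_k}\langle p_k\rangle$ equipped with the differential built from $m_1$ and the degree-$1$ matrix $\delta$ acting by the bimodule structure. This is exactly a bounded complex of projective $\Sigma$-modules whose $j$-th term is a direct sum of grading shifts $\Sigma_{i_k}\langle p_k\rangle$ of the indecomposable projectives.

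\textbf{Step 2: Match the linearity condition.} I must check that the condition $n_k=0$ for all $k$ (defining $\tw^0$) together with the degree-zero requirement on $\delta$ translates exactly into the linearity condition of Definition \ref{LCPdefn}, after cohomological shearing. The key point is the bidegree structure: in the object $((i_k,0,p_k),\delta)$, the map $\delta_{kl}$ lies in $\ _{i_k}\Sigma_{i_l}[1]\langle p_l - p_k\rangle$, and since $\Sigma$ is concentrated in bidegrees $(m,-m)$, a nonzero $\delta_{kl}$ forces this piece to sit in cohomological degree $1$ and internal degree $p_l - p_k$ with $p_l-p_k = -1$, i.e. the projective summands are arranged so that each successive term in the complex has its internal generation degree dropping by one as the cohomological degree rises by one. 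Applying the cohomological shear $(\cdot)^\sh$ of Remark \ref{cohomshearrmk}, $\Sigma^\sh$ becomes a plain graded algebra in cohomological degree $0$, the complex $\bigoplus_k\Sigma_{i_k}^\sh[p_k]\langle -p_k\rangle$ becomes one in which the term in cohomological degree $-p_k$ is generated in internal degree $-p_k$ — which is precisely the statement that it is a linear complex of projectives in the sense of Definition \ref{LCPdefn}. Conversely, every linear complex of projectives, being a bounded complex $P^\bullet$ with $P^j$ generated in degree $j$, arises as $Y_2$ of such a twisted object: one reads off the multiplicities $i_k$ and shifts $p_k$ from the indecomposable summands and recovers $\delta$ from the differential, the flatness $d^2=0$ being exactly the Maurer--Cartan equation. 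So $Y_2$ restricts to an equivalence $H^0(\tw^0\mc A_F)\xrightarrow{\cong}\textup{LCP}(\Sigma)$ of categories, and it is compatible with the mixed structures because on both sides the weight of a simple object (an indecomposable projective in a single cohomological degree $j$) is declared to be $j$, and the Tate twists agree ($\langle -1\rangle$ on one side, $[-1]\langle 1\rangle$ on the other, as noted after Theorem \ref{Kosheartthem}).

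\textbf{Main obstacle.} The only real subtlety, and where I would spend most of the care, is the bookkeeping of the two distinct grading conventions — the unsheared bigrading in which the twisted-objects formalism of Section \ref{twobjsec} is stated (cf. the remark preceding Corollary \ref{grtwobjequiv}), versus the cohomologically sheared convention of \cite{BGS,MOS} in which $\textup{LCP}(\Sigma)$ lives (Warning \ref{Sigmashwarning}). One must verify that the shear equivalence $(\cdot)^\sh:\DD_\fd(\Sigma)\xrightarrow{\cong}\DD_\fd(\Sigma^\sh)$ of Remark \ref{cohomshearrmk} carries the essential image of $Y_2|_{\tw^0\mc A_F}$ onto $\textup{LCP}(\Sigma^\sh)$ and is an equivalence of \emph{mixed} categories, i.e. respects weights and Tate twists; this is a direct but slightly fiddly check that the cohomological degree $n_k=0$ constraint plus the MC equation become, after shearing, exactly "generated in degree $j$ in cohomological degree $j$." Everything else — that $\delta$ with vanishing higher products is a flat connection, that $Y_2$ on $\tw^0$ produces complexes of projectives, that the functor is fully faithful and essentially surjective onto $\textup{LCP}$ — is either immediate from the Koszulity hypothesis killing $m_{\geq 3}$ or already packaged in Corollary \ref{grtwobjequiv} and Corollary \ref{twobjgrfiltcoro}.
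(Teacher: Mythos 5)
Your proposal is correct and follows essentially the same route as the paper: the Koszul hypothesis makes $\Sigma$ a plain associative algebra concentrated in bidegrees $(k,-k)$, which kills all higher $A_\infty$ structure (the paper phrases this as vanishing of the bimodule maps $\rho_{t,k}$ except $\rho_{1,0}$ and $\rho_{0,1}$, you phrase it via the Maurer–Cartan equation and $Y_2$), so that objects of $\tw^0\mc A_F$ become complexes of shifted projectives with the only allowed differentials the linear ones, matching $\textup{LCP}(\Sigma)$ after the cohomological shear. The only slips are cosmetic: $m_1=0$ since $\Sigma$ carries the zero differential, so the flatness condition is just $\delta^2=0$, and the sign of $p_l-p_k$ depends on the shift conventions, but neither affects the argument.
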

\begin{proof} For $\Lambda$ a Koszul algebra, the corresponding Koszul dual algebra $\Sigma$ is a plain, Koszul associative algebra, and thus the $\Ainf$ bimodule structure maps
\[	\rho^{\Sigma_{i_1,...,i_d}}_{t,k}: \Sigma^{\otimes t}\otimes \Sigma_{i_1,...,i_d} \otimes \mc E_{\Sigma}^{\otimes k} \to  \Sigma_{i_1,...,i_d}[1-t-k] \]
of Equation \ref{bimodstreqn} necessarily vanish for $t\geq 2$, $k\geq 2$ and for $t=k=1$ and $t=k=0$. Thus, the only non-trivial structure maps for objects in $ H^0(\tw^0 \mc A_F)$ are $\rho_{1,0}$ and $\rho_{0,1}$. The former is simply the usual $\Sigma$ module structure map on the direct sum of projective modules $\Sigma_{i_1,...,i_d}$, and the latter gives precisely the usual allowed linear differentials defining a linear complex of projectives.
\end{proof}

Next, we describe the analogous monad formalism arising from Theorem \ref{unextAinfheartthm}, generalizing Proposition \ref{monadunextprop} to the $A_\infty$ case. Given an object $(i_1,...,i_d,\delta)\in \tw^0\mc A_F$, the corresponding object of $\DD_\fd(\Sigma)$ has underlying bigraded vector space given by
\[ \Sigma_{i_1,...,i_d} = \bigoplus_{k=1,...,d} \Sigma_{i_k}\langle p_k\rangle \cong \bigoplus_{i\in I} \Sigma_i \otimes V_i  \]
where each $V_i$ is a graded vector space
\[ V_i=\bigoplus_{p\in \bb Z} V_{i,-p}\langle p \rangle \quad\quad \text{with}\quad\quad \dim V_{i,p} = \# \{ k\in \{1,...,d\} \ | \ i_k=i\textup{ and } p_k=p \} \ . \]
\noindent Note in particular that $\dim \oplus_{i\in I} V_i = d$.

Now, the degree zero element $\delta\in \mf{gl}_r(\bb Z\mc A_F)[1]$ is in fact an element of the subspace
\[ \delta \in \Hom_{\DD(\Sigma)}(\bigoplus_{i\in I}  \Sigma_i\otimes V_i, \bigoplus_{j\in I} \Sigma_j \otimes V_j [1] ) \subset  \mf{gl}_d(\Sigma)[1] \cong \mf{gl}_d(\mc A_F)[1]  \]
and thus admits a decomposition
\begin{equation}\label{deltadecompeqn}
	\delta = \sum_{i,j\in I} b_{ij}\otimes B_{ij}  \quad \in \quad \bigoplus_{i,j\in I}  \ _i\Sigma_j\otimes \Hom(V_i,V_j)[1] \ ,
\end{equation}
as in Equation \ref{deltaeqn}, where we again use the abuse of notation $ B_{ij} \otimes b_{ij}$ to denote the not necessarily pure tensor, as in Warning \ref{puretenswarning}.

The $\Ainf$ module structure maps for $\Sigma_{i_1,...,i_d}^\delta$ are corrected from those of $\Sigma_{i_1,...,i_d}$, the underlying direct sum of rank 1 free modules, by the higher $(\mc A_F,\mc E_{\mc A_F})$ bimodule structure maps evaluated on the tensor powers of $\delta$, according to Equation \ref{twobjstrmapgradedeqn}. In particular, the underlying cochain complex differential is given by
\[d_\delta = \sum_{k\in \Z} \rho_k^{\Sigma}(\cdot, \delta^{\otimes k-1}) \quad \in\quad \Hom_{\DD(\Sigma)}( \Sigma_{i_1,...,i_d}, \Sigma_{i_1,...,i_d}[1])  \]
where $\rho_k^\Sigma: \Sigma_{i_1,...,i_d}\otimes\Sigma^{\otimes k-1} \to \Sigma_{i_1,...,i_d}[1-k]$ denote the $\Ainf$ module structure maps for $\Sigma_{i_1,...,i_d}$ over $\mc E_{\mc A_F}$. Decomposing according to Equation \ref{deltadecompeqn}, we have the analogous decomposition
\begin{equation}\label{ddeltacompseqn}
	d_\delta = \sum_{k\in \bb Z,\  i,i_2,...,i_{k-1},j \in I} m_k^\Sigma(\cdot, b_{i i_2},...,b_{i_{k-1}  j})\otimes (B_{i i_2}  ...  B_{i_{k-1}  j} )   \quad \in \quad \bigoplus_{i,j\in I}  \Hom(\Sigma_i, \Sigma_j[1]) \otimes \Hom(V_i,V_j)\ 
\end{equation} 
of the differential on $\Sigma_{i_1,...,i_d}^\delta$. Thus, the image of the Nakayama dual of $\Sigma_{i_1,...,i_r}^{\delta}\in {\textup{D}_\perf}(\Sigma)$ under the Koszul duality equivalence of Equation \ref{concreteKozeqn} is given by
\begin{equation}\label{monadvsAinfeqn}
	K(\Sigma_{i_1,...,i_d}^{\delta,N})= \left(K(\Sigma_{i_1,...,i_d}^N) , K(d_\delta^N) \right) \quad\quad \text{where} \quad\quad K(\Sigma_{i_1,...,i_d}^N)=\bigoplus_{i\in I} K(I_i)\otimes V_i \ ,
\end{equation}
and where the differential $K(d_\delta^N):K(\Sigma_{i_1,...,i_d}^N) \to K(\Sigma_{i_1,...,i_d}^N)[1]$ is given by
\begin{equation}\hspace*{-1cm}\label{Kdeltadefneqn}
	K(d_\delta^N) :=\sum K (m_k^{\Sigma}(\cdot, b_{i,i_2},...,b_{i_{k-1}, j})^N)\otimes  ( B_{i,i_2}  ...  B_{i_{k-2}, j} )   \quad \in \quad \bigoplus_{i,j\in I} \Hom(K(I_i), K(I_j)[1] )\otimes \Hom(V_i,V_j) \ ,
\end{equation}
where the sum is over the same index set as in Equation \ref{ddeltacompseqn} above.

We also introduce the notation $d_B:\tilde H \to \tilde H[1]$, where $\tilde H \in \Perv_\cs(Y)^{\bb C^\times}$ is as in Equation \ref{monadvseqn} for the map
\begin{equation}\label{monadkozdifAinfeqn}
	d_B= \sum_{k\in \bb Z,\  i,i_2,...,i_{k-1},j \in I}  K (m_k^{\Sigma}(\cdot, b_{i,i_2},...,b_{i_{k-1}, j})^N)\otimes  ( B_{i,i_2}  ...  B_{i_{k-2}, j} )  \ . 
\end{equation}
This is essentially the same as the map in Equation \ref{Kdeltadefneqn} above, but emphasizes that a differential of this functional form is determined uniquely by the choice of linear maps $B=(B_{i,j}^\alpha)$, generalizing the definition of $d_B$ in Equation \ref{monadkozdifeqn}.

We can now state the desired generalization of Proposition \ref{monadunextprop}:

\begin{prop}\label{monadunextAinfprop} Let $\tilde H \in \Perv_\cs(Y)^{T}$ be as in Equation \ref{monadvseqn} and fix
	\[	B:=(B_{ij})_{i,j\in I}\quad\quad\text{with}\quad\quad B_{ij}=(B_{ij}^\alpha \in \Hom(V_i,V_j))_{ \alpha \in \mc B_{ij}} \ . \]
	The following are equivalent: 
	\begin{itemize}
		\item the induced map $d_B:\tilde H \to \tilde H[1]$ of Equation \ref{monadkozdifAinfeqn} satisfies $d_B^2=0$, and
		\item the induced map $\rho:(\Sigma^1)^\vee\to \Endi_{S\Bimod}(V)$ of Equation \ref{monadkoszuleqn} defines a representation of the graded DG quiver $Q_Y$.
	\end{itemize}
	Further, when these conditions hold, the resulting complex $(\tilde H,d_B)$ is a projective resolution of the object $H \in \Perv_\cs(Y)^{T}$ corresponding to the quiver representation $V\in \Lambda\Mod_\fd$.
	
	\smallskip
	
	Similarly, for $\tilde H \in \Perv_\cs(\hat Y)$ as in Equation \ref{monadvseqn}, the following are equivalent:
	\begin{itemize}
		\item the induced map $d_B:\tilde H \to \tilde H[1]$ of Equation \ref{monadkozdifAinfeqn} satisfies $d_B^2=0$, and
		\item the induced map $\rho:(\Sigma^1)^\vee\to \Endi_{S\Bimod}(V)$ of Equation \ref{monadkoszuleqn} defines a nilpotent representation of the DG quiver $Q_Y$.
	\end{itemize}
	Further, when these conditions hold, the resulting complex $(\tilde H,d_B)$ is a projective resolution of the object $H \in \Perv_\cs(\hat Y)$ corresponding to the quiver representation $V\in \Lambda\Mod_\Fd$.
\end{prop}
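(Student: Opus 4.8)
The statement is Proposition \ref{monadunextAinfprop}, which asserts an equivalence between two conditions on the data $B=(B_{ij}^\alpha)$ and identifies the resulting complex as a projective resolution. The plan is to deduce everything from Theorem \ref{unextAinfheartthm} together with the explicit computation of the Koszul duality functor $K$ of Equation \ref{concreteKozeqn} performed in the preceding subsection via the twisted objects formalism. The key observation is that the choice of $B$ is exactly the choice of the data $\delta = \sum_{i,j} b_{ij}\otimes B_{ij}\in \mf{gl}_d(\bb Z\mc A_F)[1]$ appearing in the definition of an object of $\tw^0\mc A_F$ (Equation \ref{deltadecompeqn}), and that under this dictionary the Maurer-Cartan equation $\sum_{t}m_t^{\mf{gl}_d(\bb Z\mc A_F)}(\delta^{\otimes t})=0$ is precisely the defining relation for a representation of the DG quiver $Q_Y$ — this is built into the correspondence between the $\Ainf$ structure on $\Sigma=\Ext^\bullet(F,F)$ and the potential $W_Y$ of the quiver, as recalled in Examples \ref{geoquiveg1} and \ref{geoquiveg2} and Section \ref{quiversec}.

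First I would fix notation: given $B$, form $\delta$ as in Equation \ref{deltadecompeqn} and observe that the component $\rho$ of Equation \ref{monadkoszuleqn} is recovered from the degree $(0,0)$ part of $\delta$, while the higher arity parts of $\delta$ are determined by $\rho$ because $\Sigma$ is generated in $\Ext^1$; so the map $B\mapsto \delta$ is a bijection onto the relevant affine space. Next, I would note that by the $\Ainf$ relations in $\mf{gl}_d(\bb Z\mc A_F)$, the square of the differential $d_\delta$ on $\Sigma_{i_1,\dots,i_d}$ (as computed in Equation \ref{ddeltacompseqn}) equals left-multiplication by the Maurer-Cartan defect $\sum_t m_t^{\mf{gl}_d}(\delta^{\otimes t})$; hence $d_\delta^2=0$ iff $\delta$ satisfies the Maurer-Cartan equation iff $\rho$ defines a $Q_Y$-representation. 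Since $K$ is an equivalence (Theorem \ref{catthmunext}) hence in particular a DG functor on the relevant categories of complexes, $K(d_\delta^N)=d_B$ satisfies $d_B^2=0$ if and only if $d_\delta^2=0$ — so the first equivalence follows. Then, when these conditions hold, $\Sigma_{i_1,\dots,i_d}^\delta$ is by construction an object of $\tw^0\mc A_F$, and by Theorem \ref{unextAinfheartthm} its image $H:=K((\Sigma_{i_1,\dots,i_d}^\delta)^N)=(\tilde H,d_B)$ lies in $\Perv_\cs(Y)^T$ and corresponds under the chain of equivalences to the $\Lambda$-module $V\in\Lambda\Mod_\fd$ determined by $\rho$; since $\tilde H$ is a direct sum of the projective-type complexes $K(I_i)$ (which are resolutions of the simple objects $F_i$ by Equation \ref{simplereseqn}), $(\tilde H,d_B)$ is the asserted projective resolution, i.e. a complex of objects in $\Coh(Y)^T$ built from the $E_i$, quasi-isomorphic to $H\in\Perv_\cs(Y)^T$ placed in degree zero.

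For the completed/ungraded case one runs the identical argument with $\Tw^0\mc A_F$, $\hat\Lambda\Mod_\Fd$, $\hat K$ in place of their graded analogues, using the ungraded half of Theorem \ref{unextAinfheartthm}; the only extra point is the nilpotence condition. Here I would observe that finiteness of the cohomology of $\hat H$ forces the quiver representation to be nilpotent (every finite-dimensional module over the completed path algebra $\hat\Lambda$ is nilpotent, as in the proof of Proposition \ref{csequivprop}), and conversely a nilpotent representation of $Q_Y$ defines a finite-dimensional $\hat\Lambda$-module; so the equivalence of the two conditions in the completed case is the graded statement plus this standard nilpotence bookkeeping.

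\textbf{Main obstacle.} The conceptual content is entirely carried by Theorem \ref{unextAinfheartthm} and the twisted-objects computation, so the only real work is making precise the claim that $d_\delta^2$ equals multiplication by the Maurer-Cartan defect — i.e. unwinding the $\Ainf$-bimodule structure maps $\rho_{t,k}^{\Sigma_{i_1,\dots,i_d}}$ of Equation \ref{bimodstreqn} and checking the sign conventions in Equation \ref{twobjstrmapgradedeqn} against the $\Ainf$ relations. This is the step I expect to be the main obstacle, though it is essentially a reference to \cite{Lef} (and reduces to the Koszul case of Proposition \ref{monadunextprop} via Proposition \ref{Kozcaseprop} as a sanity check); I would handle it by citing the Maurer-Cartan/twisted-complex formalism of \emph{loc.\ cit.} rather than reproving it, and spelling out only the translation into the notation $b_{ij}$, $B_{ij}$, $W_Y$.
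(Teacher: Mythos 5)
Your proposal is correct and follows essentially the same route as the paper: identify $B$ with the element $\delta$ of Equation \ref{deltadecompeqn}, use the standard twisted-objects/$\Ainf$ fact that $d_\delta^2=0$ is equivalent to the Maurer--Cartan equation, and then identify the Maurer--Cartan equation with the defining relations of the DG quiver $Q_Y$ via the Koszul-dual presentation $\Lambda=\Sigma^!$ (the paper makes this last step explicit by extending $\rho$ to $r_\rho:\Lambda^0\to\Endi(V)$, writing $d_\Lambda=\sum_t m_t^\vee$ on the generators $(\Sigma^2)^\vee$, and applying Hom-tensor adjunction — precisely the "translation" you defer to the formalism). The paper's written proof in fact stops at this equivalence, leaving the projective-resolution claim and the completed/nilpotent bookkeeping implicit, so your explicit treatment of those points via Theorem \ref{unextAinfheartthm} and the finite-dimensional-modules-over-$\hat\Lambda$ argument is a harmless (and compatible) elaboration.
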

\begin{proof}
	
	By definition, the putative differential $d_B$ is uniquely determined by an element $\delta$ as in Equation \ref{deltadecompeqn} by the formula for $d_\delta$ given in Equation \ref{ddeltacompseqn}, and by the standard argument for $\Ainf$ algebras the condition $d_\delta^2=0$ is equivalent to the Maurer-Cartan equation for $\delta$ of Equation \ref{MCeqn}.
	
	The Maurer-Cartan equation can be restated as a system of equations on the linear algebraic data $B_{ij}\in \Hom(V_i,V_j)$ with coefficients determined by the higher products of the corresponding elements $b_{ij}\in \Sigma[1]$. Indeed, for $i,j\in I$, the corresponding component of the Maurer-Cartan equation on $\delta \in \mf{gl}_d(\mc A_F)$ is given by
\begin{equation}\label{MCcompeqn}
		0 = \left( \sum_{t\in \bb N} m_t(\delta^{\otimes t}) \right)_{ij} = \sum_{t\in \bb N} \sum_{i_2,...,i_{t-1}\in I}  m_t(b_{i i_2} ,..., b_{i_{t-1} j} )\otimes (B_{i i_2} \cdots  B_{i_{t-1} j}) \ , 
\end{equation}
	understood as a polynomial equation on the linear maps $B_{ij}$. Thus, we need to show that these equations are equivalent to the requirement that the collection of linear maps $B_{ij}$ define a representation of the quiver $Q_Y$, or equivalently a module over the path algebra $\Lambda=\Sigma^!$.
	
	The element $\delta$ is equivalent to the map of $S$ bimodules
	\[\rho:(\Sigma^1)^\vee \to \Endi(V) \]
	as in Equation \ref{monadkoszuleqn}, and extends by the free-forgetful adjunction to define a map of algebras internal to $S\Bimod$
	\[ r_\rho: \Lambda^0 = \otimes_S^\bullet(\Sigma^1)^\vee \to \Endi(V) \ .  \]
	
	\noindent The map $r_\rho$ extends to define a module structure on $V$ over the path algebra $\Lambda$ of the DG quiver $Q_Y$ if and only if the composition
	\[ \Lambda^{-1} \xrightarrow{d_\Lambda} \Lambda^0 \xrightarrow{r_\rho} \Endi(V) \]
	vanishes. The differential $d_\Lambda$ is defined on the generators
	\[ (\Sigma^2)^\vee \subset  \Lambda^{-1}  = (\Sigma^2)^\vee \otimes_S \otimes_S^\bullet(\Sigma^1)^\vee  \]
	of $\Lambda^{-1}$ over $\Lambda^0$, by the formula
	\[ d_\Lambda = \sum_{t\in \bb N} m_t^\vee : (\Sigma^2)^\vee \to \otimes^\bullet_S (\Sigma^1)^\vee \ , \]
	that is, as the sum of the duals of the $\Ainf$ multiplication maps
	\[ m_t:  \otimes^t_S \Sigma \to \Sigma[2-n] \ . \]
	Thus, vanishing of the composition
\[	r_\rho \circ d_\Lambda  = r_\rho \circ  \sum_{t\in \bb N} m_t^\vee = \sum_{t\in \bb N}  \rho^{\otimes t} \circ m_t^\vee  \ : (\Sigma^2)^\vee \to \Endi(V)  \]
	is equivalent under Hom-tensor adjunction to the vanishing of the element
	\[  \sum_{t\in \bb N} m_t(\delta^{\otimes t}) = \sum_{t\in \bb N} \sum_{i_1,...,i_t \in I } m_n(b_{i_1 i_2}, ... , b_{i_{t-1} i_t}) \otimes ( B_{i_1 i_2} \cdots  B_{i_{t-1} i_t}) \ \in \Sigma^2\otimes_S \Endi(V)  \ ,\]
	which is evidently equivalent to the vanishing of the Maurer-Cartan equation given in Equation \ref{MCcompeqn}, as desired.
\end{proof}

\subsection{Moduli spaces of perverse coherent sheaves and quivers with potential}\label{quiversecunfr}

In this section, we formulate and prove the first variant of Theorem \ref{Athm} from the introduction. The categorical results discussed above determine a description of the moduli stacks of objects in the categories we have introduced above in terms of linear algebraic data. We begin by recalling the geometric description of the moduli stack of quiver representations and then state the main result establishing its equivalence with the moduli stack of perverse coherent sheaves.

Let $Q$ be a graded quiver with edge set $E_Q$, vertex set $V_Q$, and path algebra
\[ \K Q = \otimes^\bullet_S \K\langle E_Q\rangle \quad\quad \text{where}\quad\quad S=\oplus_{i\in V_Q} S_i = \oplus_{i\in V_Q} \K \]
as in Section \ref{quiversec}. For each dimension vector $\dd=(d_i)\in \bb N^{V_Q}$, define
\[ X_\dd(Q) = \bigoplus_{e\in E_Q} \Hom( \K^{d_{s(e)}}, \K^{d_{t(e)}} )  \quad\quad \textup{and} \quad\quad G_\dd(Q) = \prod_{i\in V_Q} \Gl_{d_i}(\K) \ . \]
The stack of representations $\mf M(Q)$ of $Q$ is defined as the disjoint union of the quotient stacks
\[ \mf{M}(Q) = \bigsqcup_{\dd \in \bb N^{V_Q}} \mf{M}_\dd(Q) \quad\quad\text{where}\quad\quad  \mf{M}_\dd(Q) = \left[ X_\dd(Q) / G_\dd(Q) \right] \ . \]
Let $\Endi(\K^\dd):=\Hom(\K^\dd,\K^\dd)\in S\Bimod$ denote the matrix algebra on $\K^\dd=\oplus_{i\in V_Q} \K^{d_i}$ with its natural $S$ bimodule structure, and note that there are canonical identifications
\[ X_\dd(Q) \cong \Hom_{S\Bimod}( \K\langle E_Q\rangle, \Endi(\K^\dd)) \cong \Hom_{\Alg_\Ass(S\Bimod)}( \K Q, \Endi(\K^\dd) )   \ ,\]
so that we have
\[ \mf{M}_\dd(Q)(\K) \cong \{ V\in S\Mod_\fd,\ \varphi\in \Hom_{\Alg_\Ass(S\Bimod)}( \K Q, \Endi(V) )\ | \ \dim V = \dd \  \} \ ,  \]
that is, the groupoid of geometric points of $\mf{M}_\dd$ is the maximal subgroupoid of modules over the free path algebra $\K Q$ of dimension $\dd$ over $S$.

Now, suppose $(Q,R)$ is a quiver with relations $R\subset \K Q$, as in Definition \ref{dgquiverdefn}, and define the closed subvariety $Z_\dd(Q,R) \subset X_\dd(Q)$ by
\[ Z_\dd(Q,R) = \{ \varphi \in \Hom_{\Alg_\Ass(S\Bimod)}( \K Q, \Endi(\K^\dd) ) \ |\ \varphi(R) = \{0\} \ \subset \Endi(\K^\dd) \}  \ . \]
Note that $Z_\dd(Q,R)$ is $G_\dd(Q)$-invariant, as the condition that the corresponding map to $\Endi_{S\Bimod}(V)$ satisfies $\varphi(R)=0$ is well-defined, independent of the choice of isomorphism $V\cong \K^\dd$. The stack of representations $\mf M(Q,R)$ of $(Q,R)$ is defined analogously as the disjoint union of the quotient stacks
\[ \mf{M}(Q,R) = \bigsqcup_{\dd \in \bb N^{V_Q}} \mf{M}_\dd(Q,R)  \quad\quad\textup{where} \quad\quad \mf{M}_\dd(Q,R) = \left[ Z_\dd(Q,R) / G_\dd(Q) \right]\]
and we have the analogous description of the geometric points
\[ \mf{M}_\dd(Q,R)(\K) = \{ V\in S\Bimod,\ \varphi\in \Hom_{\Alg_\Ass(S\Bimod)}( \K Q/R , \Endi(V) )\ | \ \dim V = \dd \   \}  \]
as parameterizing the modules over the path algebra $\K Q_R=\K Q/R$ of dimension $\dd$ over $S$.

Finally, suppose $(Q,W)$ is a quiver with potential $W\in \K Q_\cyc= \K Q/ [ \K Q, \K Q]$, as in Definition \ref{quivpotdefn}. Then we obtain a canonical function
\[ \Tr(W)_\dd: X_\dd(Q) \to \K \quad\quad\textup{defined by}\quad\quad \varphi \mapsto  \Tr_{\K^\dd}(\varphi(W))  \]
where we identify $\varphi\in X_\dd(Q)\cong \Hom_{\Alg_\Ass(S\Bimod)}( \K Q, \Endi(\K^\dd))$, and we define the closed subvariety $Z_\dd(Q,W)\subset X_\dd(Q)$ by
\[ Z_\dd(Q,W) : = \Crit(\Tr(W)_\dd) = \Gamma_{d\Tr(W)_\dd} \times_{T^\vee X_\dd} X_\dd  \ . \]
Again, $Z_\dd(Q,W)$ is $G_\dd$ invariant, since the function $\Tr(W)_\dd$ is itself, and the stack of representations of $(Q,W)$ is defined as the disjoint union of the quotient stacks
\[ \mf{M}_\dd(Q,W) = \bigsqcup_{\dd \in \bb N^{V_Q}} \mf{M}_\dd(Q,W)  \quad\quad \textup{where}\quad\quad \mf{M}_\dd(Q,W) = \left[ Z_\dd(Q,W) / G_\dd(Q) \right]  \ . \]
There are also canonical derived enhancements of these stacks: letting
\[ Z^h_\dd(Q,W) : = \textup{dCrit}(\Tr(W)_\dd) = \Gamma_{d\Tr(W)_\dd} \times^h_{T^\vee X_\dd} X_\dd  \ . \]
the derived stack of representations of the quiver with potential $(Q,W)$ is defined by
\[ \mf{M}^h_\dd(Q,W) = \bigsqcup_{\dd \in \bb N^{V_Q}} \mf{M}^h_\dd(Q,W)  \quad\quad \textup{where}\quad\quad \mf{M}^h_\dd(Q,W) = \left[ Z^h_\dd(Q,W) / G_\dd(Q) \right] \ . \]

There are subspaces $X_\dd^\nil(Q)\subset X_\dd(Q)$ defined by
\[ X_\dd^\nil(Q) = \{ \varphi \in \Hom_{\Alg_\Ass(S\Bimod)}( \K Q, \Endi(\K^\dd) ) \ |\ \varphi(\K Q_{(n)} ) =  \{0\} \ \subset  \Endi(\K^\dd)  \ \textup{for $n \gg 0$} \}  \ , \]
and in turn a substack $\mf{M}^\nil(Q) \subset \mf M(Q)$  defined by
\[ \mf{M}^\nil(Q) = \bigsqcup_{\dd \in \bb N^{V_Q}} \mf{M}^\nil_\dd(Q) \quad\quad \textup{where}\quad\quad  \mf{M}_\dd^\nil(Q) = \left[ X_\dd^\nil(Q) / G_\dd(Q) \right] \ . \]

Similarly, there are also subvarieties  $Z_\dd^\nil(Q,R)\subset Z_\dd(Q,R)$ and $ Z_\dd^\nil(Q,W) \subset Z_\dd(Q,W)$ defined by
\[ Z_\dd^\nil(Q,R)= Z_\dd(Q,R)\times_{X_\dd(Q)} X_\dd^\nil(Q) \quad\quad \text{and} \quad\quad Z_\dd^\nil(Q,W)=Z_\dd(Q,W)\times_{X_\dd(Q)}  X_\dd^\nil(Q) \ , \]
and in turn closed substacks $\mf M^\nil(Q,R)$ and $\mf M^\nil(Q,W) \subset \mf M^\nil(Q)$ defined by
\[ \mf{M}^\nil(Q,R) = \bigsqcup_{\dd \in \bb N^{V_Q}} \mf{M}^\nil_\dd(Q,R) \quad\quad\textup{where}\quad\quad  \mf{M}_\dd^\nil(Q,R) = \left[ Z_\dd^\nil(Q,R) / G_\dd(Q) \right] \ , \]
and similarly
\[ \mf{M}^\nil(Q,W) = \bigsqcup_{\dd \in \bb N^{V_Q}} \mf{M}^\nil_\dd(Q,W) \quad\quad\textup{where}\quad\quad  \mf{M}_\dd^\nil(Q,W) = \left[ Z_\dd^\nil(Q,W) / G_\dd(Q) \right] \ .\]

We now recall a general construction of the moduli of objects in an abelian category, following for example Definition 7.8 of \cite{AHH}.  Throughout, we let $\mc C$ be a locally Noetherian, cocomplete, $\bb K$-linear abelian category.

\begin{defn}\label{basecatdefn} Let $R$ be a commutative $\bb K$ algebra. The base change category $\mc C_R$ of $\mc C$ is defined to be the category of $R$-module objects internal to $\mc C$, that is, pairs $(C,\xi_C)$ of an object $C\in \mc C$ together with a map of $\bb K$ algebras $\xi_R:R \to \End_{\mc C}(C)$.
\end{defn}

\begin{defn} The moduli stack $\mf M_{\mc C}$ of objects of $\mc C$ is defined by
	\[ \mf M_{\mc C} ( R) = \{ E \in  \mc C_R  \ \big| \  \textup{$E$ is flat over $R$ and compact} \ \} \ ,\]
for each commutative $\bb K$ algebra $R$.
\end{defn}

It is shown in \emph{loc. cit.} that $\mf M_{\mc C}$ indeed defines a stack in the big fppf topology on $\bb K$ schemes. More generally, if $\mc C$ is a non-cocomplete, locally Noetherian, $\bb K$-linear abelian category, we write $\mf M_{\mc C}:= \mf M_{\textup{Ind}(\mc C)}$, and we recall that the finitely presented (or equivalently, compact) objects of $\textup{Ind}(\mc C)$ are given by the objects of $\mc C$, noting that $\mc C$ is abelian and thus idempotent-complete.

We can now state the main result of this subsection. For simplicity, we introduce the notation
\[ \mf M(Y) := \mf M_{\Perv_\cs(\hat Y)} \ . \]
Then we have:

\begin{theo}\label{stackthmunext} Let $Y\xrightarrow{\pi} X$ be a toric resolution of singularities of an affine, toric, Calabi-Yau threefold, such that the fibres of $\pi$ are of dimension $\leq 1$ and $\pi_* \mc O_Y \cong \mc O_X$, and let $(Q_Y,W_Y)$ be the associated quiver with potential. There is an equivalence of algebraic stacks
	\begin{equation}\label{modulistackequivunexteqn} 
		\mf M^\nil(Q_Y, W_Y)  \xrightarrow{\cong} \mf M(Y)
	\end{equation}
	where the induced equivalence of groupoids of $\bb K$ points is defined on objects by
	\begin{equation*}\hspace*{-1cm}
		(V_i, B_{ij})   \mapsto \left( \tilde H:=  \bigoplus_{i\in I} K(I_i)\otimes V_i\ , \  d_B :=  \sum_{k\in \bb Z,\  i,i_2,...,i_{k-1},j \in I}  K (m_k^{\Sigma}(\cdot, b_{i,i_2},...,b_{i_{k-1}, j})^N)\otimes  ( B_{i,i_2}  ...  B_{i_{k-2}, j} )  \right)  \ ,
	\end{equation*}
	in the notation of Section \ref{monadsec}.
	
	Similarly, this induces an equivalence of the homotopy $T$ fixed points
	\begin{equation*}
		\mf M(Q_Y, W_Y)^{T}   \xrightarrow{\cong} \mf M_{\Perv_\cs( Y)^T}  \ .
	\end{equation*}
\end{theo}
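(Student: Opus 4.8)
The plan is to assemble the equivalence of Equation \ref{modulistackequivunexteqn} from the categorical input already established, rather than constructing it by hand. The key point is that for any commutative $\bb K$ algebra $R$, the groupoid $\mf M^\nil(Q_Y,W_Y)(R)$ of $R$-flat representations of the quiver with potential is, by the definition of the critical locus stack, the groupoid of $R$-module objects in $\Lambda\Mod_{\Fd}$ which satisfy the relations $R_{W_Y}$ cut out by the potential; these relations are precisely the Maurer--Cartan components of Equation \ref{MCcompeqn}, so by Proposition \ref{monadunextAinfprop} (with $\tilde H$ replaced by its base change over $R$, which is harmless since all the Koszul-type resolutions $K(I_i)$ are bounded complexes of vector bundles) such a representation is the same data as a projective resolution $(\tilde H,d_B)$ of an object $H\in \Perv_\cs(\hat Y)_R$. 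First I would make precise the base-change statement: for $R$ Noetherian, $\Perv_\cs(\hat Y)_R$ in the sense of Definition \ref{basecatdefn} is equivalent to $\hat\Lambda\Mod_{\Fd,R}$ via Theorem \ref{PCohequiv}, and flat, compact objects on each side correspond. Then I would observe that the monad construction $(V_i,B_{ij})\mapsto (\tilde H,d_B)$ is manifestly functorial in $R$ and in isomorphisms of representations, and that by Proposition \ref{monadunextAinfprop} it lands in resolutions of perverse coherent sheaves with compact, $R$-flat cohomology; conversely every such object admits a (unique up to the relevant contractible choice) minimal such resolution, giving a quasi-inverse. This yields an equivalence of groupoids $\mf M^\nil(Q_Y,W_Y)(R)\xrightarrow{\cong}\mf M(Y)(R)$ natural in $R$, hence an equivalence of the associated stacks; algebraicity of $\mf M(Y)$ on the target side follows because it is transported from the explicitly presented quotient stack $\mf M^\nil(Q_Y,W_Y)$.

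For the final statement — the equivalence of homotopy $T$-fixed points $\mf M(Q_Y,W_Y)^T\xrightarrow{\cong}\mf M_{\Perv_\cs(Y)^T}$ — the plan is to run the same argument one categorical level up, using the graded rather than completed versions of all the inputs. Concretely: the $T$-equivariant structures fixed in Section \ref{Geosetupsec} make $\Lambda$, $\Sigma$ and all the $F_i$, $E_i$ into $\bb Z^3$-graded objects, and Theorem \ref{unextAinfheartthm} (in its graded incarnation, together with the graded twisted objects Corollary \ref{twobjgrfiltcoro}) gives an equivalence $\Perv_\cs(Y)^T\cong \Lambda\Mod_{\fd}\cong H^0(\tw^0\mc A_F)$. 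The graded refinement of Proposition \ref{monadunextAinfprop} then identifies a $T$-equivariant quiver representation — equivalently a point of $\mf M_\dd^\nil(Q_Y,W_Y)$ together with a $T$-weight grading on each $V_i$ compatible with the grading on the edge space $\K\langle E_{Q_Y}\rangle$ — with a $T$-equivariant projective resolution $(\tilde H,d_B)$ of an object of $\Perv_\cs(Y)^T$. On the quiver side, such a compatible $T$-grading on the representation spaces is exactly the data of a homotopy $T$-fixed point of the stack $\mf M(Q_Y,W_Y)$: the torus $T$ acts on $X_\dd(Q_Y)$ through its action on $\K\langle E_{Q_Y}\rangle$, and a homotopy fixed point is a point together with a lift of the orbit map to a $G_\dd(Q_Y)$-cocycle, i.e. a cocharacter $T\to G_\dd(Q_Y)$ conjugating the point to itself, which is precisely a $T$-weight decomposition of $\bigoplus_i V_i$ intertwining the $T$-action on the arrows. (Here I would note that for the nilpotent locus the relevant fixed point stack agrees with the one for the full stack $\mf M(Q_Y,W_Y)$, since a $T$-fixed finite-dimensional representation is automatically nilpotent when all edges have nonzero $T$-weight; if this fails for some $Y$ one must carry the nilpotency condition through, but in the toric Calabi--Yau setting the relevant $T=\tilde T\times(\text{grading torus})$ does act with nonzero weights on all arrows of $Q_Y$.)

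Having matched objects, I would then check that the correspondence is compatible with morphisms and with base change along graded $\bb K$ algebras $R$ (equivalently, $R$-families equivariant for the trivial $T$-action), so that it defines an equivalence of the two stacks of $T$-fixed points rather than merely a bijection on isomorphism classes; this is formal once the objectwise correspondence and its naturality are in place, because both $\mf M(Q_Y,W_Y)^T$ and $\mf M_{\Perv_\cs(Y)^T}$ are, by the previous paragraph and Corollary \ref{EMoritagrcoro}, the stacks of objects of equivalent abelian categories ($\Lambda\Mod_{\fd}$ on one side via Corollary \ref{EMoritagrcoro} and Proposition \ref{csequivprop}, $\Perv_\cs(Y)^T$ on the other) and the monad map is an explicit equivalence between them compatible with the forgetful functors to $\DGVect$.

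The main obstacle I anticipate is not any single computation but the careful bookkeeping of gradings and completions: one must verify that the $T$-equivariant lift of the Koszul duality package (Theorem \ref{catthmunext}, Equation \ref{Kozgeoextendedeq}) really does descend to an equivalence of the \emph{hearts} compatibly with the graded twisted-objects description — this is the content of the graded case of Theorem \ref{unextAinfheartthm}, which the excerpt proves, so it can be cited — and, more subtly, that the passage from "homotopy $T$-fixed point of the quotient stack $[X_\dd/G_\dd]$" to "$T$-equivariant quiver representation" is an equivalence of groupoids and not just essentially surjective (two homotopy fixed structures on the same point differing by an automorphism of the cocycle must be identified). Once that translation is pinned down, the identification of $T$-fixed points of $\mf M(Q_Y,W_Y)$ with $\mf M_{\Perv_\cs(Y)^T}$ is a direct consequence of the objectwise monad correspondence of Proposition \ref{monadunextAinfprop} together with the equivalence of Theorem \ref{unextAinfheartthm}, exactly as in the non-equivariant case treated in the first part of the theorem.
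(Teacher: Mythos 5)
Your proposal shares the paper's main ingredients and is essentially right, but it reorganizes the family-level comparison in a way that introduces one avoidable weak point. The paper's proof is a two-step transport: first, the moduli-of-objects functor $\mc C \mapsto \mf M_{\mc C}$ is natural under equivalences of abelian categories, so Theorem \ref{unextheartthm} immediately gives $\mf M_{\hat\Lambda\Mod_\Fd} \xrightarrow{\cong} \mf M_{\Perv_\cs(\hat Y)}$; second, the identification $\mf M^\nil(Q_Y,W_Y) \cong \mf M_{\hat\Lambda\Mod_\Fd}$ is the ``standard'' Lemma \ref{quivermodulilemma}, proved by the $R$-point comparison of Lemma \ref{quivermoduliextlemma} (a $G_\dd$-bundle on $\Spec R$ together with an equivariant section of the relation locus is exactly a flat, compact $\hat\Lambda\otimes_{\bb K}R$-module object with fibrewise nilpotence), with Example \ref{geoquiveg2} guaranteeing that the relations cut out by $W_Y$ are the relations of $\Lambda$; the monad formula is then only ever invoked on $\bb K$-points, via Proposition \ref{monadunextAinfprop}. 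You instead propose to realize the stack equivalence by running the monad construction in $R$-families, with a quasi-inverse obtained from ``minimal resolutions, unique up to contractible choice'' of $R$-flat families of perverse coherent sheaves. That existence/uniqueness claim over a general base is established nowhere in the paper and is genuinely delicate (minimal models in the naive sense need not exist or be unique over a ring), and your opening identification of $\mf M^\nil(Q_Y,W_Y)(R)$ with families of relations-satisfying representations is not literally ``by the definition of the critical locus stack'' --- it is precisely the content of the lemma above. Both issues dissolve if you factor through the route you already cite: the base-changed equivalence $\Perv_\cs(\hat Y)_R \simeq \hat\Lambda\Mod_{\Fd,R}$ together with the quiver-moduli lemma gives the stack equivalence with no resolutions in families at all, and the monad description is then just the induced map on $\bb K$-points.

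For the homotopy $T$-fixed point statement your treatment is more explicit than the paper's (whose proof is silent on it), and your translation --- a homotopy fixed point of $[X_\dd/G_\dd]$ is a representation together with a weight decomposition of the $V_i$ intertwining the $T$-weights on $\bb K\langle E_{Q_Y}\rangle$, i.e.\ an object of $\Lambda\Mod_\fd$ --- combined with the graded equivalences (Corollary \ref{EMoritagrcoro}, Proposition \ref{csequivprop}, Theorem \ref{unextAinfheartthm}, Corollary \ref{twobjgrfiltcoro}) is the right argument. One clarification: the second statement pairs the \emph{full} stack $\mf M(Q_Y,W_Y)$ with $\Perv_\cs(Y)^T$ on $Y$ itself, so no nilpotency condition is imposed or needs to be removed; your observation that $T$-fixed finite-dimensional representations are automatically nilpotent (equivalently, that $T$-equivariant compactly supported sheaves are supported on the exceptional fibre) is the consistency check reconciling this with the completed, unframed statement, not an extra hypothesis you must verify for the proof to go through.
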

The main remaining ingredient of the proof is the following standard fact:

\begin{lemma}\label{quivermodulilemma} Let $\Lambda=\bb K\langle Q\rangle_R$ be the path algebra of a quiver with relations $(Q,R)$ and $\hat \Lambda$ its completion with respect to the filtration by path length. There are equivalences of algebraic stacks
	\[ \mf M(Q,R)  \xrightarrow{\cong} \mf M_{\Lambda\Mod_\Fd} \quad\quad \text{and} \quad\quad \mf M^\nil(Q,R) \xrightarrow{\cong}\mf M_{\hat \Lambda\Mod_\Fd}   \ . \]
\end{lemma}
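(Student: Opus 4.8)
The plan is to unwind both sides over an arbitrary test ring $R$ into the same linear‑algebraic data and then appeal to fppf descent for vector bundles. By Definition \ref{basecatdefn}, an $R$‑point of $\mf M_{\Lambda\Mod_\Fd}$ is an object $E$ of $\textup{Ind}(\Lambda\Mod_\Fd)$ together with an algebra map $R\to\End(E)$, such that $E$ is flat and compact over $R$; equivalently, a $\Lambda\otimes_{\K}R$‑module $E$ subject to those conditions. First I would show that the flat, compact such $E$ are precisely those whose underlying $R$‑module is finitely generated projective: compactness forces $E$ to be a genuine (finitely presented) object rather than a proper filtered colimit, and flatness over $R$ then upgrades ``finitely presented'' to ``projective''. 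Conversely any finitely generated projective $R$‑module with a compatible $\Lambda$‑action is automatically flat and compact. This identifies the right‑hand side with the stack whose $R$‑points are pairs consisting of a finitely generated projective $R$‑module $V$ and a $\K$‑algebra map $\Lambda\to\End_R(V)$.

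Next I would exploit the semisimplicity of the base ring $S=\bigoplus_{i\in V_Q}\K$: its orthogonal idempotents act on $V$ and split it canonically as $V=\bigoplus_{i\in V_Q}V_i$ with each $V_i$ finitely generated projective over $R$, hence Zariski‑locally free of some rank $d_i$ on each connected component. By the universal property of the path algebra $\Lambda=\K Q_R=\otimes^\bullet_S\K\langle E_Q\rangle/R$, an $S$‑compatible algebra map $\Lambda\to\End_R(V)$ is exactly a collection of $R$‑linear maps $B_e\colon V_{s(e)}\to V_{t(e)}$ for $e\in E_Q$ whose induced map annihilates the two‑sided ideal $R$. On a cover $T'\to T$ trivializing all the $V_i$ this datum is a $T'$‑point of $Z_\dd(Q,R)$, and the ambiguity of trivialization is a $G_\dd(Q)=\prod_{i}\Gl_{d_i}(\K)$‑torsor; descending along $T'\to T$ therefore produces precisely a $T$‑point of $[Z_\dd(Q,R)/G_\dd(Q)]$. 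These assignments are compatible with isomorphisms and with base change, so they glue to an equivalence of stacks $\mf M(Q,R)=\bigsqcup_{\dd}[Z_\dd(Q,R)/G_\dd(Q)]\xrightarrow{\cong}\mf M_{\Lambda\Mod_\Fd}$, given on $\K$‑points by the stated formula.

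For the completed algebra, the key observation is that a finitely presented $R$‑module with $\Lambda$‑action is a continuous $\hat\Lambda$‑module if and only if the ideal $\K Q_{(n)}$ acts by zero for $n\gg 0$, since a finite‑length $\hat\Lambda$‑module is the same as a $\Lambda$‑module on which the augmentation ideal acts nilpotently, and this is visibly the condition cutting out $X^\nil_\dd(Q)$ and is stable under base change. Hence the dictionary above restricts to the second claimed equivalence $\mf M^\nil(Q,R)=\bigsqcup_{\dd}[Z^\nil_\dd(Q,R)/G_\dd(Q)]\xrightarrow{\cong}\mf M_{\hat\Lambda\Mod_\Fd}$. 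The step I expect to be the real obstacle is the first one: pinning down exactly what ``flat and compact over $R$'' carves out inside $\textup{Ind}(\Lambda\Mod_\Fd)$, i.e.\ the interaction of the Ind‑completion with compactness and with flatness over a possibly non‑Noetherian base $R$. Once that is settled the remainder is a routine translation through idempotents, the universal property of path algebras, and fppf descent for vector bundles, the latter being precisely what converts the quotient‑stack presentation into the intrinsic moduli‑of‑objects description; for the general formalism of $\mf M_{\mc C}$ I would cite \cite{AHH}.
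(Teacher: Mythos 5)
Your argument is correct and is essentially the paper's: the paper proves this by specializing its generalized (extended-quiver) version, Lemma \ref{quivermoduliextlemma}, whose proof is exactly your unwinding of $R$-points — compact plus flat objects of the base-changed Ind-category are identified with finitely generated projective $R$-modules, split by the idempotents of $S$ into vector bundles $V_i$, with the $\Lambda$-action encoded by sections of the associated bundles satisfying the relations (and, for $\hat\Lambda$, the nilpotency condition), then assembled via descent into points of the quotient stack. The only difference is presentational: you treat the unframed case directly rather than deducing it from the framed statement, which changes nothing of substance.
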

\begin{proof}
	This is a standard result, but we give a proof a generalization of it in Lemma \ref{quivermodulilemma}; taking $\dd=(\dd_0,0)\in \bb N^{V_{Q}}\times \{0\} \subset \bb N^{V_{Q_M}}$ in \emph{loc. cit.} gives precisely this proposition, by commutativity of the diagram in Equation \ref{KozgeoFcompeqn}.
\end{proof}

\begin{proof}{(of Theorem \ref{stackthmunext}}
	The definition of the moduli of objects functor is evidently natural with respect to equivalences of categories, so that we obtain a canonical equivalence of algebraic stacks 
	\[\mf M_{\hat \Lambda\Mod_\Fr}  \xrightarrow{\cong} \mf M_{\Perv_\cs(\hat Y)}  , \]
	by the equivalence of Theorem \ref{unextheartthm}. Composing with the equivalence of algebraic stacks of Proposition \ref{quivermodulilemma} yields the desired equivalence of Equation \ref{modulistackequivunexteqn}, and the induced equivalence on groupoids of $\bb K$ points is defined on objects by the claimed formula, by Proposition \ref{monadunextAinfprop}. Finally, we note that the quiver with relations determined by $\Sigma$ is in fact a quiver with potential, by Example \ref{geoquiveg2} above.
\end{proof}

\subsection{Examples}\label{egunfrsec}

In this section, we recall computations of the quivers with potential corresponding to Calabi-Yau threefolds in several concrete examples.

\begin{eg}\label{a3eg}
	Let $Y=X=\C^3$, so that we have a single projective object $E=\mc O_{\C^3}$ and corresponding simple object $ F_0 = \iota_*\mc O_\pt$. The algebra $\Sigma$ is given by
	$$ \Sigma \cong \Sym^\bullet( \K^3[-1]) = \K \oplus \K^3[-1] \oplus \K^3[-2] \oplus \K[-3] $$
	the symmetric algebra on 3 generators of degree $1$, and the corresponding quiver with potential $(Q_Y,W_Y)$ is given by
	\[ Q_Y=
	\begin{tikzcd}
		\mathcircled{F_0}\arrow[out=340,in=20,loop,swap,"B_3"]
		\arrow[out=220,in=260,loop,swap,"B_2"]
		\arrow[out=100,in=140,loop,swap,"\normalsize{B_1}"]
	\end{tikzcd}  \quad\quad\text{and}\quad\quad   W_Y = B_1[B_2,B_3]   \ .
	\]
	Thus the path algebra $\Lambda$ is given by the commutative algebra $\K[x,y,z]=\mc O(\C^3)$ of global functions on $\C^3$.
	
The single compactly supported simple object $F=\iota_*\mc O_\pt$ corresponding to the one node of the quiver has projective resolution determined by the injective $\Sigma$ module $I=\Sigma^\vee$ with composition series
	\[ I = \left[  \K < \K^3[1] < \K^3[2] < \K[3]   \right] \ . \]
	There are three independent extension classes of the simple $\Sigma$ module $S=\bb K$ by itself, which correspond to multiplication by the coordinate functions $x,y,z$, under the identification
	\[ \Ext^1_\Sigma(S,S)\cong \Lambda^1= \C^3_{x,y,z} \subset \Lambda = \Hom(\mc O, \mc O) = \mc O(\bb A^3) = \K[x,y,z]  \ , \]
	so that the resolution determined by the composition series of $I$ as in Equation \ref{simplereseqn} is given in this example by
	\begin{equation}
		K(I)=	[ \mc{O}\xrightarrow{\scriptsize{\begin{pmatrix}-x\\ y\\ -z \end{pmatrix}}}\mc{O}^3\xrightarrow{\scriptsize{\begin{pmatrix}0&-z&-y\\ -z&0&x\\ y& x& 0 \end{pmatrix}}}\mc{O}^3\xrightarrow{\scriptsize{\begin{pmatrix}x&y&z \end{pmatrix}}}\mc{O}]\xrightarrow{\cong} \iota_* \mc{O}_{\textup{pt}}=F \ , 
	\end{equation}
	which is simply the usual Koszul resolution of $\iota_* \mc O_\pt$.
	
	The generators $b_i\in \Ext^1(F,F)$ for $i=1,2,3$ correspond to the maps of complexes
	\begin{eqnarray*}
		b_1\quad :\quad 
		\xymatrixcolsep{3.5pc}
		\xymatrixrowsep{3pc}
		\xymatrix{ & \mc{O}\ar[d]^{\scriptsize{\begin{pmatrix}-1\\0\\0\end{pmatrix}}} \ar[r] & \mc{O}^3\ar[d]^{\scriptsize{\begin{pmatrix}0&0 & 0\\0&0&-1\\0&-1&0\end{pmatrix}}} \ar[r] &  \mc{O}^3\ar[d]^{\scriptsize{\begin{pmatrix}1&0&0\end{pmatrix}}} \ar[r] & \mc{O}  \\
			\mc{O} \ar[r] & \mc{O}^3 \ar[r] &  \mc{O}^3 \ar[r] & \mc{O}&}\\
		\\ \nonumber
		b_2\quad :\quad
		\xymatrixcolsep{3pc}
		\xymatrixrowsep{3pc}
		\xymatrix{ & \mc{O}\ar[d]^{\scriptsize{\begin{pmatrix}0\\1\\0\end{pmatrix}}} \ar[r] & \mc{O}^3\ar[d]^{\scriptsize{\begin{pmatrix}0&0 & 1\\0&0&0\\-1&0&0\end{pmatrix}}} \ar[r] &  \mc{O}^3\ar[d]^{\scriptsize{\begin{pmatrix}0&1&0\end{pmatrix}}} \ar[r] & \mc{O}  \\
			\mc{O} \ar[r] & \mc{O}^3 \ar[r] &  \mc{O}^3 \ar[r] & \mc{O}&}\\
		\\ \nonumber
		b_3 \quad :\quad
		\xymatrixcolsep{3pc}
		\xymatrixrowsep{3pc}
		\xymatrix{ & \mc{O}\ar[d]^{\scriptsize{\begin{pmatrix}0\\0\\-1\end{pmatrix}}} \ar[r] & \mc{O}^3\ar[d]^{\scriptsize{\begin{pmatrix}0&1 & 0\\1&0&0\\0&0&0\end{pmatrix}}} \ar[r] &  \mc{O}^3\ar[d]^{\scriptsize{\begin{pmatrix}0&0&1\end{pmatrix}}} \ar[r] & \mc{O}  \\
			\mc{O} \ar[r] & \mc{O}^3 \ar[r] &  \mc{O}^3 \ar[r] & \mc{O}&}
	\end{eqnarray*}
	so that the monad formalism of Proposition \ref{monadunextprop}, which determines the resolution $(\tilde H,d)$ of an object $H\in \Coh_\cs(\bb C^3)$ in terms of the corresponding quiver representation $V$, is given by
	\begin{equation}
		\mc{O}\otimes V \xrightarrow{\scriptsize{\begin{pmatrix}B_1-x\\ y-B_2\\ B_3-z \end{pmatrix}}}\mc{O}^3\otimes  V\xrightarrow{\scriptsize{\begin{pmatrix}0&B_3-z&B_2-y \\ B_3-z &0&x-B_1\\ y-B_2& x-B_1& 0 \end{pmatrix}}}\mc{O}^3 \otimes V\xrightarrow{\scriptsize{\begin{pmatrix}x-B_1&y-B_2&z-B_3 \end{pmatrix}}}\mc{O}\otimes V\ ,
	\end{equation}
	with $\dim V=n$ given by the coefficient of the $K$ theory class of $H$ in terms of the simple object $[H]=n[F]$, or equivalently the length of the corresponding pure sheaf of dimension $0$.
\end{eg}

\begin{eg}\label{o2eg}
	Let $Y=|\mc O_{\bb P^1}\oplus \mc O_{\bb P^1}(-2)| \to X= Y^\textup{aff}\cong  \Spec \C[x_1,x_2,x_3,x_4]/(x_1x_2-x_3^2)$ so that the simple objects in $\Perv_\cs(Y)$ are given by $F_0=\iota_* \mc O_{\bb P^1}$ and $F_1= \iota_*\mc O_{\bb P^1}(-1)[1]$.
	
	The algebra $\Sigma$ can be computed explicitly as follows: let $\tilde F_0=\mc O_{\bb P^1}$ and $\tilde F_1=\mc O_{\bb P^1}(-1)[1]$, and for simplicity we drop the dependence on $\bb P^1$ from the notation. Then we have
	\[ \Sigma =  \bigoplus_{i,j=0,1}  \ _j \Sigma_i \cong \bigoplus_{i,j=0,1} \Ext^\bullet( F_i, F_j\otimes \Sym^\bullet((\mc O \oplus \mc O(-2)) [-1]))  \ ,  \]
	so that we have
	\begin{align*} 
		_0 \Sigma_0 & := \Ext^\bullet( \mc O(-1)[1], \mc O(-1)[1]\otimes \Sym^\bullet((\mc O \oplus \mc O(-2)) [-1])) \\
		& = \Ext^0(\mc O, \mc O) \oplus \Ext^0(\mc O,\mc O)[-1]\oplus \Ext^1(\mc O,\mc O(-2))[-2] \oplus \Ext^1(\mc O, \mc O(-2))[-3] \\
		& \cong \K \oplus \K[-1] \oplus \K[-2] \oplus \K[-3] \\
		_1\Sigma_1 & :=  \Ext^\bullet( \mc O, \Sym^\bullet((\mc O \oplus \mc O(-2)) [-1]))  \\
		& = \Ext^0(\mc O, \mc O) \oplus \Ext^0(\mc O,\mc O)[-1]\oplus \Ext^1(\mc O,\mc O(-2))[-2] \oplus \Ext^1(\mc O, \mc O(-2))[-3] \\
		& \cong \K \oplus \K[-1] \oplus \K[-2] \oplus \K[-3] \\
		_1 \Sigma_0 & :=  \Ext^\bullet( \mc O(-1)[1], \Sym^\bullet((\mc O \oplus \mc O(-2)) [-1])) \\
		& \cong \Ext^0(\mc O, \mc O(1))[-1] \oplus \Ext^0(\mc O,\mc O(1))[-2] \\
		& \cong \K^2[-1] \oplus \K^2[-2] \\
		_0 \Sigma_1 & := \Ext^\bullet( \mc O, \mc O(-1)[1]\otimes \Sym^\bullet((\mc O \oplus \mc O(-2)) [-1])) \\
		& \cong \Ext^1(\mc O, \mc O(-3))[-1] \oplus \Ext^1(\mc O, \mc O(-3))[-2] \\
		& \cong \K^2[-1] \oplus \K^2[-2]  & .
	\end{align*}
and the corresponding quiver with potential $(Q_Y,W_Y)$ is given by
	\[	Q_Y=
	\begin{tikzcd}
	 \arrow[out=160,in=200,loop,swap,"E"] \mathcircled{F_0} \arrow[r, bend left=25 ] \arrow[r, bend left=40 ,  "A\ C"]  & \arrow[l, bend left=25 ] \arrow[l, bend left=40 ,  "B\ D"] \mathcircled{F_1}\arrow[out=340,in=20,loop,swap,"F"]
	\end{tikzcd}
 \quad\quad \text{and}\quad\quad W_Y =E(BC-DA)+ F(AD-CB) \ .
	\] 
	
	The compactly supported objects $F_0=\iota_*\mc O_{\bb P^1}$ and $F_1=\iota_* \mc O_{\bb P^1}(-1)[1]$ corresponding to the two nodes of the quiver have projective resolutions determined by the injective $\Sigma$ modules
	\begin{align*}
		I_0 &  = [ S_0 < S_0\oplus S_1^{\oplus 2}[1] < S_0\oplus S_1^{\oplus 2}[2] < S_0[3] ] \\
		I_1 & = [ S_1 < S_1\oplus S_0^{\oplus 2}[1] < S_1\oplus S_0^{\oplus 2}[2] < S_1[3]  ]  \ .
	\end{align*}
	
	There is a single independent self extension class for each of the simple objects which both correspond to the map of coherent sheaves $y$, as well as two independent extension classes in each of the groups $\Ext^1_\Sigma(S_1,S_2)$ and $\Ext^1_\Sigma(S_2,S_1)$, which correspond to the maps of coherent sheaves $1,z$ and $x,y$, respectively, under the identifications
	\begin{align*}
		\Ext^1_\Sigma(S_0,S_0) & \cong(_0\Lambda_0)^1 \cong \Hom_{\bb P^1}(\mc O_{\bb P^1}, \mc O_{\bb P^1}) = \C_y  & \subset \   _0\Lambda_0 = \Hom_Y(\mc O, \mc O)   \\	
		\Ext^1_\Sigma(S_1,S_1) & \cong(_1\Lambda_1)^1 \cong \Hom_{\bb P^1}(\mc O_{\bb P^1}, \mc O_{\bb P^1}) = \C_y  & \subset \   _1\Lambda_1 = \Hom_Y(\mc O, \mc O)   \\
		\Ext^1_\Sigma(S_0,S_1)  & \cong(_0\Lambda_1)^1 \cong \Hom_{\bb P^1}(\mc O_{\bb P^1}, \mc O_{\bb P^1}(1) ) = \C^2_{1,z} & \subset \   _0\Lambda_1 = \Hom_Y(\mc O, \mc O(1))   \\
		\Ext^1_\Sigma(S_1,S_0) & \cong ( _1 \Lambda_0)^1 \cong \Hom_{\bb P^1} (\mc O_{\bb P^1}, \mc O_{\bb P^1}(1))= \bb C_{x,xz}^2 & \subset \   _1\Lambda_0 = \Hom_Y(\mc O(1), \mc O) \end{align*}
	where $z$ denotes some choice of local coordinate on the base $\bb P^1$ and $x,y$ denote the linear coordinates on the fibres of $\mc O(-2)$ and $\mc O$, respectively. Thus, the resolutions of the simple objects $F_i$ are given by
	\begin{align*}
		K(I_0) & = [ \mc O \xrightarrow{\scriptsize{\begin{pmatrix}y \\ 1 \\ z \end{pmatrix}}} \mc O \oplus \mc O(1)^2 \xrightarrow{\scriptsize{\begin{pmatrix}0 & xz & -x \\ -z & 0  & y \\ 1 & -y & 0  \end{pmatrix}}} \mc O \oplus \mc O(1)^2 \xrightarrow{\scriptsize{\begin{pmatrix}y & x & xz \end{pmatrix}}} \mc O]   \xrightarrow{\cong} s_* \mc O_{\bb P^1}=F_0 & \textup{ and}	\\
		K(I_1) & = [ \mc O(1) \xrightarrow{\scriptsize{\begin{pmatrix}y \\ x \\ xz \end{pmatrix}}}\mc O(1)\oplus \mc O^2 \xrightarrow{\scriptsize{\begin{pmatrix}0 & z & -1 \\ -xz & 0 & y \\ x & -y & 0  \end{pmatrix}}} \mc O(1)\oplus \mc O^2  \xrightarrow{\scriptsize{\begin{pmatrix} y & 1 & z \end{pmatrix}}} \mc O(1)]   \xrightarrow{\cong} s_* \mc O_{\bb P^1}(-1)[1] = F_1 & \ .
	\end{align*}
The maps between these resolutions corresponding to the generators of $\Sigma^1$ are given by:
\begin{align}\nonumber
	e :\quad 
	\xymatrixcolsep{3pc}
	\xymatrixrowsep{3pc}
	\xymatrix{ & \mc{O}\ar[d]^{\scriptsize{\begin{pmatrix}1\\0\\0\end{pmatrix}}} \ar[r] & \mc{O}\oplus \mc O(1)^2\ar[d]^{\scriptsize{\begin{pmatrix}0&0 & 0\\0&0&-1\\0&1&0\end{pmatrix}}} \ar[r] &  \mc{O}\oplus\mc O(1)^2\ar[d]^{\scriptsize{\begin{pmatrix}1& 0 & 0\end{pmatrix}}} \ar[r] & \mc{O}  \\
		\mc{O} \ar[r] & \mc O\oplus \mc{O}(1)^2 \ar[r] &  \mc{O}\oplus \mc{O}(1)^2 \ar[r] & \mc{O} & }\\ \nonumber
	f: \quad 
	\xymatrixcolsep{3pc}
	\xymatrixrowsep{3pc}
	\xymatrix{ & \mc{O}(1)\ar[d]^{\scriptsize{\begin{pmatrix}1\\0\\0\end{pmatrix}}} \ar[r] & \mc{O}(1)\oplus \mc O^2\ar[d]^{\scriptsize{\begin{pmatrix}0&0 & 0\\0&0&-1\\0&1&0\end{pmatrix}}} \ar[r] &  \mc{O}(1)\oplus\mc O^2\ar[d]^{\scriptsize{\begin{pmatrix}1& 0 & 0\end{pmatrix}}} \ar[r] & \mc{O}(1)  \\
		\mc{O}(1) \ar[r] & \mc O(1)\oplus \mc{O}^2 \ar[r] &  \mc{O}(1)\oplus \mc{O}^2 \ar[r] & \mc{O}(1) &}\\ \nonumber
	a :\quad 
	\xymatrixcolsep{3pc}
	\xymatrixrowsep{3pc}
	\xymatrix{ & \mc{O}\ar[d]^{\scriptsize{\begin{pmatrix}0\\1\\0\end{pmatrix}}} \ar[r] & \mc{O}\oplus \mc O(1)^2\ar[d]^{\scriptsize{\begin{pmatrix}0&0 & 1\\0&0&0\\-1&0&0\end{pmatrix}}} \ar[r] &  \mc{O}\oplus\mc O(1)^2\ar[d]^{\scriptsize{\begin{pmatrix}0& 1 & 0\end{pmatrix}}} \ar[r] & \mc{O}  \\
		\mc{O}(1) \ar[r] & \mc O(1)\oplus \mc{O}^2 \ar[r] &  \mc{O}(1)\oplus \mc{O}^2 \ar[r] & \mc{O}(1) & }\\ \nonumber
	c: \quad 
	\xymatrixcolsep{3pc}
	\xymatrixrowsep{3pc}
	\xymatrix{ & \mc{O}\ar[d]^{\scriptsize{\begin{pmatrix}0\\0\\1\end{pmatrix}}} \ar[r] & \mc{O}\oplus \mc O(1)^2\ar[d]^{\scriptsize{\begin{pmatrix}0&-1 & 0\\1&0&0\\0&0&0\end{pmatrix}}} \ar[r] &  \mc{O}\oplus\mc O(1)^2\ar[d]^{\scriptsize{\begin{pmatrix}0&0&1\end{pmatrix}}} \ar[r] & \mc{O}  \\
		\mc{O}(1) \ar[r] & \mc O(1)\oplus \mc{O}^2 \ar[r] &  \mc{O}(1)\oplus \mc{O}^2 \ar[r] & \mc{O}(1) &}\\\nonumber
	\\ \nonumber
	b: \quad 
	\xymatrixcolsep{3pc}
	\xymatrixrowsep{3pc}
	\xymatrix{ & \mc{O}(1)\ar[d]^{\scriptsize{\begin{pmatrix}0\\1\\0\end{pmatrix}}} \ar[r] & \mc{O}(1)\oplus \mc O^2\ar[d]^{\scriptsize{\begin{pmatrix}0&0 & 1\\0&0&0\\-1&0&0\end{pmatrix}}} \ar[r] &  \mc{O}(1)\oplus\mc O^2\ar[d]^{\scriptsize{\begin{pmatrix}0& 1 & 0\end{pmatrix}}} \ar[r] & \mc{O}(1)  \\
		\mc{O} \ar[r] & \mc O\oplus \mc{O}(1)^2 \ar[r] &  \mc{O}\oplus \mc{O}(1)^2 \ar[r] & \mc{O} &} \\\nonumber
	d: \quad 
	\xymatrixcolsep{3pc}
	\xymatrixrowsep{3pc}
	\xymatrix{ & \mc{O}(1)\ar[d]^{\scriptsize{\begin{pmatrix}0\\0\\1\end{pmatrix}}} \ar[r] & \mc{O}(1)\oplus \mc O^2\ar[d]^{\scriptsize{\begin{pmatrix}0&-1 & 0\\1&0&0\\0&0&0\end{pmatrix}}} \ar[r] &  \mc{O}(1)\oplus\mc O^2\ar[d]^{\scriptsize{\begin{pmatrix}0&0&1\end{pmatrix}}} \ar[r] & \mc{O}(1)  \\
		\mc{O} \ar[r] & \mc O\oplus \mc{O}(1)^2 \ar[r] &  \mc{O}\oplus \mc{O}(1)^2 \ar[r] & \mc{O} &}
\end{align}
so that the monad formalism of Proposition \ref{monadunextprop} in this example determines the resolution $(\tilde H,d)$ of an object $H\in\Perv_\cs(Y)$ in terms of the quiver representation $(V_0,V_1)$ by
\begin{equation*}
	\xymatrix @R=-.3pc @C=-2.2pc{ & {\scriptsize{\begin{pmatrix} y-E & 0 \\ 1 & B \\ z & D \\ 0 & y-F \\ A & x \\ C & xz\end{pmatrix}}} & && {\scriptsize{\begin{pmatrix} 0 & xz & -x & 0 & D & -B \\ -z & 0 & y-E & -D & 0 & 0 \\ 1 & E-y & 0 & B & 0 & 0 \\ 0 & C & -A & 0 & z & -1 \\ -C & 0 & 0 & -xz & 0 & y-F \\ A & 0 & 0 & x & F-y& 0  \end{pmatrix}}} && && \scriptsize{\begin{pmatrix} y-E & x & xz & 0 & B & D \\ 0 & A & C & y-F & 1 & z \end{pmatrix}}&\\
 \mc O \otimes V_0   & &( \mc O \oplus \mc O(1)^2) \otimes V_0  && && (\mc O \oplus \mc O(1)^2) \otimes V_0 &&  &&\mc O  \otimes V_0 	\\
 \oplus&  \longrightarrow & \oplus && \longrightarrow && \oplus && \longrightarrow && \oplus \\
 \\
 \mc O(1) \otimes V_1  & & (\mc O(1)\oplus \mc O^2) \otimes V_1 &&& &   (\mc O(1)\oplus \mc O^2) \otimes V_1 &&&& \mc O(1) \otimes V_1} \ .
\end{equation*}
\end{eg}

\begin{eg}\label{o1eg} Let $Y=|\mc O_{\bb P^1}(-1)\oplus \mc O_{\bb P^1}(-1)|  \to X = Y^\textup{aff}\cong \Spec \C[x_1,x_2,x_3,x_4]/(x_1x_2-x_3x_4)$ so that the simple objects in $\Perv_\cs(Y)$ are given by $F_0=\iota_* \mc O_{\bb P^1}$ and $F_1= \iota_*\mc O_{\bb P^1}(-1)[1]$, as in the previous example.
		
The algebra $\Sigma$ can be computed explicitly, as in the previous example, by
\[ \Sigma=\bigoplus_{i,j=0,1} \ _i\Sigma_j  \cong \bigoplus_{i,j=0,1} \Ext^\bullet( F_i, F_j\otimes \Sym^\bullet((\mc O(-1) \oplus \mc O(-1)) [-1])) \]
where we have
		\begin{align*} 
			_0 \Sigma_0 & := \Ext^\bullet(\mc O(-1)[1],\mc O(-1)[1]\otimes \Sym^\bullet((\mc O(-1) \oplus \mc O(-1)) [-1])) \\ 
			& \cong \Ext^0(\mc O,\mc O) \oplus \Ext^1(\mc O,\mc O(-2))[-3] \\
			& \cong \K \oplus \K[-3] \\
			_1 \Sigma _1 & := \Ext^\bullet(\mc O,\Sym^\bullet((\mc O(-1) \oplus \mc O(-1)) [-1])) \\ 
			& \cong \Ext^0(\mc O,\mc O) \oplus \Ext^1(\mc O,\mc O(-2))[-3] \\
			& \cong \K \oplus \K[-3] \\
			_0 \Sigma _1 & : = \Ext^\bullet(\mc O,\mc O(-1)[1]\otimes \Sym^\bullet((\mc O(-1) \oplus \mc O(-1)) [-1])) \\
			& \cong  \Ext^1(\mc O, \mc O(-2))^{\oplus 2}[-1] \oplus \Ext^1(\mc O,\mc O(-3))[-2] \\
			& \cong \K^2[-1] \oplus \K^2[-2] \\
			_1 \Sigma _0 & : = \Ext^\bullet(\mc O(-1)[1], \Sym^\bullet((\mc O(-1) \oplus \mc O(-1)) [-1])) \\		
			& \cong	\Ext^0(\mc O, \mc O(1))[-1] \Ext^0(\mc O, \mc O)^{\oplus 2}[-2]\\
			& \cong \K^2[-1] \oplus \K^2[-2] 
		\end{align*}
Thus, the corresponding quiver with potential $(Q_Y,W_Y)$ is given by
\[Q_Y=
\begin{tikzcd}
	\mathcircled{F_0} \arrow[r, bend left=25 ] \arrow[r, bend left=40 ,  "A\ C"]  & \arrow[l, bend left=25 ] \arrow[l, bend left=40 ,  "B\ D"]  \mathcircled{F_1}
\end{tikzcd}
\quad\quad \text{and}\quad\quad
W_Y= ABCD-ADCB \ .
\]

	The compactly supported objects $F_0=s_*\mc O_{\bb P^1}$ and $F_1=s_* \mc O_{\bb P^1}(-1)[1]$ corresponding to the two nodes of the quiver have projective resolutions determined by the injective $\Sigma$ modules
	\begin{align*}
		I_1 &  = [ S_1 < S_2^{\oplus 2}[1] < S_2^{\oplus 2}[2] < S_1[3] ] \\
		I_2 & = [ S_2 < S_1^{\oplus 2}[1] < S_1^{\oplus 2}[2] < S_2[3]  ]  \ .
	\end{align*}
	
	There are no non-trivial self extensions of the simple objects, while there are two independent extension classes in each of the groups $\Ext^1_\Sigma(S_1,S_2)$ and $\Ext^1_\Sigma(S_2,S_1)$, which correspond to the maps of coherent sheaves $1,z$ and $x,y$, respectively, under the identifications
	\begin{align*}
		\Ext^1_\Sigma(S_1,S_2)  & \cong(_1\Lambda_2)^1 \cong \Hom_{\bb P^1}(\mc O_{\bb P^1}, \mc O_{\bb P^1}(1) ) = \C^2_{1,z} & \subset \   _1\Lambda_2 = \Hom_Y(\mc O, \mc O(1))   \\
		\Ext^1_\Sigma(S_2,S_1) & \cong ( _2 \Lambda_1)^1 \cong \Hom_{\bb P^1} (\mc O_{\bb P^1}, \mc O_{\bb P^1}^{\oplus 2})= \bb C_{x,y}^2 & \subset \   _2\Lambda_1 = \Hom_Y(\mc O(1), \mc O) \end{align*}
	where $z$ denotes some choice of local coordinate on the base $\bb P^1$ and $x,y$ denote the linear coordinates on the fibres of the rank two bundle. Thus, the resolutions of the simple objects $F_i$ are given by
	\begin{align*}
		K(I_1) & = [ \mc O \xrightarrow{\scriptsize{\begin{pmatrix}1 \\ z \end{pmatrix}}} \mc O(1)^2 \xrightarrow{\scriptsize{\begin{pmatrix}zy & -y \\ -zx & x  \end{pmatrix}}} \mc O(1)^2 \xrightarrow{\scriptsize{\begin{pmatrix}x & y \end{pmatrix}}} \mc O]   \xrightarrow{\cong} s_* \mc O_{\bb P^1}=F_1	\\
		K(I_2) & = [ \mc O(1) \xrightarrow{\scriptsize{\begin{pmatrix} x \\ y \end{pmatrix}}} \mc O^2 \xrightarrow{\scriptsize{\begin{pmatrix}yz & -xz \\ -y & x  \end{pmatrix}}} \mc O^2  \xrightarrow{\scriptsize{\begin{pmatrix}1 & z \end{pmatrix}}} \mc O(1)]   \xrightarrow{\cong} s_* \mc O_{\bb P^1}(-1)[1] = F_2
	\end{align*}
	The maps between these resolutions corresponding to the generators of $\Sigma^1$ are given by:
	\begin{align}\nonumber
		a :\quad 
		\xymatrixcolsep{3pc}
		\xymatrixrowsep{3pc}
		\xymatrix{ & \mc{O}\ar[d]^{\scriptsize{\begin{pmatrix}-1 \\ 0 \end{pmatrix}}} \ar[r] & \mc O(1)^2\ar[d]^{\scriptsize{\begin{pmatrix}0&-y \\ y &0 \end{pmatrix}}} \ar[r] & \mc O(1)^2\ar[d]^{\scriptsize{\begin{pmatrix} 1 & 0\end{pmatrix}}} \ar[r] & \mc{O}  \\
			\mc{O}(1) \ar[r] & \mc{O}^2 \ar[r] &  \mc{O}^2 \ar[r] & \mc{O}(1) & }\\ \nonumber
		c: \quad 
		\xymatrixcolsep{3pc}
		\xymatrixrowsep{3pc}
		\xymatrix{ & \mc{O}\ar[d]^{\scriptsize{\begin{pmatrix}0\\-1\end{pmatrix}}} \ar[r] & \mc O(1)^2\ar[d]^{\scriptsize{\begin{pmatrix}0&x \\-x&0 \end{pmatrix}}} \ar[r] & \mc O(1)^2\ar[d]^{\scriptsize{\begin{pmatrix}0& 1 \end{pmatrix}}} \ar[r] & \mc{O}  \\
			\mc{O}(1) \ar[r] & \mc{O}^2 \ar[r] &  \mc{O}^2 \ar[r] & \mc{O}(1) & }\\ \nonumber
		b: \quad 
		\xymatrixcolsep{3pc}
		\xymatrixrowsep{3pc}
		\xymatrix{ & \mc{O}(1)\ar[d]^{\scriptsize{\begin{pmatrix}-1\\0\end{pmatrix}}} \ar[r] & \mc O^2\ar[d]^{\scriptsize{\begin{pmatrix}0 & -z\\ z&0\end{pmatrix}}} \ar[r] &  \mc O^2\ar[d]^{\scriptsize{\begin{pmatrix} 1 & 0\end{pmatrix}}} \ar[r] & \mc{O}(1)  \\
			\mc{O} \ar[r] & \mc{O}(1)^2 \ar[r] & \mc{O}(1)^2 \ar[r] & \mc{O} &} \\ \nonumber
		d: \quad 
		\xymatrixcolsep{3pc}
		\xymatrixrowsep{3pc}
		\xymatrix{ & \mc{O}(1)\ar[d]^{\scriptsize{\begin{pmatrix}0 \\-1 \end{pmatrix}}} \ar[r] & \mc O^2\ar[d]^{\scriptsize{\begin{pmatrix}0& 1\\-1&0 \end{pmatrix}}} \ar[r] &  \mc O^2\ar[d]^{\scriptsize{\begin{pmatrix}0& 1 \end{pmatrix}}} \ar[r] & \mc{O}(1)  \\
			\mc{O} \ar[r] & \mc{O}(1)^2 \ar[r] & \mc{O}(1)^2 \ar[r] & \mc{O} &} \
	\end{align}
	so that the monad formalism of Proposition \ref{monadunextAinfprop} in this example determines the resolution $(\tilde H,d)$ of an object $H\in\Perv_\cs(Y)$ in terms of the quiver representation $(V_0,V_1)$ by
	\begin{equation*}
		\xymatrix @R=-.3pc @C=-1.5pc{ & {\scriptsize{\begin{pmatrix}  1 & -B \\ z & -D \\ -A & x \\ - C & y \end{pmatrix}}} & && {\scriptsize{\begin{pmatrix}zy-DC & CB-y & 0 & zB-D \\ D A - zx & x-BA & D-zB & 0 \\ 0 & yA-xC & yz - CD & AD-xz \\ xC-yA & 0 & CB-y & x-AB \end{pmatrix}}} && && \scriptsize{\begin{pmatrix} x & y & B & D \\ A & C & 1 & z \end{pmatrix}}&\\
			\mc O \otimes V_0   & & \mc O(1)^2 \otimes V_0  && &&\mc O(1)^2 \otimes V_0 &&  &&\mc O  \otimes V_0 	\\
			\oplus&  \longrightarrow & \oplus && \longrightarrow && \oplus && \longrightarrow && \oplus \\
			\\
			\mc O(1) \otimes V_1  & & \mc O^2 \otimes V_1 &&& &    \mc O^2 \otimes V_1 &&&& \mc O(1) \otimes V_1} \ .
	\end{equation*}

\end{eg}

\subsection{Koszul resolutions from Beilinson spectral sequences}\label{beilressec} In this section, we explain the existence of the canonical Koszul resolutions of perverse coherent sheaves described in Equation \ref{Kosreseqn}, in terms of the Beilinson spectral sequence induced by a natural resolution of the diagonal in terms of the distinguished projective generators. Similar arguments feature in the proofs of some of the main results of \cite{Bdg1}, \cite{VdB1},\cite{VdB2}, and \cite{BrKR}, though in this section we follow most closely the explicit discussion in \cite{King}.

\begin{warning} The results of this section will not be used in the remainder of the text, so the proofs are omitted for brevity. We recommend \cite{King} for statements of the results closest to the presentation here.
\end{warning}

Throughout this section, let $Y$ be a smooth variety, $E\in \DD^b\Coh(Y)$ a classical tilting object, and $\Lambda=\Hom_{\DD^b\Coh(Y)}(E,E)$ the (classical) associative algebra of endomorphisms of $E$. Then as above, we obtain inverse equivalences of triangulated categories 
\[ \Hom_{\DD^b\Coh(Y)}(E,\cdot):\DD^b\Coh(Y) \xymatrix{  \ar@<.5ex>[r]^{\cong} & \ar@<.5ex>[l]} {\textup{D}_\perf}(\Lambda) : (\cdot)\otimes_{\Lambda}E  \ , \] 
so that in particular we obtain a natural isomorphism
\begin{equation}\label{tilttransfeqn}
	\Hom_{\DD^b\Coh(Y)}(E,H)\otimes_\Lambda E \xrightarrow{\cong} H
\end{equation} 
for each object $H\in \DD^b\Coh(Y)$.

The notions of $\Lambda$ modules in $\DD^b\Coh(Y)$ and their tensor products with objects in $\DD(\Lambda)$ occuring in the preceding expressions are defined because $\DD^b\Coh(Y)$ is tensored over $\DGV$. In general, the tensor product of two objects $H_1\in \Lambda\Mod(\DD^b\Coh(Y)), H_2 \in \Lambda\Mod(\DD^b\Coh(Y))$ can be defined in terms of the induced algebra $\Lambda_{\mc O_Y} = \Lambda\otimes_\K \mc O_Y \in \Alg_\Ass(\DD^b\Coh(Y))$ by
\[ H_1\otimes_\Lambda H_2= H_1 \otimes_{ \Lambda_{\mc O_Y} } H_2 \]
where on the right hand side the objects $H_i$ are interpreted in the usual sense of module objects over an algebra object internal to the same category $\DD^b\Coh(Y)$. Similarly, one defines
\[ H_1 \boxtimes_\Lambda H_2 = \pi_1^* H_1 \otimes_{\Lambda_{\mc O_{Y\times Y}}} \pi_2^*H_2  \quad \in \DD^b\Coh(Y\times Y)  \ ,\]
which satisfies the usual identification
\[ \Delta^* (H_1 \boxtimes_\Lambda H_2) =  H_1\otimes_\Lambda H_2 \ . \]

Now, taking $H=\mc O_Y$, the isomorphism of Equation \ref{tilttransfeqn} is given by
\[ E^\vee \otimes_\Lambda E \xrightarrow{\cong} \mc O_Y \quad\quad \text{inducing a map}\quad\quad E^\vee \boxtimes_\Lambda E \to \Delta_* \mc O_Y  \] 
under the $(\Delta^*,\Delta_*)$ adjunction. Moreover, we have:

\begin{prop}\cite{King} Let $E\in \Coh(Y)$ a classical tilting object which is in addition a vector bundle. Then the natural map
	\[ E^\vee \boxtimes_\Lambda E \xrightarrow{\cong} \Delta_* \mc O_Y \]
	is an isomorphism.
\end{prop}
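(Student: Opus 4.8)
The plan is to show that the natural morphism $\phi\colon E^\vee\boxtimes_\Lambda E\to\Delta_*\mc O_Y$ — defined as the $(\Delta^*,\Delta_*)$-adjoint of the isomorphism $\Delta^*(E^\vee\boxtimes_\Lambda E)=E^\vee\otimes_\Lambda E\xrightarrow{\cong}\mc O_Y$ coming from Equation \ref{tilttransfeqn} — is an isomorphism in $\DD^b\Coh(Y\times Y)$, by testing it against an integral transform that is conservative on kernels. First I would introduce, for each $\Lambda$-linear kernel $\mc K\in\DD^b\Coh(Y\times Y)$, the Fourier--Mukai functor $\Phi_{\mc K}(F):=R\pi_{2*}\bigl(\pi_1^*F\otimes_{\mc O_{Y\times Y}}\mc K\bigr)\colon\DD^b\Coh(Y)\to\DD^b\Coh(Y)$. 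For the kernel $\Delta_*\mc O_Y$ one has $\Phi_{\Delta_*\mc O_Y}=\id$ by the projection formula, and a morphism of kernels induces a natural transformation of transforms; it therefore suffices to show that $\phi$ induces a natural \emph{isomorphism} $\Phi_{E^\vee\boxtimes_\Lambda E}\xrightarrow{\cong}\id$.

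Next I would compute $\Phi_{E^\vee\boxtimes_\Lambda E}$. Writing $E^\vee\boxtimes_\Lambda E=\pi_1^*E^\vee\otimes_{\Lambda_{\mc O_{Y\times Y}}}\pi_2^*E$, the projection formula (moving $\pi_2^*E$ out of $R\pi_{2*}$) together with flat base change along the structure map $Y\to\pt$ (identifying $R\pi_{2*}\pi_1^*(-)\simeq\mc O_Y\otimes_\K R\Gamma(Y,-)$, which is legitimate since everything is flat over $\K$, so no projectivity hypothesis on $Y$ is needed) yields a chain of natural isomorphisms
\[ \Phi_{E^\vee\boxtimes_\Lambda E}(F)\;\cong\;R\Gamma\bigl(Y,F\otimes E^\vee\bigr)\otimes_\Lambda E\;\cong\;\Hom_{\DD^b\Coh(Y)}(E,F)\otimes_\Lambda E\;\xrightarrow{\ \cong\ }\;F\,, \]
the last arrow being exactly the Morita inverse of Theorem \ref{Morita}, i.e.\ Equation \ref{tilttransfeqn}. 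The step requiring care here is tracking the right $\Lambda=\End(E)$-module structure on $R\Gamma(Y,F\otimes E^\vee)=R\Hom_Y(E,F)$ through these manipulations, so that the tensor factor $\otimes_\Lambda E$ really is the one appearing in the tilting equivalence; once this is pinned down, unwinding the adjunction that defines $\phi$ shows that $\Phi(\phi)$ is precisely the above composite, hence a natural isomorphism. Consequently $\Phi_{\textup{cone}(\phi)}=\textup{cone}(\Phi(\phi))=0$.

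Finally I would deduce $\textup{cone}(\phi)=0$ from the vanishing of its Fourier--Mukai transform. Evaluating on the structure sheaf of a closed point $y\in Y$ and using that the projection $\{y\}\times Y\hookrightarrow Y\times Y\xrightarrow{\pi_2}Y$ is an isomorphism, one gets $\Phi_{\textup{cone}(\phi)}(\mc O_y)\cong L\iota_y^*\,\textup{cone}(\phi)$ where $\iota_y\colon\{y\}\times Y\hookrightarrow Y\times Y$, the derived restriction of $\textup{cone}(\phi)$ to the divisor $\{y\}\times Y$; composing with restriction to a point then shows $L\iota_p^*\,\textup{cone}(\phi)=0$ for every closed point $p\in Y\times Y$, which forces $\textup{cone}(\phi)=0$ on the smooth variety $Y\times Y$ by the usual top-cohomology-sheaf plus Nakayama argument. (Alternatively one can observe that $E\boxtimes E$ is again a classical tilting object on $Y\times Y$ by the K\"unneth formula, hence a classical generator of $\DD^b\Coh(Y\times Y)$, and that $\Hom_{Y\times Y}(E\boxtimes E,\textup{cone}(\phi))$ is recovered from $\Phi_{\textup{cone}(\phi)}$ and so also vanishes.) The only genuine obstacle in all of this is the bookkeeping of $\Lambda$-module structures in the projection-formula computation of $\Phi_{E^\vee\boxtimes_\Lambda E}$ and the verification that $\phi$ induces the expected natural transformation rather than merely an abstract equivalence of endofunctors; the rest is formal.
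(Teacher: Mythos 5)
The paper itself gives no proof of this proposition: Section \ref{beilressec} explicitly omits all proofs and defers to \cite{King}, so there is nothing internal to compare against; your argument is, as far as I can tell, essentially the standard kernel-level proof of this fact (and in the same spirit as King's). The strategy is sound at every step: $\Phi_{\Delta_*\mc O_Y}\cong\id$ by the projection formula, the projection-formula/flat-base-change computation of $\Phi_{E^\vee\boxtimes_\Lambda E}$ is legitimate without properness of $Y$ (resolve the $\otimes_{\Lambda}$ by a bimodule or bar resolution so that each term is an external tensor product, and use that $R\pi_{2*}$ commutes with the relevant homotopy colimits), and the final conservativity step is correct because $\Phi_{\mc K}(\mc O_y)\cong L\iota_y^*\mc K$ literally computes the derived restriction of the kernel to the slice $\{y\}\times Y$, after which the top-cohomology-plus-Nakayama argument applies to the bounded-above coherent-cohomology complex $\textup{cone}(\phi)$. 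The one point you defer — that $\Phi(\phi)$ really is the counit $\Hom_Y(E,\cdot)\otimes_\Lambda E\to\id$ rather than some other map between abstractly isomorphic functors — is genuinely needed (knowing both transforms are isomorphic to $\id$ does not by itself make $\Phi(\phi)$ invertible), but it is routine; a way to shortcut the diagram chase is to observe that the full subcategory of $F\in\DD\QC(Y)$ on which $\Phi(\phi)_F$ is invertible is triangulated and closed under coproducts (all functors involved preserve coproducts), so since $E$ is a compact generator it suffices to check invertibility for $F=E$, where source and target are both identified with $E$ and the map unwinds directly from the adjunction defining $\phi$ and the isomorphism of Equation \ref{tilttransfeqn}. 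Your parenthetical alternative via the tilting object $E\boxtimes E$ on $Y\times Y$ also works, modulo the standard K\"unneth and generation statements for external products of compact generators.
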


Concretely, the resulting resolution of the diagonal can be computed explicitly as follows: given a projective resolution of $\Lambda$ as a $(\Lambda,\Lambda)$ bimodule
\[ Q^\bullet = \left[ \cdots \to Q^i \to \cdots \to Q^{-1} \to Q^0 \right] \xrightarrow{\cong} \Lambda \ , \]
the exterior tensor product is computed by the complex\begin{equation}\label{diagreseqn}
	E^\vee \boxtimes^\bullet_\Lambda E := \left[  \pi_1^* E^\vee \otimes_{\Lambda_{\mc O_{Y\times Y}}} Q^\bullet_{\mc O_{Y\times Y}} \otimes_{\Lambda_{\mc O_{Y\times Y}}} \pi_2^* E  \right] \xrightarrow{\cong}  E^\vee \boxtimes_\Lambda E   \ . 
\end{equation} 

Now, we return to the setting of the preceding section, and suppose $Y$ is a quiver-accessible variety with distinguished projective objects $E_i$ and simple objects $F_i$ in $\DD^b\Coh(Y)$ with corresponding $\Lambda$ modules $P_i$ and $S_i$, so that $\Lambda$ is equivalent to the path algebra of a DG quiver determined by $\Sigma=\Ext_{\DD(\Lambda)}^\bullet(S,S)$. In particular, we also assume that the objects $E_i$ are vector bundles.

In this setting, there is a natural projective resolution of $\Lambda$ as a $(\Lambda,\Lambda)$ bimodule with
\[ Q^k = \bigoplus_{i,j\in V_Q} \Lambda S_i \otimes_\K (S_i \Sigma^k S_j)^\vee \otimes_\K S_j \Lambda \ , \]
where $i,j$ runs over $V_Q$ the index set of the distinguished projective objects, which is by definition the vertex set of the corresponding quiver $Q$; the resolution is determined by the identifications
\[ \Lambda = \Lambda\otimes_S S \otimes_\Lambda S \otimes_S \Lambda \cong \Lambda \otimes_S ((\otimes^\bullet_S \bar\Lambda[1]) \otimes_S\Lambda) \otimes_\Lambda S \otimes_S \Lambda \cong \Lambda \otimes_S \Sigma^\vee \otimes_S \Lambda   \]
where we have used the Koszul resolution of $\Lambda$ together with the identification $(\otimes^\bullet_S \bar\Lambda[1]) \cong \Sigma^\vee$. The resulting resolution of the diagonal in Equation \ref{diagreseqn} above is thus
\[  E^\vee \boxtimes^\bullet_\Lambda E= \left[ \bigoplus_{i,j\in V_Q}  \pi_1^* E_i^\vee \otimes (S_i \Sigma^\bullet S_j)_{\mc O_{Y^{\times 2}}}^\vee \otimes \pi_2^* E_j \right] \xrightarrow{\cong} \Delta_* \mc O_Y \ .  \]

Now, given an arbitrary object $H\in \Perv_\cs(Y)^{\C^\times}$, we obtain a canonical resolution of $H$ via the Beilinson spectral sequence associated to the above resolution of the diagonal: following the standard argument, we have a canonical isomorphism
\[ H \cong \pi_{2 *} \left( \pi_1^* H \otimes \Delta_* \mc O_Y \right) . \]
and the Grothendieck spectral sequence for the composition of derived functors, computed using the resolution of the diagonal constructed above, yields the canonical resolution
\begin{equation}\label{BeilKozres}
	\left[ \bigoplus_{i,j\in V_Q} \Hom_{\DD^b\Coh(Y)}(E_i,H) \otimes_\K (S_i\Sigma^\bullet S_j)^\vee \otimes_\K E_j  \right] \xrightarrow{\cong} H \ , 
\end{equation}
so that by construction, we have:
\begin{prop}\label{BeilinsonssKozprop} Under the equivalence $\Lambda\Mod\xrightarrow{\cong} \Perv(Y)^{\C^\times}$, the resolutions of objects in $\Lambda\Mod$ constructed in terms of Koszul duality, as in Equation \ref{Kosreseqn}, are canonically identified with those induced by the Beilinson spectral sequence, as in Equation \ref{BeilKozres}.
\end{prop}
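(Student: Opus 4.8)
The strategy is to transport the Beilinson construction of Equation \ref{BeilKozres} across the tilting equivalence $\Hom_{\DD^b\Coh(Y)}(E,\cdot):\DD^b\Coh(Y)\xrightarrow{\cong}\textup{D}_\perf(\Lambda)$ together with its Koszul-dual refinement, and to recognize the output as the complex $K(M)$ underlying the resolution of Equation \ref{Kosreseqn}. First I would record the two basic dictionary entries. The tilting equivalence sends $H$ to the $\Lambda$-module $M=\Hom_{\DD^b\Coh(Y)}(E,H)$ corresponding to the quiver representation, and sends the resolution of the diagonal $E^\vee\boxtimes^\bullet_\Lambda E\to\Delta_*\mc O_Y$ of Equation \ref{diagreseqn} to the two-sided bar--Koszul resolution $Q^\bullet\to\Lambda$; by the explicit description of $Q^\bullet$ recalled immediately before Proposition \ref{BeilinsonssKozprop}, the latter is $Q^\bullet=\Lambda\otimes_S\Sigma^\vee\otimes_S\Lambda$, with differential determined by the $A_\infty$ structure maps $m_n$ of $\Sigma$ on one tensor factor and the multiplication of $\Lambda$ on the other, arising from the identification $\otimes^\bullet_S\bar\Lambda[1]\cong\Sigma^\vee$. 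Applying $\Hom_{\DD^b\Coh(Y)}(E,\cdot)$ to Equation \ref{BeilKozres} and using that, at the chain level, $\pi_{2*}\big(\pi_1^*H\otimes(-)\big)$ corresponds to $M\otimes_\Lambda(-)$, one then identifies Equation \ref{BeilKozres}, under the equivalence, with $M\otimes_\Lambda Q^\bullet\otimes_\Lambda E=M\otimes_S\Sigma^\vee\otimes_S E$, equipped with its induced differential.

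Next I would recognize $M\otimes_\Lambda Q^\bullet=M\otimes_S\Sigma^\vee\otimes_S\Lambda$ as the one-sided bar--Koszul resolution of $M$ over $\Lambda$, that is, as the complex $\Hom_\Sigma(\mc K^\bullet,M)$ obtained from the Koszul resolution $\mc K^\bullet=\otimes^\bullet_S(\bar\Sigma[1])\otimes_S\Sigma$ of $S$ as a $\Sigma$-module, exactly as in the derivation of Equation \ref{Kozshfeqn} in Section \ref{monadsec}. Hence, after the final functor $(\cdot)\otimes_\Lambda E$, Equation \ref{BeilKozres} becomes $\Hom_\Sigma(S,M)\otimes_\Lambda E=K(M)$, the object underlying the Koszul-duality resolution. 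The differentials agree because on both sides the deviation from the trivial differential is governed by the same data --- the higher products $m_n$ of $\Sigma$, inserted through the non-degenerate pairing into the action of $\Lambda$ on the chosen resolutions $K(I_i)$, which is precisely the map $K(d^N_\delta)$ of Proposition \ref{monadunextAinfprop}; the one ingredient making the two prescriptions literally coincide is the self-duality $(\Sigma^!)^!\cong\Sigma$ of Theorem \ref{Kozsqtheo}, together with the naturality of all the equivalences involved in $H$. Finally, collecting the terms of $M\otimes_S\Sigma^\vee\otimes_S E$ according to the internal $\langle\cdot\rangle$-grading by means of a composition series of $M$, as recalled in Section \ref{monadsec}, rewrites $K(M)$ in the filtered ``linear complex of projectives'' form displayed in Equation \ref{Kosreseqn}, which gives the asserted canonical identification.

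The main obstacle will be the sign- and homotopy-bookkeeping needed to match the three differentials in play: the cobar differential of $Q^\bullet$ dual to $(m_n)$, the differential of the double complex obtained by pushing forward $\pi_1^*H\otimes(E^\vee\boxtimes^\bullet_\Lambda E)$ along $\pi_2$, and the differential $K(d^N_\delta)$ of Proposition \ref{monadunextAinfprop} built from the action of paths in $\Lambda$ on the resolutions $K(I_i)$. Showing these coincide amounts to carrying the pairing $\langle\cdot,\cdot\rangle$ and its signs consistently through the bar/cobar dictionary of Proposition \ref{Kozpresprop} and the self-duality of Theorem \ref{Kozsqtheo}, and is the only substantive point. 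A secondary, routine step is to verify that the Beilinson spectral sequence really does collapse to a chain-level quasi-isomorphism: since each summand $E_i$ is projective in $\Perv(Y)$, the object $R\Hom_{\DD^b\Coh(Y)}(E_i,H)$ is concentrated in cohomological degree $0$, so $R^{>0}\pi_{2*}\big(\pi_1^*(H\otimes E_i^\vee)\big)=0$ and the total complex of the resulting double complex computes $H$ with no higher corrections. Granting these, the proposition follows by unwinding definitions.
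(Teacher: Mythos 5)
Your proposal is correct and is essentially the argument the paper intends (the paper omits a written proof and asserts the proposition ``by construction''): since the resolution of the diagonal of Equation \ref{diagreseqn} is built from the bimodule Koszul resolution $Q^\bullet\cong\Lambda\otimes_S\Sigma^\vee\otimes_S\Lambda$, its one-sided reduction under $\Hom_{\DD^b\Coh(Y)}(E,H)\otimes_\Lambda(-)\otimes_\Lambda E$ is literally the complex of Equation \ref{Kosreseqn}, which is exactly what you verify, including the degeneration step coming from projectivity of the $E_i$ in $\Perv(Y)$. The only blemish is notational: in writing $\Hom_\Sigma(S,M)\otimes_\Lambda E=K(M)$ you tacitly replace the $\Lambda$-module $M=\Hom(E,H)$ by its Koszul-dual $\Sigma$-module (identified with $M\otimes_S\Sigma^\vee$, with the dual $\Sigma^\vee$ accounting for the Nakayama twist already present in the $K(I_i)$), but the identification you intend is the correct one.
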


\section{Perverse coherent extensions on Calabi-Yau threefolds and extended quivers}\label{extsec}

\subsection{Overview of Section \ref{extsec}}\label{Extoverviewsec}

Let $\pi:X\to Y$ be as in Section \ref{Geosetupsec}. In this section, we extend the results of Theorems \ref{catthmunext}, \ref{unextAinfheartthm}, and \ref{stackthmunext} to describe certain categories generated by iterated extensions of compactly supported perverse coherent sheaves on $Y$ together with an auxiliary object $M\in \DD^b\Coh(Y)^{T}$ which does not necessarily have compact support. These descriptions are given in terms of finite rank representations of an \emph{extended} quiver, in a sense we will explain below. We recommend the reader consult the description of the motivation for these constructions given in Section \ref{agintrosec} of the introduction prior to reading this section.

Throughout this section, we suppose that $M\in\DD^b\Coh(Y)^{T}$ satisfies the following hypotheses:
\begin{enumerate}
	\item The algebra $\Ext^0(M,M)$ is commutative,
	\item $\Ext^i(F,M)=0$ and $\Ext^i(M,F)=0$ for all $i\leq 0$ and $F\in \Perv_\cs(Y)$, and
	\item $M\in \Perv^\p(Y)$ lies in the heart of an admissible Bridgeland-Deligne perverse coherent t-structure on $Y$, in the sense of Definitions \ref{BDpervcohdef} and \ref{BDadmissibledef}, and in particular $\Ext^i(M,M)=0$ for all $i<0$.
\end{enumerate}

In Section \ref{tstrsec}, we introduce the Bridgeland-Deligne perverse coherent t-structures used in the above hypotheses. In Sections \ref{pervextcatsec}, \ref{monadextsec}, and \ref{quiversecfr}  we explain the analogues of the results of Sections \ref{Kozpatsec}, \ref{monadsec} and \ref{quiversec} in the setting of perverse coherent extensions. In Section \ref{framingsec}, we define the notion of framing structures and prove Theorem \ref{Athm} from the introduction. In Section \ref{framedexamplessec}, we explain many examples of these constructions relevant to the applications to representation theory in Section \ref{funsec}.

\subsection{Bridgeland-Deligne t-structures}\label{tstrsec}

To begin, we recall the classical theory of tilting $t$-structures by torsion theories.

\begin{defn} Let $\mc B$ be an abelian category. A torsion theory on $\mc B$ is a pair $(\mc T,\mc F)$ of full subcategories of $\mc B$ such that
	\begin{enumerate}
		\item $\Hom(T,F)=0$ for each $T\in \mc T$ and $F \in \mc F$, and
		\item for each $E\in \mc B$ there is a short exact sequence
		\[ 0 \to T \to E \to F \to 0 \]
		with $T\in \mc T$ and $F\in \mc F$.
	\end{enumerate}
\end{defn}

Let $\mc D$ be a triangulated category with $t$-structure $(\D^{\leq 0},\D^{\geq 0})$ and heart $\mc B= \D^{\leq 0}\cap \D^{\geq 0}$. Then, given a torsion theory $(\mc T,\mc F)$ on $\mc B$, define
\[ \D_t^{\leq 0 }=\{ E \in \D^{\leq 0} \ | \ H^0E \in \mc T \}\quad\quad \text{and}\quad\quad \D_t^{\geq 0} = \{ E \in \D^{\geq -1 } \ |\ H^{-1}E \in \mc F\}  \ .\]

\begin{prop} The pair $(\D_t^{\leq 0},\D_t^{\geq 0})$ defines a $t$-structure on $\D$.
\end{prop}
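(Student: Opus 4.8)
This is the classical result of Happel--Reiten--Smal\o\ on tilting a $t$-structure by a torsion pair, and the plan is to verify the three axioms of a $t$-structure for the pair $(\D_t^{\leq 0},\D_t^{\geq 0})$ directly, following the standard argument. First I would record the immediate shift-stability: $\D_t^{\leq 0}[1]\subseteq \D_t^{\leq 0}$ and $\D_t^{\geq 0}[-1]\subseteq \D_t^{\geq 0}$, which is clear from the definitions since $H^0(E[1])=H^1(E)=0$ for $E\in\D^{\leq 0}$, and similarly on the other side; here the indexing conventions $\D_t^{\leq 0}\subseteq\D^{\leq 0}$ and $\D_t^{\geq 0}\subseteq\D^{\geq -1}$ are exactly arranged so that this works.

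The second step is the orthogonality $\Hom(E,F[-1])=0$ for $E\in\D_t^{\leq 0}$ and $F\in\D_t^{\geq 0}$. I would reduce this using the cohomology truncations of $E$ and $F$ with respect to the original $t$-structure: since $E\in\D^{\leq 0}$ and $F[-1]\in\D^{\geq 0}$, the only possibly nonzero contribution comes from $\Hom(H^0E,H^0(F[-1]))=\Hom(H^0E,H^{-1}F)$ via the spectral sequence / devissage along the two truncation triangles, and $H^0E\in\mc T$, $H^{-1}F\in\mc F$, so this group vanishes by axiom (1) of the torsion theory. (One should be slightly careful to phrase the devissage so that only a $\Hom^0$, not higher Ext-type terms, can survive; this is the routine part.)

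The third and main step is the existence of the truncation triangle: for every $E\in\D$ I must produce a triangle $E'\to E\to E''\to E'[1]$ with $E'\in\D_t^{\leq 0}$ and $E''\in\D_t^{\geq 0}[-1]=\D_t^{\geq 1}$. The construction is the standard two-step one: first take the usual truncation triangle $\tau^{\leq 0}E\to E\to \tau^{\geq 1}E$ for the original $t$-structure; then apply the torsion decomposition $0\to T\to H^0E\to F\to 0$ in $\mc B$ to split $H^0E$ further. Concretely, one forms the octahedron built from $\tau^{\leq -1}E\to \tau^{\leq 0}E\to H^0E$ and $T\to H^0E\to F$ to obtain an object $E'$ sitting in a triangle $\tau^{\leq -1}E\to E'\to T$ (so $E'\in\D^{\leq 0}$ with $H^0E'=T\in\mc T$, hence $E'\in\D_t^{\leq 0}$) together with a triangle $E'\to E\to E''$ whose third term $E''$ fits in $F\to E''\to \tau^{\geq 1}E$; from the latter one reads off that $H^{-1}E''=0$... wait, more precisely $H^0(E'')=F\in\mc F$ and $H^{\geq 1}E''=\tau^{\geq 1}E$ shifted appropriately, so $E''\in\D^{\geq -1}$ with $H^{-1}E''=F\in\mc F$, giving $E''\in\D_t^{\geq 0}$, but we actually need $E''\in\D_t^{\geq 1}$ — and indeed re-examining the indices, the correct statement is that the cone sits in degrees $\geq -1$ after the shift so that $E''[-1]$, not $E''$, is the right object; I would set up the triangle so that the term orthogonal to $\D_t^{\leq 0}$ is $E''$ with $H^{-1}E''\in\mc F$ and $H^{\geq 0}E''$ coming from $\tau^{\geq 1}$, which is precisely $\D_t^{\geq 1}$.

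The main obstacle is purely bookkeeping: getting the cohomological degrees and the direction of the torsion decomposition to line up correctly in the octahedral diagram, so that the middle term of the new triangle genuinely lands in $\D_t^{\leq 0}$ and the outer term in $\D_t^{\geq 1}$. Once the octahedron is drawn and the long exact cohomology sequences of its four triangles are written down, uniqueness of the truncation follows formally from the orthogonality established in step two, and the axioms are complete. I would also remark that the heart of the new $t$-structure is the extension-closed subcategory generated by $\mc F[1]$ and $\mc T$, though this is not needed for the statement as given.
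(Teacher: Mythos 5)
Your argument is correct and is precisely the standard Happel--Reiten--Smal\o\ tilting proof; the paper does not reproduce an argument at all but simply cites Proposition 2.1 of \cite{HRS}, so your write-up is essentially the proof of the cited result and matches the paper's (implicit) approach. On the one point where you waver, your first reading was the right one: the cone $E''$ of $E'\to E$ has $H^0E''=F\in\mc F$, $H^iE''=H^iE$ for $i\geq 1$, and $H^iE''=0$ for $i<0$, so $E''\in\D^{\geq 0}$ with $H^0E''\in\mc F$, which is exactly $\D_t^{\geq 0}[-1]=\D_t^{\geq 1}$ --- no further shift of $E''$ is needed.
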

\begin{proof}
	See for example Proposition 2.1 of \cite{HRS}.
\end{proof}

The heart of this $t$-structure is given by
\[ \mc B_t:= \{ E\in \D^{[-1,0]} \ | \ H^0E \in \mc T, \ H^{-1}E\in \mc F\} \ , \]
or more concretely, if $\D$ is the derived category of the abelian category $\mc B$ with the standard $t$ structure, then $\mc B_t$ is the full subcategory of two-term complexes of the form
\[ \mc B_t = \{ E=\left[ E^{-1}[1]  \xrightarrow{\varphi} E^0\right] \ \bigg| \ E^i\in \mc B,\  \coker\varphi \in \mc T,\  \ker \varphi \in \mc F   \} \ .\]

In particular, we have:
\begin{corollary}  $\mc B_t$ is an abelian category.
\end{corollary}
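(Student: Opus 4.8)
The statement to prove is that $\mc B_t$, the heart of the tilted $t$-structure $(\D_t^{\leq 0},\D_t^{\geq 0})$ associated to a torsion theory $(\mc T,\mc F)$, is an abelian category. The plan is simply to invoke the general fact that the heart of any $t$-structure on a triangulated category is an abelian category. Since the preceding Proposition (citing Proposition 2.1 of \cite{HRS}) establishes that $(\D_t^{\leq 0},\D_t^{\geq 0})$ is indeed a $t$-structure on $\D$, and $\mc B_t$ is by definition the heart $\D_t^{\leq 0}\cap \D_t^{\geq 0}$, the corollary follows immediately from this general principle applied to that particular $t$-structure.

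In more detail, the argument I would give is: first recall (or cite, e.g. \cite{BBD}) the theorem that for any triangulated category $\D$ equipped with a $t$-structure $(\D^{\leq 0},\D^{\geq 0})$, the heart $\D^{\leq 0}\cap \D^{\geq 0}$ is an abelian category, in which short exact sequences are precisely the triangles all of whose terms lie in the heart. Then apply this with the $t$-structure $(\D_t^{\leq 0},\D_t^{\geq 0})$ furnished by the previous Proposition. One may optionally remark, for the reader's orientation, on the concrete description of kernels and cokernels in $\mc B_t$: given a morphism $f$ in $\mc B_t$, one completes it to a triangle in $\D$, and then the kernel and cokernel of $f$ in $\mc B_t$ are obtained from the truncation functors $\tau^{\leq 0}_t,\tau^{\geq 0}_t$ applied to the cone and shifted cocone of $f$; but this level of detail is not needed to justify the bare assertion of abelianness.

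The main (and essentially only) subtlety is that there is nothing subtle here: the entire content has been offloaded to the cited Proposition establishing the $t$-structure axioms, so the corollary is a one-line consequence. I would therefore keep the proof to a single sentence, namely that this is immediate from the fact that the heart of a $t$-structure is abelian, together with the Proposition just proved. If a fuller treatment is wanted, one could alternatively reproduce the standard verification via the truncation functors as in \cite{BBD}, but given that the excerpt explicitly relegates such matters to references (as with Corollary \ref{PCohcoro} citing Lemma 3.2 of \cite{Bdg1}), a pointer to \cite{BBD} or \cite{HRS} suffices.
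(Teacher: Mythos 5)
Your proposal is correct and is exactly the intended argument: the paper states this as an immediate corollary of the preceding Proposition (the tilted pair is a $t$-structure), with the abelianness of the heart being the standard fact from \cite{BBD}/\cite{HRS}, which is why no proof is printed. Nothing further is needed.
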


\begin{eg} Let $\mc B=\Coh(X)$ be the abelian category of coherent sheaves on $X$ an irreducible algebraic variety of dimension $d$, $\mc T$ the full subcategory on objects with support of dimension $\leq d-1$, and $\mc F$ the full subcategory on torsion free coherent sheaves on $X$. There are evidently no possible maps $\mc T$ to $\mc F$, and for each $E\in \Coh(X)$, the torsion filtration
	\[	 0 \to T_{d-1}E \to E \to E/T_{d-1}E \to 0 \]
gives the required short exact sequence, where $T_{d-1}E$ denotes the maximal subsheaf of $E$ with support of dimension $\leq d-1$.

The resulting tilted category $\mc B_t$ is given by
\[ \mc B_t = \{ E=\left[ E^{-1}[1]  \xrightarrow{\varphi} E^0\right] \ \bigg| \ E^i\in \Coh(X),\ \dim\textup{supp}H^0E\leq d-1, \ H^{-1}E \textup{ is torsion free} \ \} \ . \] 
\end{eg}

More generally, we have:
\begin{eg}\label{dimsupptilteg} Let $\mc B=\Coh(X)$ as in the previous example, $\mc T=\Coh_{\leq k}(X)$ the full subcategory on objects with support of dimension $\leq k$, and
\[\mc F=\Coh_{\geq k+1}(X):=\mc T^\perp  \ , \]
the right orthogonal to $\mc T$. Since $\mc B$ is Noetherian and $\mc T$ is closed under extensions and quotients, we have that $(\mc T, \mc F)$ is a torsion pair. The resulting tilted category $\mc B_t$ is given by
\[ \mc B_t = \{ E=\left[ E^{-1}[1]  \xrightarrow{\varphi} E^0\right] \ \bigg| \ E^i\in \Coh(X),\ \dim\textup{supp}\ H^0E\leq k, \ H^{-1}E\in \Coh_{\geq k+1}(X) \ \} \ . \]
\end{eg}

Another variant of this construction is to use coherent sheaves supported along a particular subvariety:

\begin{eg} Let $\mc B=\Coh(X)$ as in the previous example, and fix a closed subvariety $Z\subset X$. Then let $\mc T=\Coh_Z(X)$ denote the full subcategory on objects supported on $Z$ and $\mc F = \mc T^\perp$. Since $\mc B$ is Noetherian and $\mc T$ is closed under extensions and quotients, we have that $(\mc T, \mc F)$ is a torsion pair. The resulting tilted category $\mc B_t$ is given by
\[ \mc B_t = \{ E=\left[ E^{-1}[1]  \xrightarrow{\varphi} E^0\right] \ \bigg| \ E^i\in \Coh(X),\ \textup{supp}\ H^0E\subset Z, \ H^{-1}E\in \Coh_Z(X)^\perp \ \} \ . \]
\end{eg}

In fact, the previous construction can be iterated to define a family of t-structures on the derived category of coherent sheaves; this construction is recorded in \cite{ArBez}, following unpublished results of Deligne. Similar results appeared in \cite{Gab} and \cite{Kash}, and we recommend the latter for a more explicit explanation of the geometric interpretation of the perversity function. We now recall the general definition:

\begin{defn} Let $X$ be a variety and $X^\textup{top}$ denote the Zariski topological space underlying $X$. A (monotone and comonotone) \emph{perversity function} $\p:X^\textup{top} \to \bb Z$ is a function such that
	\[ \p(y) \geq \p(x) \quad\quad \text{and} \quad\quad  \p(y) \leq \p(x) + \dim(x)-\dim(y)  \quad\quad \text{for any} \quad y \in \overline{\{x\}}  \ . \]
Given a perversity function $\p:X^\textup{top} \to \bb Z$, the \emph{Deligne perverse coherent t-structure} associated to $\p$ is defined by
\begin{align*}
	\DD^b\Coh(X)^{\p,\leq 0} & := \{ E \in \DD^b\Coh(Y) \ |\  \iota_x^*E \in \DD^{\leq \p(x)}(\mc O_x)\ \textup{for any $x\in X^\textup{top}$} \}  &  ,\\
	\DD^b\Coh(X)^{\p,\geq 0} &:=\{ E \in \DD^b\Coh(Y) \  |  \  \iota_x^! E \in \DD^{\geq \p(x)}(\mc O_x)\ \textup{for any $x\in X^\textup{top}$} \}  &  .
\end{align*}
\end{defn}

Indeed, we have the following result:

\begin{theo}\label{PervCohthm}\cite{ArBez} The pair $(\DD^b\Coh(X)^{\p,\leq 0},\DD^b\Coh(X)^{\p,\geq 0})$ defines a t-structure on $\DD^b\Coh(X)$.
\end{theo}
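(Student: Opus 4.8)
The plan is to prove Theorem \ref{PervCohthm} by Noetherian induction on $X$, gluing along closed subsets in the style of Theorem \ref{BBD}, with the two inequalities defining a perversity function playing exactly the roles of the support and cosupport conditions in the classical Beilinson--Bernstein--Deligne formalism. First I would make the standard preliminary reductions: the defining conditions are pointwise, hence Zariski-local, and $t$-structures glue over a Zariski cover, so we may assume $X$ is affine. The two easy axioms --- that $\DD^b\Coh(X)^{\p,\leq 0}$ is closed under $[1]$ and $\DD^b\Coh(X)^{\p,\geq 0}$ under $[-1]$, and that both are closed under extensions --- are immediate from the pointwise definition together with the corresponding properties of the standard $t$-structures on the local categories $\DD(\mc O_x)$, since $\iota_x^*$ and $\iota_x^!$ are exact triangulated functors. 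The substantive points are orthogonality, $\Hom(\DD^b\Coh(X)^{\p,\leq 0},\DD^b\Coh(X)^{\p,\geq 1})=0$, and the existence, for every $E\in\DD^b\Coh(X)$, of a truncation triangle $\tau^{\p,\leq 0}E\to E\to\tau^{\p,\geq 1}E\xrightarrow{[1]}$ with the two ends in the respective subcategories; once the truncation triangles are available the heart is automatically abelian, so the theorem reduces to producing them.

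For the inductive step I would filter $X$ by the reduced closed loci on which $\p$ jumps: pick a nonempty open $U\subseteq X$ small enough that $\p|_U$ is controlled, through the monotonicity and comonotonicity inequalities, by its values at the generic points of the components of $U$; set $Z=X\setminus U$ with reduced structure, and write $j\colon U\hookrightarrow X$ and $i\colon Z\hookrightarrow X$. By Noetherian induction the perverse coherent $t$-structure for $\p|_U$ exists on $\DD^b\Coh(U)$, and I would glue it, along the formalism of Theorem \ref{BBD}, with the shifted standard $t$-structure attached to $\p|_Z$ on the subcategory $\DD^b_Z\Coh(X)$ of objects set-theoretically supported on $Z$. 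Here the monotonicity condition $\p(y)\geq\p(x)$ for $y\in\overline{\{x\}}$ is the hypothesis guaranteeing that the pointwise $\DD^{\p,\leq 0}$ is stable under the gluing, and the comonotonicity inequality $\p(y)\leq\p(x)+\dim(x)-\dim(y)$ is the dual statement for $\DD^{\p,\geq 0}$; together they force the two glued subcategories to be orthogonal, so that Theorem \ref{BBD} yields both orthogonality and the truncation triangles for $\p$ out of those for $\p|_U$ (induction) and the elementary ones for the shifted standard $t$-structure on $Z$. Finiteness of $\p$ on $X^\textup{top}$ together with Noetherianity of $X$ bounds the length of the filtration, hence the cohomological amplitude of the resulting truncations, keeping them inside $\DD^b\Coh(X)$.

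The main obstacle is that, unlike in the constructible setting underlying Theorem \ref{BBD}, the triple $(\DD^b_Z\Coh(X),\DD^b\Coh(X),\DD^b\Coh(U))$ is \emph{not} a recollement on the nose: $j^*\colon\DD^b\Coh(X)\to\DD^b\Coh(U)$ has neither a left nor a right adjoint landing in bounded coherent complexes, since both extension-by-zero and pushforward across an open immersion leave $\DD^b\Coh$ (e.g.\ $j_*\mc O_{\bb A^1\setminus 0}$ is not coherent), and $i_*$ from $\DD^b\Coh(Z)$ need not hit every object supported set-theoretically on $Z$ because of infinitesimal thickenings. Circumventing this is exactly the technical content of \cite{ArBez}: one constructs the candidate truncation functor $\tau^{\p,\leq 0}$ directly, as a suitable limit over the closed subschemes with support contained in $Z$ of the truncations coming from finite thickenings --- equivalently, one works in a pro-coherent completion and checks afterwards that the answer lies in $\DD^b\Coh(X)$ --- and verifies by a dévissage over these thickenings, using Noetherianity to see the procedure stabilizes and coherence and finite Tor-dimension to control amplitude, that the output has bounded coherent cohomology and satisfies the pointwise conditions. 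Once this explicit truncation is in hand, orthogonality follows from the pointwise vanishing $\Hom_{\mc O_x}(\DD^{\leq\p(x)},\DD^{\geq\p(x)+1})=0$ for the standard $t$-structure on $\DD(\mc O_x)$, propagated along the filtration by adjunction, and the remaining axioms are formal. I expect that it is this coherent-gluing bookkeeping, rather than any single conceptual point, that requires the real care.
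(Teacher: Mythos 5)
A point of calibration first: the paper does not prove this statement at all --- its ``proof'' consists of the citation to \cite{ArBez}, since this is Deligne's theorem as written up by Arinkin--Bezrukavnikov. So the only meaningful comparison is with the argument of \cite{ArBez} itself. At the level of strategy your sketch is in the right neighbourhood of that argument: closure of the two subcategories under shifts and extensions is indeed formal from the pointwise definition, the substance is orthogonality plus truncation triangles, the proof does run by Noetherian induction over open--closed decompositions, and you correctly identify the crux, namely that $(\DD^b_Z\Coh(X),\DD^b\Coh(X),\DD^b\Coh(U))$ is not a recollement because $j^*$ admits no coherent adjoints, so Theorem \ref{BBD} cannot be quoted.

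The gap is that at exactly this crux you defer rather than prove: ``this is exactly the technical content of \cite{ArBez}'' is a citation, not an argument, and the mechanism you propose in its place --- building $\tau^{\p,\leq 0}$ as a limit over infinitesimal thickenings of $Z$ in a pro-coherent completion and checking coherence afterwards --- is not how the proof goes, nor is it clear it can be made to work, since the stabilization and coherence of that limit is essentially the original difficulty restated. The actual construction uses that $j^*$ exhibits $\DD^b\Coh(U)$ as the Verdier quotient $\DD^b\Coh(X)/\DD^b_Z\Coh(X)$, so the truncation triangle of $j^*E$ over $U$ (available by induction) lifts to a morphism $E'\to E$ in $\DD^b\Coh(X)$ whose cone is supported on $Z$ and is handled by Noetherian induction; monotonicity of $\p$ is what keeps the lift in the $\leq$ aisle. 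Moreover the $\iota_x^!$ half is not obtained by the soft ``dual statement'' you invoke: a key device in \cite{ArBez} is Grothendieck--Serre duality, which exchanges the $\iota_x^*$/monotone conditions for $\p$ with the $\iota_x^!$/comonotone conditions for the dual perversity, so that only one truncation needs to be constructed and orthogonality can be checked on one side; your outline never uses a dualizing complex, and without it the $\geq$ conditions are not reachable by the route you describe. Two smaller inaccuracies: the opening reduction ``the conditions are pointwise, hence we may assume $X$ affine'' is unjustified (existence of truncation triangles does not glue over a Zariski cover without further argument, and the proof in \cite{ArBez} makes no such reduction), and on $\DD^b_Z\Coh(X)$ one does not use a ``shifted standard t-structure attached to $\p|_Z$'' but rather the perverse coherent t-structures, supplied by induction, on all infinitesimal thickenings of $Z$, which are mutually compatible precisely because the defining conditions depend only on the underlying topological space.
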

\begin{proof} See Theorem in \cite{ArBez}.
\end{proof}

We introduce the notation $\Perv^\p(X)$ to denote the category of Deligne perverse coherent sheaves on $X$ associated to perversity function $\p$

These t-structures are often called simply perverse coherent t-structures. We use the prefix Deligne to avoid confusion with the notion of perverse coherent sheaf defined by Bridgeland, recalled in Definition \ref{pervcohdef}. In fact, the category of perverse coherent sheaves (in the sense of Bridgeland) can also be constructed by tilting the abelian category of coherent sheaves with respect to a torsion theory:

\begin{eg} Let $f:Y\to X$ be as in Section \ref{NCCRsec} and let $\mc C^\heartsuit$ denote the full subcategory of $\Coh(Y)$ on objects $C$ such that $f_*C=0$. Define full subcategories $\mc T$ and $\mc F$ of $\Coh(Y)$ by
\begin{align*}
 \mc T  & = \{ E \in \Coh(Y) \ | \ \bb R^1f_*T=0, \ \Hom(T,C)=0 \ \text{for any $C\in \mc C^\heartsuit$}\ \} \\
 \mc F  & = \{ E \in \Coh(Y) \ | \ \bb R^0f_*T=0 \ \} \ .
\end{align*}
From the first exact triangle of Equation \ref{Bdgrecseqeqn}, one can check that the pair $(\mc T,\mc F)$ defines a torsion theory on $\Coh(Y)$, and the resulting tilted category is equivalent to the category of perverse coherent sheaves,
\[ \Coh(Y)_t = \Perv(Y) \ , \]
by Corollary \ref{PervCohcoro}.
\end{eg}

We can now introduce the desired $t$-structures, which will be used to define the categories of perverse coherent extensions.

\begin{defn}\label{BDpervcohdef}  Let $f:Y\to X$ be as in Section \ref{NCCRsec}, and let $\p:X^\textup{top} \to \bb Z$ be a perversity function on $X$. The \emph{Bridgeland-Deligne perverse coherent t-structure} associated to $\p$ is defined by
\begin{align}
\DD^b\Coh(Y)^{p,\leq 0} & := \{ E \in \DD^b\Coh(Y) \ |\  f_*E \in \DD^b\Coh(X)^{\p,\leq 0}\text{ and } \iota^L E \in \mc C^{-1,\leq 0} \} &  ,\\
\DD^b\Coh(Y)^{p,\geq 0} &:=\{ E \in \DD^b\Coh(Y) \  |  \  f_*E \in \DD^b\Coh(X)^{\p,\geq 0}\text{ and } \iota^R E \in \mc C^{-1,\geq 0} \} & .
\end{align}
The category of Bridgeland-Deligne perverse coherent sheaves on $f:Y\to X$ with respect to the perversity function $\p$ is the abelian category defined by
\[ \Perv^\p(Y/X) = \DD^b\Coh(Y)^{\p,\leq 0} \cap \DD^b\Coh(Y)^{\p,\geq 0}  \ .\]
\end{defn}

\noindent Note that this indeed defines a t-structure, by Proposition \ref{Bdgrecprop} and Theorem \ref{BBD}, combined with Theorem \ref{PervCohthm} above. In particular, we make the following definition. As for the usual perverse coherent sheaves in the sense of Bridgeland, recalled in Definition \ref{pervcohdef}, we will often drop the dependence on $X$ from the notation and write simply \[ \Perv^\p(Y):=\Perv^\p(Y/X) \ .\]

For our primary application of interest, it will be necessary to require the following additional hypothesis:

\begin{defn}\label{BDadmissibledef} A Bridgeland-Deligne perverse coherent t-structure is called \emph{admissible} if the corresponding heart satisfies
\[ \Perv_\cs(Y) \subset  \Perv^\p(Y) \ ,\]
that is, it contains the category of compactly supported perverse coherent sheaves.
\end{defn}

\begin{eg}\label{mainteg} Let $X$ be an affine variety and define the perversity function $\p:X^\textup{top}\to \bb Z$ by $\p(x)=0$ for each closed point $x\in X(\bb K)$ and $\p(y)=-1$ otherwise. Then $\Perv^\p(X) = \Coh(X)_t$, the tilt of $\Coh(X)$ with respect to the torsion theory $(\mc T, \mc F)$ defined by $\mc T=\Coh_{\leq 0}(X)$ and $\mc F = \mc T^\perp$, as in Example \ref{dimsupptilteg}.
	
Moreover, for $f:Y\to X$ as in Section \ref{NCCRsec}, the Bridgeland-Deligne perverse coherent t-structure determined by $\p$ is admissible, and for any subvariety $Z\subset Y$ of dimension $\geq 1$ the object $M=\mc O_Z[1]\in \Perv^\p(Y)$.
\end{eg}

\begin{eg}\label{semidynamicalteg} Let $X$ be an affine variety and define the perversity function $\p:X^\textup{top}\to \bb Z$ by $\p(x)=0$ for each closed point $x\in X(\bb K)$ or one dimensional subvariety, and $\p(y)=-1$ otherwise. Then $\Perv^\p(X) = \Coh(X)_t$, the tilt of $\Coh(X)$ with respect to the torsion theory $(\mc T, \mc F)$ defined by $\mc T=\Coh_{\leq 1}(X)$ and $\mc F = \mc T^\perp$, as in Example \ref{dimsupptilteg}.  We remark that a similar t-structure appeared in the proof of the main theorem of \cite{Toda}.

Moreover, for $f:Y\to X$ as in Section \ref{NCCRsec}, the Bridgeland-Deligne perverse coherent t-structure determined by $\p$ is evidently admissible, and for any subvariety $Z\subset Y$ of pure dimension $\geq 2$ the object $M=\mc O_Z[1]\in \Perv^\p(Y)$, and similarly for any subvariety $Z$ of pure dimension $\leq 1$ the object $\mc O_Z\in \Perv^\p(Y)$.
\end{eg}

\subsection{Categories of perverse coherent extensions}\label{pervextcatsec}

Let $\Thick(F\oplus M) $ denote the thick subcategory of $\DD^b\Coh(\hat Y)$ generated by the compactly supported simple objects $F_i$ together with $M$, and similarly for the equivariant analogues. There are equivalent corresponding thick subcategories of $\DD_\perf(\hat \Lambda)$ and $\DD_\Fd(\Sigma)$, and their graded analogues, and we will describe them (as well as their hearts with respect to certain t-structures) in terms of explicit extensions of canonical resolutions determined by complexes of $\Sigma$, generalizing the descriptions given in Sections \ref{Geosetupsec},\ref{Kozpatsec} and \ref{monadsec} of the images of the objects in $\DD^b\Coh_\cs(Y)$ (and in $\Perv_\cs(Y)$) under these equivalences.

In order to analogously parameterize this larger class of extensions, and ultimately give descriptions of the moduli stacks of objects of these categories, it will be necessary to first understand the category in terms of Morita theory for the object $F\oplus M$, which leads naturally to the extended quiver, as we now explain.

To begin, in analogy with Corollary \ref{cctmoritacoro}, letting
\[ \Sigma_M =\Hom(F\oplus M, F\oplus M) \quad\quad \text{and}\quad\quad \Sigma_\infty=\Hom(M,M) \ \]
denote the (graded) DG associative algebras of endomorphisms of $F\oplus M$ and $M$, respectively, we have:

\begin{corollary}\label{extMoritacoro} There are equivalences of triangulated categories
	\[ \Thick(F\oplus M)  \xrightarrow{\cong} \DD_\Perf(\Sigma_M)\quad \quad \text{and}\quad \quad  \thick(F\oplus M) \xrightarrow{\cong} {\textup{D}_\perf}(\Sigma_M) \ , \]
	intertwining the forgetful functor $\DD(\Sigma_M) \to \DGV$ with $\Hom_{\DD\QC(Y)}( F\oplus M,\cdot)$, and similarly
		\[ \Thick(M) \xrightarrow{\cong} \DD_\Perf(\Sigma_\infty)\quad \quad \text{and}\quad \quad  \thick(M) \xrightarrow{\cong} {\textup{D}_\perf}(\Sigma_\infty) \ , \]
	intertwining the forgetful functor $\DD(\Sigma_\infty) \to \DGV$ with $\Hom_{\DD\QC(Y)}( M, \cdot)$.
\end{corollary}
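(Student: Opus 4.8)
The plan is to deduce this directly from Keller's Morita theorem (Theorem \ref{Morita}), applied to the compact objects $F\oplus M$ and $M$ in place of the object $F$ used in the proof of Corollary \ref{cctmoritacoro}. First I would record that $F\oplus M$ and $M$ are compact objects of $\DD\QC(\hat Y)$, respectively of $\DD\QC(Y)^T$: by the theorem of Van den Bergh recalled above, $\hat E$ (resp. $E$) is a tilting object, so the equivalences of Corollary \ref{EMoritacoro} (resp. Corollary \ref{EMoritagrcoro}) identify $\DD^b\Coh(\hat Y)$ with $\DD_\Perf(\hat\Lambda)$, i.e. with the perfect, equivalently compact, objects of $\DD(\hat\Lambda)\cong\DD\QC(\hat Y)$. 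In particular every object of $\DD^b\Coh(\hat Y)$ — such as the $F_i$ and $M$ — is compact, and likewise in the $T$-equivariant setting; this is the same input already used for $\hat E$ and $F$ in Section \ref{Geosetupsec}.

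Next, since $F\oplus M$ need not generate the whole of $\DD\QC(\hat Y)$, I would pass to the localizing subcategory $\Loc(F\oplus M)\subset\DD\QC(\hat Y)$ that it generates, in which $F\oplus M$ is by construction a generator and remains compact, hence a tilting object in the sense of Definition \ref{tiltingdefn}. Applying Theorem \ref{Morita} with $\D=\Loc(F\oplus M)$ and $\Lambda=\Hom_{\DD^b\Coh(\hat Y)}(F\oplus M,F\oplus M)=\Sigma_M$ yields a triangle equivalence $\Loc(F\oplus M)\xrightarrow{\cong}\DD(\Sigma_M)$ intertwining the forgetful functor with $\Hom_{\DD\QC(\hat Y)}(F\oplus M,\cdot)$, which by the restriction statement of Equation \ref{dgmoritaeqntriang} cuts down to $\Triang(F\oplus M)\cong\Triang(\Sigma_M)$ and $\Thick(F\oplus M)\xrightarrow{\cong}\DD_\Perf(\Sigma_M)$. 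Incorporating the trigrading induced by the chosen $T$-equivariant structures, exactly as in the passage from Corollary \ref{EMoritacoro} to Corollary \ref{EMoritagrcoro}, then gives the equivariant/graded version $\thick(F\oplus M)\xrightarrow{\cong}{\textup{D}_\perf}(\Sigma_M)$.

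Finally, the identical argument applied to the compact object $M$, with $\Loc(M)$ in place of $\Loc(F\oplus M)$ and $\Sigma_\infty=\Hom(M,M)$ in place of $\Sigma_M$, produces the remaining equivalences $\Thick(M)\xrightarrow{\cong}\DD_\Perf(\Sigma_\infty)$ and $\thick(M)\xrightarrow{\cong}{\textup{D}_\perf}(\Sigma_\infty)$ intertwining the forgetful functors with $\Hom_{\DD\QC(\hat Y)}(M,\cdot)$. There is no real obstacle here: the only substantive point is the compactness/perfectness over the formal completion $\hat Y$ addressed in the first paragraph, and everything else is a direct invocation of Theorem \ref{Morita} together with the grading bookkeeping already carried out in Corollary \ref{EMoritagrcoro}. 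I would keep the write-up to a couple of lines, citing Corollaries \ref{EMoritacoro}, \ref{EMoritagrcoro} and \ref{cctmoritacoro} for the pattern and Theorem \ref{Morita} for the statement being invoked.
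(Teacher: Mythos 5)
Your proposal is correct and follows essentially the same route as the paper, which likewise deduces the corollary by applying Keller's Morita theory (Theorem \ref{Morita}) to the objects $F\oplus M$ and $M$ within the subcategories they generate, exactly as in Corollary \ref{cctmoritacoro}, together with the grading bookkeeping of Corollary \ref{EMoritagrcoro}. Your extra care in recording compactness of $F\oplus M$ and $M$ and in passing explicitly to the localizing subcategory is just a more detailed spelling-out of the same argument.
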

\begin{proof} As in the analogous statement for $F$, we apply the DG Morita theory of Keller \cite{Kel1} recalled in Theorem \ref{Morita} to the thick subcategory generated by the objects $F\oplus M$ and $M$, respectively.
\end{proof}

Further, we let $I_M=I\cup \{\infty \}$ and define
\[ S_\infty = \Ext^0(M,M) \ \in \DD_\Fd(\Sigma_M) \quad\quad \textup{and}\quad\quad S_M= \bigoplus_{i\in I_M} S_i = S\oplus S_\infty  \ \in \DD_\Fd(\Sigma_M)  \ ,\]
as well as their natural graded enhancements. Note that the module structure on $S_\infty$ factors through the natural projection $\Sigma_M\to \Sigma_\infty$, or equivalently is pulled back from a module $S_\infty \in \DD_\Fd(\Sigma_\infty)$, for which we use the same notation by abuse, and similarly for the graded variants.

Further, as in Section \ref{Kozpatsec}, we can introduce corresponding Koszul dual graded algebras
\[ \Lambda_\infty = \Hom_{\Sigma_\infty}(S_\infty, S_\infty) \quad\quad\text{and}\quad\quad \Lambda_M = \Hom_{\Sigma_M}(S_M,S_M) \ ,\]
as well as their I-adically complete, ungraded variants $\hat \Lambda_\infty$ and $\hat\Lambda_M$. As in Equation \ref{}, we have induced presentations of $\hat \Lambda_\infty$ and $\hat \Lambda_M$ (and their graded variants) as quasi-free complete (or graded) DG associative algebras, with underlying cohomologically (and abstractly bi)graded associative algebras given by
\[  \ . \]

We again let $S_M\in \hat\Lambda_M \Mod $ and similarly $S_M \in \Lambda_M\Mod_\Z$ denote the corresponding augmentation modules, and we make the following definition:

\begin{defn}\label{frdefn} The derived category $\DD_\Fr(\hat \Lambda_M)$ of finite rank (over $S_M$) $\hat \Lambda_M$ modules is defined as the thick subcategory $\DD_\Fr(\hat \Lambda_M) = \Thick(S_M)$ generated by $S_M$, and similarly for the category of plain finite rank modules $\hat \Lambda_M\Mod_\Fr=\Filt(S_M)$. Their graded variants are defined similarly, as $\DD_\fr(\Lambda_M)=\thick(S_M)$ and $\Lambda_M\Mod_\fr=\filt(S_M)$.
\end{defn}

We again have induced Koszul duality equivalences, which are the analogues for $\Sigma_M$ and $\Sigma_\infty$ of the equivalences relating $\Sigma$ modules and $\Lambda$ modules given in Corollary \ref{Kozequivcoro}:

\begin{corollary}\label{Kozequivextcoro} There are mutually inverse equivalences of triangulated categories
	\begin{align} \Hom_{\Lambda_M}(S_M,\cdot) &: \xymatrix{\DD_\Fr(\hat \Lambda_M)  \ar@<.5ex>[r]^{\cong} & \ar@<.5ex>[l] \DD_\Perf(\Sigma_M)}: (\cdot)\otimes_{\Sigma_M} S_M  \quad\quad \ ,\\
		\Hom_{\Lambda_M}(S_M,\cdot) &: \xymatrix{\DD_\fr(\Lambda_M)  \ar@<.5ex>[r]^{\cong} & \ar@<.5ex>[l] {\textup{D}_\perf}(\Sigma_M)}: (\cdot)\otimes_{\Sigma_M} S_M  \quad\quad\ \  ,    \\
		\Hom_{\Lambda_\infty}(S_\infty,\cdot) &: \xymatrix{\DD_\Fr(\hat \Lambda_\infty)  \ar@<.5ex>[r]^{\cong} & \ar@<.5ex>[l] \DD_\Perf(\Sigma_\infty)}: (\cdot)\otimes_{\Sigma_\infty} S_\infty  \quad\quad \ \text{, and}\\
		\Hom_{\Lambda_\infty}(S_\infty,\cdot) &: \xymatrix{\DD_\fr(\Lambda_\infty)  \ar@<.5ex>[r]^{\cong} & \ar@<.5ex>[l] {\textup{D}_\perf}(\Sigma_\infty)}: (\cdot)\otimes_{\Sigma_\infty} S_\infty  \quad\quad\ \  .
	\end{align}	
\end{corollary}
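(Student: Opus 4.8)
The plan is to deduce Corollary \ref{Kozequivextcoro} by the same mechanism that produced Corollary \ref{Kozequivcoro}, simply applied with the algebra $\Lambda$ replaced by $\Lambda_M$ (respectively $\Lambda_\infty$) and the augmentation module $S$ replaced by $S_M$ (respectively $S_\infty$). First I would observe that, exactly as in Section \ref{Kozpatsec}, the module structure on $S_M$ over $\hat\Lambda_M$ (and on its graded enhancement over $\Lambda_M$) defines an augmentation $\epsilon: \hat\Lambda_M \to S_M$ over the semisimple base ring $(\hat\Lambda_M)_0 = S_M = \oplus_{i\in I_M}\bb K$, so that by construction $\Sigma_M = \Hom_{\DD(\hat\Lambda_M)}(S_M,S_M)$ is the Koszul dual algebra $\hat\Lambda_M^!$, and likewise $\Sigma_\infty = \hat\Lambda_\infty^!$; conversely, by Proposition \ref{SigmaKozprop} applied to this setting (i.e. the double-dual statement of Theorem \ref{Kozsqtheo} in the graded case, and Proposition \ref{Kozpresprop} together with the identification of the augmentation ideal with the strictly positively graded part in the ungraded case), we have $\Sigma_M^! \cong \hat\Lambda_M$ and $\Sigma_\infty^! \cong \hat\Lambda_\infty$.

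Given these Koszul duality identifications, the four equivalences are then each an instance of Theorem \ref{Kozeqtheo} (Koszul duality equivalence of module categories for $A_\infty$ algebras, from \cite{LPWZ}), applied in turn with $A = \Lambda_M$, $A = \Sigma_M$, $A = \Lambda_\infty$, and $A = \Sigma_\infty$. Concretely: in the graded case, both $\Lambda_M$ and $\Sigma_M$ are strongly locally finite — this requires choosing the $T$-equivariant structures on $F$ and $M$ compatibly, exactly as in the paragraph preceding Corollary \ref{Kozequivcoro}, noting that the hypotheses on $M$ (finiteness of $\Ext^\bullet(M,M)$ together with the bounded, equivariant coherence of $M$ and the vanishing $\Ext^i(F,M)=\Ext^i(M,F)=0$ for $i\leq 0$) guarantee that $\Sigma_M$ is graded finite dimensional and bounded below — so Theorem \ref{Kozeqtheo} gives the first graded equivalence with $A^! = \Sigma_M$, using $\thick(S_M) = \DD_\fr(\Lambda_M)$ by Definition \ref{frdefn}, and the second graded equivalence by applying the same theorem with $A = \Sigma_M$, using $(\Sigma_M)^! \cong \Lambda_M$ and $\thick(\Sigma_M) = {\textup{D}_\perf}(\Sigma_M)$. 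The ungraded equivalences follow from the weaker-hypothesis version recalled in Remark \ref{Kozequivrmk}, together with DG Morita theory (Theorem \ref{Morita}) applied to $S_M \in \DD_\Fr(\hat\Lambda_M)$ — or rather to the object $S_M$ viewed appropriately — since $\hat\Lambda_M = \Hom_{\Sigma_M}(S_M,S_M)$; this is verbatim the argument in the proof of Corollary \ref{Kozequivcoro}. The statements for $\Sigma_\infty$ and $\Lambda_\infty$ are proved identically, replacing $I_M$ by $\{\infty\}$ and $F\oplus M$ by $M$ throughout.

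I do not anticipate a serious obstacle here: the proof is a routine transcription, and the only genuine point requiring care — and the one I would state explicitly — is the verification that $\Sigma_M$ (and $\Sigma_\infty$) is strongly locally finite in the graded setting and locally finite in the ungraded setting, so that Theorem \ref{Kozeqtheo} and its Remark \ref{Kozequivrmk} variant genuinely apply. This is where hypotheses (1)--(3) on $M$ from Section \ref{Extoverviewsec} are used: the commutativity of $\Ext^0(M,M)$, the membership of $M$ in the heart of an admissible Bridgeland-Deligne t-structure (giving $\Ext^{<0}(M,M)=0$), and the $\Ext$-vanishing between $F$ and $M$ in non-positive degrees together ensure that $\Sigma_M$ is concentrated in non-negative cohomological degrees, is graded finite-dimensional, and admits a compatible equivariant grading for which it is strongly locally finite. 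Once this is in place, the proof reads: apply Proposition \ref{SigmaKozprop}'s analogue to obtain the Koszul duality identifications, then apply Theorem \ref{Kozeqtheo} (in the graded case) and Remark \ref{Kozequivrmk} together with Theorem \ref{Morita} (in the ungraded case), exactly as in the proof of Corollary \ref{Kozequivcoro}.

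\begin{proof} All four equivalences, and their analogues for $\Sigma_\infty$ and $\Lambda_\infty$, are instances of the Koszul duality equivalence of module categories recalled in Theorem \ref{Kozeqtheo}, exactly as in the proof of Corollary \ref{Kozequivcoro}.

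First, by the hypotheses on $M$ recalled in Section \ref{Extoverviewsec} — namely that $\Ext^0(M,M)$ is commutative, that $\Ext^i(F,M)=\Ext^i(M,F)=0$ for $i\leq 0$, and that $M$ lies in the heart of an admissible Bridgeland-Deligne perverse coherent t-structure, so that $\Ext^{i}(M,M)=0$ for $i<0$ — the DG algebras $\Sigma_M = \Hom(F\oplus M,F\oplus M)$ and $\Sigma_\infty = \Hom(M,M)$ have cohomology concentrated in non-negative degrees, are finite dimensional since $F$ has proper support and $\Ext^\bullet(M,M)$ is finite dimensional, and admit $T$-equivariant structures for which they are strongly locally finite in the sense of \cite{LPWZ}; we fix such structures, compatibly with those already chosen on $F$ as in Section \ref{Kozpatsec}. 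As there, the module structure maps for $S_M$ and $S_\infty$ define augmentations $\hat\Lambda_M\to S_M$ and $\hat\Lambda_\infty \to S_\infty$ over the semisimple base rings $S_M=\oplus_{i\in I_M}\bb K$ and $S_\infty=\bb K$ respectively, and by construction $\Sigma_M \cong \hat\Lambda_M^!$ and $\Sigma_\infty\cong\hat\Lambda_\infty^!$, together with their graded enhancements. Conversely, by the argument of Proposition \ref{SigmaKozprop} — Theorem \ref{Kozsqtheo} in the graded case, and Proposition \ref{Kozpresprop} together with the identification of the augmentation ideal with the strictly positively graded part in the ungraded case — we obtain $\Sigma_M^!\cong\hat\Lambda_M$ and $\Sigma_\infty^!\cong\hat\Lambda_\infty$, and their graded variants.

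In the graded case, both $\Lambda_M$ and $\Sigma_M$ are strongly locally finite, so Theorem \ref{Kozeqtheo} applies with $A=\Lambda_M$ to give the first claimed graded equivalence, using $\thick(S_M)=\DD_\fr(\Lambda_M)$ by Definition \ref{frdefn}, and with $A=\Sigma_M$ to give the second, using $\thick(\Sigma_M)={\textup{D}_\perf}(\Sigma_M)$. The ungraded equivalences follow by applying the same results under the weaker hypotheses of Remark \ref{Kozequivrmk}, noting $\Lambda_M$ is a plain associative algebra by projectivity and $\Sigma_M$ is finite dimensional hence locally finite; the second ungraded equivalence also follows by applying DG Morita theory (Theorem \ref{Morita}) to $S_M\in\DD_\Fr(\hat\Lambda_M)$, since $\hat\Lambda_M=\Hom_{\Sigma_M}(S_M,S_M)$. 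The statements for $\Sigma_\infty$ and $\Lambda_\infty$ are proved identically, with $I_M$ replaced by $\{\infty\}$ and $F\oplus M$ by $M$ throughout.
\end{proof}
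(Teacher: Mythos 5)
Your overall route is exactly the paper's: the paper's entire proof of this corollary is the single sentence that it "follows from the proof of Corollary \ref{Kozequivcoro}, \emph{mutatis mutandis}," and your write-up is precisely that transcription, with the Koszul duality identifications $\Sigma_M\cong\hat\Lambda_M^!$, $\Sigma_M^!\cong\hat\Lambda_M$ supplied by the analogue of Proposition \ref{SigmaKozprop} and the equivalences supplied by Theorem \ref{Kozeqtheo}, Remark \ref{Kozequivrmk}, and Theorem \ref{Morita}.

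One point in your justification is wrong as stated, and it is exactly the place where the transcription is \emph{not} literal. You assert that the augmentations land in "the semisimple base rings $S_M=\oplus_{i\in I_M}\bb K$ and $S_\infty=\bb K$" and that $\Sigma_M$, $\Sigma_\infty$ "are finite dimensional since $F$ has proper support and $\Ext^\bullet(M,M)$ is finite dimensional." In the paper's setup $S_\infty=\Ext^0(M,M)$ is a (commutative, typically local) ring that is in general infinite dimensional over $\bb K$ — e.g. for $M=\mc O_W[1]$ it is the ring of functions on the (formal completion of the) divisor, which is how it is used in Lemma \ref{quivermoduliextlemma} and Example \ref{Higgseg} — and correspondingly $\Sigma_\infty=\Ext^\bullet(M,M)$ is only of finite rank over $S_\infty$, not finite dimensional over $\bb K$. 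So the local-finiteness/finite-dimensionality input to Theorem \ref{Kozeqtheo} and Remark \ref{Kozequivrmk} cannot be verified over $\bb K$ as in Corollary \ref{Kozequivcoro}; it has to be formulated relative to the base ring $S_M=S\oplus S_\infty$ (this is what the "finite rank" categories $\DD_\Fr$, $\DD_\fr$ of Definition \ref{frdefn} are tracking), and one must check that the Koszul duality machinery of Section \ref{Ainfalgsec}/\cite{LPWZ} applies over this non-semisimple base. The paper glosses this under "mutatis mutandis," but since this is the only genuinely new feature of the extended setting, your proof should flag and address it rather than assert finite dimensionality over $\bb K$, which is false in the main examples.
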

\begin{proof} Follows from the proof of Corollary \ref{Kozequivcoro}, \emph{mutatis mutandis}.
\end{proof}

We now proceed to prove the generalization of Theorem \ref{catthmunext} to the thick subcategory $\Thick(F\oplus M)$, and its graded variant. To begin, we note
\begin{prop}
	There exist functors
	\begin{align*}
		(\cdot)\otimes_{S_M} S & : \DD_\Perf(\Sigma_M) \to \DD_\Fd(\Sigma) &(\cdot)\otimes_{S_M} S & : {\textup{D}_\perf}(\Sigma_M) \to \DD_\fd(\Sigma) \\
		(\cdot)\otimes_{\hat\Lambda_M} \hat \Lambda & : \DD_\Fr(\hat \Lambda_M) \to \DD_\Perf(\hat\Lambda) & (\cdot)\otimes_{\Lambda_M}  \Lambda & : \DD_\fr( \Lambda_M) \to {\textup{D}_\perf}(\Lambda)  & .
	\end{align*}
\end{prop}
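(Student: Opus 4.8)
The plan is to realise each of the four functors as a (derived) base-change functor along one of the evident comparison maps between the Koszul pairs $(\Lambda,\Sigma)$, $(\Lambda_M,\Sigma_M)$ and $(\Lambda_\infty,\Sigma_\infty)$, and then to verify that it preserves the indicated thick subcategories by evaluating on generators.

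First I would record the comparison data at the $A_\infty$/DG level. The inclusion of index sets $I\subset I_M=I\cup\{\infty\}$ gives the idempotent $e=\sum_{i\in I}e_i\in S_M$, and hypothesis (2) — that $\Ext^{\le 0}(F,M)=\Ext^{\le 0}(M,F)=0$ — guarantees that the cohomological degree zero part of $\Sigma_M=\Hom(F\oplus M,F\oplus M)$ is exactly $S_M$, i.e. that the extended quiver $Q_M$ has no arrows in nonpositive degree joining a vertex of $I$ to $\infty$. Since $\Endi(F)=e\,\Endi(F\oplus M)\,e$ is a (locally unital) DG subalgebra of $\Endi(F\oplus M)$ with complementary subcomplex $\Homi(M,F)\oplus\Homi(F,M)\oplus\Endi(M)$, one may choose a homotopy retraction computing the minimal model $\Sigma_M$ which is block diagonal for this splitting; homotopy transfer then yields $A_\infty$ structures for which $\Sigma=e\Sigma_M e$ is an $A_\infty$ subalgebra and $\Sigma_\infty=e_\infty\Sigma_M e_\infty$, and — this being the Koszul dual statement — surjective DG algebra maps $\Lambda_M\twoheadrightarrow\Lambda$ and $\hat\Lambda_M\twoheadrightarrow\hat\Lambda$ obtained by sending the arrows incident to $\infty$ to zero (these are DG maps because the correction terms to the differential of an $I$-arrow that pass through $\infty$ are killed, while its pure terms already agree with the differential of $\Lambda$). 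Hypotheses (1) and (3) are used here only to know that $S_\infty=\Ext^0(M,M)$ is a commutative base ring and that $F$ and $M$ lie in a common heart, so that this combinatorial picture is the genuine one. I expect this compatibility of minimal models to be the one point genuinely requiring care; the rest is idempotent and finiteness bookkeeping.

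With the comparison data in hand, $(\cdot)\otimes_{S_M}S$ sends a (graded) $\Sigma_M$-module $N$ to its $e$-component $Ne=N\otimes_{S_M}S$, equipped with the $A_\infty$ $\Sigma=e\Sigma_M e$-module structure obtained by restricting the structure maps of $N$ along $e$, while $(\cdot)\otimes_{\Lambda_M}\Lambda$ and $(\cdot)\otimes_{\hat\Lambda_M}\hat\Lambda$ are the ordinary derived extensions of scalars along $\Lambda_M\twoheadrightarrow\Lambda$ and $\hat\Lambda_M\twoheadrightarrow\hat\Lambda$. Preservation of the thick subcategories then reduces, via Koszul duality, to two finiteness checks: by Corollaries \ref{Kozequivcoro} and \ref{Kozequivextcoro} the functors $(\cdot)\otimes_{\hat\Lambda_M}\hat\Lambda$ and $(\cdot)\otimes_{\Lambda_M}\Lambda$ are intertwined, through the appropriate Koszul duality equivalences, with the corresponding ungraded and graded avatars of $(\cdot)\otimes_{S_M}S$ (by a direct computation with the relevant bimodules), so it suffices to prove that $(\cdot)\otimes_{S_M}S$ carries $\DD_\Perf(\Sigma_M)$ into $\DD_\Fd(\Sigma)$ and ${\textup{D}_\perf}(\Sigma_M)$ into $\DD_\fd(\Sigma)$. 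As these functors are triangulated and commute with direct summands, it is enough to evaluate them on the generators $\Sigma_M e_i$ for $i\in I_M$; up to the left/right conventions one obtains the direct sum of the $\Ext$-complexes between the $F_j$ for $j\in I$ and either $F_i$ (if $i\in I$) or $M$ (if $i=\infty$), which is a direct summand of $\Sigma$ in the first case and is finite dimensional and cohomologically bounded in the second, because the $F_j$ have proper support and $M$ is coherent on the smooth threefold $Y$. The completed statement $(\cdot)\otimes_{\hat\Lambda_M}\hat\Lambda:\DD_\Fr(\hat\Lambda_M)\to\DD_\Perf(\hat\Lambda)$ follows by the same argument after $I$-adic completion.
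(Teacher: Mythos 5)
Your treatment of the first pair of functors is essentially the paper's argument: the functor is triangulated and commutes with summands, so one evaluates on the generator, and $\Sigma_M\otimes_{S_M}S=\Hom(F,F\oplus M)$ is finite dimensional because $F$ has compact support (the paper does not need the vertex-by-vertex refinement or the homotopy-transfer discussion, but that part is harmless).

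The second pair is where there is a genuine gap. You reduce the statement that $(\cdot)\otimes_{\hat\Lambda_M}\hat\Lambda$ carries $\DD_\Fr(\hat\Lambda_M)$ into $\DD_\Perf(\hat\Lambda)$ to the $\Sigma$-side finiteness check "via Koszul duality", citing Corollaries \ref{Kozequivcoro} and \ref{Kozequivextcoro}. Those corollaries only provide the equivalences $\DD_\Fr(\hat\Lambda_M)\simeq\DD_\Perf(\Sigma_M)$ and $\DD_\Perf(\hat\Lambda)\simeq\DD_\Fd(\Sigma)$; the compatibility of the two base-change functors with these equivalences is not a formal consequence and is not something you establish — it is exactly the commutativity of the rightmost square of Theorem \ref{catthm}, which the paper proves \emph{after} this proposition, and whose proof (the chain of isomorphisms ending with $\Hom_\Sigma(S,(\cdot)\otimes_{\hat\Lambda_M}\hat\Lambda\otimes_{\hat\Lambda}S)\cong(\cdot)\otimes_{\hat\Lambda_M}\hat\Lambda$, plus the Nakayama lemma you gloss as "left/right conventions") uses the fact that $(\cdot)\otimes_{\hat\Lambda_M}\hat\Lambda$ already lands in $\DD_\Perf(\hat\Lambda)$, i.e.\ the statement you are trying to prove. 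Your reduction is therefore circular as written. Moreover, even if one grants a natural isomorphism of the composites $N\otimes_{\hat\Lambda_M}S\cong(\Hom_{\hat\Lambda_M}(S_M,N)\otimes_{S_M}S)^{N}$ in $\DD(\Sigma)$, knowing that this object lies in $\DD_\Fd(\Sigma)$ does not by itself force $N\otimes_{\hat\Lambda_M}\hat\Lambda$ to be perfect over $\hat\Lambda$: the functor $(\cdot)\otimes_{\hat\Lambda}S$ is an equivalence only after restriction to $\DD_\Perf(\hat\Lambda)$ and does not detect membership in that subcategory on all of $\DD(\hat\Lambda)$. To make your route work you would have to prove the comparison isomorphism in the ambient category independently, e.g.\ by computing $S_M\otimes^L_{\hat\Lambda_M}\hat\Lambda$ directly on the generator — at which point you may as well argue as the paper does: $\DD_\Fr(\hat\Lambda_M)=\Thick(S_M)\subset\DD_\Perf(\hat\Lambda_M)$, and derived extension of scalars along $\hat\Lambda_M\to\hat\Lambda$ (respectively $\Lambda_M\to\Lambda$) preserves perfect complexes since it sends $\hat\Lambda_M$ to $\hat\Lambda$, which immediately gives the codomains $\DD_\Perf(\hat\Lambda)$ and ${\textup{D}_\perf}(\Lambda)$.
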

\begin{proof}
 The codomain of the functors in the first line are as claimed, since $\Sigma_M\otimes_{S_M} S=\Hom(F,F\oplus M)$ is finite dimensional over the base field, as $F$ has compact support. The codomain of the functors on the second line are as claimed since we have $\DD_\Fr(\hat \Lambda_M) \subset \DD_\Perf(\hat\Lambda_M)$, and similarly for the graded variant, and the functors are clearly defined on the latter categories and preserve perfect complexes.
\end{proof}

In terms of these functors, the desired generalization of Theorem \ref{catthmunext} is the following:

\begin{theo}\label{catthm} The diagram of triangulated categories
\begin{equation}\label{Kozgeoungreqn}
		\xymatrixcolsep{6pc}
 \xymatrix{ \Thick(F,M)  \ar[d]^{\subset}  \ar@<.5ex>[r]^{ \Hom_{\hat Y}(F\oplus M, \cdot)\otimes_{\Sigma_M} S_M} & \ar@<.5ex>[l]^{\Hom_{\hat \Lambda_M}(S_M,\cdot)\otimes_{\Sigma_M} (F\oplus M)}  \DD_\Fr(\hat \Lambda_M) \ar@<.5ex>[r]^{\Hom_{\hat \Lambda_M}(S_M,\cdot)} \ar[d]^{(\cdot)\otimes_{\hat\Lambda_M} \hat \Lambda} & \ar@<.5ex>[l]^{(\cdot)\otimes_{\Sigma_M} S_M } \DD_\Perf(\Sigma_M)  \ar[d]^{(\cdot)^N\circ (\cdot)\otimes_{S_M} S } \\
	\DD^b\Coh(\hat Y) \ar@<.5ex>[r]^{\Hom_{\hat Y}(E,\cdot)}  &  \ar@<.5ex>[l]^{E\otimes_{\hat\Lambda}(\cdot) }  \DD_\Perf(\hat \Lambda)    \ar@<.5ex>[r]^{ (\cdot)^N\circ (\cdot)\otimes_{\Lambda} S}  & \ar@<.5ex>[l]^{\Hom_{\Sigma}(S,\cdot)}  \DD_\Fd(\Sigma) & }
\end{equation}
has horizontal arrows mutually inverse triangle equivalences and vertical arrows inclusions of thick subcategories, and admits canonical commutativity data, and similarly for the diagram
\begin{equation}\label{Kozgeoeqn}
		\xymatrixcolsep{6pc}
\xymatrix{ \thick(F,M)  \ar[d]^{\subset}  \ar@<.5ex>[r]^{ \Hom_{ Y}(F\oplus M, \cdot)\otimes_{\Sigma_M} S_M} & \ar@<.5ex>[l]^{\Hom_{  \Lambda_M}(S_M,\cdot)\otimes_{\Sigma_M} (F\oplus M)}  \DD_\fr(  \Lambda_M) \ar@<.5ex>[r]^{\Hom_{  \Lambda_M}(S_M,\cdot)} \ar[d]^{(\cdot)\otimes_{ \Lambda_M}   \Lambda} & \ar@<.5ex>[l]^{(\cdot)\otimes_{\Sigma_M} S_M } {\textup{D}_\perf}(\Sigma_M)  \ar[d]^{(\cdot)^N\circ(\cdot)\otimes_{S_M} S } \\
	\DD^b\Coh(  Y)^{T} \ar@<.5ex>[r]^{\Hom_{  Y}(E,\cdot)}  &  \ar@<.5ex>[l]^{E\otimes_{ \Lambda}(\cdot) }  {\textup{D}_\perf}(  \Lambda)    \ar@<.5ex>[r]^{ (\cdot)\otimes_{\Lambda} S}  & \ar@<.5ex>[l]^{\Hom_{\Sigma}(S,\cdot)}  \DD_\fd(\Sigma) & \textup{.} }
\end{equation}

\end{theo}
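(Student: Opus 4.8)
The plan is to prove Theorem \ref{catthm} by reduction to the already-established Theorem \ref{catthmunext}, treating the extended situation as a ``two-object'' enlargement of the previous one and carefully tracking how each of the three columns of the diagram changes. First I would observe that the bottom rows of both diagrams are \emph{unchanged} from Theorem \ref{catthmunext}: they are the mutually inverse equivalences $\DD^b\Coh(\hat Y)\cong \DD_\Perf(\hat\Lambda)\cong \DD_\Fd(\Sigma)$ (resp.\ their $T$-equivariant graded counterparts) furnished by Corollary \ref{EMoritacoro}, Corollary \ref{EMoritagrcoro}, and Corollary \ref{Kozequivcoro}, together with the Nakayama twist. So the only genuinely new input is the top row and the vertical maps.

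For the top row, the two horizontal equivalences are exactly the content of Corollary \ref{extMoritacoro} (DG Morita theory applied to the compact generator $F\oplus M$ of the thick subcategory it generates, giving $\Thick(F\oplus M)\cong \DD_\Perf(\Sigma_M)$) and Corollary \ref{Kozequivextcoro} (Koszul duality for the augmented algebra $\Sigma_M$ with base ring $S_M$, giving $\DD_\Fr(\hat\Lambda_M)\cong \DD_\Perf(\Sigma_M)$ where $\hat\Lambda_M=\Sigma_M^!$). Composing these two gives the claimed pair of inverse equivalences across the top; here one must check the formula for the functors matches the one written in the diagram, i.e.\ that $\Hom_{\hat Y}(F\oplus M,\cdot)$ intertwines with the forgetful functor and that $(\cdot)\otimes_{\Sigma_M}S_M$ followed by $\Hom_{\hat Y}$-transfer recovers $\Hom_{\hat\Lambda_M}(S_M,\cdot)\otimes_{\Sigma_M}(F\oplus M)$ — this is a routine unwinding exactly as in the proof of Theorem \ref{catthmunext}. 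The vertical maps on the left are the tautological inclusions of the thick subcategory $\Thick(F\oplus M)\subset \DD^b\Coh(\hat Y)$; the middle vertical $(\cdot)\otimes_{\hat\Lambda_M}\hat\Lambda$ corresponds to it under Morita theory, using the natural algebra map $\hat\Lambda\to\hat\Lambda_M$ coming from $F$ being a summand of $F\oplus M$ together with $\DD_\Fr(\hat\Lambda_M)\subset\DD_\Perf(\hat\Lambda_M)$; and the right vertical is forced by Koszul duality to be $(\cdot)^N\circ(\cdot)\otimes_{S_M}S$, the dual of the restriction along the algebra surjection $\Sigma_M\to\Sigma$ obtained by killing the $\infty$-component, corrected by the Nakayama functor exactly as in the lemma preceding Theorem \ref{catthmunext}.

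The remaining work is assembling the commutativity data. The bottom squares and the bottom-left square already commute by Theorem \ref{catthmunext}. For the new squares I would argue in the $A_\infty$/module-theoretic models, where everything reduces to functoriality of Morita equivalence and Koszul duality with respect to the algebra maps $\hat\Lambda\to\hat\Lambda_M$ and $\Sigma_M\to\Sigma$ (which are Koszul dual to one another, by the same argument as in Proposition \ref{SigmaKozprop}): restriction of scalars on one side corresponds to coinduction on the dual side, and the discrepancy between induction and coinduction is precisely the Nakayama functor, which is accounted for by the $(\cdot)^N$ appearing in the right vertical. Commutativity of the right square then follows from the natural isomorphism $\Hom_\Sigma(S,\cdot)\circ N \cong (\cdot)\otimes_\Sigma S$ used in the lemma before Theorem \ref{catthmunext}, applied relative to the surjection $\Sigma_M\to\Sigma$. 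Commutativity of the right square, plus commutativity of the bottom row, forces commutativity of the outer rectangle and hence of the middle square; and since the horizontal arrows are equivalences while the left and middle verticals are fully faithful with essential image the respective thick subcategories, the middle and right verticals are necessarily inclusions of thick subcategories as well. The graded/equivariant diagram \eqref{Kozgeoeqn} is handled identically, inserting the gradings induced by the chosen $T$-equivariant structures on $E$, $F$ and $M$ and invoking the graded versions of all the cited results (strong local finiteness of $\Sigma_M$ being guaranteed by the same choice of equivariant structure as for $\Sigma$, together with hypotheses (1)--(3) on $M$ which ensure $\Sigma_M$ is concentrated in non-negative cohomological degree with the right finiteness).

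The main obstacle I anticipate is \emph{not} any single square but rather the bookkeeping needed to identify the right-hand vertical functor correctly and to see that the Nakayama twist placed there is the one that makes all three new squares commute simultaneously — in particular verifying that $\Sigma_M\otimes_{S_M}S=\Hom(F,F\oplus M)$ is finite-dimensional (so the functor lands in $\DD_\Fd(\Sigma)$, using compact support of $F$) and that the restriction-along-$\Sigma_M\to\Sigma$ functor is genuinely the Koszul dual of the inclusion $\DD_\Fr(\hat\Lambda_M)\hookrightarrow\DD_\Perf(\hat\Lambda)$ rather than merely a superficially similar functor. This is where hypothesis (2) on $M$, namely $\Ext^{\le 0}(F,M)=\Ext^{\le 0}(M,F)=0$, is essential: it guarantees that the ``off-diagonal'' blocks $\Hom(F,M)$ and $\Hom(M,F)$ of $\Sigma_M$ are concentrated in strictly positive degree, so that $S$ and $S_\infty$ remain simple in the enlarged algebra and the combinatorics of linear/filtered complexes governing the hearts (needed later, in the analogue of Theorem \ref{unextheartthm}) goes through unchanged. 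Once this identification is pinned down, all squares commute by the same abstract nonsense as in the unframed case.
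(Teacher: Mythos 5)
Your overall route is the same as the paper's: take the bottom rows from Theorem \ref{catthmunext}, the top rows from Corollaries \ref{extMoritacoro} and \ref{Kozequivextcoro}, and handle the right-hand square by a Nakayama-compatibility lemma identifying $(\cdot)^{N_{\Sigma_M}}$ followed by $(\cdot)\otimes_{S_M}S$ with $(\cdot)\otimes_{S_M}S$ followed by $(\cdot)^{N_\Sigma}$, so that the composite $\Hom_\Sigma(S,(\Hom_{\hat\Lambda_M}(S_M,\cdot)\otimes_{S_M}S)^{N_\Sigma})$ collapses to $(\cdot)\otimes_{\hat\Lambda_M}\hat\Lambda$. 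That is exactly the paper's lemma and computation, and your remark that $\Sigma_M\otimes_{S_M}S=\Hom(F,F\oplus M)$ is finite dimensional (so the right vertical lands in $\DD_\Fd(\Sigma)$) is also the paper's observation.

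There is, however, a genuine flaw in how you assemble the commutativity data. You assert that commutativity of the right square, ``plus commutativity of the bottom row,'' forces commutativity of the outer rectangle and hence of the left square. That implication is false: one inner square never yields the outer rectangle; you need either both inner squares or an independent proof of the outer one. The paper's proof supplies precisely the missing step: the \emph{outer} squares are proved directly, using the natural isomorphism $\Hom_{\hat Y}(F\oplus M,\cdot)\otimes_{S_M}S\cong\Hom_{\hat Y}(F,\cdot)$ together with the proof of Theorem \ref{catthmunext}, and only then is the left square deduced (this is why the paper says checking the rightmost and outer squares ``is sufficient''). Your alternative attempt to get the left square directly — ``the middle vertical corresponds to the inclusion under Morita theory, using the natural algebra map $\hat\Lambda\to\hat\Lambda_M$'' — also has the algebra map in the wrong direction: the canonical map is $\varpi:\hat\Lambda_M\to\hat\Lambda$ (Koszul dual to the corner inclusion $\Sigma=\Hom(F,F)\subset\Sigma_M$, and the map the paper later uses for its compatibility diagrams), and it is this map that gives $\hat\Lambda$ the left $\hat\Lambda_M$-module structure needed even to define $(\cdot)\otimes_{\hat\Lambda_M}\hat\Lambda$; with your orientation the functor does not parse, so this is an assertion rather than an argument. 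Once you replace that step by the easy isomorphism $\Hom_{\hat Y}(F\oplus M,\cdot)\otimes_{S_M}S\cong\Hom_{\hat Y}(F,\cdot)$ and deduce the left square from the outer and right ones, your proof coincides with the paper's, in both the completed and graded/equivariant cases.
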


To complete the proof, we will need the following lemma:

\begin{lemma} The diagrams of triangulated categories
\[ \xymatrix{ \DD_\Perf(\Sigma_M) \ar[r]^{(\cdot)^{N_{\Sigma_M}\circ \subset}} \ar[d]^{(\cdot)\otimes_{S_M} S} & \DD_{\Fr}(\Sigma_M) \ar[d]^{(\cdot)\otimes_{S_M}S } \\
	\DD_\Fd(\Sigma) \ar[r]^{(\cdot)^{N_\Sigma}} & \DD_\Fd(\Sigma) }
\quad\quad \text{and}\quad\quad 
\xymatrix{ {\textup{D}_\perf}(\Sigma_M) \ar[r]^{(\cdot)^{N_{\Sigma_M}\circ \subset}} \ar[d]^{(\cdot)\otimes_{S_M} S} & \DD_{\fr}(\Sigma_M) \ar[d]^{(\cdot)\otimes_{S_M}S } \\
	\DD_\fd(\Sigma) \ar[r]^{(\cdot)^{N_\Sigma}} & \DD_\fd(\Sigma) }
\]
admit canonical commutativity data., where $(\cdot)^{N_{\Sigma_M}}:\DD_\Fr(\Sigma_M)\to \DD_\Fr(\Sigma_M)$ denotes the Nakayama functor for $\Sigma_M$ and similarly for $N_\Sigma$ and their graded variants.
\end{lemma}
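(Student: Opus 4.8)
The claim is a compatibility between the Nakayama functors for $\Sigma_M$ and $\Sigma$ and the restriction-of-scalars functor $(\cdot)\otimes_{S_M}S$ along the inclusion $S\hookrightarrow S_M$ of semisimple base rings. The plan is to unwind all three functors in terms of the defining tensor-product formula for the Nakayama functor, Equation \ref{Nakayamaeqn}, and then exhibit the required natural isomorphism as a composite of associativity and base-change isomorphisms for the tensor product, exactly as in the proof of the lemma preceding Theorem \ref{catthmunext}. Concretely, writing $N_{\Sigma_M}=(\cdot)\otimes_{\Sigma_M}\Hom_{S_M}(\Sigma_M,S_M)$ and $N_\Sigma=(\cdot)\otimes_{\Sigma}\Hom_S(\Sigma,S)$, the first step is to compute, for $M\in\DD_\Perf(\Sigma_M)$, both composites
\[
(M\otimes_{\Sigma_M}\Hom_{S_M}(\Sigma_M,S_M))\otimes_{S_M}S
\qquad\text{and}\qquad
(M\otimes_{S_M}S)\otimes_\Sigma \Hom_S(\Sigma,S),
\]
and to produce a chain of canonical isomorphisms between them. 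The key inputs are: $\Sigma\cong \Sigma_M\otimes_{S_M}S$ (so that $M\otimes_{S_M}S$ is naturally a $\Sigma$-module via the algebra map $\Sigma_M\to\Sigma$, which is how the bottom-right category is reached in Equation \ref{Kozgeoeqn}); the projection formula / base-change identity $\Hom_S(\Sigma_M\otimes_{S_M}S,S)\cong \Hom_{S_M}(\Sigma_M,S_M)\otimes_{S_M}S$, valid because $S$ is a finite projective (in fact free) $S_M$-module; and associativity of the relative tensor product. Assembling these as in the displayed string of isomorphisms in the earlier lemma gives the commutativity datum for the left-hand (ungraded, non-derived-completed) square; the same argument with $\DD_\fr$, $\DD_\fd$, and the graded variants in place of the finite-rank ones gives the right-hand square, since every functor and every isomorphism used is manifestly compatible with the gradings induced by the $T$-equivariant structures.

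The steps, in order: (1) recall from Corollary \ref{Kozequivextcoro} and the discussion around it that the relevant categories are identified and that $(\cdot)\otimes_{S_M}S$ is a well-defined exact functor with the stated targets (this is the content of the proposition immediately preceding the lemma); (2) record the elementary fact that $S_M$-module restriction commutes with $\Hom$ against the semisimple base rings, i.e. $\Hom_{S_M}(V,S_M)\otimes_{S_M}S\cong \Hom_S(V\otimes_{S_M}S,S)$ naturally in $V$, using finiteness of all modules involved; (3) combine with associativity of $\otimes$ to obtain the natural isomorphism of the two composites of functors $\DD_\Perf(\Sigma_M)\to\DD_\Fd(\Sigma)$; (4) check this isomorphism is compatible with the forgetful functors to $\DGV$, hence descends to the asserted commutativity of the square of triangulated categories, in keeping with the way commutativity data is tracked throughout Section \ref{Kozpatsec}; (5) repeat verbatim in the graded setting. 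I would present (1)–(4) as a short displayed chain of canonical isomorphisms, mirroring Equation \ref{Nakayamapropeqn}, rather than grinding through module-element computations.

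The only genuine subtlety — and the step I expect to be the main obstacle — is verifying that the restriction functor $(\cdot)\otimes_{S_M}S$ interacts correctly with the \emph{Nakayama} duality rather than merely with $\Hom$: one must check that restricting a projective $\Sigma_M$-module along $\Sigma_M\to\Sigma$ (which is \emph{not} flat in general, but is irrelevant here because we restrict scalars only over the semisimple rings) and then taking the $\Sigma$-Nakayama dual agrees with first taking the $\Sigma_M$-Nakayama dual and then restricting. This reduces to the identity $\Hom_{S_M}(\Sigma_M,S_M)\otimes_{S_M}S\cong\Hom_S(\Sigma,S)$ as $\Sigma$-bimodules once we use $\Sigma=\Sigma_M\otimes_{S_M}S$, but keeping track of both the left and right $\Sigma$-actions through this identification is where care is needed; all of it, however, is formal once one notes that $S$ and $S_M$ are products of copies of $\K$ so that every $\Hom$ and $\otimes$ over them is computed idempotent-by-idempotent. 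No deeper input is required, and in particular no hypothesis on Koszulity of $\Lambda$ is used, so the lemma holds in the generality of the whole section.
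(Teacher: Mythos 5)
Your overall strategy---reduce everything to a formal computation with Nakayama bimodules and restriction along the idempotent $e\in S_M$ corresponding to $S$, analogously to the earlier lemma---is the right one, and you are also right that no Koszulity of $\Lambda$ is needed. However, the two key identities you cite as the content of the argument are both off by a factor of $S\otimes_{S_M}(\cdot)$, and that error is not cosmetic: it is exactly where the content of the lemma lives.

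Concretely, you write $\Sigma\cong\Sigma_M\otimes_{S_M}S$, but $\Sigma_M\otimes_{S_M}S=\Hom(F,F\oplus M)$ is the $(\Sigma,\Sigma_M)$-bimodule used to define the restriction functor, \emph{not} $\Sigma=\Hom(F,F)$; the correct identification of $\Sigma$ as a corner of $\Sigma_M$ is $\Sigma\cong S\otimes_{S_M}\Sigma_M\otimes_{S_M}S$. (Relatedly, there is no algebra map $\Sigma_M\to\Sigma$; the functor $(\cdot)\otimes_{S_M}S$ is the corner functor $V\mapsto Ve$ landing in $e\Sigma_M e$-modules, not restriction of scalars along an algebra homomorphism.) For the same reason, the identity you reduce the subtlety to, $\Hom_{S_M}(\Sigma_M,S_M)\otimes_{S_M}S\cong\Hom_S(\Sigma,S)$, is false as stated: the correct statement is $S\otimes_{S_M}\Sigma_M^{N_{\Sigma_M}}\otimes_{S_M}S\cong\Sigma^{N_\Sigma}$, with a tensor on each side. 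And proving that isomorphism requires one more observation that you do not make: after unwinding, one has $\Hom_{S_M}(\Hom(F,F\oplus M),S)$, and one needs to use that the target $S$ is supported only on the idempotents of $F$ to discard the $\Hom(F,M)$ component and obtain $\Hom_{S_M}(\Hom(F,F),S)=\Sigma^{N_\Sigma}$. This is the step that actually collapses the extra $M$-summand and is the real content; without it the diagram does not close.

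Finally, once the bimodule identity is in place you need a mechanism for promoting it to commutativity of the whole square of triangulated functors. The clean way (and the paper's way) is to observe that both composites $\DD_\Perf(\Sigma_M)\to\DD_\Fd(\Sigma)$ are triangulated, so it suffices to produce the isomorphism on the generator $\Sigma_M$ itself, where both sides compute directly to $\Sigma_M^{N_{\Sigma_M}}\otimes_{S_M}S$ using the bimodule identity. Your step (3) asserts a natural isomorphism for all objects at once without this reduction, which would require a genuinely bimodule-level argument identifying $(\cdot)\otimes_{S_M}(\Sigma_M^{N_{\Sigma_M}}\otimes_{S_M}S)$ with $(\cdot)\otimes_{S_M}S\otimes_{\Sigma}\Sigma^{N_\Sigma}$ as functors; checking on the compact generator is the standard and simpler route.
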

\begin{proof} First, note that we have the natural isomorphism
\begin{align*}
	S\otimes_{S_M} \Sigma_M^{N_{\Sigma_M}} \otimes_{S_M} S & = S\otimes_{S_M} \Hom_{S_M} ( \Sigma_M , S_M) \otimes_{S_M} S   \\
	& \cong  \Hom_{S_M} ( \Sigma_M , S) \otimes_{S_M} S \\
	& \cong  \Hom_{S_M} ( \Hom(F,F\oplus M) , S) \otimes_{S_M} S \\
	& \cong  \Hom_{S_M} ( \Hom(F,F) , S)  \\
	& \cong \Sigma^{N_\Sigma} & .
\end{align*}
Now, it suffices to check the result on the generator $\Sigma_M$ of $\DD_\Perf(\Sigma_M)$; we have 
\begin{align*}  \Sigma_M \otimes_{S_M} S \otimes_{\Sigma} \Sigma^{N_\Sigma} 	
	& = \Sigma_M \otimes_{S_M} S \otimes_{\Sigma} \left(S\otimes_{S_M} \Sigma_M^{N_{\Sigma_M}} \otimes_{S_M} S \right) \\
	& \cong S\otimes_{S_M}\Sigma_M \otimes_{S_M} S \otimes_{\Sigma} \left( \Sigma_M^{N_{\Sigma_M}} \otimes_{S_M} S \right) \\
	& \cong \Sigma_M^{N_{\Sigma_M}} \otimes_{S_M} S & ,
\end{align*}
where the first equality follows from the preceding natural isomorphism.
\end{proof}

\begin{proof}{(of Theorem \ref{catthm})}
	Note that we have already established the existence of the mutually inverse triangle equivalences in the bottom rows of the diagram in Theorem \ref{catthmunext}, and those in the top rows in Corollaries \ref{extMoritacoro} and \ref{Kozequivextcoro}. Thus, it remains to check commutativity, which we now prove for the rightmost and outer squares, as this is sufficient:
	
	For the rightmost squares, applying the preceding lemma, we have the natural isomorphism
	\begin{align*} \Hom_\Sigma(S, (\Hom_{\hat\Lambda_M}(S_M, \cdot ) \otimes_{S_M} S )^{N_\Sigma} ) & \cong  \Hom_\Sigma(S, (\Hom_{\hat\Lambda_M}(S_M, \cdot )^{N_{\Sigma_M}} \otimes_{S_M} S ) ) \\
		& \cong  \Hom_\Sigma(S, (\cdot)\otimes_{\hat \Lambda_M} S_M \otimes_{S_M} S ) \\
		& \cong  \Hom_\Sigma(S, (\cdot)\otimes_{\hat \Lambda_M} \hat \Lambda \otimes_{\hat\Lambda} S ) \\
		& \cong (\cdot)\otimes_{\hat \Lambda_M} \hat \Lambda & ,
	\end{align*}
as desired, and similarly for the graded case. Commutativity of the outer squares follows from the proof of Theorem \ref{catthmunext}, noting we have the natural isomorphism
\[ \Hom_{\hat Y} (F\oplus M , \cdot) \otimes_{S_M} S \cong \Hom_{\hat Y}(F,\cdot)  \ ,  \]
and similarly for the graded case.
\end{proof}

We now describe the induced equivalences of the natural hearts of the categories in the top lines of the diagrams in Theorem \ref{catthm}, generalizing the descriptions of Theorems \ref{unextheartthm} and \ref{unextAinfheartthm} to the extended case. Recall that by hypothesis
\[M\in \Perv^\p(Y)=\Perv^\p(Y/X)\]
is an object in the heart of $\DD^b\Coh(Y)$ with respect to an admissible Bridgeland-Deligne perverse coherent t-structure, in the sense of Definition \ref{BDpervcohdef}. Further, recall that the compactly supported perverse coherent sheaves $F\in \Perv_\cs(Y)\subset \Perv^\p(Y)$ are objects of this heart by the admissible assumption. Thus, we can define the strictly full subcategories
\begin{align}
\Perv^\p_M( Y)  := \Filt(F\oplus M)  &\subset \Perv^\p( Y)   & \text{and}\\
\Perv^\p_M(Y)^{T} : = \filt(F\oplus M) & \subset \Perv^\p(Y)^{\C^\times} & ,
\end{align}
on objects admitting a filtration with subquotients given by the direct summands of the object $F\oplus M$ (and its graded shifts, in the graded case), as in Definitions \ref{Filtdefn} and \ref{filtdefn}, respectively.

Further, let $\mc D = \DD^b\Coh(Y)^{T}$, $F\oplus M=\oplus_{i\in I} F_i \oplus M$ and consider the graded variant of the $\Ainf$ category $\mc A_{F\oplus M}$ defined in Example \ref{AinfcatSeg}, with objects $i\in I_M=I\cup \{\infty\}$ and $\Hom$ spaces given by
\[ \mc A_{F\oplus M}(i,j) =\ _i(\Sigma_M)_j \quad\quad \text{for }\quad\quad i,j \in I_M .  \]
The analogue of Theorem \ref{unextAinfheartthm} is given by the following:

\begin{theo}\label{heartthm} Restriction to the hearts of the triangulated categories in the diagram of Equation \ref{Kozgeoeqn} induces mutually inverse equivalences of mixed categories
	\[	\xymatrixcolsep{3pc}
	\xymatrixrowsep{.5pc}
	\xymatrix{ \Perv_M^\p(Y)^{T}  \ar@<.5ex>[r]^{} & \ar@<.5ex>[l]  \Lambda_M\Mod_\fr  \ar@<.5ex>[r]^{} & \ar@<.5ex>[l] H^0(\tw^0 \mc A_{F\oplus M})} \ .  \]
	
	Similarly, restriction to hearts in the diagram of Equation \ref{Kozgeoungreqn} induces
	\[	\xymatrixcolsep{3pc}
	\xymatrixrowsep{.5pc}
	\xymatrix{ \Perv_M^\p(\hat Y) \ar@<.5ex>[r]^{} & \ar@<.5ex>[l]  \hat \Lambda_M\Mod_\Fr \ar@<.5ex>[r]^{} & \ar@<.5ex>[l] H^0(\Tw^0 \mc A_{F\oplus M})} \ .  \]
\end{theo}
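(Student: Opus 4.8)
The plan is to imitate the proof of Theorem \ref{unextAinfheartthm} exactly, replacing the object $F$ by $F\oplus M$ and the algebra $\Lambda$ by $\Lambda_M$ throughout, and invoking the three hypotheses on $M$ (commutativity of $\Ext^0(M,M)$, vanishing of negative and zero $\Ext$'s between $F$ and $M$, and $M$ lying in the heart of an admissible Bridgeland-Deligne t-structure) only where they are genuinely needed. Concretely, I would factor the asserted equivalences as a composite of three: first, the equivalences on the left
\[ \Perv_M^\p(Y)^T \xrightarrow{\cong} \Lambda_M\Mod_\fr \quad\text{and}\quad \Perv_M^\p(\hat Y) \xrightarrow{\cong} \hat\Lambda_M\Mod_\Fr \]
coming from restriction to hearts in the diagrams of Equations \ref{Kozgeoeqn} and \ref{Kozgeoungreqn}; second, the identifications
\[ \Lambda_M\Mod_\fr \cong \filt(S_M^i) \quad\text{and}\quad \hat\Lambda_M\Mod_\Fr \cong \Filt(S_M^i) \]
which are by definition of $\DD_\fr$, $\DD_\Fr$ (Definition \ref{frdefn}) together with the existence of Jordan--Hölder-type filtrations; third, the Koszul-duality equivalence $\filt(S_M^i)\xrightarrow{\cong}\filt((\Sigma_M)_i)$ (and its ungraded variant) from Corollary \ref{Kozequivextcoro}, composed with the equivalences $\filt((\Sigma_M)_i)\cong H^0(\tw^0\mc A_{F\oplus M})$ and $\Filt((\Sigma_M)_i)\cong H^0(\Tw^0\mc A_{F\oplus M})$ from Corollaries \ref{twobjgrfiltcoro} and \ref{twobjfiltcoro} applied to the $\Ainf$ category $\mc A_{F\oplus M}$. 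Each arrow in the middle of the diagrams of Equations \ref{Kozgeoeqn}, \ref{Kozgeoungreqn} is already known to be an equivalence (Corollaries \ref{extMoritacoro}, \ref{Kozequivextcoro} and Theorem \ref{catthm}), and triangulated functors preserve categories of iterated extensions, so once the hearts are identified the middle equivalence is automatic; I would note only that the equivalence carries $S_M^i \mapsto (\Sigma_M)_i$ and (in the graded case) preserves grading shifts in the unsheared conventions, exactly as in the proof of Theorem \ref{unextAinfheartthm}.

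The one genuinely new ingredient is the identification of the heart of the middle category: I must check that $\Perv_M^\p(Y)^T=\filt(F\oplus M)$ is the category of iterated extensions of the simple objects $F_i$ \emph{and} the extra object $M$, and that under the equivalence of Theorem \ref{catthm} this matches the usual heart of $\Lambda_M\Mod$ (equivalently, the image $\DD_\fr(\Lambda_M)$ inside $\DD^b\Coh(Y)^T$). This is where the hypotheses on $M$ enter. The hypothesis $\Ext^i(F,M)=\Ext^i(M,F)=0$ for $i\le 0$ guarantees that the $\Ainf$ category $\mc A_{F\oplus M}$ has its $\Hom$-spaces concentrated in degrees $\ge 1$ off the diagonal, so that the twisted-objects formalism of Section \ref{twobjsec} applies verbatim (the Maurer--Cartan element $\delta$ is strictly upper triangular and of positive degree); the hypothesis that $M$ lies in the heart of an \emph{admissible} Bridgeland--Deligne t-structure ensures $F_i$ and $M$ are simultaneously objects of a common abelian heart, so that "iterated extension of the $F_i$ and $M$" is a well-posed notion inside $\Perv^\p(Y)$ and coincides with $\filt(F\oplus M)$; and commutativity of $\Ext^0(M,M)$ together with $\Ext^{<0}(M,M)=0$ ensures $\Sigma_M$ has its degree-zero part equal to the semisimple-plus-commutative base ring $S_M$ so that the Koszul-duality machinery of Corollary \ref{Kozequivextcoro} is available with $S_M$ as augmentation module. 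Granting these, the argument that $\Perv_M^\p(Y)^T\xrightarrow{\cong}\Lambda_M\Mod_\fr$ is precisely the heart-restriction of the left square in Equation \ref{Kozgeoeqn}: the horizontal arrows there are triangle equivalences by Theorem \ref{catthm}, and one checks they are t-exact for the Bridgeland--Deligne t-structure on the left and the tautological one on $\DD_\fr(\Lambda_M)$, following Bridgeland \cite{Bdg1} and Van den Bergh \cite{VdB1} word for word with the perversity function $\p$ in place of the constant $k=-1$.

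The main obstacle I anticipate is \emph{verifying t-exactness of the equivalence in Theorem \ref{catthm} for the Bridgeland--Deligne t-structure}, i.e. showing that $\Hom_Y(E,\cdot)$ (and its inverse) exchanges $\Perv^\p_M(Y)^T$ with $\Lambda_M\Mod_\fr$. For the unextended case this is the content of the Bridgeland--Van den Bergh theorem (Theorem \ref{PCohequiv}), but here the t-structure is tilted by the perversity function $\p$ and the heart is enlarged by the object $M$, so I cannot simply cite \emph{loc. cit.} It will suffice to (i) recall that $\Perv^\p(Y)$ is obtained from $\Perv(Y)$ by a torsion-pair tilt governed by $\p$ (Section \ref{tstrsec}, Theorem \ref{PervCohthm}), (ii) observe that under $\Hom_Y(E,\cdot):\DD^b\Coh(Y)^T\xrightarrow{\cong}\DD_\perf(\Lambda)$ this tilt corresponds to a torsion-pair tilt of $\Lambda\Mod_\fg$, hence a t-structure on $\DD_\perf(\Lambda)$ whose heart restricts on the finite-rank subcategory to a heart containing the simples $S_i$ and the image of $M$, and (iii) identify this heart on $\DD_\fr(\Lambda_M)$ with $\Lambda_M\Mod_\fr$ by showing both are generated under extensions by the same objects. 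Steps (i)--(iii) are conceptually routine given the hypotheses but require care with the exact-triangle bookkeeping in the recollement of Proposition \ref{Bdgrecprop}; once they are in place, the ungraded case follows from the graded case by completing and forgetting the grading, exactly as in the proof of Theorem \ref{stackthmunext}, and the statement for $H^0(\Tw^0\mc A_{F\oplus M})$ follows from Corollary \ref{twobjfiltcoro} just as in Theorem \ref{unextAinfheartthm}.
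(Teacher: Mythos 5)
Your treatment of the right-hand equivalence ($\Lambda_M\Mod_\fr=\filt(S_M)\xrightarrow{\cong}\filt(\Sigma_M)\cong H^0(\tw^0\mc A_{F\oplus M})$ via Corollaries \ref{Kozequivextcoro}, \ref{twobjgrfiltcoro}, \ref{twobjfiltcoro}) is exactly the paper's argument. The problem is with what you single out as "the one genuinely new ingredient" and "the main obstacle": there is no t-exactness statement for the Bridgeland--Deligne t-structure to verify, and the program (i)--(iii) you sketch is both unnecessary and left unexecuted. In the paper, $\Perv_M^\p(Y)^{T}$ is \emph{defined} to be $\filt(F\oplus M)$ (and $\Perv_M^\p(\hat Y):=\Filt(F\oplus M)$), just as $\Lambda_M\Mod_\fr$ is defined to be $\filt(S_M)$ in Definition \ref{frdefn}; neither category is characterized as the heart of a t-structure that must be matched under the tilting equivalence. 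Consequently the left and middle equivalences follow immediately from Theorem \ref{catthm} together with the two formal facts already used in Theorem \ref{unextAinfheartthm}: triangle equivalences preserve categories of iterated extensions, and the composite functor $\Hom(F\oplus M,\cdot)$ carries $F\oplus M$ to $\Sigma_M$ (equivalently, after $(\cdot)\otimes_{\Sigma_M}S_M$, carries $F\oplus M$ to $S_M$), so $\filt(F\oplus M)\xrightarrow{\cong}\filt(\Sigma_M)\cong H^0(\tw^0\mc A_{F\oplus M})$ and likewise $\filt(S_M)\xrightarrow{\cong}\filt(\Sigma_M)$, with the ungraded case identical. The admissibility hypothesis is used only to know $F_i,M$ lie in the common abelian heart $\Perv^\p(Y)$ so that $\filt(F\oplus M)$ sits inside it, as you correctly note.

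The route you propose instead would require showing that $\Hom_Y(E,\cdot)$ intertwines the $\p$-tilted perverse heart with a torsion-pair tilt of $\Lambda\Mod_\fg$ and then identifying the induced heart on $\DD_\fr(\Lambda_M)$ with $\Lambda_M\Mod_\fr$. That is strictly stronger than the theorem, is nowhere established in the paper, and is not "conceptually routine": for instance, you would need to control the image $\Hom_Y(E,M)$ and its extensions with the $S_i$ inside the tilted heart, which is precisely the bookkeeping the definition $\Perv_M^\p(Y)^T:=\filt(F\oplus M)$ is designed to avoid. Since you defer exactly this step, your argument as written has a gap at its central point; replacing it with the extension-closure argument above closes it and recovers the paper's proof.
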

\begin{proof}
The equivalences on the right follow from the identifications
\[ \Lambda_M\Mod_\fr = \filt(S_M) \xrightarrow{\cong} \filt(\Sigma_M) \cong H^0(\tw^0 \mc A_{F\oplus M})  \ , \]
and similarly in the ungraded case
\[ \hat\Lambda_M\Mod_\Fr = \Filt(S_M) \xrightarrow{\cong} \Filt(\Sigma_M) \cong H^0(\Tw^0 \mc A_{F\oplus M})  \ , \]
where the latter isomorphism in each line follow from Corollaries \ref{twobjgrfiltcoro} and \ref{twobjfiltcoro}, respectively, and the middle equivalence in each line follows from the fact that triangulated functors preserve categories of iterated extensions, and we have seen that the given functor maps $S_M$ to $\Sigma_M$ (and, in the graded case, preserves grading shifts in the unsheared conventions).

Similarly, the triangulated equivalence given by the composition of the two horizontal equivalences in the top lines of Equations \ref{Kozgeoeqn} and \ref{Kozgeoungreqn} are given by the functor $\Hom(F\oplus M, \cdot)$, which evidently induces equivalences
\[  \Perv^\p_M(Y)^{T} : = \filt(F\oplus M)  \xrightarrow{\cong} \filt(\Sigma_M)\cong H^0(\tw^0 \mc A_{F\oplus M}) \ , \]
and similarly in the ungraded case 
\[  	\Perv^\p_M(\hat Y) = \Filt(F\oplus M)\xrightarrow{\cong} \Filt(\Sigma_M) \cong H^0(\Tw^0 \mc A_{F\oplus M})  \ , \]
which again follows from the fact that triangulated functors preserve categories of iterated extensions, and $\Hom(F\oplus M,\cdot)$ evidently maps $F\oplus M$ to $\Sigma_M$.
\end{proof}

We now recall several results about the compatibility between the equivalences of the preceding Theorem \ref{catthm} and those of Corollary \ref{cctmoritacoro} and Theorem \ref{catthmunext} describing $\Thick(F)$, as well as the analogous descriptions of $\Thick(M)$, and their graded variants.

To begin, note there are natural (graded) bimodules
\begin{align}
	S\otimes_{S_M} \Sigma_M &=\Hom(F\oplus M, F) \ \in\  (\Sigma,\Sigma_M)\Bimod   & \text{and}\\
	 S_\infty \otimes_{S_M} \Sigma_M&=\Hom(F\oplus M, M) \ \in (\Sigma_\infty,\Sigma_M)\Bimod    & ,  
\end{align}
inducing functors
\begin{align*}
 ( \cdot) \otimes_{\Sigma} (S\otimes_{S_M} \Sigma_M)  & : \DD_\Perf(\Sigma) \to \DD_\Perf(\Sigma_M)  & \text{and}\\
  ( \cdot) \otimes_{\Sigma_\infty}( S_\infty\otimes_{S_M} \Sigma_M)  & : \DD_\Perf(\Sigma_\infty) \to \DD_\Perf(\Sigma_M) & ,
\end{align*}
and similarly for the graded variants. Further, note there are natural maps of algebras
\[ \varpi:\Lambda_M \to \Lambda \quad\quad \text{and}\quad\quad \varpi_\infty:\Lambda_M\to \Lambda_\infty \]
and induced restriction functors on their module categories
\[ \varpi^*:\DD_\Fd(\hat \Lambda) \to \DD_\Fr(\hat\Lambda_M) \quad\quad \text{and}\quad\quad \varpi_\infty^*:\DD_\Fr(\hat \Lambda_\infty ) \to \DD_\Fr(\hat\Lambda_M) \ , \]
and similarly for the graded variants. The main compatibility results are the following:

\begin{prop} The diagram of triangulated categories
	
\begin{equation}\label{KozgeoFcompeqn}
		\xymatrixcolsep{6pc}
	\xymatrix{
 \Thick(F) \ar@<.5ex>[r]^{\Hom_{\hat Y}(F,\cdot)\otimes_{\Sigma} S}\ar[d]^{\subset} & \ar@<.5ex>[l]^{\Hom_{\hat\Lambda}(S,\cdot)\otimes_{\Sigma} F }  \DD_\Fd(\hat \Lambda)  \ar@<.5ex>[r]^{ \Hom_\Lambda(S,\cdot) } \ar[d]^{\varpi^*}  & \ar@<.5ex>[l]^{(\cdot)\otimes_\Sigma S } \DD_\Perf(\Sigma)  \ar[d]^{(\cdot)\otimes_{\Sigma}(S\otimes_{S_M} \Sigma_M) }	\\
		\Thick(F,M) \ar@<.5ex>[r]^{ \Hom_{\hat Y}(F\oplus M, \cdot)\otimes_{\Sigma_M} S_M} & \ar@<.5ex>[l]^{\Hom_{\hat \Lambda_M}(S_M,\cdot)\otimes_{\Sigma_M} (F\oplus M)}  \DD_\Fr(\hat \Lambda_M) \ar@<.5ex>[r]^{\Hom_{\hat \Lambda_M}(S_M,\cdot)} 
		   & \ar@<.5ex>[l]^{(\cdot)\otimes_{\Sigma_M} S_M } \DD_\Perf(\Sigma_M) }
\end{equation}

\noindent has horizontal arrows mutually inverse triangle equivalences and vertical arrows inclusions of thick subcategories, and admits canonical commutativity data, and similarly for the diagram
\begin{equation}\label{Kozgeoinfcompeqn}
		\xymatrixcolsep{6pc}
\xymatrix{
	\Thick(M) \ar@<.5ex>[r]^{\Hom_{\hat Y}(M,\cdot)\otimes_{\Sigma_\infty} S_\infty}\ar[d]^{\subset} & \ar@<.5ex>[l]^{\Hom_{\hat\Lambda_\infty}(S_\infty,\cdot)\otimes_{\Sigma_\infty} M }  \DD_\Fr(\hat \Lambda_\infty)  \ar@<.5ex>[r]^{ \Hom_{\Lambda_\infty}(S_\infty,\cdot) } \ar[d]^{\varpi_\infty^*}  & \ar@<.5ex>[l]^{(\cdot)\otimes_{\Sigma_\infty} S_\infty } \DD_\Perf(\Sigma_\infty)  \ar[d]^{(\cdot)\otimes_{\Sigma_\infty}(S_\infty\otimes_{S_M} \Sigma_M) }	\\
	\Thick(F,M) \ar@<.5ex>[r]^{ \Hom_{\hat Y}(F\oplus M, \cdot)\otimes_{\Sigma_M} S_M} & \ar@<.5ex>[l]^{\Hom_{\hat \Lambda_M}(S_M,\cdot)\otimes_{\Sigma_M} (F\oplus M)}  \DD_\Fr(\hat \Lambda_M) \ar@<.5ex>[r]^{\Hom_{\hat \Lambda_M}(S_M,\cdot)} 
	& \ar@<.5ex>[l]^{(\cdot)\otimes_{\Sigma_M} S_M } \DD_\Perf(\Sigma_M) }
\end{equation}
as well as their graded variants
\begin{equation}\label{KozgeoFcompgreqn}
		\xymatrixcolsep{6pc}
\xymatrix{
	\thick(F) \ar@<.5ex>[r]^{\Hom_{  Y}(F,\cdot)\otimes_{\Sigma} S}\ar[d]^{\subset} & \ar@<.5ex>[l]^{\Hom_{ \Lambda}(S,\cdot)\otimes_{\Sigma} F }  \DD_\fd(  \Lambda)  \ar@<.5ex>[r]^{ \Hom_\Lambda(S,\cdot) } \ar[d]^{\varpi^*}  & \ar@<.5ex>[l]^{(\cdot)\otimes_\Sigma S } {\textup{D}_\perf}(\Sigma)  \ar[d]^{(\cdot)\otimes_{\Sigma}(S\otimes_{S_M} \Sigma_M) }	\\
	\thick(F,M) \ar@<.5ex>[r]^{ \Hom_{  Y}(F\oplus M, \cdot)\otimes_{\Sigma_M} S_M} & \ar@<.5ex>[l]^{\Hom_{  \Lambda_M}(S_M,\cdot)\otimes_{\Sigma_M} (F\oplus M)}  \DD_\fr(  \Lambda_M) \ar@<.5ex>[r]^{\Hom_{  \Lambda_M}(S_M,\cdot)} 
	& \ar@<.5ex>[l]^{(\cdot)\otimes_{\Sigma_M} S_M } {\textup{D}_\perf}(\Sigma_M) }
\end{equation}
and
\begin{equation}\label{Kozgeoinfcompgreqn}
		\xymatrixcolsep{6pc}
\xymatrix{
	\thick(M) \ar@<.5ex>[r]^{\Hom_{  Y}(M,\cdot)\otimes_{\Sigma_\infty} S_\infty}\ar[d]^{\subset} & \ar@<.5ex>[l]^{\Hom_{ \Lambda_\infty}(S_\infty,\cdot)\otimes_{\Sigma_\infty} M }  \DD_\fr(  \Lambda_\infty)  \ar@<.5ex>[r]^{ \Hom_{\Lambda_\infty}(S_\infty,\cdot) } \ar[d]^{\varpi_\infty^*}  & \ar@<.5ex>[l]^{(\cdot)\otimes_{\Sigma_\infty} S_\infty } {\textup{D}_\perf}(\Sigma_\infty)  \ar[d]^{(\cdot)\otimes_{\Sigma_\infty}(S_\infty\otimes_{S_M} \Sigma_M) }	\\
	\thick(F,M) \ar@<.5ex>[r]^{ \Hom_{  Y}(F\oplus M, \cdot)\otimes_{\Sigma_M} S_M} & \ar@<.5ex>[l]^{\Hom_{  \Lambda_M}(S_M,\cdot)\otimes_{\Sigma_M} (F\oplus M)}  \DD_\fr(  \Lambda_M) \ar@<.5ex>[r]^{\Hom_{  \Lambda_M}(S_M,\cdot)} 
	& \ar@<.5ex>[l]^{(\cdot)\otimes_{\Sigma_M} S_M } {\textup{D}_\perf}(\Sigma_M) } \ .
\end{equation}
\end{prop}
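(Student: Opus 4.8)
The statement has exactly the shape of Theorem \ref{catthm}, restricted along the two distinguished inclusions of thick subcategories $\Thick(F)\hookrightarrow\Thick(F,M)$ and $\Thick(M)\hookrightarrow\Thick(F,M)$, together with their graded counterparts; so the strategy is to reduce it entirely to constructions and natural isomorphisms already assembled in Theorems \ref{catthmunext} and \ref{catthm} and in Corollaries \ref{cctmoritacoro}, \ref{extMoritacoro}, \ref{Kozequivextcoro} and \ref{Kozequivcoro}. First I would recall that all the horizontal arrows in the four diagrams \ref{KozgeoFcompeqn}, \ref{Kozgeoinfcompeqn}, \ref{KozgeoFcompgreqn}, \ref{Kozgeoinfcompgreqn} are already known to be mutually inverse triangle equivalences: the bottom rows are the outer columns of the diagrams of Theorem \ref{catthm}; the top rows of \ref{KozgeoFcompeqn} and \ref{KozgeoFcompgreqn} are the top rows of the diagrams \ref{Kozgeoextendedungreq} and \ref{Kozgeoextendedeq} of Theorem \ref{catthmunext}; and the top rows of \ref{Kozgeoinfcompeqn} and \ref{Kozgeoinfcompgreqn} are the Morita equivalence of Corollary \ref{extMoritacoro} and the Koszul duality equivalence of Corollary \ref{Kozequivextcoro}, both applied with the single object $M$ in place of $F\oplus M$ (with $\DD_\Fd$, $\DD_\fd$ on the $\Lambda_\infty$-side replaced by $\DD_\Fr$, $\DD_\fr$, since $M$ need not have compact support). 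The left vertical arrows are, by the definitions in Section \ref{pervextcatsec}, the inclusions of the thick subcategories generated by $F$, resp. $M$, inside the thick subcategory generated by $F\oplus M$; and the discussion preceding the statement has already produced the middle and right vertical functors with the claimed codomains.

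The actual content is the construction of the commutativity data, and here I would follow the bookkeeping used in the proof of Theorem \ref{catthm}: it suffices to fill the rightmost square and the outer rectangle of each diagram with canonical natural isomorphisms, after which the middle square follows by the two-out-of-three principle. For the rightmost square of \ref{KozgeoFcompeqn} I would construct the natural isomorphism $\Hom_{\hat\Lambda_M}(S_M,\varpi^* N)\cong\Hom_{\hat\Lambda}(S,N)\otimes_\Sigma(S\otimes_{S_M}\Sigma_M)$ out of the identifications $S\otimes_{S_M}\Sigma_M=\Hom(F\oplus M,F)$, $\hat\Lambda=\Hom_\Sigma(S,S)$, $\hat\Lambda_M=\Hom_{\Sigma_M}(S_M,S_M)$, the observation that $\varpi\colon\hat\Lambda_M\to\hat\Lambda$ is the map on endomorphism algebras cut out by the idempotent corresponding to the summand $S\subset S_M$, together with Hom--tensor adjunction and flatness of $S$ over the semisimple ring $S_M$; in the ungraded case one additionally uses that completion along the augmentation ideal is compatible with these tensor and $\Hom$ operations, exactly as in Proposition \ref{SigmaKozprop} and the proof of Corollary \ref{Kozequivcoro}. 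For the outer rectangle I would use that the composite of the two top horizontal equivalences agrees, up to canonical natural isomorphism, with the corresponding composite in Theorem \ref{catthmunext}, that the composite of the two bottom ones agrees with the corresponding composite in Theorem \ref{catthm}, and that $\Hom_{\hat Y}(F\oplus M,\cdot)\otimes_{S_M}S\cong\Hom_{\hat Y}(F,\cdot)$, precisely as in the last line of the proof of Theorem \ref{catthm}; compatibility with the left vertical inclusion is then immediate. The $M$-diagrams \ref{Kozgeoinfcompeqn} and \ref{Kozgeoinfcompgreqn} are handled identically, with $S$ replaced by $S_\infty$, $\Sigma$ by $\Sigma_\infty$, and the bimodule $\Hom(F\oplus M,F)$ by $\Hom(F\oplus M,M)=S_\infty\otimes_{S_M}\Sigma_M$; the graded variants follow \emph{mutatis mutandis} by carrying along the $T$-equivariant internal gradings, as throughout Sections \ref{Kozpatsec} and \ref{pervextcatsec}.

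Once the commutativity data is in hand, the claim that the middle and right vertical arrows are inclusions of thick subcategories follows formally: the left vertical arrow is such an inclusion by construction, and in a commuting square of triangulated categories whose horizontal arrows are equivalences the remaining vertical arrow is identified with it. I expect the one genuinely delicate point to be the rightmost-square natural isomorphism in the \emph{ungraded} setting --- specifically, verifying that the $I$-adic completions defining $\hat\Lambda$, $\hat\Lambda_M$, $\hat\Lambda_\infty$ are exchanged correctly by the restriction functors $\varpi^*$, $\varpi_\infty^*$ and the bimodule tensor products, and confirming that $\varpi^*$ really lands in $\DD_\Fr(\hat\Lambda_M)$ rather than merely in $\DD_\Perf(\hat\Lambda_M)$; this is the place where the finite-rank (rather than finite-dimensional) hypothesis on the auxiliary object $M$ enters essentially.
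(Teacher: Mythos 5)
The paper itself gives no argument for this proposition — its proof environment is left empty — so the only benchmark is the pattern set by Theorems \ref{catthmunext} and \ref{catthm}, and your proposal follows exactly that pattern: the horizontal arrows are quoted from Proposition \ref{csequivprop} and Corollaries \ref{cctmoritacoro}, \ref{Kozequivcoro}, \ref{extMoritacoro}, \ref{Kozequivextcoro}, commutativity is supplied for the rightmost square and the outer rectangle, and the remaining square together with the thick-subcategory claims for the middle and right verticals follows formally from the horizontal equivalences. I consider this essentially correct, and it is surely what the omitted proof was meant to be; your closing remark that the delicate points are the $I$-adic completions and the fact that $\varpi^*$ lands in $\DD_\Fr(\hat\Lambda_M)$ is well placed.

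One correction to the sketch: the identity you invoke for the outer rectangle, $\Hom_{\hat Y}(F\oplus M,\cdot)\otimes_{S_M}S\cong\Hom_{\hat Y}(F,\cdot)$, is the one used for the outer squares of Theorem \ref{catthm}, where the right-hand vertical is $(\cdot)\otimes_{S_M}S$; in diagram \ref{KozgeoFcompeqn} the right-hand vertical is instead $(\cdot)\otimes_{\Sigma}(S\otimes_{S_M}\Sigma_M)$, so what the outer rectangle actually requires is that the canonical composition map $\Hom_{\hat Y}(F,N)\otimes_{\Sigma}\Hom_{\hat Y}(F\oplus M,F)\to\Hom_{\hat Y}(F\oplus M,N)$ be an isomorphism for $N\in\Thick(F)$, which one checks on the generator $N=F$, where both sides are $\Hom(F\oplus M,F)=S\otimes_{S_M}\Sigma_M$ (and analogously with $M$, $S_\infty$, $\Sigma_\infty$ for \ref{Kozgeoinfcompeqn}). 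Your right-square isomorphism $\Hom_{\hat\Lambda_M}(S_M,\varpi^*N)\cong\Hom_{\hat\Lambda}(S,N)\otimes_\Sigma(S\otimes_{S_M}\Sigma_M)$ is best verified the same way: construct the canonical natural transformation and evaluate on the generator $S$ of $\DD_\Fd(\hat\Lambda)$, where both sides again return $S\otimes_{S_M}\Sigma_M$ (resp. $S_\infty\otimes_{S_M}\Sigma_M$ on $S_\infty$ in the other diagram). With that adjustment, and minor terminology aside (the bottom rows are the top rows of Theorem \ref{catthm}'s diagrams, and the square deduced at the end is the left one), your plan is sound.
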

\begin{proof}
\end{proof}

Moreover, we have the following induced equivalences of the natural hearts of these categories:

\begin{corollary} \label{heartinfthm} Restriction to the hearts of the triangulated categories in the diagram of Equation \ref{Kozgeoinfcompgreqn} induces mutually inverse equivalences of mixed categories
	\[	\xymatrixcolsep{3pc}
	\xymatrixrowsep{.5pc}
	\xymatrix{ \filt(M) \ar@<.5ex>[r]^{} & \ar@<.5ex>[l]  \Lambda_\infty\Mod_\fr  \ar@<.5ex>[r]^{} & \ar@<.5ex>[l] H^0(\tw^0 \mc A_{M})} \ .  \]
	
	Similarly, restriction to hearts in the diagram of Equation \ref{Kozgeoinfcompeqn} induces
	\[	\xymatrixcolsep{3pc}
	\xymatrixrowsep{.5pc}
	\xymatrix{ \Filt(M) \ar@<.5ex>[r]^{} & \ar@<.5ex>[l]  \hat \Lambda_\infty\Mod_\Fr \ar@<.5ex>[r]^{} & \ar@<.5ex>[l] H^0(\Tw^0 \mc A_{M})} \ .  \]
\end{corollary}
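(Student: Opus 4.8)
The plan is to follow the proof of Theorem \ref{heartthm} essentially verbatim, performing the systematic substitutions $F\oplus M \rightarrow M$, $\Sigma_M \rightarrow \Sigma_\infty$, $\Lambda_M \rightarrow \Lambda_\infty$, $S_M \rightarrow S_\infty$ and $\mc A_{F\oplus M}\rightarrow \mc A_M$, and invoking the diagrams of Equations \ref{Kozgeoinfcompgreqn} and \ref{Kozgeoinfcompeqn} in place of Equations \ref{Kozgeoeqn} and \ref{Kozgeoungreqn}. The assertion is that two triangulated equivalences which have already been constructed restrict to the indicated hearts, so once those diagrams are in hand the argument is purely formal.

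First I would dispose of the right-hand equivalence. By Definition \ref{frdefn} (applied to $\Lambda_\infty$) we have $\Lambda_\infty\Mod_\fr=\filt(S_\infty)$ and $\hat\Lambda_\infty\Mod_\Fr=\Filt(S_\infty)$. The top right horizontal arrow $\Hom_{\Lambda_\infty}(S_\infty,\cdot)$ of the diagram in Equation \ref{Kozgeoinfcompgreqn} is a triangulated equivalence carrying $S_\infty$ to $\Sigma_\infty$ and, in the graded case, preserving grading shifts in the unsheared conventions; since triangulated functors preserve subcategories of iterated extensions, it restricts to an equivalence $\filt(S_\infty)\xrightarrow{\cong}\filt(\Sigma_\infty)$. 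Composing with the equivalence $\filt(\Sigma_\infty)\cong H^0(\tw^0\mc A_M)$ of Corollary \ref{twobjgrfiltcoro} yields $\Lambda_\infty\Mod_\fr\cong H^0(\tw^0\mc A_M)$. The ungraded statement is identical, using $\Filt(\Sigma_\infty)\cong H^0(\Tw^0\mc A_M)$ from Corollary \ref{twobjfiltcoro}.

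Next I would treat the left-hand equivalence simultaneously with the verification that $\filt(M)$ really is the relevant heart. The composite of the two top horizontal equivalences of Equation \ref{Kozgeoinfcompgreqn} is the functor $\Hom_Y(M,\cdot)$, which sends $M$ to $\Sigma_\infty$ and hence restricts to $\filt(M)\xrightarrow{\cong}\filt(\Sigma_\infty)\cong H^0(\tw^0\mc A_M)$; here $\filt(M)$ is an abelian subcategory of $\Perv^\p(Y)^{T}$ because $M$ lies in the heart of the admissible Bridgeland--Deligne perverse coherent $t$-structure by hypothesis (3) and that heart is closed under extensions. Transporting along the middle horizontal equivalence $\thick(M)\cong\DD_\fr(\Lambda_\infty)$ then identifies $\filt(M)$ with $\Lambda_\infty\Mod_\fr$, and all three categories inherit compatible mixed structures from the $T$-equivariant gradings exactly as in Theorem \ref{heartthm}. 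The ungraded case proceeds identically using Equation \ref{Kozgeoinfcompeqn}.

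The step I expect to be the actual obstacle lies one level below, in the Proposition immediately preceding the corollary, which must supply the commutativity data for the diagrams of Equations \ref{Kozgeoinfcompgreqn} and \ref{Kozgeoinfcompeqn}: one has to check that the Keller--Morita equivalence for $M$ and the Koszul duality equivalence for $\Sigma_\infty$ are compatible, via the restriction functor $\varpi_\infty^*$ along $\varpi_\infty\colon\Lambda_M\to\Lambda_\infty$ and the bimodule $S_\infty\otimes_{S_M}\Sigma_M$, with the corresponding structures for $F\oplus M$. This is carried out as in the proof of Theorem \ref{catthm}, using natural isomorphisms of the shape $S_\infty\otimes_{S_M}\Sigma_M^{N_{\Sigma_M}}\otimes_{S_M}S_\infty\cong\Sigma_\infty^{N_{\Sigma_\infty}}$ together with evaluation on the generator $\Sigma_\infty$ of $\DD_\Perf(\Sigma_\infty)$. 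The one genuinely new point compared with the $F$-case is that $S_\infty=\Ext^0(M,M)$ is not the split semisimple ring $\bigoplus_{i\in I}\K$ but merely a finite-dimensional commutative $\K$-algebra by hypothesis (1); this, however, is precisely the level of generality in which Section \ref{Ainfalgsec} and the twisted objects formalism of Section \ref{twobjsec} are set up, so no new argument is required.
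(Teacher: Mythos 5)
Your proposal is correct and is essentially the argument the paper intends: the corollary is stated without an explicit proof precisely because it is the specialization of the proof of Theorem \ref{heartthm} obtained by replacing $F\oplus M$, $\Sigma_M$, $\Lambda_M$, $S_M$ with $M$, $\Sigma_\infty$, $\Lambda_\infty$, $S_\infty$, using the top rows of Equations \ref{Kozgeoinfcompgreqn} and \ref{Kozgeoinfcompeqn} together with Corollaries \ref{twobjgrfiltcoro} and \ref{twobjfiltcoro} and the fact that triangulated equivalences preserve categories of iterated extensions. Your remark about the compatibility Proposition and the non-semisimple base $S_\infty$ is a reasonable caution, but the equivalences in the top rows were already established in Corollaries \ref{extMoritacoro} and \ref{Kozequivextcoro}, so no further input is needed for the corollary itself.
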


\subsection{Monad presentations of perverse coherent extensions}\label{monadextsec} In this section, we generalize the construction of Section \ref{monadsec}, and its generalization in Section \ref{Ainfkozsec}, describing certain canonical resolutions of compactly supported perverse coherent sheaves in terms of representations of quivers, to describe objects of the category $\Filt(F\oplus M)$ in terms of representations of extended quivers.

The compositions of the equivalences of Equations \ref{Kozgeoungreqn} and \ref{Kozgeoeqn} define triangle equivalences
\[	\hat K_M(\cdot): \DD_\Perf(\Sigma_M) \xrightarrow{\cong} \Thick(F\oplus M) \quad\quad \text{and}\quad\quad    K_M(\cdot): \DD_\perf(\Sigma_M) \xrightarrow{\cong}  \thick(F\oplus M) \ , \]
analogous to the functors of Equations \ref{concreteungrKozeqn} and \ref{concreteKozeqn}. Our goal is to give an explicit presentation of the subcategory $\Perv_M(\hat Y)$ as the image of the corresponding subcategory $H^0(\Tw^0 \mc A_{F\oplus M})$, and thus in terms of representations of the extended quiver $Q_M$.

One of the primary implications of commutativity in the statement of Theorem \ref{catthm} is the following:

\begin{corollary}
	There are canonical natural isomorphisms
	\begin{align} \hat K_M \cong \hat K \circ (\cdot)^N \circ (\cdot) \otimes_{S_M} S  & : \DD_\Perf(\Sigma_M) \to \DD^b\Coh(\hat Y)  & \textup{and}\\
		 K_M \cong  K \circ (\cdot)^N \circ (\cdot) \otimes_{S_M} S  & : \DD_\perf(\Sigma_M) \to \DD^b\Coh( Y)^{T}  & ,
	\end{align}
where $\hat K$ and $K$ are the functors of Equations \ref{concreteungrKozeqn} and \ref{concreteKozeqn}.
\end{corollary}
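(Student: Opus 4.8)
The statement to prove is the final Corollary asserting canonical natural isomorphisms $\hat K_M \cong \hat K \circ (\cdot)^N \circ (\cdot)\otimes_{S_M} S$ and $K_M \cong K \circ (\cdot)^N \circ (\cdot)\otimes_{S_M} S$. The plan is to unwind the definitions of the functors involved and invoke the commutativity data established in Theorem \ref{catthm}. By definition, $\hat K_M$ is the composition of the top horizontal equivalences in the diagram of Equation \ref{Kozgeoungreqn}, read from right to left, namely $(\cdot)\otimes_{\Sigma_M} S_M$ followed by $\Hom_{\hat\Lambda_M}(S_M,\cdot)\otimes_{\Sigma_M}(F\oplus M)$ --- or equivalently, since the horizontal arrows are mutually inverse triangle equivalences, $\hat K_M = \Hom_{\hat\Lambda_M}(S_M,\cdot)\otimes_{\Sigma_M}(F\oplus M)\circ(\cdot)\otimes_{\Sigma_M} S_M$. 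Similarly $\hat K$ is the composition of the bottom horizontal equivalences in Equation \ref{Kozgeoextendedungreq} of Theorem \ref{catthmunext} (equivalently, it is the functor $\Hom_\Sigma(S,\cdot)\otimes_\Lambda E$ of Equation \ref{concreteungrKozeqn}).

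The key step is then purely formal: Theorem \ref{catthm} provides canonical commutativity data for the diagram of Equation \ref{Kozgeoungreqn}, whose rows are the equivalences assembling $\hat K_M$ (top) and $\hat K$ (bottom), and whose right-hand vertical arrow is precisely $(\cdot)^N\circ(\cdot)\otimes_{S_M}S$ while the left-hand vertical arrow is the inclusion $\Thick(F,M)\hookrightarrow \DD^b\Coh(\hat Y)$. Chasing an object through the outer rectangle of this commuting diagram --- going right-then-down along one side and down-then-right along the other, using the middle square's commutativity as well --- yields exactly the claimed natural isomorphism, with the naturality inherited from the 2-categorical naturality of the commutativity data. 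Concretely, I would write: for $N\in\DD_\Perf(\Sigma_M)$,
\[
\hat K_M(N) \;\cong\; \hat K\big( (\cdot)^N\circ(\cdot)\otimes_{S_M}S\big)(N)
\]
as objects of $\Thick(F,M)\subset \DD^b\Coh(\hat Y)$, and then note that the collection of these isomorphisms forms a natural transformation because each square of Equation \ref{Kozgeoungreqn} comes equipped with a natural isomorphism of functors, and composing/pasting natural isomorphisms gives a natural isomorphism. The graded case is identical, using the diagram of Equation \ref{Kozgeoeqn} in place of Equation \ref{Kozgeoungreqn}, and identifying the composite of its bottom row with the functor $K$ of Equation \ref{concreteKozeqn}.

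The only genuine content beyond bookkeeping is to confirm that the composite of the bottom row of Equation \ref{Kozgeoungreqn} (resp. Equation \ref{Kozgeoeqn}) really is the functor $\hat K$ (resp. $K$) as defined in Section \ref{monadsec}, i.e. that $E\otimes_{\hat\Lambda}(\cdot)\circ \Hom_\Sigma(S,\cdot)$ agrees with $\Hom_\Sigma(S,\cdot)\otimes_\Lambda E$; this is immediate from the explicit form of the Morita equivalence in Corollary \ref{EMoritacoro} together with the associativity of the relevant tensor products and Hom-tensor adjunction, all of which were already used in establishing Theorem \ref{catthmunext}. I do not anticipate a real obstacle here: the substance was already absorbed into proving commutativity of the two large diagrams in Theorem \ref{catthm}, and this Corollary is the routine extraction of the outer-rectangle identity from that data. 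The one point requiring a line of care is tracking that the right-hand vertical functor is literally $(\cdot)^N\circ(\cdot)\otimes_{S_M}S$ and not merely $(\cdot)\otimes_{S_M}S$, i.e. that the Nakayama twist is correctly placed --- but this is exactly how the vertical arrow is labelled in Equations \ref{Kozgeoungreqn} and \ref{Kozgeoeqn}, so no additional argument is needed.
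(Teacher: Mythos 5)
Your proposal is correct and follows essentially the same route as the paper: the paper's proof is precisely the one-line observation that the claimed isomorphisms are the commutativity data for the outer squares of the diagrams in Equations \ref{Kozgeoungreqn} and \ref{Kozgeoeqn}, which is what your outer-rectangle chase spells out (including the identification of the bottom-row composite with $\hat K$, resp. $K$, and the placement of the Nakayama twist in the right vertical arrow).
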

\begin{proof} 
These natural isomorphisms are simply the commutativity data for the outer squares of Equations \ref{Kozgeoungreqn} and \ref{Kozgeoeqn}.
\end{proof}

Further, the preceding corollary induces the following equivalent presentations of these categories:
\begin{corollary} The image of $\DD_\perf(\Sigma_M)$ under $(\cdot) \otimes_{S_M} S: \DD_\fr(\Sigma_M) \to \DD_\fd(\Sigma)$ is given by
	\[  \DD_\perf(\Sigma_M) \xrightarrow{\cong} \thick(\Sigma\oplus \Hom(F,M)) \subset  \DD_\fd(\Sigma)  \ , \]
	inducing an equivalence of mixed categories between the corresponding subcategories
	\[ H^0(\tw^0(\mc A_{F\oplus M})) \xrightarrow{\cong} \filt_\Sigma(\Sigma\oplus \Hom(F,M)) \ . \]

	Similarly, image of $\DD_\Perf(\Sigma_M)$ under $(\cdot) \otimes_{S_M} S: \DD_\Fr(\Sigma_M) \to \DD_\Fd(\Sigma)$ is given by
	\[  \DD_\Perf(\Sigma_M) \xrightarrow{\cong} \Thick(\Sigma\oplus \Hom(F,M)) \subset  \DD_\Fd(\Sigma)  \ , \]
	inducing an equivalence
	\[ H^0(\Tw^0(\mc A_{F\oplus M})) \xrightarrow{\cong} \Filt_\Sigma(\Sigma\oplus \Hom(F,M))   \ .\]
\end{corollary}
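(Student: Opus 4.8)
The plan is to prove the two halves in parallel (the graded case and its $\textup{I}$-adic completion), reducing everything to the module-theoretic statement and then transporting it through the equivalences already in hand. First I would observe that the functor $(\cdot)\otimes_{S_M}S$ under consideration is exactly the composition of the forgetful functor along the algebra map $\varpi:\Lambda_M\to\Lambda$ (equivalently, the bimodule tensor functor $(\cdot)\otimes_{\Sigma}(S\otimes_{S_M}\Sigma_M)$ on the Koszul-dual side) appearing in the proposition containing Equation~\ref{KozgeoFcompgreqn}. Since the outer and middle squares of that proposition commute, identifying the image of $\DD_\perf(\Sigma_M)$ under $(\cdot)\otimes_{S_M}S$ is equivalent, via the horizontal triangle equivalences, to identifying the image of $\thick(F\oplus M)\subset \DD^b\Coh(Y)^T$ under the restriction $\varpi^*$ read on $\Sigma$-modules; but more efficiently one works entirely on the $\Sigma$-side. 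The generator $\Sigma_M$ of $\DD_\perf(\Sigma_M)$ maps to $\Sigma_M\otimes_{S_M}S=\bigoplus_{i\in I_M}{}_i(\Sigma_M)_j$ with $j$ ranging over $I$, which as a $\Sigma$-module decomposes as $\Sigma\oplus\Hom(F,M)$: the summands indexed by $i\in I$ give $\Sigma=\Hom(F,F)$, and the summand $i=\infty$ gives $\Hom(M,F)$ — wait, one must be careful about variance here, but the point is that $\Sigma_M\otimes_{S_M}S$ computes $\Hom(F\oplus M,F)=\Hom(F,F)\oplus\Hom(M,F)$ as a left $\Sigma$-module, which is $\Sigma\oplus\Hom(F,M)^\vee$ up to the duality conventions; in any case it is a finite direct sum of $\Sigma$ and the distinguished projective-over-$S$ module $\Hom(F,M)$, using hypotheses (1)--(2) on $M$ to ensure finiteness and the correct cohomological concentration. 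Since triangle equivalences send thick subcategories generated by an object to thick subcategories generated by its image, this yields $\DD_\perf(\Sigma_M)\xrightarrow{\cong}\thick(\Sigma\oplus\Hom(F,M))\subset\DD_\fd(\Sigma)$, and the codomain is indeed inside $\DD_\fd(\Sigma)$ because $\Hom(F,M)$ is finite-dimensional over $\bb K$ ($F$ compactly supported).

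Next I would pass to the hearts. The twisted-objects category $H^0(\tw^0(\mc A_{F\oplus M}))$ is identified with $\filt(\Sigma_M)$ by Corollary~\ref{twobjgrfiltcoro} (and the identification $\Lambda_M\Mod_\fr=\filt(S_M)\cong\filt(\Sigma_M)$ from the proof of Theorem~\ref{heartthm}), so under $(\cdot)\otimes_{S_M}S$ it is sent to the full subcategory on objects admitting a filtration with subquotients among the direct summands of $\Sigma_M\otimes_{S_M}S$ and their grading shifts — that is, with subquotients among the summands of $\Sigma\oplus\Hom(F,M)$. This is precisely $\filt_\Sigma(\Sigma\oplus\Hom(F,M))$, by the same argument used repeatedly above: triangulated functors preserve categories of iterated extensions, and an exact functor which is an equivalence onto its image restricts to an equivalence of the subcategories of filtered objects. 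The equality $H^0(\tw^0(\mc A_{F\oplus M}))\xrightarrow{\cong}\filt_\Sigma(\Sigma\oplus\Hom(F,M))$ then follows, and the ungraded statement is obtained verbatim with $\widehat{(\cdot)}$, $\Tw^0$, $\Filt$ and $\DD_\Fr$, $\DD_\Perf$ in place of their lowercase analogues, invoking Corollary~\ref{twobjfiltcoro} instead.

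The main obstacle, such as it is, is bookkeeping rather than conceptual: one must pin down the precise module structure of $\Sigma_M\otimes_{S_M}S$ as a (graded) $\Sigma$-module, in particular verifying that the $\infty$-summand really is the distinguished object $\Hom(F,M)$ with its natural $A_\infty$-module structure over $\Sigma=\mc A_F$ (inherited from the $\mc A_{F\oplus M}$ structure maps restricted to $I$-indexed morphisms), and that the decomposition respects the cohomological-shear grading conventions of Warnings~\ref{gradingnotationwarning} and~\ref{Sigmashwarning}; this uses hypotheses (1)--(3) on $M$, notably $\Ext^i(F,M)=\Ext^i(M,F)=0$ for $i\le 0$, to guarantee that $\Hom(F,M)$ is concentrated in the right degrees and that no spurious maps appear when forming the thick (resp. filtered) closure. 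I would also record explicitly that the notation $\filt_\Sigma(-)$ and $\Filt_\Sigma(-)$ mean $\filt(-)$ and $\Filt(-)$ inside $\DD_\fd(\Sigma)$ (resp. $\DD_\Fd(\Sigma)$) in the sense of Definitions~\ref{filtdefn} and~\ref{Filtdefn}, so that the statement is unambiguous. Everything else is a direct application of Theorem~\ref{catthm}, the proposition surrounding Equations~\ref{KozgeoFcompeqn}--\ref{Kozgeoinfcompgreqn}, and the twisted-objects formalism of Section~\ref{twobjsec}.
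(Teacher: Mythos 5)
Your proposal is correct and follows essentially the same route as the paper: the paper's entire proof is the one-line computation that the generator $\Sigma_M$ is sent to $\Sigma_M\otimes_{S_M}S\cong \Sigma\oplus\Hom(F,M)$, after which the identifications of thick and filtered subcategories are formal, exactly as you argue. The only place you wobble is the variance: the fix is not "duality conventions" (indeed $\Hom(M,F)$ is \emph{not} $\Hom(F,M)^\vee$ here; Serre duality on the compactly supported side would introduce a shift by $3$), but simply the paper's convention, already used in the proof of Theorem \ref{catthm}, that $\Hom(F\oplus M,\cdot)\otimes_{S_M}S\cong\Hom(F,\cdot)$, i.e.\ tensoring with $S$ over $S_M$ restricts the \emph{source}; hence the image of the generator is $\Hom(F,F\oplus M)=\Sigma\oplus\Hom(F,M)$ on the nose, with its natural $\Sigma$-module structure, and no dual or correction is needed.
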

\begin{proof} This follows immediately, noting that the image of $\Sigma_M$ is given by
	\[ \Sigma_M \otimes_{S_M} S = \Hom( F\oplus M, F\oplus M) \otimes_{S_M} S \cong \Hom(F,F\oplus M) = \Sigma \oplus \Hom(F,M)  \ .\]
\end{proof}

Thus, it remains to understand the image of (the Nakayama dual of) $\filt_\Sigma(\Sigma\oplus \Hom(F,M))$ under the functor $K:\DD_\perf(\Sigma) \to \DD^b\Coh(Y)^{T}$ studied in Section \ref{monadsec}, generalizing the calculation of the image of $\filt_\Sigma(\Sigma)$ in \emph{loc. cit.} (as well as its I-adically complete, ungraded variant).

\begin{warning}
	For concreteness, we give most of the exposition in this section in the graded case, noting the ungraded variant can be recovered by completing and forgetting the grading. Further, for notation simplicity we will restrict the trigrading along a cocharacter $\C^\times \to T$, in keeping with Warning \ref{gradingwarn}.
\end{warning}

To begin, we let
\[\Sigma_\infty= \Hom(F,M), \ I_\infty = \Hom(F,M)^N \ \in \DD_\fd(\Sigma) \]
and similarly for $\Sigma_\infty,I_\infty\in \DD_\Fd(\Sigma)$, and suppose $I_\infty$ has Jordan-Holder series
\begin{equation}\label{injcompeinftyqn}
	I_\infty = [ I_\infty^0 < I_\infty^{-1}\langle 1 \rangle < \cdots < I_\infty^{-m_\infty}\langle m_\infty \rangle]   \ ,
\end{equation}
in the cohomologically sheared notation, in analogy with Equation \ref{injcompeqn}. Then the image of $I_\infty$ under the explicit presentation of the Koszul duality functor established in Section \ref{monadsec} is given by
\[ K(I_\infty) = \left[  I_i^{-m_\infty} \otimes_{S} E [m_\infty] \to \cdots \to I_\infty^{-1} \otimes_S E[1] \to I_\infty^0\otimes_S E \right] \xrightarrow{\cong} M \ , \]
as in Equation \ref{simplereseqn}. Further, we have canonical identifications
\begin{align}
\ _\infty(\Sigma_M)_i^1  &\cong \Hom_\Sigma(I_\infty, I_i\langle 1 \rangle ) \cong \Hom^0_{\DD^b\Coh(Y)}( K(I_\infty), K(I_i)[1])  \\
	\ _i(\Sigma_M)_\infty^1  & \cong \Hom_\Sigma(I_i, I_\infty\langle 1 \rangle ) \cong \Hom^0_{\DD^b\Coh(Y)}( K(I_i), K(I_\infty)[1])  \\
\ _\infty(\Sigma_M)_\infty^1 & \cong \Hom_\Sigma(I_\infty, I_\infty\langle 1 \rangle )  \cong \Hom^0_{\DD^b\Coh(Y)}( K(I_\infty), K(I_\infty)[1]) 
\end{align}
as in Equation \ref{extideqn}, though we note that $I_\infty \in \DD_\fd(\Sigma)$ is not necessarily injective so the $\Hom$ functors over $\Sigma$ appearing in the preceding equations are not necessarily exact, in contrast to those in \emph{loc. cit.}.

Given an object $(i_1,...,i_{d},\delta)\in \tw^0\mc A_{F\oplus M}$, where we recall that each $i_k \in I_M = I \cup \{ \infty\}$, the corresponding object of $\DD_\fd(\Sigma)$ has underlying bigraded vector space given by
\[ \Sigma_{i_1,...,i_d} = \bigoplus_{k=1,...,d} \Sigma_{i_k}\langle p_k\rangle \cong \bigoplus_{i\in I_M} \Sigma_i \otimes V_i  \cong  \left(\bigoplus_{i\in I} \Sigma_i \otimes V_i\right)\oplus  ( \Sigma_\infty \otimes V_\infty) \]
where each $V_i$ is a graded vector space
\[ V_i=\bigoplus_{p\in \bb Z} V_{i,-p}\langle p \rangle \quad\quad \text{with}\quad\quad \dim V_{i,p} = \# \{ k\in \{1,...,d\} \ | \ i_k=i\textup{ and } p_k=p \} \ . \]
\noindent Note in particular that $\dim \oplus_{i\in I_M} V_i = d$.

Now, the degree zero element $\delta\in \mf{gl}_d(\bb Z\mc A_{F\oplus M})[1]$ is in fact an element of the subspace
\[ \delta \in \Hom_{\DD(\Sigma_M)}(\bigoplus_{i\in I_M}  (\Sigma_M)_i\otimes V_i, \bigoplus_{j\in  I_M} (\Sigma_M)_j \otimes V_j [1] ) \subset  \mf{gl}_d(\Sigma_M)[1] \cong \mf{gl}_d(\mc A_{F\oplus M})[1]  \]
and thus admits a decomposition
\begin{equation}\label{deltadecompexteqn}
	\delta =  \sum_{i,j\in I_M} b_{ij}\otimes B_{ij}  \quad \in \quad \bigoplus_{i,j\in I_M}  \ _i(\Sigma_M)_j\otimes \Hom(V_i,V_j)[1] \ ,
\end{equation}
as in Equation \ref{deltadecompeqn}, where we again use the abuse of notation $ B_{ij} \otimes b_{ij}$ to denote a not necessarily pure tensor, as in Warning \ref{puretenswarning}.

The underlying cochain complex differential is given by
\[d_\delta = \sum_{k\in \Z} \rho_k^{\Sigma\oplus \Sigma_\infty}(\cdot, \delta^{\otimes k-1}) \quad \in\quad \Hom_{\DD(\Sigma)}( \Sigma_{i_1,...,i_d}, \Sigma_{i_1,...,i_d}[1])  \]
where $\rho_k^{\Sigma\oplus \Sigma_\infty}: \Sigma_{i_1,...,i_d}\otimes\Sigma_M^{\otimes k-1} \to \Sigma_{i_1,...,i_d}[1-k]$ denote the image under $(\cdot)\otimes_{S_M} S$ of the $\Ainf$ module structure maps for $(\Sigma_M)_{i_1,...,i_s}$ over $\mc E_{\mc A_{F\oplus M}}$. Decomposing according to Equation \ref{deltadecompexteqn}, we have the analogous decomposition
\begin{equation}\label{ddeltacompsexteqn} 
	d_\delta = \sum_{k\in \bb Z,\  i,i_2,...,i_{k-1},j \in I_M}\rho_k^{\Sigma\oplus \Sigma_\infty}(\cdot, b_{i i_2},...,b_{i_{k-1}  j})\otimes (B_{i i_2}  ...  B_{i_{k-1}  j} )   \quad \in \quad \bigoplus_{i,j\in I_M}  \Hom(\Sigma_i, \Sigma_j[1]) \otimes \Hom(V_i,V_j)\ 
\end{equation} 
of the differential on $\Sigma_{i_1,...,i_d}^\delta$.

Thus, the image of the Nakayama dual of $\Sigma_{i_1,...,i_d}^{\delta}\in {\textup{D}_\perf}(\Sigma)$ under the Koszul duality equivalence of Equation \ref{concreteKozeqn} is given by
\begin{equation}\label{}
	K(\Sigma_{i_1,...,i_d}^{\delta,N})= \left(K(\Sigma_{i_1,...,i_d}^N) , K(d_\delta^N) \right) \quad\quad \text{where} \quad\quad K(\Sigma_{i_1,...,i_d}^N)=\bigoplus_{i\in I_M} K(I_i)\otimes V_i \ ,
\end{equation}
and where the differential $K(d_\delta^N):K(\Sigma_{i_1,...,i_d}^N) \to K(\Sigma_{i_1,...,i_d}^N)[1]$ is given by
\begin{equation}\hspace*{-1cm}\label{} 
	K(d_\delta^N) :=\sum K (\rho_k^{\Sigma\oplus \Sigma_\infty}(\cdot, b_{i,i_2},...,b_{i_{k-1}, j})^N)\otimes  ( B_{i,i_2}  ...  B_{i_{k-1}, j} )   \quad \in \quad \bigoplus_{i,j\in I_M} \Hom(K(I_i), K(I_j)[1] )\otimes \Hom(V_i,V_j) \ ,
\end{equation}
where the sum is over the same index set as in Equation \ref{ddeltacompsexteqn} above.

We now give a concrete description of the equivalence of this data with that of a representation of the extended quiver $Q_M$, which is by definition given by a (finite rank, in the sense of Definition \ref{frdefn}) module over the path algebra $\Lambda_M=( \otimes^\bullet_{S_M} \bar\Sigma_M[1])^\vee$, as ensured by the equivalence of Theorem \ref{heartthm} between $\Perv_M^\p(Y)^{T}$ and $\Lambda_M\Mod_\fr$.

The K theory class of an object $H\in \Perv_M^\p(Y)^{T}$ is determined by the multiplicites $d_i\in \bb N$ for $i\in I$ of the simple factors $F_i$ together with the multiplicity $d_\infty\in \bb N$ of the auxiliary object $M$. These multiplicities correspond to those of the finite dimensional simple $\Lambda_M$ modules $S_i$ for $i\in I$ together with that of the remaining finite rank generator $S_\infty$. The extended quiver representation corresponding to $H$ is then determined by a $\dd=(d_i)_{i\in I_M}$ dimensional $S_M$ module
\[ V= \bigoplus_{i\in V_{Q_M}} V_i = V_\infty \oplus \bigoplus_{i \in V_Q} V_i   \]
 together with a map
\begin{equation}\label{quiverrepexteqn}
	\K\langle E_{Q_M}\rangle^0 = (\Sigma_M^1)^\vee \to \Endi_{S_M\Bimod}(V) 
\end{equation}
of $S_M$ bimodules, such that the induced map from the cohomological degree zero component of the quasi-free resolution of the path algebra $\Lambda_M^0=\otimes_{S_M}^\bullet \K\langle E_{Q_M}\rangle^0$ to $\Endi_{S\Bimod}(V)$ maps the ideal of relations in the quiver $Q_M$ to zero.

The map of Equation \ref{quiverrepexteqn} can equivalently be interpreted as an element
\begin{equation*}
	\delta= \sum_{i,j \in I_M} b_{ij}\otimes B_{ij} \quad \in \quad \Sigma_M^1\otimes \Endi(V)  =  \bigoplus_{i,j\in I_M} \ _i(\Sigma_M)_j \otimes \Hom(V_i,V_j)  \ ,
\end{equation*}
which determines a cohomological degree $+1$ endomorphism $d_B:\tilde H\to \tilde H[1]$ of the perverse coherent complex
\begin{equation}\label{monadextvseqn}
	\tilde H:=  \bigoplus_{i\in I_M}K(I_i)\otimes_{S_i}  V_i\   \in  \Perv_M^\p(Y)^{\bb C^\times}  \quad\quad \text{, or}\quad\quad 	\tilde H:=  \bigoplus_{i\in I_M}\hat K(I_i)\otimes_{S_i}  V_i\   \in  \Perv_M^\p(\hat Y)
\end{equation}
in the ungraded case, by the formula
\begin{equation}\label{monadkozdifAinfexteqn} %
	d_B= \sum_{k\in \bb Z,\  i,i_2,...,i_{k-1},j \in I_M}  K (\rho_k^{\Sigma\oplus \Sigma_\infty}(\cdot, b_{i,i_2},...,b_{i_{k-1}, j})^N)\otimes  ( B_{i,i_2}  ...  B_{i_{k-1}, j} ) \ . 
\end{equation}

We can now state the desired generalization of Propositions \ref{monadunextprop} and \ref{monadunextAinfprop}:

\begin{prop}\label{monadprop} 
	Let $\tilde H \in \Perv_M^\p(Y)^{T}$ be as in Equation \ref{monadextvseqn} and fix
	\[	B:=(B_{ij})_{i,j\in I_M}\quad\quad\text{with}\quad\quad B_{ij}=(B_{ij}^\alpha \in \Hom(V_i,V_j))_{ \alpha \in \mc B_{ij}} \ . \]
	The following are equivalent: 
	\begin{itemize}
		\item the induced map $d_B:\tilde H \to \tilde H[1]$ of Equation \ref{monadkozdifAinfexteqn} satisfies $d_B^2=0$, and
		\item the induced map $\rho:(\Sigma^1_M)^\vee\to \Endi(V)$ of Equation \ref{quiverrepexteqn} defines a representation of the graded DG quiver $Q_M$.
	\end{itemize}
	Further, when these conditions hold, the resulting complex $(\tilde H,d_B)$ is a projective resolution of the object $H \in \Perv_M^\p(Y)^{T}$ corresponding to the extended quiver representation $V\in \Lambda_M\Mod_\fr$.
	
	\smallskip
	
	Similarly, for $\tilde H \in \Perv_M^\p(\hat Y)$ as in Equation \ref{monadextvseqn}, the following are equivalent:
	\begin{itemize}
		\item the induced map $d_B:\tilde H \to \tilde H[1]$ of Equation \ref{monadkozdifAinfexteqn} satisfies $d_B^2=0$, and
		\item the induced map $\rho:(\Sigma^1_M)^\vee\to \Endi_{S\Bimod}(V)$ of Equation \ref{quiverrepexteqn} defines a nilpotent representation of the DG quiver $Q_M$.
	\end{itemize}
	Further, when these conditions hold, the resulting complex $(\tilde H,d_B)$ is a projective resolution of the object $H \in \Perv_M(\hat Y)$ corresponding to the extended quiver representation $V\in \Lambda_M\Mod_\Fr$.
\end{prop}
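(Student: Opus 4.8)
The plan is to reduce Proposition~\ref{monadprop} to the already-established Proposition~\ref{monadunextAinfprop} by systematically tracking how the passage from $\Sigma$ to $\Sigma_M$ affects each ingredient. The key observation is that the diagram~\eqref{Kozgeoeqn} of Theorem~\ref{catthm}, together with its stated commutativity data, identifies $H^0(\tw^0\mc A_{F\oplus M})$ with $\Lambda_M\Mod_\fr$ and in turn with $\Perv_M^\p(Y)^T$, and that under the outer commuting square the realization functor $K_M:\DD_\perf(\Sigma_M)\to\thick(F\oplus M)$ factors as $K\circ(\cdot)^N\circ(\cdot)\otimes_{S_M}S$. So an object $(i_1,\dots,i_d,\delta)\in\tw^0\mc A_{F\oplus M}$ is sent to the complex $K(\Sigma_{i_1,\dots,i_d}^{\delta,N})$ which is precisely the pair $(\tilde H,K(d_\delta^N))$ appearing in Equation~\eqref{monadextvseqn} and the formula above it, once one decomposes $\delta=\sum_{i,j\in I_M}b_{ij}\otimes B_{ij}$ and uses that the structure maps $\rho_k^{\Sigma\oplus\Sigma_\infty}$ are the images under $(\cdot)\otimes_{S_M}S$ of the bimodule structure maps of $(\Sigma_M)_{i_1,\dots,i_d}$ over $\mc E_{\mc A_{F\oplus M}}$.

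The argument then proceeds in the same three steps as in the proof of Proposition~\ref{monadunextAinfprop}. First: by the standard homotopical-algebra fact about twisted objects, the condition $d_B^2=0$ for the map $d_B=d_\delta^N$ built from $\delta$ is equivalent to $d_\delta^2=0$ on $(\Sigma_M)_{i_1,\dots,i_d}^\delta$, which is in turn equivalent to the Maurer-Cartan equation~\eqref{MCeqn} for $\delta\in\mf{gl}_d(\mc A_{F\oplus M})[1]$ — here one must note that $\delta$ lies in the subspace cut out by the condition that the entries be $\Hom_{\DD(\Sigma_M)}$-morphisms between the summands $(\Sigma_M)_i\otimes V_i$, so that the Maurer--Cartan equation is a polynomial system on the linear maps $B_{ij}$ with coefficients the $A_\infty$ products $m_t(b_{ii_2},\dots,b_{i_{t-1}j})$, exactly as in Equation~\eqref{MCcompeqn}. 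Second: one identifies this polynomial system, via the Hom-tensor adjunction and the fact that the differential $d_{\Lambda_M}$ on the quasi-free presentation of $\Lambda_M=(\otimes^\bullet_{S_M}\bar\Sigma_M[1])^\vee$ is dual to the $A_\infty$ multiplication maps $m_t:\otimes^t_{S_M}\Sigma_M\to\Sigma_M[2-t]$, with the condition that the map $r_\rho:\Lambda_M^0\to\Endi_{S_M\Bimod}(V)$ extending $\rho:(\Sigma_M^1)^\vee\to\Endi(V)$ kills the relations of $Q_M$, i.e.\ that $(V,\rho)$ is a representation of the graded DG quiver $Q_M$; in the ungraded/$\hat\Lambda_M$ case the completion forces the additional nilpotency condition on $\rho$, as in the second half of Proposition~\ref{monadunextAinfprop}. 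Third: when these conditions hold, $(\tilde H,d_B)=K_M(\Sigma_{i_1,\dots,i_d}^{\delta,N})$ is by construction the image under the triangle equivalence $K_M$ of the object of $\DD_\perf(\Sigma_M)$ corresponding to $V\in\Lambda_M\Mod_\fr$, hence is a projective resolution of the corresponding $H\in\Perv_M^\p(Y)^T$; projectivity of the terms $K(I_i)\otimes V_i$ follows since each $K(I_i)$ is a bounded complex of the projective generators $E_j$, and $H^0$ of this resolution recovers $H$ by the equivalence of Theorem~\ref{heartthm}.

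The main obstacle I expect is keeping careful bookkeeping of the $\infty$-node contributions and, relatedly, the loss of exactness of $\Hom_\Sigma(I_\infty,-)$ noted after Equation~\eqref{injcompeinftyqn}: unlike the $F_i$, the module $I_\infty=\Hom(F,M)^N$ need not be injective over $\Sigma$, so the identifications $\ {}_i(\Sigma_M)_j^1\cong\Hom^0_{\DD^b\Coh(Y)}(K(I_i),K(I_j)[1])$ involving $i$ or $j$ equal to $\infty$ require the derived, rather than underived, $\Hom$, and one must check that the concrete description of $K$ from Section~\ref{monadsec} still computes the correct totalization of the double complex obtained by adjoining the differentials $K(d_{\delta_{ij}}^N)$ between the resolutions $K(I_i)$. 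This is essentially a diagram-chase verifying compatibility between the twisted-objects realization functor $Y_2$ and the explicit Koszul-duality model of $K$, together with the Nakayama-functor intertwining lemmas already proved before Theorem~\ref{catthm}; the hypotheses (1)--(3) on $M$ — in particular $\Ext^i(F,M)=\Ext^i(M,F)=0$ for $i\le 0$ and $M\in\Perv^\p(Y)$ — are exactly what guarantees that $\mc A_{F\oplus M}$ and its twisted-object category behave well enough for this to go through, and I would flag their use at the point where one needs $\Sigma_M$ to be connective and $S_M$ to generate the relevant heart.
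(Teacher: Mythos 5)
Your proposal is correct and follows essentially the same route as the paper: the paper's own proof of Proposition \ref{monadprop} simply states that it is identical to the proof of Proposition \ref{monadunextAinfprop} \emph{mutatis mutandis}, which is exactly the reduction you carry out (Maurer--Cartan for $\delta$ over $\Sigma_M$, duality with the relations ideal of $\Lambda_M$, nilpotency in the completed case, and the resolution statement via Theorem \ref{heartthm}). Your additional care about the $\infty$-node and the non-injectivity of $I_\infty$ is a reasonable spelling-out of the "mutatis mutandis" that the paper leaves implicit.
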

\begin{proof}
The proof is the same as that of Proposition \ref{monadunextAinfprop}, \emph{mutatis mutandis}.
\end{proof}

\subsection{Moduli spaces of perverse coherent extensions and extended quivers}\label{quiversecfr}

Let $Q_\M$ be an extended quiver, with vertex set $V_{Q_\M}=V_Q\cup \{\infty\}$ and edge set $E_{Q_\M}=E_Q\cup E_\infty$, where $E_\infty$ denotes all edges of the extended quiver with $\infty$ as at least one of its source and target vertices. The free path algebra of the extended quiver is given by
\[ \bb K Q_\M = \otimes^\bullet_{S_\M} \bb K\langle E_{Q_\M}\rangle \quad\quad \text{where} \quad\quad S_\M=\oplus_{i\in V_Q} S_i \oplus S_\infty = \oplus_{I\in I} \bb K \oplus S_\infty \ . \]

For each dimension vector $\dd=(\dd_0,d_\infty)\in \bb N^{V_{Q_M}}=\bb N^{V_Q}\times \bb N$ for the extended quiver, define
\[ X_{\dd}(Q_\M) =  \bigoplus_{i,j \in I_\M}  \ _i \Sigma^1_j \otimes \Hom(\bb K^{d_i}, \bb K^{d_j}) \quad\quad\text{and}\quad\quad G_{\dd}(Q_\M) = \prod_{i\in V_{Q_\M}} \Gl_{d_i}(S_i)  \ . \]
The stack of representations $\mf M(Q_M)$ of the free $S_M$-algebra on the extended quiver $Q_\M$ is defined as the disjoint union of the quotient stacks
\[ \mf M(Q_\M) = \bigsqcup_{\dd\in \bb N^{V_{Q_\M}}} \mf{M}_{\dd}(Q_\M) \quad\quad\text{where}\quad\quad  \mf{M}_{\dd}(Q_\M) = \left[ X_\dd(Q_\M) / G_\dd(Q_\M) \right] \ . \]

\begin{rmk}\label{trivframermk}
		Note that for $\dd=(\dd_0,0)\in  \bb N^{V_{Q_M}}=\bb N^{V_Q} \times \bb N$, we have tautological identifications
	\[\bigoplus_{i,j \in I}  \ _i \Sigma^1_j \otimes \Hom(\bb K^{d_i}, \bb K^{d_j}) = \bigoplus_{e\in E_Q} \Hom( \K^{d_{s(e)}}, \K^{d_{t(e)}} )\quad\quad \text{and}\quad\quad \Gl_{d_i}(S_i) = \Gl_{d_i}(\bb K) \   \]
for $i\in V_Q$,	so that in this case the definitions of $X_\dd(Q_M),\ G_\dd(Q_M)$ and $\mf M_\dd(Q_M)$ reduce to the standard definitions of $X_\dd(Q),\  G_\dd(Q)$ and $\mf M_\dd(Q)$ given in Section \ref{quiversec}.
\end{rmk}

Let $\Endi(S_M^\dd):=\Hom(S_M^\dd,S_M^\dd)\in S_M\Bimod$ denote the matrix algebra on
\[S_M^\dd:=\bigoplus_{i\in V_{Q_M}} S_i^{d_i}=\bb K^{\dd_0} \oplus S_\infty^{d_\infty}\]
with its natural $S_M$ bimodule structure, and note that there are canonical identifications
\[ X_\dd(Q_M) \cong \Hom_{S_M\Bimod}( \K\langle E_{Q_M}\rangle, \Endi(S_M^\dd)) \cong \Hom_{\Alg_\Ass(S_M\Bimod)}( \K{Q_M}, \Endi(S_M^\dd) )   \ ,\]
so that we have
\[ \mf{M}_\dd(Q_M)(\K) \cong \{ V\in S_M\Mod_\Fr,\ \varphi\in \Hom_{\Alg_\Ass(S\Bimod)}( \K Q_M, \Endi(V) )\ | \ \dim V = \dd \  \} \ ,  \]
that is, the groupoid of geometric points of $\mf{M}_\dd$ is the maximal subgroupoid of the category of modules over the free path algebra $\bb K Q_M$ with underlying $S_M$ module a direct sum of free $S_i$ modules of finite rank $\dd$.

Let $R_M\subset \bb K Q_M$ be the ideal generated by the image of the Koszul differential
\[d_{\Lambda_M}^{-1}:\Lambda_M^{-1} \to \Lambda_M^0= \bb K Q_M \ ,\]
and for each $\dd\in \bb N^{V_{Q_M}}$ define the closed subvariety $Z_\dd(Q_\M,R_\M) \subset X_\dd(Q_M)$ by
\begin{align*}
	 Z_\dd(Q_\M,I_\M) &  =\{ \varphi \in  \Hom_{\Alg_\Ass(S_M\Bimod)}( \K{Q_M}, \Endi(S_M^\dd) ) \ | \ \varphi(I_M) = \{0\}\  \subset  \Endi(S_M^\dd)  \}  \\ 
	 & = \Hom_{\Alg_\Ass(S_M\Bimod)}( \Lambda_M , \Endi(S_M^\dd) )  &  .
\end{align*}

\noindent Note that $Z_\dd(Q_M,I_M)$ is $G_\dd(Q_M)$ invariant, as the condition that the corresponding map to $ \Endi(V) $ satisfies $\varphi(I_M)=\{0\}$ is well defined, independent of the choice of isomorphism $V\cong S_M^\dd$. The stack of representations $\mf M(Q_M,R_M)$ of the extended quiver $(Q_M,R_M)$ is defined analogously as the disjoint union of the quotient stacks
\[ \mf{M}(Q_M,R_M) = \bigsqcup_{\dd \in \bb N^{V_Q}} \mf{M}_\dd(Q_M,R_M)  \quad\quad \text{where}\quad\quad \mf{M}_\dd(Q_M,R_M) = \left[ Z_\dd(Q_M,R_M) / G_\dd(Q_M) \right]\]
and we have the analogous description of the geometric points
\[ \mf{M}_\dd(Q_M,R_M)(\K) = \{ V\in S_M\Bimod,\ \varphi\in \Hom_{\Alg_\Ass(S_M\Bimod)}( \K Q_M/R_M , \Endi(V) )\ | \ \dim_{S_M} V = \dd \   \}  \]
as parameterizing the modules over the path algebra $\K Q_M/R_M= \Lambda_M$ with underlying $S_M$ module a direct sum of free $S_i$ modules of finite rank $\dd$.

There are subspaces $X_\dd^\nil(Q_M)\subset X_\dd(Q_M)$ defined by
\[ X_\dd^\nil(Q_M) = \{ \varphi \in \Hom_{\Alg_\Ass(S_M\Bimod)}( \K Q_M, \Endi(S_M^\dd) ) \ |\ \varphi( (\K Q_M)_{(n)} ) =  \{0\} \ \subset  \Endi(S_M^\dd)  \ \textup{for $n \gg 0$} \}  \ , \]
and in turn a substack $\mf M^\nil(Q_M) \subset \mf M(Q_M)$ defined by
\[ \mf{M}^\nil(Q_M) = \bigsqcup_{\dd \in \bb N^{V_{Q_M}}} \mf{M}^\nil_\dd(Q_M) \quad\quad\text{where}\quad\quad  \mf{M}_\dd^\nil(Q_M) = \left[ X_\dd^\nil(Q_M) / G_\dd(Q_M) \right] \ . \]

\noindent Similarly, there are closed subvarieties $Z_\dd^\nil(Q_M,R_M)\subset X_\dd^\nil(Q_M)$ defined by
\[Z_\dd^\nil(Q_M,R_M)= Z_\dd(Q_M,R_M)\times_{X_\dd(Q_M)} X_\dd^\nil(Q_M)\]
and in turn a closed substack $\mf M^\nil_\dd(Q_M,R_M) \subset \mf M^\nil(Q_M)$ defined by
\[ \mf{M}^\nil(Q_M,R_M) = \bigsqcup_{\dd \in \bb N^{V_{Q_M}}} \mf{M}^\nil_\dd(Q_M,R_M) \quad\quad\text{where}\quad\quad  \mf{M}_\dd^\nil(Q_M,R_M) = \left[ Z_\dd^\nil(Q_M, R_M) / G_\dd(Q) \right] \ . \]

We now state the main result of this section, which is the full statement of Theorem A from the introduction. As in Section \ref{quiversecunfr}, we introduce the notation
\[ \mf M(Y,M) :=  \mf M_{\Perv_M^\p(\hat Y)} \ , \]
and we have:

\begin{theo}\label{stackthm} Let $M\in \Perv^\p(Y)^{T}$ be an object satisfying the hypotheses in Section \ref{Extoverviewsec}, and let $Q_M$ be the associated extended quiver. There is an equivalence of algebraic stacks
	\begin{equation}\label{modulistackequiveqn}
		\mf M^\nil(Q_M,R_\M)  \xrightarrow{\cong}  \mf M(Y,M) 
	\end{equation}
	where the induced equivalence of groupoids of $\bb K$ points is defined on objects by
	\begin{equation*}\hspace*{-1cm}
		(V_i, B_{ij})   \mapsto \left(	\tilde H:=  \bigoplus_{i\in I_M}K(I_i)\otimes_{S_i}  V_i \ ,\ d_B:= \sum_{k\in \bb Z,\  i,i_2,...,i_{k-1},j \in I_M}  K (\rho_k^{\Sigma\oplus \Sigma_\infty}(\cdot, b_{i,i_2},...,b_{i_{k-1}, j})^N)\otimes  ( B_{i,i_2}  ...  B_{i_{k-1}, j} )  \right)  \ ,
	\end{equation*}
	in the notation of Section \ref{monadextsec}.
	
	Similarly, this induces an equivalence of the $T$ fixed points
	\begin{equation}\label{modulistackfpeqn}
		\mf M(Q_M,R_M)^{T}   \xrightarrow{\cong} \mf M_{\Perv_M^\p( Y)^T}  \ .
	\end{equation}
\end{theo}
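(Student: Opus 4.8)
The plan is to reduce Theorem \ref{stackthm} to the unframed case (Theorem \ref{stackthmunext}) by the same three-step strategy, now applied to the extended algebra $\Lambda_M$ in place of $\Lambda$, using the categorical results already assembled in this section. First I would invoke the tautological naturality of the moduli-of-objects construction (Definition \ref{basecatdefn} and the surrounding discussion) with respect to equivalences of abelian categories: the equivalence $\Perv_M^\p(\hat Y) \cong \hat\Lambda_M\Mod_\Fr$ from Theorem \ref{heartthm} induces a canonical equivalence of algebraic stacks
\[ \mf M_{\hat\Lambda_M\Mod_\Fr} \xrightarrow{\cong} \mf M_{\Perv_M^\p(\hat Y)} = \mf M(Y,M) \ , \]
and similarly the graded variant $\Perv_M^\p(Y)^T \cong \Lambda_M\Mod_\fr$ gives the equivalence on $T$-fixed points needed for Equation \ref{modulistackfpeqn}. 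So the content is to identify $\mf M_{\hat\Lambda_M\Mod_\Fr}$ with $\mf M^\nil(Q_M,R_M)$, and to check that under the composite equivalence the groupoid of $\bb K$-points is described by the stated monad formula.

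Second, I would establish the analogue of Lemma \ref{quivermodulilemma} for extended quivers: an equivalence of algebraic stacks $\mf M(Q_M,R_M) \xrightarrow{\cong} \mf M_{\Lambda_M\Mod_\Fr}$ and $\mf M^\nil(Q_M,R_M) \xrightarrow{\cong} \mf M_{\hat\Lambda_M\Mod_\Fr}$. Here the point is precisely the construction carried out above: $X_\dd(Q_M)$ was defined so that its $\bb K$-points are $\Hom_{\Alg_\Ass(S_M\Bimod)}(\bb K Q_M,\Endi(S_M^\dd))$, the subvariety $Z_\dd(Q_M,R_M)$ cuts out the modules over $\bb K Q_M/R_M=\Lambda_M$ by construction (and is $G_\dd(Q_M)$-invariant for the reason noted), and the nilpotence condition $X_\dd^\nil(Q_M)$ matches the passage to the $I$-adic completion $\hat\Lambda_M$ just as in the unframed case; finitely-generated-over-$S_M$-of-finite-rank corresponds to the category $\Lambda_M\Mod_\Fr=\Filt(S_M)$ of Definition \ref{frdefn}. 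The proof is the same bookkeeping as for Lemma \ref{quivermodulilemma}, now keeping track of the extra vertex $\infty$ and the non-trivial base ring $S_\infty=\Ext^0(M,M)$ at that vertex; I would note that taking $\dd=(\dd_0,0)$ recovers the unframed statement via Remark \ref{trivframermk}, which is consistent with the cross-reference in the proof of Lemma \ref{quivermodulilemma}.

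Third, composing the two equivalences above yields Equation \ref{modulistackequiveqn}, and it remains to read off the induced equivalence on $\bb K$-points. On objects, the composite sends an extended-quiver representation $(V_i,B_{ij})$ with $V\in\hat\Lambda_M\Mod_\Fr$ to the corresponding object of $H^0(\Tw^0\mc A_{F\oplus M})$ under Theorem \ref{heartthm}, and then to its image under the Koszul-type functor $\hat K_M$; but by the Corollary following Theorem \ref{catthm} we have $\hat K_M \cong \hat K\circ(\cdot)^N\circ(\cdot)\otimes_{S_M}S$, and the explicit description of this composite worked out in Section \ref{monadextsec} — culminating in Equations \ref{monadextvseqn} and \ref{monadkozdifAinfexteqn} — gives exactly the claimed pair $(\tilde H, d_B)$. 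That the assignment is well-defined as a map of stacks (not merely on $\bb K$-points) follows because each step is; in particular $d_B^2=0$ precisely when $(V_i,B_{ij})$ satisfies the relations $R_M$, by Proposition \ref{monadprop}, which also identifies $(\tilde H,d_B)$ as a projective resolution of the object of $\Perv_M^\p(\hat Y)$ it represents, so the functor is essentially surjective and fully faithful on groupoids of points. Finally, the $T$-equivariant statement of Equation \ref{modulistackfpeqn} follows by running the same argument with the graded versions throughout — the graded equivalence of Theorem \ref{heartthm}, the graded analogue of the extended Lemma \ref{quivermodulilemma}, and the graded variant of the monad formula — noting as in the proof of Theorem \ref{stackthmunext} that the extended quiver with relations $(Q_M,R_M)$ is in fact an extended quiver with potential by the construction of Example \ref{geoquiveg2} applied to $\Sigma_M$.

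\textbf{Main obstacle.} The routine part is the stack-theoretic translation; the delicate point is the bookkeeping at the vertex $\infty$, where the base ring is the (possibly non-semisimple, merely finite-dimensional commutative) algebra $S_\infty=\Ext^0(M,M)$ rather than $\bb K$. One must check that $\mf M_{\hat\Lambda_M\Mod_\Fr}$ is genuinely an algebraic stack of the expected form — i.e. that $X_\dd(Q_M)$ is a finite-dimensional affine space and $G_\dd(Q_M)=\prod_i\Gl_{d_i}(S_i)$ an affine algebraic group, using that $\Sigma_M$ is graded-finite-dimensional over $S_M$ because $F$ has compact support and $M$ satisfies the $\Ext$-vanishing hypotheses of Section \ref{Extoverviewsec} — and that the moduli-of-objects formalism of \cite{AHH} applies to the $\bb K$-linear abelian category $\Perv_M^\p(\hat Y)$ (which is locally Noetherian, being equivalent to $\hat\Lambda_M\Mod_\Fr$ for a Noetherian $\hat\Lambda_M$). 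Once these finiteness inputs are in place, the identification of $\bb K$-points is forced by the already-established commutativity data in Theorem \ref{catthm} and the explicit formulas of Section \ref{monadextsec}, so no genuinely new calculation is required beyond what Proposition \ref{monadprop} supplies.
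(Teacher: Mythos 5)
Your proposal follows essentially the same route as the paper's own proof: naturality of the moduli-of-objects functor applied to the equivalence of Theorem \ref{heartthm}, composed with the extended-quiver moduli lemma (the paper's Lemma \ref{quivermoduliextlemma}, whose proof is exactly the $S_\infty$-bookkeeping you flag as the delicate point), with the formula on $\bb K$-points supplied by Proposition \ref{monadprop}. The argument and its reduction to those ingredients are correct as stated.
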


The main remaining ingredient of the proof is the following lemma:

\begin{lemma}\label{quivermoduliextlemma} Let $\Lambda_M=\bb K\langle Q_M\rangle / R_M$ be the path algebra of the extended quiver $Q_M$ and $\hat \Lambda_M$ be its completion. There is an equivalence of algebraic stacks
	\[ \mf M^\nil(Q_\M,R_\M) \xrightarrow{\cong}\mf M_{\hat \Lambda_\M\Mod_\Fr}   \ . \]
\end{lemma}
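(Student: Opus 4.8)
The plan is to prove Lemma \ref{quivermoduliextlemma} by reducing it to the general principle that the moduli stack of objects in an abelian category depends only on the category up to equivalence, together with the standard identification of stacks of representations of a quiver with relations with moduli of modules over the path algebra. First I would recall that by Theorem \ref{heartthm} we have an equivalence of abelian categories $\hat\Lambda_M\Mod_\Fr \cong \Perv_M^\p(\hat Y)$, so (taking Ind-completions and using naturality of the moduli-of-objects construction with respect to equivalences, exactly as in the proof of Theorem \ref{stackthm}) it suffices to construct an equivalence of algebraic stacks $\mf M^\nil(Q_M,R_M) \xrightarrow{\cong} \mf M_{\hat\Lambda_M\Mod_\Fr}$ directly. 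The key point is that $\hat\Lambda_M = \bb K Q_M/R_M$ completed along the path-length filtration, so a finite-rank $\hat\Lambda_M$-module is exactly a finite-rank $S_M$-module $V$ together with a homomorphism $\varphi:\bb K Q_M \to \Endi(V)$ killing $R_M$ and satisfying the nilpotency condition $\varphi((\bb K Q_M)_{(n)})=0$ for $n\gg 0$; this is precisely the description of the $\bb K$-points of $\mf M^\nil_\dd(Q_M,R_M)$ given above.

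The steps I would carry out, in order: (1) For each dimension vector $\dd=(\dd_0,d_\infty)\in \bb N^{V_{Q_M}}$, exhibit the functor on groupoids of $R$-points. A flat family over $\Spec R$ of finite-rank $\hat\Lambda_M$-modules of rank $\dd$ is, by flatness and finite-rank-ness, an $R\otimes_{\bb K} S_M$-module which is locally free of rank $\dd$, equipped with the action map; Zariski-locally on $\Spec R$ we may trivialize it to $S_M^\dd\otimes_{\bb K} R$, and then the action data is exactly a point of $\Hom_{\Alg_\Ass(S_M\otimes R\Bimod)}(\Lambda_M\otimes R, \Endi(S_M^\dd)\otimes R)$ satisfying the nilpotency condition, i.e.\ an $R$-point of $Z_\dd^\nil(Q_M,R_M)$; the change-of-trivialization ambiguity is precisely the $G_\dd(Q_M)(R)$-action. (2) Check this assignment is natural in $R$ and compatible with the groupoid structures, giving a morphism of prestacks $\mf M^\nil(Q_M,R_M)\to \mf M_{\hat\Lambda_M\Mod_\Fr}$. (3) Check it is an equivalence: essential surjectivity is the local trivialization argument above, and full faithfulness on each fiber groupoid follows since a morphism of $\hat\Lambda_M$-module families is the same as an $R$-linear map intertwining the action data, which under the trivializations is an element of $\Hom_{S_M\Bimod}(S_M^{\dd}\otimes R, S_M^{\dd'}\otimes R)$ intertwining the $\varphi$'s --- exactly the morphisms in $[Z_\dd^\nil/G_\dd]$. (4) Conclude that $\mf M^\nil(Q_M,R_M)$ is an algebraic stack (it is visibly a quotient of a scheme by an affine algebraic group) and that the displayed equivalence holds, summing over $\dd$.

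The main obstacle I expect is step (1): ensuring that the nilpotency condition on an abstract finite-rank module over the \emph{completed} path algebra $\hat\Lambda_M$ matches, family by family over a general base $R$, the schematic condition $\varphi((\bb K Q_M)_{(n)})=0$ for $n\gg 0$ cutting out $X_\dd^\nil(Q_M)$ inside $X_\dd(Q_M)$. Concretely, one must verify that over $\Spec R$ the locus where the action factors through some finite quotient $\bb K Q_M/(\bb K Q_M)_{(n)}$ is open and that $X_\dd^\nil(Q_M)$ --- which is a priori the \emph{increasing union} $\bigcup_n X_\dd(\bb K Q_M/(\bb K Q_M)_{(n)})$ of closed subschemes, hence an ind-scheme or at least a non-quasicompact scheme --- still represents finite-rank $\hat\Lambda_M$-modules in families; here one uses that $S_M/\bb K$ is finite-dimensional and $\dd$ is fixed, so by a Nakayama/quasicompactness argument the relevant $n$ can be bounded locally on $\Spec R$, making $X_\dd^\nil(Q_M)$ a genuine (locally finite type) scheme after all. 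A clean way to handle this, which I would adopt, is to note that the proof is entirely parallel to the case $M=0$ already asserted in Lemma \ref{quivermodulilemma} of Section \ref{quiversecunfr} (itself deferred to the present lemma): indeed taking the component $\dd=(\dd_0,0)$ and invoking the commutativity of the diagram in Equation \ref{KozgeoFcompeqn} recovers that statement, so it suffices to run the standard argument --- see e.g.\ the discussion of moduli of quiver representations in \cite{AHH} --- with $S$ replaced throughout by $S_M$ and ordinary finite-dimensionality replaced by finite rank over $S_M$, all of which goes through verbatim since $S_M$ is a finite-dimensional commutative $\bb K$-algebra.
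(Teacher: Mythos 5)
Your overall strategy—compare groupoids of $R$-points, trivialize the underlying $S_M$-module, and identify the action data with a point of $Z^\nil_\dd(Q_M,R_M)$ modulo $G_\dd(Q_M)$—is the same in spirit as the paper's, but the step you dispose of in one clause ("by flatness and finite-rank-ness, an $R\otimes_{\bb K}S_M$-module which is locally free of rank $\dd$") is precisely the content of the paper's proof, and your justification that the classical argument goes through "verbatim" rests on a false premise. You assert that $S_M$ is a finite-dimensional commutative $\bb K$-algebra; in fact $S_\infty=\Ext^0(M,M)$ is typically infinite-dimensional (for $M=\mc O_{\C^2}[1]$ it is the ring of functions on the formal surface $W=\Spf S_\infty$), and this is exactly where the extended case departs from ordinary quiver moduli: the datum at the framing vertex is not a vector bundle on $T=\Spec R$ but a module over $S_\infty\otimes R$, i.e.\ a sheaf on $T\times W$, with structure group $\Gl_{d_\infty}(S_\infty)$ rather than $\Gl_{d_\infty}(\bb K)$. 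The heart of the lemma is therefore to show that a compact, $R$-flat object of $\textup{Ind}(\hat\Lambda_M\Mod_\Fr)_R$ has framing component finitely generated over $S_\infty\otimes R$ and fiberwise free of rank $d_\infty$ over $S_\infty$. The paper gets this by unpacking compactness: compact objects of the Ind-completion form the Karoubi completion of $\Filt(S_M)$, the Karoubi completion of finite-rank free $S_\infty$-modules is the finitely generated projectives, and since $S_\infty$ is local, projective plus finitely generated implies free; flatness over $R$ then makes the $V_Q$-components vector bundles on $T$ and $V_\infty$ a rank-$d_\infty$ bundle on $T\times W$ trivializable on each $\{t\}\times W$, which is exactly the $G_\dd(Q_M)$-torsor with equivariant section describing an $R$-point of $\mf M^\nil_\dd(Q_M,R_M)$. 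None of this follows from "flatness and finite-rank-ness" alone, so as written your step (1) begs the question.

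Two secondary points. Your Zariski-local trivialization over $\Spec R$ would also need an argument for the $\Gl_{d_\infty}(S_\infty)$-factor; the paper sidesteps this by matching the torsor description directly against bundles on $T\times W$ that are only required to be trivializable over $\bb K$-points of $T$. And your resolution of the nilpotency "obstacle" again invokes finite-dimensionality of $S_M/\bb K$; the uniform bound you want actually comes from the fixed filtration length $|\dd|$ of a finite-rank module (the augmentation ideal shifts the filtration), not from any finiteness of $S_\infty$, and in the paper the nilpotency condition is simply carried through the dictionary on both sides rather than bounded.
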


\begin{proof}
	Let $R$ be a commutative ring with $T=\Spec R$, recall that $W = \Spf S_\infty \subset \hat X$ denotes the formal subscheme of $\hat X$ on which $\pi_* M$ is supported, and fix $\dd \in \bb N^{V_{Q_M}}$. The groupoid of $R$ points of the component $\mf M^\nil_\dd(Q_\M,I_\M)$ of the left hand side is given by
	\begin{align*}
		\mf M^\nil_\dd(Q_\M,I_\M)(R) & =  [Z_\dd(Q_M,I_M) / G_\dd(Q_M)](R) \\
		& = \{ P \in \Bun_{G_\dd(Q_M)}(T) \ ,\  \varphi \in \Gamma(T, P\times_{G_\dd(Q_M)} X_\dd^\nil(Q_M) ) \ | \  \varphi (I_M\otimes R) = \{0\} \  \} \ .
		\ \end{align*}
	Note that the underlying $G_\dd(Q_M) = \Gl_{\dd_0}(\bb K) \times \Gl_{d_\infty}(S_\infty)$ bundle $P$ over $R$ is equivalent to a rank $d_i$ vector bundle $V_i$ over $T$ for each $i\in V_Q$ together with a rank $d_\infty$ vector bundle $V_\infty$ over $T\times W$, trivializable over $\{t\}\times W$ for each $\bb K$ point $t\in T(\bb K)$. Under this identification, the section $\varphi$ is equivalent to a collection of sections of the associated endomorphism bundle
	\begin{equation}\label{quiverrepfamilyeqn}
		\varphi \in \bigoplus_{i,j\in I_M} \ _i\Sigma_j^1 \otimes_{S_i\otimes S_j^\op}  \Gamma(T, \Homi(V_i,V_j) \  )  
	\end{equation}
	satisfying the relations generating the ideal $I_M$, such that for each $\bb K$ point $t\in T(\bb K)$, the induced morphism of associative algebras $\Lambda_M \to \Endi(V_t)$ has nilpotent image, where $V_t$ denotes the fibre of $V=\oplus_{i\in I_M} V_i$ at $t$.
	
	Alternatively, the groupoid of $R$ points of $\mf M_{\hat \Lambda_\M\Mod_\Fr}$ is by definition given by
	\[ \mf M_{\hat \Lambda_\M\Mod_\Fr}(R) = \{ V \in \textup{Ind}(\hat\Lambda_M\Mod_\Fr)_R \ |\ \textup{$V$ is flat over $R$ and compact } \} \ , \]
	where we recall that $\mc C_R$ denotes the category of $R$ module objects internal to $\mc C$, as in Definition \ref{basecatdefn}.
	
	The compact objects of $\textup{Ind}(\hat \Lambda_M\Mod_\Fr)_R$ are $\hat\Lambda_M\otimes_\K R$ modules with underlying $S_M$ module given as a colimit of free modules of finite rank over $S_M$, which are finitely generated as $\hat \Lambda_M\otimes_\K R$ modules. These conditions together imply that the underlying $S_M\otimes_\K R$ module is finitely generated.
	
	Given an object $V\in \textup{Ind}(\hat\Lambda_M\Mod_\Fr)_R$, let $V=\oplus_{i\in V_Q} V_i \oplus V_\infty$ denote the natural decomposition as a module over $S_M= S\oplus S_\infty$.
	If $V$ is compact, then each $V_i$ defines a finitely generated $R$ module, and $V_\infty$ a finitely generated $S_\infty \otimes_\K R$ module such that $(V_\infty)_t=V_\infty\otimes_R \bb K$ is projective for each $\bb K$ point $t\in T(\bb K)$; the latter follows from the fact that the category of compact objects of the ind completion of a category $\mc C$ is given by the Karoubi completion of $\mc C$, and the  Karoubi completion of the category of free $S_\infty$ modules is the category of projective $S_\infty$ modules. Moreover, since $S_\infty$ is a local ring, projective and finitely generated implies free, so that $(V_\infty)_t$ is free of finite rank over $S_\infty$ for each $\bb K$ point $t\in T(\bb K)$.
	
	Finally, for $V\in \textup{Ind}(\hat\Lambda_M\Mod_\Fr)_R^\textup{c}$ compact, the condition that $V$ be flat over $R$ corresponds under the above identifications to the condition that each $V_i$ is flat as an $R$ module for $i\in V_{Q_M}$. Thus, for each $i\in V_Q$, $V_i$ defines a finitely generated, projective $R$ module or equivalently a vector bundle on $T$ of finite rank $d_i$. Similarly, $V_\infty$ defines a vector bundle on $T\times W$ of finite rank $d_\infty$ such that $(V_\infty)_t$ is trivializable for each $t\in T(\bb K)$.
	
	Thus, we have identified the $S_M\otimes_\K R$ module underlying an object $V\in \mf M_{\hat \Lambda_\M\Mod_\Fr}(R)$ with the collection of vector bundles underlying an object in the groupoid of $R$ points $\mf M^\nil_\dd(Q_\M,I_\M)(R)$. The additional data required to define the $\hat \Lambda_M\otimes R$ module structure on $V$ is evidently equivalent to that of $\varphi$ in Equation \ref{quiverrepfamilyeqn}; this completes the proof of the lemma.
\end{proof}

We now conclude the proof of Theorem \ref{stackthm} using the lemma and explain some implications of the result:

\begin{proof}{(of Theorem \ref{stackthm})}
The definition of the moduli of objects functor is evidently natural with respect to equivalences of categories, so that we obtain a canonical equivalence of algebraic stacks 
\[\mf M_{\hat \Lambda_\M\Mod_\Fr}  \xrightarrow{\cong} \mf M_{\Perv_M^\p(\hat Y)}  , \]
by the equivalence of Theorem \ref{heartthm}. Composing with the equivalence of algebraic stacks of Lemma \ref{quivermoduliextlemma} yields the desired equivalence of Equation \ref{modulistackequiveqn}, and the induced equivalence on groupoids of $\bb K$ points is defined on objects by the claimed formula, by Proposition \ref{monadprop}.
\end{proof}

\begin{corollary} Let $\dd=(\dd_0,0) \in \bb N^{V_{Q_M}}=\bb N^{V_Q} \times \bb N$. There is a commutative diagram of equivalences of algebraic stacks
\begin{equation*}	\xymatrix{ \mf M_{\dd_0}^\nil(Q_Y,W_Y) \ar[r]^\cong\ar[d]^\cong & \mf M_{\Perv_\cs(\hat Y), \dd_0} \ar[d]^\cong \\ 
		\mf M^\nil_\dd(Q_M,R_M) \ar[r]^\cong & \mf M_{\Perv_M^\p(\hat Y),\dd} } \ .
\end{equation*}
where the horizontal arrows are given by the equivalences of Theorems \ref{stackthmunext} and \ref{stackthm}, and the vertical arrows are given by the tautological identifications of Remark \ref{trivframermk} and of the image of $\Filt(F_i)\subset \Filt(F_i,M)$ as the full subcategory on objects with $0$ factors of $M$ in their composition series.
\end{corollary}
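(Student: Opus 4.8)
The plan is to observe that the horizontal equivalences of Theorems \ref{stackthmunext} and \ref{stackthm} have already been constructed, so all that remains is to identify the two vertical inclusions with the claimed tautological maps and check that the resulting square commutes on the level of the underlying constructions. First I would unwind the definition of the restriction $\dd = (\dd_0,0)$: by Remark \ref{trivframermk}, for a dimension vector with $d_\infty=0$ the spaces $X_\dd(Q_M)$, $G_\dd(Q_M)$ and hence $\mf M_\dd^\nil(Q_M,R_M)$ reduce verbatim to $X_{\dd_0}(Q_Y)$, $G_{\dd_0}(Q_Y)$ and $\mf M_{\dd_0}^\nil(Q_Y,W_Y)$, since the only edges of $Q_M$ not present in $Q_Y$ lie in $E_\infty$ and contribute $\Hom$ spaces with a factor $\bb K^{d_\infty}=0$, while the relations in $R_M$ restrict to those defining $(Q_Y,W_Y)$ (again, any relation involving an edge of $E_\infty$ is automatically satisfied). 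This gives the left vertical equivalence as an isomorphism of stacks, not merely a closed immersion.

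Next I would identify the right vertical arrow. By Theorem \ref{heartthm} and its unframed predecessor Theorem \ref{unextAinfheartthm}, the categories $\Perv_\cs(\hat Y) \simeq \hat\Lambda\Mod_\Fr \simeq H^0(\Tw^0\mc A_F)$ and $\Perv_M^\p(\hat Y)\simeq \hat\Lambda_M\Mod_\Fr\simeq H^0(\Tw^0\mc A_{F\oplus M})$ are related by the fully faithful inclusion $\Filt(F_i)\hookrightarrow \Filt(F_i,M)$, whose essential image is precisely the full subcategory of objects admitting a composition series with no subquotient equal to $M$ (equivalently, whose class in $K$-theory has $d_\infty=0$). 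Passing to moduli stacks of objects, which is functorial in equivalences and in fully faithful exact embeddings with the given image (cf. the discussion following Definition \ref{basecatdefn}), this inclusion of categories induces exactly the component inclusion $\mf M_{\Perv_\cs(\hat Y),\dd_0}\hookrightarrow \mf M_{\Perv_M^\p(\hat Y),\dd}$ for $\dd=(\dd_0,0)$, which is moreover an isomorphism onto the open-and-closed substack cut out by $d_\infty=0$.

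Finally I would check commutativity. This is built into the constructions: the commutative diagram of Equation \ref{KozgeoFcompeqn} (and its graded variant \ref{KozgeoFcompgreqn}) records that $\Hom_{\hat Y}(F\oplus M,\cdot)\otimes_{\Sigma_M} S_M$ restricted along $\varpi^*$ agrees with $\Hom_{\hat Y}(F,\cdot)\otimes_\Sigma S$, and the monad formula in Theorem \ref{stackthm} restricts to the monad formula in Theorem \ref{stackthmunext} when $V_\infty=0$, since all terms $K(\rho_k^{\Sigma\oplus\Sigma_\infty}(\cdot,b_{i,i_2},\dots,b_{i_{k-1},j})^N)\otimes(B_{i,i_2}\cdots B_{i_{k-1},j})$ in which any index $i_\ell=\infty$ vanish, and $\rho_k^{\Sigma\oplus\Sigma_\infty}$ restricted to the $\Sigma$-part is $m_k^\Sigma$. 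Thus the four equivalences assemble into a commutative square, as claimed. The only point requiring care — and the main (minor) obstacle — is verifying that the moduli-of-objects functor $\mc C\mapsto \mf M_{\mc C}$ really does send the fully faithful embedding $\Filt(F_i)\hookrightarrow\Filt(F_i,M)$ to the component inclusion rather than merely to a monomorphism of stacks; this is handled by noting that flatness over the base ring is checked fibrewise and that a family has $d_\infty=0$ if and only if every geometric fibre does, so membership in the image is an open and closed condition, detected exactly by the $K$-theory component $\dd=(\dd_0,0)$.
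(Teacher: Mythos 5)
Your argument is correct and follows essentially the same route as the paper: the paper's proof is a one-line appeal to the commutativity of the diagram in Equation \ref{KozgeoFcompeqn} together with the induced equivalences of hearts (Theorems \ref{unextheartthm} and \ref{heartthm}), which is exactly the core of your commutativity check. The extra detail you supply — the reduction via Remark \ref{trivframermk} at $d_\infty=0$, the identification of the image of $\Filt(F_i)\subset\Filt(F_i,M)$ under the moduli-of-objects functor, and the observation that the monad formula of Theorem \ref{stackthm} degenerates to that of Theorem \ref{stackthmunext} when $V_\infty=0$ — is a correct unwinding of what the paper leaves tautological.
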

\begin{proof}
The result follows from commutativity of the diagram in Equation \ref{KozgeoFcompeqn}, together with the induced equivalences of hearts given in Theorems \ref{unextheartthm} and \ref{heartthm}.
\end{proof}

For $Q_M$ the extended quiver corresponding to the object $M\in \Perv^\p(Y)^{T}$, let $Q_\infty$ denote the full extended subquiver of $Q_M$ on the single extended vertex $\infty \in V_{Q_M}$, and similarly define
\[ X_{d_\infty}(Q_\infty) = \ _\infty \Sigma^1_\infty \otimes \End(\bb K^{d_\infty}) \quad\quad \text{and}\quad\quad \mf M_{d_\infty}(Q_\infty) = [ X_{d_\infty}(Q_\infty) / \Gl_{d_\infty}(S_\infty)]  \ , \]
as well as closed subvarieties $Z_{d_\infty}(Q_\infty, I_\infty) \subset  X_{d_\infty}(Q_\infty)$ and corresponding closed substacks $\mf M_{d_\infty}(Q_\infty,I_\infty) \subset \mf M_{d_\infty}(Q_\infty)$ by
\[ \hspace*{-1cm} Z_{d_\infty}(Q_\infty, I_\infty)  = X_{d_\infty}(Q_\infty) \times_{X_{\dd}(Q_M)} Z_{\dd}(Q_M,R_M) \quad\quad \text{and}\quad\quad \mf M_{d_\infty}(Q_\infty,I_\infty)=[ Z_{d_\infty}(Q_\infty, I_\infty)/ \Gl_{d_\infty}(S_\infty) ] \ ,  \]
where $\dd=(\textbf{0},d_\infty) \in  \bb N^{V_{Q_M}}=\bb N^{V_Q} \times \bb N$.

\begin{rmk}\label{trivunframermk}
	Note that for $\dd=(\textbf{0},d_\infty)\in  \bb N^{V_{Q_M}}=\bb N^{V_Q} \times \bb N$, we have tautological identifications
	\[ X_\dd(Q_M) =  X_{d_\infty}(Q_\infty) \quad\quad \text{and}\quad\quad G_\dd(Q_M) = \Gl_{d_\infty}(S_\infty)  \]
 so that in this case the definitions of $X_\dd(Q_M,R_M)$ and $\mf M_\dd(Q_M,R_M)$ reduce to the definitions of $X_{d_\infty}(Q_\infty,I_\infty)$ and $\mf M_{d_\infty}(Q_\infty,I_\infty)$ given in the preceding paragraph.
\end{rmk}

\begin{corollary}\label{framingstackcompcoro}Let $\dd=(\textbf{0},d_\infty)\in  \bb N^{V_{Q_M}}=\bb N^{V_Q} \times \bb N$. There is a commutative diagram of equivalences of algebraic stacks
	\begin{equation*}	\xymatrix{ \mf M_{d_\infty}^\nil(Q_\infty,I_\infty) \ar[r]^\cong\ar[d]^\cong & \mf M_{\Filt(M),d_\infty} \ar[d]^\cong \\ 
			\mf M^\nil_\dd(Q_M,R_M) \ar[r]^\cong & \mf M_{\Perv_M^\p(\hat Y),\dd} } \ .
	\end{equation*}
where the horizontal arrows are given by the equivalence of Theorem \ref{stackthm}, and the vertical arrows are given by the tautological identifications of Remark \ref{trivunframermk} and of the image of $\Filt(M) \subset \Filt(F_i,M)$ as the full subcategory on objects with $0$ factors of $F_i$ in their composition series for each $i\in V_Q$. 
\end{corollary}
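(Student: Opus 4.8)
\textbf{Proof proposal for Corollary \ref{framingstackcompcoro}.}

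The plan is to deduce the statement from the already-established equivalence of Theorem \ref{stackthm}, exactly as the preceding corollary was deduced, by restricting attention to the connected component indexed by $\dd=(\mathbf{0},d_\infty)$ and matching the two tautological descriptions of this component. First I would observe that the commutativity of the diagram in Equation \ref{Kozgeoinfcompeqn} together with the induced equivalences of hearts recorded in Corollary \ref{heartinfthm} gives a canonical equivalence $\mf M_{\Filt(M),d_\infty} \xrightarrow{\cong} \mf M_{\hat\Lambda_\infty\Mod_\Fr,d_\infty}$, and in turn, by the same argument as Lemma \ref{quivermoduliextlemma} applied to the single-vertex extended subquiver $Q_\infty$ (which is literally the content of that lemma with $V_Q=\emptyset$), an equivalence $\mf M^\nil_{d_\infty}(Q_\infty,I_\infty) \xrightarrow{\cong} \mf M_{\Filt(M),d_\infty}$. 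This is the left vertical arrow.

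Next I would identify the bottom and right objects. On the stack-of-quiver-representations side, Remark \ref{trivunframermk} gives the tautological identifications $X_\dd(Q_M)=X_{d_\infty}(Q_\infty)$, $G_\dd(Q_M)=\Gl_{d_\infty}(S_\infty)$, $Z_\dd(Q_M,R_M)=Z_{d_\infty}(Q_\infty,I_\infty)$ and hence $\mf M^\nil_\dd(Q_M,R_M)=\mf M^\nil_{d_\infty}(Q_\infty,I_\infty)$; this is the left vertical arrow read as an identity on underlying stacks, which is precisely why the square will commute strictly. On the sheaf side, the key point is that an object of $\Perv^\p_M(\hat Y)$ whose $K$-theory class has all simple multiplicities $d_i=0$ for $i\in V_Q$ is, by the equivalence $\Perv^\p_M(\hat Y)\cong H^0(\Tw^0\mc A_{F\oplus M})$ of Theorem \ref{heartthm} and the concrete description of twisted objects, an iterated extension of copies of $M$ alone, i.e. lies in $\Filt(M)$; conversely any object of $\Filt(M)$ has this property. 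Thus the full subcategory of $\Perv^\p_M(\hat Y)$ on objects with $0$ factors of every $F_i$ is exactly $\Filt(M)$, giving the right vertical identification $\mf M_{\Filt(M),d_\infty}\cong \mf M_{\Perv^\p_M(\hat Y),\dd}$.

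Finally, commutativity of the square is immediate once the two vertical arrows are recognized as the two faces of the equivalence of Theorem \ref{stackthm} restricted to the component $\dd=(\mathbf 0,d_\infty)$: the horizontal arrows are by definition the restrictions of the equivalence of Equation \ref{modulistackequiveqn} and its single-vertex analogue, the left vertical arrow is the restriction of the chain of equivalences factoring $\mf M^\nil(Q_M,R_M)\to \mf M_{\hat\Lambda_M\Mod_\Fr}$ through $\mf M_{\hat\Lambda_\infty\Mod_\Fr}$ via the restriction functor $\varpi_\infty^*$, and the right vertical arrow is the corresponding inclusion of hearts; compatibility of all of these is precisely the commutativity data for the diagram in Equation \ref{Kozgeoinfcompeqn}. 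I expect the only real point requiring care — the ``main obstacle,'' though a mild one — is checking that the restriction-functor $\varpi_\infty^*$ appearing in Equation \ref{Kozgeoinfcompeqn} is compatible at the level of moduli stacks with the tautological closed embedding $\mf M_{d_\infty}(Q_\infty,I_\infty)\hookrightarrow \mf M_\dd(Q_M,R_M)$, i.e. that setting the vertex spaces $V_i$ for $i\in V_Q$ to zero in a representation of $Q_M$ corresponds under Koszul duality to pulling back a $\hat\Lambda_\infty$-module along $\varpi_\infty$; this is a direct unwinding of the monad formula in Proposition \ref{monadprop}, since every term in $d_B$ involving a path through a vertex in $V_Q$ automatically vanishes when the corresponding $V_i=0$.
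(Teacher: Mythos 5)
Your proposal is correct and follows essentially the same route as the paper: the paper's proof likewise deduces the corollary from the commutativity data of the diagram in Equation \ref{Kozgeoinfcompeqn} together with the induced equivalences of hearts in Corollary \ref{heartinfthm} and Theorem \ref{heartthm}, combined with the tautological identifications of Remark \ref{trivunframermk}. Your additional unwinding of the monad formula and of the single-vertex case of Lemma \ref{quivermoduliextlemma} just makes explicit details the paper leaves implicit.
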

\begin{proof}
	The result follows from commutativity of the diagram in Equation \ref{Kozgeoinfcompeqn},together with the induced equivalences of hearts given in Theorems \ref{heartinfthm} and \ref{heartthm}.
\end{proof}

\begin{eg}\label{Higgseg} Let $W\subset \hat Y$ denote a smooth, toric subvariety and $M=\mc O_W$, so that $S_\infty=H^0(W,\mc O_W)$. Then using the Koszul resolution, we can compute
\[ _\infty \Sigma^1_\infty \cong \Ext^1(\mc O_W, \mc O_W) \cong H^0(W, N_{W/Y})  \ , \]
the space of global sections of the normal bundle $N_{W/Y}$ to $W$ in $Y$. Moreover, the space
\[ Z_{d_\infty}(Q_\infty, I_\infty) \subset  X_{d_\infty}(Q_\infty) \cong H^0(W, N_{W/Y}) \otimes \End(\bb K^{d_\infty}) \]
identifies with the space of global $N_{W/Y}$-twisted Higgs fields on the trivial bundle $\mc O_W^{\oplus d_\infty}$. Thus, the stack of nilpotent representations of the quiver with relations $(Q_\infty,I_\infty)$ is given by
\[ \mf M^\nil_{d_\infty}(Q_\infty, I_\infty) =\left[Z_{d_\infty}(Q_\infty, I_\infty) / \Gl_{d_\infty}(S_\infty) \right] \cong \textup{Higgs}^\nil_{d_\infty}(W, N_{W/Y})_0 \ , \]
the stack of nilpotent $N_{W/Y}$-twisted Higgs bundles on $W$ of rank $d_\infty$ with underlying vector bundle isomorphic to the trivial bundle, though without such an isomorphism fixed.

Thus, in this example, we find that Corollary \ref{framingstackcompcoro} reduces to the relevant variant of the spectral correspondence, giving an equivalence between the above-described stack of Higgs bundles and the stack of coherent sheaves on $\hat Y$ isomorphic to an iterated extension of the structure sheaf $\mc O_W$.
\end{eg}

\subsection{Framing structures}\label{framingsec}

For an arbitrary element
\begin{equation*}
	\delta= \sum_{i,j \in R_M} b_{ij}\otimes B_{ij} \quad \in \quad \Sigma_M^1\otimes \Endi(V)  =  \bigoplus_{i,j\in R_M} \ _i\Sigma_j \otimes \Hom(V_i,V_j)  \ ,
\end{equation*}
consider the decomposition $V=V_\textbf{0}\oplus V_\infty := \left(\oplus_{i\in I} V_i \right) \oplus V_\infty$ and the induced decomposition
\[ \delta =  \ _0 \delta_0 + \  _\infty\delta_0 +\ _0\delta_\infty +\  _\infty\delta_\infty \quad \in \quad \Sigma^1\otimes \Endi(V)  =  \bigoplus_{i,j\in \{\textbf{0},\infty\} } \ _i\Sigma_j \otimes \Hom(V_i,V_j)   \ . \]
We now formulate an additional hypothesis on the object $M\in \DD^b\Coh(Y)^T$, which will be necessary for the results of this section: we require that the $i=j=\infty$ component of the Maurer-Cartan equations on $\delta$ determined by $\Sigma_M$, in the sense of Equation \ref{MCeqn}, satisfies
\begin{equation}\label{framingconditioneqn}
	 \ _\infty \left(\sum_{t\in \bb N } m_t^{\mf{gl}_{\dd}(\Sigma_M)} (\delta^{\otimes t} ) \right)\ \!\!\!  _\infty \  = \sum_{t\in \bb N}  m_t^{\mf{gl}_{d_\infty}( \!\!\ _\infty\Sigma_\infty)}(( _\infty \delta_\infty)^{\otimes t})   \ \in \mf{gl}_{d_\infty}(\!\!\ _\infty\Sigma_\infty)  \ ,
\end{equation}
that is, the $(\infty,\infty)$ component of the Maurer-Cartan operator for $\Sigma_M$ evaluated on $\delta$ is equal to the Maurer-Cartan operator for $\Sigma_\infty$ evaluated on the $(\infty,\infty)$ component of $\delta$.

\begin{prop}\label{framingmapprop} Let $M\in \DD^b\Coh(Y)^T$ satisfy the condition of Equation \ref{framingconditioneqn}, in addition to the usual hypotheses introduced in Section \ref{Extoverviewsec}. Then there exists a canonical map of stacks
	\begin{equation*}
		\mf M(Q_M,R_M)  \to \mf M (Q_\infty, I_\infty) \quad\quad \text{defined by}\quad\quad  	(V_i\ , B_{ij})_{i,j\in R_M} \mapsto (V_\infty \ , B_{\infty\infty})  \ .
	\end{equation*}
\end{prop}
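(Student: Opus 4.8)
The plan is to verify directly that the proposed assignment is well-defined on $\K$ points and on families, and that the defining equations are respected, i.e.\ that the map really does land in $\mf M(Q_\infty, I_\infty)\subset \mf M(Q_\infty)$ rather than only in $\mf M(Q_\infty)$. First I would note that there is an obvious candidate morphism of stacks of representations of \emph{free} path algebras: from the description of $X_\dd(Q_M)$ as $\bigoplus_{i,j\in I_M}\ _i\Sigma^1_j\otimes \Hom(\bb K^{d_i},\bb K^{d_j})$ and $X_{d_\infty}(Q_\infty)=\ _\infty\Sigma^1_\infty\otimes \End(\bb K^{d_\infty})$, the projection onto the $(\infty,\infty)$ summand defines a $\Gl_{d_\infty}(S_\infty)$-equivariant linear map $X_\dd(Q_M)\to X_{d_\infty}(Q_\infty)$, covering the group homomorphism $G_\dd(Q_M)=\Gl_{\dd_0}(\bb K)\times \Gl_{d_\infty}(S_\infty)\to \Gl_{d_\infty}(S_\infty)$; equivalently, on geometric points this is $(V_i,B_{ij})\mapsto (V_\infty, B_{\infty\infty})$, since forgetting $V_\textbf{0}$ and the maps with a non-$\infty$ index is exactly restriction along the inclusion of the full extended subquiver $Q_\infty\hookrightarrow Q_M$. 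Passing to quotient stacks and taking the disjoint union over dimension vectors $\dd=(\dd_0,d_\infty)$ gives a well-defined map $\mf M(Q_M)\to \mf M(Q_\infty)$.

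Second, and this is the only substantive point, I would check that this map carries $\mf M(Q_M,R_M)$ into the closed substack $\mf M(Q_\infty,I_\infty)$, i.e.\ that if a representation $(V_i,B_{ij})$ satisfies the relations $R_M$ then its image $(V_\infty,B_{\infty\infty})$ satisfies the relations $I_\infty$. Recall from Proposition \ref{monadprop} (and its unframed precursors) that a representation of $Q_M$ lies in $\mf M(Q_M,R_M)$ iff the associated element $\delta\in \mf{gl}_\dd(\Sigma_M)[1]$ satisfies the Maurer--Cartan equation for $\Sigma_M$, and that the $(\infty,\infty)$-component of this equation reads
\[
\ _\infty\!\left(\sum_{t\in\bb N} m_t^{\mf{gl}_\dd(\Sigma_M)}(\delta^{\otimes t})\right)\!\!\ _\infty = 0 \ \in \mf{gl}_{d_\infty}(\!\!\ _\infty\Sigma_\infty)\ .
\]
The hypothesis of Equation \ref{framingconditioneqn} identifies the left-hand side with $\sum_{t\in\bb N} m_t^{\mf{gl}_{d_\infty}(\ _\infty\Sigma_\infty)}((\ _\infty\delta_\infty)^{\otimes t})$, which is precisely the Maurer--Cartan operator for $\Sigma_\infty$ evaluated on the element $\ _\infty\delta_\infty$ corresponding to the restricted representation $(V_\infty,B_{\infty\infty})$. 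Hence vanishing of the $(\infty,\infty)$-component of the $\Sigma_M$-equations is equivalent, under the hypothesis, to the Maurer--Cartan equation for $\Sigma_\infty$, which by the same Proposition \ref{monadprop} applied to the single-vertex extended quiver $Q_\infty$ (equivalently, by the characterization of $Z_{d_\infty}(Q_\infty,I_\infty)$ via the Koszul differential of $\Lambda_\infty$) is exactly the condition $B_{\infty\infty}\in Z_{d_\infty}(Q_\infty,I_\infty)$, i.e.\ $\varphi(I_\infty)=\{0\}$.

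Third, to promote this from geometric points to a map of stacks I would observe that the argument is entirely functorial: for a test scheme $T=\Spec R$, the identification of $R$-points of $\mf M_\dd(Q_M,R_M)$ in Lemma \ref{quivermoduliextlemma} presents an object as a collection of vector bundles $(V_i)_{i\in V_Q}$ over $T$ and $V_\infty$ over $T\times W$ together with a section $\varphi\in\bigoplus_{i,j\in I_M}\ _i\Sigma^1_j\otimes \Gamma(\Homi(V_i,V_j))$ satisfying the relations $R_M\otimes R$; the projection onto the $(\infty,\infty)$-summand produces $(V_\infty,\varphi_{\infty\infty})$, and the $(\infty,\infty)$-component of the relations $R_M\otimes R$, via the $R$-linear extension of Equation \ref{framingconditioneqn}, becomes the relations $I_\infty\otimes R$. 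Equivariance under the reductive group actions is clear since the $(\infty,\infty)$-block transforms only under the $\Gl_{d_\infty}(S_\infty)$-factor. I expect the main (and really the only) obstacle is bookkeeping: carefully matching the combinatorics of the matrix $A_\infty$-products $m_t^{\mf{gl}_\dd(\Sigma_M)}(\delta^{\otimes t})$, restricted to paths whose intermediate vertices may pass through $V_Q$, against the hypothesis of Equation \ref{framingconditioneqn} — in particular making precise that ``the $(\infty,\infty)$ component of the Maurer--Cartan operator'' includes all such mixed contributions, so that the hypothesis genuinely says those mixed contributions do not obstruct the restricted representation. Once that identification is in hand, well-definedness, equivariance, and functoriality are all immediate, and the canonicity of the map follows from the canonicity of the subquiver inclusion $Q_\infty\hookrightarrow Q_M$.
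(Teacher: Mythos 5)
Your proposal is correct and follows essentially the same route as the paper: project onto the $(\infty,\infty)$ summand to get an equivariant map $X_\dd(Q_M)\to X_{d_\infty}(Q_\infty)$ inducing a map of quotient stacks, then use that the left-hand side of Equation \ref{framingconditioneqn} is a generator of the ideal cutting out $Z_\dd(Q_M,R_M)$, so its equality with the right-hand side (which generates the ideal of $Z_{d_\infty}(Q_\infty,I_\infty)$) forces the image to land in the closed substack. Your additional functoriality check over a test scheme is harmless but not needed, since the $G_\dd(Q_M)$-equivariant morphism of affine varieties already induces the map of quotient stacks directly.
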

\begin{proof}
The projection map to the direct summand corresponding to $i=j=\infty$ defines a canonical map
\[ X_\dd(Q_M) = \bigoplus_{i,j \in I_\M}  \ _i \Sigma^1_j \otimes \Hom(\bb K^{d_i}, \bb K^{d_j})  \to  \ _\infty\Sigma_\infty^1\otimes \End(\bb K^{d_\infty} ) = X_{d_\infty}(Q_\infty)  \]
which evidently induces a map on quotient stacks
\[ \mf M_\dd(Q_M) \to \mf M_{d_\infty}(Q_\infty) \ .\]
It remains to check that the restriction of the projection map to $Z_\dd(Q_M,R_M) \subset X_\dd(Q_M)$ defines a map
\[ Z_\dd(Q_M,R_M)  \to Z_{d_\infty}(Q_\infty, I_\infty)  \quad\quad\text{and thus a map} \quad\quad  \mf M_\dd(Q_M,R_M) \to \mf M_{d_\infty}(Q_\infty, I_\infty) \ , \]
as desired, on the corresponding closed substacks of the above quotient stacks. The left hand side of Equation \ref{framingconditioneqn} is one of the generators of the defining ideal of the subvariety $Z_\dd(Q_M,R_M) \subset X_\dd(Q_M)$, and thus the equality with the right hand side, which generates the ideal defining $ Z_{d_\infty}(Q_\infty, I_\infty) \subset X_{d_\infty}(Q_\infty)$, implies the result.
\end{proof}

Under the hypotheses of the preceding proposition, we also have:
\begin{corollary} There exists a canonical map of stacks
	\begin{equation*}
		\mf M^\nil(Q_M,R_M) \to \mf M^\nil(Q_\infty, I_\infty) \quad\quad \text{defined by}\quad\quad  	(V_i\ , B_{ij})_{i,j\in I_M} \mapsto (V_\infty \ , B_{\infty\infty})  \ .
	\end{equation*}
\end{corollary}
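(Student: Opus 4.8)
The statement to be proved is the nilpotent analogue of Proposition \ref{framingmapprop}: under the same hypotheses on $M$ (the usual ones from Section \ref{Extoverviewsec} together with the compatibility condition of Equation \ref{framingconditioneqn}), the projection-to-$(\infty,\infty)$ assignment $(V_i,B_{ij})_{i,j\in I_M}\mapsto (V_\infty,B_{\infty\infty})$ defines a morphism of algebraic stacks $\mf M^\nil(Q_M,R_M)\to \mf M^\nil(Q_\infty,I_\infty)$.

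The plan is to factor the argument through Proposition \ref{framingmapprop}. First I would recall that $\mf M^\nil(Q_M,R_M)$ and $\mf M^\nil(Q_\infty,I_\infty)$ are by construction the closed substacks of $\mf M(Q_M,R_M)$ and $\mf M(Q_\infty,I_\infty)$ cut out by the nilpotency conditions on the path-algebra actions, via the closed subvarieties $Z^\nil_\dd(Q_M,R_M)=Z_\dd(Q_M,R_M)\times_{X_\dd(Q_M)}X_\dd^\nil(Q_M)$ and $Z^\nil_{d_\infty}(Q_\infty,I_\infty)=Z_{d_\infty}(Q_\infty,I_\infty)\times_{X_{d_\infty}(Q_\infty)}X^\nil_{d_\infty}(Q_\infty)$. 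So it suffices to show that the morphism of Proposition \ref{framingmapprop} carries the nilpotent locus into the nilpotent locus; equivalently, that the projection $\pi: X_\dd(Q_M)\to X_{d_\infty}(Q_\infty)$ restricts to a map $X_\dd^\nil(Q_M)\to X^\nil_{d_\infty}(Q_\infty)$, after which the conclusion follows by pulling back along the Cartesian squares and passing to quotient stacks by the evident equivariance (the projection $G_\dd(Q_M)\to \Gl_{d_\infty}(S_\infty)$ intertwines the two actions).

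The key step is therefore the linear-algebra observation: if $\varphi\in X_\dd(Q_M)$ has the property that $\varphi((\bb K Q_M)_{(n)})=\{0\}$ for $n\gg 0$ inside $\Endi(S_M^\dd)$, then its image $\pi(\varphi)\in X_{d_\infty}(Q_\infty)$ satisfies $\pi(\varphi)((\bb K Q_\infty)_{(n)})=\{0\}$ for $n\gg 0$ inside $\End(\bb K^{d_\infty})$. This is immediate from the fact that $\bb K Q_\infty$ is the subalgebra of $\bb K Q_M$ generated by the loop(s) at the vertex $\infty$ — a path of length $n$ in $Q_\infty$ is in particular a path of length $n$ in $Q_M$ with source and target $\infty$ — so that $\pi(\varphi)$ evaluated on a length-$n$ path in $Q_\infty$ is the $(\infty,\infty)$-block of $\varphi$ evaluated on the same path, which vanishes once $n$ exceeds the nilpotency bound for $\varphi$. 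Invariance of $X^\nil_\dd(Q_M)$ and $X^\nil_{d_\infty}(Q_\infty)$ under the respective structure groups is built into their definitions, and the compatibility of the relations is exactly what Proposition \ref{framingmapprop} already provides, so no new verification of the ideal condition is needed here.

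I do not expect a genuine obstacle; the only point requiring a little care is bookkeeping the identification $X_\dd(Q_M)\times_{X_\dd(Q_M)}(\cdot)$ in the Cartesian squares so that the pullback of the nilpotent locus is literally $X^\nil_\dd(Q_M)\cap Z_\dd(Q_M,R_M)=Z^\nil_\dd(Q_M,R_M)$, and similarly on the $Q_\infty$ side, together with checking that the resulting map of quotient stacks is the claimed one on $\bb K$-points, namely $(V_i,B_{ij})\mapsto(V_\infty,B_{\infty\infty})$. This is routine, and the proof can simply be recorded as: the map of Proposition \ref{framingmapprop} sends the nilpotent locus to the nilpotent locus because $\bb K Q_\infty\subset \bb K Q_M$ and the $(\infty,\infty)$-block of a map killing all sufficiently long paths in $Q_M$ kills all sufficiently long paths in $Q_\infty$; restricting along the Cartesian squares and taking quotient stacks gives the claim.
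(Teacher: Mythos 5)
Your proposal is correct and follows the same route as the paper: the paper's proof simply observes that the projection $X_\dd(Q_M)\to X_{d_\infty}(Q_\infty)$ evidently carries $X^\nil_\dd(Q_M)$ into $X^\nil_{d_\infty}(Q_\infty)$ and then invokes the proof of Proposition \ref{framingmapprop}, which is exactly your factorization. Your path-containment argument (paths in $Q_\infty$ are paths in $Q_M$ beginning and ending at $\infty$, so nilpotency is inherited by the $(\infty,\infty)$-block) just makes explicit what the paper leaves as "evident."
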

\begin{proof} The projection map $ X_\dd(Q_M) \to  X_{d_\infty}(Q_\infty)$ evidently maps $ X^\nil_\dd(Q_M) $ to $X^\nil_{d_\infty}(Q_\infty)$, and the result follows readily from the proof of Proposition \ref{framingmapprop}.
\end{proof}

We have the following equivalent geometric form of the corollary:
\begin{corollary}\label{geoframingmapcoro} There exists a canonical map of stacks
	\begin{equation*}
		 \mf M(Y,M) \to \mf M_{\Filt(M)}
	\end{equation*}
defined by forgetting the compactly supported composition factors and remembering only the underlying iterated extension of $M$.
\end{corollary}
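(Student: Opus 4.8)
The plan is to recognize that the geometric statement of Corollary \ref{geoframingmapcoro} is simply the transport of the preceding corollary across the equivalences of algebraic stacks established in Theorem \ref{stackthm} and Corollary \ref{framingstackcompcoro}. Concretely, I would argue as follows. By Theorem \ref{stackthm} we have an equivalence $\mf M^\nil(Q_M,R_M) \xrightarrow{\cong} \mf M(Y,M)$, and the single-vertex extended subquiver $Q_\infty$ with relations $I_\infty$ has $\mf M^\nil(Q_\infty,I_\infty)\cong \mf M_{\Filt(M)}$ by Corollary \ref{framingstackcompcoro}, using the tautological identification of Remark \ref{trivunframermk} together with the equivalence of hearts from Corollary \ref{heartinfthm} and the moduli-of-objects functor's naturality. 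Therefore the map of stacks from the preceding corollary, $\mf M^\nil(Q_M,R_M) \to \mf M^\nil(Q_\infty,I_\infty)$ given on $\bb K$-points by $(V_i,B_{ij})\mapsto (V_\infty,B_{\infty\infty})$, transports to a canonical map $\mf M(Y,M) \to \mf M_{\Filt(M)}$; this proves the existence asserted in Corollary \ref{geoframingmapcoro}.

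The second point is to identify this transported map with the stated operation of ``forgetting the compactly supported composition factors and remembering only the underlying iterated extension of $M$.'' For this I would unwind the explicit form of the equivalences on $\bb K$-points. Under Theorem \ref{stackthm}, a representation $(V_i,B_{ij})_{i,j\in I_M}$ corresponds to the object $H$ with $K$-theory class determined by multiplicities $d_i$ of $F_i$ for $i\in I$ and multiplicity $d_\infty$ of $M$, realized by the monad complex $\tilde H = \bigoplus_{i\in I_M} K(I_i)\otimes_{S_i} V_i$ with differential $d_B$. Passing to the subrepresentation $(V_\infty, B_{\infty\infty})$ amounts to discarding all composition factors $F_i$ (the compactly supported simple perverse coherent sheaves) and keeping only the $d_\infty$ copies of $M$ assembled via the self-extension data $B_{\infty\infty}\in {}_\infty\Sigma^1_\infty\otimes\End(V_\infty)$; by Corollary \ref{framingstackcompcoro} and Corollary \ref{heartinfthm} this is precisely the object of $\Filt(M)$ given by the corresponding iterated extension of $M$ recorded by $(V_\infty,B_{\infty\infty})$. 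So on $\bb K$-points the transported map sends an object $H\in \Perv^\p_M(\hat Y)$, presented as an iterated extension with compactly supported factors $F_i$ and factors $M$, to the underlying iterated extension of $M$ obtained by killing the $F_i$-factors, which is exactly the claimed description. One should note here the mild subtlety flagged in the introduction: the object ``the underlying iterated extension of $M$'' is canonically defined precisely because we are working with $M$ satisfying the hypotheses of Section \ref{Extoverviewsec}, so that $\filt(F\oplus M)$ and $\filt(M)$ behave as required.

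The only genuine content beyond bookkeeping is that the map is \emph{well-defined at the level of the full stack}, not merely on $\bb K$-points, and compatible with the moduli-of-objects formalism of Definition \ref{basecatdefn}. This is handled by the corollary preceding \ref{geoframingmapcoro}, whose proof rests on the condition of Equation \ref{framingconditioneqn}: the $(\infty,\infty)$ component of the Maurer--Cartan/critical-locus equations for $\Sigma_M$ agrees with the Maurer--Cartan equations for $\Sigma_\infty$, so the projection $X_\dd(Q_M)\to X_{d_\infty}(Q_\infty)$ carries $Z_\dd(Q_M,R_M)$ into $Z_{d_\infty}(Q_\infty,I_\infty)$ and descends to the quotient stacks and their nilpotent substacks. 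I would therefore spend essentially no effort re-proving this and simply invoke it; the main (very mild) obstacle is to verify carefully that the chain of identifications of Theorem \ref{stackthm}, Corollary \ref{framingstackcompcoro}, and the naturality of $\mf M_{(-)}$ fit together into a genuinely commuting square, so that the transported arrow really is the forgetful operation and not a twist thereof — but this is immediate from commutativity of the diagram in Equation \ref{Kozgeoinfcompeqn} together with the induced equivalences of hearts. Hence the statement follows.
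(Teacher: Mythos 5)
Your proposal is correct and follows essentially the same route as the paper: the corollary is obtained by transporting the map $(V_i,B_{ij})\mapsto (V_\infty,B_{\infty\infty})$ of the preceding corollary across the identifications of Theorem \ref{stackthm} and Corollary \ref{framingstackcompcoro}, with well-definedness of the "forget the compactly supported factors" operation guaranteed by Proposition \ref{framingmapprop} via the condition of Equation \ref{framingconditioneqn}. Your additional unwinding on $\bb K$-points is consistent with the paper's (much terser) argument and adds nothing that conflicts with it.
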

\begin{proof}
This is equivalent to the statement of the preceding corollary, under the identifications of Theorem \ref{stackthm} and Corollary \ref{framingstackcompcoro}. Note that while the operation on complexes of coherent sheaves defining the map is evidently not well-defined in general, the geometric content of Proposition \ref{framingmapprop} above is that this operation is well defined under the given hypotheses.
\end{proof}

\begin{defn} A \emph{framing structure} for $M$ of rank $d_\infty$ is a $\bb K$-point
	\[ \f \in Z_{d_\infty}(Q_\infty, I_\infty)(\bb K) \ . \]
\end{defn}

Given a framing structure $\f$ for $M$ of rank $d_\infty$, we define the stack $\mc F_\f$ over $\mf M_{d_\infty}(Q_\infty, I_\infty)$ by
\[ \mc F_\f = \left[ \{ \f\} /  G_\f \right]\cong \left[ \bb O_\f / \Gl_{d_\infty}(S_\infty)\right] \ \to \  \mf M_{d_\infty}(Q_\infty, I_\infty) \ , \]
where $ G_\f = \textup{Stab}_{\Gl_{d_\infty}(S_\infty)}(\f)$ denotes the subgroup of $\Gl_{d_\infty}(S_\infty)$ stabilizing the point $\f$, and $\bb O_\f\subset Z_{d_\infty}(Q_\infty,I_\infty)$ denotes the orbit of $\f$ under $\Gl_{d_\infty}(S_\infty)$. Similarly, we define the stack $\mc F^\cn_\f$ over $\mf M_{d_\infty}(Q_\infty, I_\infty)$ by
\[ \mc F^\cn_\f = \left[ \{ \f\} /  G^\cn_\f \right]\cong \left[ \bb O^\cn_\f / \Gl_{d_\infty}(\bb K)\right] \ \to \  \mf M_{d_\infty}(Q_\infty, I_\infty) \ , \]
where $ G^\cn_\f = \textup{Stab}_{\Gl_{d_\infty}(\bb K)}(\f)$ denotes the subgroup of $\Gl_{d_\infty}(\bb K)$ stabilizing the point $\f$, and $\bb O^\cn_\f\subset Z_\dd(Q_M,R_M)$ denotes the orbit of $\f$ under $\Gl_{d_\infty}(\bb K)$.

Moreover, for any $\dd=(d_i)\in \bb N^{V_Q}$, we define the stack of $\mc F_\f$-framed representations of $Q_M$ of dimension $\dd$ by
\begin{equation*}
\mf M^{\mc F_\f}_\dd(Q_M, R_M) = \mf M_{(\dd,d_\infty)}(Q_M,R_M) \times_{ \mf M_{d_\infty}(Q_\infty,I_\infty)} \mc F_\f
\end{equation*}
and define the full moduli stack of $\mc F_\f$-framed representations of $Q_M$ by
\[  \mf M^{\mc F_\f}(Q_M, R_M)  = \bigsqcup_{\dd\in \bb N^{V_Q}} \mf M^{\mc F_\f}_\dd (Q_M,R_M)  \ , \]
as usual. Similarly, we define the stack of $\mc F_\f^\cn$-framed representations of $Q_M$ by
\begin{equation*}\hspace*{-1cm}
 \mf M^{\mc F^\cn_\f}(Q_M, R_M)  = \bigsqcup_{\dd\in \bb N^{V_Q}} \mf M^{\mc F^\cn_\f}_\dd (Q_M,R_M) \quad\quad \text{where}\quad\quad  \mf M^{\mc F^\cn_\f}_\dd(Q_M, R_M) = \mf M_{(\dd,d_\infty)}(Q_M,R_M) \times_{ \mf M_{d_\infty}(Q_\infty,I_\infty)} \mc F^\cn_\f \ .
\end{equation*}

 More explicitly, we introduce the space
\begin{align*}
	 X^\f_\dd(Q_M) & = X_\dd(Q_M) \times_{X_{d_\infty}(Q_\infty)} \{\f\}  \\ \nonumber
	 &  \cong \bigoplus_{i,j \in V_Q}  \ _i \Sigma^1_j \otimes \Hom(\bb K^{d_i}, \bb K^{d_j})  \oplus \bigoplus_{i \in V_Q} \ _i\Sigma^1_\infty \otimes \Hom(\bb K^{d_i},\bb K^{d_\infty})  \oplus  \bigoplus_{j \in V_Q} \ _\infty\Sigma^1_j \otimes \Hom(\bb K^{d_\infty}, \bb K^{d_j}) 
\end{align*}
as well as the closed subvariety $Z^\f_\dd(Q_M,R_M) \subset X^\f_\dd(Q_M)$, defined by
\[ Z^\f_\dd(Q_M,R_M) = Z_\dd(Q_M,R_M) \times_{Z_{d_\infty}(Q_\infty,I_\infty)} \{\f\} \ . \]
Then the moduli stacks of $\mc F_{\f}$-framed and $\mc F_f^\cn$-framed representations of $Q_M$ of dimension $\dd$ are given by
\[ \hspace*{-1cm} \mf M^{\mc F_\f}_\dd(Q_M, R_M) =  \left[ Z^\f_\dd(Q_M,R_M) / (G_\dd(Q_Y) \times G_\f ) \right] \quad\quad \text{and}\quad\quad  \mf M^{\mc F^\cn_\f}_\dd(Q_M, R_M) =  \left[ Z^\f_\dd(Q_M,R_M) / (G_\dd(Q_Y) \times G_\f^\cn ) \right]   \ , \]
respectively.

Finally, we define the stack of $\f$-framed representations of $Q_M$ by
\begin{equation*}
\mf M^{\f}(Q_M, R_M)  = \bigsqcup_{\dd\in \bb N^{V_Q}} \mf M^{\f}_\dd (Q_M,R_M) \quad\quad \text{where}\quad\quad	\mf M^{\f}_\dd(Q_M, R_M) = \mf M_{(\dd,d_\infty)}(Q_M,R_M) \times_{ \mf M_{d_\infty}(Q_\infty,I_\infty)} \{ \f \}
\end{equation*} and the analogous concrete description is simply given by
\[	\mf M^{\f}_\dd(Q_M, R_M)=  \left[ Z^\f_\dd(Q_M,R_M) / G_\dd(Q_Y)  \right] \ . \]

We can also define the corresponding equivalent geometric moduli stacks
\begin{align}
\mf M^{\f}(Y,M) &  = \bigsqcup_{\dd \in \bb N^{V_Q}} \mf M_\dd^{\f}(Y,M) &\textup{where}\quad \mf M_\dd^{\f}(Y,M) & = \mf M_{(\dd,d_\infty)}(Y,M) \times_{\mf M_{\Filt(M),d_\infty}}  \{ \f \}   &  \textup{,} \label{fframedstackeqn}\\
\mf M^{\mc F^\cn_\f}(Y,M) & = \bigsqcup_{\dd \in \bb N^{V_Q}} \mf M_\dd^{\mc F^\cn_\f}(Y,M) &\textup{where}\quad \mf M_\dd^{\mc F^\cn_\f}(Y,M) & = \mf M_{(\dd,d_\infty)}(Y,M) \times_{\mf M_{\Filt(M),d_\infty}} \mc F^\cn_\f  & \quad\quad\textup{,} \label{Fcframedstackeqn}\\
\mf M^{\mc F_\f}(Y,M) & = \bigsqcup_{\dd \in \bb N^{V_Q}}  \mf M_\dd^{\mc F_\f}(Y,M) &\textup{where}\quad \mf M_\dd^{\mc F_\f}(Y,M) & = \mf M_{(\dd,d_\infty)}(Y,M) \times_{\mf M_{\Filt(M),d_\infty}} \mc F_\f  & \textup{.} \label{Fframedstackeqn}
\end{align}
We can now give the framed analogue of Theorem \ref{stackthm}, which establishes a special case of Conjecture 10.3.5 a) in \cite{KS7}:
 
\begin{theo}\label{stackpotthm} Let $M\in \Perv^\p(Y)^T$ as in Theorem \ref{stackthm} and let $\f$ be a framing structure for $M$. There is a canonical framed quiver with potential $(Q_M^\f,W_M^\f)$ and an equivalence of algebraic stacks
\begin{equation}
\mf M(Q_M^\f,W_M^\f) \xrightarrow{\cong} \mf M^{\f}(Y,M) \ ,
\end{equation}
where the induced equivalence of groupoids of $\bb K$ points is defined on objects by
	\begin{equation*}\hspace*{-1cm}
	(V_i, B_{ij})   \mapsto \left(	\tilde H:=  \bigoplus_{i\in I_M}K(I_i)\otimes_{S_i}  V_i \ ,\ d_B:= \sum_{k\in \bb Z,\  i,i_2,...,i_{k-1},j \in I_M}  K (\rho_k^{\Sigma\oplus \Sigma_\infty}(\cdot, b_{i,i_2},...,b_{i_{k-1}, j})^N)\otimes  ( B_{i,i_2}  ...  B_{i_{k-1}, j} )  \right)  \ ,
\end{equation*}
in the notation of Section \ref{monadextsec}.
\end{theo}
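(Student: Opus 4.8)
The plan is to deduce Theorem \ref{stackpotthm} from Theorem \ref{stackthm} by identifying the $\f$-framed stack $\mf M^\f(Y,M)$, which by definition (Equation \ref{fframedstackeqn}) is the fibre of the forgetful map of Corollary \ref{geoframingmapcoro} over the point $\f$, with the stack of representations of a modified framed quiver with potential. Under the equivalence of Theorem \ref{stackthm}, we have $\mf M^\f_\dd(Y,M) \cong Z^\f_\dd(Q_M,R_M)/G_\dd(Q_Y)$ where $Z^\f_\dd(Q_M,R_M) = Z_\dd(Q_M,R_M)\times_{Z_{d_\infty}(Q_\infty,I_\infty)}\{\f\}$, so the whole content is to exhibit this as $\mf M(Q_M^\f,W_M^\f)$ for an appropriate framed quiver with potential.

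First I would construct the quiver $Q_M^\f$: its vertex set is $V_Q$, its edges are those of $Q_M$ with source or target in $V_Q$ (the loops $\ _\infty(E_{Q_M})_\infty$ at the framing vertex are discarded as dynamical data), together with the marked arrow $\bullet$ carrying the fixed nilpotent datum $\f\in\ _\infty\Sigma_\infty^1\otimes\End(\K^{d_\infty})$ — this is exactly the pattern of Equations \ref{CCDSeqn} and \ref{Nekeqn} in the introduction. Thus $X_\dd(Q_M^\f) = X_\dd^\f(Q_M)$ tautologically, and $G_\dd(Q_M^\f) = G_\dd(Q_Y)$. Next I would produce the potential $W_M^\f$: starting from the canonical potential $W_M$ for $(Q_M,R_M)$ given by Example \ref{geoquiveg2} applied to $\Sigma_M$, one substitutes the fixed value $\f$ for the discarded loop variables $\ _\infty(\cdot)_\infty$ wherever they occur in cyclic words of $W_M$. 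The key point, which I would verify, is that the critical locus equations $\partial W_M^\f = 0$ reproduce exactly the equations cutting out $Z^\f_\dd(Q_M,R_M)$ inside $X^\f_\dd(Q_M)$: for the unmarked arrows these are the $(i,j)$-components with $i,j\in V_Q$ and the mixed $(i,\infty)$, $(\infty,j)$ components of the Maurer-Cartan equation for $\Sigma_M$ evaluated on $\delta$ with $\ _\infty\delta_\infty$ fixed to $\f$, while the $(\infty,\infty)$ component — which is automatically satisfied by the framing condition of Equation \ref{framingconditioneqn} precisely because $\f\in Z_{d_\infty}(Q_\infty,I_\infty)$ — corresponds to no equation since there is no $\bullet$-free arrow to differentiate against. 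Here I would need to keep careful track of the fact that $\f$ being a marked arrow means we only differentiate $W_M^\f$ with respect to the dynamical (unmarked) arrows; the relations obtained this way match the generators of $R_M$ restricted to the framed locus, exactly as in Proposition \ref{framingmapprop}.

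Having established $\mf M(Q_M^\f,W_M^\f) = \mf M^\f(Q_M,R_M)$ as stacks, the equivalence with $\mf M^\f(Y,M)$ is then immediate: it is the base change along $\{\f\}\to \mf M_{d_\infty}(Q_\infty,I_\infty)$ of the equivalence $\mf M(Q_M,R_M)\xrightarrow{\cong}\mf M(Y,M)$ of Theorem \ref{stackthm}, using Corollary \ref{framingstackcompcoro} to identify the bottom corner of the fibre product on the geometric side with $\mf M_{\Filt(M)}$ and hence $\mf M^\f(Y,M)$ with $\mf M_\dd(Y,M)\times_{\mf M_{\Filt(M),d_\infty}}\{\f\}$. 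The formula for the induced equivalence on $\K$-points is inherited verbatim from Theorem \ref{stackthm}, since the framing simply fixes $B_{\infty\infty}=\f$ and restricts the remaining linear-algebraic data; the monad $(\tilde H, d_B)$ is unchanged in form, now with $b_{\infty\infty}$ contributing the fixed maps between the Koszul resolutions via $\Ext^1(M,M)$.

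I expect the main obstacle to be verifying that the critical locus equations of $W_M^\f$ coincide \emph{on the nose}, not merely up to some choice of basis or sign, with the defining equations of $Z^\f_\dd(Q_M,R_M)$ — in particular checking that substituting the fixed $\f$ into the cyclic-word potential $W_M$ and then taking partial derivatives commutes correctly with the Maurer-Cartan / quiver-relation correspondence established in Proposition \ref{monadunextAinfprop}, and that the $\bullet$-marked arrow convention (fixed data that modifies equations on the unmarked arrows, as described after Equation \ref{CCDSeqn}) is implemented consistently. This is essentially a bookkeeping argument combining Example \ref{geoquiveg2}, the proof of Proposition \ref{monadprop}, and the framing condition \ref{framingconditioneqn}, but it requires care with the non-commutative Poisson bracket and the cyclic invariance of $\K Q_{M,\cyc}$; a secondary subtlety is confirming that when $\Sigma_\infty$ has higher $A_\infty$ products the substituted potential $W_M^\f$ is still cyclic and that $\{W_M^\f, W_M^\f\} = 0$, which follows from the master equation for $W_M$ together with Equation \ref{framingconditioneqn}.
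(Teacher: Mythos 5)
Your proposal is correct and follows essentially the same route as the paper's proof: reduce via Theorem \ref{stackthm} and Corollary \ref{framingstackcompcoro} to identifying $\mf M^{\f}(Q_M,R_M)$ with $\mf M(Q_M^\f,W_M^\f)$, define $Q_M^\f$ by deleting the framing-to-framing loops so that $X_\dd(Q_M^\f)\cong X^\f_\dd(Q_M)$, obtain $W_M^\f$ by substituting the fixed decomposition of $\f$ into the canonical cyclic potential of Example \ref{geoquiveg2} for $\Sigma_M$, and check that its critical locus reproduces the Maurer-Cartan equations specialized at $\f$, i.e.\ $Z^\f_\dd(Q_M,R_M)=\textup{crit}(W^\f_{M,\dd})$, with the $\K$-point formula inherited from Theorem \ref{stackthm}. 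Your explicit observation that the $(\infty,\infty)$ component of the Maurer-Cartan equation is not imposed by any derivative but holds automatically by the framing hypothesis of Equation \ref{framingconditioneqn} is exactly the point the paper treats implicitly when it says the marked variables induce the specialization at $\f\in Z_{d_\infty}(Q_\infty,I_\infty)$.
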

 \begin{proof} By Theorem \ref{stackthm} and Corollary \ref{framingstackcompcoro}, the desired result is equivalent to an equivalence of algebraic $\bb K$-stacks
$ \mf M(Q_M^\f,W_M^\f) \xrightarrow{\cong} \mf M^{\f}(Q_M, R_M)$
 which we now construct.
 
 The underlying quiver $Q_M^\f$ is essentially the same as the extended quiver $Q_M$, but with the extended node corresponding to the object $M$ considered as a framing node, and the edges from the extended node to itself excluded from the edge set; more formally, we define the set of internal vertices by $V_{Q_M^\f}=V_{Q_Y}$, and define the $\bb K$-linear span of the edge set to be
 \[	\bb K\langle E_{Q_M^\f}\rangle :=  \bigoplus_{i,j \in V_Q}  \ _i \Sigma_j^\vee[-1]  \oplus \bigoplus_{i \in V_Q}\  _i\Sigma_\infty^\vee[-1]  \oplus  \bigoplus_{j \in V_Q} \ _\infty\Sigma_j^\vee[-1]  \ .\]
 
 Note that we evidently have an identification of vector spaces
 \[ X_\dd(Q_M^\f):=\Hom_{S\Bimod}(\bb K\langle E_{Q_M^\f}\rangle, \Endi(\bb K^\dd) ) \cong  X^\f_\dd(Q_M) \ . \]
 
 \noindent Thus, it remains to check that the closed subvariety $Z^\f_\dd(Q_M,R_M) \subset X^\f_\dd(Q_M)$ is given by the critical locus of a potential. In fact, the potential is given by a slight variant of the formula given in Example \ref{geoquiveg2}, as follows: we fix a decomposition
 \[ \f = \sum_{\alpha}  f_\alpha \otimes F_\alpha \ \in\   _\infty \Sigma^1_\infty \otimes \End(\bb K^{d_\infty}) \]
 in terms of which the potential (which is evidently independent of this choice of decomposition) is given by
 \[	 W_M^\f= \sum_{n\geq 1} \sum_{a_1,...,a_{n+1} \in E_{Q_M^\f}\cup \{f_\alpha \} } \langle m_n^{\Sigma_M}(a_1^\vee,...,a_n^\vee), a_{n+1}^\vee \rangle a_1 \cdot ... \cdot a_{n+1}  \ , \]
 where we note that $E_{Q_M^\f}\cup \{f_\alpha\}\subset E_{Q_M}$, so that their duals define elements on which the $\Ainf$ multiplication $m_n^{\Sigma_M}:\Sigma_M^{\otimes n} \to \Sigma_M[2-n]$ can be evaluated. We then interpret the variables $a_i$ corresponding to edges $a_i\in E_{Q_M^\f}\subset E_{Q_M}$ as the usual defining variables in the potential, while the variables $a_i=f_\alpha\in\  _\infty \Sigma^1_\infty$ are defined to evaluate to the fixed linear maps $F_\alpha$ when defining the induced potential $W^\f_{M,\dd}:X^\f_\dd(Q_M)\to \bb K$ for any $\dd\in \bb N^{V_Q}$. In analogy with Example \ref{geoquiveg2}, the critical locus equations induce the Maurer-Cartan equations for $\Sigma_M$, with the modification of the variables corresponding to $\{\f_\alpha\}$ inducing the specialization at $\f \in Z_{d_\infty}(Q_\infty, I_\infty)$, so that we have that \[Z^\f_\dd(Q_M,R_M)=\textup{crit}(W^\f_{M,\dd}) \subset X^\f_\dd(Q_M) \ ,\] as desired.
 \end{proof}

Finally, we give a brief explanation of the geometric interpretation of these stacks. By Corollary \ref{geoframingmapcoro}, we have the following modular descriptions of the spaces of $\bb K$ points:
\begin{itemize}
	\item For $\mf M^{\f}(Y,M)$, a perverse coherent extension equipped with an isomorphism from the underlying iterated extension of $M$ to that determined by $\f$.
	\item For $\mf M_\dd^{\mc F^\cn_\f}(Y,M)$, a perverse coherent extension equipped with an isomorphism from the underlying iterated extension of $M$ to a complex of sheaves of the form $(K(I_\infty)\otimes_\bb K W,\  d)$ where $W$ is a $\bb K$ vector space of dimension $d_\infty$ and $d$ an auxiliary differential which is conjugate under $\Gl(W)$ to the differential determined by $f$.
	\item For $\mf M_\dd^{\mc F_\f}(Y,M)$, a perverse coherent extension such that the underlying iterated extension of $M$ is isomorphic to that determined by $\f$.
\end{itemize}

We now explain the distinction between these variants in the simplest examples, though we note that several more involved examples are given in Section \ref{framedexamplessec} below. We fix the dimension vector to be $\dd= \bf 0 \in \bb N^{V_Q}$, so that by Corollary \ref{framingstackcompcoro}, we have
\[ \mf M_{(\textbf{0}, d_\infty)}(Y,M) \cong \mf M_{\Filt(M),d_\infty}\ , \]
that is, the (unframed) moduli of perverse coherent extensions is given by the moduli stack of objects in the full subcategory on objects admitting filtration with subquotients isomorphic to $M$ of total multiplicity $d_\infty$. In turn, this implies that the framed moduli spaces are simply given by
\[ \mf M_\textbf{0}^{\f}(Y,M) \cong \{ \f \}  \ , \quad\quad \mf M_\textbf{0}^{\mc F^\cn_\f}(Y,M) \cong \mc F^\cn_\f  \ , \text{ and}  \quad\quad \mf M_\textbf{0}^{\mc F_\f}(Y,M) \cong  \mc F_\f  \ .  \]

We now explain these isomorphisms explicitly in some special cases:

\begin{eg}\label{DTframingeg} Let the framing object be $M=\mc O_{\hat Y}$ the structure sheaf of $\hat Y$. Since the structure sheaf does not admit any non-trivial extensions with itself, the only possible framing structure is given by $\f=0$, and $\mf M_{(\bf{0}, d_\infty)}(Y,M)$ is equivalent to the moduli stack of objects which are isomorphic to $\mc O_{\hat Y}^{\oplus d_\infty}$. Thus, in this case, we have that $\mf M^{\mc F_\f}_{(\bf{0}, d_\infty)}(Y,M)\cong \mf M_{(\bf{0}, d_\infty)}(Y,M)$ is given by the moduli stack of sheaves which are isomorphic to $\mc O_{\hat Y}^{\oplus d_\infty}$, which is evidently equivalent to $\mc F_\f= \left[ \pt / \Gl_{d_\infty}(\mc O_{\hat Y})\right]$.
	
Similarly, we have that $\mf M_\dd^{\mc F^\cn_\f}(Y,M)$ is the moduli stack of sheaves equipped with an isomorphism to $\mc O_{\hat Y}\otimes_\bb K W$ for $W$ a $\bb K$ vector space of dimension $d_\infty$, which is evidently equivalent to \[ \mc F_\f^\cn=\left[ \pt/ \Gl_{d_\infty}(\bb K)\right]  \ . \]

Finally, we have that $\mf M_\dd^{\f}(Y,M)$ is the moduli stack of sheaves equipped with an isomorphism to $\mc O_{\hat Y}^{\oplus d}$, which is evidently equivalent to just the single point $\{ \mc O_{\hat Y}^{\oplus d_\infty}\}$ corresponding to $\f=\{0\}$.
\end{eg}

\subsection{Examples}\label{framedexamplessec} In this section, we explain several examples of the construction of Theorem \ref{stackpotthm} and their relationships with previously known constructions of moduli spaces of coherent sheaves in terms of quivers.

\begin{eg}\label{pervcohsyseg} The most basic family of examples of perverse coherent extensions is the case when the extending object is given by $M=\mc O_Y[1]$, the structure sheaf of the threefold shifted down in cohomological degree by one. As we explain in Section \ref{DTsec} below, in this case the definition of a perverse coherent extension is essentially equivalent to that of a perverse coherent system in the sense of Nagao-Nakajima \cite{NN}, and the resulting quivers generalize those studied in \emph{loc. cit.} in the case of the resolved conifold following \cite{Sz1}.
	
Indeed, for $M=\mc O_Y[1]$, we have
\[ _\infty\Sigma_\infty^1=0 \quad\quad \ _\infty \Sigma^1_j=0 \quad\quad \ _i \Sigma^1_\infty = \begin{cases} \bb K & \textup{if $i=0$} \\ \{0\} & \textup{otherwise} \end{cases} \]
independent of the threefold $Y$. Thus, the extended quivers are given by adding a single node and a single arrow from that node to the zeroth node of the quiver $Q_Y$. In the cases of Examples \ref{a3eg}, \ref{o2eg}, and \ref{o1eg}, the extended quivers $Q_M$ are thus given by
\[
\begin{tikzcd}
	\mathcircled{M}\arrow[r, "I"] &
	\mathcircled{F_0}\arrow[out=340,in=20,loop,swap,"B_3"]
	\arrow[out=220,in=260,loop,swap,"B_2"]
	\arrow[out=100,in=140,loop,swap,"\normalsize{B_1}"]
\end{tikzcd}
\quad\quad
	\begin{tikzcd}
		\mathcircled{M}\arrow[d, "I"] \\ 
	\arrow[out=160,in=200,loop,swap,"E"] \mathcircled{F_0} \arrow[r, bend left=25 ] \arrow[r, bend left=40 ,  "A\ C"]  & \arrow[l, bend left=25 ] \arrow[l, bend left=40 ,  "B\ D"] \mathcircled{F_1}\arrow[out=340,in=20,loop,swap,"F"]
\end{tikzcd}
\quad\quad
\begin{tikzcd}
			\mathcircled{M}\arrow[d, "I"] \\ 
	\mathcircled{F_0} \arrow[r, bend left=25 ] \arrow[r, bend left=40 ,  "A\ C"]  & \arrow[l, bend left=25 ] \arrow[l, bend left=40 ,  "B\ D"]  \mathcircled{F_1}
\end{tikzcd}
\ ,
\]
respectively. Since $_\infty\Sigma_\infty^1=0$, for each $d_\infty\in \bb N$ is only a single framing stucture $\f=0\in Z_{d_\infty}(Q_\infty,I_\infty)=\{ 0 \}$, and the corresponding framed quivers with potential $(Q_M^\f,W_M^\f)$ are given by

\begin{equation}\label{DTquiverseqn}
	\begin{tikzcd}
	\boxed{V_\infty}\arrow[r, "I"] &
	\mathcircled{V}\arrow[out=340,in=20,loop,swap,"B_3"]
	\arrow[out=220,in=260,loop,swap,"B_2"]
	\arrow[out=100,in=140,loop,swap,"\normalsize{B_1}"]
\end{tikzcd}
\quad\quad
\begin{tikzcd}
	\boxed{V_\infty}\arrow[d, "I"] \\ 
	\arrow[out=160,in=200,loop,swap,"E"] \mathcircled{V_0} \arrow[r, bend left=25 ] \arrow[r, bend left=40 ,  "A\ C"]  & \arrow[l, bend left=25 ] \arrow[l, bend left=40 ,  "B\ D"] \mathcircled{V_1}\arrow[out=340,in=20,loop,swap,"F"]
\end{tikzcd}
\quad\quad
\begin{tikzcd}
	\boxed{V_\infty}\arrow[d, "I"] \\ 
	\mathcircled{V_0} \arrow[r, bend left=25 ] \arrow[r, bend left=40 ,  "A\ C"]  & \arrow[l, bend left=25 ] \arrow[l, bend left=40 ,  "B\ D"]  \mathcircled{V_1}
\end{tikzcd}
\ ,
\end{equation}

\noindent together with the same potentials as in Examples \ref{a3eg}, \ref{o2eg}, and \ref{o1eg}.

The $\Sigma$-module $I_\infty=S_0$ is given by a single copy of the simple module $S_0$ in degree $1$, and thus the corresponding projective resolution is the trivial one $K(I_\infty)=\mc O_Y[1]=M$. The representative $i:K(I_\infty)\to K(I_0)$ of $\ _0 \Sigma^1_\infty=\bb K$ is induced by the constant map $\mc O\to \mc O$; for example, in the case of the resolved conifold, following Example \ref{o1eg}, it is given by
\[\xymatrix{ & & & \mc O \ar[d]^{1} \\ \mc{O} \ar[r] & \mc{O}(1)^2 \ar[r] & \mc{O}(1)^2 \ar[r] & \mc{O} &} \ . \]
Thus, the monad presentation for perverse coherent systems induced by Proposition \ref{monadprop} is given by modifying that for compactly supported perverse coherent sheaves by a single summand of the form $\mc O\otimes V_\infty$ in cohomological degree $-1$ and a single additional auxiliary differential from degree $-1$ to degree $0$ given by $I$; for example, in the case of the resolved conifold, following Example \ref{o1eg}, it is given by
	\begin{equation}\label{conifoldmonadeqn}
	\xymatrix @R=-.3pc @C=-1.5pc{ & {\scriptsize{\begin{pmatrix}  1 & -B \\ z & -D \\ -A & x \\ - C & y \end{pmatrix}}} & && {\scriptsize{\begin{pmatrix}zy-DC & CB-y & 0 & zB-D \\ D A - zx & x-BA & D-zB & 0 \\ 0 & yA-xC & yz - CD & AD-xz \\ xC-yA & 0 & CB-y & x-AB  \\ 0 & 0 & 0 & 0\end{pmatrix}}} && && \scriptsize{\begin{pmatrix} x & y & B & D & I \\ A & C & 1 & z & 0 \end{pmatrix}}&\\
		\mc O \otimes V_0   & & \mc O(1)^2 \otimes V_0  && &&\mc O(1)^2 \otimes V_0 &&  &&\mc O  \otimes V_0 	\\
		\oplus&  \longrightarrow & \oplus && \longrightarrow && \oplus && \longrightarrow && \oplus \\
		\\
		\mc O(1) \otimes V_1  & & \mc O^2 \otimes V_1 &&& &    \mc O^2 \otimes V_1 &&&& \mc O(1) \otimes V_1 \\ 
				&   &  &&  && \oplus &&  &&  \\
	& & && && \mc O \otimes V_\infty } \ .
\end{equation}
\end{eg}

\begin{eg}\label{ADHMeg} Let $Y=X=\bb C^3=\Spec \C[x,y,z]$ and let $M=\mc O_{\bb C^2}[1]$ the shifted structure sheaf of the coordinate subspace $\bb C^2_{xy} \subset \bb C^3$. Then we have
\[ _\infty \Sigma^1_0=\bb K \quad\quad \ _0 \Sigma^1_\infty =\bb K \quad\quad _\infty\Sigma_\infty^1=S_\infty \]
so that the extended quiver $Q_M$ is given by
\[ Q_M = \quad 
\begin{tikzcd}
	\mathcircled{M} 	\arrow[out=160,in=200,loop,swap, "A"]  \arrow[r, shift left=0.5ex, "I"] & \arrow[l, shift left=0.5ex, "J"]
	\mathcircled{F_0}\arrow[out=340,in=20,loop,swap,"B_3"]
	\arrow[out=220,in=260,loop,swap,"B_2"]
	\arrow[out=100,in=140,loop,swap,"\normalsize{B_1}"]
\end{tikzcd} \ . \]
The equivalence classes of constant framing structures $\f$ on $M$ of rank $d_\infty$ are given by conjugacy classes of matrices in $A_\f\in \mf{gl}_{d_\infty}$, and the corresponding framed quiver with potential is given by
\begin{equation}\label{ADHM3deqn}
	 Q_M^\f =	\quad
\begin{tikzcd}\boxed{V_\infty} 	\arrow[out=160,in=200,loop,swap,"\bullet" marking, "A_\f"]  \arrow[r, shift left=0.5ex, "I"] & \arrow[l, shift left=0.5ex, "J"]
\mathcircled{V}\arrow[out=340,in=20,loop,swap,"B_3"]
\arrow[out=220,in=260,loop,swap,"B_2"]
\arrow[out=100,in=140,loop,swap,"\normalsize{B_1}"] \end{tikzcd}
 \quad\quad\text{and}\quad\quad   W_M^\f = B_1[B_2,B_3] + B_3I J - IA_\f J   \ .  
\end{equation}
In particular, taking $A_\f=0$, we recover the standard quiver with potential giving the `tripled' or `three dimensional' variant of the ADHM construction, which corresponds under dimensional reduction to the usual ADHM quiver. More generally, $(Q_M^\f,W_M^\f)$ gives the quiver with potential studied in \cite{CCDS}. These relationships are discussed further in Section \ref{} below.

The $\Sigma$ module $I_\infty$ corresponding to $\mc O_{\bb C^2}[1]$ is given by
\[ I_\infty = \left[ S_0\langle 1 \rangle < S_0\langle 2 \rangle \right] \quad\quad \text{so that}\quad\quad K(I_\infty) = \left[ \mc O[2] \xrightarrow{z} \mc O[1] \right ] \xrightarrow{\cong} \mc O_{\bb C^2}[1] \ , \]
and we can choose representatives of the auxiliary elements of $\Sigma^1$ according to
\begin{eqnarray*}
	i\in  \ _0 \Sigma^1_\infty \quad \mapsto\quad & 
			\xymatrixcolsep{3pc}
\xymatrixrowsep{3pc}
\xymatrix{ &  & \mc{O}\ar[d]^{\scriptsize{\begin{pmatrix}0\\0\\ 1\end{pmatrix}}} \ar[r] &  \mc{O}\ar[d]^{\scriptsize{1}} \\
	\mc{O} \ar[r] & \mc{O}^3 \ar[r] &  \mc{O}^3 \ar[r] & \mc{O}} \\
	j \in \ _\infty \Sigma^1_0 \quad \mapsto\quad &
\xymatrixcolsep{3pc}
\xymatrixrowsep{3pc}
\xymatrix{ \mc{O} \ar[d]^{-1} \ar[r] & \mc{O}^3 \ar[d]^{\scriptsize{\begin{pmatrix}0&0&1 \end{pmatrix}}} \ar[r] &  \mc{O}^3\ar[r] & \mc{O}  \\
	\mc{O} \ar[r] & \mc{O}   && } \\
	a\in \ _\infty\Sigma^1_\infty \quad \mapsto\quad &
\xymatrixcolsep{3pc}
\xymatrixrowsep{3pc}
\xymatrix{ &  & \mc{O}\ar[d]^{\scriptsize{1}} \ar[r] &  \mc{O} \\
	& \mc{O} \ar[r] &  \mc{O} & }
\end{eqnarray*}
so that the monad presentation induced by Proposition \ref{monadprop} is given by
	\begin{equation}
	\xymatrixcolsep{-1pc}
\xymatrixrowsep{-.3pc}\xymatrix{
	&{\scriptsize{\begin{pmatrix}B_1-x\\ y-B_2\\ B_3-z \\ -J\end{pmatrix}}} &&& {\scriptsize{\begin{pmatrix}0&B_3-z&B_2-y & 0 \\ B_3-z &0&x-B_1 & 0\\ y-B_2& x-B_1& 0 & I \\ 0 & 0 & J & A_\f \end{pmatrix}}} &&& {\scriptsize{\begin{pmatrix}x-B_1&y-B_2&z-B_3 & I \end{pmatrix}}}\\
	&&& \mc{O}^3\otimes  V &&& \mc{O}^3 \otimes V &  \\ 
	\mc{O}\otimes V  &\longrightarrow & & \oplus  &\longrightarrow&&   \oplus  &\longrightarrow&&&& \mc{O}\otimes V \\ &&& \mc O\otimes V_\infty &&& \mc O\otimes V_\infty } \ .
\end{equation}
\end{eg}

\begin{eg}\label{spikedeg} Generalizing the previous example, let $Y=X=\bb C^3=\Spec \C[x,y,z]$, with $M=\mc O_{\bb C^2_{xy}}\oplus \mc O_{\bb C^2_{xz}}\oplus \mc O_{\bb C^2_{yz}}[1]$, and consider the simplest framing structure $\f=0$. Then the framed quiver with potential $(Q_M^\f,W_M^\f)$ is given by
\[ Q_M^\f =	\quad
\begin{tikzcd}
	& & \boxed{V_\infty^2} \ar[dl, shift left=0.5ex, "I_2"] \\
	\boxed{V_\infty^3} \arrow[r, shift left=0.5ex, "I_3"] & \arrow[l, shift left=0.5ex, "J_3"]
	\mathcircled{V}\arrow[out=340,in=20,loop,swap,"B_3"]
	\arrow[out=220,in=260,loop,swap,"B_2"]
	\arrow[out=100,in=140,loop,swap,"\normalsize{B_1}"]  \ar[ur, shift left=0.5ex, "J_2"] \ar[dr, shift left=0.5ex, "J_1"] \\
	& & \boxed{V_\infty^1} \ar[ul, shift left=0.5ex, "I_1"]
\end{tikzcd}
\quad\quad\text{and}\quad\quad   W_M^\f = B_1[B_2,B_3] + \sum_{k=1}^3 B_k I_k J_k \ , \]
the quiver introduced in \cite{NekP} to describe the moduli space of spiked instantons. Thus, the moduli stack of perverse coherent extensions provides a model for this moduli space defined purely in terms of algebraic geometry.

There is also an interesting space of framing structures in this family of examples. With $V_\infty^1=\{0\}$ for simplicity, a framing structure $\f$ is determined by a tuple of linear maps $A_\f=(A_2,A_3,A_2^3,A_3^2)$, and the corresponding framed quiver with potential is given by
\begin{equation}\label{spikedfrmeqn}
	 Q_M^\f = \quad 
\begin{tikzcd}
	\boxed{V_\infty^3} \arrow[out=115,in=155,loop, "\bullet" marking, swap,"A_3"]  \arrow[rd, bend left, "I_3"]  \arrow[rr,  "\bullet" marking, bend left, "A_2^3"]   & & \arrow["\bullet" marking]{ll}[swap]{A_3^2}  \arrow[ld, bend left,  "I_{2}"] \arrow[out=25,in=65,loop, "\bullet" marking, swap,"A_{2}"]   \boxed{V_\infty^2}  \\
	& \arrow[lu, bend left, "J_3"] \arrow[ru, bend left, "J_2"]  \mathcircled{V_0}\arrow[out=310,in=350,loop,swap,"B_3"]
	\arrow[out=190,in=230,loop,swap,"B_1"]
	\arrow[out=250,in=290,loop,swap,"\normalsize{B_2}"]&
\end{tikzcd} \quad\quad \text{and}
\end{equation}
\[ W_M^\f =B_1[B_2,B_3] + B_3I_{3}J_3  + B_2 I_{2} J_2 - I_{3}A_{3}J_3 - I_{2}A_{2}J_2 + I_{2}A_{2}^{3} J_{3}  + I_{3}A_{3}^{2} J_{2}  \ .\]
The monad presentations for these generalizations can be computed analogously.
\end{eg}

\begin{eg}\label{KNeg}	Let $Y=|\mc O_{\bb P^1}\oplus \mc O_{\bb P^1}(-2)| \to X= \Spec \C[x_1,x_2,x_3,x_4]/(x_1x_2-x_3^2)$, following Example \ref{o2eg}, and let $M=\mc O_{W}[1]$ for $W=|\mc O_{\bb P^1}(-2)|$. Then we have
	\[ _0 \Sigma^1_\infty= \bb K \quad\quad  \ _\infty\Sigma^1_0=\bb K \quad\quad _1 \Sigma^1_\infty= \{0\} \quad\quad  \ _\infty\Sigma^1_1=\{0\} \quad\quad \ _\infty\Sigma^1_\infty = S_\infty  \]
so that the extended quiver $Q_M$ is given by
\[Q_M = \quad \begin{tikzcd}
\mathcircled{M}\arrow[d,shift left=0.5ex, "I"] \arrow[out=160,in=200,loop,swap,"G"] \\ 
\arrow[out=160,in=200,loop,swap,"E"] \ar[u,shift left=0.5ex, "J"] \mathcircled{F_0} \arrow[r, bend left=25 ] \arrow[r, bend left=40 ,  "A\ C"]  & \arrow[l, bend left=25 ] \arrow[l, bend left=40 ,  "B\ D"] \mathcircled{F_1}\arrow[out=340,in=20,loop,swap,"F"]
\end{tikzcd} \ . \]
Again, a constant framing structure $\f$ is given by an endomorphism $G_\f$, and the resulting framed quiver with potential $(Q_M^\f,W_M^\f)$ is given by
\[ \hspace*{-1cm} Q_M^\f = \quad \begin{tikzcd}
	\boxed{V_\infty}\arrow[d,shift left=0.5ex, "I"] \arrow[out=160,in=200,loop, "\bullet" marking,swap,"G_\f"] \\ 
	\arrow[out=160,in=200,loop,swap,"E"] \ar[u,shift left=0.5ex, "J"] \mathcircled{V_0} \arrow[r, bend left=25 ] \arrow[r, bend left=40 ,  "A\ C"]  & \arrow[l, bend left=25 ] \arrow[l, bend left=40 ,  "B\ D"] \mathcircled{V_1}\arrow[out=340,in=20,loop,swap,"F"]
\end{tikzcd} \quad\quad \text{and}\quad\quad W_M^\f =E(BC-DA)+ F(AD-CB) + EIJ - IG_\f J  \ . \]
In particular, for $G_\f=0$ the resulting quiver is the framed and tripled, in the sense of Ginzburg \cite{Ginz}, affine type $\textup{A}_1$ Dynkin quiver, and corresponds under dimensional reduction to the framed and doubled, in the sense of Nakajima \cite{Nak1}, affine type $\textup{A}_1$ Dynkin quiver
\[ \begin{tikzcd}
	\boxed{V_\infty}\arrow[d,shift left=0.5ex, "I"] \\ 
	\ar[u,shift left=0.5ex, "J"] \mathcircled{V_0} \arrow[r, bend left=25 ] \arrow[r, bend left=40 ,  "A\ C"]  & \arrow[l, bend left=25 ] \arrow[l, bend left=40 ,  "B\ D"] \mathcircled{V_1}
\end{tikzcd} \quad\quad \text{with relations}\quad\quad \begin{cases} BC-DA + IJ & =0 \\ AD-CB &  = 0 \end{cases} \ , \]
which describes the moduli space of instantons on $|\mc O_{\bb P^1}(-2)|$, the resolved $\textup{A}_1$ singularity, by the results of Kronheimer-Nakajima \cite{KrNak}.

The $\Sigma$ module $I_\infty$ corresponding to $M=\mc O_{|\mc O_{\bb P^1}(-2)|}[1]$ is given by
\[ I_\infty = \left[ S_0\langle 1 \rangle < S_0\langle 2 \rangle \right] \quad\quad \text{so that}\quad\quad K(I_\infty)=\left[ \mc O[2] \xrightarrow{y} \mc O[1] \right] \xrightarrow{\cong} \mc O_{|\mc O_{\bb P^1}(-2)|}[1] \]
and we can choose representatives of the auxiliary elements of $\Sigma^1$ according to
\begin{eqnarray*}
	i\in  \ _0 \Sigma^1_\infty \quad \mapsto\quad & 
	\xymatrixcolsep{3pc}
	\xymatrixrowsep{3pc}
	\xymatrix{ &  & \mc{O}\ar[d]^{\scriptsize{\begin{pmatrix}1\\0\\ 0\end{pmatrix}}} \ar[r] &  \mc{O}\ar[d]^{\scriptsize{1}} \\
			\mc{O} \ar[r] & \mc O\oplus \mc{O}(1)^2 \ar[r] &  \mc{O}\oplus \mc{O}(1)^2 \ar[r] & \mc{O} &} \\
	j \in \ _\infty \Sigma^1_0 \quad \mapsto\quad &
	\xymatrixcolsep{3pc}
	\xymatrixrowsep{3pc}
	\xymatrix{ 	\mc{O} \ar[r] \ar[d]^{1} & \mc O\oplus \mc{O}(1)^2 \ar[r] \ar[d]^{\scriptsize{\begin{pmatrix}1&0&0 \end{pmatrix}}} &  \mc{O}\oplus \mc{O}(1)^2 \ar[r] & \mc{O} \\	\mc{O} \ar[r] & \mc{O}   && } \\
	g\in \ _\infty\Sigma^1_\infty \quad \mapsto\quad &
	\xymatrixcolsep{3pc}
	\xymatrixrowsep{3pc}
	\xymatrix{ &  & \mc{O}\ar[d]^{\scriptsize{1}} \ar[r] &  \mc{O} \\
		& \mc{O} \ar[r] &  \mc{O} & }
\end{eqnarray*}
so that the monad presentation induced by Proposition \ref{monadprop} is given by
\begin{equation*}
\hspace*{-1cm}	\xymatrix @R=-.3pc @C=-2.2pc{ & {\scriptsize{\begin{pmatrix} y-E & 0 \\ 1 & B \\ z & D \\ 0 & y-F \\ A & x \\ C & xz \\ J & 0\end{pmatrix}}} & && {\scriptsize{\begin{pmatrix} 0 & xz & -x & 0 & D & -B & I \\ -z & 0 & y-E & -D & 0 & 0 & 0 \\ 1 & E-y & 0 & B & 0 & 0 & 0 \\ 0 & C & -A & 0 & z & -1 & 0 \\ -C & 0 & 0 & -xz & 0 & y-F & 0 \\ A & 0 & 0 & x & F-y& 0 & 0 \\  J & 0 & 0 & 0 & 0 & 0 & G_\f\end{pmatrix}}} && && \scriptsize{\begin{pmatrix} y-E & x & xz & 0 & B & D & I\\ 0 & A & C & y-F & 1 & z & 0 \end{pmatrix}}&\\
		  & &( \mc O \oplus \mc O(1)^2) \otimes V_0  && && (\mc O \oplus \mc O(1)^2) \otimes V_0 &&  &&	\\
		\mc O \otimes V_0 &   & \oplus &&  && \oplus &&  && \mc O  \otimes V_0 \\
		\\
		\oplus &\longrightarrow & (\mc O(1)\oplus \mc O^2) \otimes V_1 && \longrightarrow & &   (\mc O(1)\oplus \mc O^2) \otimes V_1 && \longrightarrow &&\oplus \\
\mc O(1) \otimes V_1 	&   & \oplus &&  && \oplus &&  &&  \mc O(1) \otimes V_1  \\
	& & \mc O\otimes V_\infty && && \mc O\otimes V_\infty
}  \ .
\end{equation*}
\end{eg}

\begin{eg}\label{Beilinsoneg} Let $Y=|\mc O_{\bb P^1}\oplus \mc O_{\bb P^1}(-2)| \to X= \Spec \C[x_1,x_2,x_3,x_4]/(x_1x_2-x_3^2)$, following Example \ref{o2eg}, and let $M=\mc O_{W}[1]$ for $W=|\mc O_{\bb P^1}|$. Then we have
	\[ _0 \Sigma^1_\infty= \bb K \quad\quad  \ _\infty\Sigma^1_0=\{0\} \quad\quad _1 \Sigma^1_\infty= \{0\} \quad\quad  \ _\infty\Sigma^1_1=\bb K^2 \quad\quad \ _\infty\Sigma^1_\infty = \{0\}  \]
	so that the extended quiver $Q_M$ is given by
	\[Q_M = \quad \begin{tikzcd}
		\mathcircled{M}\arrow[d,shift left=0.5ex, "I"] \\ 
		\arrow[out=160,in=200,loop,swap,"E"] \mathcircled{F_0} \arrow[r, bend left=25 ] \arrow[r, bend left=40 ,  "A\ C"]  & \arrow[l, bend left=25 ] \arrow[l, bend left=40 ,  "B\ D"] \mathcircled{F_1}\arrow[out=340,in=20,loop,swap,"F"] \ar[ul,shift left=1ex, bend right=40]\ar[ul, bend right=40, swap,"J_1 \ J_2"]    
	\end{tikzcd} \ . \]
The only framing structure is the trivial one $\f=0$ since $ \ _\infty\Sigma^1_\infty = \{0\}$, and the corresponding framed quiver with potential $(Q_M^\f,W_M^\f)$ is given by
	\[ \hspace*{-1cm}Q_M^\f = \quad \begin{tikzcd}
	\boxed{V_\infty}\arrow[d,shift left=0.5ex, "I"] \\ 
	\arrow[out=160,in=200,loop,swap,"E"] \mathcircled{V_0} \arrow[r, bend left=25 ] \arrow[r, bend left=40 ,  "A\ C"]  & \arrow[l, bend left=25 ] \arrow[l, bend left=40 ,  "B\ D"] \mathcircled{V_1}\arrow[out=340,in=20,loop,swap,"F"] \ar[ul,shift left=1ex, bend right=40]\ar[ul, bend right=40, swap,"J_1 \ J_2"]    
\end{tikzcd}  \quad\quad \textup{and}\quad\quad W_M^\f =E(BC-DA)+ F(AD-CB) + IJ_1A + IJ_2C  \ . \]
This quiver with potential corresponds under dimensional reduction to a framed variant of the product of the standard Beilinson quiver for $\bb P^1$ with the Jordan quiver, given by
	\[  \begin{tikzcd}
	\boxed{V_\infty}\arrow[d,shift left=0.5ex, "I"] \\ 
	\arrow[out=160,in=200,loop,swap,"E"] \mathcircled{V_0}  & \arrow[l,swap, "B\ D"] \arrow[l,shift left=1ex] \mathcircled{V_1}\arrow[out=340,in=20,loop,swap,"F"] \ar[ul,shift left=1ex, bend right=40]\ar[ul, bend right=40, swap,"J_1 \ J_2"]  
\end{tikzcd} \quad\quad \text{with relations}\quad\quad \begin{cases} EB-BF + I J_2 & =0 \\ DF-ED + I J_1 &  = 0 \end{cases} \ . \]

The $\Sigma$ module $I_\infty$ corresponding to $M=\mc O_{|\mc O_{\bb P^1}|}[1]$ is given by
\[\hspace*{-1cm} I_\infty = \left[ S_0\langle 1 \rangle< S_1^{\oplus 2} \langle 2 \rangle < S_0\langle 3 \rangle \right] \quad\quad \text{so that}\quad\quad K(I_\infty)=\left[ \mc O[3] \xrightarrow{\scriptsize{\begin{pmatrix}-z \\ 1 \end{pmatrix}}} \mc O(1)^2[2] \xrightarrow{\scriptsize{\begin{pmatrix}x& xz\end{pmatrix}}} \mc O[1] \right] \xrightarrow{\cong} \mc O_{|\mc O_{\bb P^1}(-2)|}[1] \]
and we can choose representatives of the auxiliary elements of $\Sigma^1$ according to
\begin{eqnarray*}
	i\in  \ _0 \Sigma^1_\infty \quad \mapsto\quad & 
	\xymatrixcolsep{3pc}
	\xymatrixrowsep{3pc}
	\xymatrix{ &   \mc O \ar[r]\ar[d]^{\scriptsize{\begin{pmatrix}1\\0\\ 0\end{pmatrix}}} & \mc{O}(1)^2 \ar[r] \ar[d]^{\scriptsize{\begin{pmatrix}0 & 0\\ 1 & 0\\ 0 & 1\end{pmatrix}}}&  \mc{O}\ar[d]^{\scriptsize{1}} \\
		\mc{O} \ar[r] & \mc O\oplus \mc{O}(1)^2 \ar[r] &  \mc{O}\oplus \mc{O}(1)^2 \ar[r] & \mc{O} &} \\
	j_1 \in \ _\infty \Sigma^1_1 \quad \mapsto\quad &
	\xymatrixcolsep{3pc}
	\xymatrixrowsep{3pc}
	\xymatrix{ &  \mc{O}(1) \ar[r] \ar[d]^{\scriptsize{\begin{pmatrix}1 \\ 0 \end{pmatrix}}} & \mc O(1)\oplus \mc{O}^2 \ar[r]  \ar[d]^{\scriptsize{\begin{pmatrix}0&1&0 \end{pmatrix}}} &  \mc{O}(1)\oplus \mc{O}^2 \ar[r] & \mc{O}(1) & \\
	\mc O \ar[r] & \mc O(1)^2 \ar[r] & \mc O 		 } \\
	j_2 \in \ _\infty \Sigma^1_1 \quad \mapsto\quad &
\xymatrixcolsep{3pc}
\xymatrixrowsep{3pc}
\xymatrix{ &  \mc{O}(1) \ar[r] \ar[d]^{\scriptsize{\begin{pmatrix}0 \\ 1 \end{pmatrix}}} & \mc O(1)\oplus \mc{O}^2 \ar[r]  \ar[d]^{\scriptsize{\begin{pmatrix}0&0&1 \end{pmatrix}}} &  \mc{O}(1)\oplus \mc{O}^2 \ar[r] & \mc{O}(1) & \\
	\mc O \ar[r] & \mc O(1)^2 \ar[r] & \mc O 		 }
\end{eqnarray*}
so that the monad presentation induced by Proposition \ref{monadprop} is given by
\begin{equation*}
	\hspace*{-1cm}	\xymatrix @R=-.3pc @C=-2.2pc{ & {\scriptsize{\begin{pmatrix} y-E & 0 & I\\ 1 & B & 0 \\ z & D & 0 \\ 0 & y-F & 0 \\ A & x & 0 \\ C & xz & 0 \\ 0 & J_1 & 0 \\ 0 & J_2 & 0 \end{pmatrix}}} & && {\scriptsize{\begin{pmatrix} 0 & xz & -x & 0 & D & -B & 0 & 0 \\ -z & 0 & y-E & -D & 0 & 0 & I & 0 \\ 1 & E-y & 0 & B & 0 & 0 & 0 & I \\ 0 & C & -A & 0 & z & -1 & 0 & 0 \\ -C & 0 & 0 & -xz & 0 & y-F & 0 & 0 \\ A & 0 & 0 & x & F-y& 0 & 0  & 0\\  0 & 0 & 0 & 0 & J_1 & J_2 & 0 & 0\end{pmatrix}}} && && \scriptsize{\begin{pmatrix} y-E & x & xz & 0 & B & D & I\\ 0 & A & C & y-F & 1 & z & 0 \end{pmatrix}}&\\
	\mc O \otimes V_0	& &( \mc O \oplus \mc O(1)^2) \otimes V_0  && && (\mc O \oplus \mc O(1)^2) \otimes V_0 &&  &&	\\
		 \oplus&   & \oplus &&  && \oplus &&  && \mc O  \otimes V_0 \\
		\\
	\mc O(1) \otimes V_1 	 &\longrightarrow & (\mc O(1)\oplus \mc O^2) \otimes V_1 && \longrightarrow & &   (\mc O(1)\oplus \mc O^2) \otimes V_1 && \longrightarrow &&\oplus \\
		\oplus	&   & \oplus &&  && \oplus &&  &&  \mc O(1) \otimes V_1  \\
	\mc O\otimes V_\infty	& & \mc O(1)^2\otimes V_\infty  && && \mc O\otimes V_\infty
	}  \ .
\end{equation*}
\end{eg}

\begin{eg}\label{prechainsaweg}
	Let $Y=|\mc O_{\bb P^1}\oplus \mc O_{\bb P^1}(-2)| \to X= \Spec \C[x_1,x_2,x_3,x_4]/(x_1x_2-x_3^2)$, following Example \ref{o2eg}, and let $M=\mc O_{W}[1]$ for $W=\mc O_{\bb P^1}|_{z=0}\oplus \mc O_{\bb P^1}(-2)|_{z=0}\cong \bb A^2$. Then we have	
	\[ _0 \Sigma^1_\infty= 0 \quad\quad  \ _\infty\Sigma^1_0=\bb K \quad\quad _1 \Sigma^1_\infty= \bb K \quad\quad  \ _\infty\Sigma^1_1=\{0\} \quad\quad \ _\infty\Sigma^1_\infty = S_\infty  \]
	so that the extended quiver $Q_M$ is given by
\[Q_M = \quad \begin{tikzcd}
	\arrow[out=160,in=200,loop,swap,"E"] \mathcircled{F_0} \ar[d,"J"] \arrow[r, bend left=25 ] \arrow[r, bend left=40 ,  "A\ C"]  & \arrow[l, bend left=25 ] \arrow[l, bend left=40 ,  "B\ D"] \mathcircled{F_1}\arrow[out=340,in=20,loop,swap,"F"]   \\
	\mathcircled{M}\arrow[ur, bend right=40, "I"] 	\arrow[out=160,in=200,loop,swap,"G"] 
\end{tikzcd} \ . \]

\noindent For the trivial framing structure $G_\f=0$, the corresponding framed quiver with potential is given by
\[Q_M^\f = \quad \begin{tikzcd}
	\arrow[out=160,in=200,loop,swap,"E"] \mathcircled{V_0} \ar[d,"J"] \arrow[r, bend left=25 ] \arrow[r, bend left=40 ,  "A\ C"]  & \arrow[l, bend left=25 ] \arrow[l, bend left=40 ,  "B\ D"] \mathcircled{V_1}\arrow[out=340,in=20,loop,swap,"F"]   \\
	\boxed{V_\infty}\arrow[ur, bend right=40, "I"] 
\end{tikzcd} \quad\quad \textup{and}\quad\quad W_M^\f =E(BC-DA)+ F(AD-CB) + IJD  \ . \]
This quiver with potential corresponds under dimensional reduction to the principal chainsaw quiver of \cite{KanT} for $\rho=\rho_\textup{prin}$ a principal nilpotent, given by
\[  \begin{tikzcd}
 \arrow[out=160,in=200,loop,swap,"E"] \mathcircled{V_0} \ar[d,"J"] \arrow[r, bend left=25, "A" ] & \arrow[l, bend left=25, "B" ]  \mathcircled{V_1}\arrow[out=340,in=20,loop,swap,"F"]   \\	
\boxed{V_\infty}\arrow[ur, bend right=40, "I"] 
\end{tikzcd}  \quad\quad \text{with relations}\quad\quad \begin{cases}  FA-AE + IJ &  = 0 \\ EB-BF & =0  \end{cases} \ . \]

The $\Sigma$ module $I_\infty$ corresponding to $M=\mc O_{W}[1]$ is given by
\[\hspace*{-1cm} I_\infty = \left[ S_0\langle 1 \rangle< S_1 \langle 2 \rangle \right] \quad\quad \text{so that}\quad\quad K(I_\infty)=\left[ \mc O[2] \xrightarrow{\scriptsize{z}} \mc O(1)[1] \right] \xrightarrow{\cong} \mc O_{W}[1] \]
and we can choose representatives of the auxiliary elements of $\Sigma^1$ according to
\begin{eqnarray*}
	i\in  \ _1 \Sigma^1_\infty \quad \mapsto\quad & 
	\xymatrixcolsep{3pc}
	\xymatrixrowsep{3pc}
	\xymatrix{ &   & \mc{O}\ar[r] \ar[d]^{\scriptsize{\begin{pmatrix}0 \\0 \\ 1\end{pmatrix}}}&  \mc{O}(1) \ar[d]^{\scriptsize{1}} \\
		\mc{O}(1) \ar[r] & \mc O(1)\oplus \mc{O}^2 \ar[r] &  \mc{O}(1)\oplus \mc{O}^2 \ar[r] & \mc{O}(1)  &} \\
	j \in \ _\infty \Sigma^1_0 \quad \mapsto\quad &
	\xymatrixcolsep{3pc}
	\xymatrixrowsep{3pc}
	\xymatrix{   \mc{O} \ar[r] \ar[d]^{\scriptsize{1}} & \mc O\oplus \mc{O}(1)^2 \ar[r]  \ar[d]^{\scriptsize{\begin{pmatrix}0&0&1 \end{pmatrix}}} &  \mc{O}\oplus \mc{O}(1)^2 \ar[r] & \mc{O}& \\
	 \mc O\ar[r] & \mc O(1)  } 
\end{eqnarray*}
so that the monad presentation induced by Proposition \ref{monadprop} is given by
\begin{equation*}
	\hspace*{-1cm}	\xymatrix @R=-.3pc @C=-2.2pc{ & {\scriptsize{\begin{pmatrix} y-E & 0 \\ 1 & B \\ z & D \\ 0 & y-F \\ A & x \\ C & xz \\ J & 0\end{pmatrix}}} & && {\scriptsize{\begin{pmatrix} 0 & xz & -x & 0 & D & -B & 0 \\ -z & 0 & y-E & -D & 0 & 0 & 0 \\ 1 & E-y & 0 & B & 0 & 0 & 0 \\ 0 & C & -A & 0 & z & -1 & 0 \\ -C & 0 & 0 & -xz & 0 & y-F & 0 \\ A & 0 & 0 & x & F-y& 0 & I \\  0 & 0 & J & 0 & 0 & 0 & 0 \end{pmatrix}}} && && \scriptsize{\begin{pmatrix} y-E & x & xz & 0 & B & D & 0\\ 0 & A & C & y-F & 1 & z & I\end{pmatrix}}&\\
		& &( \mc O \oplus \mc O(1)^2) \otimes V_0  && && (\mc O \oplus \mc O(1)^2) \otimes V_0 &&  &&	\\
		\mc O \otimes V_0 &   & \oplus &&  && \oplus &&  && \mc O  \otimes V_0 \\
		\\
		\oplus &\longrightarrow & (\mc O(1)\oplus \mc O^2) \otimes V_1 && \longrightarrow & &   (\mc O(1)\oplus \mc O^2) \otimes V_1 && \longrightarrow &&\oplus \\
		\mc O(1) \otimes V_1 	&   & \oplus &&  && \oplus &&  &&  \mc O(1) \otimes V_1  \\
		& & \mc O\otimes V_\infty && && \mc O(1)\otimes V_\infty
	}  \ .
\end{equation*}
\end{eg}

\begin{eg}\label{chainsaweg}	Let $Y=|\mc O_{\bb P^1}\oplus \mc O_{\bb P^1}(-2)| \to X= \Spec \C[x_1,x_2,x_3,x_4]/(x_1x_2-x_3^2)$, following Example \ref{o2eg}, and let $M=\mc O_{W_0}[1]\oplus \mc O_{W_1}[1]$ for $W_0=\mc O_{\bb P^1}|_{z=0}\oplus \mc O_{\bb P^1}(-2)|_{z=0}\cong \bb A^2$ and $W_1=|\mc O_{\bb P^1}|$. Then generalizing the previous two examples, extended quiver is given by
\[Q_M = \quad \begin{tikzcd}
		\mathcircled{M_1}\arrow[d,shift left=0.5ex, "I"] \ar[dd,shift left=-2.7ex, out=220,in=140,swap, "K" ] \\ 
		\arrow[out=160,in=200,loop,shift right =-1ex,looseness=3,swap,"E"] \mathcircled{F_0}\ar[d,"J_0"]  \arrow[r, bend left=25 ] \arrow[r, bend left=40 ,  "A\ C"]  & \arrow[l, bend left=25 ] \arrow[l, bend left=40 ,  "B\ D"] \mathcircled{F_1}\arrow[out=340,in=20,loop,swap,"F"] \ar[ul,shift left=1ex, bend right=40]\ar[ul, bend right=40, swap,"J_1 \ J_2"]   \\
			\mathcircled{M_0}\arrow[ur, bend right=40,swap, "I_0"] 	\arrow[out=160,in=200,loop,swap,"G"]  \ar[uu,shift left=1.7ex, out=140,in=220, swap, "H" ]
	\end{tikzcd} \ . \]

In particular, consider the framing structure $\f$ given by $G=H=0$ but with $K_\f$ potentially non-trivial. Then the corresponding framed quiver with potential is given by
 \begin{equation}\label{chainsawwpotsl2eqn}
 	 Q_M^\f = \quad \begin{tikzcd}
	\boxed{V_{\infty_1}}\arrow[d,shift left=0.5ex, "I"] \ar[dd,shift left=-1.7ex, out=220,in=140,swap, "\bullet" marking, "K" ] \\ 
	\arrow[out=160,in=200,loop,shift right =-1ex,looseness=3,swap,"E"] \mathcircled{V_0}\ar[d,"J_0"]  \arrow[r, bend left=25 ] \arrow[r, bend left=40 ,  "A\ C"]  & \arrow[l, bend left=25 ] \arrow[l, bend left=40 ,  "B\ D"] \mathcircled{V_1}\arrow[out=340,in=20,loop,swap,"F"] \ar[ul,shift left=1ex, bend right=40]\ar[ul, bend right=40, swap,"J_1 \ J_2"]   \\
	\boxed{V_{\infty_0}}\arrow[ur, bend right=40, swap,"I_0"] 
\end{tikzcd} \quad\quad \textup{and}\quad\quad \begin{array}{l} W_M^\f = E(BC-DA)+ F(AD-CB)  \\ \quad\quad\quad\quad\quad\quad + IJ_1A + IJ_2C  + I_0J_0D+I_0 K J_1 \end{array} \ . 
 \end{equation}
This quiver with potential corresponds under dimensional reduction to the quiver
\[ \hspace*{-1cm} \begin{tikzcd}
	\boxed{V_{\infty_1}}\arrow[d,shift left=0.5ex, "I"] \ar[dd,shift left=-1.7ex, out=220,in=140,swap, "\bullet" marking, "K" ] \\ 
	\arrow[out=160,in=200,loop,shift right =-1ex,looseness=3,swap,"E"] \mathcircled{V_0}\ar[d,"J_0"]   \arrow[r, bend left=25 , "A"]  & \arrow[l, bend left=25 ,  "B"] \mathcircled{V_1}\arrow[out=340,in=20,loop,swap,"F"] \ar[ul, bend right=40, swap,"J_2"]   \\
	\boxed{V_{\infty_0}}\arrow[ur, bend right=40, swap,"I_0"] 
\end{tikzcd} \quad\quad \textup{with relations}\quad\quad \begin{cases}  FA-AE + I_0J_0 &  = 0 \\ EB-BF + I J_2& =0 \\
AI+I_0 K & = 0 \end{cases}  \ . \]
In particular, if $d_{\infty_1}\leq d_{\infty_0}$ and $K$ is injective, we conjecture that for appropriate choice of stability conditions, the third relation gives an equivalence between the space of stable representations of this quiver with relations and that of the quiver
\begin{equation}\label{chainsawsl2eq}
	  \begin{tikzcd}
  	\boxed{V_{\infty_1}}\arrow[d,shift left=0.5ex, "I"] \\ 
  	\arrow[out=160,in=200,loop,swap,"E"] \mathcircled{V_0}\ar[d," \tilde J_0"]   \arrow[r, bend left=25 , "A"]  & \arrow[l, bend left=25 ,  "B"] \mathcircled{V_1}\arrow[out=340,in=20,loop,swap,"F"] \ar[ul, bend right=40, swap,"J_2"]   \\
  	\boxed{\tilde V_{\infty_0}}\arrow[ur, bend right=40, swap,"\tilde I_0"] 
  \end{tikzcd} \quad\quad \textup{with relations}\quad\quad \begin{cases}  FA-AE + \tilde I_0 \tilde J_0 &  = 0 \\ EB-BF + I J_2& =0 \end{cases}  \ , 
\end{equation}
  where $\tilde d_{\infty_0}=\dim \tilde V_{\infty_0}= d_{\infty_0}-d_{\infty_1}$; the latter quiver with relations is precisely the chainsaw quiver of \cite{FinR} for $\rho$ a nilpotent in $\gl_n$ for $n=d_{\infty_1}+\tilde d_{\infty_0}$ with Jordan blocks of size $d_{\infty_1}$ and $\tilde d_{\infty_0}$, and this is a special case of Conjecture \ref{chainsawconj}. The monad presentation in this example is a straightforward generalization of those in the preceding two examples.
\end{eg}

\begin{eg}\label{NYeg}
	Let $Y=|\mc O_{\bb P^1}(-1)\oplus \mc O_{\bb P^1}(-1)|  \to X =\Spec \C[x_1,x_2,x_3,x_4]/(x_1x_2-x_3x_4)$, following Example \ref{o1eg}, and let $M=\mc O_{|\mc O_{\bb P^1}(-1)|}[1]$. Then we have
	\[ _0 \Sigma^1_\infty= \bb K \quad\quad  \ _\infty\Sigma^1_0=\{0\} \quad\quad _1 \Sigma^1_\infty= \{0\} \quad\quad  \ _\infty\Sigma^1_1=\bb K \quad\quad \ _\infty\Sigma^1_\infty =\{0\} \]
	so that the extended quiver $Q_M$ is given by
		\[Q_M = \quad \begin{tikzcd}
		\mathcircled{M}\arrow[d,shift left=0.5ex, "I"] \\ 
	\mathcircled{F_0} \arrow[r, bend left=25 ] \arrow[r, bend left=40 ,  "A\ C"]  & \arrow[l, bend left=25 ] \arrow[l, bend left=40 ,  "B\ D"] \mathcircled{F_1}\ar[ul, bend right=40, swap,"J"]    
	\end{tikzcd} \ . \]
The only framing structure is the trivial one $\f=0$ since $ \ _\infty\Sigma^1_\infty = \{0\}$, and the corresponding framed quiver with potential $(Q_M^\f,W_M^\f)$ is given by
	\[Q_M^\f = \quad \begin{tikzcd}
	\boxed{V_\infty}\arrow[d,shift left=0.5ex, "I"] \\ 
	\mathcircled{V_0} \arrow[r, bend left=25 ] \arrow[r, bend left=40 ,  "A\ C"]  & \arrow[l, bend left=25 ] \arrow[l, bend left=40 ,  "B\ D"] \mathcircled{V_1}\ar[ul, bend right=40, swap,"J"]    
\end{tikzcd} \quad\quad \text{and}\quad\quad   W_M^\f = ABCD-ADBC + IJC \ . \]
This quiver corresponds under dimensional reduction to the Nakajima-Yoshioka quiver
	\[\begin{tikzcd}
	\boxed{V_\infty}\arrow[d,shift left=0.5ex, "I"] \\ 
	\mathcircled{V_0} \arrow[r,  "d"]  & \arrow[l, bend left=25 ] \arrow[l, bend left=40 ,  "B_1\ B_2"] \mathcircled{V_1}\ar[ul, bend right=40, swap,"J"] \end{tikzcd} \quad\quad \text{with relations}\quad\quad B_1dB_2-B_2dB_1+IJ=0 \ , \]
which describes the moduli space of instantons on the blowup $\widehat{\C}^2 =|\mc O_{\bb P^1}(-1)|$, as explained in \cite{NY1}; here we have renamed the variables by $A\mapsto d$, $B\mapsto B_2$ and $D\mapsto B_1$ to agree with the notation of \emph{loc. cit.}.

The $\Sigma$ module $I_\infty$ corresponding to $M=\mc O_{|\mc O_{\bb P^1}(-1)|}[1]$ is given by
\[ I_\infty = \left[ S_0\langle 1 \rangle< S_1 \langle 2 \rangle\right] \quad\quad \text{so that}\quad\quad K(I_\infty)=\left[ \mc O(1)[2] \xrightarrow{y} \mc O[1] \right] \xrightarrow{\cong} \mc O_{|\mc O_{\bb P^1}(-1)|}[1] \]
and we can choose representatives of the auxiliary elements of $\Sigma^1$ according to
\begin{eqnarray}
	i \in \ _0 \Sigma^1_\infty \quad \mapsto\quad &
\xymatrixcolsep{3pc}
\xymatrixrowsep{3pc}
\xymatrix{  & & \mc O(1) \ar[r]\ar[d]^{\scriptsize{\begin{pmatrix}0 \\ 1\end{pmatrix}}}  & \mc O \ar[d]^{1} \\  \mc{O} \ar[r] & \mc O(1)^2 \ar[r] & \mc O(1)^2\ar[r] & \mc{O}  }\\ \nonumber
	j \in \ _\infty \Sigma^1_1 \quad \mapsto\quad &
\xymatrixcolsep{3pc}
\xymatrixrowsep{3pc}
\xymatrix{  \mc{O}(1)\ar[d]^{1} \ar[r] & \mc O^2\ar[d]^{\scriptsize{\begin{pmatrix}0 & 1\end{pmatrix}}} \ar[r] &  \mc O^2 \ar[r] & \mc{O}(1)  \\
	\mc{O}(1) \ar[r] & \mc{O}}
\end{eqnarray}
so that the monad presentation induced by Proposition \ref{monadprop} is given by
	\begin{equation*}
	\xymatrix @R=-.3pc @C=-1.5pc{ & {\scriptsize{\begin{pmatrix}  1 & -B \\ z & -D \\ -A & x \\ - C & y \\  0 & J \end{pmatrix}}} & && {\scriptsize{\begin{pmatrix}zy-DC & CB-y & 0 & zB-D & 0\\ D A - zx & x-BA & D-zB & 0 & I \\ 0 & yA-xC & yz - CD & AD-xz & 0 \\ xC-yA & 0 & CB-y & x-AB & 0 \\ 0 & 0 & 0 & J & 0\end{pmatrix}}} && && \scriptsize{\begin{pmatrix} x & y & B & D & I \\ A & C & 1 & z & 0 \end{pmatrix}}&\\
		\mc O \otimes V_0   & & \mc O(1)^2 \otimes V_0  && &&\mc O(1)^2 \otimes V_0 &&  &&\mc O  \otimes V_0 	\\
		\oplus&  \longrightarrow & \oplus && \longrightarrow && \oplus && \longrightarrow && \oplus \\
		\\
		\mc O(1) \otimes V_1  & & \mc O^2 \otimes V_1 &&& &    \mc O^2 \otimes V_1 &&&& \mc O(1) \otimes V_1 \\ 
		&   & \oplus &&  && \oplus &&  &&  \\
		& &\mc O(1) \otimes V_\infty && && \mc O \otimes V_\infty } \ .
\end{equation*}

\end{eg}

\begin{eg} Let $Y=X=\bb C^3$ and let $M=\mc O_\C[1]$ the shifted structure sheaf of the coordinate subspace $\C_x\subset \bb C^3$. Then we have
	\[ \ _\infty \Sigma ^1_0 = \bb K^2 \quad\quad  \ _0\Sigma^1_\infty = \bb K \quad\quad \ _\infty \Sigma^1_\infty = S_\infty^2 \]
so that the extended quiver $Q_M$ is given by
\[
\begin{tikzcd}
	\mathcircled{M} \arrow[r, bend left, shift left=0.4ex, "I"]  \arrow[out=115,in=155,loop,swap, "A"]  \arrow[out=205,in=245,loop,swap, "A"]& \arrow[l,   "J_2"]   \arrow[l, shift left=0.4ex, bend left,   "J_3"]   \mathcircled{F_0}\arrow[out=340,in=20,loop,swap,"B_3"]
	\arrow[out=70,in=110,loop,swap,"B_1"]
	\arrow[out=250,in=290,loop,swap,"\normalsize{B_2}"]
\end{tikzcd} \ .
\]
In particular, for the trivial framing structure $\f=0$, the corresponding framed quiver with potential $(Q_M^\f,W_M^\f)$ is given by
\[ Q_M^\f = \quad 
\begin{tikzcd}
	\boxed{V_\infty} \arrow[r, bend left, shift left=0.4ex, "I"] & \arrow[l,   "J"]   \arrow[l, shift left=0.4ex, bend left,   "J"]   \mathcircled{V_0}\arrow[out=340,in=20,loop,swap,"B_3"]
	\arrow[out=70,in=110,loop,swap,"B_1"]
	\arrow[out=250,in=290,loop,swap,"\normalsize{B_2}"]
\end{tikzcd} \quad\quad \text{and}\quad\quad W = B_1[B_2,B_3] + B_2IJ_2 + B_3 I J_3 \ .
\]
This quiver with potential is precisely the one introduced by Davison-Ricolfi \cite{DavRic} in their study of the local DT-PT correspondence.

There are also generalizations analogous to those of Example \ref{spikedeg}, with corresponding quivers given by
\[
\begin{tikzcd}
	\boxed{V_\infty^3} \arrow[out=70,in=110,loop, "\bullet" marking, swap,"C_1"]  \arrow[out=160,in=200,loop, "\bullet" marking, swap,"C_2"] \arrow[rd, bend left, "I_3"]  \arrow[rr,  "\bullet" marking, bend left, "A_2^3"]   & & \arrow["\bullet" marking]{ll}[swap]{A_3^2}  \arrow[ld, bend left,  "I_{2}"]   \boxed{V_\infty^2} \arrow[out=70,in=110,loop, "\bullet" marking, swap,"D_1"] \arrow[out=340,in=20,loop, "\bullet" marking, swap,"D_3"] \\
	& \arrow[lu, bend left, "J_3^1"] \arrow[ru, bend left, "J_2^1"]  \mathcircled{V_0} \ar[ul, "J_3^2"] \ar[ur,"J_2^3"] \arrow[out=310,in=350,loop,swap,"B_3"]
	\arrow[out=190,in=230,loop,swap,"B_1"]
	\arrow[out=250,in=290,loop,swap,"\normalsize{B_2}"]&
\end{tikzcd}
\]
where we have for simplicity only given a representative example, analogous to that of Equation \ref{spikedfrmeqn}, and we have omitted the calculation of the potential.
\end{eg}

\begin{eg}\label{DTmoduleeg} We can also consider the case where \[M=\mc O_{\bb C^3}[1]\oplus \mc O_{\bb C_x}\oplus \mc O_{\C_y} \oplus \mc O_{\C_z} \ , \] using the $t$-structure of Example \ref{semidynamicalteg}. It is a straightforward generalization of the above computations to check that the resulting extended quiver is essentially that of \cite{Jaf}, which was apparently computed to describe the DT topological vertex. In particular, the framing vertex has four components
\[ V_\infty = V_\infty^0 \oplus V_\infty^x\oplus V_\infty^y \oplus V_\infty^z  \,\]
corresponding to the four summands of $M$ above.

In this example the moduli stack of framed perverse coherent extensions, rather than parameterizing maps from $\mc O_{\C^3}$ to an iterated extension of $\mc O_\pt$, parameterizes maps from $\mc O_{\C^3}$ to an iterated extension of $\mc O_\pt$ together with a fixed underlying iterated extension of $\mc O_{\C_x}$, $\mc O_{\C_y}$, and $\mc O_{\C_z} $.

Indeed, we conjecture that for appropriate stability conditions and framing structure, the fixed points of the resulting moduli space are indexed by the set of plane partitions with fixed asymptotics $\alpha,\beta,\gamma$, where each of the partitions is determined by the framing structure at the framing vertex $i$, given by the pair $A^i_1,A^i_2$ of fixed endomorphisms of $V_\infty^i$, for $i=x,y,z$ respectively. As we will see, this is closely related to their occurrence in the construction of \cite{FJMM}.
\end{eg}

\begin{eg}\label{VWmoduleeg} Similarly, we can also consider the analogous computation in the setting of Example \ref{ADHMeg}, taking \[M=\mc O_{\bb C_{xy}^2}[1]\oplus\mc O_{\bb C_{xz}^2}[1]\oplus \mc O_{\bb C_{yz}^2}[1]\oplus  \mc O_{\bb C_x}\oplus \mc O_{\C_y} \oplus \mc O_{\C_z} \ , \] and again using the $t$-structure of Example \ref{semidynamicalteg}. In particular, the framing vertex has six components
\[ V_\infty = V_\infty^{xy} \oplus V_\infty^{xz} \oplus V_\infty^{yz} \oplus V_\infty^x\oplus V_\infty^y \oplus V_\infty^z  \,\]
corresponding to the summands of $M$ above. This gives rise to the following apparently new framed quiver with potential
\begin{equation}\label{VWmodquivereqn}
	 Q_M^\f =	
\begin{tikzcd}
	& \boxed{V_\infty^x} \ar[dl,shift right = .5ex, "J_x"] \ar[d]  \arrow[d, shift left=0.5ex, "I^x_1 I^x_2"]	\arrow[out=160,in=200,loop,swap,"\bullet" marking, "A^x_1 A^x_2"] \arrow[out=160,in=200,loop,swap,"\bullet" marking,looseness=5, shift right = 1ex]  \\
	\boxed{V_\infty^{xy}} \ar[ur, shift left = 1ex,"I_x"] \ar[dr, shift left = - 1ex,"I_y"]	\arrow[out=160,in=200,loop,swap,"\bullet" marking, "A^{xy}"]  \arrow[r, shift left=0.5ex, "I"]  &  \mathcircled{V} \ar[u, bend left, swap, shift left = 1ex,"J^x" ]  \ar[d, bend right,shift right = 1ex, "J^y" ]  \arrow[l, shift left=0.5ex,"J"]   \arrow[out=340,in=20,loop,swap, "B_1 B_2 B_3"] \arrow[out=340,in=20,loop,swap,looseness=5, shift right = 1ex]  \arrow[out=340,in=20,loop,swap,looseness=3, shift right = .5ex]  \\ 
	&	\boxed{V_\infty^y}\ar[u] \ar[ul,shift left = .5ex, "J_y"]\arrow[u, shift left=0.5ex, swap,"I^x_1 I^x_2"]	\arrow[out=160,in=200,loop,swap,"\bullet" marking, "A^y_1 A^y_2"] 	\arrow[out=160,in=200,loop,swap,"\bullet" marking,looseness=5, shift right = 1ex]
\end{tikzcd}
\quad \quad\quad\text{and}\quad\quad 
\end{equation}
which we have drawn in the case $V_\infty^{xz} =V_\infty^{xz}=V_\infty^z=0$, for simplicity.

Again, in this example the moduli stack of framed perverse coherent extensions parameterizes, rather than just maps from a fixed iterated extension of $\mc O_{\C^2}$ to an iterated extension of $\mc O_\pt$, maps from the same source to an iterated extension of $\mc O_\pt$ together with a fixed underlying iterated extension of $\mc O_{\C_x}$, $\mc O_{\C_y}$, and $\mc O_{\C_z} $, determined by the framing structure.

Indeed, generalizing the previous example, we conjecture that for appropriate stability conditions, the fixed points of the resulting moduli space are indexed by the set of plane partitions with fixed asymptotics $\alpha,\beta,\gamma$, and a pit at location $(m,n)$, in the sense of \cite{BerFM}, where the partitions $\alpha,\beta,\gamma$ are those of length $\dim V_\infty^i$ determined by the pair $A^i_1,A^i_2$ of endomorphisms of $V_\infty^i$ given by the framing structure at the framing vertex $i$, for $i=x,y,z$ respectively, $\dim V_\infty^{xz}=m$, $\dim V_\infty^{yz}=n$, and $V_\infty^{xy}=0$, and $A^{xz}$ and $A^{yz}$ are given by principal nilpotents in $\gl_m$ and $\gl_n$, respectively. For $n=0$, these appear to be the cohomological variants of the modules constructed in \cite{FFJMM}.
\end{eg}

\section{Representations of cohomological Hall algebras from perverse coherent extensions}\label{cohareptotalsec}

\subsection{Overview of Section \ref{cohareptotalsec}}\label{coharepintrosec} In Section \ref{cohareptotalsec}, we recall the construction of the Kontsevich-Soibelman cohomological Hall algebra from \cite{KS1}, and prove Theorem \ref{Bthm} from the introduction, following the approach of \cite{Soi}. In Section \ref{vanishingbasicsec} we recall some necessary facts about sheaves of vanishing cycles, and in Section \ref{cohasec} we recall the construction of the cohomological Hall algebra. In Section \ref{coharepsec} we prove Theorem \ref{Bthm}, and in Section \ref{invtsec} we explain the interpretation of the vector spaces underlying these representations as cohomological enumerative invariants. In Section \ref{quiveryangsec}, we outline an extension of the construction of the representation of Theorem \ref{Bthm} to a representation of a larger algebra which appears to agree with the shifted quiver Yangian of \cite{GLY}.

\subsection{Vanishing cycles sheaves and cohomology}\label{vanishingbasicsec} In this section, we review the formalism of nearby and vanishing cycles, and recall some of their key properties that we use in section.

Let $X$ be a smooth variety, $W:X\to \bb A^1$ a regular function, and $X_0=X\times_{\bb A^1} \{0\}$, $U=X\times_{\bb A^1}\bb G_m$, $\tilde{\bb G}_m$ the universal cover of $\bb G_m$, and $\tilde U = U\times_{\bb G_m} \tilde{\bb G}_m$, so we have that
\begin{equation}
	\xymatrix{ X_0 \ar[r]^{\iota_0} \ar[d] & X \ar[d]^{W} & U  \ar[l]_{j}\ar[d] & \ar[l]_{\tilde \pi} \tilde U \ar[d] \\
		\{0\} \ar[r] & \bb A^1 & \ar[l] \bb G_m & \ar[l] \tilde{\bb G}_m & , } 
\end{equation}
is commutative, with all squares Cartesian. The nearby cycles functor is defined as
\begin{equation*}
	\psi_W:= \iota_0^* j_* \tilde\pi_*\tilde \pi^* : \DC(U) \to \DC(X_0) \ .
\end{equation*}

\noindent Note that the unit of the $(\pi^*,\pi_*)$ adjunction for $\pi=j \circ \tilde \pi: \tilde U \to X$ defines a natural transformation
\[ \iota^*A \to \psi_W  j^*A \] 
the cone on which extends to define the vanishing cycles functor
\[ \phi_{W,0}:\DC(X) \to \DC(X_0) \quad\quad\quad\quad  A \mapsto \phi_{W,0}(A)\cong \textup{Cone}\left[\iota^*A \to \psi_W j^*A\right]  \ . \]
In fact, it is convenient to consider the cohomological shifts $\psi_W^p=\psi_W[-1], \phi_{W,0}^p=\phi_{W,0}[-1]$ as these both preserve the hearts of the respective perverse t-structures, that is, they restrict to functors
\[ \psi_W^{p,0}:\Per(U) \to \Per(X_0) \quad\quad \text{and}\quad\quad \phi_{W,0}^{p,0}: \Per(X) \to \Per(X_0) \ .\]

Let $Z=\textup{Crit}(W):=\Gamma_{dW}\times_{T^\vee X} X  \to X$ denote the (classical) critical locus of $W$, that is, the (classical) vanishing locus of $dW\in \Omega^1(X)$. Note $W$ is constant on each of the finitely many irreducible components of $Z$, so that $f(Z)\subset \bb A^1$ is a finite set of points, and we have
\[ Z=\bigsqcup_{c\in f(Z)} Z_c \quad\quad \text{where} \quad\quad Z_c = Z\times_{\bb A^1} \{c\}  \] 
so that we have the following commutative diagram, with all squares Cartesian:
\[  \xymatrix{ Z_c \ar[r]^{i_c } \ar[d] & X_c \ar[r]\ar[d]^{\iota_c} & \{c\} \ar[d] \\ 
 Z \ar[r]^{i} & X \ar[r]^{W} & \bb A^1  & . }   \]

For each $c \in W(Z)\subset \bb A^1$, we consider the corresponding shifted vanishing cycles functor for the function $W-c$, denoted $\phi_{W,c}^p:= \phi_{W-c,0}^p: \DD(X)\to \DD(X_c) $, and define
\[\phi_{W,c}^{p,0}: \Per(X) \to \Per(X_c)  \quad\quad \text{and}\quad\quad \varphi_{W,c} = \phi_{W,c}^{p,0}( \underline{\bb Q}_X[\dim X]) \ \in \Per(X_c) \ . \]
In fact, the support of $ \varphi_{W,c}$ is contained in $Z_c \subset X_c$, and thus we also define the internal variant of the $c$ component of the vanishing cycle sheaf
\[ \tilde \varphi_{W,c} = i_c^*  \varphi_{W,c} \in \Per(Z_c) \]
as the image of $\varphi_{W,c}$ under the equivalence $\Per(X_c)_{Z_c}\cong \Per(Z_c)$. Finally, we define the total intrinsic variant of the vanishing cycle sheaf
\[ \tilde \varphi_W := \bigoplus_{c\in W(Z)} \tilde \varphi_{W,c} \quad \in\quad \Per(Z)=\bigoplus_{c\in W(Z)} \Per(Z_c)  \ , \]
and similarly its extrinsic analogue, which will be the primary object of interest
\[ \varphi_W:= \iota_* \tilde \varphi_W \ \in \Perv(X) \ .\]
More generally, we define the total extrinsic vanishing cycles functor
\[ \phi_W = \bigoplus_{c\in W(Z)} \iota_{c *} \circ \phi^p_{W,c}: \DD(X) \to \DD(X) \ , \]
which in particular reproduces the preceding object as $\varphi_W = \phi_W( \underline{\bb Q}_X [\dim X])$.

We define the compactly supported vanishing cycles cohomology of $X$, and its perverse shifted analogue, by
\[ H^\bullet_c(X,\phi_W\underline{\bb Q}_X ) = p_! \phi_W p^* \bb Q  \quad\quad\quad\quad H^\bullet_c(X,\varphi_W) = p_! \phi_W p^* \bb Q[\dim X]     \ ,  \]
where $p:X\to \pt$. It satisfies the following functoriality properties:

\begin{prop} Let $W:Y\to \bb A^1$ and $f:X\to Y$ a proper map of smooth varieties. Then $f$ induces a pullback map
\begin{equation}\label{pullbackeqn}
	 f^*: H_c^\bullet(Y,\phi_W \underline{\bb Q}_Y) \to H^\bullet_c(X,\phi_{W\circ f} \underline{\bb Q}_X)  \ . 
\end{equation}
	Similarly, a smooth map $f:X\to Y$ of smooth varieties induces a pushforward map
\begin{equation}\label{pushfwdeqn}
	 f_*:   H^\bullet_c(X,\phi_{W\circ f} \underline{\bb Q}_X) \to H_c^\bullet(Y,\phi_W \underline{\bb Q}_Y)[-2d]    \ , 
\end{equation}
	where $d=\dim X-\dim Y$ denotes the relative dimension of $f$.
\end{prop}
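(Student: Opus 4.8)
The plan is to construct the pullback and pushforward maps from the standard adjunctions and base change isomorphisms in the six-functor formalism, exactly parallel to the construction of pullback and pushforward in ordinary (compactly supported) cohomology, once one knows that the vanishing cycles functor $\phi_W$ is compatible with proper pushforward and smooth (or at least arbitrary) pullback. The essential input is the pair of base change statements: for a proper map $f:X\to Y$ there is a natural isomorphism $f_*\phi_{W\circ f}\cong \phi_W f_*$, and for a smooth map $f:X\to Y$ there is a natural isomorphism $f^*\phi_W[d]\cong \phi_{W\circ f}f^*$ (with a shift by the relative dimension $d$), both of which follow from the corresponding base change properties of the nearby cycles functor $\psi$, since $\phi$ is built from $\psi$ and $\iota_0^*$ via a cone, and cones of natural transformations commute with functors that are exact and satisfy base change. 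I would cite these as standard; the cleanest reference is the compatibility of $\psi_W$ with proper pushforward (which is \emph{pf.} smooth base change / proper base change applied to the diagram defining $\psi$) and with smooth pullback.

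First I would treat the pullback map of Equation \ref{pullbackeqn}. Starting from the adjunction unit $\underline{\bb Q}_Y\to f_*f^*\underline{\bb Q}_Y=f_*\underline{\bb Q}_X$ (using $f^*\underline{\bb Q}_Y=\underline{\bb Q}_X$), apply $\phi_W$ to get $\phi_W\underline{\bb Q}_Y\to \phi_W f_*\underline{\bb Q}_X\cong f_*\phi_{W\circ f}\underline{\bb Q}_X$ by proper base change for $f$. Then apply $q_!$ where $q:Y\to \pt$, noting $q_! f_* = q_! f_! = (q\circ f)_! = p_!$ since $f$ is proper (so $f_*=f_!$), to obtain $q_!\phi_W\underline{\bb Q}_Y\to p_!\phi_{W\circ f}\underline{\bb Q}_X$, which is precisely $f^*:H^\bullet_c(Y,\phi_W\underline{\bb Q}_Y)\to H^\bullet_c(X,\phi_{W\circ f}\underline{\bb Q}_X)$. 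For the pushforward map of Equation \ref{pushfwdeqn}, I would instead use the counit of the $(f_!,f^!)$ adjunction together with the relative purity isomorphism $f^!\cong f^*[2d]$ valid for $f$ smooth of relative dimension $d$: the counit $f_!f^!\underline{\bb Q}_Y\to\underline{\bb Q}_Y$ becomes $f_!f^*\underline{\bb Q}_Y[2d]=f_!\underline{\bb Q}_X[2d]\to\underline{\bb Q}_Y$, equivalently $f_!\underline{\bb Q}_X\to\underline{\bb Q}_Y[-2d]$. Apply $\phi_W$, use the smooth pullback compatibility $\phi_{W\circ f}\cong f^*\phi_W[d]$ and the projection/base change isomorphism $f_!\circ f^*(\cdot)\cong (\cdot)\otimes f_!\underline{\bb Q}_X$ appropriately — more directly, smooth base change gives $f_!\phi_{W\circ f}\underline{\bb Q}_X\cong f_!f^*\phi_W\underline{\bb Q}_Y[d]$ and the above trace map lands it in $\phi_W\underline{\bb Q}_Y[-d]$, up to tracking shifts — and then apply $q_!$ with $q_!f_!=p_!$ to land in $H^\bullet_c(Y,\phi_W\underline{\bb Q}_Y)[-2d]$.

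The main obstacle, and the only genuinely nontrivial point, is the careful bookkeeping of cohomological shifts: $\phi_W$ here is the unshifted total vanishing cycles functor $\phi_W=\bigoplus_c \iota_{c*}\phi^p_{W,c}$ where $\phi^p_{W,c}=\phi_{W-c,0}[-1]$, and one must verify that the shift in the smooth-pullback compatibility $\phi_{W\circ f}\cong f^*\phi_W[d]$ comes out to exactly $[d]$ (relative dimension), matching the $[2d]$ that appears from relative purity $f^!\cong f^*[2d]$ and producing the final $[-2d]$ in Equation \ref{pushfwdeqn}. Here it is cleanest to check the shift on the perverse-normalized constant sheaf $\varphi_W=\phi_W\underline{\bb Q}_X[\dim X]$: since $f$ smooth of relative dimension $d$ means $f^*$ maps $\Per(Y)$ to $\Per(X)[d]$ and $f^*\underline{\bb Q}_Y[\dim Y]=\underline{\bb Q}_X[\dim Y]=\underline{\bb Q}_X[\dim X-d]$, and since perverse vanishing cycles commute with smooth pullback up to a shift by $d$, the identifications $\phi^p_{W\circ f}f^*\cong f^*\phi^p_W$ on perverse sheaves (appropriately shifted) give the claimed relative dimension shift without extra factors. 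I would also remark that the two maps are compatible with the decomposition over $W(Z)$ into the individual $c$-components, so it suffices to treat each $\phi^p_{W,c}$ separately, reducing to the classical $c=0$ case for the function $W-c$. The rest is formal manipulation of the six operations, and I would state it as such rather than spelling out every adjunction triangle.
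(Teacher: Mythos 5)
Your pullback construction coincides with the paper's: composing the unit $\underline{\bb Q}_Y\to f_*\underline{\bb Q}_X$ with the compatibility of $\phi_W$ and pushforward is exactly the natural transformation $\phi_W\to f_*\phi_{W\circ f}f^*$ that the paper invokes, and applying $q_!$ with $f_*=f_!$ (properness) gives the map of Equation \ref{pullbackeqn}. The pushforward half also follows the paper's route — counit of the $(f_!,f^!)$ adjunction, relative purity $f^*\cong f^![-2d]$, and the smooth-pullback compatibility of vanishing cycles — but as written it contains a shift error that would make the output disagree with the statement.

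The compatibility of vanishing cycles with smooth pullback, for the functor $\phi_W$ as defined here (the total, unnormalized vanishing cycles functor; equivalently $\phi^p_W=\phi_W[-1]$ at the level of functors), carries \emph{no} shift: $f^*\phi_W\cong\phi_{W\circ f}f^*$, which is the form the paper uses (Equation 23 of the Davison reference cited in its proof). Your version $f^*\phi_W[d]\cong\phi_{W\circ f}f^*$ is off by $[d]$: the $[d]$ appears only at the level of the IC-normalized objects, $\varphi_{W\circ f}\cong f^*\varphi_W[d]$, and is accounted for entirely by the difference between the normalizations $\varphi_W=\phi_W\underline{\bb Q}_Y[\dim Y]$ and $\varphi_{W\circ f}=\phi_{W\circ f}\underline{\bb Q}_X[\dim X]$, not by the functorial compatibility. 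Indeed, with your shifted compatibility your own computation lands in $\phi_W\underline{\bb Q}_Y[-d]$ (as you note), contradicting the $[-2d]$ of Equation \ref{pushfwdeqn}; the attempted reconciliation via the perverse normalization conflates the functor $\phi_W$ with its value on the shifted constant sheaf and does not repair the arithmetic. With the shiftless compatibility everything closes: $f_!\phi_{W\circ f}\underline{\bb Q}_X\cong f_!f^*\phi_W\underline{\bb Q}_Y\cong f_!f^!\phi_W\underline{\bb Q}_Y[-2d]\to\phi_W\underline{\bb Q}_Y[-2d]$ by the counit, and applying $q_!$ with $q_!f_!=p_!$ yields exactly the stated map. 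So the strategy is right, but the key lemma must be stated without the extra $[d]$ for the argument to produce the correct degree.
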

\begin{proof} Recall that for any map of smooth varieties, there is a natural transformation
	\[ \phi_W \xrightarrow{} f_* \phi_{W\circ f} f^* \ ;  \]
 see for example Equation 20 in \cite{Dav1}. This induces the desired map as
	\[ p_{Y!} \phi_W p_Y^*\bb Q \to  p_{Y !} f_* \phi_{W\circ f} f^* p_{Y}^* \bb Q \cong p_{X !}\phi_{W\circ f}p_X^*\bb Q \ ,   \]
where we have used properness of $f$ to identify $f_*=f_!$.

Similarly, for $f$ a smooth map of smooth varieties, the induced natural transformation
\[   f^* \phi_W \xrightarrow{\cong}\phi_{W\circ f} f^* \]
is an isomorphism; see for example Equation 23 in \emph{loc. cit.}. This induces the desired map as
\[ p_{X!} \phi_{W\circ f}p_X^* \bb Q \cong p_{Y !} f_! \phi_{W\circ f} f^* p_Y^* \bb Q \xrightarrow{\cong}  p_{Y !}f_!f^*  \phi_W p_Y^* \bb Q  \to p_{Y !} \phi_W p_Y^* \bb Q[-2d] \]
where we have used the counit $f_!f^! \to \id$ of the $(f_!,f^!)$ adjunction, as well as the identification $f^*= f^![-2d]$ which follows from smoothness of $f$.
\end{proof}

In particular, we also obtain:

\begin{corollary} For $f:X\to Y$ an affine fibration of relative dimension $d$, the induced pushforward map of Equation \ref{pushfwdeqn} is an isomorphism, so that there is an inverse isomorphism
\begin{equation}\label{pushfwdinveqn}
	 f_*^{-1}: H_c^\bullet(Y,\phi_W \underline{\bb Q}_Y) \xrightarrow{\cong} H^\bullet_c(X,\phi_{W\circ f}\underline{\bb Q}_X) [2d]  \ . 
\end{equation}
\end{corollary}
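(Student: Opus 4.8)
The plan is to deduce this corollary directly from the preceding proposition, using only the elementary fact that an affine fibration has vanishing higher cohomology and is acyclic in a suitable sense. The key observation is that for $f:X\to Y$ an affine fibration of relative dimension $d$, the counit $f_!f^!\to \id$ and the unit $\id \to f_*f^*$ induce isomorphisms after applying $p_{Y!}$, precisely because $f$ is a Zariski-locally trivial $\bb A^d$-bundle (or more generally a torsor under an affine bundle). Concretely, one has the projection formula together with $f_!f^*\underline{\bb Q}_Y \cong \underline{\bb Q}_Y \otimes f_!\underline{\bb Q}_X$, and $f_!\underline{\bb Q}_X \cong \underline{\bb Q}_Y[-2d]$ for an affine fibration, so that the natural map $f_!f^*\phi_W p_Y^*\bb Q \to \phi_W p_Y^*\bb Q[-2d]$ appearing in the proof of Equation \ref{pushfwdeqn} is an isomorphism.

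First I would recall the statement of Equation \ref{pushfwdeqn}, which gives the pushforward map $f_*: H^\bullet_c(X,\phi_{W\circ f}\underline{\bb Q}_X) \to H_c^\bullet(Y,\phi_W\underline{\bb Q}_Y)[-2d]$ constructed as the composite of the isomorphism $p_{X!}\phi_{W\circ f}p_X^*\bb Q \cong p_{Y!}f_!\phi_{W\circ f}f^*p_Y^*\bb Q$, the isomorphism $f^*\phi_W \xrightarrow{\cong}\phi_{W\circ f}f^*$ (smoothness of $f$), and the map induced by the counit $f_!f^!\to \id$ together with $f^* = f^![-2d]$. Then I would argue that when $f$ is an affine fibration, this last map $p_{Y!}f_!f^*\phi_W p_Y^*\bb Q \to p_{Y!}\phi_W p_Y^*\bb Q[-2d]$ is an isomorphism: this is the statement that $f_!\underline{\bb Q}_X \cong \underline{\bb Q}_Y[-2d]$ with the natural comparison map, which holds for any Zariski-locally trivial affine-space bundle and, by a standard dévissage over an open cover trivializing the bundle, for any affine fibration. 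Hence $f_*$ is itself an isomorphism, and one defines $f_*^{-1}$ to be its inverse, shifting to land in $H^\bullet_c(X,\phi_{W\circ f}\underline{\bb Q}_X)[2d]$ as stated in Equation \ref{pushfwdinveqn}.

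I expect the only genuine point requiring care — and hence the main obstacle — is the verification that $f_!\underline{\bb Q}_X \cong \underline{\bb Q}_Y[-2d]$ with the comparison map being the natural one, in the full generality of ``affine fibration'' rather than ``vector bundle.'' For a genuine vector bundle this is immediate from homotopy invariance of the constant sheaf and proper base change; for a torsor under a vector bundle (the case that arises from the quiver constructions, where the affine fibrations come from forgetting coordinates in a linear presentation), one reduces to the vector bundle case by passing to a Zariski cover over which the torsor trivializes and checking that the comparison maps glue. Since the paper's applications only use affine fibrations of this very concrete form (projections off of linear subspaces of representation varieties), it would suffice to note that these are in fact locally trivial $\bb A^d$-bundles, making the argument entirely routine; I would state this reduction explicitly rather than attempt the most general possible statement. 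Everything else is formal manipulation of the adjunctions already invoked in the proof of the preceding proposition.
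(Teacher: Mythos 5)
Your proposal is correct and follows essentially the same route as the paper: the paper's proof simply observes that for an affine fibration the counit of the $(f_!,f^!)$ adjunction is an isomorphism, so that by the construction of the pushforward in the preceding proposition the map $f_*$ is invertible. Your extra dévissage verifying $f_!\underline{\bb Q}_X \cong \underline{\bb Q}_Y[-2d]$ via local triviality is the standard justification of exactly that counit isomorphism, which the paper takes for granted.
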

\begin{proof} If $f$ is an affine fibration, the counit of the $(f_!,f^!)$ adjunction is an isomorphism, and thus by the proof of the preceding proposition the induced map is an isomorphism.
\end{proof}

\subsection{Cohomological Hall algebras of quivers with potential and Calabi-Yau threefolds}\label{cohasec}

Throughout this section, let $(Q,W)$ be a quiver with potential, and recall from Section \ref{quiversecunfr} the moduli stacks of representations of the underlying quiver
\[ \mf M_\dd(Q) = [X_\dd(Q)/ G_\dd(Q)]  \quad\quad \text{where}\quad\quad  X_\dd(Q)= \prod_{e \in E_Q} \Hom( \K^{d_{s(e)}}, \K^{d_{t(e)}})  \quad\quad G_\dd(Q)=  \prod_{i\in V_Q} \Gl_{d_i}(\K) \]
for each dimension vector $\dd\in \bb N^{V_Q}$; for simplicity, we will often omit $Q$ from the notation when there is only a single quiver under consideration, as in the remainder of this subsection.

Given $\aaa, \bbb \in \bb N^{V_Q}$, we define
\begin{equation*}
\mf M_{\aaa,\bbb} = [X_{\aaa,\bbb}/G_{\aaa,\bbb}] \quad\quad \text{where}\quad\quad \begin{cases} X_{\aaa,\bbb} = \big\{ \varphi\in X_{\aaa+\bbb} \ \big|\ \varphi\big(\bigoplus_{e\in E_Q} \bb K^{a_{s(e)} }\big) \subset \bigoplus_{e\in E_Q} \bb K^{a_{t(e)} } \big\} \\
	G_{\aaa,\bbb} = \big\{ g \in G_{\aaa+\bbb} \ \big| \ g\big(\bigoplus_{i\in V_Q} \K^{a_i} \big) \subset \bigoplus_{i\in V_Q} \K^{a_i+b_i}  \big\} \ .
\end{cases}
\end{equation*}
Equivalently, these spaces are defined component-wise as
\[X_{\aaa,\bbb} = \prod_{e\in E_Q} \Hom_{a_{s(e)},a_{t(e)}}(\K^{a_{s(e)}+b_{s(e)}} ,\K^{a_{t(e)}+b_{t(e)}} ) \quad\quad \textup{and}\quad\quad G_{\aaa,\bbb} = \prod_{i\in V_Q} \Gl_{a_i,b_i}(\K) \]
where $\Hom_{m_1,m_2}(\K^{m_1+n_1}, \K^{m_2+n_2}) $ denotes the space of block upper triangular linear maps, that is, satisfying $\varphi(\K^{m_1}) \subset \K^{m_2}$, and $ \Gl_{m,n}(\K) \subset \Gl_{m+n}(\K)$ is the parabolic subgroup preserving the flag $\{0\} \subset \K^{m} \subset \K^{m+n}$.

Geometrically, $\mf M_{\aaa,\bbb}$ is the moduli stack of short exact sequences of representations of the quiver $Q$ with sub object and quotient object of dimension $\aaa$ and $\bbb$, respectively. In particular, note that we have a closed embedding $X_{\aaa,\bbb}\to X_{\aaa+\bbb}$ by definition, as well as a smooth map $X_{\aaa,\bbb}\to X_\aaa\times X_\bbb$ given by forgetting the extension data. These maps define a correspondence of algebraic $\bb K$-stacks
\[ \xymatrix{ & \mf M_{\aaa,\bbb} \ar[dl] \ar[dr] \\ \mf M_{\aaa}\times \mf M_\bbb & & \mf M_{\aaa+\bbb} } \ , \]
inducing an associative product on the Borel-Moore homology of $\mf M$ with coefficients in the sheaf of vanishing cycles determined by $W$, as we now explain.

We recall the definition of the cohomological Hall algebra associated to the quiver with potential $(Q,W)$ following \cite{KS1}. The underlying graded vector space is given by
\[ \mc H(Q,W)=\bigoplus_{\dd\in \bb N^{V_Q}} \mc H_\dd(Q,W) \quad\quad \mc H_\dd(Q,W) = H_\bullet^{G_\dd(Q)}( X_\dd(Q),\varphi_{W_\dd })  \]
where $H_\bullet^G(X,\varphi):=H^\bullet_{G,c}(X,\varphi)^\vee$ denotes the equivariant Borel-Moore homology with coefficients in the sheaf of vanishing cycles, defined as the dual of the compactly supported cohomology with coefficients in the sheaf of vanishing cycles, and we recall that the natural perverse grading shift in the equivariant case is given by
\[ H^\bullet_{G,c}(X,\varphi)= H^\bullet_{G,c}(X, \phi_W \underline{\bb Q}_X ) [\dim X - \dim G]  \ . \]
Again, for simplicity, we will often omit $(Q,W)$ from the notation when there is only a single quiver with potential under consideration.

The cohomological Hall algebra is not a graded algebra in the usual sense: rather than being an associative algebra object internal to the category $\Vect_{\bb N^{V_Q}}$ of $\bb N^{V_Q}$-graded complexes with its usual monoidal structure, it is an associative algebra in this category with respect to the usual monoidal structure twisted by a cohomological degree shift, defined by
\[( \bigoplus_{\aaa \in \bb N^{V_Q}} V_\aaa )\otimes^\tw (\bigoplus_{\bbb \in \bb N^{V_Q}} W_\bbb) := \bigoplus_{\dd\in \bb N^{V_Q}} \bigoplus_{\aaa+\bbb=\dd} V_\aaa\otimes W_\bbb [\chi(\bbb,\aaa)-\chi(\aaa,\bbb)]  \ ,\]
as explained in Section 2.7 of \cite{KS1} or 3.1 of \cite{Dav1}, where we have used the bilinear form $\chi:\bb Z^{V_Q}\times \bb Z^{V_Q}\to \Z$ defined by
\[ \chi(\aaa,\bbb) = \sum_{i\in V_Q} a_i b_i - \sum_{e\in E_Q} a_{s(e)}b_{t(e)} \ . \]
In particular, note that $\chi(\aaa,\aaa)=\dim G_\aaa-\dim X_\aaa$ and similarly
\begin{equation}\label{cohaproddegeqn}
	 \chi(\aaa,\bbb)=\dim G_{\aaa,\bbb}-\dim G_\aaa\times G_\bbb-\dim X_{\aaa,\bbb} + \dim X_{\aaa}\times X_\bbb \ . 
\end{equation}

We now recall the construction of the cohomological Hall algebra of a quiver with potential introduced by Kontsevich-Soibelman in \cite{KS1}.

\begin{theo}\cite{KS1}\label{cohaprodthm} Let $(Q,W)$ be a quiver with potential. The Kontsevich-Soibelman cohomological Hall algebra \[\mc H(Q,W)\in \Alg_\Ass(\Vect_{\bb N^{V_Q}}^{\otimes^\tw}) \ ,\] that is, $\mc H(Q,W)$ admits a canonical (twisted $\bb N^{ V_Q}$-graded) associative algebra structure.
\end{theo}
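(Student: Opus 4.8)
The plan is to construct the product maps directly from the correspondences $\mf M_\aaa \times \mf M_\bbb \leftarrow \mf M_{\aaa,\bbb} \to \mf M_{\aaa+\bbb}$ described above, using the pullback and pushforward maps on compactly supported vanishing cycles cohomology established in the previous subsection, and then to verify associativity by the standard argument comparing both composites with a common ``double flag'' correspondence. Concretely, I would first unwind the two maps in the correspondence at the level of the spaces before taking quotients: the closed embedding $\iota: X_{\aaa,\bbb}\hookrightarrow X_{\aaa+\bbb}$ is $G_{\aaa,\bbb}$-equivariant and proper, and the forgetful map $q: X_{\aaa,\bbb}\to X_\aaa\times X_\bbb$ is a $G_{\aaa,\bbb}$-equivariant affine fibration (the fibre being the off-diagonal Hom blocks). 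After passing to the appropriate Borel-type quotients, $\iota$ induces a proper map of smooth stacks and $q$ factors through a smooth affine fibration $[\mf M_{\aaa,\bbb}] \to [\mf M_\aaa\times \mf M_\bbb]$ composed with the smooth map $[X_{\aaa,\bbb}/G_{\aaa,\bbb}]\to [X_{\aaa,\bbb}/(G_\aaa\times G_\bbb)]$ of relative dimension $-\dim(G_{\aaa,\bbb}/G_\aaa\times G_\bbb)$; one must check that the potential restricts compatibly, i.e. $W_{\aaa+\bbb}\circ \iota = W_{\aaa,\bbb}$ and $W_{\aaa,\bbb}$ is pulled back from $W_\aaa \boxplus W_\bbb$ on $X_\aaa\times X_\bbb$ up to the affine-fibration directions, which holds because the potential of a quiver with potential is cyclic and hence block-additive on a parabolic subrepresentation.

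Next I would assemble the product as the composite
\[
\mc H_\aaa \otimes \mc H_\bbb = H^{G_\aaa\times G_\bbb}_\bullet(X_\aaa\times X_\bbb, \varphi) \xrightarrow{(q)_*^{-1}} H^{G_{\aaa,\bbb}}_\bullet(X_{\aaa,\bbb},\varphi) \xrightarrow{\iota_*} H^{G_{\aaa+\bbb}}_\bullet(X_{\aaa+\bbb},\varphi) = \mc H_{\aaa+\bbb},
\]
where $(q)_*^{-1}$ is the inverse of the pushforward for an affine fibration, which is an isomorphism by the corollary giving Equation \ref{pushfwdinveqn} (together with the smooth pushforward along $G_{\aaa,\bbb}$-orbits), and $\iota_*$ is the proper pushforward of Equation \ref{pushfwdeqn} applied after dualizing; equivalently one works with $\iota^*$ on compactly supported cohomology via Equation \ref{pullbackeqn} and $q_*$, then dualizes. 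The cohomological degree bookkeeping is exactly the content of Equation \ref{cohaproddegeqn}: the perverse shift conventions $H^\bullet_{G,c}(X,\varphi) = H^\bullet_{G,c}(X,\phi_W\underline{\bb Q}_X)[\dim X - \dim G]$ together with the relative dimensions of $q$ and $\iota$ produce precisely the shift $[\chi(\bbb,\aaa)-\chi(\aaa,\bbb)]$ that the twisted monoidal structure $\otimes^\tw$ absorbs, so that the product is a morphism in $\Vect^{\otimes^\tw}_{\bb N^{V_Q}}$. This is a routine but careful diagram chase and I would present it compactly.

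Finally, for associativity I would introduce, for a triple $\aaa,\bbb,\ccc$, the moduli stack $\mf M_{\aaa,\bbb,\ccc}$ of representations equipped with a two-step filtration with subquotients of dimension $\aaa,\bbb,\ccc$, fitting into a commuting diagram of correspondences mapping down to $\mf M_\aaa\times\mf M_\bbb\times\mf M_\ccc$ and $\mf M_{\aaa+\bbb+\ccc}$, and factoring through both $\mf M_{\aaa,\bbb+\ccc}$ (via $\mf M_{\bbb,\ccc}$) and $\mf M_{\aaa+\bbb,\ccc}$ (via $\mf M_{\aaa,\bbb}$). Both iterated products equal the single map induced by the correspondence $\mf M_\aaa\times\mf M_\bbb\times\mf M_\ccc \leftarrow \mf M_{\aaa,\bbb,\ccc}\to \mf M_{\aaa+\bbb+\ccc}$, using base change for vanishing cycles pushforward and pullback along the relevant Cartesian squares (the ``upper-triangular'' squares relating the various flag stacks are Cartesian, and $\phi_W$ commutes with the smooth and proper base changes by the natural transformations recalled in the proof of the previous proposition). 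The main obstacle I anticipate is precisely establishing these base-change compatibilities for $\phi_W$ across the flag correspondences with sufficient care about the potential restricting correctly on each stratum — i.e., that the functions $W_{\aaa,\bbb,\ccc}$, $W_{\aaa,\bbb}$, $W_{\bbb,\ccc}$ etc. are all mutually compatible pullbacks — since everything else is the formal structure of a convolution algebra. I would handle this by reducing, via the affine-fibration invariance of Equation \ref{pushfwdinveqn}, to the honest closed substacks where the potentials literally agree by cyclicity, and then invoking the standard proper/smooth base change for vanishing cycles.
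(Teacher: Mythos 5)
Your construction is essentially the paper's own proof: the same correspondence $X_\aaa\times X_\bbb \leftarrow X_{\aaa,\bbb}\to X_{\aaa+\bbb}$, the same ingredients from the vanishing-cycles subsection (proper pullback as in Equation \ref{pullbackeqn} for the closed embedding and the flag fibration changing $G_{\aaa,\bbb}$ to $G_{\aaa+\bbb}$, the inverse pushforward of Equation \ref{pushfwdinveqn} for the unipotent affine fibration, and the smooth pushforward of Equation \ref{pushfwdeqn} for $X_{\aaa,\bbb}\to X_\aaa\times X_\bbb$), the same degree bookkeeping via Equation \ref{cohaproddegeqn} absorbed by $\otimes^\tw$, and the same standard two-step-filtration argument for associativity, which the paper simply delegates to Section 2.3 of \cite{KS1}. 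The only points you leave implicit are the Thom--Sebastiani isomorphism identifying $H^\bullet_{G_\aaa\times G_\bbb,c}(X_\aaa\times X_\bbb,\phi_{W_\aaa\boxplus W_\bbb})$ with $\mc H_\aaa^\vee\otimes\mc H_\bbb^\vee$ (which the paper invokes explicitly) and a reversed arrow on the quotient-stack map $[X_{\aaa,\bbb}/(G_\aaa\times G_\bbb)]\to[X_{\aaa,\bbb}/G_{\aaa,\bbb}]$, both cosmetic.
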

\begin{proof}
	
The associative product map $m:\mc H\otimes^\tw \mc H \to \mc H$ on the cohomological Hall algebra $\mc H=\mc H(Q,W)$ is defined in components by
\[  m_{\aaa,\bbb}:\mc H_\aaa\otimes \mc H_\bbb \to \mc H_{\aaa+\bbb}[\chi(\aaa,\bbb)-\chi(\bbb,\aaa) ]  \ .\]
The components $m_{\aaa,\bbb}$ will be constructed in terms of the equivalent dual maps
\[ m_{\aaa,\bbb}^\vee:  H^\bullet_{G_{\aaa+\bbb},c}(X_{\aaa+\bbb},\varphi_{W_{\aaa+\bbb}} )  \to H^\bullet_{G_\aaa,c}(X_{\aaa},\varphi_{W_\aaa})\otimes H^\bullet_{G_\bbb,c}(X_{\bbb},\varphi_{W_\bbb}) [\chi(\aaa,\bbb)-\chi(\bbb,\aaa)] \ , \]
or after accounting explicitly for the perverse shifts,
\begin{equation}\label{cohaprodmapeqn}
m_{\aaa,\bbb}^\vee: H^\bullet_{G_{\aaa+\bbb},c}(X_{\aaa+\bbb},\phi_{W_{\aaa+\bbb}} \underline{\bb Q}_{X_{\aaa+\bbb}} ) \to H^\bullet_{G_\aaa,c}(X_{\aaa},\phi_{W_{\aaa}} \underline{\bb Q}_{X_\aaa})\otimes H^\bullet_{G_\bbb,c}(X_{\bbb},\phi_{W_{\bbb}} \underline{\bb Q}_{X_\bbb})[  2\chi(\aaa,\bbb) ] \ ,
\end{equation}
where the claimed cohomological degree shift follows from the equality
\[ \chi(\aaa+\bbb,\aaa+\bbb) - \chi(\aaa,\aaa) - \chi(\bbb,\bbb) +\chi(\aaa,\bbb)-\chi(\bbb,\aaa)  = 2\chi(\aaa,\bbb)  \ .\]
We now proceed with the construction of the desired maps:

There is a restriction of equivariance map
\[ H^\bullet_{G_{\aaa+\bbb},c}(X_{\aaa+\bbb},\phi_{W_{\aaa+\bbb}} \underline{\bb Q}_{X_{\aaa+\bbb}} ) \to  H^\bullet_{G_{\aaa,\bbb},c}(X_{\aaa+\bbb},\phi_{W_{\aaa+\bbb}} \underline{\bb Q}_{X_{\aaa+\bbb}} ) \]
given by the proper pullback map of Equation \ref{pullbackeqn} induced by the map
\[ X_{\aaa+\bbb}/ G_{\aaa,\bbb} \to  X_{\aaa+\bbb} / G_{\aaa+\bbb} \]
which is proper and schematic, as its fibres are isomorphic to the partial flag variety $G_{\aaa+\bbb} /G_{\aaa,\bbb} $. Similarly, pullback along the closed embedding $X_{\aaa,\bbb}\to X_{\aaa+\bbb}$ gives a map
\[H^\bullet_{G_{\aaa,\bbb},c}(X_{\aaa+\bbb},\phi_{W_{\aaa+\bbb}} \underline{\bb Q}_{X_{\aaa+\bbb}} ) \to H^\bullet_{G_{\aaa,\bbb},c}(X_{\aaa,\bbb},\phi_{W_{\aaa,\bbb}} \underline{\bb Q}_{X_{\aaa,\bbb}} ) \ . \]

Next, there is a further restriction of equivariance map
\[ H^\bullet_{G_{\aaa,\bbb},c}(X_{\aaa,\bbb},\phi_{W_{\aaa,\bbb}} \underline{\bb Q}_{X_{\aaa,\bbb}} ) \to H^\bullet_{G_{\aaa}\times G_\bbb,c}(X_{\aaa,\bbb},\phi_{W_{\aaa,\bbb}} \underline{\bb Q}_{X_{\aaa,\bbb}} )[2(\dim G_{\aaa,\bbb} - \dim G_\aaa\times G_\bbb ) ]  \]
given by the inverse pushforward map, as in Equation \ref{pushfwdinveqn}, induced by the affine fibration
\[ X_{\aaa,\bbb}/(G_\aaa\times G_\bbb) \to X_{\aaa,\bbb}/ G_{\aaa,\bbb}   \]
with fibres given by the unipotent group $G_{\aaa,\bbb}/(G_\aaa\times G_\bbb)$. There is also a pushforward map
\[ H^\bullet_{G_{\aaa}\times G_\bbb,c}(X_{\aaa,\bbb},\phi_{W_{\aaa,\bbb}} \underline{\bb Q}_{X_{\aaa,\bbb}} ) \to H^\bullet_{G_{\aaa}\times G_\bbb,c}(X_{\aaa} \times X_\bbb, \phi_{W_{\aaa}\boxtimes W_\bbb} \underline{\bb Q}_{X_{\aaa} \times X_\bbb} )[-2(\dim X_{\aaa,\bbb} - \dim X_\aaa\times X_\bbb ) ]  \]
induced as in Equation \ref{pushfwdeqn} by the smooth map
 \[ X_{\aaa,\bbb} / (G_\aaa \times G_\bbb) \to (X_{\aaa}\times X_\bbb) / (G_\aaa\times G_\bbb ) \ .\]
Composing the above sequence of maps with the Thom-Sebastiani isomorphism
\[H^\bullet_{G_{\aaa}\times G_\bbb,c}(X_{\aaa} \times X_\bbb, \phi_{W_{\aaa}\boxtimes W_\bbb} \underline{\bb Q}_{X_{\aaa} \times X_\bbb} ) \cong H^\bullet_{G_\aaa,c}(X_{\aaa},\phi_{W_{\aaa}} \underline{\bb Q}_{X_\aaa})\otimes H^\bullet_{G_\bbb,c}(X_{\bbb},\phi_{W_{\bbb}} \underline{\bb Q}_{X_\bbb}) \ ,  \]
gives the desired map of Equation \ref{cohaprodmapeqn}, where the correct shift follows from the equality of Equation \ref{cohaproddegeqn}. Associativity of the above product follows from a standard argument; see for example Section 2.3 of \cite{KS1}.

\end{proof}

In particular, we make the following definition:

\begin{defn} Let $Y\xrightarrow{\pi} X$ be a resolution of singularities of a toric Calabi-Yau threefold satisfying the hypotheses of Section \ref{Geosetupsec}. The cohomological Hall algebra of $Y$ is defined by 
\[ \mc H(Y) := \mc H(Q_Y,W_Y)  \  \in \Alg_\Ass(\Vect_{\bb N^{V_Q}}^{\otimes^\tw}) \ , \]
as constructed in the preceding Proposition, where $(Q_Y,W_Y)$ is the quiver with potential associated to $Y$ as in Theorem \ref{stackthmunext}.
\end{defn}

\subsection{Representations of cohomological Hall algebras from perverse coherent extensions}\label{coharepsec}

In this section, we prove Theorem \ref{Bthm} from Section \ref{rtintrosec} of the introduction, following the results outlined in Section 4 of \cite{Soi}.

Let $M\in \Perv^\p(Y)^T$ satisfy the hypotheses of Section \ref{Extoverviewsec} with $Q_M$ be the associated extended quiver, and let $\f$ be a framing structure for $M$ of rank $d_\infty$ with $(Q_M^\f,W_M^\f)$ the associated framed quiver with potential, as in Theorem \ref{stackpotthm}. Recall that by the construction in \emph{loc. cit.} we have
\[  X_\dd(Q_M^\f) = \bigoplus_{i,j \in V_Q}  \ _i \Sigma^1_j \otimes \Hom(\bb K^{d_i}, \bb K^{d_j})  \oplus \bigoplus_{i \in V_Q} \ _i\Sigma^1_\infty \otimes \Hom(\bb K^{d_i},\bb K^{d_\infty})  \oplus  \bigoplus_{j \in V_Q} \ _\infty\Sigma^1_j \otimes \Hom(\bb K^{d_\infty}, \bb K^{d_j}) \]
for each $\dd \in \bb N^{V_{Q_Y}}$, as well as the closed subvariety
\[ Z_\dd(Q_M^\f) = \Crit(W^\f_{M,\dd}) \subset  X_\dd(Q_M^\f) \quad\quad \text{invariant under} \quad\quad G_\dd(Q_Y)= \prod_{i\in V_{Q_Y}} \Gl_{d_i}(\bb K)  \ .\]
Throughout this section, we will drop the dependence on the choice of quiver $Q_Y$ and extended quiver $Q_M$ from the notation, and write simply
\[ X^\f_\dd=X_\dd(Q_M^\f) \quad\quad X_\dd = X_\dd(Q_Y) \quad\quad\textup{and} \quad\quad  G_\dd=G_\dd(Q_Y) \ . \]
Further, we fix a choice of stability condition $\zeta \in \bb R^{V_{Q}}$ for the framed quiver with potential $(Q_M^\f, W_M^\f)$ and let $X^{\f,\zeta}_\dd \subset X^\f_\dd$ denote the open subvariety of $\zeta$-stable points.

As in the preceding Section \ref{cohasec}, for $\aaa, \bbb \in \bb N^{V_Q}$, we let $\aaa^0=(\aaa,0), \bbb^\f = (\bbb,d_\infty) \in \bb N^{V_{Q_M^\f}}$ and define
\begin{equation*}
	\mf M^\f_{\aaa,\bbb} = [X^\f_{\aaa,\bbb}/G_{\aaa,\bbb}] \quad\quad \text{where}\quad\quad \begin{cases} X_{\aaa,\bbb}^\f = \big\{ \varphi\in X^\f_{\aaa+\bbb} \ \big|\ \varphi\big(\bigoplus_{e\in E_{Q_M^\f}} \bb K^{a^0_{s(e)} }\big) \subset \bigoplus_{e\in E_{Q_M^\f}} \bb K^{a^0_{t(e)} } \big\} \\
		G_{\aaa,\bbb} = \big\{ g \in G_{\aaa+\bbb} \ \big| \ g\big(\bigoplus_{i\in V_{Q_Y}} \K^{a_i} \big) \subset \bigoplus_{i\in V_{Q_Y}} \K^{a_i}  \big\} 
	\end{cases}\ .
\end{equation*}
Equivalently, these spaces are defined component-wise as
\[X^\f_{\aaa,\bbb} = \prod_{e\in E_{Q_M^\f}} \Hom_{a^0_{s(e)},a^0_{t(e)}}(\K^{a^0_{s(e)}+b^\f_{s(e)}} ,\K^{a^0_{t(e)}+b^\f_{t(e)}} ) \quad\quad \textup{and}\quad\quad G_{\aaa,\bbb} = \prod_{i\in V_{Q_Y}} \Gl_{a_i,b_i}(\K) \]
where $\Hom_{m_1,m_2}(\K^{m_1+n_1}, \K^{m_2+n_2}) $ denotes the space of block upper triangular linear maps, that is, satisfying $\varphi(\K^{m_1}) \subset \K^{m_2}$, and $ \Gl_{m,n}(\K) \subset \Gl_{m+n}(\K)$ is the parabolic subgroup preserving the flag $\{0\} \subset \K^{m} \subset \K^{m+n}$.

Geometrically, $\mf M^\f_{\aaa,\bbb}$ is the moduli stack of short exact sequences of representations of the framed quiver $Q_M^\f$ with sub object and quotient object of dimension $\aaa^0$ and $\bbb^\f$, respectively. Note that since the sub object has framing dimension $0$ and thus is equivalent to a representation of the unframed quiver $Q_Y$. In particular, note that we again have a closed embedding $X^\f_{\aaa,\bbb}\to X^\f_{\aaa+\bbb}$ by definition, as well as a smooth map $X^\f_{\aaa,\bbb}\to X_\aaa\times X^\f_\bbb$ given by forgetting the extension data. These maps define a correspondence of algebraic $\bb K$-stacks
\[ \xymatrix{ & \mf M^\f_{\aaa,\bbb} \ar[dl] \ar[dr] \\ \mf M_{\aaa}\times \mf M^\f_\bbb & & \mf M^\f_{\aaa+\bbb} } \ . \]
Analogously to the previous section, this correspondence will induce the desired module structure on the Borel-Moore homology of $\mf M^{\f,\zeta}$ with coefficients in the sheaf of vanishing cycles determined by $W^\f$, but we must modify the correspondence to ensure it preserves the moduli spaces of $\zeta$-stable representations.

We define subschemes $X_{\aaa,\bbb}^{\f,\zeta_\bbb} , X_{\aaa,\bbb}^{\f,\zeta_{\aaa+\bbb}} , X_{\aaa,\bbb}^{\f,\zeta} \subset X_{\aaa,\bbb}^\f$ by requiring that the diagrams
\[\xymatrix{X_{\aaa,\bbb}^{\f,\zeta_\bbb} \ar[r] \ar[d] & X_\bbb^{\f,\zeta } \ar[d] \\ X_{\aaa,\bbb}^\f \ar[r]  & X^\f_\bbb  } \quad\quad \xymatrix{X_{\aaa,\bbb}^{\f,\zeta_{\aaa+\bbb}} \ar[r] \ar[d] & X_{\aaa+\bbb}^{\f,\zeta } \ar[d] \\ X_{\aaa,\bbb}^\f \ar[r]  & X^\f_{\aaa+\bbb} } \quad\quad
\xymatrix{X_{\aaa,\bbb}^{\f,\zeta} \ar[r] \ar[d] & X_\bbb^{\f,\zeta }  \times X_{\aaa+\bbb}^{\f,\zeta }\ar[d] \\ X_{\aaa,\bbb}^\f \ar[r]  &  X^\f_\bbb \times X^\f_{\aaa+\bbb} } 
   \]
are Cartesian. Note that the subschemes $X_{\aaa,\bbb}^{\f,\zeta_\bbb} , X_{\aaa,\bbb}^{\f,\zeta_{\aaa+\bbb}} , X_{\aaa,\bbb}^{\f,\zeta} \subset X_{\aaa,\bbb}^\f$ are all open, by base change, and we have that $X_{\aaa,\bbb}^{\f,\zeta} = X_{\aaa,\bbb}^{\f,\zeta_{\aaa+\bbb}}\times _{ X_{\aaa,\bbb}^\f }X_{\aaa,\bbb}^{\f,\zeta_\bbb}$. Similarly, the maps
\[ X_{\aaa,\bbb}^{\f,\zeta_\bbb} \to X_\aaa\times X_\bbb^{\f,\zeta} \quad\quad \text{and}\quad\quad  X_{\aaa,\bbb}^{\f,\zeta_{\aaa+\bbb}}  \to  X_{\aaa + \bbb}^{\f,\zeta} \]
are smooth and a closed embedding, respectively, by base change. In order to complete the construction, we will need to make an additional hypothesis on the stability condition $\zeta$:

\begin{defn} A stability condition $\zeta$ is called \emph{compatible} with $M$ if the composition
\[[ X_{\aaa,\bbb}^{\f,\zeta}/G_{\aaa,\bbb}] \to [X_{\aaa,\bbb}^{\f,\zeta_{\aaa+\bbb} }/G_{\aaa,\bbb}] \to [X_{\aaa + \bbb}^{\f,\zeta}/G_{\aaa,\bbb}]   \]
is proper for each $\aaa,\bbb \in \bb N^{V_{Q_Y}}$.
\end{defn}

We now construct the desired representations $\V=\V^{\f,\zeta}(M)$ defined by an object $M\in \DD^b\Coh(Y)^T$ together with a choice of framing structure $\f$ and a compatible stability condition $\zeta$. The underlying graded vector space is given by
\[ \V^{\f,\zeta}(M) = \bigoplus_{\dd\in \bb N^{V_Q}} \V^{\f,\zeta}_\dd(M)  \quad\quad \text{with}\quad\quad \V^{\f,\zeta}_\dd(M) =H_\bullet^{G_\dd(Q)}( X_\dd^{\zeta}(Q_M^\f), \varphi_{W_{M,\dd}^{\f}} )  \ , \]
where we recall that $H_\bullet^G(X,\varphi):=H^\bullet_{G,c}(X,\varphi)^\vee$ denotes the equivariant Borel-Moore homology with coefficients in the sheaf of vanishing cycles, defined as the dual of the compactly supported cohomology with coefficients in the sheaf of vanishing cycles, as in Section \ref{cohasec}. The main result of this section is the following, which closely follows the proof of Theorem \ref{cohaprodthm} above from \cite{KS1}:

Let $Y\to X$ be as in Section \ref{Geosetupsec}, $M\in \DD^b\Coh(Y)^T$ satisfy the hypotheses of Sections \ref{Extoverviewsec} and \ref{framingsec}, $\f \in Z_{d_\infty}(Q_M,R_M)(\bb K)$ be a framing structure on $M$, and $\zeta$ a compatible stability condition. Then we have the following result, which establishes Theorem \ref{Bthm} from the introduction:

\begin{theo}\label{cohareptheo} There exists a natural (twisted $\bb N^{V_Q}$-graded) representation \[ \rho_M: \mc H(Y) \to \End_{\Vect_{\bb N^{V_Q}}}(\V^{\f,\zeta}(M)) \] of the cohomological Hall algebra $\mc H(Y)$ on $\V^{\f,\zeta}(M)$.
\end{theo}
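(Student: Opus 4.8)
The plan is to mirror the construction of the cohomological Hall algebra product from Theorem \ref{cohaprodthm}, but with the second tensor factor replaced by the module space $\V^{\f,\zeta}(M)$ and using the modified correspondence $\mf M_{\aaa}\times \mf M^{\f,\zeta}_{\bbb} \leftarrow \mf M^{\f,\zeta}_{\aaa,\bbb}\to \mf M^{\f,\zeta}_{\aaa+\bbb}$ introduced above. Concretely, I would define the action maps $\rho_{\aaa,\bbb}:\mc H_\aaa(Y)\otimes \V^{\f,\zeta}_\bbb(M)\to \V^{\f,\zeta}_{\aaa+\bbb}(M)[\chi(\aaa,\bbb)-\chi(\bbb,\aaa)]$ via their duals
\[ \rho_{\aaa,\bbb}^\vee: H^\bullet_{G_{\aaa+\bbb},c}(X^{\f,\zeta}_{\aaa+\bbb},\varphi_{W^\f_{M,\aaa+\bbb}}) \to H^\bullet_{G_\aaa,c}(X_\aaa,\varphi_{W_\aaa})\otimes H^\bullet_{G_\bbb,c}(X^{\f,\zeta}_\bbb,\varphi_{W^\f_{M,\bbb}})[\chi(\aaa,\bbb)-\chi(\bbb,\aaa)] \]
as a composite of five maps, exactly as in the algebra case: (1) a restriction-of-equivariance (proper schematic pullback, Equation \ref{pullbackeqn}) along $[X^{\f,\zeta}_{\aaa+\bbb}/G_{\aaa,\bbb}]\to [X^{\f,\zeta}_{\aaa+\bbb}/G_{\aaa+\bbb}]$ with partial-flag fibres $G_{\aaa+\bbb}/G_{\aaa,\bbb}$; (2) pullback along the closed embedding $X^{\f,\zeta_{\aaa+\bbb}}_{\aaa,\bbb}\hookrightarrow X^{\f,\zeta}_{\aaa+\bbb}$; here one uses that $X^{\f,\zeta_{\aaa+\bbb}}_{\aaa,\bbb}\subset X^\f_{\aaa,\bbb}$ is open and that, by \emph{compatibility} of $\zeta$, the relevant map to $[X^{\f,\zeta}_{\aaa+\bbb}/G_{\aaa,\bbb}]$ is proper so the pullback is well-defined; (3) the inverse-pushforward isomorphism (Equation \ref{pushfwdinveqn}) for the affine fibration $[X^{\f,\zeta}_{\aaa,\bbb}/(G_\aaa\times G_\bbb)]\to [X^{\f,\zeta}_{\aaa,\bbb}/G_{\aaa,\bbb}]$ with unipotent fibre $G_{\aaa,\bbb}/(G_\aaa\times G_\bbb)$; (4) pushforward (Equation \ref{pushfwdeqn}) along the smooth forgetting-the-extension map $[X^{\f,\zeta}_{\aaa,\bbb}/(G_\aaa\times G_\bbb)]\to [(X_\aaa\times X^{\f,\zeta}_\bbb)/(G_\aaa\times G_\bbb)]$; (5) the Thom--Sebastiani isomorphism identifying the vanishing-cycle cohomology of the product $X_\aaa\times X^{\f,\zeta}_\bbb$ with $W_\aaa\boxplus W^\f_{M,\bbb}$ with the tensor product. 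The bookkeeping of perverse shifts is identical to the algebra case, producing the claimed $[2\chi(\aaa,\bbb)]$ twist, with $\chi$ the Euler form of the \emph{unframed} quiver $Q_Y$ (the framing vertex contributes no extra grading since the subobject has framing dimension zero).

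The key input that must be checked before this composite is even defined is that the potential restricts correctly along the flag locus, i.e.\ $W^\f_{M,\aaa+\bbb}|_{X^\f_{\aaa,\bbb}}$ is the ``sum'' of $W_{\aaa}$ on the subobject block and $W^\f_{M,\bbb}$ on the quotient block, up to a function depending only on the off-diagonal extension data that vanishes on the critical locus in the fibre direction. This is where the three hypotheses on $M$ from Section \ref{Extoverviewsec} together with the framing-compatibility condition of Equation \ref{framingconditioneqn} enter: the condition $\Ext^{\leq 0}(F,M)=\Ext^{\leq 0}(M,F)=0$ guarantees that there are no arrows into the subobject block from the framing vertex and no potential terms mixing framing-to-subobject with subobject-internal arrows in a way that would obstruct the splitting, while Equation \ref{framingconditioneqn} ensures the $(\infty,\infty)$-block of the Maurer--Cartan/critical equations for $\Sigma_M$ agrees with that for $\Sigma_\infty$, so that the framing structure is compatibly inherited by the quotient. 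Concretely the cyclic potential $W^\f_M$ decomposes, under the block decomposition of each arrow space, into a sum of the three cyclic sub-potentials plus terms each of which contains at least one off-diagonal (sub-to-quotient) arrow, and the Thom--Sebastiani formalism for vanishing cycles of a function of the form $W_1(u)+W_2(v)+Q(u,v,w)$ with $Q$ linear in the off-diagonal variables $w$ reduces to the external tensor product after integrating out $w$; this is exactly the mechanism used in the unframed case, so I would cite the relevant Thom--Sebastiani and dimensional-reduction lemmas rather than reprove them.

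After the action maps are constructed, associativity of the $\mc H(Y)$-action—i.e.\ $\rho_{\aaa,\bbb+\ccc}\circ(\id\otimes \rho_{\bbb,\ccc}) = \rho_{\aaa+\bbb,\ccc}\circ(m_{\aaa,\bbb}\otimes\id)$—follows from the same three-step-flag argument used to prove associativity of the cohomological Hall product in \cite{KS1}: one introduces the moduli stack of two-step filtrations $\mf M^{\f,\zeta}_{\aaa,\bbb,\ccc}$ with subquotients of dimensions $(\aaa,0),(\bbb,0),(\ccc,d_\infty)$, maps it to the various one- and two-step flag stacks, and checks that both composites factor through the pull-push along this common stack; the compatibility of $\zeta$ is again used to ensure all the intervening maps are proper where needed. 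Unitality is immediate since $\mc H_{\mathbf 0}(Y)=H_\bullet(\pt)$ acts by the identity on each $\V^{\f,\zeta}_\dd(M)$.

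The main obstacle I anticipate is step (2)—verifying that the closed-embedding pullback is defined, which is precisely the content of requiring $\zeta$ to be \emph{compatible} with $M$: one needs the map $[X^{\f,\zeta}_{\aaa,\bbb}/G_{\aaa,\bbb}]\to[X^{\f,\zeta}_{\aaa+\bbb}/G_{\aaa,\bbb}]$ (or its variant composed through $X^{\f,\zeta_{\aaa+\bbb}}_{\aaa,\bbb}$) to be proper, and then establishing that this properness genuinely holds for the stability conditions $\zeta_{\DT/\PT}$, $\zeta_\NCDT$, $\zeta_\VW$ of interest. Properness here amounts to a boundedness/semistability statement—a subobject of a $\zeta$-stable framed representation, together with a $\zeta$-stable quotient, is itself bounded and the filtration locus is closed in the stable-quotient locus—which is the analogue of the classical fact that the Hecke correspondences for Nakajima quiver varieties are proper; I would prove it by a valuative-criterion argument using Harder--Narasimhan/semistable reduction for quiver representations adapted to the framed setting, which is the most technical point but is essentially standard once the correspondence diagram is set up. The other steps are formal consequences of the six-functor formalism for vanishing cycles recalled in Section \ref{vanishingbasicsec} and present no new difficulty beyond careful tracking of the cohomological degree shifts.
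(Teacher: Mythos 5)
Your construction is essentially the paper's own proof: the same pull--push composite along the framed flag correspondence (restriction of equivariance, proper pullback supplied by compatibility of $\zeta$, inverse pushforward along the unipotent affine fibration, pushforward along the forgetful map, Thom--Sebastiani), with the same shift bookkeeping and the module axiom established by the standard Kontsevich--Soibelman flag argument. The only caveat is that your middle paragraph overcomplicates and misattributes the splitting of the potential: on the flag locus $W^{\f}_{M,\aaa+\bbb}$ restricts \emph{exactly} to the external sum $W_{\aaa}\boxplus W^{\f}_{M,\bbb}$, because cyclic traces of block-triangular products see only the diagonal blocks and the subobject has framing dimension zero, so no dimensional-reduction step, Ext-vanishing hypothesis, or appeal to Equation \ref{framingconditioneqn} is needed at that point.
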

\begin{proof}

The module structure map $\rho:\mc H\otimes^\tw \V \to \V$ for $\V=\V^{\f,\zeta}(M)$ over the cohomological Hall algebra $\mc H=\mc H(Y)$ is defined in components by
\[  \rho_{\aaa,\bbb}:\mc H_\aaa\otimes \V_\bbb \to \V_{\aaa+\bbb}[\chi(\aaa,\bbb)-\chi(\bbb,\aaa) ]  \ .\] 
The components $\rho_{\aaa,\bbb}$ will be constructed in terms of the equivalent dual maps
\[ \rho_{\aaa,\bbb}^\vee:  H^\bullet_{G_{\aaa+\bbb},c}(X^{\f,\zeta}_{\aaa+\bbb},\varphi_{W^\f_{\aaa+\bbb}} )  \to H^\bullet_{G_\aaa,c}(X_{\aaa},\varphi_{W_\aaa})\otimes H^\bullet_{G_\bbb,c}(X^{\f,\zeta}_{\bbb},\varphi_{W^\f_\bbb}) [\chi(\aaa,\bbb)-\chi(\bbb,\aaa)] \ , \]
or after accounting explicitly for the perverse shifts,
\begin{equation}\label{coharepmapeqn}
	\rho_{\aaa,\bbb}^\vee: H^\bullet_{G_{\aaa+\bbb},c}(X^{\f,\zeta}_{\aaa+\bbb},\phi_{W^\f_{\aaa+\bbb}} \underline{\bb Q}_{X^{\f,\zeta}_{\aaa+\bbb}} ) \to H^\bullet_{G_\aaa,c}(X_{\aaa},\phi_{W_{\aaa}} \underline{\bb Q}_{X_\aaa})\otimes H^\bullet_{G_\bbb,c}(X^{\f,\zeta}_{\bbb},\phi_{W^\f_{\bbb}} \underline{\bb Q}_{X^{\f,\zeta}_\bbb})[  2\chi(\aaa,\bbb) ] \ ,
\end{equation}
where the claimed cohomological degree shift again follows from the equality
\[ \chi(\aaa+\bbb,\aaa+\bbb) - \chi(\aaa,\aaa) - \chi(\bbb,\bbb) +\chi(\aaa,\bbb)-\chi(\bbb,\aaa)  = 2\chi(\aaa,\bbb)  \ .\]
We now proceed with the construction of the desired maps:

There is again a restriction of equivariance map
\[ H^\bullet_{G_{\aaa+\bbb},c}(X^{\f,\zeta}_{\aaa+\bbb},\phi_{W^\f_{\aaa+\bbb}} \underline{\bb Q}_{X^{\f,\zeta}_{\aaa+\bbb}} ) \to  H^\bullet_{G_{\aaa,\bbb},c}(X^{\f,\zeta}_{\aaa+\bbb},\phi_{W^\f_{\aaa+\bbb}} \underline{\bb Q}_{X^{\f,\zeta}_{\aaa+\bbb}} ) \]
given by the proper pullback map of Equation \ref{pullbackeqn} induced by the map
\[ X_{\aaa+\bbb}/ G_{\aaa,\bbb} \to  X_{\aaa+\bbb} / G_{\aaa+\bbb} \]
which is proper and schematic, as its fibres are isomorphic to the partial flag variety $G_{\aaa+\bbb} /G_{\aaa,\bbb}$. Similarly, pullback along the map $[X^{\f,\zeta}_{\aaa,\bbb}/G_{\aaa,\bbb}]\to [X^{\f,\zeta}_{\aaa+\bbb}/G_{\aaa,\bbb}]$, which is proper by the compatibility of $\zeta$, gives a map
\[H^\bullet_{G_{\aaa,\bbb},c}(X^{\f,\zeta}_{\aaa+\bbb},\phi_{W^\f_{\aaa+\bbb}} \underline{\bb Q}_{X^{\f,\zeta}_{\aaa+\bbb}} ) \to H^\bullet_{G_{\aaa,\bbb},c}(X^{\f,\zeta}_{\aaa,\bbb},\phi_{W^\f_{\aaa,\bbb}} \underline{\bb Q}_{X^{\f,\zeta}_{\aaa,\bbb}} ) \ . \]

Next, there is again a further restriction of equivariance map
\[ H^\bullet_{G_{\aaa,\bbb},c}(X^{\f,\zeta}_{\aaa,\bbb},\phi_{W^\f_{\aaa,\bbb}} \underline{\bb Q}_{X^{\f,\zeta}_{\aaa,\bbb}} ) \to H^\bullet_{G_{\aaa}\times G_\bbb,c}(X^{\f,\zeta}_{\aaa,\bbb},\phi_{W^\f_{\aaa,\bbb}} \underline{\bb Q}_{X^{\f,\zeta}_{\aaa,\bbb}} )[2(\dim G_{\aaa,\bbb} - \dim G_\aaa\times G_\bbb ) ]  \]
given by the inverse pushforward map, as in Equation \ref{pushfwdinveqn}, induced by the affine fibration
\[ X^{\f,\zeta}_{\aaa,\bbb}/(G_\aaa\times G_\bbb) \to X^{\f,\zeta}_{\aaa,\bbb}/ G_{\aaa,\bbb}   \]
with fibres given by the unipotent group $G_{\aaa,\bbb}/(G_\aaa\times G_\bbb)$.

There is also a pushforward map
\[H^\bullet_{G_{\aaa}\times G_\bbb,c}(X^{\f,\zeta}_{\aaa,\bbb},\phi_{W^\f_{\aaa,\bbb}} \underline{\bb Q}_{X^{\f,\zeta}_{\aaa,\bbb}} ) \to H^\bullet_{G_{\aaa}\times G_\bbb,c}(X^{\f,\zeta_\bbb}_{\aaa,\bbb},\phi_{W^\f_{\aaa,\bbb}} \underline{\bb Q}_{X^{\f,\zeta_\bbb}_{\aaa,\bbb}} )  \]
induced as in Equation \ref{pushfwdeqn} by the map $X^{\f,\zeta}_{\aaa,\bbb} \to X^{\f,\zeta_\bbb}_{\aaa,\bbb}$ since it is an open immersion and thus smooth. Similarly, there is again a pushforward map
\[ H^\bullet_{G_{\aaa}\times G_\bbb,c}(X_{\aaa,\bbb}^{\f,\zeta_\bbb},\phi_{W^\f_{\aaa,\bbb}} \underline{\bb Q}_{X^{\f,\zeta_\bbb}_{\aaa,\bbb}} ) \to H^\bullet_{G_{\aaa}\times G_\bbb,c}(X_{\aaa} \times X^{\f,\zeta}_\bbb, \phi_{W_{\aaa}\boxtimes W^\f_\bbb} \underline{\bb Q}_{X_{\aaa} \times X^{\f,\zeta_\bbb}_\bbb} )[-2(\dim X^\f_{\aaa,\bbb} - \dim X_\aaa\times X^\f_\bbb ) ]  \]
induced by the smooth map
\[ X^{\f,\zeta_\bbb}_{\aaa,\bbb} / (G_\aaa \times G_\bbb) \to (X_{\aaa}\times X^{\f,\zeta}_\bbb) / (G_\aaa\times G_\bbb ) \ .\]
Composing the above sequence of maps with the Thom-Sebastiani isomorphism
\[H^\bullet_{G_{\aaa}\times G_\bbb,c}(X_{\aaa} \times X^{\f,\zeta}_\bbb, \phi_{W_{\aaa}\boxtimes W^\f_\bbb} \underline{\bb Q}_{X_{\aaa} \times X^{\f,\zeta}_\bbb} ) \cong H^\bullet_{G_\aaa,c}(X_{\aaa},\phi_{W_{\aaa}} \underline{\bb Q}_{X_\aaa})\otimes H^\bullet_{G_\bbb,c}(X^{\f,\zeta}_{\bbb},\phi_{W^\f_{\bbb}} \underline{\bb Q}_{X^{\f,\zeta}_\bbb}) \ ,  \]
gives the desired map of Equation \ref{coharepmapeqn}, where the correct shift follows from the equality of Equation \ref{cohaproddegeqn}. Commutativity of the diagram
\[ \xymatrix@C=5pc{ \mc H \otimes^\tw \mc H \otimes^\tw \V \ar[r]^{\id_{\mc H}\otimes \rho  } \ar[d]^{m_{\mc H}\otimes \id_{\V}} & \mc H\otimes^\tw \V \ar[d]^{\rho}  \\  \mc H \otimes^\tw \V \ar[r]^{\rho} & \V} \]
follows from a standard argument, similar to that of the associativity of the product on the cohomological Hall algebra; see for example Section 2.3 of \cite{KS1}.
\end{proof}

\subsection{Enumerative invariants from perverse coherent extensions}\label{invtsec}

The vector spaces $\bb V_M^\zeta(Y)$ underlying the modules constructed in the preceding section should be understood as a family of categorified enumerative invariants of the threefold $Y$, which are heuristically given by counting the number of $\zeta$-stable iterated extensions of the object $M$ with compactly supported perverse coherent sheaves. We now formally define the corresponding enumerative invariants and recall their relation with geometry, which follow from the seminal results of \cite{Beh} and \cite{BehFan}.

Let $M\in \Perv^\p(Y)^T$, $\f$ a framing structure for $M$ of rank $d_\infty$, and $(Q_M^\f,W_M^\f)$ the associated framed quiver with potential, and $\zeta$ a stability condition on $X^\f(Q_M)$ as in the preceding section, and recall that the graded vector space underlying $\bb V_M^\zeta(Y)$ is given by
\[ \V^{\f,\zeta}(M) = \bigoplus_{\dd\in \bb N^{V_Q}} \V^{\f,\zeta}_\dd(M)  \quad\quad \text{with}\quad\quad \V^{\f,\zeta}_\dd(M) =H_\bullet^{G_\dd(Q)}( X_\dd^{\zeta}(Q_M^\f), \varphi_{W_{M,\dd}^{\f}} )  \ . \]

We define the generating functional for the corresponding enumerative invariants by
\begin{equation}\label{genfuneqn}
	 \Zz_M^{\f,\zeta}(\qq) = \sum_{\dd \in \bb N^{V_{Q_Y}}}\qq^\dd \chi(\bb V^{\f,\zeta}_\dd(M))  \quad \in \Pq \ ,
\end{equation}
where we introduce the shorthand notation $\qq^\dd=\prod_{i\in V_{Q_Y}} q_i^{d_i}$ and $\Pq= \bb Z[\![q_i]\!]_{i\in V_{Q_Y}} $, and $\chi$ denotes the Euler characteristic of the cohomologically graded vector space.

Now, suppose that the stability condition $\zeta$ is chosen so that there are no strictly semi-stable points and $\mf M^{ \zeta}_\dd(Q_M^\f)= X^{\zeta}_\dd(Q_M^\f)/G_\dd(Q)$ is a smooth scheme for each $\dd\in \bb N^{V_{Q_Y}}$, so that the function $W_{M,\dd}^\f:X^{ \zeta}_\dd(Q_M^\f)\to \bb K$ descends to a regular function $\overline{W}_{M,\dd}^\f:\mf M^{ \zeta}_\dd(Q_M^\f) \to \bb K $, and we have that
\[ \mf M^\zeta_\dd(Q_M^\f, W_M^\f) = \left[ \Crit(W_{M,\dd}^\f|_{X_{\dd}^{ \zeta}(Q_M^\f)}) / G_\dd(Q_Y) \right] = \Crit(\overline{W}_M^\f) \subset \mf M^{ \zeta}_\dd(Q_M^\f)  \ ,  \]
that is, the moduli space $\mf M^\zeta_\dd(Q_M^\f, W_M^\f)$ of $\zeta$-stable representations of the quiver with potential $(Q_M^\f,W_M^\f)$ is a global critical locus of a regular function on a smooth scheme. In particular, we have
\begin{equation}
\V^{\f,\zeta}_\dd(M) =H_\bullet^{G_\dd(Q)}( X_\dd^{\zeta}(Q_M^\f), \varphi_{W_{M,\dd}^{\f}} ) = H_\bullet( \mf M_\dd^{\zeta}(Q_M^\f), \varphi_{\overline{W}_{M,\dd}^\f})  \ .
\end{equation}

Moreover, the results of \cite{Beh} imply that $\mf M^\zeta_\dd(Q_M^\f, W_M^\f)$ admits a symmetric obstruction theory inducing a virtual fundamental class $[\mf M^\zeta_\dd(Q_M^\f, W_M^\f)]^\textup{vir} \in H_\bullet(\mf M^\zeta_\dd(Q_M^\f, W_M^\f))$ and corresponding invariants
\[  \tilde{\mc Z}^{\f,\zeta}_M(\qq) = \sum_{\dd\in \bb N^{V_{Q_Y}}} \qq^\dd  \int_{ [\mf M^\zeta_\dd(Q_M^\f, W_M^\f)]^\textup{vir}} 1  \quad\in \Pq   \ , \]
which heuristically count the number of points in $\mf M^\zeta_\dd(Q_M^\f, W_M^\f)$, such that these invariants can be computed as
\[\int_{ [\mf M^\zeta_\dd(Q_M^\f, W_M^\f)]^\textup{vir}} 1  = \chi(\mf M^\zeta_\dd(Q_M^\f, W_M^\f), \nu_{M,\dd}^\zeta ):= \sum_{n\in \bb Z} n \  \chi((\nu_{M,\dd}^\zeta)^{-1}\{n\} ) \  , \]
the weighted Euler characteristic of $\mf M^\zeta_\dd(Q_M^\f, W_M^\f)$ with respect to a constructible function
\[\nu_{M,\dd}^\zeta: \mf M^\zeta_\dd(Q_M^\f, W_M^\f) \to \bb Z \ . \]
Moreover, it is shown in \emph{loc. cit.} that in the case of a critical locus, this constructible function can be computed locally in terms of the Euler characteristic of the Milnor fibre of the potential, inducing an identification
\[ \chi(\mf M^\zeta_\dd(Q_M^\f, W_M^\f), \nu_{M,\dd}^\zeta ) = \chi( \V^{\f,\zeta}_\dd(M)) \quad\quad \textup{and thus} \quad\quad   \Zz_M^{\f,\zeta} =  \tilde{\mc Z}^{\f,\zeta}_M \ .\]

We now recall some additional hypotheses that allow for a concrete combinatorial approach to calculating these invariants, and which are valid in many examples of interest. Let $\tilde T\subset T$ denote the maximal torus of $T$ which preserves the Calabi-Yau structure on $Y$, and recall that there is a natural action of $T$ on $\mf M_\dd^{\zeta,\f}(Q_M)$ induced by the action on the underlying threefold $Y$, such that the subtorus $\tilde T$ preserves the potential $W_M^\f$ and thus naturally acts on the moduli space $\mf M^\zeta_\dd(Q_M^\f, W_M^\f)$.

Additionally, let $T_\f$ be a maximal torus of $G^\cn_\f = \textup{Stab}_{\Gl_{d_\infty}(\bb K)}(\f)$ the subgroup of $\Gl_{d_\infty}(\bb K)$ stabilizing the point $\f \in Z_{d_\infty}(Q_\infty,R_\infty)$, and let $A=\tilde T\times T_\f$ which acts naturally on $X_\dd^\f(Q_M)$ such that the potential $W_{M,\dd}^\f$ is $A$ invariant and this action commutes with that of $G_\dd$. Thus, $A$ also acts naturally on $Z_\dd^\f(Q_M,R_M)$, as well as the quotient stacks $\mf M_\dd^\f(Q_M)$ and $\mf M_\dd(Q_M^\f,W_M^\f)$, and similarly we obtain actions on $\mf M_\dd^{\f,\zeta}(Q_M)$ and $\mf M_\dd^\zeta(Q_M^\f,W_{M}^\f)$ if $A$ preserves $\zeta$-stability. Throughout the remainder of this section, we make the following hypothesis:

\begin{hypo}\label{lochypo} We assume that the $A$-fixed subvariety $ \mf M^\zeta_\dd(Q_M^\f, W_M^\f)^A$ is a finite set $\Fp_\dd$ of isolated fixed points.
\end{hypo}

For simplicity of presentation in the following corollary, we also assume that for each $\dd \in \bb N^{V_{Q_Y}}$, the dimensions of the tangent spaces to the $A$ fixed points in the component $\mf M^\zeta_\dd(Q_M^\f, W_M^\f)$ are all of the same parity $(-1)^{k_\dd}$ for $k_\dd\in \bb Z/ 2\bb Z$.

Under the preceding hypotheses, we can compute the above enumerative invariants using the localization results of \cite{BehFan}, so that we obtain:
\begin{corollary}\label{fpcoro} The above generating function is given by
\[ \mc Z_M^\zeta(\qq) = \sum_{\dd\in \bb N^{V_{Q_Y}}} \qq^\dd  (-1)^{k_\dd} \ |\mf M^\zeta_\dd(Q_M^\f, W_M^\f)^A|     \quad \in \Pq \ , \]
where $|\mf M^\zeta_\dd(Q_M^\f, W_M^\f)^A| \in \bb Z$ denotes the cardinality of the set of $A$ fixed points.
\end{corollary}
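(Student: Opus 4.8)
The plan is to deduce this corollary directly from the preceding discussion together with the Behrend--Fantechi localization formula \cite{BehFan} for virtual counts under torus actions, applied to the symmetric obstruction theory on $\mf M^\zeta_\dd(Q_M^\f,W_M^\f)$ coming from \cite{Beh}. First I would recall, as already established in the text above, that under the stated smoothness and stability hypotheses the moduli space $\mf M^\zeta_\dd(Q_M^\f,W_M^\f)=\Crit(\overline W{}_{M,\dd}^\f)$ is a global critical locus of a regular function on the smooth quotient scheme $\mf M^{\zeta}_\dd(Q_M^\f)$, so that it carries a canonical symmetric obstruction theory with virtual fundamental class $[\mf M^\zeta_\dd(Q_M^\f,W_M^\f)]^\textup{vir}$, and that the generating function $\mc Z_M^\zeta(\qq)=\Zz_M^{\f,\zeta}(\qq)=\tilde{\mc Z}^{\f,\zeta}_M(\qq)$ of the previous paragraphs computes precisely $\sum_\dd \qq^\dd \int_{[\mf M^\zeta_\dd(Q_M^\f,W_M^\f)]^\textup{vir}} 1$. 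So the problem reduces to computing the degree of the virtual class of each component.

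Next I would invoke the torus action: by Hypothesis \ref{lochypo}, the torus $A=\tilde T\times T_\f$ acts on $\mf M^\zeta_\dd(Q_M^\f,W_M^\f)$ (preserving $\zeta$-stability, and with $\tilde T$ preserving the potential and $T_\f$ acting through its stabilizer-torus action) with isolated fixed point set $\Fp_\dd$, a finite set. The Behrend--Fantechi virtual localization theorem for a symmetric obstruction theory then expresses $\int_{[\mf M^\zeta_\dd(Q_M^\f,W_M^\f)]^\textup{vir}} 1$ as a sum over fixed points $p\in \Fp_\dd$, where the contribution of each isolated fixed point is $(-1)^{\dim T_p \mf M^\zeta_\dd(Q_M^\f,W_M^\f)}$ --- the sign of the parity of the dimension of the Zariski tangent space at $p$. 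This is the content of the main theorem of \cite{BehFan}: for a symmetric obstruction theory with isolated fixed points, each fixed point contributes $\pm 1$ given by this parity, the virtual normal bundle contributions canceling in pairs against the tangent-obstruction symmetry. Under the additional simplifying assumption stated just before the corollary --- that all tangent spaces at fixed points in a given component $\mf M^\zeta_\dd(Q_M^\f,W_M^\f)$ have the same parity $(-1)^{k_\dd}$ --- every fixed point in that component contributes the same sign $(-1)^{k_\dd}$, so the sum collapses to $(-1)^{k_\dd}\,|\Fp_\dd| = (-1)^{k_\dd}\,|\mf M^\zeta_\dd(Q_M^\f,W_M^\f)^A|$. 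Assembling over all $\dd\in\bb N^{V_{Q_Y}}$ with the formal variables $\qq^\dd$ yields the claimed formula in $\Pq$.

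The only genuine point requiring care --- and the step I expect to be the main obstacle to present cleanly --- is verifying the hypotheses of \cite{BehFan} in the present equivariant setting: namely that the symmetric obstruction theory on the critical locus is $A$-equivariant, that $A$-fixed points being isolated in the scheme-theoretic sense is what is being assumed in Hypothesis \ref{lochypo}, and that the degree-zero virtual count is indeed insensitive to the choice of equivariant lift (so that one may evaluate at generic equivariant parameters where localization applies). All of these are standard for critical loci of $A$-invariant potentials on smooth $A$-schemes --- the obstruction theory $[T^\vee X \to \Omega^1_X]$ restricted to $\Crit$ is manifestly equivariant, and the Milnor-fibre/Behrend-function computation is local and $A$-equivariant --- so this amounts to citing \cite{Beh,BehFan} and checking compatibility, rather than any new argument. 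I would therefore state the proof as: apply the identification of $\mc Z_M^\zeta$ with the virtual count established above, then apply Behrend--Fantechi localization \cite{BehFan} together with Hypothesis \ref{lochypo} and the uniform-parity assumption to evaluate each $\int_{[\mf M^\zeta_\dd]^\textup{vir}} 1 = (-1)^{k_\dd}|\mf M^\zeta_\dd(Q_M^\f,W_M^\f)^A|$, and sum.
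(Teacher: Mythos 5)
Your proposal is correct and follows essentially the same route as the paper: the paper's proof simply cites Theorem 3.4 and Corollaries 3.5 and 3.6 of \cite{BehFan} (together with the identification of $\mc Z_M^\zeta$ with the weighted Euler characteristic/virtual count from \cite{Beh} established just above), which is exactly the localization-to-isolated-fixed-points argument with signs given by tangent-space parity that you spell out.
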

\begin{proof} Under the given hypotheses, this follows from Theorem 3.4 and Corollaries 3.5 and 3.6 of \emph{loc. cit.}.
\end{proof}

In fact, our description of the set of $T$ fixed points induced by the equivalence of Equation \ref{modulistackfpeqn} together with Theorem \ref{heartthm} gives a natural enumeration of the fixed points in terms of objects in $\filt(\Sigma_M)$, or equivalently in the category of twisted objects $H^0(\tw^0(\mc A_{F\oplus M}))$.

\subsection{Towards representations of shifted quiver Yangians}\label{quiveryangsec}

In this section, we outline the construction of an extension of the geometric action of $\mc H(Y)$ on $\mc V^\zeta(M)$ to the action of a larger associative algebra, called a shifted quiver Yangian in \cite{LiYam,GLY}, following the approach of \cite{RSYZ} and in turn \cite{Ts1}, \cite{SV}, \cite{Var} and \cite{Nak2}.

Let $F$ denote the field of fractions of the polynomial ring
\[  H_A^\bullet(\pt)=\bb K[\hbar_1,\hbar_2,\hbar_3,\xx]/(\hbar_1+\hbar_2+\hbar_3) \ , \]
 where we recall that $A=\tilde T \times T_\f$ is the product of $\tilde T\subset T$ the maximal torus of $T$ which preserves the Calabi-Yau structure on $Y$ and $T_\f$ a maximal torus of $G^\cn_\f = \textup{Stab}_{\Gl_{d_\infty}(\bb K)}(\f)$, and we have introduced the identifications $H_{T_\f}(\pt)=\bb K[\xx]:=\bb K[x_j]_{j=1,..,\rk(T_\f)}$ and $H^\bullet_T(\pt)=\bb K[\hbar_1,\hbar_2,\hbar_3]$ such that the Calabi-Yau structure is of weight $(1,1,1)$ in the corresponding grading.

Throughout this section, all constructions are given over the base ring $F$ by default, though this will be occasionally be suppressed in the notation. In particular, we let
\[\mc H=\mc H(Q_Y,W_Y)^A\in \Alg_\Ass(\Vect_{F,\bb N^{V_Q}}^{\otimes^\tw}) \quad\quad \text{and}\quad\quad \V^\zeta=\V^\zeta(M)^A \in \mc H\Mod(\Vect_{F, \bb N^{V_Q}}^{\otimes^\tw}) \]
denote the localized, $A$-equivariant analogues of the objects introduced under this notation in sections \ref{cohasec} and \ref{coharepsec}, respectively. In particular, for each $\dd\in \bb N^{V_{Q_Y}}$ the underlying vector spaces of the degree $\dd$ components are given by
\[ \mc H_\dd = H_\bullet^{G_\dd\times A}(X_\dd,\varphi_{W_\dd})\otimes_{H^\bullet_A(\pt)} F  \quad\quad \text{and}\quad\quad  \V_\dd^\zeta = H_\bullet^{G_\dd\times A}( X_\dd^{\f,\zeta}, \varphi_{W_{M,\dd}^{\f}} )\otimes_{H^\bullet_A(\pt)} F  \ .\]

We define the universal Cartan algebra $\mc H^0$ as the polynomial algebra
\[ \mc H^0=\mc H^0(Y) = F[\phi_n^{i} ]^{i\in I}_{n\in \bb N}  \quad\quad \textup{and let} \quad\quad \phi^i(z) =1 + \sigma_3 \sum_{n=0}^\infty \phi^i_n z^{-n-1} \ \in \mc H^0 \lP z^{-1} \rP   \  \]
be a formal generating functional for the variables $\phi_n^i$, which we will use as a shorthand to write equations involving these variables.

Recall that the analogue of the Chern polynomial in this setting defines a map
\[c_{1/z}:K_{G_\dd\times A}(X_\dd^{\f,\zeta})\to \End_{H^\bullet_{G_\dd\times A}(\pt)}(H_\bullet^{G_\dd\times A}( X_\dd^{\f,\zeta}, \varphi_{W_{M,\dd}^{\f}} )\otimes_{H^\bullet_A(\pt)} F  )\lP z^{-1}\rP \ ;\]
it can be defined for example on vector bundles $E\in K_{G_\dd\times A}(X_\dd^{\f,\zeta}) $ by
\[ c_{1/z}(E)= \sum_{n=1}^{\textup{rk}(E)} c_n(E) z^{-n} := z^{-\text{rk}(E)} \textup{Eu}(E\otimes q) =  \prod_{i=1}^{\textup{rk}(E)} (1+\frac{\ee_i}{z})  \]
where $\ee_1,...,\ee_{\text{rk}(E)}\in H_\bullet^{G_\dd\times A}( X_\dd^{\f,\zeta}(Q_M), \varphi_{W_{M,\dd}^{\f}} ) $ denote the Chern roots of $E$ and
\[\hspace*{-1cm} \textup{Eu}(E\otimes q)=s^*s_* \  \in \End_{H^\bullet_{G_\dd\times A\times \bb C^\times}(\pt)}(H_\bullet^{G_\dd\times A\times \bb C^\times}(X_\dd^{\f,\zeta},\varphi_{W_{M,\dd}^{\f}} ))\cong \End_{H^\bullet_{G_\dd\times A}(\pt)}(H_\bullet^{G_\dd\times A}(X_\dd^{\f,\zeta},,\varphi_{W_{M,\dd}^{\f}}))[z] \]
denotes the $G_\dd\times A\times \bb C^\times$-equivariant Euler class, defined in terms of $s:X_\dd^{\f,\zeta} \to |E\otimes q|$ the zero section, where $\C^\times$ acts trivially on $X$, $q$ denotes its standard one dimensional, weight one representation, and we identify $H^\bullet_{\bb C^\times}(\pt)\cong \bb C[z]$.

For each $\dd\in \bb N^{V_Q}$, there are $G_\dd\times A$ equivariant tautological bundles $\mc V^i_\dd$ on $X_\dd^{\f,\zeta}$ of rank $d_i$ for each $i\in I_M$ which define classes in $K_{G_\dd\times A}(X_\dd^{\f,\zeta})$, in terms of which we can define the canonical classes $\mc F_\dd^i$ by
\[  \mc F_\dd^i = \ _i \Sigma\otimes_{S_M} \mc V_\dd = \bigoplus_{j\in I_M} \ _i \Sigma_j\otimes \mc V_\dd^j  \ \in K_{G_\dd\times A}(X_\dd^{\f,\zeta}) \]
for each $i\in I_M= V_{Q_Y} \cup \{\infty\}$, where we view $\Sigma$ with its cohomological and $T$ gradings as a complex of representations of $A$.

We now define an action $\mc H^0\to \End_F(\V^\zeta)$ of the universal Cartan algebra $\mc H^0$ on $\V^\zeta$ in components
\[ \mc H^0 \to  \End_{F}(H_\bullet^{G_\dd\times A}(X_\dd^{\f,\zeta},\varphi_{W_{M,\dd}^{\f}}) \otimes_{H^\bullet_A(\pt)} F  ) \ ,\]
by the action of the generators $\phi^i_n$ on $\V^\zeta_\dd(M)$, according to the formula
\[ \phi^i(z) \mapsto c_{-1/z}(\mc F_\dd^i) \ \in \End_{F}(H_\bullet^{G_\dd\times A}( X_\dd^{\f,\zeta}, \varphi_{W_{M,\dd}^{\f}} )\otimes_{H^\bullet_A(\pt)} F )\lP z^{-1}\rP \  , \]
for each $i\in V_{Q_Y}$. 

Recall that the representation structure map $\mc H \to \End(\V^\zeta)$ constructed in Theorem \ref{cohareptheo} is defined in terms of the maps
\[\xymatrix{ & X^{\f,\zeta}_{\aaa,\bbb} \ar[dl]_{\q} \ar[dr]^{\p} \\ X_{\aaa}\times X^{\f,\zeta}_\bbb & & X^{\f,\zeta}_{\aaa+\bbb} }  \ . \]
The relation between the actions of $\mc H$ and $\mc H^0$ on $\V^\zeta$ is determined by the difference
\[ \p^* \mc F_{\aaa+\bbb}^i - \q^*( \mc O_{X_a}\boxtimes \mc F_{\bbb}^i) = q^*(\mc G^i_\aaa \boxtimes \mc O_{ X^{\f,\zeta}_\bbb } )  \]
in terms of a class $\mc G^i_\aaa\in K_{G_\aaa}(X_\aaa)$. In particular, we assume that the Chern polynomial of $\mc G^i_\aaa$ is given by
\[   c_{-1/z}(\mc G_\aaa^i)  =  P^i_\aaa(\hbar_1,\hbar_2,\xx,\ee_k,z)^-  \quad\in \End_{H^\bullet_{G_\dd\times A}(\pt)}(\ H_\bullet^{G_\aaa\times A}(X_\aaa,\varphi_{W_\aaa})\otimes_{H^\bullet_A(\pt)} F ) \lP z^{- 1} \rP  \ ,   \]
the power series expansion in $z^{-1}$ of a rational function
\[ P^i_\aaa=P^i_\aaa(z) = P^i_\aaa(\hbar_1,\hbar_2,\xx,\ee_k,z) \ \in  F(\{\ee_k\}_{k=1}^{\rk(\mc V_\aaa^i)},z) \]
with coefficients in the Chern roots $\ee_k$ of $\mc V_\aaa$ over the base field $F$.

Thus, we similarly define an action $\mc H^0 \to \End_F(\mc H)$ of the universal Cartan algebra $\mc H^0$ on the cohomological Hall algebra $\mc H$ in components
\begin{equation}\label{cartancohaacteqn}
	\mc H^0 \to \End_{H^\bullet_{G_\dd\times A}(\pt)}(H_\bullet^{G_\dd\times A}( X_\dd, \varphi_{W_{\dd}} )\otimes_{H^\bullet_A(\pt)} F ) \quad\quad \text{by}\quad\quad \phi^i(z) \mapsto c_{-1/z} (\mc G_\aaa^i) \ , 
\end{equation}
and we define the \emph{extended cohomological Hall algebra} of $Y$ as the semidirect product algebra
\[\mc H^{\geq 0}= \mc H^0\ltimes \mc H  \ \] with respect to this action. Then we obtain the following Proposition:

\begin{prop} The object $\V^\zeta$ admits a canonical (twisted $\bb N^{V_Q}$-graded) module structure over the extended cohomological Hall algebra $\mc H^{\geq 0}$, that is, there is a natural algebra map
\[ \mc H^{\geq 0} \to \End_F(\V^\zeta)  \ ,\]
extending the representation of $\mc H^0$ in Equation \ref{cartancohaacteqn} and that of $\mc H$ from Theorem \ref{cohareptheo}.
\end{prop}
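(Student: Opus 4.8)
The plan is to assemble the claimed $\mc H^{\geq 0}$-module structure on $\V^\zeta$ from the two actions already in hand---that of $\mc H^0$ via $\phi^i(z)\mapsto c_{-1/z}(\mc F^i_\dd)$ and that of $\mc H$ from Theorem \ref{cohareptheo}---and the only genuine content is to check that these two actions are compatible in the sense required to extend to the semidirect product $\mc H^0\ltimes \mc H$. Concretely, recalling that a module over a semidirect product $A\ltimes B$ (for $A$ acting on $B$ by $\sigma:A\to \mathrm{Aut}(B)$) is the same as a vector space $V$ with commuting-up-to-$\sigma$ actions, i.e. with $a\cdot(b\cdot v) = (\sigma(a)(b))\cdot(a\cdot v)$, the task is to verify the single relation
\[ \phi^i(z)\cdot \big( x\cdot v\big) \ = \ \big(\sigma(\phi^i(z))(x)\big)\cdot \big(\phi^i(z)\cdot v\big) \]
for $x\in \mc H_\aaa$, $v\in \V^\zeta_\bbb$, $i\in V_{Q_Y}$, where $\sigma$ is the action of Equation \ref{cartancohaacteqn}. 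Here I would first record the precise bookkeeping: on $\V^\zeta_\dd$ the operator $\phi^i(z)$ is multiplication by $c_{-1/z}(\mc F^i_\dd)$, and the correspondence defining $\rho_{\aaa,\bbb}$ uses the maps $\q:X^{\f,\zeta}_{\aaa,\bbb}\to X_\aaa\times X^{\f,\zeta}_\bbb$ and $\p:X^{\f,\zeta}_{\aaa,\bbb}\to X^{\f,\zeta}_{\aaa+\bbb}$.

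The key step is the cohomology-class identity $\p^*\mc F^i_{\aaa+\bbb} - \q^*(\mc O_{X_\aaa}\boxtimes \mc F^i_\bbb) = \q^*(\mc G^i_\aaa\boxtimes \mc O_{X^{\f,\zeta}_\bbb})$ asserted in the text just above the statement, which holds because the tautological bundle $\mc V^j_{\aaa+\bbb}$ restricted along $\p$ sits in a short exact sequence with subbundle the tautological bundle for the subrepresentation (of dimension $\aaa^0$, hence a bundle pulled back from $X_\aaa$) and quotient the tautological bundle for the quotient representation (of dimension $\bbb^\f$, hence pulled back from $X^{\f,\zeta}_\bbb$); tensoring with $\ _i\Sigma_j$ and summing over $j$ gives the identity with $\mc G^i_\aaa = \ _i\Sigma\otimes_{S_M}\mc V_\aaa$. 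Applying $c_{-1/z}$, which is multiplicative on short exact sequences, converts this into a \emph{multiplicative} identity
\[ \p^*c_{-1/z}(\mc F^i_{\aaa+\bbb}) \ = \ \q^*\big(c_{-1/z}(\mc G^i_\aaa)\boxtimes c_{-1/z}(\mc F^i_\bbb)\big) \]
as operators (by cup product) on $H^\bullet_{G_{\aaa,\bbb}\times A,c}(X^{\f,\zeta}_{\aaa,\bbb},\varphi)$, and then the desired intertwining relation follows from the projection formula: each map in the zig-zag building $\rho_{\aaa,\bbb}$ (the proper pullbacks, the inverse affine-fibration pushforwards, the smooth pushforward, and Thom--Sebastiani) commutes with cup product by a pulled-back class, so I can slide $c_{-1/z}(\mc F^i_{\aaa+\bbb})$ along the correspondence, converting multiplication by it on the target $\V^\zeta_{\aaa+\bbb}$ into multiplication by $c_{-1/z}(\mc G^i_\aaa)$ on the $\mc H_\aaa$ factor (which is exactly $\sigma(\phi^i(z))$ by definition of Equation \ref{cartancohaacteqn}) together with multiplication by $c_{-1/z}(\mc F^i_\bbb)$ on the $\V^\zeta_\bbb$ factor (which is $\phi^i(z)$ acting on $\V^\zeta_\bbb$). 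Rationality/expansion issues are harmless since everything is being done after $\otimes_{H^\bullet_A(\pt)}F$ and in formal power series in $z^{-1}$.

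Finally I would observe that the $\mc H^0$-action is manifestly well-defined---$\mc H^0$ is a free polynomial algebra, so no relations to check---and that it is compatible with the twisted $\bb N^{V_Q}$-grading since each $\phi^i_n$ preserves the degree $\dd$ component. Combining this with the $\mc H$-module structure of Theorem \ref{cohareptheo} and the compatibility relation just established gives the algebra map $\mc H^{\geq 0}=\mc H^0\ltimes \mc H\to \End_F(\V^\zeta)$, as claimed. The main obstacle I anticipate is purely technical: carefully propagating the cup-product-by-pulled-back-class identity through every arrow of the Hall-module correspondence---in particular checking that the inverse pushforward maps of Equation \ref{pushfwdinveqn} and the open-immersion pushforwards respect the projection formula with the correct perverse/cohomological shifts---rather than any conceptual difficulty; this is the same bookkeeping that underlies the associativity proof of Theorem \ref{cohaprodthm} and the module axioms of Theorem \ref{cohareptheo}, and I would present it in that style (``by a standard argument, cf. Section 2.3 of \cite{KS1}'') rather than spelling out each diagram.
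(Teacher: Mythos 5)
Your proposal is correct and matches the paper's approach: the paper gives no details beyond citing Proposition 6.2.2 of \cite{RSYZ}, and the argument you spell out — the filtration of tautological bundles on the correspondence giving $\p^*\mc F^i_{\aaa+\bbb}=\q^*(\mc G^i_\aaa\boxtimes\mc O)+\q^*(\mc O\boxtimes\mc F^i_\bbb)$ with $\mc G^i_\aaa={}_i\Sigma\otimes\mc V_\aaa$, multiplicativity of $c_{-1/z}$, and the projection formula pushed through each arrow of the Hall-module zig-zag — is exactly the content of that cited proof, adapted to the present correspondences. The only point worth noting is that the paper partly treats the existence and rationality of $c_{-1/z}(\mc G^i_\aaa)$ as an assumption, whereas you derive the class identity; this is consistent with, not different from, the paper's route.
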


\begin{proof}
By the construction of $\mc G^i_\aaa$, the proof is a straightforward generalization of that of Proposition 6.2.2 in \cite{RSYZ}.
\end{proof}

We define the subspace of spherical generators $\mc H^1\subset \mc H$ by
\[ \mc H^1 = \bigoplus_{|\dd|=1} \mc H_\dd  = \bigoplus_{i\in V_{Q_Y}} \mc H_{\one_i} \]
where $\one_i\in \bb N^{V_{Q_Y}}$ denotes the $i^{th}$ standard basis vector, and the spherical subalgebra
\[ \mc{SH}=\mc{SH}(Y)= \langle \mc H^1 \rangle  \subset \mc H \ ,\]
as the subalgebra generated by this subspace over the base ring (in the present setting given by $F$), as well as $\mc{SH}^{\geq 0} = \mc H^0\ltimes \mc{SH}$. Note that we have an induced action
\begin{equation}\label{shrepmapeqn}
	\mc{SH} \to \End_{F}(\V^\zeta) \ ,
\end{equation}
and similarly for $\mc{SH}^{\geq 0}$.

Let $\ee_i=c_1(\mc V^i_{\one_i})\in  \mc H_{\one_i}= H_\bullet^{G_{\one_i}\times A}(X_{\one_i},\varphi_{W_{\one_i}})$ denote the first Chern class of the one dimensional tautological bundle $\mc V^i_{\one_i}$, and define the formal generating functional
\[ e^i(z) = \sum_{n=0}^\infty e^i_n z^{-n-1} \in \mc{SH}\lP z^{-1}\rP  \quad\quad \text{where}\quad\quad e_n^i=(\ee_i)^n  \in \mc{SH} \ .  \]
 The induced relations in $\mc{SH}^{\geq 0}$ between the elements $e^j_m$ and $\phi^i_n$ are determined by the action of Equation \ref{cartancohaacteqn} in the case $\aaa=\one_j$, and the rational function
\[P^i_j=P^i_j(z)=P^i_{\one_j}(\hbar_1,\hbar_2,\xx,\ee_j,z) \ \in F(\ee_j,z)  \]
is called the \emph{bond factor} in \cite{LiYam}, where it is denoted $\varphi^{(i\Rightarrow j)}(z)$. The definition of the action in Equation \ref{cartancohaacteqn} determines relations between the generators summarized in terms of the generating functions as
\begin{equation}\label{ephirlneqn}
	\phi^i(z) e^j(w) = e^j(w) \phi^i(z) P^i_j(z-w) \quad\in   \mc{SH}^{\geq 0}\lL z^{-1} \rL \lP w^{-1}\rP \ .  
\end{equation}

For the remainder of this section, we will in additional assume that the $A$-fixed subvariety $ \mf M^\zeta_\dd(Q_M^\f, W_M^\f)^A$ is given by a set $\Fp_\dd$ of isolated fixed points, as in Hypothesis \ref{lochypo}. In particular, the pullback map induced as in Equation \ref{pullbackeqn} gives an isomorphism
\[ \bb V^\zeta_\dd \cong H_\bullet^{A}(\mf M^{\f,\zeta}_\dd, \varphi_{\overline{W}^\f_\dd})\otimes_{H^\bullet_A(\pt)}F  \xrightarrow{\iota^*}  H_\bullet^A(\Fp_\dd,\varphi_{\overline{W}^\f_\dd\circ \iota}) \otimes_{H^\bullet_A(\pt)}F = \bigoplus_{\lambda\in \Fp_\dd} H^A_\bullet(\pt_\lambda)\otimes_{H^\bullet_A(\pt)}F \ , \]
for each $\dd\in \bb N^{V_{Q_Y}}$, and letting $F_\lambda$ denote a copy of the base field $F$, we have an identification
\[  \bigoplus_{\lambda\in \Fp_\dd}  F_\lambda   \xrightarrow{\cong}  \bigoplus_{\lambda\in \Fp_\dd}   H^A_\bullet(\pt_\lambda)\otimes_{H^\bullet_A(\pt)}F \quad\quad \text{defined by} \quad\quad P \mapsto P\cap [ \pt_\lambda ]  \ ,\]
for each $P\in F_\lambda$ so that we obtain a natural basis for the module $\bb V^\zeta_\dd$ given by
\[ \bb V^\zeta_\dd =\bigoplus_{\lambda\in \Fp_\dd}  F_\lambda  \quad\quad \text{and thus}\quad\quad \bb V^\zeta = \bigoplus_{\lambda \in \Fp} F_\lambda  \quad\quad\text{for}\quad\quad  \Fp = \bigsqcup_{\dd\in \bb N^{V_{Q_Y}}} \Fp_\dd\ . \] 
In particular, there is a natural pairing
\begin{equation}\label{pairingeqn}
	 (\cdot,\cdot):\V^\zeta \otimes_F \V^\zeta \to F \quad\quad \text{defined by} \quad\quad  ([\pt_\lambda],[\pt_\mu]) = \delta_{\lambda,\mu}\textup{Eu}_A(T_\lambda)  \ , 
\end{equation}
where $T_\lambda$ denotes the tangent space to $ \mf M^\zeta_\dd(Q_M^\f, W_M^\f)$ at the fixed point $\lambda \in \Fp$ and $\textup{Eu}_A$ denotes the $A$-equivariant Euler class.

Let $\mc{SH}^\op$ denote the opposite algebra of $\mc{SH}$ and note that we have the following proposition:

\begin{prop}\label{factprop} There exists a natural action right action
	\begin{equation}\label{shoprepmapeqn}
		\mc{SH}^\op \to \End_F(\V^\zeta) \quad\quad \text{defined by} \quad \quad f=e^\op \mapsto \rho(e)^*  
	\end{equation}
	for each $f=e^\op \in \mc{SH}^\op$, where $\rho:\mc{SH}\to \End_F(\V^\zeta)$ is the representation of Theorem \ref{cohareptheo} and $(\cdot)^*:\End_F(\V^\zeta)\to \End_F(\V^\zeta)$ denotes the adjoint with respect to the pairing of Equation \ref{pairingeqn} above.
\end{prop}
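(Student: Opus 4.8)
The plan is to verify that the assignment $f = e^\op \mapsto \rho(e)^*$ is a well-defined algebra homomorphism $\mc{SH}^\op \to \End_F(\V^\zeta)$, which amounts to two things: first, that taking adjoints with respect to the pairing of Equation \ref{pairingeqn} is a ring anti-automorphism of $\End_F(\V^\zeta)$ — this is formal once one knows the pairing is non-degenerate, which follows since $\textup{Eu}_A(T_\lambda)\neq 0$ in $F$ under Hypothesis \ref{lochypo} (localization ensures the Euler classes become invertible after passing to the fraction field $F$); and second, that the adjoint of $\rho(e)$ for $e\in\mc{SH}$ is again a well-defined endomorphism of $\V^\zeta$, i.e. preserves the grading appropriately and has coefficients in $F$ rather than merely in a completion. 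The latter is where the real content lies: $\rho(e^i_n)$ raises the dimension vector by $\one_i$, so its adjoint lowers it by $\one_i$, and we must check the matrix coefficients of this adjoint in the fixed-point basis are genuinely rational (elements of $F$).

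First I would set up the adjoint concretely in the fixed-point basis: writing $\rho(e)$ as a matrix $(\rho(e)_{\mu\lambda})$ with respect to the basis $\{[\pt_\lambda]\}_{\lambda\in\Fp}$, the adjoint is the matrix $\rho(e)^*_{\lambda\mu} = \rho(e)_{\mu\lambda}\,\textup{Eu}_A(T_\mu)/\textup{Eu}_A(T_\lambda)$, which is manifestly an element of $F$ since each Euler class is. Since $\rho(e)$ changes the dimension vector by a fixed amount and $\Fp_\dd$ is finite for each $\dd$, the matrix is block-finite, so $\rho(e)^*$ is a genuine endomorphism of the graded vector space $\V^\zeta = \bigoplus_{\lambda\in\Fp}F_\lambda$ with no completion issues. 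Next I would confirm the anti-homomorphism property: for $e_1, e_2\in\mc{SH}$ one has $(\rho(e_1)\rho(e_2))^* = \rho(e_2)^*\rho(e_1)^*$, and since the product in $\mc{SH}^\op$ is $e_1^\op \cdot e_2^\op = (e_2 e_1)^\op$, the map $e^\op\mapsto\rho(e)^*$ sends this to $\rho(e_2 e_1)^* = \rho(e_2)^*\rho(e_1)^*$, as required; it also sends $1^\op$ to $\id^* = \id$. One should also record the twisted $\bb N^{V_Q}$-grading statement: $\rho$ being a map in $\mc H\Mod(\Vect_{F,\bb N^{V_Q}}^{\otimes^\tw})$ means $\rho(\mc H_\aaa)$ has the cohomological degree shift $[\chi(\aaa,\bbb)-\chi(\bbb,\aaa)]$ on the degree-$\bbb$ component, and taking adjoints with respect to a pairing valued in $F$ (placed in degree zero) reverses this shift in the expected way, so the adjoint action is compatible with the grading conventions of $\mc{SH}^\op$.

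The main obstacle I anticipate is establishing that the pairing of Equation \ref{pairingeqn} really does intertwine $\rho$ with $\rho(\cdot)^*$ in a way compatible with the correspondence-theoretic definition of the product — that is, one wants to know not just abstractly that adjoints exist, but that the adjoint of the operator built from the correspondence $X^{\f,\zeta}_{\aaa,\bbb}$ with maps $\p,\q$ is computed by the \emph{transposed} correspondence, which requires a base-change/projection-formula argument for vanishing-cycle cohomology together with the self-duality of the pairing under $\p_*, \q^*$. This is exactly the kind of compatibility used in \cite{SV} and \cite{RSYZ}; I would follow their argument, using the properness of $\p$ (guaranteed by the compatibility hypothesis on $\zeta$ entering Theorem \ref{cohareptheo}) to ensure $\p_*$ and $\p^*$ are adjoint up to the Euler-class normalization, and the smoothness of $\q$ so that $\q^*$ and $\q_*$ are adjoint up to a shift. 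Once these two adjunctions are combined with the Thom–Sebastiani isomorphism respecting the pairing, the claim follows. I expect the remaining steps — non-degeneracy, the block-finiteness, the anti-homomorphism identity — to be routine.
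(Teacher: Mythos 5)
Your proposal is correct and follows essentially the same route as the paper, whose proof simply invokes Proposition 3.6 of \cite{SV} \emph{mutatis mutandis}: the adjoint with respect to the localized fixed-point pairing, its well-definedness over $F$ via Hypothesis \ref{lochypo}, and the transposed-correspondence compatibility (properness of $\p$, smoothness of $\q$) are exactly the ingredients of that argument. Only a harmless order slip: $\rho(e_2e_1)^*=(\rho(e_2)\rho(e_1))^*=\rho(e_1)^*\rho(e_2)^*$, which is precisely what the homomorphism property for $\mc{SH}^\op$ with your convention $e_1^\op\cdot e_2^\op=(e_2e_1)^\op$ requires.
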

\begin{proof}
This follows immediately from the proof of Proposition 3.6 of \cite{SV}, \emph{mutatis mutandis}.
\end{proof}

In particular, we define the formal generating functional
\[ f^i(z) = \sum_{n=0}^\infty f^i_n z^{-n-1} \in \mc{SH}^\op[z^{-1}] \quad\quad \text{where}\quad\quad f_n^i=(e_n^i)^\op  \in \mc{SH}^\op \ ,  \]
and note that the induced relations between the generators $f_m^j$ and $\phi^i_n$ are summarized in terms of the generating functions as
\begin{equation}\label{fphirlneqn}
	\phi^i(z) f^j(w) =  f^j(w) \phi^i(z) P^i_j(z-w)^{-1}   \quad\in   \mc{SH}^{\geq 0}\lL z^{-1} \rL \lP w^{-1}\rP\ .  
\end{equation}

Finally, we consider the action by the endomorphisms
\[\tilde \psi^{i,j}_{n,m}=[\rho(e^i_n),\rho(f^j_m)] \ \in \End_F(\V^\zeta) \ .\]
We assume that these endomorphisms vanish for $i\neq j$ and moreover that for $i=j$ the resulting endomorphism $\tilde \psi^i_{n+m}$ depends only on the sum $m+n$ and is determined
\[ \tilde \psi^i(z) := z^{-s_i} +\sum_{n=s_i}^\infty \tilde \psi^i_n z^{-n-1} = Q^i_M(z) c_{-1/z}(\mc F_\dd^i) \quad \in\   z^{-s_i}\End_{H^\bullet_{G_\dd\times A}(\pt)}(H_\bullet^{G_\dd\times A}( X_\dd^{\f,\zeta}, \varphi_{W_{M,\dd}^{\f}} )\otimes_{H^\bullet_A(\pt)} F )\lP z^{-1} \rP  \ ,\]
in terms of a rational function
\begin{equation}\label{}
	Q_M^i=Q_M^i(z)=Q_M^i(\hbar_1,\hbar_2,\xx,e_i,z) \  \in F(e_i,z)
\end{equation}
with coefficients in the first Chern class $\ee_i$ of $\mc V_{\one_i}$ over the base field $F$, where the map is given by taking the power series expansion at $z=\infty$. The rational function $Q_M^i\in F(e_i,z)$ encodes the shift the associated shifted quiver Yangian and is called the ground state charge function in \cite{GLY}, where it is denoted $^\#\psi_0^{(i)}(z)$. Typically, $Q^i_M$ admits a factorization of the form
\[ Q^i_M(z) = \frac{\prod_{l=1}^{s_i^+} (z-q_{i,l}^+(\hbar_1,\hbar_2,\xx,e_i))}{\prod_{l=1}^{s_i^-} (z-q_{i,l}^-(\hbar_1,\hbar_2,\xx,e_i))} \]
and we call the integer $s_i=s_i^+-s_i^-$ the shift of the corresponding root $i\in I$ for the object $M$.

We define the shifted Cartan algebra $\mc H^0_M$ as the polynomial algebra
\[ \mc H^0_M=\mc H^0_M(Y) = F[\psi_n^i]_{n\in \bb N}^{i\in I} \quad\quad \text{and let}\quad\quad \psi^i(z)  = z^{-s_i} +\sum_{n=s_i}^\infty \psi^i_n z^{-n-1}\quad\in z^{-s_i} \mc H^0\lP z^{-1}\rP    \  \]
be the corresponding formal generating functional for the variables $\psi_n^i$. Moreover, we obtain a natural action $\mc H^0_M\to \End_F(\bb V^\zeta)$ of the shifted Cartan algebra $\mc H^0_M$ on $\V^\zeta$ defined in components
\begin{equation}\label{shiftedcartanacteqn}
 \mc H^0_M \to \End_{H^\bullet_{G_\dd\times A}(\pt)}(H_\bullet^{G_\dd\times A}( X_\dd^{\f,\zeta}, \varphi_{W_{M,\dd}^{\f}} )\otimes_{H^\bullet_A(\pt)} F)\quad\quad \text{by}\quad\quad \psi^i(z)  \mapsto  \tilde \psi^i(z)=Q^i_M(z) c_{-1/z}(\mc F_\dd^i)  \ .
\end{equation}
By construction, the relations between the images of the generators $\tilde e^i_n=\rho( e^i_n)$ and $\tilde f^j_M=f^j_M$ under the respective representations are summarized in terms of generating functions as
\[ [\tilde e^i(z),\tilde f^j(w)] = \delta_{i,j} \frac{ \tilde\psi^i(z)-\tilde\psi^i(w)}{z-w} \quad \in \End_F(\V^\zeta)\lL z^{-1} \rL \lP w^{-1}\rP\ .\]

We now recall the definition of (shifted) quiver Yangians given in \cite{LiYam} (and \cite{GLY}):

\begin{defn} The \emph{shifted quiver Yangian} $\mc Y_M=\mc Y_M(Y)$ is the associative algebra over $F$ generated by elements $e^i_n, f^i_n$ and $\psi^i_m$ for $i\in I$, $n \in \bb N$, and $m \in \bb Z_{\geq s_i}$, subject to the relations
\begin{align}
	\psi^i(z) e^j(w) &  = e^j(w) \psi^i(z) P^i_j(z-w)  & \in   \mc Y_M\lL z^{-1} \rL \lP w^{-1}\rP\\
	\psi^i(z) f^j(w)  & = f^j(w) \psi^i(z) P^i_j(z-w)^{-1} & \in    \mc Y_M\lL z^{-1} \rL \lP w^{-1}\rP\\
	[ e^i(z), f^j(w)] & = \delta_{i,j} \frac{ \psi^i(z)-\psi^i(w)}{z-w}  & \in   \mc Y_M\lP z^{- 1},w^{-1}\rP\\
		\psi^i(z) \psi^j(w) &  = \psi^j(w) \psi^i(z)  & \in   \mc Y_M\lP z^{- 1},w^{-1}\rP\\
		e^i(z) e^j(w) &  = e^j(w) e^i(z) P^i_j(z-w)  & \in   \mc Y_M\lP z^{- 1},w^{-1}\rP\\
		f^i(z) f^j(w)  & = f^j(w) f^i(z) P^i_j(z-w)^{-1} & \in    \mc Y_M\lP z^{- 1},w^{-1}\rP  
\end{align}
together with the Serre relations between the various $e^i_n$ generators, and similarly between the $f^i_n$.
\end{defn}

Under the above hypotheses, we make the following conjecture:

\begin{conj}\label{quiveryangconj} The maps of Equations \ref{shrepmapeqn}, \ref{shoprepmapeqn} and \ref{shiftedcartanacteqn} define a representation
\[ \mc Y_M(Y) \to \End_F(\V^\zeta(M)^A)   \]
of the shifted quiver Yangian $\mc Y_M(Y)$ on $\bb V^\zeta(M)^A$.
\end{conj}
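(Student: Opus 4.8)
The plan is to establish Conjecture \ref{quiveryangconj} in stages, showing that the operators already constructed satisfy each family of defining relations of $\mc Y_M(Y)$. First I would record that the operators $\tilde e^i_n = \rho(e^i_n)$, $\tilde f^i_n = \rho(f^i_n)^*$, and $\tilde\psi^i_m$ arising from Equations \ref{shrepmapeqn}, \ref{shoprepmapeqn}, and \ref{shiftedcartanacteqn} are well-defined endomorphisms of $\V^\zeta(M)^A$, using Hypothesis \ref{lochypo} to pass to the fixed-point basis $\V^\zeta = \bigoplus_{\lambda\in\Fp} F_\lambda$ and the nondegenerate pairing of Equation \ref{pairingeqn}; this is the technical setup that lets all subsequent identities be checked coefficient-by-coefficient on fixed points. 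The $\psi$-$e$ and $\psi$-$f$ relations then follow directly: they are built into the construction, since Equation \ref{shiftedcartanacteqn} defines $\tilde\psi^i(z) = Q^i_M(z)c_{-1/z}(\mc F^i_\dd)$ and the action of the Cartan on the spherical generators is governed by the bond factor $P^i_j$ via Equations \ref{ephirlneqn} and \ref{fphirlneqn}, which in turn come from the semidirect-product structure of the extended cohomological Hall algebra $\mc H^{\geq 0}$ and Proposition \ref{factprop}. The $[e^i,f^j]$ relation is the key computation: for $i\neq j$ one must show the commutator vanishes, and for $i=j$ that $[\tilde e^i(z),\tilde f^i(w)]$ equals $\frac{\tilde\psi^i(z)-\tilde\psi^i(w)}{z-w}$; this is exactly the hypothesis imposed preceding Equation \ref{shiftedcartanacteqn} that defines $\tilde\psi^i(z)$, so this step amounts to verifying that hypothesis holds for the geometry at hand — i.e., that the off-diagonal commutators of creation and annihilation operators vanish and the diagonal ones depend only on $n+m$, with the prescribed generating function.

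The plan for the $ee$, $ff$, and $\psi\psi$ relations would be to use the cohomological Hall algebra structure directly: the $\psi^i$ all act through Chern classes of tautological bundles and hence commute, giving the $\psi\psi$ relation for free; the $ee$ relation with bond factor $P^i_j$ is a property of the spherical subalgebra $\mc{SH}(Y)\subset \mc H(Y)$ — specifically it should follow from the shuffle-algebra description of $\mc H(Y)$ and the explicit form of the kernel, in the spirit of the computations of \cite{RSYZ} and \cite{SV} — and the $ff$ relation then follows by taking adjoints with respect to the pairing of Equation \ref{pairingeqn}, using Proposition \ref{factprop}. The Serre relations among the $e^i_n$ (and dually the $f^i_n$) would be checked by the same shuffle-algebra mechanism: they are quadratic or cubic identities in $\mc{SH}(Y)$ whose coefficients are determined by the potential $W_Y$ and the bond factors, and one verifies them either by a residue computation in the shuffle presentation or by reducing to small dimension vectors where the fixed-point loci are explicit.

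The main obstacle will be the $[e^i,f^j]$ relation together with the identification of $\tilde\psi^i(z)$, because this is where the interaction between the creation operators (geometric, coming from correspondences on the stable loci $X^{\f,\zeta}_{\aaa,\bbb}$) and the annihilation operators (defined abstractly as adjoints) must be made explicit. Following \cite{SV} and \cite{RSYZ}, I would reduce this to a fixed-point computation: evaluate both sides on a basis vector $[\pt_\lambda]$, express $\rho(e^i_n)$ and $\rho(f^i_n)^*$ via the localization formula for the correspondences — where adding or removing a box at a fixed point contributes a ratio of equivariant Euler classes of tangent spaces — and match the resulting rational function against $Q^i_M(z)c_{-1/z}(\mc F^i_\dd)$. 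The delicate point is controlling the combinatorics of these tangent space contributions uniformly in $\dd$, and in particular verifying that the diagonal commutator genuinely depends only on $n+m$; this is precisely the content of the unproven hypothesis stated before Equation \ref{shiftedcartanacteqn}, so the honest statement is that Conjecture \ref{quiveryangconj} would follow once that hypothesis is established, and proving it in full generality — rather than case by case as in the known examples $Y=\C^3$ — is the genuinely hard part, which is why the statement is presented as a conjecture. A reasonable intermediate goal would be to prove it for $Y=Y_{m,n}$ resolving $X_{m,n}=\{xy-z^mw^n\}$ with $M=\mc O_S[1]$, where the dimensional reduction isomorphisms of \cite{YZ1} and \cite{RenS} reduce the vanishing-cycle computation to ordinary Borel-Moore homology and the fixed-point combinatorics becomes the box-counting of \cite{GLY}.
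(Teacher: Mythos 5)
The statement you are addressing is stated in the paper as a conjecture, and the paper offers no proof of it: Section \ref{quiveryangsec} only constructs the candidate structure maps (Equations \ref{shrepmapeqn}, \ref{shoprepmapeqn}, \ref{shiftedcartanacteqn}), records the relations that hold by construction (Equations \ref{ephirlneqn} and \ref{fphirlneqn}, via the semidirect product $\mc H^{\geq 0}$ and Proposition \ref{factprop}), and then \emph{assumes} the form of the commutators $[\rho(e^i_n),\rho(f^j_m)]$ in the paragraph preceding Equation \ref{shiftedcartanacteqn}, pointing to \cite{SV}, \cite{RSYZ}, \cite{RSYZ2} for known cases and to the cohomological analogue of \cite{Negquiv} as the expected route to a general proof. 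Your plan follows essentially this same outline and, to your credit, isolates exactly the same open point — verifying that hypothesis on the $e$--$f$ commutators (and, implicitly, the $ee$/$ff$/Serre relations in $\mc{SH}(Y)$ via a shuffle/localization argument in the spirit of \cite{SV} and \cite{RSYZ}) — so it is consistent with the paper's intentions but, like the paper, does not close the gap; what you have is a correct reduction of the conjecture to the unproven fixed-point/tangent-space computation, not a proof of it.
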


In particular, it is implicit in the conjecture that the restriction of the representation to the positive half $\mc Y^+(Y,M)$ agrees with the restriction to the spherical subalgebra $\mc{SH}$ of the representation of $\mc H$ constructed in Theorem \ref{cohareptheo}, as this is the definition of the representation in Equation \ref{shrepmapeqn}. In the case $Y=\bb C^3$, the above conjecture holds in the sense that the above construction is equivalent to that of \cite{SV}, also obtained in unpublished work of Feigin-Tsymbaliuk recalled in \cite{Ts1}, and \cite{RSYZ}, although the shift is trivial in all these cases. Several other examples were also explored in \cite{RSYZ2} in this framework with non-trivial shifts. Moreover, the cohomological analogue of the recent results of \cite{Negquiv}, together with the above construction, appear to imply the above conjecture, but we leave a detailed discussion of this for future work.

\section{Yangians of threefolds and vertex algebras of divisors}\label{funsec}

\subsection{Perverse coherent systems, Donaldson-Thomas theory, and quiver Yangians}\label{DTsec}

In this section, we let $M=\mc O_Y[1]$ be the structure sheaf of the threefold $Y$ shifted down in cohomological degree by 1. As we observed in Example \ref{pervcohsyseg}, the extension group $_\infty \Sigma^1_\infty=\Ext^1(\mc O_Y[1],\mc O_Y[1])=0$ is trivial so that the only possible framing structure for each rank $d_\infty=\rrr$ is the trivial one $\f_\rrr=\{0\}$. Moreover, the notion of $\mc F_{\f_r}^\cn$-framed perverse coherent extension is closely related to the notion of perverse coherent system, in the sense of \cite{NN}, as we explain below.

We begin by recalling the definition of perverse coherent systems from \emph{loc. cit.}:

\begin{defn}\label{pervcohsysdef} A \emph{perverse coherent system} on $Y$ is a tuple $(H,W,S)$ where $H\in \Perv(Y)$ is a perverse coherent sheaf and
	\[ s: \mc O_Y\otimes W  \to H \ ,\]
is a map of complexes of quasicoherent sheaves. A perverse coherent system on $Y$ is called \emph{compactly supported} if the underlying perverse coherent sheaf $H\in \Perv_\cs(Y)$.
\end{defn}

Let $\mf M_{\overline{\textup{Per}}_\cs(Y)}$ denote the moduli stack of compactly supported perverse coherent systems on $Y$, and note there is a natural decomposition
\[ \mf M_{\overline{\textup{Per}}_\cs(Y)} = \bigsqcup_{\rrr\in \bb N} \mf M_{\overline{\textup{Per}}_\cs(Y)}^\rrr \ ,\]
where $\overline{\textup{Per}}_\cs(Y)$ denotes the component for which the vector space $W$ is of dimension $\rrr \in \bb N$.

Recall that we observed in Example \ref{pervcohsyseg} that the extension group
\[_\infty\Sigma_\infty^1=\Ext^1(\mc O_Y[1],\mc O_Y[[1])=0 \ , \]
so that for each rank $d_\infty=\rrr\in \bb N$ there is a unique trivial framing structure $\f_\rrr=\{0\}$. We can now state the desired equivalence:

\begin{prop}\label{pervcohsysprop} There is an equivalence of algebraic $\bb K$ stacks
	\[ \mf M^{\mc F_{\f_\rrr}^\cs}(Y,\mc O_Y[1]) \xrightarrow{\cong }\mf M_{\overline{\textup{Per}}_\cs(Y)}^\rrr \ ,\]
	where we recall that $\mc F_{\f_0}^\cs$ is as in Equation \ref{Fcframedstackeqn}.
\end{prop}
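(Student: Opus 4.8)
The plan is to identify both sides with the stack of objects in a suitable abelian category and match up the extra data. Recall from Example \ref{DTframingeg} that the $\mc F^\cn_{\f_0}$-framed moduli stack $\mf M^{\mc F^\cn_{\f_\rrr}}(Y,\mc O_Y[1])$ parameterizes perverse coherent extensions of $\mc O_Y[1]$ equipped with an isomorphism from the underlying iterated extension of $\mc O_Y[1]$ to a complex of the form $(K(I_\infty)\otimes_{\bb K} W, d)$, where $W$ is a $\bb K$ vector space of dimension $\rrr$ and $d$ is conjugate under $\Gl(W)$ to the differential determined by $\f_\rrr = \{0\}$; since $\f_\rrr$ is trivial and $K(I_\infty)=\mc O_Y[1]$, this is just an isomorphism to $\mc O_Y^{\oplus \rrr}[1]$, or equivalently a choice of trivialization $\mc O_Y\otimes W\cong \mc O_Y^{\oplus\rrr}$ of the framing vector space. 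So an $R$-point of the left-hand side is: an object $H$ in the heart of the admissible Bridgeland-Deligne perverse coherent t-structure which is an iterated extension of compactly supported perverse coherent sheaves and $\rrr$ copies of $\mc O_Y[1]$, with $H$-subobject the compactly supported part, together with an identification of the quotient (the underlying iterated extension of $\mc O_Y[1]$) with $\mc O_Y^{\oplus\rrr}[1]$.

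First I would unwind the definition of the right-hand side. By Corollary \ref{PervCohcoro}, the data of $s:\mc O_Y\otimes W\to H$ with $H\in\Perv_\cs(Y)$ is equivalent, in the triangulated category $\DD^b\Coh_\cs(\hat Y)$ (after passing to the formal completion, which is where $\mf M(Y,M)$ is defined), to a representative of a class in $\Hom^1(\mc O_Y[1]\otimes W, H)$, exactly as in the discussion of Equation \ref{DTextensioneqn} in the introduction: a map $s:\mc O_Y\otimes W\to H$ of complexes of quasicoherent sheaves is the same as a triangle $H\to E\to \mc O_Y^{\oplus\rrr}[1]$ in the derived category together with a representative of the extension class, i.e. an object $E$ with a filtration with subobject $H$ and quotient $\mc O_Y^{\oplus\rrr}[1]$, plus the identification of that quotient with the trivial object $\mc O_Y^{\oplus\rrr}[1]$. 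The point is that no stability is imposed (we are on the full stack), so the extension need not split nor be split-injective; this matches the fact that $\mc F^\cn_{\f_0}$-framing records the $\Gl_\rrr(\bb K)$-orbit data but not positivity. Thus the groupoid of $R$-points of $\mf M_{\overline{\textup{Per}}_\cs(Y)}^\rrr$ is naturally the groupoid of objects of $\Perv^\p_M(\hat Y)$ that are iterated extensions of compactly supported simples and $\rrr$ copies of $M=\mc O_Y[1]$, with compactly supported subobject, equipped with a trivialization of the quotient filtration, which is precisely $\mf M^{\mc F^\cn_{\f_\rrr}}(Y,M)(R)$ by Corollary \ref{geoframingmapcoro} and the modular description of $\mf M^{\mc F^\cn_\f}$ given after Theorem \ref{stackpotthm}.

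The remaining steps are: (i) check that this identification of groupoids is natural in $R$, hence an isomorphism of prestacks, using that the moduli-of-objects functor is natural with respect to the equivalences of Theorem \ref{heartthm} and that the framing data $\mc F^\cn_{\f_\rrr}$ is a fibre product construction (Equation \ref{Fcframedstackeqn}) that behaves well under base change; (ii) confirm that the algebraic stack structures agree, which follows from Theorem \ref{stackthm}/\ref{stackpotthm} presenting $\mf M^{\mc F^\cn_{\f_\rrr}}(Y,M)$ as a global quotient $[Z^{\f}_\dd(Q_M)/(G_\dd(Q_Y)\times G^\cn_{\f_\rrr})]$ with $G^\cn_{\f_\rrr}=\Gl_\rrr(\bb K)$ (since $\f_\rrr=0$), and Nagao-Nakajima's presentation of $\mf M_{\overline{\textup{Per}}_\cs(Y)}^\rrr$ as the same quotient; (iii) verify the formal-completion issue does no harm, i.e. that $\Perv_\cs(Y)$-objects appearing in a compactly supported perverse coherent system on $Y$ are automatically supported near $C$ up to the torsion sheaves away from $C$, so that the two notions of ``compactly supported'' match — or, more carefully, restrict attention to the formal neighbourhood $\hat Y$ on both sides from the outset, as is done throughout Section \ref{extsec}.

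\textbf{The main obstacle} I anticipate is step (iii) together with the precise bookkeeping of what ``$\mc F^\cn_{\f}$-framed'' means versus what a map $s:\mc O_Y\otimes W\to H$ of \emph{complexes of quasicoherent sheaves} (as opposed to a derived-category morphism) records: a genuine chain-level map carries slightly more rigidity than an $\Ext^1$ class, and one must check that passing from the monad presentation of Theorem \ref{stackpotthm} (where the extra differential $I:\mc O_Y\otimes V_\infty\to\mc O_Y\otimes V_0$ of Equation \ref{DTquiverseqn} is literally a chain-level datum in the complex $(\tilde H, d_B)$) back to a perverse coherent system reproduces exactly Definition \ref{pervcohsysdef} with no loss or gain of data. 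I expect this to come down to the fact that $\mc O_Y[1]$ has no negative self-exts and $I_\infty=S_0$ is concentrated in a single degree, so the twisted-objects/monad description of the framing is canonically strict, and hence the chain-level map $s$ and the extension class determine each other uniquely; making this precise is where the real work lies, but it is a direct check once the dictionary of Sections \ref{monadextsec}–\ref{framingsec} is in hand.
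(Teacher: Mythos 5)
Your proposal correctly recalls the modular description of the framed stack from Example \ref{DTframingeg} and correctly matches the data of a map $s:\mc O_Y\otimes W\to H$ with an extension $H\to E\to \mc O_Y^{\oplus \rrr}[1]$ together with a trivialization of the quotient; that half of the argument agrees with the paper. But there is a genuine gap at the step where you describe the $R$-points of the left-hand side as objects ``with subobject the compactly supported part, together with an identification of the quotient with $\mc O_Y^{\oplus\rrr}[1]$.'' That is not part of the definition of a framed perverse coherent extension: by construction these are \emph{arbitrary} iterated extensions in which the copies of $M$ and the compactly supported simples $F_i$ may be interleaved in any order, and the framing only rigidifies the ``underlying iterated extension of $M$'' obtained by forgetting the compactly supported factors (Corollary \ref{geoframingmapcoro}), not a quotient. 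Indeed, for general $M$ your asserted description is false — for $M=\mc O_{\C^2}[1]$ in $\C^3$ the nonvanishing of $\Ext^1(F_0,M)$ produces the $J$-arrows of the ADHM quiver, i.e.\ framed extensions in which the compactly supported part is \emph{not} a subobject, which is exactly why perverse coherent extensions carry more data than systems in that case.

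What makes the proposition true for $M=\mc O_Y[1]$, and what your proposal never invokes, is the computation $\ _j\Sigma^1_\infty=\Ext^1(F_j,\mc O_Y[1])=0$ for all $j$ (together with $\Ext^1(\mc O_Y[1],F_j)=\delta_{j,0}\,\bb K$, corresponding to the single arrow $I$). This vanishing is the content of the paper's proof: it forces every framed perverse coherent extension of $\mc O_Y[1]$ to have its compactly supported part as a subobject, yields the canonical map $\mf M^{\mc F^\cn_{\f_\rrr}}(Y,\mc O_Y[1])\to\mf M(Y)$, and identifies the remaining data over $H$ with precisely a map $s:\mc O_Y\otimes W\to H$, i.e.\ with Definition \ref{pervcohsysdef}. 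You need to add this Ext computation (already recorded in Example \ref{pervcohsyseg}) and the resulting structural statement before your groupoid identification can go through. By contrast, the issues you single out as the ``main obstacle'' are comparatively harmless: the chain-level versus derived-category ambiguity for $s$ disappears because $\mc O_Y=E_0$ is projective in $\Perv(Y)$, so $\Hom_{\DD^b\Coh}(\mc O_Y,H)=\Hom_{\Perv(Y)}(\mc O_Y,H)$, and the completion bookkeeping is a routine matter of working on $\hat Y$ (or with the $T$-equivariant/graded variant) consistently on both sides.
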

\begin{proof}By definition, the moduli stack $\mf M^{\mc F_{\f_\rrr}^\cs}(Y,\mc O_Y[1])$ parameterizes perverse coherent extensions of $\mc O_Y[1]$ equipped with an isomorphism of the underlying iterated extension of $\mc O_Y[1]$ with $\mc O_Y\otimes W[1]$ for $W$ a vector space of rank $\rrr$, by Example \ref{DTframingeg}.
	
Further, recall from Example \ref{pervcohsyseg} that we observed the extension groups between $\mc O_Y[1]$ and the simple objects $F_i\in \Perv_\cs(Y)$ are given by
\[\ \ _j \Sigma^1_\infty = \Ext^1(F_j,\mc O_Y[1])=0 \quad\quad _\infty \Sigma^1_j=\Ext^1(\mc O_Y[1],F_j)=\begin{cases} \bb K & \textup{if $j=0$} \\ \{0\} & \textup{otherwise} \end{cases} \ , \]
where the only non-trivial extension class is given by
\[\mc O_C \to  \mc I_C[1] \to \mc O_Y[1]  \quad\quad \text{induced by}\quad\quad  \mc I_C[1] \to \mc O_Y[1] \to \mc O_C[1] \ . \] Thus, the only possible extensions of $\mc O_Y[1]$ with the generators $F_i$ is given by the map $\mc O_Y\to F_0$. 

Similarly, note that in this case the potential $W_M^{\f_r}$ is independent of the framing arrows, so there is a canonical map of $\K$ stacks
\[ \mf M^{\mc F_{\f_\rrr}^\cs}(Y,\mc O_Y[1])  \to \mf M(Y) \ , \]
which corresponds to taking the subobject given by the underlying iterated extension of the generators $F_i\in \Perv_\cs(Y)$. For each compactly supported perverse coherent sheaf $H\in \mf M(Y)(\bb K)$, the observations above imply that the only possible perverse coherent extension with subobject $H$ is given by a map from the underlying iterated extension of $\mc O_Y$, which has a fixed isomorphism with the object $\mc O_Y\otimes W$, to the object $H$. This provides the desired map $s:\mc O_Y\otimes W \to F$.

\end{proof}

As a corollary of Proposition \ref{pervcohsysprop} together with Theorem \ref{stackthm}, we obtain that there is a canonical monad presentation for perverse coherent systems, given by that of Example \ref{pervcohsyseg}. In the case of the resolved conifold studied in \cite{Sz1} and \cite{NN}, the monad presentation is that of Equation \ref{conifoldmonadeqn}, though the results also apply in the more general setting studied in \cite{Nag}. Indeed, the corresponding framed quivers with potential $(Q_{\mc O_Y[1]}^{\f_\rrr},W^{\f_\rrr}_{\mc O_Y[1]})$ in this case are precisely those studied in \emph{loc. cit.}.

For each choice of compatible stability condition $\zeta \in \bb R^{V_{Q_Y}}$, the corresponding homology module $\bb V^\zeta_Y:= \bb V^\zeta(\mc O_Y[1])$ defined in Section \ref{coharepsec} is given by
\begin{equation}\label{catDTeqn}\hspace*{-1cm}
	\bb V^\zeta_Y= \bigoplus_{\dd \in \bb N^{V_{Q_Y}}} H_\bullet^{G_\dd(Q_Y)\times A}( X_\dd^{\zeta}(Q_{\mc O_Y[1]}^{\f_\rrr}),\varphi_{W^{\f_\rrr}_{\mc O_Y[1]}})\otimes_{H^\bullet_A(\pt)} F \ ,
\end{equation}
or equivalently, under the hypotheses of Section \ref{invtsec}, which hold in our examples of interest,
\[\bb V^\zeta_Y=  \bigoplus_{\dd \in \bb N^{V_{Q_Y}}} H_\bullet^{ A}( \mf M_\dd^{\zeta}(Q_{\mc O_Y[1]}^{\f_\rrr}),\varphi_{\overline{W}^{\f_\rrr}_{\mc O_Y[1]}})\otimes_{H^\bullet_A(\pt)} F   \ .\]
We also define the corresponding generating functional, as in Equation \ref{genfuneqn}, by
\[ \Zz_Y^\zeta(\qq) = \sum_{\dd \in \bb N^{V_{Q_Y}}}\qq^\dd \chi(\bb V^\zeta_\dd(\mc O_Y[1]))  \ \in \Pq \ ,\]
where we recall the shorthand notation $\qq^\dd=\prod_{i\in V_{Q_Y}} q_i^{d_i}$ and $\Pq= \bb Z[\![q_i]\!]_{i\in V_{Q_Y}} $.

The results of \cite{NN} and \cite{Nag}, in particular Proposition 3.17 and Theorem 3.18 of the latter, imply that for stability conditions $\zeta_\DT,\zeta_\PT,\zeta_\NCDT$ in appropriately chosen chambers, the corresponding homology modules are given by the (categorified) Donaldson-Thomas series \cite{DT}, Pandharipande-Thomas series \cite{PT}, and non-commutative Donaldson-Thomas series \cite{Sz1}, that is, we have
\[ \Zz_Y^{\zeta_\DT} = \Zz_Y^\DT \quad\quad \Zz_Y^{\zeta_\PT} = \Zz_Y^\PT \quad\quad \Zz_Y^{\zeta_\NCDT} = \Zz_Y^\NCDT  \ ,\]
where the latter are defined as the generating functions of the corresponding enumerative invariants of the threefold $Y$. In fact, it was proved in \cite{NN} using the results of \cite{BehFan} that these invariants can be computed via equivariant localization in terms of signed counts of fixed points, so that we have
\[ \Zz_Y^\zeta(\qq)= \sum_{\dd \in \bb N^{V_{Q_Y}}} \qq^\dd (-1)^{k_\dd} | \mf M_\dd^\zeta(Q_{\mc O_Y[1]}^{\f_\rrr},W^{\f_\rrr}_{\mc O_Y[1]})^A |  \ , \]
in keeping with Corollary \ref{fpcoro}, which was written following \emph{loc. cit.}.

\begin{eg} Let $Y=\bb A^3$ and $r=1$, so that the framed quiver with potential $(Q^{\f_1}_{\mc O_{\bb A^3}},W^{\f_1}_{\mc O_{\bb A^3}})$ is given by that on the left in Equation \ref{DTquiverseqn}. It is well known that for a generic stability condition $\zeta$, the corresponding moduli space of $\zeta$-stable representations such that $\dim V_0=n$ is given by
\[\mf M_n^\zeta(Q^{\f_1}_{\mc O_{\bb A^3}},W^{\f_1}_{\mc O_{\bb A^3}})= \Hilb_n(\bb A^3) \ , \]
the Hilbert scheme of points on $\bb C^3$. The set of $T$-fixed points $\Fp_n$ of $\Hilb_n(\bb A^3)$ is labeled by the set of length $n$ plane partitions, so that we have
\[ \bb V_{\bb C^3}^\zeta \cong \bigoplus_{n\in \bb N}\bigoplus_{\lambda \in \Fp_n } F_\lambda  \quad\quad \text{where} \quad\quad F_\lambda= F \]
is a copy of the base field $F$ for each $n\in \bb N$ and $\lambda \in\Fp_n$.
Further, the results of \cite{BehFan} in this case imply that the DT series is given by
\[ \mc Z_{\bb C^3}^\zeta(q_0) = \prod_{k=1}^\infty \frac{1}{(1-(-q_0)^k)^k} \quad   \in \bb Z[\![q_0]\!] \ . \]
We also introduce notation for the MacMahon function
\[ M(x,q)= \prod_{k=1}^\infty \frac{1}{(1-xq^k)^k} \quad\quad\text{and}\quad\quad M(q)= M(1,q)= \prod_{k=1}^\infty \frac{1}{(1-q^k)^k} \ , \]
so that we can write simply
\[\mc Z_{\bb C^3}^\zeta(q)=M(q) \quad\quad \text{for}\quad\quad q=-q_0 \ .\]
\end{eg}

\begin{eg} The approach to studying the DT series of more general threefolds via quivers with potential was pioneered by Szendroi in the seminal paper \cite{Sz1} in the case $Y_{1,1}=|\mc O_{\bb P^1}(-1)^{\oplus 2}|$, and the framed quiver with potential considered in \emph{loc. cit.} is that on the right in Equation \ref{DTquiverseqn}. In this case, it was conjectured in \emph{loc. cit.} based on extensive computational evidence and proved in \cite{Young1} that the non-commutative DT series of $Y_{1,1}$ is given by
\[ \mc Z^{\NCDT}_{Y_{1,1}}(q_0,q_1) = \prod_{k=1}^\infty  \frac{(1+q_0^k(-q_1)^{k-1})^k(1+q_0^k(-q_1)^{k+1})^k}{(1-q_0^k(-q_1)^k)^{2k} }   \ . \]
If we let $q=-q_0q_1$ and $x=q_1$ then this expression can equivalently be written
\begin{equation}\label{conifoldDTeqn}
	\mc Z^{\NCDT}_{Y_{1,1}}(q,x) = \prod_{k=1}^\infty \frac{(1-x^{-1}q^k)^{k}(1-xq^k)^{k}}{(1-q^k)^{2k}} = M(1,q)^2  M(x^{-1},q)^{-1} M(x,q)^{-1}  \ .
\end{equation}
\end{eg}

\begin{eg}
Similar results were obtained in \cite{BrY} for toric quotient singularities. In particular, for $Y_{2,0}=|\mc O_{\bb P^1}\oplus \mc O_{\bb P^1}(-2)|$ with corresponding framed quiver with potential that in the middle of Equation \ref{DTquiverseqn}, the formula for the non-commutative DT series from \emph{loc. cit.} is given by
\begin{equation}\label{A1singDTeqn}
	 \mc Z^{\NCDT}_{Y_{2,0}}(q,x) = \prod_{k=1}^\infty\frac{1}{(1-q^k)^{2k}(1-x^{-1}q^k)^{k}(1-xq^k)^{k}} = M(1,q)^2 M(x^{-1},q)  M(x,q)   \ , 
\end{equation}
where we again let $q=-q_0q_1$ and $x=q_1$.

More generally, for $Y_{m,0}=\tilde A_{m-1}\times \bb A^1$, the analogous formula from \emph{loc. cit.} is given by
\begin{equation}\label{AnsingDTeqn}
		 \mc Z^{\NCDT}_{Y_{m,0}}(q,x_i) = M(1,q)^n \prod_{1\leq a \leq b \leq m-1} M(x_{[a,b]}^{-1},q)  M(x_{[a,b]},q)   \ ,  
\end{equation}
where we let $q=-q_0q_1...q_{m-1}$, $x_i=q_i$ for $i=1,...,m-1$, and $x_{[a,b]}=x_ax_{a+1}\hdots x_{b}$.
\end{eg}

The cohomological Hall algebra of Kontsevich-Soibelman \cite{KS1} was defined so that it naturally acts on the cohomology of DT theory type moduli spaces (see for example the discussion in \cite{Soi} and the review \cite{Szrev}) and in this case the results of Section \ref{coharepsec} simply reproduce this fact:

\begin{corollary} There exists a natural representation 
\begin{equation}\label{cohadtrepeqn}
	 \rho_Y:\mc H(Y)	\to \End_F(\V_Y^\zeta)
\end{equation} of Equation \ref{catDTeqn} is naturally a module for of the cohomological Hall algebra $\mc H(Y)$ on the categorified DT-type series $\bb V^\zeta_Y$ for each compatible stability condition $\zeta \in \bb R^{V_{Q_Y}}$.
\end{corollary}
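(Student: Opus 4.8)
The plan is to observe that this corollary is nothing more than the specialization of Theorem \ref{cohareptheo} (equivalently Theorem \ref{Bthm}) to the object $M = \mc O_Y[1]$, with its unique framing structure $\f_\rrr = \{0\}$, together with the identification of the resulting homology module with the categorified DT-type series established in Proposition \ref{pervcohsysprop}. So the first step is to verify that $M = \mc O_Y[1]$ satisfies all the hypotheses required to invoke Theorem \ref{cohareptheo}: namely, the conditions of Section \ref{Extoverviewsec} (commutativity of $\Ext^0(M,M)$, vanishing of $\Ext^{\leq 0}(F,M)$ and $\Ext^{\leq 0}(M,F)$ for compactly supported $F$, and membership of $M$ in the heart of an admissible Bridgeland--Deligne perverse coherent t-structure), as well as the framing condition of Equation \ref{framingconditioneqn} from Section \ref{framingsec}. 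The t-structure hypothesis is handled by Example \ref{mainteg} (taking the perversity function $\p$ that is $0$ on closed points and $-1$ elsewhere, for which $\mc O_Z[1] \in \Perv^\p(Y)$ for any subvariety $Z$ of dimension $\geq 1$, and in particular $\mc O_Y[1]$); the $\Ext$-vanishing and commutativity conditions follow from the explicit computations of $\Sigma_\infty = \Hom(\mc O_Y[1],\mc O_Y[1])$ and the extension groups $_j\Sigma^1_\infty$, $_\infty\Sigma^1_j$ recorded in Example \ref{pervcohsyseg}, where in particular $_\infty\Sigma^1_\infty = \Ext^1(\mc O_Y[1],\mc O_Y[1]) = 0$. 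Since the only possible framing structure is the trivial one, the framing condition of Equation \ref{framingconditioneqn} is automatic.

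Next I would apply Theorem \ref{cohareptheo} directly: for the framing structure $\f_\rrr$ and a compatible stability condition $\zeta$, one obtains a natural representation $\rho_M : \mc H(Y) \to \End_{\Vect_{\bb N^{V_Q}}}(\V^{\f_\rrr,\zeta}(M))$ of the cohomological Hall algebra $\mc H(Y) = \mc H(Q_Y,W_Y)$ on $\V^{\f_\rrr,\zeta}(\mc O_Y[1])$. The remaining step is purely one of identification: by the definitions of Section \ref{coharepsec}, $\V^{\f_\rrr,\zeta}(\mc O_Y[1])$ is precisely the graded vector space $\bb V^\zeta_Y$ displayed in Equation \ref{catDTeqn}, and by Proposition \ref{pervcohsysprop} together with Theorem \ref{stackthm} this carries the interpretation as the categorified DT/PT/NCDT series. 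Passing to the $A$-equivariant, localized version $\rho_Y : \mc H(Y) \to \End_F(\V^\zeta_Y)$ of Equation \ref{cohadtrepeqn} is then a matter of base change along $H^\bullet_A(\pt) \to F$, exactly as in the passage from $\mc H$ and $\V^\zeta$ to their localized analogues described at the start of Section \ref{quiveryangsec}; naturality in $\zeta$ is inherited from the functoriality of the vanishing cycle pushforward and pullback maps used in the proof of Theorem \ref{cohareptheo}.

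There is really no substantive obstacle here, since the corollary is a routine unwinding of the machinery already in place; the only point requiring any care is confirming that $\mc O_Y[1]$ genuinely satisfies the admissibility hypothesis on the Bridgeland--Deligne t-structure, which in turn rests on the fact that $\Perv_\cs(Y) \subset \Perv^\p(Y)$ for the perversity function of Example \ref{mainteg} — but this is exactly the content of that example. I would therefore present the proof as two lines: cite Example \ref{pervcohsyseg} and Example \ref{mainteg} for the hypotheses, then invoke Theorem \ref{cohareptheo} and Proposition \ref{pervcohsysprop}, and finally note the base change to obtain the localized statement.

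\begin{proof} By Example \ref{pervcohsyseg}, the object $M = \mc O_Y[1]$ satisfies $\Ext^i(\mc O_Y[1],\mc O_Y[1]) = 0$ for $i \neq 0$ and $\Ext^0(\mc O_Y[1],\mc O_Y[1]) = \Hom(\mc O_Y,\mc O_Y) = \bb K$ is commutative, and the extension groups $_j\Sigma^1_\infty = \Ext^1(F_j,\mc O_Y[1])$ and $_\infty\Sigma^1_j = \Ext^1(\mc O_Y[1],F_j)$ vanish except for $_\infty\Sigma^1_0 = \bb K$; more generally $\Ext^i(F,\mc O_Y[1]) = \Ext^i(\mc O_Y[1],F) = 0$ for $i \leq 0$ and any $F \in \Perv_\cs(Y)$, as these computations are insensitive to the threefold $Y$. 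By Example \ref{mainteg}, taking the perversity function $\p$ with $\p(x) = 0$ for closed points $x$ and $\p(y) = -1$ otherwise, the associated Bridgeland--Deligne perverse coherent t-structure is admissible and $\mc O_Y[1] = \mc O_Z[1] \in \Perv^\p(Y)$ for $Z = Y$, which has dimension $\geq 1$. Thus $M = \mc O_Y[1]$ satisfies the hypotheses of Sections \ref{Extoverviewsec} and \ref{framingsec}, the latter being automatic since $_\infty\Sigma^1_\infty = 0$ forces the unique framing structure $\f_\rrr = \{0\}$ and hence the condition of Equation \ref{framingconditioneqn} holds trivially.

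Therefore Theorem \ref{cohareptheo} applies with $M = \mc O_Y[1]$, framing structure $\f_\rrr$, and the compatible stability condition $\zeta$, yielding a natural representation
\[ \rho_{\mc O_Y[1]}: \mc H(Y) \to \End_{\Vect_{\bb N^{V_Q}}}(\V^{\f_\rrr,\zeta}(\mc O_Y[1])) \ . \]
By the definition of $\V^{\f_\rrr,\zeta}(\mc O_Y[1])$ in Section \ref{coharepsec} and Proposition \ref{pervcohsysprop}, the underlying graded vector space is exactly $\bb V^\zeta_Y$ as in Equation \ref{catDTeqn}. Passing to $A$-equivariant, localized homology by base change along $H^\bullet_A(\pt) \to F$, as in the discussion at the beginning of Section \ref{quiveryangsec}, produces the desired representation $\rho_Y: \mc H(Y) \to \End_F(\V^\zeta_Y)$, and naturality in the stability condition $\zeta$ follows from the functoriality of the pushforward and pullback maps in vanishing cycle cohomology used in the construction.
\end{proof}
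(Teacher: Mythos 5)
Your proof is correct and is essentially the paper's own argument: the corollary is stated as an immediate specialization of Theorem \ref{cohareptheo} to $M=\mc O_Y[1]$ with its unique trivial framing structure and a compatible $\zeta$, followed by the identification of $\V^{\f_\rrr,\zeta}(\mc O_Y[1])$ with $\V^\zeta_Y$ and base change to the $A$-equivariant localized setting, exactly as you do. The only slip is the claim $\Ext^0(\mc O_Y[1],\mc O_Y[1])=\bb K$ — for non-compact $Y$ this is $H^0(Y,\mc O_Y)\cong \mc O(X)$ — but it is harmless since the hypothesis only requires this algebra to be commutative.
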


The extension of this action to a larger associative algebra containing $\mc H(Y)$ as a positive subalgebra in some triangular decomposition was widely anticipated in this setting. The basic idea was already present in the original papers of Lusztig \cite{Lus} and Ringel \cite{Ring}, and was proposed explicitly in the context of cohomological DT theory by Soibelman \cite{Soi} (see also Section 7.3 of the review \cite{Szrev}). Moreover, by the \emph{dimensional reduction} equivalence between the critical and preprojective cohomological Hall algebra  in some examples, established in \cite{YZ1} and the appendix to \cite{RenS}, the results of \cite{Nak2}, \cite{Var} and more recently \cite{SV} and \cite{SV2} suggested a relationship between $\mc H(Y_{m,0})$ and affine Yangian type quantum groups.

Motivated by related considerations in string theory developed in the series of papers \cite{Cos1}, \cite{Cos2}, and \cite{Cos3}, the following conjecture was formulated by Costello. 
\begin{conj}\cite{CosMSRI} \label{Yangactconj} Let $Y_{m,n} \to X_{m,n}$ be a resolution of the affine, toric, Calabi-Yau singularity $X_{m,n}=\{ xy-z^mw^n\}$. Then there exists a natural representation
\[\rho: \mc Y_{-\delta} (\glh_{m|n})  \to \End_F(\V_{Y_{m,n}}^{\zeta_\NCDT})  \ , \]
of the $-\delta$ shifted affine Yangian $\mc Y_{-\delta} (\glh_{m|n})$ of $\gl_{m|n}$ on $\V_{Y_{m,n}}^{\zeta_\NCDT}$, inducing an isomorphism
\[ \rho( \mc Y_{-\delta} (\glh_{m|n})_+) \xrightarrow{\cong}  \rho_{Y_{m,n}} (\mc{SH}(Y_{m,n}))   \ , \]
and such that $\V_{Y_{m,n}}^{\zeta_\NCDT}$ is identified with the vacuum module $V_{m,n}$ for $\mc Y_{-\delta} (\glh_{m|n})$.
\end{conj}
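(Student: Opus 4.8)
The plan is to deduce Conjecture \ref{Yangactconj} from the machinery developed above together with the explicit computations of Donaldson--Thomas invariants of $Y_{m,n}$ recalled in this section. First I would specialize the construction of Theorem \ref{cohareptheo} to $M=\mc O_{Y_{m,n}}[1]$ with its unique framing structure $\f_r$ of rank $r$ and the stability condition $\zeta_\NCDT$, working over the field $F$ of fractions of $H^\bullet_A(\pt)$, to obtain the $A$-equivariant representation $\rho_{Y_{m,n}}:\mc H(Y_{m,n})\to \End_F(\V_{Y_{m,n}}^{\zeta_\NCDT})$ and its restriction to the spherical subalgebra $\mc{SH}(Y_{m,n})$. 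By Proposition \ref{pervcohsysprop} the underlying framed quiver with potential is that of Nagao--Nakajima, so its unframed quiver $Q_{Y_{m,n}}$ is the ``tripled'' quiver attached to the toric diagram of $Y_{m,n}$, i.e. precisely the input datum from which Li--Yamazaki and Galakhov--Li--Yamazaki build the (shifted) quiver Yangian; this is the bridge to the affine $\gl_{m|n}$ side.

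The second step is to run the construction of Section \ref{quiveryangsec} in this case and to match the resulting shifted quiver Yangian with $\mc Y_{-\delta}(\glh_{m|n})$. Concretely I would: (i) compute the tautological classes $\mc F^i_\dd$, and hence the bond factors $P^i_j(z)$, from the $\Ext$-algebra $\Sigma$ of the simple objects $F_i\in\Perv_\cs(Y_{m,n})$, checking they agree with the structure functions in the affine super-Yangian presentation of \emph{loc. cit.}; (ii) compute the ground state charge function $Q^i_M(z)$ for $M=\mc O_{Y_{m,n}}[1]$, where the vanishing $\ _\infty\Sigma^1_\infty=\Ext^1(\mc O_{Y}[1],\mc O_{Y}[1])=0$ (Example \ref{pervcohsyseg}) forces the shift to be $-\delta$, which is the origin of the subscript in the statement; and (iii) invoke Conjecture \ref{quiveryangconj} --- which I would aim to establish in this case from the cohomological analogue of the shuffle-algebra relations of \cite{Negquiv} together with the dimensional reduction isomorphisms of \cite{YZ1} and the appendix to \cite{RenS} --- so that the operators $\rho(e^i_n)$, their adjoints $\rho(f^i_n)$ (Proposition \ref{factprop}), and the Cartan operators of Equation \ref{shiftedcartanacteqn} really do satisfy \emph{all} the relations of $\mc Y_{-\delta}(\glh_{m|n})$, including the Serre relations.

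The third step is to identify $\V_{Y_{m,n}}^{\zeta_\NCDT}$ with the vacuum module $V_{m,n}$. The fundamental class of the point in dimension vector $\dd=0$ is annihilated by all the $f^i_n$ and is an eigenvector for the Cartan with the correct ``vacuum'' weights, giving a canonical map $V_{m,n}\to \V_{Y_{m,n}}^{\zeta_\NCDT}$. Surjectivity should follow from the fact that in the NCDT chamber every $\zeta_\NCDT$-stable representation is cyclic, so $\V_{Y_{m,n}}^{\zeta_\NCDT}$ is generated over $\mc{SH}(Y_{m,n})$ --- hence over $\rho(\mc Y_{-\delta}(\glh_{m|n})_-)$ once the triangular decomposition is in place --- by the vacuum vector; injectivity should follow by comparing the torus fixed-point basis of Section \ref{invtsec} and the non-degeneracy of the pairing of Equation \ref{pairingeqn} with a PBW-type basis of $\mc Y_{-\delta}(\glh_{m|n})_-$. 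As a numerical consistency check one then recovers $P_q(V_{m,n})=\mc Z^{\NCDT}_{Y_{m,n}}=\mc Z_{Y_{m,n}}^{\zeta_\NCDT}$, matching the formulas of Equations \ref{conifoldDTeqn}, \ref{A1singDTeqn} and \ref{AnsingDTeqn}.

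The main obstacle is this last step: proving that $\V_{Y_{m,n}}^{\zeta_\NCDT}$ is \emph{exactly} the vacuum module and not a proper quotient or a non-split extension, i.e. that the negative half of the algebra acts freely on the vacuum vector. This is already the crux of the corresponding statement over $\C^3$ in \cite{SV}, where it is handled via compatible free field realizations and screening operators; for general $m,n$ one would want analogous compatible structures and coproducts $\mc Y_{-\delta}(\glh_{m|n})\to \mc Y_{-\delta}(\glh_{m|n})^{\otimes 2}$ reducing the relations to low rank. The super case $n>0$ is a further complication: one must track the $\Z/2$-grading and the odd root vectors throughout, and be careful about completions in the affine direction. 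Establishing Conjecture \ref{quiveryangconj} in the generality needed --- in particular the Serre relations --- is the second substantial input that this plan takes for granted.
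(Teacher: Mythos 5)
The statement you are addressing is labelled a \emph{conjecture} in the paper (attributed to Costello), and the paper offers no proof of it: it only records the expectation that a proof would follow from Conjecture \ref{quiveryangconj} together with an identification $\mc Y_{\mc O[1]}(Y_{m,n})\cong \mc Y_{-\delta}(\glh_{m|n})$, notes that the cohomological analogue of \cite{Negquiv} combined with the construction of Section \ref{quiveryangsec} ``appears to imply'' Conjecture \ref{quiveryangconj}, and points to \cite{LiYam}, \cite{GLY} and the $\gl_1$ check in \cite{RSYZ2}. Your plan reconstructs exactly this intended route --- specializing Theorem \ref{cohareptheo} to $M=\mc O_{Y_{m,n}}[1]$ at $\zeta_\NCDT$, running the Section \ref{quiveryangsec} construction to get bond factors and charge functions, and matching with the shifted affine super-Yangian --- so there is no divergence of approach to report. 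You are also candid that the two essential inputs remain unproven, and these are precisely the points the paper itself leaves open: (i) that the operators satisfy \emph{all} relations of $\mc Y_{-\delta}(\glh_{m|n})$, Serre relations included (this is the content of Conjecture \ref{quiveryangconj}, which the paper does not establish), and (ii) that $\V_{Y_{m,n}}^{\zeta_\NCDT}$ is exactly the vacuum module, i.e.\ that the negative half acts freely on the vacuum vector, for which the paper likewise has no argument beyond the character match with Equations \ref{conifoldDTeqn}, \ref{A1singDTeqn}, \ref{AnsingDTeqn}. So your proposal is a faithful account of the intended strategy rather than a proof; one small caveat is that the $-\delta$ shift should be traced through the ground state charge functions $Q^i_M(z)$ determined by the framing arrow into the node $0$ (cf.\ Example \ref{pervcohsyseg}), not merely through the vanishing of $\ _\infty\Sigma^1_\infty$, though the paper does not spell this computation out either.
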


Our expectation is that the proof of this conjecture will follow from Conjecture \ref{quiveryangconj} together with the identification
\[   \mc Y(Y_{m,n}):=\mc Y_{\mc O[1]}(Y_{m,n}) \cong \mc Y_{-\delta} (\glh_{m|n})  \]
so that the induced isomorphism of the positive half with $\mc{SH}$ follows by construction
\[\mc Y_{-\delta} (\glh_{m|n})_+ \cong \mc Y(Y_{m,n})_+ := \mc{SH}(Y_{m,n})  \ . \]

\noindent Indeed, our understanding is that some form of this conjecture is essentially proved along these lines in \cite{LiYam} and \cite{GLY}, and we hope that the present paper will help to develop a more robust translation of their work to the language of geometric representation theory. The conjecture was also checked for $\gl_1$ in \cite{RSYZ2}, following several related results, using precisely the approach described here. Indeed, Section \ref{quiveryangsec} was was written following \emph{loc. cit.}, as well as the references therein and the many indicated here. Closely related results were also obtained in \cite{VarV1}, \cite{Liu1} and \cite{VarV2}.

There are also several more conceptual approaches to understanding the appearance of Yangians in this setting, including \cite{MO} and \cite{DavMein}. We hope to better understand the results presented here in such terms, but this is not addressed in the present work.

As a consequence of Conjecture \ref{Yangactconj}, we expect equality between the non-commutative DT series of the threefold $Y_{m,n}$ and the Poincare
 series of the vacuum module $V_{m,n}$ for $\mc Y_{-\delta} (\glh_{m|n})$:
\begin{corollary} There is a natural grading on $V_{m,n}$ such that
	\[\mc Z_{Y_{m,n}}^{\NCDT}(q) = P_q( V_{m,n})  \ \in \bb Z\lP q\rP \]
where $P_q\in \bb Z\lP q\rP$ denotes the Poincare series.
\end{corollary}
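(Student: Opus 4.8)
The strategy is to deduce the identity $\mathcal Z_{Y_{m,n}}^{\NCDT}(q) = P_q(V_{m,n})$ as a formal consequence of Conjecture \ref{Yangactconj}, exactly as Theorem \ref{Nakthm} yields the identity in Equation \ref{VWC2introeqn}: once $\bb V_{Y_{m,n}}^{\zeta_\NCDT}$ is identified with the vacuum module $V_{m,n}$ for $\mc Y_{-\delta}(\glh_{m|n})$ \emph{as graded vector spaces}, the two sides of the claimed equality compute the same generating function from its two descriptions. So the first step is to fix the grading. On the geometric side, $\bb V_{Y_{m,n}}^{\zeta_\NCDT}$ carries the $\bb N^{V_{Q_Y}}$-grading by dimension vector $\dd$ of Equation \ref{catDTeqn}, and the specialization $\qq^\dd \mapsto q^{|\dd|}$ along $q = -q_0q_1\cdots$ (as in the examples following Equation \ref{conifoldDTeqn}) is precisely the variable in which $\mathcal Z^{\NCDT}_{Y_{m,n}}$ is written. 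On the algebra side, $\mc Y_{-\delta}(\glh_{m|n})$ has a root/weight grading, and the vacuum module $V_{m,n}$ inherits a grading by the monoid of weights appearing; the conformal (or principal) grading on $V_{m,n}$ is the one under which $P_q$ is taken. I would first check that the isomorphism $\bb V_{Y_{m,n}}^{\zeta_\NCDT} \cong V_{m,n}$ posited in Conjecture \ref{Yangactconj} intertwines the dimension-vector grading with the weight grading on $V_{m,n}$, so that the specialization $\qq \mapsto q$ matches the conformal grading up to an explicit affine reparametrization.

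Second, granting the graded isomorphism, the right-hand side $P_q(V_{m,n})$ is by definition $\sum_{\dd} q^{|\dd|} \dim_F (V_{m,n})_\dd$, and by the identification this equals $\sum_\dd q^{|\dd|} \dim_F \bb V^{\zeta_\NCDT}_{Y_{m,n},\dd}$. Here I would invoke the results recalled in Section \ref{invtsec}: under Hypothesis \ref{lochypo}, which holds for $Y_{m,n}$ by the toric fixed-point analysis of \cite{NN},\cite{Nag},\cite{BrY}, the moduli space $\mf M_\dd^{\zeta_\NCDT}(Q^{\f_\rrr}_{\mc O_Y[1]},W^{\f_\rrr}_{\mc O_Y[1]})$ is a critical locus with isolated $A$-fixed points, and the graded Euler characteristic of $\bb V^{\zeta_\NCDT}_{Y_{m,n},\dd}$ equals the signed count $(-1)^{k_\dd}|\mf M_\dd^{\zeta_\NCDT}(\cdots)^A|$ by Corollary \ref{fpcoro}. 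Summing over $\dd$ gives exactly $\mathcal Z^{\NCDT}_{Y_{m,n}}(q)$ in the parametrization of \cite{BrY}, i.e.\ Equations \ref{conifoldDTeqn}, \ref{A1singDTeqn}, and their $\gl_{m|n}$ analogues. The subtlety to address is the sign $(-1)^{k_\dd}$: one must verify that the parity $k_\dd$ of the tangent-space dimension is itself linear in $|\dd|$ (equivalently, that $(-1)^{k_\dd} = (-1)^{c\cdot|\dd|}$ for a fixed $c$), which is what allows the sign to be absorbed into the substitution $q = -q_0 q_1\cdots q_{m+n-1}$; this is visible in all the worked examples and follows from the fact that $Y_{m,n}$ is a symmetric quiver with potential of the relevant form.

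Third, I would assemble these: $P_q(V_{m,n}) = P_q(\bb V^{\zeta_\NCDT}_{Y_{m,n}}) = \sum_\dd q^{|\dd|}(-1)^{k_\dd \bmod 2}|\mf M_\dd^{\zeta_\NCDT}(\cdots)^A| = \mathcal Z^{\NCDT}_{Y_{m,n}}(q)$, where the middle equality is Corollary \ref{fpcoro} and the last is the definition of the NCDT series together with the localization computation of \cite{NN} using \cite{BehFan}. The "natural grading on $V_{m,n}$" referred to in the statement is then precisely the pullback of the dimension-vector grading under the conjectural isomorphism, reparametrized to the conformal grading; one should record this identification explicitly so that $P_q$ is unambiguous.

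The main obstacle is that this argument is entirely conditional on Conjecture \ref{Yangactconj}, which is not proved in the excerpt — in particular on the identification of $\bb V^{\zeta_\NCDT}_{Y_{m,n}}$ with the vacuum module \emph{as a graded vector space}, not merely abstractly. Thus the real content beyond formal bookkeeping is: (i) establishing the graded compatibility of the dimension-vector grading with the conformal grading on $V_{m,n}$ under the expected isomorphism $\mc Y(Y_{m,n}) \cong \mc Y_{-\delta}(\glh_{m|n})$, which in turn rests on Conjecture \ref{quiveryangconj} and the identification $\mc Y_{\mc O[1]}(Y_{m,n})\cong \mc Y_{-\delta}(\glh_{m|n})$ discussed after that conjecture; and (ii) controlling the sign $(-1)^{k_\dd}$ uniformly. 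Short of proving Conjecture \ref{Yangactconj}, the honest status of the final statement is that it is a corollary of that conjecture, and the proof I would write makes exactly that deduction rigorous, isolating (i) and (ii) as the only inputs needed; for $\gl_1$, where Conjecture \ref{Yangactconj} is known by \cite{RSYZ2}, the argument above is unconditional and recovers the MacMahon-type product formulas quoted in Equations \ref{conifoldDTeqn}–\ref{AnsingDTeqn}.
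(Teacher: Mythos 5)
Your proposal matches the paper's treatment: the corollary is stated there as an immediate, conditional consequence of Conjecture \ref{Yangactconj}, with the ``natural grading'' on $V_{m,n}$ being exactly the one induced by the conjectural identification with $\bb V^{\zeta_\NCDT}_{Y_{m,n}}$ (graded by dimension vector, specialized via $q=-q_0q_1\cdots$), and the equality with $\mc Z^{\NCDT}_{Y_{m,n}}$ coming from the localization results of Section \ref{invtsec} and \cite{NN}, \cite{Nag}, \cite{BrY}; the only thing the paper adds beyond this deduction is an independent consistency check for $n=0$ via the PBW theorem of \cite{Guay}, which is not part of the proof itself. So your argument is correct and essentially the same as the paper's, with your sign and grading bookkeeping being a reasonable elaboration of what the paper leaves implicit.
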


Indeed, in the case $n=0$, the PBW theorem for the affine Yangian of $\spl_n$ proved in \cite{Guay} gives a filtration on $\mc Y(\widehat{\spl}_m)$ such that we identifications of the associated graded
\[ \gr\  \mc Y(\widehat{\spl}_m) \cong \Sym^\bullet( \widehat{\spl}_m[u^{\pm},v]) \quad\quad \text{and}\quad\quad  \gr \ \mc Y_\delta(\widehat{\spl}_m) \cong \Sym^\bullet( \spl_m[t_1,t_2])  \ , \]
where $\widehat{\spl}_m[u^{\pm},v]$ denotes the universal central extension of $\spl_m[u^{\pm 1},v]$  and $\spl_m[t_1,t_2]$ denotes the subalgebra spanned by polynomials in $t_1=u$ and $t_2=u^{-1}v$ with coefficients in $\spl_m$, so that the corresponding associated graded of the vacuum module $V_{m,0}^\spl$ for $\spl_m$ is given by
\[ \gr \ V_{m,0}^{\spl} \cong \bigotimes_{r=0}^\infty \Sym^\bullet( \widehat{\spl}_m[u^{\pm 1}])\otimes_{ \Sym^\bullet( u^{-r}{\spl}_m[u] )} \bb K  \ , \]
the Poincare polynomial of which is given by
\[ P_q(V_{m,0}^\spl) = \prod_{r=0}^\infty \prod_{k=1}^\infty \frac{1}{(1-q^{k+r})^{m^2-1}} = \left( \prod_{k=1}^\infty \frac{1}{(1-q^{k})^{k}}\right) ^{m^2-1}  = M(q)^{m^2-1} \ . \]
Similarly, this implies that in the $\gl_m$ case we have
\begin{equation}\label{DTchieqn}
	 P_q(V_{m,0})=M(1,q)^{m^2} \ , 
\end{equation}
which evidently matches the formula from Equation \ref{AnsingDTeqn} specialized to $x_i=1$. Moreover the additional refinement given by the variables $\{x_i\}$ agrees with the refinement by the grading on $V_{m,0}$ under the Cartan $\mf{h}\subset \spl_m$.

More generally, we conjecture that the modules induced by the construction of Theorem \ref{cohareptheo}, and its extension outlined in Section \ref{quiveryangsec}, in the case of Example \ref{DTmoduleeg} leads to a family of modules
\[\V_{\alpha,\beta,\gamma}^\zeta = \bigoplus_{d\in \bb N^{V_{Q_Y}}} H^A_\bullet(\mf M_\dd^{\zeta}(Q_{M}^{\f_{\alpha,\beta,\gamma}}),\varphi_{\overline{W}_{M,\dd}^{\f_{\alpha,\beta,\gamma}}}) \otimes_{H_A^\bullet(\pt)} F \]
over the affine Yangian of $\gl_1$ which were constructed in the closely related setting of modules over the quantum toroidal algebra in \cite{FJMM}, where $f_{\alpha,\beta,\gamma}$ is the framing structure on the extended quiver described in Example \ref{DTmoduleeg}. Their natural analogue in this setting is the following:

\begin{conj}\label{DTmodconj} There exists a natural representation
	\[ \rho : \mc Y_{-\delta}(\glh_1) \to \End_F( \V^{\zeta_\NCDT}_{\alpha,\beta,\gamma} ) \]
such that $\V^{\zeta_\NCDT}_{\alpha,\beta,\gamma}$ is identified with the cohomological variant of the MacMahon module $\mc M_{\alpha,\beta,\gamma}(u)$ constructed in \cite{FJMM}.
\end{conj}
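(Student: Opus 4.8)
The plan is to deduce Conjecture \ref{DTmodconj} as the specialization to $Y=\C^3$ of the general construction of Theorem \ref{cohareptheo} together with its Yangian enhancement from Section \ref{quiveryangsec}, applied to the object $M=\mc O_{\C^3}[1]\oplus \mc O_{\C_x}\oplus \mc O_{\C_y}\oplus \mc O_{\C_z}$ equipped with the framing structure $\f_{\alpha,\beta,\gamma}$ of Example \ref{DTmoduleeg}, in which the fixed endomorphisms $A^i_1,A^i_2\in\gl(V_\infty^i)$ are principal nilpotents whose Jordan types are read off from the partitions $\alpha,\beta,\gamma$. First I would fix the stability condition $\zeta_\NCDT$ and check that it is compatible with $M$ in the sense of Section \ref{coharepsec}, and that the resulting moduli spaces $\mf M_\dd^{\zeta_\NCDT}(Q_M^{\f_{\alpha,\beta,\gamma}},W_M^{\f_{\alpha,\beta,\gamma}})$ are global critical loci on smooth schemes for which $A=\tilde T\times T_\f$ acts with isolated fixed points, so that the hypotheses of Section \ref{invtsec} and Hypothesis \ref{lochypo} hold. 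The central combinatorial input here is the identification of $\mf M_\dd^{\zeta_\NCDT}(Q_M^{\f_{\alpha,\beta,\gamma}},W_M^{\f_{\alpha,\beta,\gamma}})^A$ with the set of plane partitions of asymptotics $\alpha,\beta,\gamma$ and size determined by $\dd$; this furnishes the fixed-point basis $\V^{\zeta_\NCDT}_{\alpha,\beta,\gamma}=\bigoplus_{\lambda}F_\lambda$ indexed by exactly the data labelling $\mc M_{\alpha,\beta,\gamma}(u)$ in \cite{FJMM}.

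Next I would run the construction of Section \ref{quiveryangsec}: the action of $\mc H^{\geq 0}(\C^3)$ from Theorem \ref{cohareptheo}, the action of $\mc{SH}(\C^3)^{\op}$ by adjoints with respect to the equivariant pairing of Equation \ref{pairingeqn} (Proposition \ref{factprop}), and the action of the shifted Cartan algebra $\mc H^0_M$ through $\psi^i(z)=Q^i_M(z)\,c_{-1/z}(\mc F_\dd^i)$. The crucial calculation is the ground-state charge function $Q^i_M\in F(e_i,z)$, which I expect to factor into linear factors in $z$ labelled by the coordinate contents of the boxes of $\alpha,\beta,\gamma$, matching the highest-weight datum ${}^{\#}\psi_0(z)$ of the MacMahon module in the cohomological (additive) degeneration of quantum toroidal $\glh_1$. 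Granting Conjecture \ref{quiveryangconj} in the case $Y=\C^3$, this produces a representation $\mc Y_M(\C^3)\to \End_F(\V^{\zeta_\NCDT}_{\alpha,\beta,\gamma})$; since the underlying algebra $\mc Y_M(\C^3)$ depends on $M$ only through the shift/ground-state charge, it is identified with the $-\delta$-shifted affine Yangian $\mc Y_{-\delta}(\glh_1)$, which yields the desired map $\rho$.

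To identify the module with $\mc M_{\alpha,\beta,\gamma}(u)$, I would compare the action on the fixed-point basis with the explicit formulas of \cite{FJMM}: the eigenvalue of $\psi^i(z)$ on $[\pt_\lambda]$ should equal the rational function attached to the plane partition $\lambda$; the operators $e^i_n$ should add a single addable box with matrix coefficient a ratio of equivariant Euler classes computed through the correspondences $X^{\f,\zeta}_{\aaa,\bbb}$; and $f^i_n$ should remove a box. One then checks that $[\pt_\emptyset]$, the minimal plane partition with profiles $\alpha,\beta,\gamma$, is a cyclic highest-weight vector, so that $\V^{\zeta_\NCDT}_{\alpha,\beta,\gamma}$ is a quotient of $\mc M_{\alpha,\beta,\gamma}(u)$, and finally compares Poincaré series using Corollary \ref{fpcoro} — both sides being the generating function over plane partitions of asymptotics $\alpha,\beta,\gamma$ — to conclude that the quotient map is an isomorphism.

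The hard part will be, exactly as the excerpt stresses following \cite{SV}, establishing Conjecture \ref{quiveryangconj} in this case: proving that the geometrically defined operators satisfy \emph{all} the relations of the shifted quiver Yangian, in particular the cubic Serre relations among the $e^i_n$ and among the $f^i_n$, which cannot be verified by direct computation. The expected route, adapting \cite{SV}, \cite{RSYZ}, and \cite{RSYZ2}, is a coproduct/factorization argument reducing the Serre relations to the abelian case (one box per asymptotic leg), where they are checked by hand; alternatively one may hope to invoke the cohomological analogue of the shuffle-algebra presentation of \cite{Negquiv} alluded to at the end of Section \ref{quiveryangsec}. A secondary technical obstacle is reconciling the rational degeneration of the quantum toroidal MacMahon module of \cite{FJMM} with the additive conventions used here, including the precise dictionary between the spectral parameter $u$ and the equivariant variables and the normalization of $Q^i_M$.
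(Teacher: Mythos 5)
The statement you are trying to prove is stated in the paper as Conjecture \ref{DTmodconj}; the paper offers no proof of it, only the surrounding heuristics (Example \ref{DTmoduleeg}, Section \ref{quiveryangsec}, and the analogy with \cite{FJMM}). Your outline follows exactly the route the authors themselves envision, but as written it is a conditional strategy rather than a proof, and the conditions it rests on are themselves open. Most seriously, you invoke Conjecture \ref{quiveryangconj} to get the map $\mc Y_M(\C^3)\to \End_F(\V^{\zeta_\NCDT}_{\alpha,\beta,\gamma})$; that conjecture is precisely the hard content here (the full shifted quiver Yangian relations, in particular the Serre relations, for the geometrically defined operators), and deferring it to a hoped-for coproduct/factorization argument or to a cohomological analogue of \cite{Negquiv} does not discharge it. Likewise, the fixed-point combinatorics you treat as "central combinatorial input" — that $\mf M_\dd^{\zeta_\NCDT}(Q_M^{\f_{\alpha,\beta,\gamma}},W_M^{\f_{\alpha,\beta,\gamma}})^A$ is the set of plane partitions with asymptotics $\alpha,\beta,\gamma$ — is itself only conjectured in Example \ref{DTmoduleeg}, and you do not verify the hypotheses needed even to set it up: compatibility of $\zeta_\NCDT$ with this particular $M$ (note $M$ now mixes $\mc O_{\C^3}[1]$ with the one-dimensional sheaves $\mc O_{\C_x},\mc O_{\C_y},\mc O_{\C_z}$, which requires the Bridgeland--Deligne $t$-structure of Example \ref{semidynamicalteg} and a nontrivial check of the hypotheses of Sections \ref{Extoverviewsec}, \ref{framingsec} and \ref{coharepsec}), smoothness of the stable locus so that Section \ref{invtsec} applies, and Hypothesis \ref{lochypo}.

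Two further steps are asserted rather than argued. First, the identification of the resulting algebra with $\mc Y_{-\delta}(\glh_1)$: the shift is read off from the degree of the ground-state charge function $Q^i_M(z)$, and you do not compute $Q^i_M$ for this $M$; it is not automatic that adjoining the legs $\mc O_{\C_x},\mc O_{\C_y},\mc O_{\C_z}$ with the framing endomorphisms $A^i_1,A^i_2$ leaves the shift equal to $-\delta$ rather than merely modifying the highest weight, so this needs an explicit Euler-class calculation. Second, the final identification with $\mc M_{\alpha,\beta,\gamma}(u)$: comparing eigenvalues of $\psi^i(z)$ and matrix coefficients of $e^i_n,f^i_n$ on the fixed-point basis with the formulas of \cite{FJMM} requires a precise rational (cohomological) degeneration of the quantum toroidal module, which does not exist in the literature in the form you need and which you flag only as a "secondary technical obstacle"; and your concluding step — cyclicity of $[\pt_\emptyset]$ plus equality of Poincar\'e series forcing the surjection $\mc M_{\alpha,\beta,\gamma}(u)\twoheadrightarrow \V^{\zeta_\NCDT}_{\alpha,\beta,\gamma}$ to be an isomorphism — presupposes that the geometric module is a quotient of the MacMahon module at all, i.e.\ that the relations defining $\mc M_{\alpha,\beta,\gamma}(u)$ (including the central-charge/highest-weight constraints) hold geometrically, which is again the content of the unproven Yangian action. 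In short, the architecture is sound and matches the authors' intent, but every load-bearing step (fixed-point enumeration, compatibility and smoothness hypotheses, computation of $Q^i_M$ and the shift, the Yangian relations, and the degeneration dictionary with \cite{FJMM}) remains to be supplied, so the proposal does not yet constitute a proof.
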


\subsection{Perverse coherent extensions of divisors, Vafa-Witten theory, and vertex algebras}\label{VWsec}
In this section, we explain the application of our results in the case that $M$ is given by the structure sheaf of a divisor. Let $S$ be an effective toric Cartier divisor in $Y$, $S^\red$ the underlying reduced divisor, and $\mf D_S$ its set of irreducible components, so that we have
\[ S^\red= \bigcup_{d\in \mf D_S} S_d \quad\quad\quad \mc O_{S^\red}^\sss=\bigoplus_{d\in \mf D_S} \mc O_{S_d} \quad\quad \text{and}\quad\quad [S]=\sum_{d\in \mf D_S} r_d [S_d]  \]
for some multiplicities $r_d\in \bb N$ defined for each $d\in \mf D_S$. Let $Q_{\mc O_{S^\red}^\sss[1]}$ denote the extended quiver corresponding to the object $M=\mc O_{S^\red}^\sss[1]$ and note that the data of a Jordan-Holder filtration on $\mc O_S$ with subquotients given by the objects $\mc O_{S_d}$ determines a framing structure $\f_{S}$ of rank $\rr_S=(r_d)_{d\in \mf D_S}$, where we identify $\mf D_S$ with the set of framing nodes of the extended quiver $Q_S$. Thus, we can consider the stack of $\f_S$-framed perverse coherent extensions of $\mc O_{S^\red}^\sss[1]$
\[  \mf M(Y,S):= \mf M^{\f_{S}}(Y,\mc O_{S^\red}^\sss[1])  \quad\quad \text{as well as} \quad\quad  \mf M^0(Y,S):= \mf M^{0_{S}}(Y,\mc O_{S^\red}^\sss[1]) \ , \]
the stack of trivially framed perverse coherent extensions of  $\mc O_{S^\red}^\sss[1]$ of rank $\rr_S$, and their corresponding framed quivers with potential $(Q^{\f_S},W^{\f_S})$ and $(Q^{0_S},W^{0_S})$. We have seen in Examples \ref{ADHMeg} to \ref{NYeg} that certain stable loci $\mf M^\zeta(Y,S)$ or $\mf M^{0,\zeta}(Y,S)$ in these stacks provide models in algebraic geometry for moduli spaces of framed instantons on $S^\red$ of rank $\rr_S$, that is, of rank $r_d$ on the irreducible component $S_d$ for each $d\in \mf D_S$. Indeed, this gives the desired generalization of the ADHM construction described in Section \ref{agintrosec} of the introduction.

In particular, in the case $Y=\C^3$ and $S=S_{r_1,r_2,r_3}=r_1[\C^2_{yz}]+r_2[\C^2_{xz}]+r_3[\C^2_{yz}]$, applying Theorem \ref{stackpotthm} as in Example \ref{spikedeg} implies the desired description in algebraic geometry of the stack $\mf M_{\textup{Nek}}^{r_1,r_2,r_3}(\C^3)$ of rank $\rr =(r_1,r_2,r_3)$ representations of the spiked instantons quiver with potential studied in \cite{NekP}:

\begin{theo} There is an equivalence of algebraic stacks
	\[ \mf M_{\textup{Nek}}^{r_1,r_2,r_3}(\C^3) \xrightarrow{\cong }  \mf M^{0}(\bb C^3,S_{r_1,r_2,r_3})  \ . \]
\end{theo}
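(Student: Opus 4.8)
The plan is to deduce this theorem directly from Theorem \ref{stackpotthm} together with the explicit computation of the extended quiver with potential carried out in Example \ref{spikedeg}. First I would observe that the object $M=\mc O_{S^\red}^\sss[1]$ for $S=S_{r_1,r_2,r_3}$ is the semisimplification $\mc O_{\C^2_{yz}}[1]\oplus \mc O_{\C^2_{xz}}[1]\oplus \mc O_{\C^2_{xy}}[1]$, and that in Example \ref{spikedeg} (with the trivial framing structure $0_S$) the computation of the extension groups $_i \Sigma^1_j$ and of the potential $W_M^{0_S}$ shows that $(Q_M^{0_S},W_M^{0_S})$ is precisely the quiver with potential of Equation \ref{Nekeqn}, namely Nekrasov's spiked instanton quiver with the three framing nodes having dimensions $r_1,r_2,r_3$. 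I would check that $M$ satisfies the hypotheses of Section \ref{Extoverviewsec}: the $t$-structure of Example \ref{mainteg} is admissible and contains $\mc O_Z[1]$ for each of the three coordinate $2$-planes $Z$, the algebra $\Ext^0(M,M)$ is commutative (it is a product of polynomial rings in one variable since the three divisors are distinct and irreducible), and $\Ext^{\leq 0}(F,M)=\Ext^{\leq 0}(M,F)=0$ for compactly supported perverse coherent $F$ by a direct Koszul-complex computation on $\C^3$ — these are routine and I would only sketch them. I would also verify the additional hypothesis of Equation \ref{framingconditioneqn} needed to define framing structures; for the trivial framing $0_S$ this is immediate since the $(\infty,\infty)$ block of the Maurer-Cartan operator is unobstructed (there are no arrows among the framing nodes once we pass to $\mc O_{S^\red}^\sss$ and the potential has no purely-framing terms).

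Granting these verifications, Theorem \ref{stackpotthm} applied to $M=\mc O_{S^\red}^\sss[1]$ with framing structure $\f=0_S$ of rank $\rr=(r_1,r_2,r_3)$ yields a canonical framed quiver with potential $(Q_M^{0_S},W_M^{0_S})$ and an equivalence of algebraic stacks
\[ \mf M(Q_M^{0_S},W_M^{0_S}) \xrightarrow{\cong} \mf M^{0_S}(Y,\mc O_{S^\red}^\sss[1]) = \mf M^0(\bb C^3,S_{r_1,r_2,r_3}) \ . \]
Composing with the identification $(Q_M^{0_S},W_M^{0_S})=(Q_{\textup{Nek}}^{r_1,r_2,r_3},W_{\textup{Nek}}^{r_1,r_2,r_3})$ from Example \ref{spikedeg}, and with the tautological identification of the stack of representations $\mf M_{\textup{Nek}}^{r_1,r_2,r_3}(\C^3)$ with $\mf M(Q_{\textup{Nek}}^{r_1,r_2,r_3},W_{\textup{Nek}}^{r_1,r_2,r_3})$, gives the asserted equivalence. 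The explicit formula for the equivalence on $\bb K$-points is then inherited verbatim from Theorem \ref{stackpotthm}, and produces exactly the monad presentation written out in Example \ref{spikedeg}.

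The main obstacle, I expect, is not the abstract deduction but making the match between the two quivers with potential genuinely canonical rather than merely up to an isomorphism of quivers. Concretely, the potential produced by the construction in the proof of Theorem \ref{stackpotthm} is $W_M^{0_S}=\sum_{n\geq 1}\sum \langle m_n^{\Sigma_M}(a_1^\vee,\dots,a_n^\vee),a_{n+1}^\vee\rangle a_1\cdots a_{n+1}$, and one must compute the relevant $A_\infty$ products $m_n^{\Sigma_M}$ on $\Sigma_M=\Ext^\bullet(F_0\oplus M,F_0\oplus M)$ — equivalently pin down the Serre-duality pairing and the cubic structure constants — and check that after choosing the bases of $_i\Sigma^1_j$ used in Example \ref{spikedeg} one recovers $B_1[B_2,B_3]+\sum_{k=1}^3 B_kI_kJ_k$ with no extra terms. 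Since $\C^3$ is Koszul (Example \ref{a3eg}), higher products $m_n$ with $n\geq 3$ vanish on the relevant summands and the only subtlety is the sign/normalization bookkeeping in the cubic terms $B_kI_kJ_k$ and the verification that $\Ext^1$ between the $2$-plane summands themselves vanishes (so there are indeed no arrows between distinct framing nodes in the trivial-framing case). I would handle this by citing the computation already recorded in Example \ref{spikedeg} and Equation \ref{Nekeqn}, noting that the monad presentation there is precisely the one of \cite{NekP}, so that the two potentials agree on the nose for the stated choice of representatives of extension classes.
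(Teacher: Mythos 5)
Your proposal is correct and is essentially the paper's own argument: the theorem is obtained by applying Theorem \ref{stackpotthm} to $M=\mc O_{S^\red}^\sss[1]$ with the trivial framing $0_S$ and invoking the identification of $(Q_M^{0_S},W_M^{0_S})$ with Nekrasov's spiked-instanton quiver of Equation \ref{Nekeqn} already recorded in Example \ref{spikedeg}. One correction to a side remark: $\Ext^1$ between distinct shifted plane summands does \emph{not} vanish --- its nonvanishing is precisely what produces the extra framing data $A_2^3,A_3^2$ in Example \ref{spikedeg} --- and the absence of arrows among framing nodes in $Q_M^{0_S}$ instead follows from the construction in the proof of Theorem \ref{stackpotthm}, which excludes the edges among framing vertices from the edge set and fixes them to the value prescribed by $\f=0$.
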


Similarly, the special cases of this construction in Examples \ref{KNeg}, \ref{NYeg}, and \ref{chainsaweg} give `three dimensional' variants of the corresponding cases of constructions of \cite{KrNak}, \cite{NY1}, and \cite{FinR}, respectively, analogous to the spiked instantons variant of the ADHM construction. In general, in the case $Y=Y_{m,0}$, this construction conjecturally gives the analogous variant of the relationship between rank $m$, parabolic torsion free sheaves on $\bb P^1\times \bb P^1$ and chainsaw quivers from \emph{loc. cit.}. For divisors $S$ in the spaces $Y_{m,n}$, we obtain a generalization of this variant of their construction to $\gl_{m|n}$. We discuss examples of this form in detail following Conjecture \ref{chainsawconj} below, and in the succeeding Section \ref{MVsec}.

For each choice of compatible stability condition $\zeta \in \bb R^{V_{Q_Y}}$, the corresponding homology module $\bb V_S=\bb V^{\f_S,\zeta}(Y,\mc O_{S^\red}^\sss[1])$ defines a model for a rank $\rr_S$ cohomological Vafa-Witten type series of $S^\red$, given by
\begin{equation}\label{catVWeqn}
\bb V_S^\zeta= \bigoplus_{\dd \in \bb N^{V_{Q_Y}}} H_\bullet^{G_\dd(Q_Y)\times A}( X_\dd^{\zeta}(Q^{\f_S}),\varphi_{W^{\f_S}})\otimes_{H_A^\bullet(\pt)} F  \ , 
\end{equation}
or equivalently, again under the hypotheses of Section \ref{invtsec} which hold in our examples of interest,
\[\bb V_S^\zeta= \bigoplus_{\dd \in \bb N^{V_{Q_Y}}} H_\bullet^{A}( \mf M_\dd^{\zeta}(Q^{\f_S}),\varphi_{\overline{W}^{\f_S}})\otimes_{H_A^\bullet(\pt)} F  \  . \]

We also define the corresponding generating functional, as in Equation \ref{genfuneqn}, by
\[ \Zz_S^\zeta(\qq) = \sum_{\dd \in \bb N^{V_{Q_Y}}}\qq^\dd \chi(\bb V^{\f_S,\zeta}_\dd(\mc O_{S^\red}^\sss[1]))  \ \in \Pq \ ,\]
where we recall the shorthand notation $\qq^\dd=\prod_{i\in V_{Q_Y}} q_i^{d_i}$ and $\Pq= \bb Z[\![q_i]\!]_{i\in V_{Q_sY}} $. In particular, we define
\[ \V_S = \V^{\f_S,\zeta_\VW}(\mc O_{S^\red}^\sss[1])  \quad\quad \text{and}\quad\quad  \Zz_S^{\VW}(\qq) = \sum_{\dd \in \bb N^{V_{Q_Y}}}\qq^\dd \chi(\bb V_{S,\dd}))  \ \in \Pq \ ,\]
for $\zeta=\zeta_\VW$ the Vafa-Witten stability condition, as well as
\[\V^0_S=\V^{0_S,\zeta}(\mc O_{S^\red}^\sss[1])  \quad\quad \text{and}\quad\quad \mc Z_S^{0,\zeta}(\qq) = \sum_{\dd \in \bb N^{V_{Q_Y}}}\qq^\dd \chi(\bb V^0_{S,\dd}))  \ \in \Pq \ , \]
corresponding to the trivial framing structure $0_S$ of rank $\rr_S$.

Note that again the hypotheses necessary to calculate the generating functional in terms of fixed point counts as in Corollary \ref{fpcoro} will hold in the main examples of interest, as we will use below:

\begin{eg}\label{VWAGTeg} Let $Y=\C^3$ and $S^\red=\bb C^2_{xy}$ as in Example \ref{ADHMeg}. For $S=S^\red$ and generic $\zeta$, the moduli space of $\zeta$-stable, trivially-framed perverse coherent extensions of $M=\mc O_{S^\red}^\sss[1]$ of rank 1 and dimension $\dd=n$ corresponds under dimensional reduction to the Hilbert scheme of $n$ points on $\bb C^2$, so that we have
\begin{equation}\label{vacC2eqn}
		 \V_{\C^2} = \bigoplus_{n\in\bb N} H_\bullet^{A}(\Hilb_n(\bb C^2)) \otimes_{H^\bullet_A(\pt)} F  \cong  \bigoplus_{n\in\bb N}  \bigoplus_{\lambda\in \Fp_n}  F_\lambda \ 
\end{equation}
	where $\Fp_n$ denotes the set of $A$-fixed points of $\Hilb_n(\bb C^2)$, which is in bijection with the set of partitions $\lambda$ of length $n$, and $F_\lambda$ denotes a copy of the field of fractions of $H_A^\bullet(\pt)$. It follows that the corresponding generating function is given by
\begin{equation}\label{VWC2eqn}
		 \mc Z_{\C^2}^\VW(q) = \prod_{k=1}^\infty \frac{1}{1-q^k} = \eta(q)^{-1}   \  .
\end{equation}
More generally, for $S=r[\bb C^2]$ the module for the trivial framing $\f=0$ is given by
\begin{equation}\label{vacC20reqn}
	 	\bb V_{r [\bb C^2]}^0   \cong  \bigoplus_{n\in \bb N} H_\bullet^A(M(r,n)) \otimes_{H^\bullet_A(\pt)} F 
	 	\cong \bigoplus_{n\in \bb N}  \bigoplus_{n_1+\hdots + n_r = n}  \bigotimes_{j=1}^r H_\bullet^{A}(\Hilb_{n_j}) \otimes_{H^\bullet_A(\pt)} F 
	 	\cong \bigotimes_{j=1}^r \V_{\C^2} 
\end{equation}
and thus the generating function for the trivially framed extensions is given by that for $r$-tuples of partitions of total length $n$:
\begin{equation}\label{VWr0C2eqn}
 \mc Z^0_{r [ \C^2]}(q) = \prod_{j=1}^r \prod_{k=1}^\infty \frac{1}{1-q^k} = \eta(q)^{-r}    \ .
\end{equation}

For $\f=\f_S$, again following Example \ref{ADHMeg}, the fixed points of the moduli spaces are restricted from arbitrary $r$-tuples of partitions to \emph{nested partitions}, those which satisfy the additional requirement that each partition is a strict subset of the previous; this was studied in \cite{CCDS} in precisely this setting and our examples of this form were inspired by their work.

Under the identification
\[ \bb V_{r [\bb C^2]}^0 \cong \bigoplus_{n\in \bb N}  \bigoplus_{n_1+\hdots + n_r = n} \bigotimes_{j=1}^r \left(  \bigoplus_{\lambda_j \in \Fp_{n_j}}  F_{\lambda_j} \right) \cong  \bigoplus_{n\in \bb N}  \bigoplus_{n_1+\hdots + n_r = n}  \bigoplus_{\lambda_1 , \hdots , \lambda_r; \lambda_j \in \Fp_{n_j}}  F_{\lambda_1,...,\lambda_r}  \]
the module $\V_{r[\C^2]}$ is given by the submodule
\begin{equation}\label{VWrC2eqn}
\bb V_{r[\C^2]} \cong  \bigoplus_{n\in \bb N}  \bigoplus_{n_1+\hdots + n_r = n}  \bigoplus_{\lambda_1 \geq \hdots \geq \lambda_r; \lambda_j \in \Fp_{n_j}}  F_{\lambda_j}  \quad\quad \text{and thus} \quad\quad \mc Z^\VW_{r[\C^2]}(q) = \prod_{j=1}^r \prod_{k=1}^\infty \frac{1}{1-q^{j+k}}  \ . 
\end{equation}

\end{eg}

\begin{eg}\label{VWpiteg} For $Y=\C^3$ and $S=S_{M,N,0}= M[\C^2_{xy}]+N[\C^2_{yz}]$, as in the special case of Example \ref{spikedeg} corresponding to the quiver in Equation \ref{spikedfrmeqn}, we expect that the set of $A$ fixed points of $\mc M(Y,S)$ is in bijection with the set of plane partitions with a pit at location $(M,N)$ and trivial asymptotics, in the sense of \cite{BerFM}.
\end{eg}

\begin{eg}\label{VWpitasseg}
More generally, let $Y=\C^3$, $S=S_{M,N,0}= M[\C^2_{xy}]+N[\C^2_{yz}]$ and
\[ M = \mc O_{S^\red}^\sss[1] \oplus \mc O_{\bb C_x}\oplus \mc O_{\C_y} \oplus \mc O_{\C_z} \ , \]
following Example \ref{VWmoduleeg}, with the framing structure given by that described in \emph{loc. cit.}, and we expect the fixed points of the resulting moduli space to correspond to plane partitions with a pit at location $(M,N)$ and asymptotics $\alpha,\beta,\gamma$ determined as in \emph{loc. cit.}. A special case of the resulting family of quivers is given by the quiver of Equation \ref{VWmodquivereqn}.
\end{eg}

\begin{eg}\label{NYcounteg} Let $Y=|\mc O_{\bb P^1}(-1)\oplus\mc O_{\bb P^1}(-1)|$ and $S=|\mc O_{\bb P^1}(-1)|$ as in Example \ref{NYeg}. The vanishing cycles cohomology defining $\V_S$ is equivalent to the ordinary Borel-Moore homology of the dimensional reduction, and thus by localization we have an isomorphism
	\begin{equation}\label{vacOm1eqn}
 \V_{|\mc O_{\bb P^1}(-1)|} \cong \bigoplus_{k\in\bb Z} \bigoplus_{n\in\bb N}\bigoplus_{n_0+n_1=n}\bigoplus_{\lambda_0\in \Fp_{n_0}, \lambda_1\in \Fp_{n_1}} F_{k,\lambda_0,\lambda_1}  
	\end{equation}
by Proposition 3.2 and Theorem 3.4 in \cite{NY0}, where $\Fp_n$ denotes the set of partitions of $n$.
	
Similarly, Corollary 5.7 of \cite{NY2} applied in the limit $m=\infty$ to each term $k=c_1\in \bb Z$, the partition function is given by the following formula of Corollary 3.15 in \cite{NYLec}
\begin{equation}\label{VWOm1eqn}
	 \mc Z^\VW_{|\mc O_{\bb P^1}(-1)|} (q)= \sum_{k\in \bb Z} q^{\frac{k^2}{2}} \frac{1}{\prod_{k=1}^\infty (1-q^k)^2} \ . 
\end{equation}
\end{eg}

The representation of the cohomological Hall algebra $\mc H(Y)$ on $\V_S$ constructed in Theorem \ref{cohareptheo}, and its extension to the action of the shifted quiver Yangian $\mc Y_M(Y)$ outlined in Section \ref{quiveryangsec}, were essentially defined to generalize the construction of Grojnowski \cite{Groj} and Nakajima \cite{Nak} of the Heisenberg algebra action on the Hilbert scheme of points, following the Schiffmann-Vasserot proof \cite{SV} of the AGT conjecture \cite{AGT} and their respective generalizations in \cite{RSYZ} and \cite{GaiR}. We note that these results are also closely related to those obtained in unpublished work of Feigin-Tsymbaliuk, as explained in \cite{Ts1}, and in turn to many results in the setting of quantum toroidal algebras such as \cite{FFJMM} and \cite{BerFM}; we will elaborate on this relationship further in Section \ref{MVsec} below.

The proposal that there should be analogous vertex algebras corresponding to more general divisors $S$ and threefolds $Y$ was considered already in the original paper of Gaiotto-Rapcak \cite{GaiR}, and explored in some detail in \cite{PrR}, but the analogue of the AGT conjecture in this setting was not known in general, as the relevant moduli spaces generalizing the spiked instantons construction of Nekrasov to threefolds $Y$ other than $\C^3$ had not been constructed previously.

There is also a closely related conjecture of Feigin-Gukov \cite{FeiG} that there should exist vertex operator algebras $\text{VOA}[M_4,\gl_r]$ associated to four manifolds $M_4$, analogously generalizing the AGT conjecture. This appears to coincide with the predictions of \cite{GaiR} discussed in the preceding paragraph in the case that the underlying reduced scheme $S^\red$ is irreducible and smooth, $M_4$ is the analytification of $S^\red$, and $r\in \bb N$ the multiplicity of $S^\red$ in $S$.

However, a mathematical definition of these vertex algebras was also not known in general, for a divisor $S$ nor four-manifold $M_4$, and relatively few examples were known in the non-abelian case. In the companion paper \cite{Bu}, we give a general combinatorial construction of vertex algebras $\V(Y,S)$ as the kernel of screening operators acting on lattice vertex algebras determined by the data of the GKM graph of $Y$ and a Jordan-Holder filtration of $\mc O_S$ with subquotients structure sheaves $\mc O_{S_d}$ of the divisors $S_d$ occurring as irreducible, reduced components of $S$. This construction appears to satisfy the predictions of \cite{GaiR}, \cite{PrR}, and \cite{FeiG}, and in particular we formulate the following analogue of the AGT conjecture in this setting:

\begin{conj}\label{VWconj} There exists a natural representation
	\[\rho: \mc U(\V(Y,S)) \to \End_F(\V_S )\]
of the algebra of modes $\mc U(\V(Y,S))$ of the vertex algebra $\V(Y,S)$ on $\V_S$, inducing an isomorphism
\[ \rho( \mc U(\V(Y,S))_+) \xrightarrow{\cong} \rho_S(\mc{SH}(Y)) \]
and such that $\V_S$ is identified with the vacuum module for the vertex algebra $\V(Y,S)$.
\end{conj}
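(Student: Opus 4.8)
\textbf{Proof proposal for Conjecture \ref{VWconj}.}

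The plan is to reduce the conjecture to two independent inputs: (1) the action of the spherical cohomological Hall algebra $\mc{SH}(Y)$ and its opposite on $\V_S$, which follows from Theorem \ref{cohareptheo} together with the adjointness statement of Proposition \ref{factprop} and the extension to $\mc H^{\geq 0}$ and $\mc H^0_M$ of Section \ref{quiveryangsec}; and (2) a presentation of the vertex algebra $\V(Y,S)$ of the companion paper \cite{Bu} as a joint kernel of screening operators acting on a free-field realization, which is designed precisely so that the relations of $\mc U(\V(Y,S))$ can be checked against those imposed on $\rho_S(\mc{SH}(Y))$ and $\rho_S(\mc{SH}(Y)^\op)$ inside $\End_F(\V_S)$. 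Concretely, I would first use the localization description of $\V_S$ as $\bigoplus_{\lambda\in\Fp}F_\lambda$, with the pairing of Equation \ref{pairingeqn}, to identify the generating-field operators $\tilde e^i(z),\tilde f^i(z),\tilde\psi^i(z)$ built from the correspondences in Theorem \ref{cohareptheo}; the bond factors $P^i_j$ and ground-state charge functions $Q^i_M$ appearing there should match the structure constants of the free-field realization of $\V(Y,S)$ in \cite{Bu}, essentially by construction of the latter. Then Conjecture \ref{quiveryangconj} (or its direct verification in the relevant examples) gives an action of $\mc Y_M(Y)$ on $\V_S$, and the surjection $\mc Y_S(Y)\to\mc U(\V(Y,S))$ of Equation \ref{MVmapeqn}, combined with the compatible free-field realizations, would upgrade this to the desired representation of $\mc U(\V(Y,S))$.

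The second, and decisive, step is the factorization/coproduct argument in the style of \cite{SV}: one writes $\V_S^0\cong\bigotimes_{d\in\mf D_S}\V_{S_d}^{\otimes r_d}$ and, more generally, $\V_S^0\cong\V_{S_1}^0\otimes\V_{S_2}^0$ for $S=S_1+S_2$, and constructs coproducts $\Delta_{S_1,S_2}:\mc Y_S(Y)\to\mc Y_{S_1}(Y)\otimes\mc Y_{S_2}(Y)$ making the square after Equation \ref{voaembeqn} commute. The point is that the embedding of vertex algebras $\V(Y,S)\hookrightarrow\V(Y,S_1)\otimes\V(Y,S_2)$ in \cite{Bu} has image cut out by a screening operator, so that the relations in $\mc U(\V(Y,S))$ acting on $\V_S^0$ reduce — via the Feigin--Frenkel-type resolutions underlying the construction of \cite{Bu} — to the case where all $S_i$ are irreducible with multiplicity one, i.e. to a ``rank one'' calculation. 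In that base case, $\V_{S_d}\cong\bigoplus_n H^A_\bullet(\Hilb_n(S_d^\red))\otimes_{H_A^\bullet(\pt)}F$ and one verifies directly, as in the original computation of Grojnowski--Nakajima and its equivariant refinement, that the operators satisfy the defining relations and that $\V_{S_d}$ is the vacuum module. Passing from the trivial framing $0_S$ to the framing $\f_S$ determined by a Jordan--Hölder filtration of $\mc O_S$ is then handled exactly as in \cite{CCDS}: the fixed points become nested tuples of partitions, which is the geometric shadow of the quantum Hamiltonian reduction relating the Verma-type module $\V_S^0$ to the vacuum module $\V_S$.

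I would organize the argument as: (i) set up the localization basis and generating operators for $\V_S$ and match $P^i_j$, $Q^i_M$ with the free-field data of \cite{Bu}; (ii) invoke Theorem \ref{cohareptheo} plus Proposition \ref{factprop} to get the $\mc{SH}(Y)$ and $\mc{SH}(Y)^\op$ actions and assemble them, using Section \ref{quiveryangsec}, into an action of $\mc Y_S(Y)$; (iii) establish the factorization $\V_S^0\cong\V_{S_1}^0\otimes\V_{S_2}^0$ and the compatible coproducts, reducing the relation-checking to the rank one case; (iv) verify the rank one case by the equivariant Hilbert-scheme computation and identify the vacuum module; (v) deduce the Poincaré series identity $\mc Z_S^\VW(q)=P_q(\V_{(Y,S)})$ of Equation \ref{VWintroeqn} as a corollary by comparing fixed-point generating functions (using Corollary \ref{fpcoro}) with characters. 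The main obstacle will be step (iii): constructing the coproducts $\Delta_{S_1,S_2}$ on $\mc Y_S(Y)$ geometrically — the analogue for general $Y$ and $S$ of the affine Yangian coproduct of $\gl_1$ used in \cite{SV} — and proving that under the factorization isomorphism they intertwine the representations and induce the screening-operator embedding of \cite{Bu}. This is precisely the point the introduction flags as open (``we hope to give a geometric construction of the coproducts and complete the proof of Conjecture \ref{VWintroconj} following this approach in future work''), and I expect it to require either a geometric construction of the coproduct via a suitable correspondence on moduli of perverse coherent extensions, or an R-matrix-type argument in the spirit of \cite{MO}; everything else is, in principle, a bookkeeping exercise modulo the already-cited results.
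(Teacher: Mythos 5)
The statement you are addressing is stated in the paper as a conjecture, and the paper contains no proof of it: the authors only construct the candidate structure map (the $\mc{SH}(Y)$-action of Theorem \ref{cohareptheo} together with the adjoint $\mc{SH}(Y)^\op$-action of Proposition \ref{factprop}) and then explicitly say that a proof is expected to follow from compatibility between the free-field realizations of $\V(Y,S)$ in \cite{Bu} and a family of coproducts on the shifted quiver Yangians $\mc Y_S(Y)$, to be constructed in future work. Your proposal reproduces exactly this intended strategy --- localization basis and generating fields, matching of the bond factors $P^i_j$ and charge functions $Q^i_M$ with the free-field data of \cite{Bu}, assembly into a $\mc Y_S(Y)$-action, factorization $\V_S^0\cong\V_{S_1}^0\otimes\V_{S_2}^0$ with coproducts intertwining the screening-operator embedding of Equation \ref{voaembeqn}, and reduction to a rank-one computation --- so as a roadmap it is faithful to what the authors themselves envision.

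However, it is not a proof, and the gap is genuine rather than bookkeeping. Step (ii) of your outline already rests on Conjecture \ref{quiveryangconj}, which is itself unproven in the paper; step (iii), the geometric construction of $\Delta_{S_1,S_2}$ and the commutativity of the diagram relating it to $\rho_{S_1}^0\otimes\rho_{S_2}^0$, is precisely the open ingredient the paper defers, and without it the relations of $\mc U(\V(Y,S))$ cannot be verified on $\V_S$ even granting the free-field presentation of \cite{Bu}; and even the rank-one base case (iv), where $\V(Y,S)$ is a lattice-times-Heisenberg algebra, is only asserted in the paper with the remark that a proof will appear in future work, so it cannot be cited as established. Your closing paragraph candidly acknowledges that (iii) is the obstacle, which is correct --- but that means the proposal establishes nothing beyond what Theorem \ref{cohareptheo}, Proposition \ref{factprop}, and Section \ref{quiveryangsec} already provide, namely the candidate map $\rho_S$ and a conjectural framework; the identification of the image of $\mc U(\V(Y,S))_+$ with $\rho_S(\mc{SH}(Y))$ and of $\V_S$ with the vacuum module remains unproved both in your argument and in the paper.
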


While we are not currently able to give a proof in general, note that the conjectural identification with the action of $\mc{SH}(Y)$ implies that Theorem \ref{cohareptheo} together with Proposition \ref{factprop} provide the desired construction of the action of the algebra of modes. We expect that the proof will follow from the compatibility between the free field realizations of $\V(Y,S)$ used in their construction in \cite{Bu}, and a family of coproducts on the corresponding shifted quiver Yangians $\mc Y_S(Y)$, under equivalences of the type conjectured in the following Section \ref{MVsec}. We hope to explore this further in future work.

As a consequence of Conjecture \ref{VWconj}, we expect equality between the Vafa-Witten-type series $\mc Z^\VW_S(q)$ of the divisor $S$ defined above and the Poincare series $P_q$ of the vacuum module $\V_{(Y,S)}$ of the vertex algebra $\V(Y,S)$.

\begin{corollary}\label{VWcoro} There is a natural grading on $\V_{(Y,S)}$ such that
	\[ \mc Z^\VW_S(q) =  P_q(\V_{(Y,S)})  \ \in \bb Z \lP q \rP  \ .\]
\end{corollary}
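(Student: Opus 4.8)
The plan is to deduce Corollary \ref{VWcoro} as a direct consequence of Conjecture \ref{VWconj}, in exactly the way that the analogous statement in Equation \ref{VWC2introeqn} follows from Theorem \ref{Nakthm} in the introduction. The key point is that the conjecture supplies an identification of the graded vector space $\V_S$ with the vacuum module $\V_{(Y,S)}$ for the vertex algebra $\V(Y,S)$, so the Poincar\'e series of the two sides must agree, provided the gradings are matched correctly. Concretely, first I would recall from Section \ref{VWsec} that
\[ \V_S = \bigoplus_{\dd\in\bb N^{V_{Q_Y}}} H_\bullet^A(\mf M_\dd^{\zeta_\VW}(Q^{\f_S}),\varphi_{\overline W^{\f_S}})\otimes_{H^\bullet_A(\pt)} F \ , \]
and observe that the cohomological grading on each summand, together with the grading by the dimension vector $\dd$, refines to a single $\bb Z_{\geq 0}$-grading under a suitable linear functional; this is the \emph{conformal grading} referred to in Proposal \ref{metaAGTprpl} and Equation \ref{VW4meqn}. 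Under this grading the graded dimension of $\V_S$ is precisely $\mc Z^\VW_S(q)$, essentially by the definition of $\chi$ in Equation \ref{genfuneqn} together with the fact that, under Hypothesis \ref{lochypo}, the localized homology has a basis indexed by $A$-fixed points with each basis vector sitting in a definite conformal degree.

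Second, I would invoke Conjecture \ref{VWconj}: the representation structure map $\rho:\mc U(\V(Y,S))\to\End_F(\V_S)$ identifies $\V_S$ with the vacuum module $\V_{(Y,S)}$ \emph{as a graded vector space}, where the grading on the vacuum module is the conformal grading coming from the stress-energy operator $L_0$ of $\V(Y,S)$. The content that must be checked is the compatibility of the two gradings: that the conformal grading transported from $\V(Y,S)$ along $\rho$ agrees with the geometric conformal grading on $\V_S$ described above. This follows from the construction of $\V(Y,S)$ in the companion paper \cite{Bu} as a joint kernel of screening operators on a lattice vertex algebra tensored with a Heisenberg algebra: the conformal weight of a vacuum descendant is read off from the lattice and Heisenberg mode numbers, and these correspond under $\rho$ to the operations of adjoining compactly supported simple objects $F_i$ along curve classes, which is exactly what increments the dimension vector $\dd$ and the cohomological degree in the geometric picture. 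Granting this, taking graded dimensions of both sides of the isomorphism $\V_S\cong\V_{(Y,S)}$ yields $\mc Z^\VW_S(q)=P_q(\V_{(Y,S)})$, which is the claim.

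The main obstacle is, of course, that Conjecture \ref{VWconj} is itself only a conjecture — so the honest status of Corollary \ref{VWcoro} is that it is a theorem \emph{conditional on} Conjecture \ref{VWconj}, and this is how it should be stated. Even granting the conjecture, the one genuinely non-formal step is the grading-matching: one must verify that the abstract conformal grading on $\V(Y,S)$ pulls back to the geometric grading on $\V_S$ rather than to some shift or rescaling of it. In the rank one case this can be checked directly, since (as remarked after Conjecture \ref{VWintroconj}) the relations in the lattice vertex algebra $V_{H_2(S;\Z)}\otimes\pi_{H_0(S)}$ are given by explicit formulas, and one can compare the conformal weight of a state built from $c_1$ and $c_2$ data against the cohomological degree and the component $\dd$ of the corresponding fixed point; the sample computations in Examples \ref{VWAGTeg}, \ref{NYcounteg}, and their counterparts \ref{VWC2eqn}, \ref{VWOm1eqn} already exhibit the matching in closed form. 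For the higher rank $W$-algebra case one reduces to the known situation of \cite{SV} (via Theorem \ref{CCDSAGTtheo} and the identification of $\V_{r[\C^2]}$ with the vacuum module of $W^\kappa_{f_{\textup{prin}}}(\gl_r)$, whose character is recorded after Theorem \ref{CCDSAGTtheo}) using the factorization $\V_S^0\cong\bigotimes_d\V_{S^d}^{\otimes r_d}$ together with the conjectural compatibility of coproducts and screening operators discussed at the end of Section \ref{rtintrosec}; making that reduction precise is itself contingent on the coproduct construction, which is left to future work.
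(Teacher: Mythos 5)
Your proposal matches the paper's (implicit) argument: Corollary \ref{VWcoro} is stated there as an immediate consequence of Conjecture \ref{VWconj}, obtained by taking graded dimensions of the conjectural identification of $\V_S$ with the vacuum module $\V_{(Y,S)}$, exactly as you do, and your framing of it as a statement conditional on the conjecture is the intended reading. Your additional remarks on matching the geometric grading with the conformal grading and on the rank-one and $W^\kappa_{f_{\textup{prin}}}(\gl_r)$ checks are consistent with the discussion surrounding Conjecture \ref{VWintroconj} and Examples \ref{VWAGTeg}--\ref{AGTeg}, so no gap to report.
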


Indeed, the above conjecture and corollary can be verified directly in several of the simplest examples, following the previous work on the subject mentioned above:

\begin{eg}\label{NakGrojeg} In the case $Y=\C^3$ and $S=\C^2$, the constructions we have outlined reduce to the original constructions of Grojnowski \cite{Groj} and Nakajima \cite{Nak}. The vertex algebra
\[\V(\C^3,\C^2) = \pi_k\]
is simply the Heisenberg algebra $\pi_k$ at level $k=-\frac{1}{\hbar_1 \hbar_2}$, the module $\V_{\C^2}$ of Equation \ref{vacC2eqn} is evidently isomorphic to the standard Fock module, and the vacuum character is given by $\eta(q)^{-1}$, which is indeed the Vafa-Witten-type series for $\C^2$ of Equation \ref{VWC2eqn} above.
\end{eg}

\begin{eg}\label{latticeeg}
More generally, the constructions of \emph{loc. cit.} generalize to smooth surfaces $S$ to give the Heisenberg algebra $\pi(S)$ on the cohomology $H_A^\bullet(S;\C)$ over the base field $F$; we have included $A$-equivariance following \cite{NakMLec} in keeping with our present setting. Thus, the vacuum module of $\pi(S)$ is given by
\[  \pi_S \cong \Sym_F^\bullet( z^{-1} (H^A_\bullet(S)\otimes_{H_A^\bullet(\pt)} F) [z^{-1}]) \quad\quad \text{so that}\quad\quad P_q(\pi_S) = \eta(q)^{-\chi(S)}  \ , \]
by the formula of \cite{Gtt}. Indeed, we show in \cite{Bu} that the Heisenberg algebra $\pi(S)$ canonically embeds in the vertex algebra $\V(Y,S)$, but for $H_2(S;\Z)\neq 0$ it is a strict subalgebra.

In fact, we explain in \emph{loc. cit.} that in the rank 1 case, corresponding to our assumption that $S$ is a smooth algebraic surface and in particular reduced when considered as a divisor, the vertex algebra $\V(Y,S)$ is simply the lattice vertex algebra generated by $H_2(S;\Z)$ equipped with negative the intersection pairing, tensored with the Heisenberg algebra generated by $H_0(S)$,
\[ \bb V(Y,S) =  V_{H_2(S;\Z)} \otimes \pi_{H^0(S;\C)}  \ .\]
The additional generators of the lattice vertex algebra correspond to the Hecke modifications of the sheaves along the compact curve classes in $H_2(S;\Z)$, as in the construction outlined in the final chapter of \cite{NakLec}. This is also related to the results of \cite{Nak1} and \cite{GKV}.

We note that the construction of Negut \cite{Neg1} provides a higher rank analogue of the construction of Grojnowski-Nakajima, generalizing Example \ref{AGTeg} below to more general surfaces $S$, but the resulting vertex algebras analogously fail to include the lattice-type extensions corresponding to Hecke modifications along curve classes, which our construction conjecturally provides.
\end{eg}

\begin{eg} As a special case of the preceding Example \ref{latticeeg}, let $Y=|\mc O_{\bb P^1}(-1)\oplus\mc O_{\bb P^1}(-1)|$ and $S=|\mc O_{\bb P^1}(-1)|$ following Examples \ref{NYcounteg} and in turn \ref{NYeg}. Indeed, we have an isomorphism between the vacuum module of Equation \ref{vacOm1eqn} and that of a canonically normalized rank 1 lattice vertex algebra $V_{\bb Z}$ tensored with a Heisenberg algebra,
\[ \bb V_{|\mc O_{\bb P^1}(-1)|} \cong V_{\bb Z} \otimes \pi \quad\quad \text{and in particular}\quad\quad   P_q( V_{\bb Z} \otimes \pi)= \sum_{k\in \bb Z} q^{\frac{k^2}{2}} \frac{1}{\prod_{k=1}^\infty (1-q^k)^2} = \mc Z^\VW_{|\mc O_{\bb P^1}(-1)|}(q) \ ,\]
we obtain equality of the Poincare polynomial of the vacuum module with the Vafa-Witten type series of $|\mc O_{\bb P^1}(-1)|$ given in Equation \ref{VWOm1eqn}. Moreover, we have checked by direct calculation that the conjecture holds following our proposed construction in this example, using results of \cite{NY0}, \cite{NY1}, \cite{NY2}; this result will appear in future work.
\end{eg}

\begin{eg} \label{AGTeg} For $Y=\C^3$ and $S=r[\C^2]$, as in Example \ref{VWAGTeg}, the definition of the vertex algebra $\V(Y,S)$ from \cite{Bu} is simply the Feigin-Frenkel free field realization of the principal affine $W$-algebra of $\gl_r$, so that we have
\[ \V(\C^3,r[\C^2])= W^\kappa_{f_{\textup{prin}} }(\gl_r)\cong W^\kappa_{f_{\textup{prin}} }(\spl_r)\otimes \pi \ , \] 
where we use the notation $W^\kappa_{f_{\textup{prin}} }(\gl_r)$ to denote the vertex algebra over the field of fractions of $H_{\tilde T}^\bullet(\pt)$ defined by \[ \kappa = -h^{\vee} - \frac{\hbar_2}{\hbar_1} \ . \] In this case, our proposed construction was defined to reproduce that of \cite{SV} used in the proof of the AGT conjecture \cite{AGT}, and moreover the variant thereof studied in \cite{CCDS} which produces the vacuum module, as in the statement of Conjecture \ref{VWconj}. In \cite{SV} the authors prove the following theorem:

\begin{theo}\label{SVtheo}\cite{SV} There exists a natural representation
\[ \mc U(W^\kappa_{f_{\textup{prin}} }(\gl_r)) \to \End(\V_{r[\C^2]}^0)  \ ,\]
such that $\V_{r[\C^2]}^0$ is identified with the universal Verma module $\bb M_r$ for $W^\kappa_{f_{\textup{prin}} }(\gl_r)$.
\end{theo}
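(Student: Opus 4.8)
The plan is to reconstruct the argument of Schiffmann--Vasserot in the language of this paper, so that Theorem \ref{SVtheo} becomes the specialization to $Y=\C^3$, $S=r[\C^2]$, $\zeta=\zeta_\VW$ of Conjecture \ref{VWconj}. First I would pin down the underlying vector space: by Example \ref{ADHMeg}, the framed quiver with potential attached to $\mc O_{\C^2}^{\oplus r}[1]$ with trivial framing $0_{r[\C^2]}$ is the three-dimensional ADHM quiver, and under the dimensional reduction isomorphism of Equation \ref{dimredeqn} together with the isomorphism $\mc H(\C^3)\cong \mc H(\C^2)$ coming from \cite{YZ1} and the appendix to \cite{RenS}, the module $\V^0_{r[\C^2]}$ of Equation \ref{vacC20reqn} is identified with $\bigoplus_n H^A_\bullet(M(r,n))\otimes_{H^\bullet_A(\pt)}F$, the localized equivariant homology of the moduli spaces of framed rank-$r$ torsion-free sheaves on $\bb P^2$; in particular $\V^0_{r[\C^2]}\cong \V_{\C^2}^{\otimes r}$ as graded vector spaces.

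Next I would build the algebra action. Corollary \ref{Nakopcoro} (i.e.\ Theorem \ref{cohareptheo}) gives $\rho^0_{r[\C^2]}:\mc H(\C^3)\to \End_F(\V^0_{r[\C^2]})$; restricting to the spherical subalgebra $\mc{SH}(\C^3)$ and invoking Proposition \ref{factprop} produces commuting actions of $\mc{SH}(\C^3)$ and $\mc{SH}(\C^3)^\op$ by adjoints with respect to the equivariant integration pairing of Equation \ref{pairingeqn}. Following Section \ref{quiveryangsec}, these glue with the Cartan action to a representation
\[ \rho_{r[\C^2]}: \mc Y(\glh_1) \to \End_F(\V^0_{r[\C^2]}) \ , \]
with $\mc Y(\glh_1)_+\cong \mc{SH}(\C^3)$ and $\mc Y(\glh_1)_-\cong \mc{SH}(\C^3)^\op$; this is the instance of Conjecture \ref{quiveryangconj} which is known in this case by \cite{SV} (see also \cite{RSYZ2}). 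I would then establish compatibility with a coproduct $\Delta:\mc Y(\glh_1)\to \mc Y(\glh_1)^{\otimes 2}$: under the factorization $\V^0_{r[\C^2]}\cong \V^0_{r_1[\C^2]}\otimes \V^0_{r_2[\C^2]}$ for $r=r_1+r_2$, the square relating $\rho_{r[\C^2]}$ with $\rho^0_{r_1[\C^2]}\otimes \rho^0_{r_2[\C^2]}$ along $\Delta$ commutes; this is a correspondence-level computation, obtained by decomposing $M(r,n)$ under a generic cocharacter of the framing torus into products $M(r_1,n_1)\times M(r_2,n_2)$ and tracking the Hecke correspondences.

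The heart of the argument is the reduction to rank one. One must show that $\rho_{r[\C^2]}$ factors through the quotient $\mc Y(\glh_1)\to \mc U(W^\kappa_{f_{\textup{prin}}}(\gl_r))$, with $\kappa=-h^\vee-\hbar_2/\hbar_1$ as in Example \ref{AGTeg}. Using the iterated coproduct and the previous step this reduces to $r=1$, where $W^\kappa_{f_{\textup{prin}}}(\gl_1)=\pi$ is the Heisenberg algebra and the claim is Theorem \ref{Nakthm} of Grojnowski--Nakajima; for general $r$ one then uses that the compatible Feigin--Frenkel resolutions of \cite{FF1} realize $W^\kappa_{f_{\textup{prin}}}(\gl_r)$ inside $W^{\kappa_1}_{f_{\textup{prin}}}(\gl_{r_1})\otimes W^{\kappa_2}_{f_{\textup{prin}}}(\gl_{r_2})$ as the joint kernel of a screening operator, compatibly with the geometric coproduct, which forces the $W$-algebra relations on $\V^0_{r[\C^2]}$ by induction. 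Finally, with the $\mc U(W^\kappa_{f_{\textup{prin}}}(\gl_r))$-action in hand, a weight/character computation with the fixed-point basis of $\V^0_{r[\C^2]}$ — indexed by $r$-tuples of partitions as in Equation \ref{vacC20reqn}, with the empty tuple the highest weight vector and highest weight parametrized by the framing torus variables $\xx$ — matches this basis with a PBW-type basis of $\bb M_r$, identifying $\V^0_{r[\C^2]}$ with the universal Verma module.

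The main obstacle is the descent to the $W$-algebra, i.e.\ verifying the $W^\kappa_{f_{\textup{prin}}}(\gl_r)$ relations on $\V^0_{r[\C^2]}$: this cannot be checked directly for $r\geq 2$ and genuinely requires the interplay of the geometric coproduct, the Feigin--Frenkel free-field realization, and the $r=1$ base case — precisely the technical core of \cite{SV}. A secondary subtlety is establishing the coproduct compatibility square at the level of correspondences, which requires a careful analysis of fixed loci and of the degree shifts in the twisted monoidal structure $\otimes^\tw$.
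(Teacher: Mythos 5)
The paper does not give a proof of this statement; it is cited verbatim from Schiffmann--Vasserot \cite{SV} (the same theorem appears in the introduction as Theorem \ref{SVAGTthm}) and is included in Example \ref{AGTeg} purely to motivate Conjecture \ref{VWconj} and Theorem \ref{CCDStheo}. There is therefore no proof of the paper's own to compare your attempt against. That said, your sketch is a faithful reconstruction of the \cite{SV} argument as the paper itself narrates it in Section \ref{rtintrosec}: the factorization $\V^0_{r[\C^2]}\cong\V_{\C^2}^{\otimes r}$, the compatible coproduct $\Delta$ on $\mc Y(\glh_1)$, the screening-operator characterization of $W^\kappa_{f_{\textup{prin}}}(\gl_r)$ inside $W^{\kappa_1}_{f_{\textup{prin}}}(\gl_{r_1})\otimes W^{\kappa_2}_{f_{\textup{prin}}}(\gl_{r_2})$ via the Feigin--Frenkel resolutions of \cite{FF1}, the reduction to $r=1$ where the statement is Grojnowski--Nakajima, and the correct identification of the verification of $W$-algebra relations for $r\geq 2$ as the genuinely hard step which cannot be done directly. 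Your translation into this paper's framework, via Example \ref{ADHMeg}, dimensional reduction, Corollary \ref{Nakopcoro}, Proposition \ref{factprop}, and Section \ref{quiveryangsec}, is also consistent with the machinery the paper develops and with the paper's stated expectation that its constructions reproduce those of \cite{SV} in this case.

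One framing imprecision: your opening sentence describes Theorem \ref{SVtheo} as the specialization of Conjecture \ref{VWconj} to $Y=\C^3$, $S=r[\C^2]$, $\zeta=\zeta_\VW$, but this is not literally correct. Conjecture \ref{VWconj} concerns the module $\V_S$ built from the nontrivial framing structure $\f_S$ and asserts an identification with the \emph{vacuum} module; Theorem \ref{SVtheo} concerns $\V_S^0$ built from the trivial framing $0_S$ and identifies it with the universal \emph{Verma} module $\bb M_r$. The specialization of Conjecture \ref{VWconj} to $Y=\C^3$, $S=r[\C^2]$ is Theorem \ref{CCDStheo}, not Theorem \ref{SVtheo}. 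Since the body of your sketch correctly works with $\V^0_{r[\C^2]}$ and the Verma module throughout, this is a slip in how the result is situated relative to the paper's conjectures rather than a mathematical error in the argument itself.
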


In \cite{CCDS}, the authors extend these results to give a geometric construction of the vacuum module as the cohomology of the moduli space of stable representations of a variant of the ADHM construction, for which the quiver is precisely that corresponding to taking $A_\f$ equal to a principal nilpotent in Equation \ref{ADHM3deqn} of Example \ref{ADHMeg}. Thus, in the notation we have introduced, they prove:

\begin{theo}\label{CCDStheo}\cite{CCDS} There exists a natural representation
	\[ \mc U(W^\kappa_{f_{\textup{prin}} }(\gl_r)) \to \End(\V_{r[\C^2]})  \ ,\]
	such that $\V_{r[\C^2]}$ is identified with the vacuum module for $W^\kappa_{f_{\textup{prin}} }(\gl_r)$.
\end{theo}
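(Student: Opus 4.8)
The plan is to obtain the representation as a special case of Theorem \ref{cohareptheo}, then extend and identify it following the strategy of \cite{SV} and its adaptation in \cite{CCDS}. First I would specialize Theorem \ref{stackpotthm} and Theorem \ref{cohareptheo} to $Y=\C^3$ and $M=\mc O_{\C^2}[1]$ equipped with the framing structure $\f_{r[\C^2]}$ of rank $r$ determined by a principal nilpotent $A_\f\in\gl_r$, as in Example \ref{ADHMeg} and Equation \ref{CCDSeqn}, producing the framed quiver with potential $(Q_M^\f,W_M^\f)$ of Equation \ref{ADHM3deqn} and a representation $\rho_{r[\C^2]}\colon \mc H(\C^3)\to \End_F(\V_{r[\C^2]})$ of the Kontsevich--Soibelman cohomological Hall algebra on $\V_{r[\C^2]}=H_\bullet^{G_\dd\times A}(\mf M^{\f,\zeta_\VW},\varphi_{W^\f_M})\otimes_{H^\bullet_A(\pt)}F$. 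Applying the dimensional reduction isomorphism of Equation \ref{dimredeqn}, together with the identification $\mc H(\C^3)\cong\mc H(\C^2)$ of critical and preprojective cohomological Hall algebras from \cite{YZ1} and the appendix to \cite{RenS}, this becomes the action of $\mc H(\C^2)$ on the Borel--Moore homology of the moduli spaces of stable representations of the CCDS quiver with relations, which is precisely the input of \cite{SV,CCDS}.

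Next I would extend the $\mc H(\C^3)$-action to an action of the shifted quiver Yangian $\mc Y_M(\C^3)$ following Section \ref{quiveryangsec}: the positive half is the image of the spherical subalgebra $\mc{SH}(\C^3)$, the Cartan part acts by the tautological classes $\mc F^i_\dd$ twisted by the ground-state charge function $Q^i_M$ (which is where the principal nilpotent framing $A_\f$ enters, distinguishing this case from the universal Verma module case of Theorem \ref{SVtheo}), and the negative half acts by adjoints with respect to the localized integration pairing via Proposition \ref{factprop}. The resulting algebra should be identified with a shift of the affine Yangian $\mc Y(\glh_1)$, and the key structural input is the surjection $\mc Y(\glh_1)\to \mc U(W^\kappa_{f_{\textup{prin}}}(\gl_r))$; I would then need to show the geometric action factors through it. Concretely, I would compute the action on the fixed-point basis --- which, by the localization analysis of Example \ref{VWAGTeg}, is indexed by $r$-tuples of \emph{nested} partitions --- in terms of the stable-basis/shuffle-algebra formalism of \cite{SV}, and match it term-by-term with the Feigin--Frenkel free-field realization of $W^\kappa_{f_{\textup{prin}}}(\gl_r)$ used to define $\V(\C^3,r[\C^2])$ in \cite{Bu}.

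The main obstacle is this factorization step: even in the trivially framed case of \cite{SV} one cannot verify directly that the geometric operators satisfy the defining relations of $\mc U(W^\kappa_{f_{\textup{prin}}}(\gl_r))$, and the same will be true here, now with the additional subtlety that one must land in the vacuum quotient rather than the Verma module. I would follow \cite{SV,CCDS} and use the compatible system of Feigin--Frenkel resolutions of \cite{FF1}, together with a coproduct on the shifted quiver Yangian compatible with the embeddings of $W$-algebras $W^\kappa_{f_{\textup{prin}}}(\gl_r)\to W^{\kappa_1}_{f_{\textup{prin}}}(\gl_{r_1})\otimes W^{\kappa_2}_{f_{\textup{prin}}}(\gl_{r_2})$, to reduce the verification to the rank $1$ ($\gl_1$, i.e.\ Heisenberg) case, where the operators and relations are given by explicit closed formulas and the check is routine. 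Finally, to upgrade this representation to an identification of $\V_{r[\C^2]}$ with the vacuum module, I would observe that the fixed point indexed by the empty nested partition is a cyclic vector, compute $P_q(\V_{r[\C^2]})=\prod_{j=1}^{r}\prod_{k=1}^{\infty}(1-q^{j+k})^{-1}$ as in Equation \ref{VWrC2eqn}, note that this equals the Poincar\'e series of the vacuum module of $W^\kappa_{f_{\textup{prin}}}(\gl_r)$, and conclude that the surjection from the vacuum module onto $\V_{r[\C^2]}$ induced by cyclicity is an isomorphism by comparing graded dimensions degree by degree.
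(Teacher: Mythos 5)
There is no in-paper proof to compare your attempt against: Theorem \ref{CCDStheo} is quoted from \cite{CCDS} as background, exactly like Theorem \ref{SVAGTthm}, and the present paper's own contribution in this direction is only the construction of the structure map (the specialization of Theorem \ref{cohareptheo} as in Examples \ref{ADHMeg} and \ref{VWAGTeg}, extended as outlined in Section \ref{quiveryangsec} and Proposition \ref{factprop}). Indeed the paper states explicitly that the relations of $\mc U(W^\kappa_{f_{\textup{prin}}}(\gl_r))$ cannot be verified directly on these operators and that the coproduct needed to run the \cite{SV}-style reduction is, within this paper, still conjectural. So your outline follows the strategy the paper attributes to \cite{SV} and \cite{CCDS}, but judged as a standalone proof it has genuine gaps.

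First, the dimensional reduction step is misstated: with $A_\f$ a principal nilpotent the potential $B_1[B_2,B_3]+B_3IJ-IA_\f J$ is not linear in a set of arrows whose complement carries all remaining terms, since $IA_\f J$ involves neither $B_1$ nor $B_3$. Reducing along $B_3$ yields the ADHM space with relation $[B_1,B_2]+IJ=0$ carrying the residual superpotential $\Tr(IA_\f J)$, so $\V_{r[\C^2]}$ is a vanishing-cycle (not ordinary Borel--Moore) homology, whose fixed points are the nested partitions of Example \ref{VWAGTeg}; the clean statement of Equation \ref{dimredeqn} does not apply verbatim, and this is precisely what distinguishes the vacuum case from $\V^0_{r[\C^2]}$. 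Second, and more seriously, the factorization-through-$\mc U(W^\kappa_{f_{\textup{prin}}}(\gl_r))$ step cannot simply copy \cite{SV}: only the Verma-type module satisfies $\V^0_{r[\C^2]}\cong \V_{\C^2}^{\otimes r}$, while $\V_{r[\C^2]}$ does not split, so one must use the free-field embedding of $W^\kappa_{f_{\textup{prin}}}(\gl_r)$ into $\pi^{\otimes r}$ and a coproduct compatible with the vacuum modules --- exactly the ingredient the paper flags as unproven in the discussion around Conjecture \ref{VWintroconj} --- and deferring this to ``\cite{SV,CCDS}'' is circular here, since the statement to be proved is the theorem of \cite{CCDS}. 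Finally, before the character comparison with Equation \ref{VWrC2eqn} can close the argument you must verify that the empty-nested-partition class is cyclic and satisfies the vacuum annihilation conditions; the character count itself is correct but does not substitute for these checks.
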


In particular, we have the identification
 \[ P_q(W^\kappa_{f_{\textup{prin}} }(\gl_r))  = \prod_{j=1}^r \prod_{k=1}^\infty \frac{1}{1-q^{k+j}}=\mc Z^\VW_{r[\C^2]}(q) \ ,\]
as desired, where we recall that $\V_{r[\C^2]}$ and $\mc Z^\VW_{r[\C^2]}(q)$ are given by the expressions of Equation \ref{VWrC2eqn}, and we have used the usual abuse of notation denoting the vacuum module by the underlying vertex algebra $ W^\kappa_{f_{\textup{prin}} }(\gl_r)$.

These results are also closely related to the cohomological variant of \cite{FFJMM}, as well as \cite{BerFM} in the case $n=0$. In particular, note that the fixed point counting underlying the decomposition of $\V_{r[\C^2]}$ in Equation \ref{VWrC2eqn} is precisely that considered in \emph{loc. cit.}.
\end{eg}

\begin{eg}\label{GReg} More generally, for $Y=\C^3$ and $S=S_{M,N,0}= M[\C^2_{xy}]+N[\C^2_{yz}]$ as in Example \ref{VWpiteg}, the vertex algebras $\V(\C^3,S_{M,N,0})$ are the Gaiotto-Rapcak $Y$ algebras $Y_{N,0,M}$ of \cite{GaiR}, and the construction from \cite{Bu} reduces to their definition in \cite{RSYZ}. Moreover, \emph{loc. cit.} established the variant of Conjecture \ref{VWconj} proved in \cite{SV}, related to the Verma module rather than the vacuum, for $Y=\C^3$ and arbitrary $S_{M,N,L}$. Their construction was a significant source of inspiration for the results of this paper. Analogous results were also established in the setting of quantum toroidal algebras for $L=0$ in \cite{BerFM}.
	
Following Example \ref{VWpiteg}, the corresponding quivers are given by the special case of Example \ref{spikedeg} in Equation \ref{spikedfrmeqn}, with $A_3^2=A_2^3=0$ and $A_2,A_3$ principal nilpotents in $\gl_{M}$ and $\gl_{N}$, respectively, and the fixed points are in correspondence with the plane partitions with a pit at location $(M,N)$ of \cite{BerFM}, with trivial asymptotics $\alpha=\beta=\gamma=\O$.
\end{eg}

More generally, in analogy with Conjecture \ref{DTmodconj}, we expect that the modules induced by the construction of Theorem \ref{cohareptheo} in the case of Example \ref{VWpitasseg} where $S=S_{M,N,0}=M[\C^2_{xy}]+N[\C^2_{yz}]$ define certain canonical families of modules
\[\V_{\alpha,\beta,\gamma}^{M,N} = \bigoplus_{\dd\in \bb N^{V_{Q_Y}}} H_\bullet^A(\mf M_\dd^{\f_{S;\alpha,\beta,\gamma},\zeta_\VW}(Q_{M}),\varphi_{\overline{W}_{M,\dd}^{\f_{S;\alpha,\beta,\gamma}}})\otimes_{H_A^\bullet(\pt)} F  \]
over the vertex algebras $\V(\C^3,S_{M,N,0})$, with fixed point bases enumerated by plane partitions with a pit and fixed asymptotics as in \cite{BerFM}, determined by the framing structure $\f_{S;\alpha,\beta,\gamma}$ as in Example \ref{VWmoduleeg}. In particular, we make the following conjecture:

\begin{conj}\label{VWmodconj} There exists a natural representation
\[ \rho: \mc U(\V(\C^3;S_{M,N,0})) \to \End_F(\V_{\alpha,\beta,\O}^{M,N} )   \]
of the algebra $\mc U(\V(\C^3;S_{M,N,0}))$ on $\V_{S;\alpha,\beta,\O}$, identifying it with the module $\mc N_{\alpha,\beta,\O}^{M,N}$ of \emph{loc. cit.}.
\end{conj}

We now describe some applications of our results to examples of divisors $S$ in a resolution $Y_{m,n}$ of $X_{m,n}=\{xy-z^mw^n\}$. Let $\mu$ and $\nu$ be partitions of length $m$ and $n$, respectively
\[\mu=\{ \mu_1 \geq \hdots \geq \mu_m \geq 0 \} \quad \quad\text{and}\quad\quad \nu =\{\nu_1 \geq \hdots \geq \nu_n \geq 0 \} \]
and define the corresponding lists of integers
\[ \textbf{M} =(M_i)_{i=0}^{m-1} \quad \textbf{N} =(N_i)_{i=0}^{n-1}  \quad \quad \text{where} \quad \quad M_i=\sum_{k=i+1}^{m} \mu_k \quad N_i=\sum_{k=i+1}^{n} \nu_k \]
 $i=0,...,m-1$, $j=0,...,n-1$; we also write just
 \[M = M_0 = \sum_{k=1}^m \mu_k \quad\quad \text{and}\quad\quad N=N_0=\sum_{k=1}^n \nu_k  \ . \]
 We define $S_{\mu,\nu}$ as the toric divisor corresponding to the labeling of the faces of the moment polytope of $Y_{m,n}$ by the integers $M_i$ and $N_i$ depicted in Figure \ref{fig:Smunufig} below in the case $m=3,n=2$. We also write simply $S_\mu$ or $S_\nu$ if $n=0$ or $m=0$, respectively.

 \begin{figure}[b]
 	\begin{overpic}[width=1.0\textwidth]{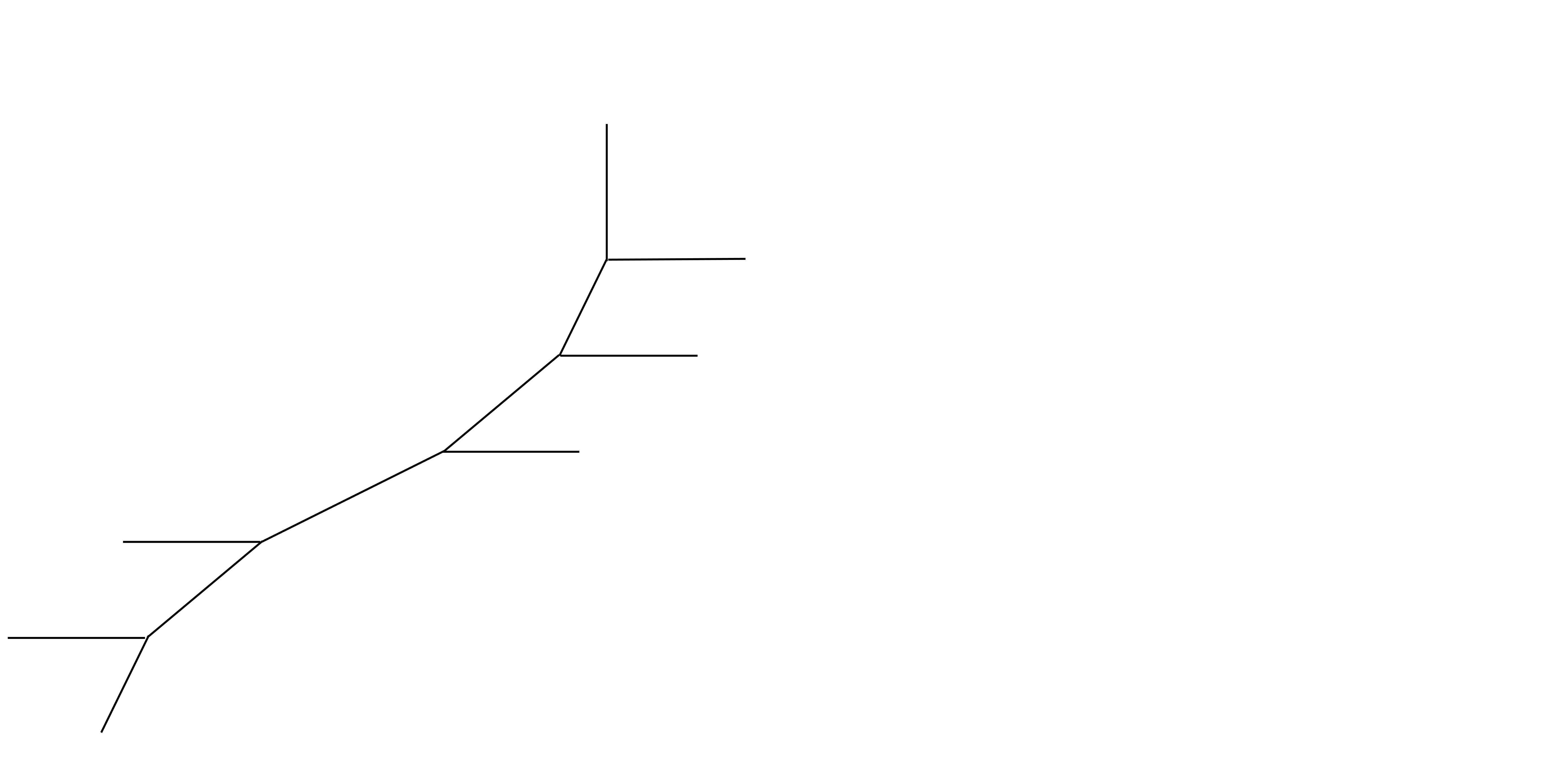}
 		\put(0,43) {$ Y_{3,2}\to X_{3,2} = \{ xy-z^3w^2 \}$}
 		\put (39.5,35) {$M_0= \mu_1+\mu_2+\mu_3$} 
 		\put (37.5,28.5) {$M_1= \mu_2+\mu_3$} 
 		\put(32, 22) {$M_2=\mu_3$} 
 		\put (2,10.5) {$N_1=\nu_2$}
 		\put (4, 16.5) {$N_0=\nu_1+\nu_2$}
 		
 		\put(34,30) {$C_1$}
 		\put(29,25) {$C_2$}
 		\put(23,17) {$C_3$}
 		\put(14,11.5) {$C_4$}
 		
 		\put(70,22) {$ \mleft[ \begin{array}{c c c | c c}  &  &   &   &  \\  \alpha_1 & & & &  \\  &\alpha_2   &  &  &   \\
 				\hline &  & \alpha_3 &  &  \\  & &  &   \alpha_4&  \end{array} \mright] $}
 	\end{overpic}
 	\caption{The resolution $Y_{3,2}$, the toric divisor $S_{\mu,\nu}$, and the compact toric curve classes $C_i$ with their corresponding simple roots $\alpha_i$ of $\gl_{3|2}$}
 	\label{fig:Smunufig}
 \end{figure}
 
 In this setting, following physical predictions from \cite{PrR} we conjecture in \cite{Bu} that the corresponding vertex algebra is given by the affine $W$ algebra of $\gl_{M|N}$ corresponding to the nilpotents $f_\mu$ and $f_\nu$ in $\gl_M$ and $\gl_N$, respectively, determined by $\mu$ and $\nu$:
 
 \begin{conj}\label{WLSconj}\cite{Bu} There is an isomorphism of vertex algebras
 	\[ W_{f_\mu,f_\nu}^\kappa(\gl_{M|N}) \xrightarrow{\cong} \V(Y_{m,n},S_{\mu,\nu})  \ .\]
 \end{conj}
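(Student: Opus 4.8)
The strategy is to prove the isomorphism by exhibiting both sides as joint kernels of screening operators acting on the \emph{same} lattice-times-Heisenberg vertex algebra. By construction in the companion paper \cite{Bu}, the vertex algebra $\V(Y_{m,n},S_{\mu,\nu})$ is the intersection of kernels of a collection of screening operators $\{Q_e\}$ acting on a vertex algebra $\Pi_\Gamma$ built from the GKM graph $\Gamma$ of $Y_{m,n}$ and a chosen Jordan--Holder filtration of $\mc O_{S_{\mu,\nu}}$: the lattice part is generated by the compact toric curve classes $C_i$, carrying the simple roots $\alpha_i$ of $\gl_{m|n}$ as in Figure \ref{fig:Smunufig}, with the pairing induced by $(-1)$ times the intersection form decorated by the divisor multiplicities $M_i, N_i$, and the Heisenberg part records $H_0$ of the strata. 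The first step is to write this data out completely explicitly for $Y_{m,n}$ and $S_{\mu,\nu}$, so that $\Pi_\Gamma$ and the screening currents become concrete, and to record how the parameters $\hbar_1,\hbar_2,\hbar_3$ enter the lattice pairing and the level.

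On the $W$-superalgebra side, the plan is to use a free field (Wakimoto--Feigin--Frenkel type) realization of $W^\kappa_{f_\mu,f_\nu}(\gl_{M|N})$ as a joint kernel of screening operators inside a lattice-times-Heisenberg vertex algebra of rank $M+N$. For the principal nilpotent this is the Feigin--Frenkel resolution \cite{FF1}, exactly as exploited in \cite{SV} and \cite{CCDS}; for a general nilpotent one obtains such a realization by performing the quantum Drinfeld--Sokolov reduction in stages along the parabolic associated to $f_\mu, f_\nu$, or equivalently by the screening-operator constructions of Genra and of Creutzig--Genra--Nakatsuka, adapted to the super case. One must choose the realization so that the screening currents are naturally indexed by the simple roots of $\gl_{m|n}$ together with the ``Jordan block boundaries'' determined by $\mu$ and $\nu$; the number of screenings and the lattice should then be in bijection with the compact curves $C_i$ of $Y_{m,n}$ and the divisor data.

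The heart of the argument is the matching step: one checks that the lattice $\Gamma$ with its decorated intersection pairing equals the root/weight-type lattice appearing in the $W$-algebra free field realization, that the Heisenberg pieces agree, and that the two families of screening currents coincide up to an isomorphism of the ambient $\Pi$-type vertex algebra — after identifying $\kappa$ with the correct function of $\hbar_2/\hbar_1$ generalizing $\kappa = -h^\vee - \hbar_2/\hbar_1$, and tracking the Jordan block sizes $\mu_k,\nu_k$ against the labels $M_i,N_i$. Granting this, the joint kernels are literally the same subspace and the isomorphism of \ref{WLSconj} follows. As a consistency check one can compare characters: the left side has the character of the $W_{f_\mu,f_\nu}^\kappa(\gl_{M|N})$ vacuum module, computable from the general $W$-superalgebra character formula, while the right side is pinned down by Corollary \ref{VWcoro} and the fixed-point count of Section \ref{invtsec}.

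The main obstacle I expect is twofold. First, producing the general-nilpotent, super free field realization of $W^\kappa_{f_\mu,f_\nu}(\gl_{M|N})$ \emph{in a form manifestly comparable} to the GKM-graph construction — iterating the reduction along a parabolic while keeping track of odd screenings and of strong-generation and freeness statements — is delicate, and the literature in the super case is less complete than for $\spl_n$. Second, and more seriously, one needs a vanishing/exactness result showing that the intersection of screening kernels is ``no bigger than expected'' on both sides, i.e. that the relevant BRST-type complexes have cohomology concentrated in degree zero; for the principal case this is the content of the Feigin--Frenkel resolution, and for general $\mu,\nu$ and the super case it would have to be established (or imported from a suitable generalization), presumably via a spectral sequence reducing to the rectangular or hook-type building blocks.
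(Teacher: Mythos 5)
You should first note that the paper does not prove this statement at all: it is stated as Conjecture \ref{WLSconj} (imported from the companion paper \cite{Bu}), and the only positive assertion made is Theorem \ref{WLStheo} for the case $n=0$, whose proof is explicitly deferred to future work. So there is no in-paper argument to compare your proposal against; what you have written is a strategy for settling an open conjecture, not an alternative to an existing proof.

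As a strategy, your route is the expected one and is consistent with how the paper frames the problem: $\V(Y_{m,n},S_{\mu,\nu})$ is by construction a joint kernel of screening operators on a lattice-times-Heisenberg vertex algebra built from the GKM data of $Y_{m,n}$ and the Jordan--H\"older filtration of $\mc O_{S_{\mu,\nu}}$, and in the principal $\gl_r$ case the identification with $W^\kappa_{f_{\textup{prin}}}(\gl_r)$ goes exactly through the Feigin--Frenkel free-field realization \cite{FF1}, as used in \cite{SV} and \cite{CCDS}. But the two obstacles you flag at the end are not peripheral technicalities; they are precisely the open mathematical content of the conjecture, and as long as they are unresolved the ``matching step'' is vacuous. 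Concretely: (i) you need, for an arbitrary pair of nilpotents $(f_\mu,f_\nu)$ in the super setting, a free-field realization of $W^\kappa_{f_\mu,f_\nu}(\gl_{M|N})$ as a joint kernel of screenings whose lattice, Heisenberg part, and screening currents are indexed compatibly with the curve classes $C_i$ and the multiplicities $M_i,N_i$; no such general statement is currently available to be ``imported,'' and producing it (including the odd screenings and strong generation) is essentially equivalent to the hard part of the conjecture. (ii) Even granting matching ambient algebras and screening currents, one must show the joint kernels are no larger than the $W$-superalgebra on one side and than $\V(Y_{m,n},S_{\mu,\nu})$ on the other, i.e.\ a Feigin--Frenkel-type exactness/vanishing result for general $(\mu,\nu)$ at the relevant (generic) level; your appeal to ``a spectral sequence reducing to rectangular or hook-type building blocks'' is a hope, not an argument. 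A character comparison via Corollary \ref{VWcoro} is itself conditional on Conjecture \ref{VWconj}, so it cannot serve as an unconditional consistency check. In short: the plan is the right one, but as written it restates the conjecture's difficulty rather than resolving it.
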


\noindent In fact, we have a proof of this conjecture in the case $n=0$, which will appear in future work:
\begin{theo}\label{WLStheo} There is an isomorphism of vertex algebras
	\[ W^\kappa_{f_\mu}(\gl_M) \xrightarrow{\cong}\V(Y_{m,0}, S_\mu)  \ . \]
\end{theo}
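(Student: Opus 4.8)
The plan is to identify the vertex algebra $\V(Y_{m,0},S_\mu)$, constructed in the companion paper \cite{Bu} as a joint kernel of screening operators on a lattice vertex algebra attached to the GKM graph of $Y_{m,0}=\widetilde{A_{m-1}}\times\bb A^1$ together with a Jordan-Holder filtration of $\mc O_{S_\mu}$, with the Feigin-Frenkel type free field realization of $W^\kappa_{f_\mu}(\gl_M)$. The key point is that both sides are by construction realized inside the \emph{same} lattice (or $\beta\gamma$-$bc$) vertex algebra, as joint kernels of explicitly matching collections of screening currents: on the $W$-algebra side these are the screenings of the Feigin-Frenkel resolution for the nilpotent $f_\mu$, appropriately twisted to the level $\kappa=-h^\vee-\tfrac{\hbar_2}{\hbar_1}$, while on the geometric side they are indexed by the compact toric curve classes $C_i$ of $Y_{m,0}$ as in Figure \ref{fig:Smunufig}, with the lattice data governed by the intersection form and the labels $M_i=\sum_{k>i}\mu_k$ on the faces of the moment polytope. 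First I would set up a precise dictionary between the two sets of combinatorial data: the simple roots $\alpha_i$ attached to the curves $C_i$ versus the simple roots of $\gl_M$ grouped according to the nilpotent $f_\mu$, and the Heisenberg/lattice generators on each side.

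The core of the argument is then a screening-operator comparison. I would proceed in three steps. Step one: recall from \cite{Bu} the explicit presentation $\V(Y_{m,0},S_\mu)=\bigcap_i \ker Q_i$ inside a lattice VOA $V_L\otimes \pi$, computing $L$ and the screening momenta from the GKM data. Step two: recall the corresponding free-field realization of $W^\kappa_{f_\mu}(\gl_M)$ — for a general nilpotent this is the quantum Hamiltonian reduction of $V_\kappa(\gl_M)$, which by the work of Feigin-Frenkel (and its generalization for arbitrary nilpotents) admits a presentation as a joint kernel of screening operators on a tensor product of a Heisenberg VOA with $\beta\gamma$ and $bc$ systems associated to the $\mf{sl}_2$-grading given by $f_\mu$. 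Step three: exhibit an isomorphism of the ambient free-field algebras matching the two families of screenings term by term, so that the joint kernels are identified. The case $\mu=(1^m)$, i.e. $f_\mu=0$ and $M=m$, should reduce to the known identification $\V(\C^3\times\text{pt-type data})$ with $V_\kappa(\gl_m)$ free-field realized à la Wakimoto, and the principal case $\mu=(m)$ to the Feigin-Frenkel realization of $W^\kappa(\gl_m)$; these serve as consistency checks and, via the compatibility of the resolutions under refining/coarsening the partition $\mu$ (as in the discussion of Feigin-Frenkel resolutions \cite{FF1} invoked in Section \ref{rtintrosec}), the general case can be bootstrapped by induction on the number of parts of $\mu$.

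I expect the main obstacle to be the precise matching of the screening operators for a \emph{general} nilpotent $f_\mu$, including getting the level $\kappa$ and all the background charges / lattice normalizations exactly right. For the principal and zero nilpotents the free-field realizations are classical and the combinatorics is clean, but for intermediate $\mu$ one must use the hook-type or parabolic free-field realization of $W^\kappa_{f_\mu}(\gl_M)$ and verify that the geometric screenings from the curves $C_i$ — whose momenta are dictated by the intersection pairing on $Y_{m,0}$ and the divisor labels $M_i$ — coincide with the screenings attached to the simple roots of $\gl_M$ in the Levi/nilpotent decomposition determined by $\mu$. A secondary subtlety is that $Y_{m,0}=\widetilde{A_{m-1}}\times\bb A^1$ is slightly degenerate (as noted in Section \ref{rtintrosec}, the torus action on the divisor factors through a one-dimensional quotient), so the equivariant parameter $\hbar_3$ decouples and one must check that the resulting specialization of $\kappa$ matches $-h^\vee-\tfrac{\hbar_2}{\hbar_1}$ consistently with the conventions of \cite{Bu}. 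Once the screening dictionary is established, the isomorphism of joint kernels is formal, and strong generation plus a character comparison (the Poincaré series on both sides being controlled by the same combinatorics) confirms it is an isomorphism rather than merely an embedding.
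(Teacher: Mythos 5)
The first thing to note is that the paper does not actually contain a proof of this theorem: it is stated with the explicit remark that the proof ``will appear in future work,'' and the construction of $\V(Y,S)$ itself lives in the companion paper \cite{Bu}. So there is no argument in this paper to compare yours against. That said, your strategy --- realize both sides inside the same lattice/free-field vertex algebra and match the two families of screening operators, the geometric ones indexed by the compact curves $C_i$ with momenta dictated by the intersection form and the labels $M_i$, and the Feigin--Frenkel/Wakimoto-type ones attached to $f_\mu$ --- is exactly the route the paper telegraphs: $\V(Y,S)$ is defined in \cite{Bu} as a joint kernel of screenings precisely so that such free-field comparisons become available, and the compatibility with Feigin--Frenkel resolutions is the mechanism invoked in Section \ref{rtintrosec}. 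In that sense your proposal is very likely the intended approach in spirit.

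As a proof, however, what you have written is a plan rather than an argument, and the two places where the real work sits are left open. First, for a general nilpotent $f_\mu$ you need a theorem, not a hope, that $W^\kappa_{f_\mu}(\gl_M)$ at generic level is the joint kernel of explicit screening operators on the relevant free-field algebra with respect to the good grading determined by the pyramid of $\mu$; such screening realizations do exist for arbitrary nilpotents at generic level, but they must be invoked with the precise momenta and background charges, since that is exactly where the identification of $\kappa=-h^\vee-\hbar_2/\hbar_1$ and of the lattice normalizations is decided. Second, the actual matching of the geometric screening data from \cite{Bu} with the $W$-algebra screenings for intermediate $\mu$ \emph{is} the content of the theorem; asserting a dictionary between the curves $C_i$ and the simple roots grouped according to $\mu$, or a bootstrap ``by induction on the number of parts of $\mu$'' via compatibility of resolutions, does not yet produce it --- the induction step (what statement is being induced on, and how refining $\mu$ relates the two joint kernels) is not formulated. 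Your concluding character comparison also needs care: the Poincar\'e series on the $W$-algebra side is taken with respect to the good grading attached to the pyramid rather than the Dynkin grading, so it only closes the argument after the gradings on the two sides have been matched, which is again part of the screening bookkeeping. None of this says your approach is wrong --- it is almost certainly the one the authors have in mind --- but the proof lies in the computations you have deferred, and those cannot be checked against this paper since it omits them.
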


The quivers that arise in this setting are generalizations of the variant of the $\gl_2$ chainsaw quiver considered in Example \ref{chainsaweg}, and in general we conjecture there exists an appropriate framing structure and dimensional reduction relating the resulting module with the ordinary Borel-Moore homology of the moduli space of representations of the corresponding chainsaw quiver. We now formally state this conjecture:

Let $\mf M_\dd^\mu(Q^\textup{Ch}_m,W^\textup{Ch}_m)$ denote the stack of $\dd$ dimensional representations of the $\gl_m$ chainsaw quiver as in \cite{FinR}, with fixed framing dimension given by the integers $\mu_i$, so that following the conjectures of Kanno-Tachikawa \cite{KanT} and in turn \cite{Brav1}, we expect that
\begin{equation}\label{KTequiveqn}
\bb M_{f_\mu} \xrightarrow{\cong}	\bigoplus_{\dd \in \bb N^{V_{Q_Y}}} H^A_\bullet(\mf M_\dd^{\mu,\zeta}(Q^\textup{Ch}_m,I^\textup{Ch}_m))\otimes_{H_A^\bullet(\pt)} F  \ ,
\end{equation}
the $A$-equivariant Borel-Moore homology of the moduli space of stable representations of the $\gl_m$ chainsaw quiver $(Q^\textup{Ch}_m,I^\textup{Ch}_m)$ of framing dimension $\mu$ is identified with the universal Verma module $\bb M_{f_\mu}$ for the affine W-algebra $W_{f_\mu}(\gl_M)$ associated to the nilpotent $f_\mu$ in $\gl_M$.

Let $\f_0$ be the framing structure for $S_\mu$ of rank $\textbf{M}$ generalizing that of Equation \ref{chainsawsl2eq}, that is, such that the map $V_{\infty_{i}}\to V_{\infty_{i-1}}$ is injective for each $i=1,...,m-1$ and all other framing endomorphisms are zero. We denote the corresponding module by
\[ \V_{S_\mu}^{\f_0} = \bigoplus_{\dd \in \bb N^{V_{Q_Y}}} H_\bullet^A(\mf{M}_\dd^{\f_0,\zeta_\VW}(Y_{m,0},S_{\mu}), \varphi_{\overline{W}^{\f_0}})\otimes_{H_A^\bullet(\pt)} F  \ . \]
Following the equivalence outlined at the end of Example \ref{chainsaweg}, we make the following conjecture:

\begin{conj}\label{chainsawconj} There is a canonical isomorphism
\[  H_\bullet^A(\mf{M}_\dd^{\f_0,\zeta}(Y_{m,0},S_{\mu}), \varphi_{\overline{W}^{\f_0}}) \cong H^A_\bullet(\mf M_\dd^{\mu,\zeta}(Q^\textup{Ch}_m,I^\textup{Ch}_m)) \ . \]
\end{conj}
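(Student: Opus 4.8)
The plan is to reduce the statement to a comparison of two moduli stacks of quiver representations and then match them directly. First, by Theorem \ref{stackpotthm} applied to $M=\mc O_{S_\mu^\red}^\sss[1]$ on $Y=Y_{m,0}$ with the framing structure $\f_0$, the stack $\mf M_\dd^{\f_0,\zeta}(Y_{m,0},S_\mu)$ is equivalent to the stack $\mf M_\dd^\zeta(Q^{\f_0},W^{\f_0})$ of $\zeta$-stable representations of the associated framed quiver with potential, and this equivalence intertwines the relevant sheaves of vanishing cycles, so that
\[ H_\bullet^A(\mf M_\dd^{\f_0,\zeta}(Y_{m,0},S_\mu),\varphi_{\overline W^{\f_0}}) \cong H_\bullet^{A}(\mf M_\dd^\zeta(Q^{\f_0},W^{\f_0}),\varphi_{\overline W^{\f_0}}) \ . \]
Since $Y_{m,0}=\widetilde{A_{m-1}}\times \bb A^1$ is a product with the affine line, the quiver with potential $(Q^{\f_0},W^{\f_0})$ has cubic potential, linear in the collection of loops $E_i$ dual to the $\bb A^1$-coordinate at each internal vertex (as in Examples \ref{o2eg} and \ref{chainsaweg}). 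One then invokes the dimensional reduction isomorphism of Equation \ref{dimredeqn}, in the form established in Appendix A of \cite{Dav1}, which applies to framed quivers with potential and open stable loci, to identify the above with the ordinary $A$-equivariant Borel--Moore homology $H_\bullet^A(\mf M_\dd^{\zeta}(\tilde Q,\tilde R))$ of the dimensionally reduced framed quiver with relations $(\tilde Q,\tilde R)$ obtained by deleting the $E_i$ and imposing the preprojective-type relations they generate, together with the additional relations produced by differentiating $W^{\f_0}$ in the framing arrows.

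Second --- and this is the heart of the argument --- one must identify $(\tilde Q,\tilde R)$ together with the framing structure $\f_0$ and the stability $\zeta_\VW$ with the $\gl_m$ chainsaw quiver $(Q^\textup{Ch}_m,I^\textup{Ch}_m)$ of \cite{FinR} with framing dimension $\mu$. The framing nodes of $\tilde Q$ carry the dimensions $\mb M=(M_i)$, connected by the injective maps $K_i\colon V_{\infty_i}\to V_{\infty_{i-1}}$ coming from $\f_0$, which have corank $M_{i-1}-M_i=\mu_i$; the extra relations involving the $K_i$, generalising the relation $AI+I_0K=0$ of Example \ref{chainsaweg}, allow one, on a suitable stable locus, to eliminate all the framing maps factoring through $\operatorname{im}(K_i)$ and to replace the tower of framing nodes by a single chain of complements of total dimension $M=|\mu|$ with jumps $\mu_i$, exactly as in the passage from the quiver with potential of Equation \ref{chainsawwpotsl2eqn} to that of Equation \ref{chainsawsl2eq}. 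Carrying this out in families should produce an isomorphism of $A$-equivariant stacks $\mf M_\dd^{\zeta}(\tilde Q,\tilde R)\cong \mf M_\dd^{\mu,\zeta'}(Q^\textup{Ch}_m,I^\textup{Ch}_m)$ for an induced stability $\zeta'$, hence the desired isomorphism on homology; one then checks, as in \emph{loc. cit.}, that $\zeta_\VW$ maps to the stability used in Equation \ref{KTequiveqn}. An alternative, essentially equivalent, route is to set up this identification directly at the level of the category $H^0(\tw^0\mc A_{F\oplus M})$ of Theorem \ref{heartthm}, matching twisted objects with chainsaw data via the monad presentation.

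The main obstacle is this second step, specifically the passage to the chainsaw quiver via elimination of the injectivity relations. Two points require care: $(i)$ one must pin down the chamber of stability conditions $\zeta$ on $(Q^{\f_0},W^{\f_0})$ --- which we call $\zeta_\VW$ --- for which every $\zeta$-stable representation has all the maps $K_i$ injective, so that the elimination is everywhere valid and the resulting map of moduli spaces is an isomorphism rather than merely a birational correspondence; this is analogous to, and should follow from, the analysis of \cite{FinR} and \cite{KanT}, but must be redone for the three-dimensional model and then transported through dimensional reduction. $(ii)$ The relations produced by the $\Ainf$ structure maps $\rho_k^{\Sigma\oplus\Sigma_\infty}$ in Theorem \ref{stackpotthm} must be shown to match exactly the chainsaw relations after the substitution, which is a finite but delicate computation of $\Ext$-products between the simple objects $F_i$, the auxiliary objects, and $M$ on $Y_{m,0}$; the case $m=2$ is worked out in Example \ref{chainsaweg}, and one expects the general case to follow by the same mechanism, using the product structure $Y_{m,0}=\widetilde{A_{m-1}}\times\bb A^1$ to reduce the $\Ext$ computations to those on the resolved $A_{m-1}$ surface. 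Once $(i)$ and $(ii)$ are settled the theorem follows, and, combined with Theorem \ref{WLStheo} and the Kanno--Tachikawa identification of Equation \ref{KTequiveqn}, it yields the identification of $\bb V_{S_\mu}^{\f_0}$ with the universal Verma module $\bb M_{f_\mu}$ for $W^\kappa_{f_\mu}(\gl_M)$.
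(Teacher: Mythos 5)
This statement is stated in the paper as a conjecture, and the paper offers no proof of it: the only justification given is exactly the outline you reproduce, namely dimensional reduction in the sense of Theorem A.1 of \cite{Dav1} combined with "passing to cokernels of the injective maps determined by the framing structure," illustrated only in the $m=2$ case in Example \ref{chainsaweg}, where even that reduction is itself flagged as conjectural. So your route coincides with the paper's intended one, but as a proof it has a genuine gap, and it is precisely the gap you yourself flag: steps $(i)$ and $(ii)$ of your second paragraph are not carried out, and they are the entire mathematical content of the statement. Concretely, you must (a) exhibit a chamber of stability conditions on $(Q^{\f_0},W^{\f_0})$ for which every stable representation has all the maps $K_i$ injective, show that the elimination of the subspaces $\operatorname{im}(K_i)$ can be performed in families so as to give an isomorphism of stacks (not just a bijection on $\bb K$-points or a birational map), and verify that the induced stability agrees with the one used in Equation \ref{KTequiveqn} and in \cite{FinR}; and (b) check for all $m$ and all $\mu$ that the relations surviving dimensional reduction — the derivatives of $W^{\f_0}$ in the loops $E_i$ and in the framing arrows, as dictated by the $\Ainf$ products $\rho_k^{\Sigma\oplus\Sigma_\infty}$ — become exactly the chainsaw ideal $I^\textup{Ch}_m$ with framing dimensions $\mu_i=M_{i-1}-M_i$ after the substitution. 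Neither of these is routine, and "should produce" and "one expects" are doing all the work in your write-up.

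Two further points of care that your sketch glosses over. First, the dimensional reduction isomorphism of Equation \ref{dimredeqn} is stated for the full stack of representations of a quiver with potential linear in the reduced loops; you are applying it to the open $\zeta$-stable locus of a framed quiver, so you need (and should state) the compatible open-restriction/equivariant version, including the check that $\zeta$-stability on the three-dimensional model corresponds to the stability used on the reduced quiver with relations. Second, the comparison of stability chambers is not merely bookkeeping: the paper's own Example \ref{chainsaweg} restricts to $d_{\infty_1}\leq d_{\infty_0}$ and injective $K$ and still only conjectures the equivalence, so pinning down $\zeta_\VW$ and proving injectivity of all $K_i$ on the stable locus is an open problem, not something that "should follow" from \cite{FinR} and \cite{KanT} without new work, since those references live on the two-dimensional side of the reduction.
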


As explained in \emph{loc. cit.}, the expectation is that this follows from dimensional reduction, in the sense of Theorem A.1 of \cite{Dav1}, for example, together with an equivalence given by passing to cokernels of the injective maps determined by the framing structure. In particular, note that the dimensions of the framing vector spaces in the two quiver descriptions of the above equivalence differ in the following way: $\mf{M}^{f_0,\zeta}(Y_{m,0},S_{\mu})$ parameterizes representations of framing dimension $\textbf{M}$ while $\mf M^{\mu,\zeta}(Q^\textup{Ch}_m,I^\textup{Ch}_m)$ parameterizes representations of framing dimension $\mu$, noting that
\[\mu_i=M_{i-1}-M_i \ . \]
This is the generalization of the relation outlined between the framing vector spaces $V_{\infty_0}$ and $\tilde V_{\infty_0}$ in the quivers of Equations \ref{chainsawwpotsl2eqn} and \ref{chainsawsl2eq}, respectively.

In summary, we expect that the preceding Conjecture \ref{chainsawconj} together with an identification of the form in Equation \ref{KTequiveqn} imply that the module constructed by the framing structure $\f_0$ above is identified with the universal Verma module $\bb M_{f_\mu}$ of the corresponding $W$-algebra:
\begin{conj} There exists a natural representation
\[ \mc U(W_{f_\mu}^\kappa(\gl_M))\to \End_F( \V_{S_\mu}^{\f_0}) \ ,\]
such that $\V_{S_\mu}^{\f_0}$ is identified with the universal Verma module $\bb M_{f_\mu}$.
\end{conj}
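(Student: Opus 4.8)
The plan is to deduce the statement by composing three inputs: (i) the geometric identification of $\V_{S_\mu}^{\f_0}$ with the Borel--Moore homology of the moduli stack of representations of the chainsaw quiver, via dimensional reduction and the cokernel equivalence of Conjecture \ref{chainsawconj}; (ii) the Kanno--Tachikawa/Braverman-type identification of the latter with the universal Verma module $\bb M_{f_\mu}$ of $W^\kappa_{f_\mu}(\gl_M)$, as in Equation \ref{KTequiveqn}; and (iii) the compatibility of the $\mc H(Y_{m,0})$-action of Theorem \ref{cohareptheo} and its extension to $\mc Y_{S_\mu}(Y_{m,0})$ from Section \ref{quiveryangsec} with the $W$-algebra action under these identifications. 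Concretely, I would first invoke Theorem \ref{stackpotthm} to present $\mf M^{\f_0}(Y_{m,0},S_\mu)$ as the stack of representations of the framed quiver with potential $(Q^{\f_0}_{S_\mu}, W^{\f_0}_{S_\mu})$, then apply the dimensional reduction isomorphism of Equation \ref{dimredeqn} (Theorem A.1 of \cite{Dav1}) to replace the vanishing-cycle homology by ordinary equivariant Borel--Moore homology of the dimensionally reduced quiver with relations. Second, I would carry out the change of framing variables $M_i \mapsto \mu_i = M_{i-1}-M_i$ by passing to cokernels of the injective framing maps prescribed by $\f_0$, as sketched after Equation \ref{chainsawsl2eq}, identifying the reduced quiver with relations with the $\gl_m$ chainsaw quiver $(Q^{\textup{Ch}}_m, I^{\textup{Ch}}_m)$ of framing dimension $\mu$; this is exactly Conjecture \ref{chainsawconj}.

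Granting those two steps, $\V_{S_\mu}^{\f_0}$ is identified with $\bigoplus_\dd H^A_\bullet(\mf M^{\mu,\zeta}_\dd(Q^{\textup{Ch}}_m,I^{\textup{Ch}}_m))\otimes_{H^\bullet_A(\pt)}F$, which by Equation \ref{KTequiveqn} carries the $\mc U(W^\kappa_{f_\mu}(\gl_M))$-action with underlying module $\bb M_{f_\mu}$. The remaining work is to construct the representation structure map $\mc U(W^\kappa_{f_\mu}(\gl_M))\to\End_F(\V_{S_\mu}^{\f_0})$ intrinsically from the cohomological Hall algebra action, so that the statement is not merely a transport of structure. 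Following the strategy of \cite{SV} and \cite{RSYZ}, I would take the positive half of $\mc U(W^\kappa_{f_\mu}(\gl_M))$ to be realized by the spherical subalgebra $\mc{SH}(Y_{m,0})$ acting via $\rho_{S_\mu}$ from Theorem \ref{cohareptheo}, the negative half by the opposite algebra acting by adjoints with respect to the equivariant integration pairing of Equation \ref{pairingeqn} (Proposition \ref{factprop}), and the Cartan part by the characteristic classes of the tautological bundles as in Equation \ref{shiftedcartanacteqn}; under the isomorphism $\V_{S_\mu}^{\f_0}\cong\bb M_{f_\mu}$ one then checks these generators satisfy the $W$-algebra relations. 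For $m=2$ this is essentially the content of \cite{CCDS} (our Theorem \ref{CCDStheo}) applied after the identification $Y_{1,0}=\C^3$, $S_{\mu}=r[\C^2]$, $f_\mu$ regular; the general $m$ case should follow by the same abstract argument, using that the screening-operator description of $W^\kappa_{f_\mu}(\gl_M)$ in \cite{Bu} (Theorem \ref{WLStheo}) matches the one encoded geometrically.

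The main obstacle, unsurprisingly, is that the argument rests on \emph{three} currently-conjectural inputs: Conjecture \ref{chainsawconj} (the chainsaw identification via cokernels), the Kanno--Tachikawa identification in Equation \ref{KTequiveqn}, and the compatibility of the cohomological Hall algebra action with the $W$-algebra action — i.e. the hard half of Conjecture \ref{VWconj} restricted to this Verma-module setting. Of these, the genuinely new difficulty is the last: verifying that the endomorphisms produced by $\rho_{S_\mu}(\mc{SH}(Y_{m,0}))$ and its adjoints satisfy the defining relations of $\mc U(W^\kappa_{f_\mu}(\gl_M))$, which as noted in the text is not possible by direct computation even in the rank $r$ case of \cite{SV}. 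The way around this in \emph{loc. cit.} is to pass through the affine Yangian $\mc Y(\glh_1)$, use the coproduct and the Feigin--Frenkel resolution to reduce to a rank-one base case, and then propagate; the analogue here would be to use the conjectural coproducts on the shifted quiver Yangian $\mc Y_{S_\mu}(Y_{m,0})$ compatible with the vertex-algebra embeddings of Equation \ref{voaembeqn}, reducing the relation-checking to the divisors $S_{\mu_i}$ of a single part. I would therefore structure the proof so that Conjectures \ref{chainsawconj} and \ref{KTequiveqn} are cited as black boxes, isolate the compatibility statement as the one genuinely new lemma, and prove it by the coproduct/resolution reduction, leaving its completion — as the text itself anticipates — to the same future work in which the coproducts are constructed.
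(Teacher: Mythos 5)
Your proposal follows essentially the same route the paper itself indicates: the statement is left as a conjecture there, supported only by the expectation that Conjecture \ref{chainsawconj} (the dimensional-reduction and cokernel identification with the $\gl_m$ chainsaw quiver) combined with the Kanno--Tachikawa-type identification of Equation \ref{KTequiveqn} yields the Verma-module identification, and your steps (i)--(ii) reproduce exactly this, while your step (iii) spells out the CoHA/shifted-quiver-Yangian compatibility that the paper defers to Conjectures \ref{VWconj} and \ref{Yangsurfconj} and to future work on the coproducts. One small correction: the known base case for the \emph{Verma} module with the framing $\f_0$ (which is the trivial framing when $m=1$) is Theorem \ref{SVtheo} of \cite{SV}, not Theorem \ref{CCDStheo} of \cite{CCDS}, which concerns the vacuum module for the framing $\f_{S_\mu}$ and corresponds to the subsequent Conjecture \ref{Walgvacconj}; likewise your reference to $m=2$ there should be to the $Y_{1,0}=\C^3$ case.
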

The preceding equivalence is the analogue of Theorem \ref{SVtheo} recalled from \cite{SV}, and is closely related to several existing mathematical generalizations of the AGT conjecture, such as \cite{BFFR}, \cite{FFNR}, \cite{Negaff}, \cite{Neg2}, and \cite{CDM}. However, we have not considered the modules corresponding to the framing structures that feature in Conjecture \ref{VWconj}. Indeed, the framing structure $\f_{S_\mu}$ can be described by adding to $\f_0$ an additional principal nilpotent $G_\f$ in $\gl_{M_0}$ acting at the framing vertex corresponding to $\C^2$; the modification of the framed quiver with potential in Equation \ref{chainsawwpotsl2eqn}, which corresponds to the framing structure $\f_0$, is depicted in Figure \ref{fig:Walgquivfig} above:

\begin{figure}[t]
	\hspace*{-1cm}
	\begin{overpic}[width=.4\textwidth]{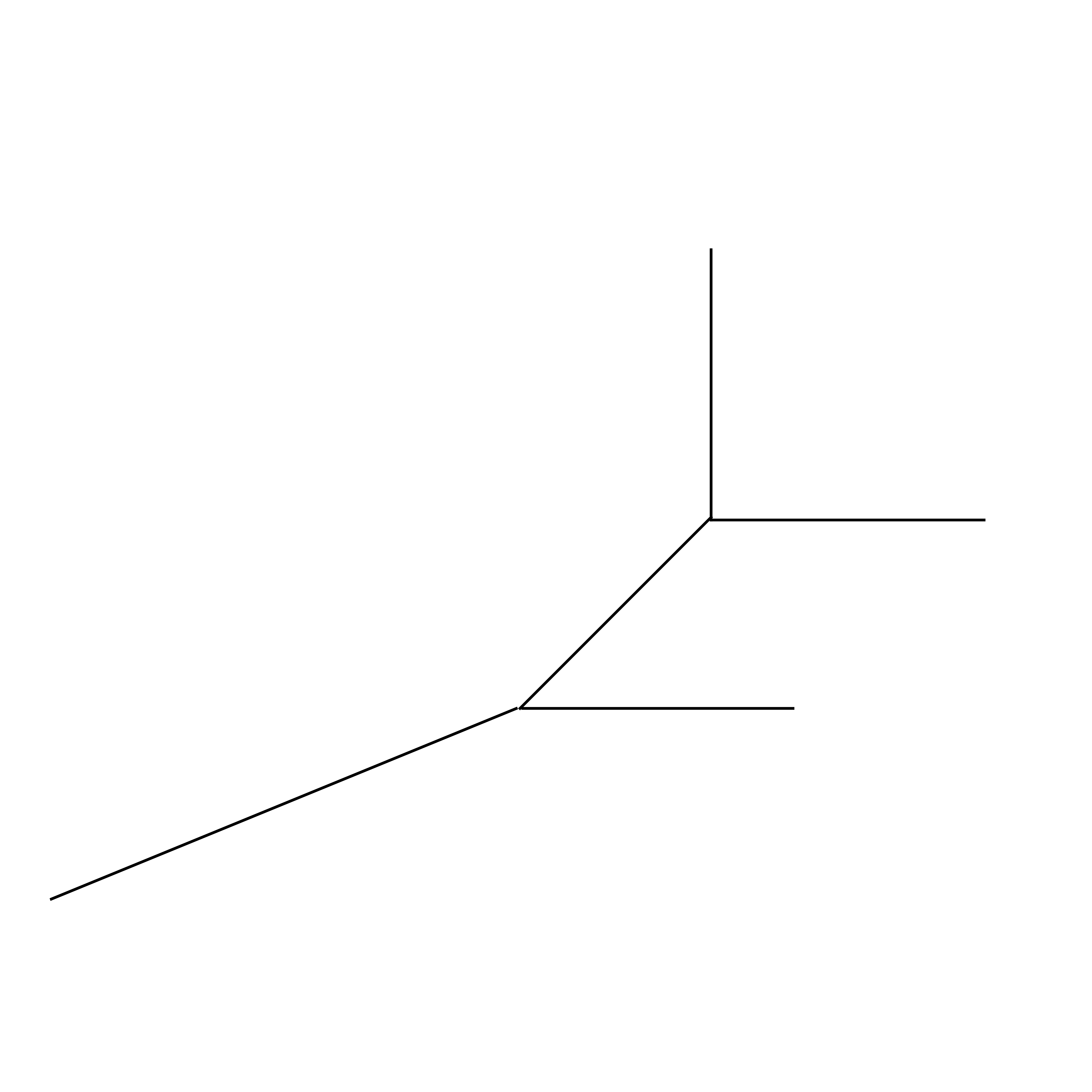}
		\put(0,90) {$ Y_{2,0}\to X_{2,0}=\{xy-z^2\} \times \bb C $}
		\put(0,84) {\rotatebox[origin=c]{90}{$\subset$}}
		\put(0,78) {$S_{\mu}$}
		\put(67,57) {$M_0=\mu_1+\mu_2$}
		\put(62,38) {$M_1=\mu_2$}
	\end{overpic}\quad\quad
		\begin{overpic}[width=.55\textwidth]{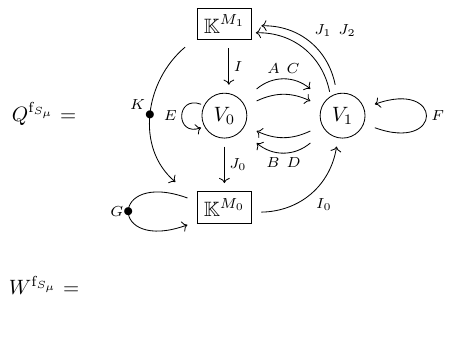}
			\put(22,10) { $  E(BC-DA)+ F(AD-CB) + IJ_1A  $} 
			\put(42,5) {$ + IJ_2C + I_0J_0D+I_0 K J_1 + I_0 G J_0 $}
	\end{overpic}
	\caption{The toric divisor $S_\mu$ in $Y_{2,0}$ and framed quiver with potential $(Q^{\f_{S_\mu}},W^{\f_{S_\mu}})$}
	\label{fig:Walgquivfig}
\end{figure}

We recall that the module associated to the divisor $S$ which features in Conjecture \ref{VWconj} in defined in terms of the corresponding quiver with potential as
\[\V_{S_\mu} = \V_{S_\mu}^{\f_{S_\mu},\zeta_\VW} =  \bigoplus_{\dd \in \bb N^{V_{Q_Y}}} H_\bullet^{G_\dd(Q_Y)\times A}( X_\dd^{\zeta_\VW}(Q^{\f_{S_\mu}}),\varphi_{W^{\f_{S_\mu}}})\otimes_{H_A^\bullet(\pt)} F \ .   \]
The statement of \emph{loc. cit.} in this setting, by Theorem \ref{WLStheo}, gives the following conjecture:
\begin{conj}\label{Walgvacconj}
There exists a natural representation
\[ \mc U(W_{f_\mu}^\kappa(\gl_M)) \to \End_F( \V_{S_\mu}) \ ,\]
such that $\V_{S_\mu}$ is identified with the vacuum module $W_{f_\mu}^\kappa(\gl_M)$.
\end{conj}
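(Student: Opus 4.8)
The plan is to deduce Conjecture \ref{Walgvacconj} from the combination of three inputs already assembled in the paper: the geometric construction of a cohomological Hall algebra action from Theorem \ref{cohareptheo}, together with its extension by adjoints to the opposite spherical subalgebra as in Proposition \ref{factprop}; the identification of the relevant vertex algebra as the $W$-algebra $W^\kappa_{f_\mu}(\gl_M)$ via Theorem \ref{WLStheo}; and the known solution of the analogous problem for $S=r[\C^2]$, namely Theorems \ref{SVtheo} and \ref{CCDStheo}, which handle the principal nilpotent case $\mu = (1,\dots,1)$, $m=M$. Concretely, first I would fix the framing structure $\f_{S_\mu}$ depicted in Figure \ref{fig:Walgquivfig}, decomposed as $\f_0$ (the injective maps $V_{\infty_i}\to V_{\infty_{i-1}}$) plus the principal nilpotent $G_\f$ acting at the framing vertex attached to $\C^2$; apply Theorem \ref{cohareptheo} to obtain $\rho_{S_\mu}:\mc H(Y_{2,0})\to \End_F(\V_{S_\mu})$, restrict to $\mc{SH}(Y_{2,0})$, and use Proposition \ref{factprop} to extend by the pairing-adjoint action to a map $\mc{SH}(Y_{2,0})\otimes \mc{SH}(Y_{2,0})^\op \to \End_F(\V_{S_\mu})$. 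The target is then to show these two halves close up into an action of $\mc U(W^\kappa_{f_\mu}(\gl_M))$ and that $\V_{S_\mu}$ is the vacuum module.

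The key steps, in order: (1) via Conjecture \ref{chainsawconj}/dimensional reduction and the identification with chainsaw-quiver homology (Equation \ref{KTequiveqn}), reduce the computation of $\V_{S_\mu}^{\f_0}$ to the universal Verma module $\bb M_{f_\mu}$, and then analyze the effect of turning on the extra principal nilpotent $G_\f$; heuristically this should cut the Verma module down to the vacuum, exactly as passing from $A_\f=0$ (Verma, Theorem \ref{SVtheo}) to $A_\f$ principal (vacuum, Theorem \ref{CCDStheo}) does in the $r[\C^2]$ case. (2) Establish the Cartan/shifted-Cartan relations: compute the classes $\mc G^i_\aaa$ and $\mc F^i_\dd$ from Section \ref{quiveryangsec} explicitly for this quiver, identify the bond factors $P^i_j$ and the ground-state charge functions $Q^i_M$, and match them against the defining data of the free-field realization of $W^\kappa_{f_\mu}(\gl_M)$ used in \cite{Bu}; the point is that the screening operators cutting out $W^\kappa_{f_\mu}(\gl_M)$ inside a lattice/Heisenberg vertex algebra are engineered to correspond to the Hecke correspondences along the compact curve classes $C_i$. (3) Verify the $W$-algebra (OPE/commutation) relations on $\V_{S_\mu}$ for the generators; as in \cite{SV}, this cannot be done by brute force, so the mechanism is to reduce to a factorization statement: using the compatible free field realizations of \cite{Bu} (the embedding $\V(Y,S)\hookrightarrow \V(Y,S_1)\otimes \V(Y,S_2)$ of Equation \ref{voaembeqn}) together with a conjectural coproduct on the shifted quiver Yangian $\mc Y_{S_\mu}(Y_{2,0})$ making the relevant diagram commute, reduce the relations to the rank-one strata, where they follow from Theorem \ref{CCDStheo} and the Feigin-Frenkel resolution. (4) Finally, identify $\V_{S_\mu}$ with the vacuum module: it is a highest-weight module over $W^\kappa_{f_\mu}(\gl_M)$ by construction, generated by the fundamental class in dimension $\mathbf 0$, and a character comparison — the Poincaré series of $\V_{S_\mu}$ computed by fixed-point localization (Corollary \ref{fpcoro}, with fixed points indexed by the appropriate restricted plane partitions) against the known character $P_q(W^\kappa_{f_\mu}(\gl_M))$, generalizing the $r[\C^2]$ count in Equation \ref{VWrC2eqn} — forces the module to be irreducible-generated, hence the vacuum.

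The main obstacle is step (3): proving that the geometrically constructed operators satisfy the full $W$-algebra relations, not merely generating the correct commutative Cartan and the correct ``half'' algebras $\mc{SH}$ and $\mc{SH}^\op$. Even in the classical case this is exactly the hard part of \cite{SV}, and it rests on a coproduct on the affine Yangian of $\gl_1$ compatible with the Feigin-Frenkel resolution; here one needs the analogous coproduct $\Delta_{S_1,S_2}:\mc Y_{S_\mu}(Y_{2,0})\to \mc Y_{S_1}(Y_{2,0})\otimes \mc Y_{S_2}(Y_{2,0})$ and the compatibility of the free-field realizations of the $W$-superalgebras from \cite{Bu} with the relevant resolutions, both of which are currently conjectural (this is precisely why Conjecture \ref{VWintroconj} is only a conjecture). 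A secondary difficulty, internal to the present setting, is proving Conjecture \ref{chainsawconj} — that the moduli stack $\mf M^{\f_0}(Y_{m,0},S_\mu)$ is genuinely equivalent, after dimensional reduction and passage to cokernels of the framing injections, to the chainsaw-quiver moduli — since the dimension vectors differ by the relation $\mu_i = M_{i-1}-M_i$ and one must control the stable loci carefully. I would expect steps (1), (2) and (4) to be tractable with the tools assembled in Sections \ref{extsec}, \ref{cohareptotalsec}, and \ref{quiveryangsec}, conditional on these two inputs, and the honest statement of the result is therefore conditional on the coproduct and resolution-compatibility machinery whose development \cite{Bu} and the present paper defer to future work.
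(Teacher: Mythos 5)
The statement you are addressing is stated in the paper as a conjecture, not a theorem: the paper gives no proof, and obtains it simply as the specialization of Conjecture \ref{VWconj} to $Y=Y_{m,0}$, $S=S_\mu$ via the identification $\V(Y_{m,0},S_\mu)\cong W^\kappa_{f_\mu}(\gl_M)$ of Theorem \ref{WLStheo}, with the actual proof explicitly deferred to future work. Your roadmap is essentially the one the paper itself envisions — the action of $\mc{SH}(Y)$ and $\mc{SH}(Y)^\op$ from Theorem \ref{cohareptheo} and Proposition \ref{factprop}, matching of Cartan data against the free-field realization of \cite{Bu}, reduction of the $W$-algebra relations to rank one via a conjectural coproduct on the shifted quiver Yangian compatible with the screening-operator embeddings of Equation \ref{voaembeqn}, and a localization/character comparison to pin down the vacuum module — and you correctly identify the genuine open inputs, namely the coproduct and resolution-compatibility machinery (your step (3)) and Conjecture \ref{chainsawconj}, so your proposal is an accurate account of the state of the problem rather than a proof, exactly as you acknowledge.
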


More generally, following Conjecture \ref{WLSconj}, we make the analogous conjectures in the $\gl_{m|n}$ case:
\begin{conj}\label{Wsupalgvacconj}
	There exists a natural representations
	\[ \mc U(W_{f_\mu,f_\nu}^\kappa(\gl_{M|N})) \to \End_F( \V_{S_{\mu,\nu}}^0) \quad\quad \text{and}\quad\quad \mc U(W_{f_\mu,f_\nu}^\kappa(\gl_{M|N})) \to \End_F( \V_{S_{\mu,\nu}}) \ ,\]
	such that $\V_{S_{\mu,\nu}}^0$ and $\V_{S_{\mu,\nu}}$ are identified with the verma and vacuum modules of $W_{f_\mu,f_\nu}^\kappa(\gl_{M|N})$.
\end{conj}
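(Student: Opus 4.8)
The plan is to deduce Conjecture \ref{Wsupalgvacconj} as the special case $Y=Y_{m,n}$, $S=S_{\mu,\nu}$ of the general Conjecture \ref{VWconj}, combined with the identification of vertex algebras in Conjecture \ref{WLSconj} (respectively Theorem \ref{WLStheo} when $n=0$, where the vertex-algebra identification is unconditional). The starting point is Theorem \ref{cohareptheo}: applied with $M=\mc O_{S_{\mu,\nu}^\red}^\sss[1]$, the trivial framing structure $0_{S_{\mu,\nu}}$ (resp. $\f_{S_{\mu,\nu}}$) of rank $\rr_{S_{\mu,\nu}}$, and a Vafa--Witten stability condition $\zeta_\VW$, it produces a representation of the Kontsevich--Soibelman cohomological Hall algebra $\mc H(Y_{m,n})$ on $\V^0_{S_{\mu,\nu}}$ (resp. $\V_{S_{\mu,\nu}}$). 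Restricting to the spherical subalgebra $\mc{SH}(Y_{m,n})$ and extending by adjoints with respect to the equivariant integration pairing via Proposition \ref{factprop} yields an action of $\mc{SH}(Y_{m,n})\otimes\mc{SH}(Y_{m,n})^\op$; the content of the conjecture is that this action extends to, and factors through, the algebra of modes $\mc U(W^\kappa_{f_\mu,f_\nu}(\gl_{M|N}))$, with the resulting module being the universal Verma module (resp. the vacuum module).

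The crux is to organise these two one-sided actions into a representation of a shifted quiver Yangian $\mc Y_M(Y_{m,n})$, as outlined in Section \ref{quiveryangsec} and Conjecture \ref{quiveryangconj}, using the localization description of $\V^0_{S_{\mu,\nu}}$ and $\V_{S_{\mu,\nu}}$ in terms of fixed points --- equivalently, of twisted objects over $\mc A_{F\oplus M}$, or of the combinatorial data of the nested chainsaw-quiver fixed points of Conjecture \ref{chainsawconj}. Following the Brundan--Kleshchev philosophy of Section \ref{MVsec}, one expects $\mc Y_M(Y_{m,n})$ for these $M$ to be a truncation of a shifted affine Yangian of $\gl_{m|n}$ admitting a surjection onto $\mc U(W^\kappa_{f_\mu,f_\nu}(\gl_{M|N}))$ of the form of Equation \ref{MVmapeqn}. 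To identify the kernel of this surjection on the module, one would use the free field realization of $\V(Y_{m,n},S_{\mu,\nu})$ from the companion paper \cite{Bu} as the joint kernel of a family of screening operators on a lattice-type vertex algebra: these screenings should be indexed by the compact toric curve classes $C_i\subset Y_{m,n}$, i.e. by the simple roots of $\gl_{m|n}$, and the statement that the $\mc{SH}(Y_{m,n})$-action factors through the $W$-superalgebra amounts to showing its image commutes with these screening operators on $\V^0_{S_{\mu,\nu}}$ (resp. $\V_{S_{\mu,\nu}}$).

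The third step reduces this commutation statement to base cases. Using the factorization $\V^0_{S_{\mu,\nu}}\cong\bigotimes_{d\in\mf D_{S_{\mu,\nu}}}\V_{S_d}^{\otimes r_d}$, the compatible coproducts $\Delta_{S_1,S_2}$ on $\mc Y_M(Y_{m,n})$ --- the affine, super analogues of the coproducts of \cite{BrKl} and \cite{FKPRW}, whose construction is part of the program of Section \ref{MVsec} --- and the compatibility of the corresponding Feigin--Frenkel-type resolutions built into the construction of \cite{Bu}, the general case should follow from the rank-one-on-a-single-component cases, where the required relations can be verified by an explicit computation generalizing those of \cite{SV} and \cite{CCDS}. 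The identification of $\V^0_{S_{\mu,\nu}}$ with the universal Verma module and of $\V_{S_{\mu,\nu}}$ with the vacuum module would then follow by comparing graded characters --- via Corollary \ref{fpcoro} and the fixed-point enumerations for the trivial versus the $\f_{S_{\mu,\nu}}$ framing --- together with a highest-weight argument showing each module is generated over the negative half by its lowest-degree piece and torsion-free over the relevant Cartan.

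I expect the main obstacle to be exactly the one flagged for the $\C^3$ case after Theorem \ref{SVAGTthm}: the relations of $\mc U(W^\kappa_{f_\mu,f_\nu}(\gl_{M|N}))$ cannot be checked directly, so the argument rests entirely on first establishing the shifted quiver Yangian action of Conjecture \ref{quiveryangconj} and, more seriously, on constructing coproduct maps on $\mc Y_M(Y_{m,n})$ compatible with the screening-operator description of the vertex-algebra embeddings of Equation \ref{voaembeqn} --- both of which remain open. The super case $n>0$ compounds this: it introduces odd screening operators, a non-reductive structure group $\gl_{m|n}$, and the attendant subtleties in the Feigin--Frenkel resolution that are absent from the $\gl_m$ situation settled by Theorem \ref{WLStheo}, so one should probably first complete the $n=0$ case and then bootstrap to general $n$.
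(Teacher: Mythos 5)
This statement is stated in the paper only as a conjecture, with no proof given: the paper's own justification is precisely the strategy you describe, namely combining the cohomological Hall algebra module of Theorem \ref{cohareptheo} (and its adjoint extension via Proposition \ref{factprop}) with Conjectures \ref{VWconj}, \ref{WLSconj} (Theorem \ref{WLStheo} for $n=0$), \ref{quiveryangconj}, and \ref{chainsawconj}, together with the screening-operator free field realizations of \cite{Bu} and the coproduct program of Section \ref{MVsec}. Your proposal therefore takes essentially the same route as the paper and correctly identifies the same open ingredients (the shifted quiver Yangian action and the compatible coproducts/screening commutation, plus the extra subtleties of the super case), so it is as complete as the paper's own discussion, which defers the actual proof to future work.
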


\subsection{Towards isomorphisms between $W$-superalgebras and Yangians for $\widehat{\mf{gl}}_{m|n}$}\label{MVsec}

As at the end of the preceding section, let $Y_{m,n} \to X_{m,n}$ be a resolution of the affine, toric, Calabi-Yau singularity $X_{m,n}=\{ xy-z^mw^n\}$. In this section, we explain the analogue of Conjecture \ref{Yangactconj} for $\V_{S}$, the module constructed from a divisor $S$ in $Y_{m,n}$ in the preceding section, and moreover explain its expected relationship with Conjecture \ref{VWconj}. This gives a geometric approach to prove a variant of the main theorem of \cite{BrKl}: an isomorphism between affine $W$-superalgebras and truncated shifted Yangians of $\glh_{m|n}$.

To begin, recall from Proposition \ref{Picprop} that
\[ \Pic(\hat Y) \xrightarrow{\cong} \bb \Z^{I_+} \quad\quad \text{by} \quad\quad \mc L \mapsto \deg \iota_i^* \mc L \]
where $I_+$ is the index set of the irreducible components $C_i$ of the fibre $C$ of the resolution over the $T$-fixed point $x\in X$, and $\iota_i:C_i\to Y$ are their the inclusions. For $Y=Y_{m,n}$, we thus have identifications of the Picard group of $\hat Y_{m,n}$ with the root lattice of $\spl_{m|n}$ and the index set $I_+$ with the set of positive simple roots for $\spl_{m|n}$,
\[ \Pic( \hat Y_{m,n}) \cong Q_{\spl_{m|n}}  \quad\quad \text{and}\quad\quad I_+ \cong \Pi_{\spl_{m|n}} \ .\]
For example, in Figure \ref{fig:Smunufig} we have depicted the four compact toric curve classes $C_i$ in $Y_{3,2}$ and the corresponding positive simple roots $\alpha_i$ in $\spl_{m|n}$.

Following \cite{BrKl}, the shifted Yangian is defined in terms of a square matrix $\sigma$ of size $|\Pi|+1$ called the \emph{shift matrix}, which is uniquely determined by its entries $s_{i,i+1}$ and $s_{i+1,i}$ immediately above and below the diagonal, for $i=1,...,|\Pi|$. Now, given a divisor $S$ in $Y_{m,n}$, we define the corresponding shifts of real roots as the intersection numbers
\[ s_{i+1,i} = \langle [C_i], S \rangle  \quad \quad \text{for}\quad\quad i=1,..., |\Pi_{\spl_{m|n}}|=m+n-1  \]
 where we have assumed for simplicity that all these numbers are non-negative, as will be the case for $S_{\mu,\nu}$ as long as $\mu_m\geq \nu_1$. We let $\sigma_S$ denote the induced shift matrix, and $\mc Y_{\sigma_S}(\glh_{m|n})$ the corresponding shifted Yangian of $\glh_{m|n}$ with respect to the shift of real roots determined by $\sigma_S$; for example, the divisor $S_{\mu,\nu}$ in $Y_{3,2}$ from the example of Figure \ref{fig:Smunufig} induces the shift matrix given by
\[ \sigma_{S_{\mu,\nu}} =   \mleft[\begin{array}{c c c | c c} 0 & 0 & 0 & 0 &0  \\
	\mu_1-\mu_2& 0 & 0 & 0 & 0\\
		\mu_1 - \mu _3& \mu_2-\mu_3 & 0 & 0 & 0 \\
\hline	\mu_1 - \nu_1 & \mu_2 - \nu_1 & \mu_3-\nu_1 & 0 & 0\\
\mu_1 - \nu_2 & \mu_2 - \nu _2 &\mu_3-\nu_2&\nu_1-\nu_2 &0
\end{array}\mright]  \ . \]

We can now state the analogue of the Conjecture \ref{Yangactconj} of Costello \cite{CosMSRI} for the modules $\V_S$ corresponding to divisors $S$ in $Y_{m,n}$:

\begin{conj}\label{Yangsurfconj} There exists a natural representation
	\[\rho:\mc Y_{\sigma_S}(\glh_{m|n}) \to  \End_F(\V_{S}) \ ,\]
of the $\sigma_S$ shifted affine Yangian $\mc Y_{\sigma_S}(\glh_{m|n})$ of $\gl_{m|n}$ on $\bb V_S$, inducing an isomorphism
\[ \rho(\mc Y_{\sigma_S}(\glh_{m|n})_+ ) \xrightarrow{\cong} \rho_{S}(\mc{SH}(Y_{m,n}))  \ . \]
\end{conj}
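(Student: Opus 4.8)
The plan is to assemble $\rho$ from ingredients already developed in the paper and then to identify the algebra that acts. \emph{Step one: the cohomological Hall algebra action.} First I would apply Theorem \ref{cohareptheo} (Theorem \ref{Bthm}) to $Y=Y_{m,n}$, the object $M=\mc O_{S^\red}^\sss[1]$ equipped with the framing structure $\f_S$ of rank $\rr_S$ determined by a Jordan--Holder filtration of $\mc O_S$, and the Vafa--Witten stability condition $\zeta_\VW$; this requires checking that $\zeta_\VW$ is compatible with $M$ in the sense of Section \ref{coharepsec}, which can be done on the explicit framed quivers with potential of Examples \ref{KNeg}--\ref{chainsaweg}. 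The output is the representation $\rho_S:\mc H(Y_{m,n})\to \End_F(\V_S)$ of Corollary \ref{Nakopcoro}, defined over the field $F$ of fractions of $H_A^\bullet(\pt)$ with $A=\tilde T\times T_{\f_S}$.

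\emph{Step two: extension to the shifted quiver Yangian.} Next I would carry out the construction of Section \ref{quiveryangsec} for this choice of $M$ and $\f_S$: compute the canonical tautological classes $\mc F_\dd^i$ and the difference classes $\mc G^i_\aaa$, extract the bond factors $P^i_j(z)$ and the ground--state charge functions $Q^i_M(z)$ from the $\Ext$-algebra $\Sigma_M$ of $Y_{m,n}$ together with the framing $\f_S$, form the shifted Cartan algebra $\mc H^0_M$, and then verify Conjecture \ref{quiveryangconj} in this case. Concretely this means checking that the operators $\tilde e^i_n=\rho_S(e^i_n)$, their adjoints $\tilde f^i_n$ with respect to the fixed-point pairing of Equation \ref{pairingeqn} (available under Hypothesis \ref{lochypo} and Proposition \ref{factprop}), and the $\tilde\psi^i(z)$ satisfy the defining relations of $\mc Y_M(Y_{m,n}):=\mc Y_{\mc O_{S^\red}^\sss[1]}(Y_{m,n})$. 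The $\psi$--$e$, $\psi$--$f$ and $\psi$--$\psi$ relations follow from the definition of the extended cohomological Hall algebra and Equations \ref{ephirlneqn}, \ref{fphirlneqn}, and the $[e,f]$ relation is built into the definition of $\psi^i$; the substantive content is the $e$--$e$, $f$--$f$ exchange relations and the Serre relations.

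\emph{Step three: identification of the algebra.} It then remains to identify $\mc Y_M(Y_{m,n})$ with the shifted affine Yangian $\mc Y_{\sigma_S}(\glh_{m|n})$. Since the vertex set of $Q_{Y_{m,n}}$ is $I_+\cup\{0\}$ with $I_+$ in bijection with $\Pi_{\spl_{m|n}}$, the bond factors $P^i_j$ reproduce the $\bb Z/2\bb Z$-graded Cartan data of $\glh_{m|n}$, exactly as in the case $M=\mc O_Y[1]$ underlying Conjecture \ref{Yangactconj}. The new input is the shift: reading off the factorization of $Q^i_M(z)$ with integers $s^\pm_i$, one should find that the shift of each root equals the intersection number $s_{i+1,i}=\langle [C_i],S\rangle$, reflecting that the framing structure $\f_S$ remembers the divisor class $[S]$ and modifies the Cartan only at the curve classes $C_i$ met by $S$. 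This is a finite computation with $\Sigma_M$. Granting it, since the positive half of a shifted Yangian is independent of the shift, $\mc Y_{\sigma_S}(\glh_{m|n})_+\cong \mc Y_{-\delta}(\glh_{m|n})_+\cong \mc Y(Y_{m,n})_+=\mc{SH}(Y_{m,n})$ using Conjecture \ref{Yangactconj}, and the induced isomorphism $\rho(\mc Y_{\sigma_S}(\glh_{m|n})_+)\xrightarrow{\cong}\rho_S(\mc{SH}(Y_{m,n}))$ is then immediate, because by construction in Step two the positive half of $\mc Y_M(Y_{m,n})$ is generated by the image of the spherical subalgebra under $\rho_S$.

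\emph{Main obstacle.} The crux is verifying the full set of relations in Step two---equivalently Conjecture \ref{quiveryangconj}---and especially the Serre relations, which as in \cite{SV} cannot be checked directly on the module $\V_S$. I expect this will require either the cohomological lift of Negut's identification \cite{Negquiv} of shuffle algebras with quiver Yangians, applied to the framed quivers with potential above, or a coproduct/factorization argument reducing to the rank-one $Y_{m,0}$ chainsaw-quiver situation of Conjecture \ref{chainsawconj}, made compatible with the free-field realizations of $\V(Y_{m,n},S)$ from \cite{Bu}. A secondary obstacle is pinning down the shift matrix precisely: this is a finite calculation, but depends on getting the normalization of $Q^i_M$ and the shift-matrix conventions of \cite{BrKl} exactly right.
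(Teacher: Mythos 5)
The statement you are addressing is presented in the paper as a conjecture, and the paper offers no proof of it --- only the expectation that it follows from Conjecture \ref{quiveryangconj} together with an identification of the shifted quiver Yangian $\mc Y_S(Y_{m,n})$ with $\mc Y_{\sigma_S}(\glh_{m|n})$, the shifts being read off from the intersection numbers $\langle [C_i],S\rangle$. Your plan reproduces essentially this same route (Theorem \ref{cohareptheo} for the $\mc H(Y_{m,n})$-action on $\V_S$, the extension of Section \ref{quiveryangsec}, and the identification of the shift data), and the gaps you flag --- verifying the full relations including Serre, i.e.\ Conjecture \ref{quiveryangconj}, and pinning down $\sigma_S$ --- are exactly the points the paper itself leaves open.
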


As for Conjecture \ref{Yangactconj}, we expect that the proof of the preceding conjecture will follow from Conjecture \ref{quiveryangconj} together with an identification
\[ \mc Y_S(Y_{m,n}):= \mc Y_{\mc O_S[1]}(Y_{m,n}) \cong \mc Y_{\sigma_S}(\glh_{m|n}) \ . \]
In fact, for general $Y$ and $S$, we expect that this action factors as a surjection 
\[ \mc Y_S(Y) \to \mc U(\V(Y,S))  \ , \]
or a map with dense image though we will ignore this subtlety in what follows, composed with the representation $\mc U(\V(Y,S))\to  \End_F(\V_{S}) $ of Conjecture \ref{VWconj}. In particular, we conjecture that for $Y=Y_{m,n}$ and $S=S_{\mu,\nu}$ this induces a surjection 
\begin{equation}\label{YSevaleqn}
	\mc Y_{\sigma_{S_{\mu,\nu}} }(\glh_{m|n}) \to  \mc U(\V(Y_{m,n},S_{\mu,\nu}) )
\end{equation}
with kernel given by the $\ell^{th}$-truncation ideal for $\ell=\mu_1$, so that we have
\[ \mc Y_{\sigma_{S_{\mu,\nu}}}^\ell(\glh_{m|n}) \xrightarrow{\cong} \mc U(\V(Y_{m,n},S_{\mu,\nu})) \ . \] 
In summary, together with Conjecture \ref{WLSconj}, we obtain:
\begin{conj}\label{affineBKconj} There is a canonical isomorphism
\begin{equation}\label{affineBKeqn}
	 \mc Y_{\sigma_{S_{\mu,\nu}}}^\ell(\glh_{m|n}) \xrightarrow{\cong}\mc U(W_{f_\mu,f_\nu}^\kappa(\gl_{M|N})) \ , 
\end{equation}
where $\mu,\nu$ are the unique dominant coweights such that $\sigma=\sigma_{S_{\mu,\nu}}$ and $\ell=\mu_1$.
\end{conj}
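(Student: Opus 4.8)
The strategy is to prove Conjecture \ref{affineBKconj} by interposing the geometry between the two sides: realize both $\mc Y^\ell_{\sigma_{S_{\mu,\nu}}}(\glh_{m|n})$ and $\mc U(W^\kappa_{f_\mu,f_\nu}(\gl_{M|N}))$ as algebras acting faithfully on the common vector space $\V_{S_{\mu,\nu}}$, and then identify the two images. Concretely, I would factor the desired isomorphism of Equation \ref{affineBKeqn} through the two conjectural representations
\[
\mc Y_{\sigma_{S_{\mu,\nu}}}(\glh_{m|n}) \xrightarrow{\ \rho\ } \End_F(\V_{S_{\mu,\nu}}) \xleftarrow{\ \rho'\ } \mc U(W^\kappa_{f_\mu,f_\nu}(\gl_{M|N})) \ ,
\]
the left map coming from Conjecture \ref{Yangsurfconj} (hence ultimately from the quiver Yangian action of Conjecture \ref{quiveryangconj} and the identification $\mc Y_{S_{\mu,\nu}}(Y_{m,n})\cong \mc Y_{\sigma_{S_{\mu,\nu}}}(\glh_{m|n})$), the right map from Conjecture \ref{Wsupalgvacconj} together with Theorem \ref{WLStheo}/Conjecture \ref{WLSconj}. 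The claim then reduces to: (i) $\rho$ and $\rho'$ have the same image in $\End_F(\V_{S_{\mu,\nu}})$; (ii) $\rho'$ is injective and $\rho$ has kernel exactly the $\ell$-truncation ideal with $\ell=\mu_1$; (iii) the induced isomorphism $\mc Y^\ell_{\sigma}(\glh_{m|n})\xrightarrow{\cong}\mc U(W^\kappa_{f_\mu,f_\nu}(\gl_{M|N}))$ is canonical, i.e. compatible with the triangular decompositions and the standard generators on both sides.

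**Key steps, in order.** First I would establish the positive-half statement: the surjection of Equation \ref{YSevaleqn}, $\mc Y_{\sigma_{S_{\mu,\nu}}}(\glh_{m|n})_+ \twoheadrightarrow \mc U(\V(Y_{m,n},S_{\mu,\nu}))_+$, identifying both with $\rho_{S_{\mu,\nu}}(\mc{SH}(Y_{m,n}))$ using the spherical-subalgebra part of Conjectures \ref{Yangsurfconj} and \ref{VWconj}; this is the geometrically cleanest input since the spherical subalgebra $\mc{SH}(Y_{m,n})$ is defined intrinsically from the cohomological Hall algebra of $Y_{m,n}$ and its action on $\V_{S_{\mu,\nu}}$ is furnished by Theorem \ref{cohareptheo}. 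Second, I would handle the negative half by the adjointness construction of Proposition \ref{factprop}, which produces the $\mc{SH}(Y_{m,n})^{\op}$ action by taking adjoints with respect to the equivariant integration pairing of Equation \ref{pairingeqn}; matching this with the negative half of $\mc Y_{\sigma_{S_{\mu,\nu}}}(\glh_{m|n})$ and of $\mc U(W^\kappa_{f_\mu,f_\nu})$. Third, the Cartan data: I would compute the commutator $[\rho(e^i(z)),\rho(f^j(w))]$ geometrically, extract the rational function $Q^i_M(z)$ (the ground-state charge function) as in Section \ref{quiveryangsec}, and show the shift it encodes is precisely the shift matrix $\sigma_{S_{\mu,\nu}}$ read off from the intersection numbers $\langle [C_i],S_{\mu,\nu}\rangle$, and that the truncation parameter appearing is $\ell=\mu_1$. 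Fourth, I would compare the two sides generator-by-generator: following the approach of \cite{SV} and \cite{CCDS} in the unshifted case, use the free-field realization of $W^\kappa_{f_\mu,f_\nu}(\gl_{M|N})$ provided by the screening-operator construction of the companion paper \cite{Bu}, which was designed so that the $W$-algebra modes act on $\V_{S_{\mu,\nu}}$ through the same Hall-algebra correspondences; matching the Feigin–Frenkel–type screening presentation with the Serre relations among the $e^i_n$ (and $f^i_n$) generators of the shifted Yangian. Finally, invoke faithfulness of the $W$-algebra action on its vacuum module (Conjecture \ref{Wsupalgvacconj}) together with a dimension/character count — comparing $\mc Z^\VW_{S_{\mu,\nu}}(q)$ with $P_q(W^\kappa_{f_\mu,f_\nu}(\gl_{M|N}))$ via Corollary \ref{VWcoro} and Theorem \ref{WLStheo} — to conclude the kernel of $\rho$ is exactly the truncation ideal and no larger.

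**Main obstacle.** The hard part is step four — matching relations. Even in the type-A, unshifted, principal case this matching is \emph{not} done by direct verification of relations: as noted in the excerpt (following Conjecture \ref{VWintroconj}), already for rank-$r$ sheaves on $\C^2$ one cannot directly check the $W^\kappa_{f_{\textup{prin}}}(\gl_r)$ relations, and the proof in \cite{SV} proceeds by a factorization/coproduct argument reducing to rank one. The analogue here requires (a) a coproduct $\Delta_{S_1,S_2}:\mc Y_{S}(Y_{m,n})\to \mc Y_{S_1}(Y_{m,n})\otimes\mc Y_{S_2}(Y_{m,n})$ compatible with the factorization $\V^0_{S}\cong \V^0_{S_1}\otimes\V^0_{S_2}$ and with the vertex-algebra embedding $\V(Y_{m,n},S)\hookrightarrow \V(Y_{m,n},S_1)\otimes\V(Y_{m,n},S_2)$ of Equation \ref{voaembeqn}, and (b) a base case — reducing to the rank-one summands $\V_{S_d}$ where the relations in the lattice/Heisenberg vertex algebra are explicit and can be checked by hand, as in Example \ref{latticeeg}. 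Constructing these coproducts on the quiver-Yangian side, and proving they are the affine analogues of the Brundan–Kleshchev/\cite{FKPRW} coproducts so that the induced embedding of $W$-superalgebras is the screening-kernel one, is exactly the ingredient flagged as open at the end of Section \ref{rtintrosec}; this is where the genuinely new work lies. Secondary difficulties: handling the superalgebra signs and the parity-dependent Serre relations for $\glh_{m|n}$ uniformly, and making Conjectures \ref{quiveryangconj}, \ref{Yangsurfconj}, \ref{VWconj}, and \ref{Wsupalgvacconj} — on which this plan rests — into theorems in the generality needed (the excerpt only records these as conjectures, with \cite{Negquiv} and related work cited as a likely route for the quiver-Yangian half).
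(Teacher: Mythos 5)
The statement you were asked to prove is recorded in the paper only as a conjecture: the authors derive it conditionally from Conjecture \ref{WLSconj}, Conjecture \ref{Yangsurfconj}, and the claim that the kernel of the surjection in Equation \ref{YSevaleqn} is the $\ell$-truncation ideal with $\ell=\mu_1$, support it with the character checks of Corollary \ref{twholochicoro} and the subsequent examples, and explicitly defer a proof to future work. Your plan --- acting both algebras on the common module $\V_{S_{\mu,\nu}}$, matching the spherical and adjoint halves, extracting the shift from the ground-state charge function, and reducing the relation check to rank one via coproducts compatible with the screening realization of \cite{Bu} --- is precisely the strategy the paper outlines, and you correctly single out the coproduct construction as the genuinely open ingredient, so your proposal matches the paper's intended approach rather than supplying a proof the paper itself does not contain.
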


\noindent This is precisely the analogue for $\glh_{m|n}$ of the main theorem of \cite{BrKl}, which was originally proved for the Yangian of $\gl_N$. Some results along these lines have been obtained recently in \cite{Ue1}, \cite{Negrect} and \cite{KoUe} in the case $\sigma=0$ and $\mu$ and $\nu$ are given by rectangular partitions, but the statement of the conjecture for general shift appears to be new. There is also an example in the non-rectangular case given in \cite{Ue2}, but given the apparent absence of a shift of real roots in this result, we are unsure whether the map constructed in \emph{loc. cit.} has an analogous geometric origin.

We also propose the existence of a natural limiting vertex algebra, generalizing the usual $W_{1+\infty}$ algebra, as follows: let $\sigma$ be a shift matrix for $\gl_{m|n}$ and let $\mu^1,\nu^1$ be the minimal in the dominance order among the coweights corresponding to $\sigma$ as in Conjecture \ref{affineBKconj}, or equivalently those such that $\ell^1=\mu^0_1$ is minimal, where we have assumed $\mu_n\geq \nu_1$ for simplicity. More generally, let $\mu^\nn$ and $\nu^\nn$ denote the $\nn^{th}$ smallest coweights corresponding to $\sigma$, and let $M_\nn=|\mu^\nn|$ and $N_\nn=|\nu^\nn|$. Then we propose to consider the large $r$ limit algebra, conjecturally enhanced to include $r$ as a parameter,
\[W^\kappa_{1+\infty,\sigma}(\gl_{m|n}) :=  \ \text{``}  \ \lim_{\nn \to\infty} W^\kappa_{f_{\mu^\nn},f_{\nu^\nn}}( \gl_{M_\nn,N_\nn}) \ \text{''} \ , \]
which defines a $\sigma-$shifted variant of the $\gl_{m|n}$-extended $W_{1+\infty}$ vertex algebra, which was introduced in \cite{Cos2} for $n=0$, and studied further and generalized to $n\neq 0$ in \cite{EGR}, \cite{CrH}, \cite{EbP}, \cite{CrHU} and \cite{Rap}. In terms of this vertex algebra, the preceding conjecture implies the following, which has an interpretation in string theory as an example of the twisted holographic principle of Costello-Li \cite{CosL} for M5 branes in the $\Omega$-background, following the approach of \cite{Cos2}:
 
 \begin{conj}\label{twholoconj} There is a canonical isomorphism of associative algebras
\[\mc Y_{\sigma }(\glh_{m|n}) \xrightarrow{\cong} \mc U(W^\kappa_{1+\infty,\sigma}(\gl_{m|n})) \ ,\]
inducing isomorphisms as in Equation \ref{affineBKeqn} for each compatible $l\in \bb N$, $\mu$, $\nu$, and an identification
\[ V_{m,n;\sigma} \xrightarrow{\cong } W^\kappa_{1+\infty,\sigma}(\gl_{m|n}) \]
between the vacuum module $V_{m,n;\sigma} $ of the $\sigma$-shifted affine Yangian of $\glh_{m|n}$ and that of $W^\kappa_{1+\infty,\sigma}(\gl_{m|n})$.
\end{conj}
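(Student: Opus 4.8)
The plan is to realize both sides as limits of the finite-level objects of Conjecture \ref{affineBKconj} and to promote that (conjectural) level-by-level isomorphism to an isomorphism of the two limit systems. Fix the shift matrix $\sigma$ and let $\mu^\nn,\nu^\nn$ be the increasing tower of dominant coweights with $\sigma_{S_{\mu^\nn,\nu^\nn}}=\sigma$, so that $M_\nn=|\mu^\nn|$, $N_\nn=|\nu^\nn|$, and $\ell^\nn=\mu_1^\nn$, as in the definition of $W^\kappa_{1+\infty,\sigma}(\gl_{m|n})$. On the Yangian side, the truncation ideals attached to the levels $\ell^\nn$ form a decreasing chain, giving surjections $\mc Y_\sigma^{\ell^{\nn+1}}(\glh_{m|n}) \twoheadrightarrow \mc Y_\sigma^{\ell^\nn}(\glh_{m|n})$; the first step is to prove, using the PBW theorem for shifted affine (super)Yangians in the spirit of \cite{Guay}, that this projective system has the completed un-truncated algebra $\mc Y_\sigma(\glh_{m|n})$ as its limit, with no generators or relations lost. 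On the vertex algebra side, the finite $W$-superalgebras $W^\kappa_{f_{\mu^\nn},f_{\nu^\nn}}(\gl_{M_\nn|N_\nn})$ are assembled, via the coproduct-type maps and the embeddings $\V(Y,S)\to \V(Y,S_1)\otimes \V(Y,S_2)$ of Equation \ref{voaembeqn}, into the limit algebra $W^\kappa_{1+\infty,\sigma}(\gl_{m|n})$ following \cite{Cos2}; here the key point is that the free field realizations of \cite{Bu}, as joint kernels of screening operators on lattice vertex algebras attached to the GKM graph of $Y_{m,n}$, are designed so that enlarging $\mu^\nn$ or $\nu^\nn$ leaves the screening data compatible, making this limit manifest.

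The heart of the argument is to match the two limit systems geometrically. Both towers are incarnated by the moduli stacks of $\f$-framed perverse coherent extensions of Theorem \ref{stackpotthm}: there is a natural object $M_\sigma\in\DD^b\Coh(Y_{m,n})^T$ encoding the shift data, together with a family of correspondences between the moduli stacks $\mf M^{\f_{S_{\mu^\nn,\nu^\nn}},\zeta}(Y_{m,n},\cdot)$ of successive framing ranks implementing the addition of framing. The cohomological Hall algebra action of Theorem \ref{cohareptheo} and its extension to the shifted quiver Yangian $\mc Y_{M_\sigma}(Y_{m,n})$ of Section \ref{quiveryangsec}, on one side, and the vertex algebra action of Conjecture \ref{VWconj} on the other, produce commuting families of operators on the homology $\V^{\f,\zeta}(M_\sigma)$; the content of Conjecture \ref{affineBKconj} at level $\nn$ is precisely that these match through $\mc Y_\sigma^{\ell^\nn}(\glh_{m|n})\cong\mc U(W^\kappa_{f_{\mu^\nn},f_{\nu^\nn}}(\gl_{M_\nn|N_\nn}))$, cf.\ Equation \ref{affineBKeqn} and Conjecture \ref{WLSconj} (the $n=0$ case being Theorem \ref{WLStheo}). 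Since the geometric correspondences intertwine both the Hall-algebra structure maps and the screening-operator descriptions, the level-$\nn$ isomorphisms are compatible with the truncation surjections, and passing to the limit gives the isomorphism $\mc Y_\sigma(\glh_{m|n})\xrightarrow{\cong}\mc U(W^\kappa_{1+\infty,\sigma}(\gl_{m|n}))$ together with all the finite-level quotients of Equation \ref{affineBKeqn}.

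For the vacuum-module statement, the single geometric module $\V^{\f,\zeta}(M_\sigma)$ carries compatible actions of both sides. It is identified with the vacuum module of the $\sigma$-shifted affine Yangian --- this is the limiting form of Conjecture \ref{Yangsurfconj}, itself the divisor analogue of Conjecture \ref{Yangactconj} --- and with the vacuum of $W^\kappa_{1+\infty,\sigma}(\gl_{m|n})$, since the latter is the limit of the vacuum modules $W^\kappa_{f_{\mu^\nn},f_{\nu^\nn}}(\gl_{M_\nn|N_\nn})$ of Conjectures \ref{Walgvacconj} and \ref{Wsupalgvacconj}. The isomorphism of algebras just constructed intertwines these two identifications, yielding $V_{m,n;\sigma}\xrightarrow{\cong}W^\kappa_{1+\infty,\sigma}(\gl_{m|n})$. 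Conceptually, this self-duality of the limit algebra is exactly what the twisted holographic principle of Costello--Li \cite{CosL}, adapted to the $\Omega$-background M5-brane setting of \cite{Cos2}, predicts; the geometric construction is meant to make it rigorous.

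The main obstacle is twofold. First, the argument is conditional on Conjectures \ref{quiveryangconj}, \ref{VWconj}, \ref{WLSconj} and \ref{affineBKconj}, so the genuinely new work beyond those is the \emph{limit analysis}: one must show that the projective system of truncated shifted Yangians recovers the full algebra without pathology --- surjectivity of the transition maps, control of the associated graded via PBW, and behaviour of completions --- and, dually, that the tower of finite $W$-superalgebras assembles to a well-defined $W^\kappa_{1+\infty,\sigma}(\gl_{m|n})$ with $r$ promoted to a parameter; the latter is subtle in the supercase, where the relevant stabilization and the parametrization by central elements are far less understood than for $\gl_N$ in \cite{BrKl}. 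Second, one must verify that the real-root shift data $\sigma$ --- read off as the intersection numbers $\langle[C_i],S\rangle$ --- is genuinely constant along the tower $\{S_{\mu^\nn,\nu^\nn}\}$ and matches the Yangian shift at every level, including the interplay between the imaginary-root shift governed by $\ell^\nn$ and the real-root shift; proving this compatibility uniformly in $\nn$, rather than at a single level, requires pushing the combinatorics of the GKM graph of $Y_{m,n}$ and the partitions $\mu,\nu$ through the whole limiting construction.
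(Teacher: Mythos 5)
The statement you are proving is Conjecture \ref{twholoconj}: the paper offers no proof of it. The authors explicitly defer a proof to future work and supply only evidence, namely the $\gl_1$ case (which follows from \cite{SV} and the results recalled around Theorem \ref{SVtheo}), and the unconditional character identity of Corollary \ref{twholochicoro}, which they verify by explicit computations of Poincar\'e polynomials of vacuum modules for $\gl_1$, for $\gl_2$ with shifts $0$, $1$, $2$, and for $\gl_{1|1}$. So there is no "paper proof" to compare against; what can be assessed is whether your proposal constitutes a proof, and it does not.

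Your outline is essentially the program the paper itself sketches --- pass to the limit over the tower $S_{\mu^\nn,\nu^\nn}$ of the finite-level isomorphisms of Conjecture \ref{affineBKconj}, using the geometric modules of Theorem \ref{cohareptheo} and the maps of Equation \ref{MVmapeqn}/\ref{YSevaleqn} --- but every load-bearing step is either one of the paper's open conjectures (Conjectures \ref{quiveryangconj}, \ref{VWconj}, \ref{WLSconj}, \ref{Yangsurfconj}, \ref{affineBKconj}) or an unproven "one must show" which you yourself flag: the inverse-limit/PBW analysis for shifted affine super-Yangians, the compatibility of the level-$\nn$ isomorphisms with the truncation surjections, and, most fundamentally, the construction of $W^\kappa_{1+\infty,\sigma}(\gl_{m|n})$ itself, which the paper only defines in quotation marks as a conjectural large-$r$ limit "enhanced to include $r$ as a parameter." Since the target of the claimed isomorphism is not rigorously defined, no argument of this shape can establish Conjecture \ref{twholoconj} as written; at best it reduces it to a list of other conjectures plus a limit analysis that is precisely the open content. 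Moreover, the one piece of the statement that \emph{is} checkable unconditionally --- the character identity $P_q(V_{m,n;\sigma}) = \lim_{\nn\to\infty} P_q(W^\kappa_{f_{\mu^\nn},f_{\nu^\nn}}(\gl_{M_\nn|N_\nn}))$ of Corollary \ref{twholochicoro} --- is not verified in your write-up, whereas it is the only substantive evidence the paper actually provides (via the pyramid/good-grading computations of Figures \ref{fig:C3wfig}--\ref{fig:s0SWfig}). If you want to add genuine content beyond the paper, the place to start is either those character computations in new cases, or an unconditional treatment of the Yangian-side inverse limit and of the definition of the limit $W$-algebra in the super case, where the stabilization phenomena used in \cite{BrKl} and \cite{Cos2} are not yet available.
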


We hope to provide a proof of this conjecture in future work, using the geometric approach developed here. In what follows, we explain the conjecture in the case of $\gl_1$ for which it follows from \cite{SV}, and provide some numerical evidence for the general statement. In particular, we note the following corollary of the preceding conjecture, which we verify holds in the examples below:

\begin{corollary}\label{twholochicoro} Let $\sigma$ be a shift matrix for $\gl_{m|n}$, and for each $\nn\in\bb N$ let $\mu^\nn$ and $\nu^\nn$ be the dominant coweights defined as above. Then
\begin{equation}\label{twholochieqn}
	 P_q(V_{m,n;\sigma}) = \lim_{\nn \to\infty} P_q(W^\kappa_{f_{\mu^\nn},f_{\nu^\nn}}( \gl_{M^\nn|N^\nn})) \ . 
\end{equation}
\end{corollary}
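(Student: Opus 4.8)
\textbf{Proof proposal for Corollary \ref{twholochicoro}.}

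The plan is to deduce Corollary \ref{twholochicoro} directly from Conjecture \ref{twholoconj}, which we are permitted to assume as an input here since it is stated above; the corollary is essentially the character-theoretic shadow of that isomorphism. First I would unpack the meaning of the limiting vertex algebra $W^\kappa_{1+\infty,\sigma}(\gl_{m|n})$: by construction it is the inverse limit (conjecturally with $\nn$, equivalently the truncation parameter $\ell^\nn=\mu^\nn_1$, running to infinity) of the vertex algebras $W^\kappa_{f_{\mu^\nn},f_{\nu^\nn}}(\gl_{M^\nn|N^\nn})$ along the natural truncation maps. The key point is that these truncation maps are filtered: each $W^\kappa_{f_{\mu^\nn},f_{\nu^\nn}}(\gl_{M^\nn|N^\nn})$ carries the conformal grading, the truncation maps are graded, and in each fixed conformal degree the graded pieces stabilize as $\nn\to\infty$. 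Thus the Poincar\'e series $P_q$ of the limit, computed degree by degree, is literally the coefficient-wise limit $\lim_{\nn\to\infty} P_q(W^\kappa_{f_{\mu^\nn},f_{\nu^\nn}}(\gl_{M^\nn|N^\nn}))$, which is precisely the right-hand side of Equation \ref{twholochieqn}.

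Next I would invoke the identification furnished by Conjecture \ref{twholoconj}: the vacuum module $V_{m,n;\sigma}$ of the $\sigma$-shifted affine Yangian is identified, as a graded vector space, with the vacuum module of $W^\kappa_{1+\infty,\sigma}(\gl_{m|n})$, i.e. with the underlying vertex algebra itself under the usual abuse of notation. Since this identification is an isomorphism of graded objects (it intertwines the conformal grading on the vertex algebra side with the natural grading on $V_{m,n;\sigma}$ described in Conjecture \ref{Yangactconj} and its analogues), taking Poincar\'e series of both sides yields
\[ P_q(V_{m,n;\sigma}) = P_q(W^\kappa_{1+\infty,\sigma}(\gl_{m|n})) = \lim_{\nn\to\infty} P_q(W^\kappa_{f_{\mu^\nn},f_{\nu^\nn}}(\gl_{M^\nn|N^\nn})) \ , \]
which is the assertion of the corollary. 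To make this airtight one would need to check compatibility of the two gradings through the chain of identifications coming from Conjectures \ref{Yangsurfconj}, \ref{WLSconj}, and \ref{affineBKconj}, tracking the conformal weight against the cohomological degree on $\bb V_S$ via the geometric construction of Section \ref{coharepsec}; this is the bookkeeping that relates $\chi(\bb V^\zeta_{S,\dd})$ to the $q$-graded dimension.

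The main obstacle is the well-definedness and stabilization of the limit itself: one must verify that the tower $\{W^\kappa_{f_{\mu^\nn},f_{\nu^\nn}}(\gl_{M^\nn|N^\nn})\}_\nn$ genuinely forms an inverse system with surjective (or at least eventually-surjective-in-each-degree) transition maps, and that these maps are compatible with the Yangian truncation maps of Equation \ref{YSevaleqn} under the isomorphism of Conjecture \ref{affineBKconj}. In the $\gl_1$ case this reduces to the classical statement that the principal $W$-algebras $W^\kappa_{f_\textup{prin}}(\gl_r)$ for growing $r$ assemble to $W_{1+\infty}^\kappa$, with truncation corresponding exactly to setting higher generators to zero, and the character identity then follows from the explicit product formula $P_q(W^\kappa_{f_\textup{prin}}(\gl_r)) = \prod_{j=1}^r\prod_{k=1}^\infty (1-q^{k+j})^{-1}$ of Theorem \ref{CCDStheo} together with its $r\to\infty$ limit $\prod_{j\geq 1}\prod_{k\geq 1}(1-q^{k+j})^{-1}$; this is the prototype that the general argument must emulate. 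Everything beyond this stabilization input is formal, so I expect the proof to be short once Conjecture \ref{twholoconj} and the grading compatibilities are granted.
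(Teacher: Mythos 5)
Your proposal is correct and takes essentially the same route as the paper: the corollary is stated there as a direct consequence of Conjecture \ref{twholoconj}, obtained by taking Poincar\'e series of the identification $V_{m,n;\sigma}\cong W^\kappa_{1+\infty,\sigma}(\gl_{m|n})$ together with the definition of the latter as the large-$\nn$ limit of the algebras $W^\kappa_{f_{\mu^\nn},f_{\nu^\nn}}(\gl_{M_\nn|N_\nn})$, with no further formal proof given beyond the verification in the examples. Your additional remarks on degreewise stabilization of the limit and grading compatibility are sensible elaborations of the same argument rather than a different approach.
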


Note that the choice of shift matrix $\sigma$ together with $\nn\in \bb N$ determines not only the dominant coweights $\mu^\nn$ and $\nu^\nn$ but an enhancement of them to a \emph{pyramid}, in the sense of \cite{EK}. This determines a good grading with respect to which we define $W^\kappa_{f_{\mu^\nn},f_{\nu^\nn}}( \gl_{M^\nn,N^\nn})$ and the Poincare polynomial on the right hand side of Equation \ref{twholochieqn}.

We now proceed to the discussion of examples:

\begin{eg}
Let $m=1$ and $n=0$ so that $Y=Y_{1,0}=\C^3$. In this case, there are no real roots with which to shift the Yangian, and the isomorphism
\[ \mc Y(\glh_1) \xrightarrow{\cong} \mc U( W_{1+\infty}^\kappa(\gl_1) ) \]
was explained in \cite{GGLP}, following the proof of the AGT conjecture from \cite{SV} and the unpublished results of Feigin-Tsymbaliuk explained in \cite{Ts1}. Moreover, the results of \cite{SV} and their generalizations in \cite{RSYZ}, which we recall establish special cases of Conjecture \ref{VWconj} as outlined in Examples \ref{AGTeg} and \ref{GReg}, respectively, also prove the special cases of Conjecture \ref{Yangsurfconj} in these examples. Further, in these cases, the proofs of the two conjectures are related precisely as described in the discussion following \emph{loc. cit.}.

In particular, for each $r\in \bb N$ we can compute the Poincare polynomial of the vacuum module of $\V(\C^3,r[\C^2])=W^\kappa_{f_{\textup{prin}} }(\gl_r)$ following the spectral sequence argument from Section 15.2.9 of \cite{FBZ}, as we recall below; this is summarized in Figure \ref{fig:C3wfig}.

\begin{figure}[b]
	\begin{overpic}[width=1\textwidth]{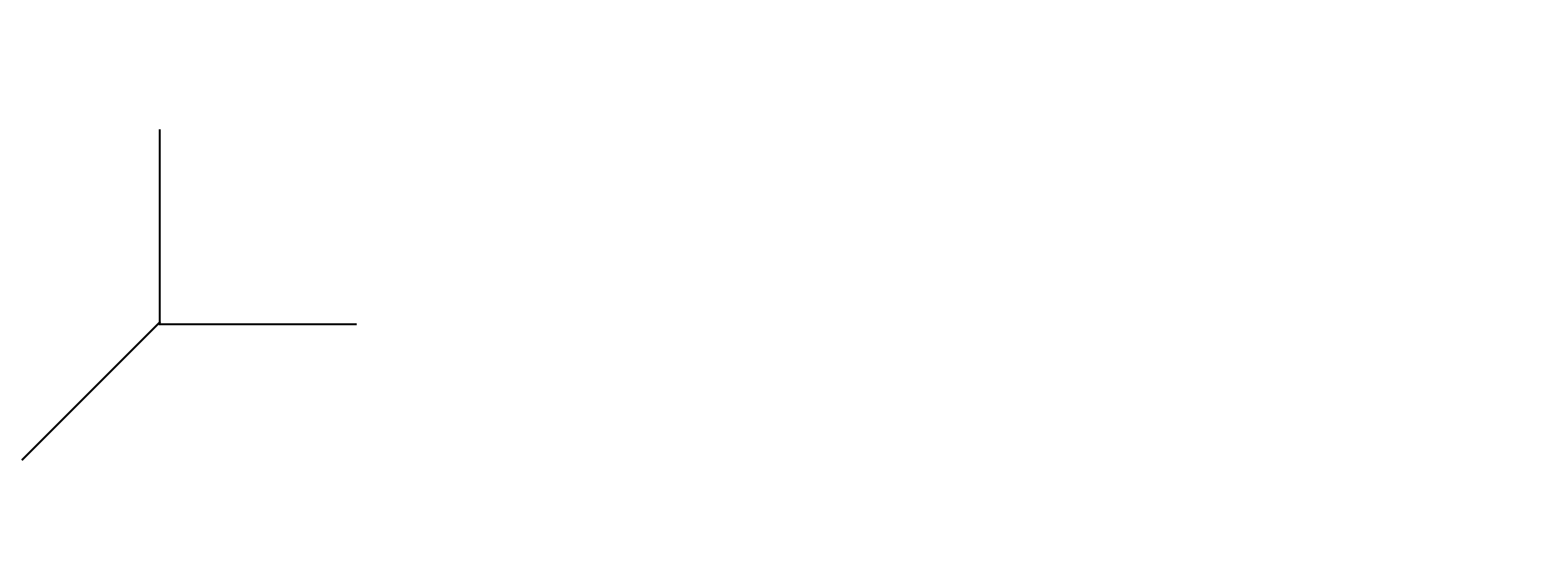}
				\put(0,33) {$Y_{1,0}=X_{1,0}= \C^3$}
		\put(13,19) {$r$}
		\put(40,30)   {$ \pi = \ $  \ytableausetup{mathmode, boxsize=2 em}
			\begin{ytableau}
				1 & 2 & 3 & \none[\dots]& \scriptstyle \nn - 1 & \nn \\
		\end{ytableau} }
		\put(40,17) {$\Gamma=\scriptsize{ \mleft[\begin{array}{c c c c c }
					\textbf{0} & \textbf{2} & \textbf{4} & \hdots & \textbf{2r-2} \\
					-2& 0 & 2 & \hdots & 2r-4  \\
					\vdots & \vdots &\vdots & \ddots & \vdots   \\
					4-2r & 6-2r & 8-2r & \hdots &2 \\
					2-2r & 4-2r & 6-2r & \hdots &0 
				 \end{array}\mright] }$}
		\put(40,3) {$ P_q(W_{f_{\textup{prin}} }^\kappa(\gl_{\nn})) = \prod_{j=0}^{r-1} \prod_{k=1}^\infty \frac{1}{1-q^{k+j}} $}
	\end{overpic}
	\caption{The divisor $r[\C^2]$ in $\C^3$, with associated pyramid $\pi$, induced grading $\Gamma$, and Poincare polynomial $P_q$ of the vacuum module of $W_{f_{\textup{prin}} }^\kappa(\gl_{\nn})$ }
	\label{fig:C3wfig}
\end{figure}

The computation of \emph{loc. cit.} proceeds as follows: given the divisor $S=r[\C^2]$, consider the corresponding pyramid $\pi$ as explained above, the grading $\Gamma$ induced by $\pi$, and compute the degree with respect to $\Gamma$ of the highest weights of each of the irreducible representations occurring in the decomposition of the adjoint of $\gl_r$ under the $\spl_2$ embedding, representatives of which are written in bold in Figure \ref{fig:s0Wfig}. Each such highest weight can be lifted to a field of $W^\kappa(\gl_r)$ of the corresponding conformal weight, and it is proved in \emph{loc. cit.} that these fields form a set of strong generators, in terms of which we can compute the Poincare polynomial of the vacuum module, usually called the \emph{character} of the vertex algebra.

In general, the character of $W^\kappa_{f_{\textup{prin}} }(\gl_r)$ computed as outlined above is given by
\[ P_q(W^\kappa_{f_{\textup{prin}} }(\gl_r)) =\prod_{j=0}^{r-1} \prod_{k=1}^\infty \frac{1}{1-q^{j+k}}  \]
and we find a direct verification of Corollary \ref{twholochicoro} above
\[ \lim_{r\to \infty} P_q(W^\kappa_{f_{\textup{prin}} }(\gl_r)) =\lim_{r\to \infty} 
\prod_{j=0}^{r-1}\prod_{k=1}^\infty \frac{1}{1-q^{j+k}}  = \prod_{k=1}^\infty \frac{1}{(1-q^{k})^k} = M(q)= P_q(V_{1,0}) \ , \]
where we have used the equality of Equation \ref{DTchieqn} above in the case $m=1$.
\end{eg}

\begin{eg}\label{sl2unshifteg} Let $m=2$ and $n=0$, so that $Y=Y_{2,0}\to X_{2,0}=\{ xy-z^2\}\times \bb C$. The corresponding lattice of real roots is given by $\Pic(\hat Y_{2,0}) \cong Q_{\spl_2} \cong \bb Z$, and to begin we suppose the shift is trivial, that is $\sigma = \scriptsize{\begin{bmatrix} 0 & 0 \\ 0 & 0 \end{bmatrix}}$. For each $r\in \bb N$, there is a dominant coweight $\mu^r$ corresponding to $\sigma$ determined as above, and the corresponding divisor $S_{\mu^r}$ is pictured on the left of Figure \ref{fig:s0Wfig} below. Moreover, the corresponding pyramid $\pi$ and good grading $\Gamma$, together with the character of the vacuum module for the $W$-algebra $W_{f_{\mu^\nn}}(\gl_{2r})$ determined by this grading, are pictured on the right.

\begin{figure}[b]
	\begin{overpic}[width=1\textwidth]{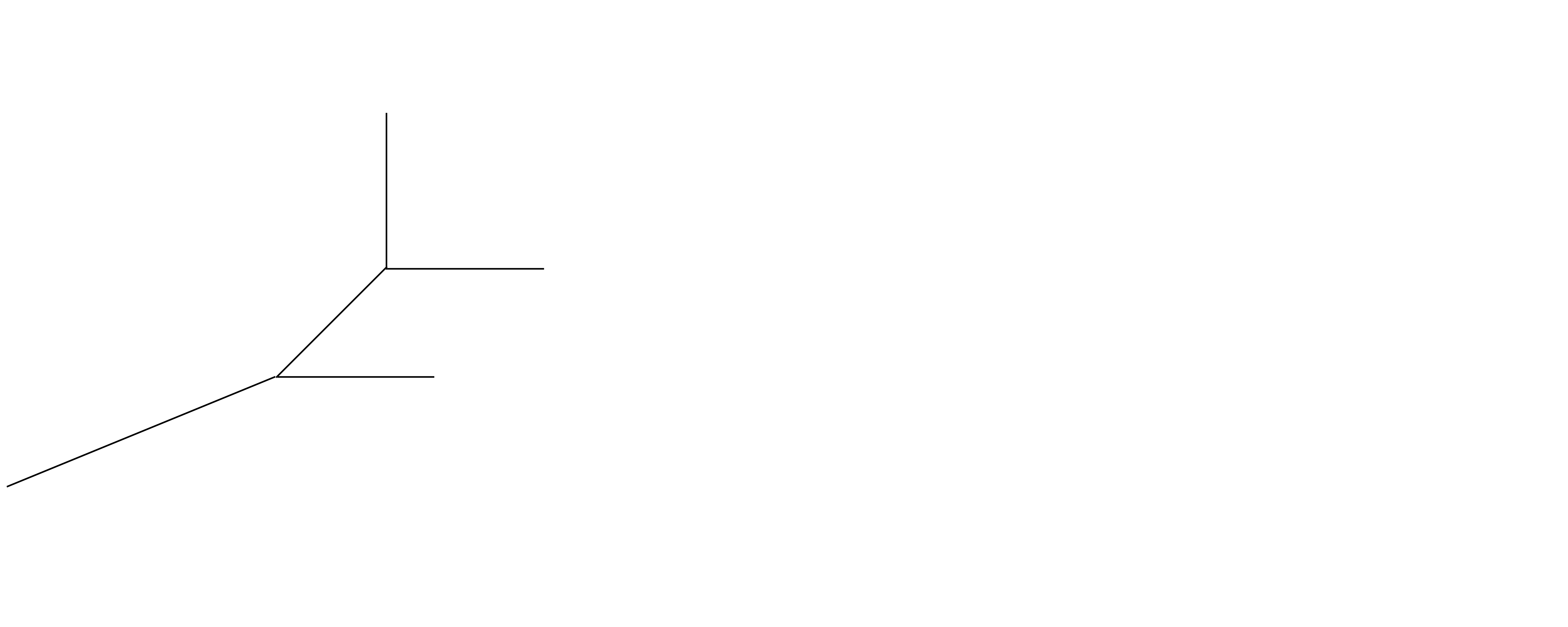}
		\put(0,39) {$ Y_{2,0}\to X_{2,0}=\{xy-z^2\} \times \bb C $}
		\put(26,25) {$M_0=2r$}
		\put(23,18) {$M_1=r$}
		\put(40,37)   {$ \pi =$ \ytableausetup{mathmode, boxsize=2 em}
			\begin{ytableau}
				\scriptstyle	\nn+1 & \scriptstyle \nn+2 &\scriptstyle \nn+3 & \none[\dots] 	& \scriptstyle 2r - 1 & 2\nn \\
				1 & 2 & 3 & \none[\dots]& \scriptstyle \nn - 1 & \nn \\
		\end{ytableau} }
		\put(40,17) {$\Gamma=\scriptsize{ \mleft[\begin{array}{c c c c c | c c c c c}
						\textbf{0} & \textbf{2} & \textbf{4} & \hdots & \textbf{2r-2} & 	\textbf{0} & \textbf{2} & \textbf{4} & \hdots & \textbf{2r-2} \\
					-2& 0 & 2 & \hdots & 2r-4   &-2& 0 & 2 & \hdots & 2r-4 \\
					\vdots & \vdots &\vdots & \ddots & \vdots & 	\vdots & \vdots &\vdots & \ddots & \vdots \\
					4-2r & 6-2r & 8-2r & \hdots &2  & 	4-2r & 6-2r & 8-2r & \hdots &2  \\
					2-2r & 4-2r & 6-2r & \hdots &0 & 	2-2r & 4-2r & 6-2r & \hdots &0\\
					\hline
						\textbf{0} & \textbf{2} & \textbf{4} & \hdots & \textbf{2r-2} & 	\textbf{0} & \textbf{2} & \textbf{4} & \hdots & \textbf{2r-2} \\
					-2& 0 & 2 & \hdots & 2r-4   &-2& 0 & 2 & \hdots & 2r-4 \\
					\vdots & \vdots &\vdots & \ddots & \vdots & 	\vdots & \vdots &\vdots & \ddots & \vdots \\
					4-2r & 6-2r & 8-2r & \hdots &2  & 	4-2r & 6-2r & 8-2r & \hdots &2  \\
					2-2r & 4-2r & 6-2r & \hdots &0 & 	2-2r & 4-2r & 6-2r & \hdots &0\\
				\end{array}\mright] }$}
		\put(40,0) {$ P_q(W_{f_{\mu^\nn}}(\gl_{2r})) = \prod_{j=0}^{r-1} \prod_{k=1}^\infty \frac{1}{(1-q^{k+j})^4} $}
	\end{overpic}
	\caption{The toric divisor $S_{\mu^\nn}$ in $Y_{2,0}$ for $\sigma=\scriptsize{\begin{bmatrix}
				0 & 0 \\ 0 & 0 \end{bmatrix}}$,  with associated pyramid $\pi$, induced grading $\Gamma$, and Poincare polynomial $P_q$ of the vacuum module of $W_{f_{\mu^\nn}}(\gl_{2\nn})$}
	\label{fig:s0Wfig}
\end{figure}

 For example, for $r=1$ the coweight $\mu^1$ corresponds to the trivial nilpotent $f_{\mu^1}=0$, so that the corresponding $W$-algebra is the affine Kac-Moody algebra $V^\kappa(\gl_2)$, and the conjectural map $\mc Y(\glh_2)\to \mc U(V^\kappa(\gl_2))\cong \mc U(\glh_2)$ of Equation \ref{YSevaleqn} is simply the evaluation homomorphism constructed in \cite{Guay}. The vertex algebra $V^\kappa(\gl_2)$ is strongly generated by a collection of fields of conformal weight 1 enumerated by any basis of the underlying Lie algebra, and thus the character of the vacuum module is given by $\eta(q)^{-4}$. Evidently this agrees with the $r=1$ case of the general formula
\[ P_q(W_{f_{\mu^\nn}}(\gl_{2r})) = \prod_{j=0}^{r-1} \prod_{k=1}^\infty \frac{1}{(1-q^{k+j})^4}  \ , \]
from Figure  \ref{fig:s0Wfig}. Moreover, we can again verify Corollary \ref{twholochicoro} directly in this case:
\[ \lim_{r\to \infty}  P_q(W_{f_{\mu^\nn}}(\gl_{2r})) =  \lim_{r\to \infty}  \prod_{j=0}^{r-1} \prod_{k=1}^\infty \frac{1}{(1-q^{k+j})^4} = \prod_{k=1}^\infty \frac{1}{(1-q^k)^{4k}} = M(q)^4 = P_q(V_{2,0}) \ , \]
where again we have used the equality of Equation \ref{DTchieqn}.
\end{eg}

\begin{eg}\label{s1sl2eg} Let $m=2$ and $n=0$, so that $Y=Y_{2,0}\to X_{2,0}=\{ xy-z^2\}\times \bb C$, as in the preceding example, and suppose the shift matrix is given by $\sigma_1=\scriptsize{\begin{bmatrix}	0 & 0 \\ 1 & 0 \end{bmatrix}}$. Again, the sequence of toric divisors, and their corresponding pyramids, good gradings, and vacuum module characters are pictures in Figure \ref{fig:s1Wfig} below.

For example, in the case $r=2$ the coweight $\mu^2$ corresponds to the subregular nilpotent $f_{\mu^2}\in \gl_3$, so that the corresponding vertex algebra is given by the Bershadsky-Polyakov $W_3^{(2)}$ algebra, the subregular $W$-algebra for $\spl_3$, tensored with a Heisenberg algebra. It is well known that the $W_3^{(2)}$ vertex algebra is strongly generated by four fields $J,L,G^+$ and $G^-$ of conformal weights $1$, $2$, $\frac{3}{2}$ and $\frac{3}{2}$, respectively.
	
\begin{figure}[b]
	\hspace*{-1cm}
	\begin{overpic}[width=1\textwidth]{picOm2lrg}
				\put(0,39) {$ Y_{2,0}\to X_{2,0}=\{xy-z^2\} \times \bb C $}
		\put(26,25) {$M_0=2r-1$}
		\put(23,18) {$M_1=r-1$}
		\put(40,40)   {$ \pi =$ \ytableausetup{mathmode, boxsize=2 em}
			\begin{ytableau}
				\none[ ] & \scriptstyle	\nn+1 & \scriptstyle \nn+3  & \none[\dots] & \scriptstyle 2\nn -2	& \scriptstyle \scriptstyle 2\nn -1  \\
				1 & 2 & 3 & \none[\dots]& \scriptstyle \nn - 1 & \nn  \\
		\end{ytableau} }
		\put(40,20) {$\Gamma=\scriptsize{ \mleft[\begin{array}{c c c c c | c c c c c}
					\textbf{0} & \textbf{2} & \textbf{4} & \hdots & \textbf{2r-2}& \textbf{2} & \textbf{4} & \textbf{6}  & \hdots & \textbf{2r-2} \\
					-2& 0 & 2 & \hdots & 2r-4 &  0 & 2 & 4 & \hdots &  2r-4 \\
					\vdots & \vdots &\vdots & \ddots & \vdots & 	\vdots & \vdots &\vdots & \ddots & \vdots \\
					4-2r & 6-2r & 8-2r & \hdots &2  & 	 6-2r & 8-2r & 10-2r& \hdots &2  \\
					2-2r & 4-2r & 6-2r & \hdots &0 &  4-2r & 6-2r & 8-2r&  \hdots &0\\
					\hline -2& \textbf{ 0} &  \textbf{2} & \hdots &  \textbf{2r-4} & 	\textbf{0} & \textbf{2} & \textbf{4} & \hdots & \textbf{2r-4} \\
					-4 & -2& 0  & \hdots & 2r-6 & -2 & 0 & 2 & \hdots & 2r-6\\
					\vdots & \vdots &\vdots & \ddots & \vdots & 	\vdots & \vdots &\vdots & \ddots & \vdots \\
					4-2r & 6-2r & 8-2r & \hdots &2  &  6-2r & 8-2r & 10-2r& \hdots &2  \\
					2-2r & 4-2r & 6-2r & \hdots &0 &  4-2r & 6-2r & 8-2r&  \hdots &0\\
				\end{array}\mright] }$}
		\put(40,3) {$ P_q(W_{f_{\mu^\nn}}(\gl_{2r-1})) = \prod_{j=0}^{r-1} \prod_{k=1}^\infty \frac{(1-q^k)(1-q^{k+(r-1)})^2}{(1-q^{k+j})^4} $}
	\end{overpic}
	\caption{The toric divisor $S_{\mu^\nn}$ in $Y_{2,0}$ for $\sigma=\scriptsize{\begin{bmatrix}
				0 & 0 \\ 1 & 0 \end{bmatrix}}$,  with associated pyramid $\pi$, induced grading $\Gamma$, and Poincare polynomial $P_q$ of the vacuum module of $W_{f_{\mu^\nn}}(\gl_{2\nn-1})$}
	\label{fig:s1Wfig}
\end{figure}

In fact, the traditional conformal weights for the Bershadsky-Polyakov algebra correspond to the Dynkin grading on $W_{f_{\mu^2}}(\spl_3)$, while with respect to the good grading induced by the pyramid above, the fields $G^+$ and $G^-$ are of degree $1$ and $2$. Thus, including the auxiliary Heisenberg generator, the character of $\V(Y_{2,0},S_{\mu^2})=W_{f_{\mu^2}}(\gl_3)$ is given by
\[ P_q(W_{f_{\mu^2}}(\gl_3)) = \prod_{k=1}^\infty \frac{1}{(1-q^k)^3 (1-q^{k+1})^2 } \ , \]
in agreement with the general formula given in Figure \ref{fig:s1Wfig} above.

We can again verify Corollary \ref{twholochicoro} directly in this case:
\[ \lim_{r\to \infty}  P_q(W_{f_{\mu^\nn}}(\gl_{2r-1})) =  \lim_{r\to \infty} \prod_{j=0}^{r-1} \prod_{k=1}^\infty \frac{(1-q^k)(1-q^{k+(r-1)})^2}{(1-q^{k+j})^4} = \prod_{k=1}^\infty \frac{(1-q^k)}{(1-q^k)^{4k}} = P_q(V_{2,0;\sigma_1}) \ . \]
\end{eg}

\begin{eg}\label{s2sl2eg} Let $m=2$ and $n=0$, so that $Y=Y_{2,0}\to X_{2,0}=\{ xy-z^2\}\times \bb C$, as in the preceding two examples, and suppose now that the shift matrix is given by $\sigma_2=\scriptsize{\begin{bmatrix}	0 & 0 \\ 2 & 0 \end{bmatrix}}$. The sequence of toric divisors, and their corresponding pyramids, good gradings, and vacuum module characters are pictures in Figure \ref{fig:s2Wfig} below.
	
We can yet again verify Corollary \ref{twholochicoro} directly in this case:
\begin{align*}
		\lim_{r\to \infty}  P_q(W_{f_{\mu^\nn}}(\gl_{2r-1}))  & =  \lim_{r\to \infty} \prod_{j=0}^{r-1} \prod_{k=1}^\infty \frac{(1-q^k)(1-q^{k+1})(1-q^{k+(r-1)})^2}{(1-q^{k+j})^4(1-q^{k+r})^2}  \\
		& = \prod_{k=1}^\infty \frac{(1-q^k)(1-q^{k+1})}{(1-q^k)^{4k}} = P_q(V_{2,0;\sigma_2}) \ . 
\end{align*}

\begin{figure}[b]
	\hspace*{-1cm}
	\begin{overpic}[width=1\textwidth]{picOm2lrg}
				\put(0,39) {$ Y_{2,0}\to X_{2,0}=\{xy-z^2\} \times \bb C $}
		\put(26,25) {$M_0=2r$}
		\put(23,18) {$M_1=r-1$}
		\put(40,40)   {$ \pi =$ \ytableausetup{mathmode, boxsize=2 em}
			\begin{ytableau}
				\none[ ] & 	\none[ ] & \scriptstyle \nn+2  & \none[\dots] & \scriptstyle 2\nn -2	& \scriptstyle \scriptstyle 2\nn -1  & 2\nn\\
				1 & 2 & 3 & \none[\dots]& \scriptstyle \nn - 1 & \nn & \scriptstyle  \nn +1  \\
		\end{ytableau} }
		\put(40,20) {$\Gamma=\scriptsize{ \mleft[\begin{array}{c c c c c | c c c c c}
					\textbf{0} & \textbf{2} & \textbf{4} & \hdots & \textbf{2r} & \textbf{4} & \textbf{6} & \textbf{8}  & \hdots & \textbf{2r} \\
					-2& 0 & 2 & \hdots & 2r-2 &   2 & 4 & 6 & \hdots &  2r-2 \\
					\vdots & \vdots &\vdots & \ddots & \vdots & 	\vdots & \vdots &\vdots & \ddots & \vdots \\
					2-2r & 4-2r & 6-2r & \hdots &2  & 	 6-2r & 8-2r & 10-2r& \hdots &2  \\
					2r & 2-2r & 4-2r & \hdots &0 &  4-2r & 6-2r & 8-2r&  \hdots &0\\
					\hline -4& -2 &  \textbf{0} & \hdots &  \textbf{2r-4} & 	\textbf{0} & \textbf{2} & \textbf{4} & \hdots & \textbf{2r-4} \\
					-6 & -4& -2  & \hdots & 2r-6 & -2 & 0 & 2 & \hdots & 2r-6\\
					\vdots & \vdots &\vdots & \ddots & \vdots & 	\vdots & \vdots &\vdots & \ddots & \vdots \\
					4-2r & 6-2r & 8-2r & \hdots &2  &  6-2r & 8-2r & 10-2r& \hdots &2  \\
					2-2r & 4-2r & 6-2r & \hdots &0 &  4-2r & 6-2r & 8-2r&  \hdots &0\\
				\end{array}\mright] }$}
		\put(40,3) {$ P_q(W_{f_{\mu^\nn}}(\gl_{2r})) = \prod_{j=0}^{r-1} \prod_{k=1}^\infty \frac{(1-q^k)(1-q^{k+1})(1-q^{k+(r-1)})^2}{(1-q^{k+j})^4(1-q^{k+r})^2} $}
	\end{overpic}
	\caption{The toric divisor $S_{\mu^\nn}$ in $Y_{2,0}$ for $\sigma=\scriptsize{\begin{bmatrix}
				0 & 0 \\ 2 & 0 \end{bmatrix}}$,  with associated pyramid $\pi$, induced grading $\Gamma$, and Poincare polynomial $P_q$ of the vacuum module of $W_{f_{\mu^\nn}}(\gl_{2\nn})$}
	\label{fig:s2Wfig}
\end{figure}

\end{eg}

\begin{eg}
	Let $m=n=1$ so that $Y_{1,1}= | \mc O_{\bb P^1}(-1)\oplus  \mc O_{\bb P^1}(-1)| \to X_{1,1}=\{ xy-zw\}$, so that the lattice of real (super) roots is given by $\Pic(Y_{1,1})\cong Q_{\spl_{1|1}}\cong \Z$, and we consider the case that the shift is trivial, so that $\sigma=\scriptsize{\begin{bmatrix}	0 & 0 \\ 0 & 0 \end{bmatrix}}$.
	
	As in Example \ref{sl2unshifteg}, in the unshifted case with $r=1$ the corresponding $W$-algebra is the affine Kac-Moody vertex superalgebra $V^\kappa(\gl_{1|1})$, which is strongly generated by two odd and two even fields all of conformal weight $1$, so that the character of the vacuum module is given by
	\[ P_q(V^\kappa(\gl_{1|1})) =  \prod_{k=1}^\infty \frac{(1+q^{k})^2}{(1-q^{k})^2} \ .\]
	More generally, the vacuum character of the rectangular $W$-superalgebra which corresponds to the divisor $S_{\mu^r,\nu^r}$ via Conjecture \ref{Wsupalgvacconj} is computed analogously, as summarized in Figure \ref{fig:s0SWfig}:
	\[P_q(W_{f_{\mu^\nn},f_{\nu^\nn}}(\gl_{r|r})) = \prod_{j=0}^{r-1} \prod_{k=1}^\infty \frac{(1+q^{k+j})^2}{(1-q^{k+j})^2} \ .\]
	Again, we can verify Corollary \ref{twholochicoro} directly in this case:
\[	\lim_{r\to \infty} 	P_q(W_{f_{\mu^\nn},f_{\nu^\nn}}(\gl_{r|r}))  = \lim_{r\to \infty}  \prod_{j=0}^{r-1} \prod_{k=1}^\infty \frac{(1+q^{k+j})^2}{(1-q^{k+j})^2} \\
 = \prod_{k=1}^\infty \frac{(1+q^{k})^{2k}}{(1-q^{k})^{2k}} = P_q(V_{1|1}) \  \ ,\]
 in agreement with the computations of the character of the $\mc N=2$ superconformal $W_{1+\infty}$ algebras studied in \cite{GWP} and \cite{GWPZ}. More generally, the analogous computations to those of Examples \ref{s1sl2eg} and \ref{s2sl2eg}  can evidently be checked similarly for divisors on $Y_{1|1}$ with non-trivial shift.
\end{eg}

\begin{figure}[b]
	\begin{overpic}[width=1\textwidth]{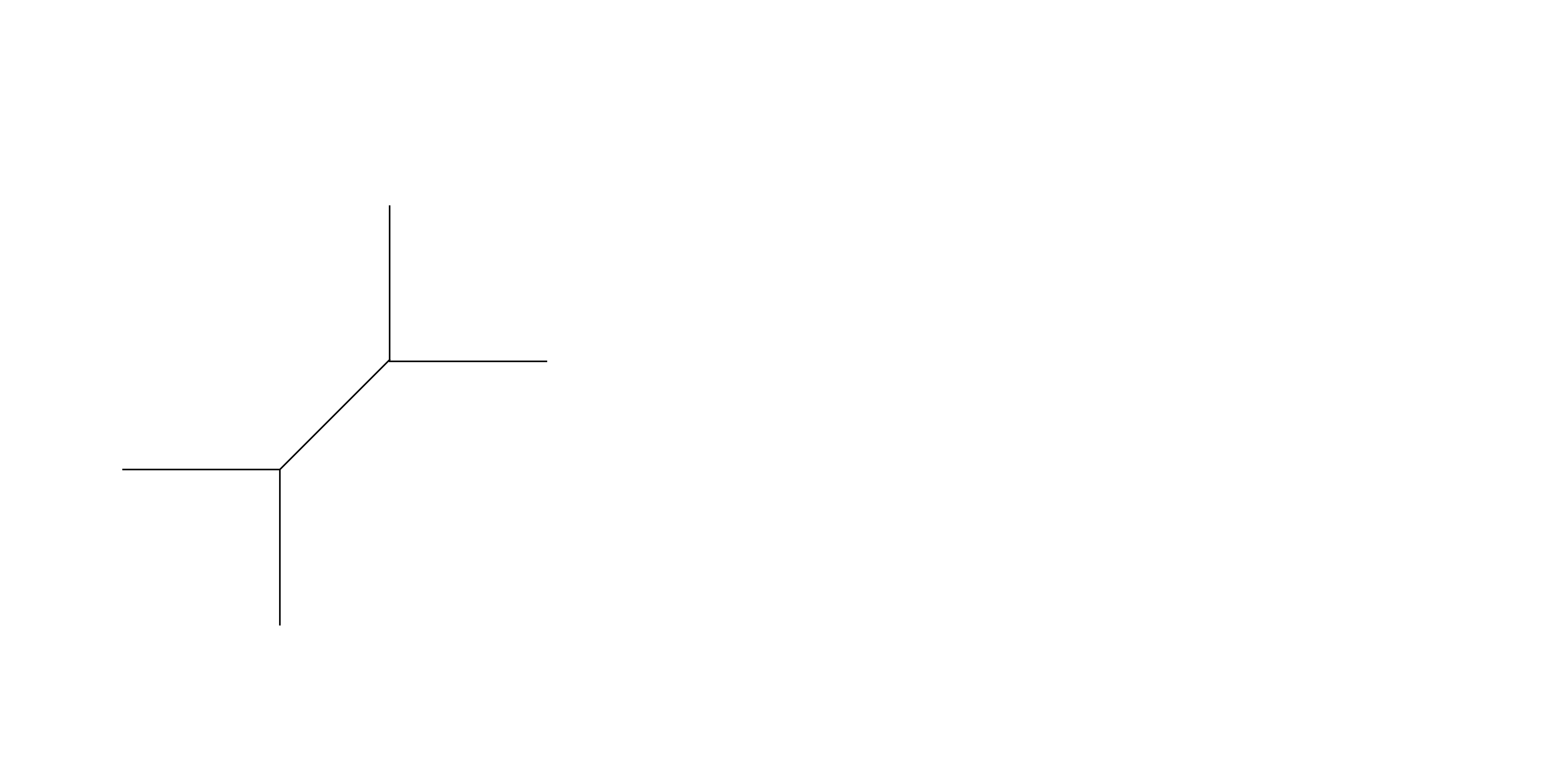}
		\put(0,44) {$ Y_{1,1}\to X_{1,1}=\{xy-zw\} $}
		\put(26,29) {$M_0=r$}
		\put(15,25) {$N_0=r$}
		\put(40,42)   {$ \pi =$ \ytableausetup{mathmode, boxsize=2 em}
			\begin{ytableau}
				\scriptstyle	\nn+1 & \scriptstyle \nn+2 &\scriptstyle \nn+3 & \none[\dots] 	& \scriptstyle 2r - 1 & 2\nn \\
				1 & 2 & 3 & \none[\dots]& \scriptstyle \nn - 1 & \nn \\
		\end{ytableau} }
		\put(40,22) {$\Gamma=\scriptsize{ \mleft[\begin{array}{c c c c c | c c c c c}
					\textbf{0} & \textbf{2} & \textbf{4} & \hdots & \textbf{2r-2} & 	\textbf{0} & \textbf{2} & \textbf{4} & \hdots & \textbf{2r-2} \\
					-2& 0 & 2 & \hdots & 2r-4   &-2& 0 & 2 & \hdots & 2r-4 \\
					\vdots & \vdots &\vdots & \ddots & \vdots & 	\vdots & \vdots &\vdots & \ddots & \vdots \\
					4-2r & 6-2r & 8-2r & \hdots &2  & 	4-2r & 6-2r & 8-2r & \hdots &2  \\
					2-2r & 4-2r & 6-2r & \hdots &0 & 	2-2r & 4-2r & 6-2r & \hdots &0\\
					\hline
					\textbf{0} & \textbf{2} & \textbf{4} & \hdots & \textbf{2r-2} & 	\textbf{0} & \textbf{2} & \textbf{4} & \hdots & \textbf{2r-2} \\
					-2& 0 & 2 & \hdots & 2r-4   &-2& 0 & 2 & \hdots & 2r-4 \\
					\vdots & \vdots &\vdots & \ddots & \vdots & 	\vdots & \vdots &\vdots & \ddots & \vdots \\
					4-2r & 6-2r & 8-2r & \hdots &2  & 	4-2r & 6-2r & 8-2r & \hdots &2  \\
					2-2r & 4-2r & 6-2r & \hdots &0 & 	2-2r & 4-2r & 6-2r & \hdots &0\\
				\end{array}\mright] }$}
		\put(40,5) {$ P_q(W_{f_{\mu^\nn},f_{\nu^\nn}}(\gl_{r|r})) = \prod_{j=0}^{r-1} \prod_{k=1}^\infty \frac{(1+q^{k+j})^2}{(1-q^{k+j})^2} $}
	\end{overpic}
	\caption{The toric divisor $S_{\mu^\nn,\nu^\nn}$ in $Y_{1,1}$ for $\sigma=\scriptsize{\begin{bmatrix}
				0 & 0 \\ 0 & 0 \end{bmatrix}}$,  with pyramid $\pi$, induced grading $\Gamma$, and Poincare polynomial $P_q$ of the vacuum module of $W_{f_{\mu^\nn},f_{\nu^\nn}}(\gl_{r|r})$}
	\label{fig:s0SWfig}
\end{figure}

\newpage

\bibliographystyle{alpha}

\bibliography{reCYsnewest}

\end{document}